\numberwithin{equation}{section}
\newcommand{\ra}{\rightarrow}
\newcommand{\be}{\begin{equation}}
\newcommand{\ee}{\end{equation}}
\newcommand{\bi}{\begin{itemize}}
\newcommand{\ei}{\end{itemize}}
\newcommand{\commentout}[1]{}
\newcommand{\Hm}{\mathbb{H}}
\newcommand{\Leb}{{\text{Leb}}}
\newcommand{\calO}{{\mathcal{O}}}
\newcommand{\Nm}{{\mathbb{N}}}
\newcommand{\Rm}{{\mathbb R}}
\newcommand{\R}{{\mathbb R}}
\newcommand{\Pm}{{\mathbb P}}
\newcommand{\expE}{{\mathbb E}}
\newcommand{\Ind}{{\mathbbm{1}}}
\newcommand{\Es}[1]{{\mathbb E}\left[#1\right]}
\newcommand{\p}[1]{{\mathbb P}\left(#1\right)}
\newcommand{\psub}[2]{{\mathbb P}_{#1}\left(#2\right)}
\newcommand{\Esub}[2]{{\mathbb E}_{#1}\left[#2\right]}
\def\1{\mathbbm{1}}
\newtheorem{theo}{Theorem}[section]
\newtheorem{lem}[theo]{Lemma}
\newtheorem{defin}[theo]{Definition}
\newtheorem{prop}[theo]{Proposition}
\newtheorem{cor}[theo]{Corollary}
\newtheorem{rmk}[theo]{Remark}
\newtheorem{assum}[theo]{Assumption}
\newtheorem{counterexample}[theo]{Counterexample}
\newcommand{\calO}{{\mathcal{O}}}
\newcommand{\Nm}{{\mathbb{N}}}
\newtheorem*{rep@theorem}{\rep@title}
\newcommand{\newreptheorem}[2]{%
\newenvironment{rep#1}[1]{%
 \def\rep@title{#2 \ref{##1}}%
 \begin{rep@theorem}}%
 {\end{rep@theorem}}}
\newtheorem*{rep@theorem}{\rep@title}
\newcommand{\newreptheorem}[2]{%
\newenvironment{rep#1}[1]{%
 \def\rep@title{#2 \ref{##1}}%
 \begin{rep@theorem}}%
 {\end{rep@theorem}}}
\begin{document}
\setcounter{page}{1}

\title{Convergence and front position for an FKPP-type free boundary problem}
\author{Julien Berestycki\footnote{Department of Statistics, University of Oxford, UK},\, Sarah Penington\footnote{Department of Mathematical Sciences, University of Bath, UK}\, and Oliver Tough\footnote{Department of Mathematical Sciences, Durham University, UK}}
\date{\today}

\maketitle

\begin{abstract}
The free boundary problem\[ \begin{cases}
\partial_tu=\frac{1}{2}\Delta u+u,\quad &t>0, \, x>L_t,\\ u(t,x)=0,\quad &t>0,\, x\le L_t,\\ \int_{L_t}^{\infty}u(t,y)dy=1,\quad &t> 0,\\ u(t,x)dx \to u_0(dx)&\text{weakly as }t\to 0, \end{cases}\] has long been conjectured to be in the universality class of the so-called FKPP reaction-diffusion equation. It appears naturally as the hydrodynamic limit of a branching-selection particle system, the $N$-BBM. In the present work, we show that for any initial condition $u_0(dx)$ that decays fast enough as $x\to\infty$, the solution of the free boundary problem converges to the minimal travelling wave solution. We further show how the decay of the initial condition precisely determines the position of the free boundary $L_t$ at large times $t$, mirroring the celebrated results of Bramson \cite{Bramson1983} in the context of the FKPP equation. Our conditions for convergence to the minimal travelling wave, and for $L_t$ to have the Bramson asymptotics
\[ L_t=\sqrt{2}t-\frac{3}{2\sqrt{2}}\log t+c+o(1)\quad\text{as }t\to\infty,\]
are necessary and sufficient. We also apply our results to a more general free boundary problem that depends on a parameter $\beta$, where we see a transition from \emph{pulled} to \emph{pushed} behaviour (with \emph{pushmi-pullyu} behaviour at the critical value of $\beta$). We obtain analogous sharp conditions for convergence to the minimal travelling wave, along with precise asymptotics for the front position, in each of these regimes. To our knowledge, such necessary and sufficient conditions had not previously been established in the pushmi-pullyu or pushed regimes, even for classical monostable reaction-diffusion equations. Our results prove and extend non-rigorous predictions in the physics literature of the first author, Brunet and Derrida.
\end{abstract}

\tableofcontents

\section{Introduction and main results}\label{section:main}

We consider solutions of the following free boundary partial differential equation:
\begin{equation}\label{eq:FBP}
		\begin{cases}
		\partial_tu=\frac{1}{2}\Delta u+u,\quad &t>0, \, x>L_t,\\
		u(t,x)=0, \quad &t>0, \, x\le L_t,\\
		\int_{L_t}^{\infty}u(t,y)dy=1, \quad &t> 0,\\
		u(t,x )dx \to u_0(dx) &\text{weakly as } t\to 0.
	\end{cases}
\end{equation}
Existence and uniqueness of classical solutions of~\eqref{eq:FBP} was established in~\cite{Berestycki2018}; we will be more precise about the definition of a classical solution, and discuss existing (rigorous and non-rigorous) results about solutions of the free boundary problem, and the motivation for studying it, in Section~\ref{subsec:background} below.
In particular, $u(t,x)$ is the hydrodynamic limit of a branching-selection particle system known as the $N$-BBM~\cite{DeMasi2019} (see Section \ref{subsec:N-BBM}).

The minimal travelling wave solution of~\eqref{eq:FBP}, i.e.~the unique travelling wave solution with minimal wave speed $\sqrt 2$, is given by
\begin{equation}\label{eq:minimal travelling wave}
	u(t,x)=\pi_{\min}(x-\sqrt{2}t) \; \text{ for }t>0,\, x\in \R, \quad \text{where }\; \pi_{\min}(y)=2ye^{-\sqrt{2}y}\1_{\{y>0\}} \;\text{ for }y\in \R.
\end{equation}
For each $c> \sqrt{2}$, a unique travelling wave solution with speed $c$ exists, given by $u(t,x)=\pi_{c}(x-c t)$ for $t>0$, $x\in \R,$ where 
\begin{equation}\label{eq:speed c travelling wave}
	\pi_{c}(y)=(c^2-2)^{-1/2}[e^{(-c+\sqrt{c^2-2})y}-e^{(-c-\sqrt{c^2-2})y}]\1_{\{y>0\}} \;\text{ for }y\in \R.
\end{equation}
We let $\pi_{\sqrt 2}=\pi_{\min}$.

Writing $U(t,x):=\int_x^{\infty}u(t,y)dy$ for $t>0$ and $x\in \R$, where $(u(t,x),L_t)$ is the solution of~\eqref{eq:FBP}, we have that $(U(t,x),L_t)$ solves the free boundary problem
\begin{equation}\label{eq:FBP_CDF}
	\begin{cases}
		\partial_tU=\frac{1}{2}\Delta U+U,\quad &t>0, \; x>L_t,\\
		U(t,x)= 1,\quad &t>0, \; x\leq L_t,\\
		\partial_xU(t, L_t)= 0,\quad &t>0,\\
		U(t,\cdot ) \to U_0(\cdot) \quad &\text{in }L^1_{\mathrm{loc}}\text{ as } t\to 0,
	\end{cases}
\end{equation}
where $U_0(x):=u_0((x,\infty))$ for $x\in \R$.
Moreover, if $(U(t,x),L_t)$ solves~\eqref{eq:FBP_CDF}, then letting $u(t,x)=-\partial_x U(t,x)$ for $t>0$ and $x\in \R$, we have that $(u(t,x),L_t)$ solves~\eqref{eq:FBP}, i.e.~the solutions of \eqref{eq:FBP} and \eqref{eq:FBP_CDF} are in one-to-one correspondence.
Although~\eqref{eq:FBP} is the formulation that makes the most intuitive sense when thinking about the hydrodynamic limit of the $N$-BBM, in this article it will almost always be convenient to work with the integrated version~\eqref{eq:FBP_CDF} of the free boundary problem.

\subsection{Main results} \label{subsec:mainresults}

We say that $(U(t,x),L_t)$ is a classical solution to~\eqref{eq:FBP_CDF} if:
\begin{enumerate}[(i)]
\item  $L_t\in \R$ $\forall t>0$ and $t\mapsto L_t$ is continuous on $(0,\infty)$,
\item  $U :  (0,\infty) \times \R  \to [0, 1]$ with $U \in C^{1,2}(\{(t,x):t>0,\, x>L_t\})\cap C( (0,\infty) \times \R)$,
\item  $(U,L)$ satisfies~\eqref{eq:FBP_CDF}.
\end{enumerate}
In the present article, we will always be dealing with classical solutions of~\eqref{eq:FBP_CDF}. 
We will always make the following (equivalent) assumptions on the initial conditions $U_0$ and $u_0$ of~\eqref{eq:FBP_CDF} and~\eqref{eq:FBP} respectively.
\begin{assum}[Initial condition] \label{assum:standing assumption ic}
Suppose $U_0:\Rm\to [0,1]$ is a c\`adl\`ag non-increasing function with $U_0(x)\to 1$ as $x\to -\infty$ and $U_0(x)\to 0$ as $x\to \infty$.
Equivalently, suppose $u_0$ is a Borel probability measure on $\R$.
\end{assum}
An existence and uniqueness result for classical solutions of~\eqref{eq:FBP_CDF}, for initial conditions satisfying Assumption \ref{assum:standing assumption ic}, will be given in Proposition~\ref{prop:fbpsoln} below (which follows from results in~\cite{Berestycki2018}).

The minimal travelling wave solution of~\eqref{eq:FBP_CDF} has shape given by
\begin{equation} \label{eq:Pimindefn}
\Pi_{\min}(x):=\int_x^{\infty}\pi_{\min}(y)dy\;\text{ for } x\in \R,
\end{equation}
and wave speed given by $\sqrt 2$.
For $c\ge\sqrt 2$, we write
\begin{equation} \label{eq:Picdefn}
\Pi_{c}(x):=\int_x^{\infty}\pi_{c}(y)dy\;\text{ for } x\in \R
\end{equation}
for the shape of the travelling wave solution with speed $c$ (recalling that we let $\pi_{\sqrt 2}=\pi_{\min}$).
Our first main result characterises the domain of attraction of the minimal travelling wave solution of~\eqref{eq:FBP_CDF}. 
\begin{theo} \label{theo:convtoPimin}
Let $U_0$ satisfy Assumption~\ref{assum:standing assumption ic}, and let $(U(t,x),L_t)$ denote the solution of the free boundary problem~\eqref{eq:FBP_CDF}.	Then the following are equivalent:
	\begin{enumerate}
		\item $\limsup_{x\ra\infty}\frac{1}{x}\log U_0(x)\leq -\sqrt{2}$;
		\item $\limsup_{t\ra\infty}\frac{L_t}{t}\leq \sqrt{2}$;
		\item $\lim_{t\ra\infty}\frac{L_t}{t}= \sqrt{2}$;\label{enum:convergence of velocity}
		\item $U(t,x+L_t)\ra \Pi_{\min}(x)$ uniformly in $x$ as $t\ra\infty$. \label{enum:convergence of profile}
	\end{enumerate}
\end{theo}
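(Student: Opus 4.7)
The strategy is to close a cycle of implications around the four conditions. The implications $(3) \Rightarrow (2)$ and $(4) \Rightarrow (3)$ are ``soft'': the former is immediate, and for the latter I would integrate \eqref{eq:FBP_CDF} in the moving frame $V(t, \eta) := U(t, \eta + L_t)$ over a finite window $[a, b]$, pass to the limit $t \to \infty$ using $V(t, \cdot) \to \Pi_{\min}$ together with standard parabolic regularity (so that $\partial_\eta V$ also converges on compacts), and read off from the travelling-wave ODE $\tfrac{1}{2} \Pi_{\min}'' + \sqrt{2} \Pi_{\min}' + \Pi_{\min} = 0$ on $(0, \infty)$ that $L_t'(t) \to \sqrt{2}$ in a time-averaged sense, hence $L_t/t \to \sqrt{2}$. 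Combined with the general lower bound $\liminf_{t \to \infty} L_t/t \ge \sqrt{2}$ (obtained by placing an $\epsilon$-scaled compactly supported bump beneath $u$ and invoking the FKPP spreading property), these soft arguments leave three substantive implications to establish: $(1) \Rightarrow (2)$, $(2) \Rightarrow (1)$, and the main one $(2) \Rightarrow (4)$.

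For $(1) \Rightarrow (2)$, I use supercritical travelling-wave barriers from above. Since $c - \sqrt{c^2 - 2} < \sqrt{2}$ for every $c > \sqrt{2}$, the profile $\Pi_c$ has a fatter exponential tail than $U_0$ under~(1), so for each $c > \sqrt{2}$ there exists $a_c \in \R$ with $U_0 \le \Pi_c(\cdot - a_c)$ pointwise. The parabolic comparison principle for \eqref{eq:FBP_CDF} (available from the construction in \cite{Berestycki2018}) then gives $U(t, x) \le \Pi_c(x - c t - a_c)$, hence $L_t \le c t + a_c$; sending $c \downarrow \sqrt{2}$ yields (2). The converse $(2) \Rightarrow (1)$ is proved by contrapositive. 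If $\limsup_{x \to \infty} x^{-1} \log U_0(x) > -\sqrt{2}$, there exist $\lambda \in (0, \sqrt{2})$ and $x_n \uparrow \infty$ with $U_0(x_n) \ge e^{-\lambda x_n}$. The hydrodynamic limit of the $N$-BBM \cite{DeMasi2019} combined with classical BBM density estimates, or equivalently a Feynman--Kac analysis of the linear equation $\partial_t w = \tfrac{1}{2}\Delta w + w$ against a suitable stopping rule, then predicts that along a suitable sequence $t_n$ the quantity $U(t_n, v_\lambda t_n - o(t_n))$ is close to $1$ with $v_\lambda := \lambda/2 + 1/\lambda > \sqrt{2}$; this forces $\limsup L_t/t \ge v_\lambda > \sqrt{2}$, contradicting~(2).

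The main implication $(2) \Rightarrow (4)$ proceeds in three steps. \textbf{(A) Compactness.} Parabolic regularity near the moving boundary (from \cite{Berestycki2018}), together with $0 \le U \le 1$ and the monotonicity of $U(t, \cdot)$, yields equicontinuity of $(U(t, \cdot + L_t))_{t \ge 1}$ and hence precompactness in the topology of uniform convergence on $\R$. \textbf{(B) Subsequential limits are travelling waves.} Any limit $V$ of $U(t_n, \cdot + L_{t_n})$ along $t_n \to \infty$ is non-increasing, equals $1$ on $(-\infty, 0]$, vanishes at $+\infty$, and satisfies the travelling-wave ODE on $(0, \infty)$ with some speed $c \ge \sqrt{2}$ extracted as a subsequential limit of the averaged displacements $(L_{t_n + s} - L_{t_n})/s$; by the classification of travelling waves of \eqref{eq:FBP_CDF}, $V = \Pi_c$. \textbf{(C) Ruling out $c > \sqrt{2}$.} If $V = \Pi_c$ with $c > \sqrt{2}$ were a subsequential limit along $t_n$, then for any $\delta \in (0, c - \sqrt{2})$ and arbitrarily large $T$ one could place a travelling wave of speed $c - \delta$ beneath $U$ on the window $[t_n, t_n + T]$, producing $\limsup L_t/t \ge c - \delta > \sqrt{2}$ and contradicting~(2). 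Hence every subsequential limit equals $\Pi_{\min}$, which upgrades to full uniform convergence and gives (4). The main obstacle is Step~(C): it encodes the quantitative rigidity behind the celebrated Bramson logarithmic correction, and in my plan would be carried out via the $N$-BBM coupling of \cite{DeMasi2019} together with maximum principle estimates for the associated linearised problem.
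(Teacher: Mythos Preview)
Your cycle is well-organized and some pieces are fine: the barrier argument for $(1)\Rightarrow(2)$ via $\Pi_c$ is correct and in fact cleaner than the paper's route through the Brunet--Derrida relation (Theorems~\ref{theo:magic formula} and~\ref{theo:initial condition limsup bdy relation}), and your $(4)\Rightarrow(3)$ integration argument is close in spirit to the paper's use of Lemma~\ref{lem:Bramsongronwall} for $(4)\Rightarrow(2)$.

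The substantial gap is Step~(B) of $(2)\Rightarrow(4)$. You claim that any subsequential limit $V$ of $U(t_n,\cdot+L_{t_n})$ satisfies the travelling-wave ODE with some speed $c$. This does not follow from compactness: a limit of profiles along a time sequence is not automatically stationary in the moving frame. To get the ODE you would need either a Lyapunov functional forcing the time derivative of the moving-frame profile to vanish in the limit, or a Liouville theorem stating that eternal solutions of the free boundary problem with bounded front speed are travelling waves. Neither is available, and establishing either is essentially as hard as the theorem itself --- this is the step where the real work hides, not Step~(C). Step~(C) also has a secondary tail issue: uniform convergence $U(t_n,\cdot+L_{t_n})\to\Pi_c$ says nothing about the decay rate of $U(t_n,x+L_{t_n})$ for $x$ large, so you cannot slide a global subsolution $\Pi_{c-\delta}(\cdot-a)$ beneath $U(t_n,\cdot)$. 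Your $(2)\Rightarrow(1)$ sketch is likewise incomplete: $U_0(x_n)\ge e^{-\lambda x_n}$ along a sequence gives, by monotonicity, only $U_0\ge e^{-\lambda x_n}$ on $(-\infty,x_n]$, not a global exponential lower bound, so no travelling-wave subsolution of speed $v_\lambda$ is produced.

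The paper's proof of $(1)\Rightarrow(4)$ avoids omega-limit sets entirely. It splits on whether $\int_0^\infty ye^{\sqrt 2 y}U_0(y)\,dy$ is finite or infinite. In the finite-mass case (Section~\ref{sec:finitemass}), the stretching lemma and a comparison with Bramson's FKPP asymptotics first pin down $L_t=\sqrt 2 t-\tfrac{3}{2\sqrt 2}\log t+O(1)$ and $L_{t+s}-L_t\to\sqrt 2 s$; the Brunet--Derrida relation (Theorem~\ref{theo:magic formula}), applied with running ``initial condition'' $U(t,L_t+\cdot)$, then identifies the limit profile as $\Pi_{\min}$. In the infinite-mass case (Section~\ref{sec:infiniteinitialmass}), a Bramson-style Feynman--Kac/Brownian-bridge analysis sandwiches $U(t,L_t+\cdot)$ between multiples of $\Pi_{\min}$ in the tail, after which Proposition~\ref{prop:sandwichmeansconv} upgrades to uniform convergence. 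The equivalence $(1)\Leftrightarrow(2)$ is handled separately via Theorem~\ref{theo:initial condition limsup bdy relation}.
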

Informally, Theorem~\ref{theo:convtoPimin} tells us that if the initial condition $U_0(x)$ decays sufficiently quickly as $x\to \infty$, then the asymptotic speed of the front is $\sqrt{2}$ and the front shape converges to the minimal travelling wave shape $\Pi_{\min}$.
This is, of course, precisely the behaviour expected here by analogy with the classical FKPP equation~\cite{Bramson1983,Kolmogorov1937} (see Section~\ref{subsubsec:FKPPlinearised}). 
The next natural question is the position of the front at large times; in a manner akin to the classical FKPP equation~\cite{Bramson1983}, the answer depends in a subtle way on the decay of $U_0(x)$ as $x\to \infty$.
\begin{theo} \label{theo:Ltposition}
	Suppose that $U_0$ satisfies Assumption~\ref{assum:standing assumption ic}, and define
	\[
	 b(t)=2^{-1 / 2} \log \left(\int_0^{\infty} y e^{\sqrt 2 y} U_0(y) e^{-y^2 / (2 t)} d y+1\right) \quad \text{for }t>0.
	\]
	Suppose that for some $\gamma<1/2$,
\begin{equation}\label{eq:stretched exponential U0}
U_0(x)\leq e^{x^{\gamma}-\sqrt{2}x}
\end{equation}
for all $x$ sufficiently large.
	\commentout{
	
	for some $\delta<1 / 3$, for $t$ sufficiently large,
	\begin{equation}\label{eq:assumption on b Lt asymptotics}
	b(t)\le t^\delta.
	\end{equation}
	}Let $(U(t,x),L_t)$ denote the solution of the free boundary problem~\eqref{eq:FBP_CDF}, and let
	\[
	m(t)=\sqrt 2 t-\frac 3{2\sqrt 2} \log t+b(t)  \quad \text{for }t>0.
	\]
	Then there exists $a=a(U_0)\in \R$ such that
	\[
	L_t-m(t) \to a \quad \text{as }t\to \infty.
	\]	
		In particular, if $\int_0^{\infty} y e^{\sqrt 2 y} U_0(y)  d y<\infty$, then there exists $c=c(U_0)\in \R$ such that 
	\[
	L_t = \sqrt 2 t -\frac3{2\sqrt 2 } \log t +c +o(1) \quad \text{as }t\to \infty.
	\]
If instead $\int_0^{\infty} y e^{\sqrt 2 y} U_0(y)  d y=\infty$, then $b(t)\to \infty$ as $t\to \infty$, and 
\[
	a(U_0)= -\frac 1 {\sqrt{2}} \log \sqrt{\pi}.
	\]
Moreover, if $\int_0^{\infty} y e^{\sqrt 2 y} U_0(y)  d y=\infty$, then the conclusion 
\[
L_t-\left[\sqrt{2}t-\frac{3}{2\sqrt{2}}\log t\right]\ra \infty \quad \text{as }t\to \infty
\]
remains valid without assuming \eqref{eq:stretched exponential U0}.
\end{theo}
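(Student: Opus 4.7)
The plan is to combine the profile convergence from Theorem~\ref{theo:convtoPimin} with a Bramson-style linearisation adapted to the free-boundary setting. Theorem~\ref{theo:convtoPimin} already yields $U(t,\cdot+L_t)\to\Pi_{\min}$ uniformly, so everything reduces to pinning down $L_t-m(t)$ up to $o(1)$. The key observation is that in the co-moving frame $y=x-\sqrt{2}t$ the tilted function $\Phi(t,y):=e^{\sqrt{2}y}U(t,y+\sqrt{2}t)$ satisfies the \emph{pure} heat equation $\Phi_t=\tfrac12\Phi_{yy}$ on $\{y>L_t-\sqrt{2}t\}$, with Dirichlet value $e^{\sqrt{2}(L_t-\sqrt{2}t)}\to 0$ at the (receding) free boundary. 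This admits a Feynman--Kac representation in terms of Brownian motion killed on the moving curve $s\mapsto L_s-\sqrt{2}s$, and the integral $\int_0^\infty y\,e^{\sqrt{2}y}U_0(y)e^{-y^2/(2t)}dy$ appearing in the definition of $b(t)$ is, up to the Gaussian normalisation, exactly this Feynman--Kac integral against the tilted initial datum, with the factor of $y$ arising from conditioning the Brownian motion to stay positive on $[0,t]$.

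The core of the argument is then the construction of matched super- and sub-solutions of~\eqref{eq:FBP_CDF} of the form $\Pi_{\min}(\,\cdot\,-m(t)\mp a\mp\varepsilon)$, suitably regularised near the front. The Bramson shift $-\tfrac{3}{2\sqrt{2}}\log t$ embedded in $m(t)$ is precisely what turns such an ansatz into an interior super- or sub-solution in the pulled regime (with the $\tfrac{3}{2}$ ultimately coming from the same Brownian conditioning), while the slower correction $b(t)$ captures the additional lag produced by initial data with a tail heavier than the critical exponential. The hypothesis~\eqref{eq:stretched exponential U0} with $\gamma<1/2$ is the standard technical condition guaranteeing that $b(t)$ grows slowly enough for the perturbation bounds to close. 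A comparison principle for~\eqref{eq:FBP_CDF}, together with Theorem~\ref{theo:convtoPimin} used to initialise and terminate the comparisons, then gives $|L_t-m(t)-a|\le\varepsilon$ for all $t$ large. To identify $a=a(U_0)$: in the convergent case $\int_0^\infty y e^{\sqrt{2}y}U_0(y)dy<\infty$, dominated convergence yields $b(t)\to\tfrac{1}{\sqrt{2}}\log\bigl(\int_0^\infty y e^{\sqrt{2}y}U_0(y)dy+1\bigr)$, so all $U_0$-dependence is absorbed into the finite constant $c$; in the divergent case $b(t)\to\infty$ absorbs all $U_0$-dependence, leaving only the universal constant $a(U_0)=-\tfrac{1}{\sqrt{2}}\log\sqrt{\pi}$ produced by the Gaussian kernel $e^{-y^2/(2t)}$ in $b(t)$.

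The unconditional last statement (valid without~\eqref{eq:stretched exponential U0} whenever the integral diverges) follows by a comparison/monotonicity argument: any such $U_0$ dominates a truncation $V_0\le U_0$ for which~\eqref{eq:stretched exponential U0} does hold and $\int_0^\infty y e^{\sqrt{2}y}V_0(y)dy$ is still as large as we wish, so the main part of the theorem applied to $V_0$ gives $L^{U_0}_t\ge L^{V_0}_t=\sqrt{2}t-\tfrac{3}{2\sqrt{2}}\log t+b^{V_0}(t)+O(1)$, and $b^{V_0}(t)\to\infty$ by choice of $V_0$. The main obstacle throughout is the self-consistent control of the Feynman--Kac kernel for Brownian motion killed on the \emph{a priori unknown} curve $L_s$: Theorem~\ref{theo:convtoPimin} supplies only $L_s\sim\sqrt{2}s$ to leading order, so the refined estimates have to be bootstrapped, using first a rough location of $L_s$ to compute the leading behaviour of the tilted heat kernel, then feeding this back into the super- and sub-solution construction. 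Handling simultaneously the receding absorbing barrier, the integral constraint, and a broad class of initial-data tails is what makes this free-boundary Bramson analysis substantially more delicate than its classical FKPP counterpart.
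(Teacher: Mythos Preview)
The proposal has a genuine gap at its core. You claim that shifted travelling waves $\Pi_{\min}(\cdot - m(t) \mp a \mp \varepsilon)$ become super- and sub-solutions once the Bramson shift is included in $m(t)$, but a direct computation gives
\[
\Bigl(\partial_t - \tfrac{1}{2}\partial_x^2 - 1\Bigr)\bigl[\Pi_{\min}(x - m(t))\bigr] = (\sqrt{2} - m'(t))\,\Pi'_{\min}(x - m(t)),
\]
and since $m'(t) < \sqrt{2}$ for large $t$ while $\Pi'_{\min} \le 0$, this is non-positive: the shifted wave is always a \emph{sub}-solution of the linear equation, never a super-solution. This sign obstruction is exactly the well-known difficulty with pulled fronts; it is not repaired by boundary regularisation, and the PDE-comparison approaches that do work for classical FKPP replace $\Pi_{\min}$ by a Dirichlet profile vanishing at the front, which is a different ansatz from the one you propose. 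Even granting suitable barriers, the comparison principle available for~\eqref{eq:FBP_CDF} (Proposition~\ref{prop:fbpcomparison}) compares only \emph{solutions} with ordered initial data; there is no obvious notion of super-solution compatible with the free-boundary mass constraint.

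The paper's route is quite different and splits sharply into two cases. In the finite-mass case it avoids PDE barriers altogether: the forward Feynman--Kac identity $\Pm_{u_0}(\tau > t) = e^{-t}$ (Lemma~\ref{lem:FKforwardstime}), combined with an extension of~\cite{BBHR17} for a \emph{fixed} curved boundary (Theorem~\ref{theo:extension of BBHR}) and the stretching lemma (making $L_t - L^H_t$ monotone), first gives $L_t - m(t) = \calO(1)$; a separate probabilistic bootstrap (Theorem~\ref{theo:bootstrap boundary asymptotics}) then pins down the Heaviside boundary $L^H_t$ to $o(1)$. In the infinite-mass case the paper adapts Bramson's method via the backward Feynman--Kac formula of Lemma~\ref{lem:FKforinfinitemass}, the auxiliary curves $\underline{\calM}_{r,t}$, and entropic repulsion for the Brownian bridge. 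The dichotomy between these cases, and the genuinely different machinery each requires, is absent from your outline. Your tilted-variable observation and the identification of $b(t)$ are correct, and your argument for the final unconditional statement is essentially the paper's (comparison with a smaller initial condition), modulo a minor slip: for a fixed $V_0$ with large but finite mass, $b^{V_0}(t)$ tends to a constant rather than to infinity, so one must either vary the truncation or, as the paper does, take a single $\tilde{U}_0 = U_0 \wedge e^{-\sqrt{2}x}$ that still has infinite mass.
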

\begin{defin}[Finite and infinite initial mass]\label{defin: fin init mass}
In the remainder of the article, the cases 
\begin{align}
\int_0^{\infty} y e^{\sqrt 2 y} U_0(y)d y<\infty  \label{eq:finite initial mass U0}\\
\text{and }\qquad \int_0^{\infty} y e^{\sqrt 2 y} U_0(y)  d y=\infty \label{eq:infinite initial mass U0}
\end{align}
will be referred to as the \emph{finite initial mass} and \emph{infinite initial mass} cases, respectively, as in~\cite{Bramson1983}. 
\end{defin}
Observe that for $U_0$ satisfying Assumption~\ref{assum:standing assumption ic}, by integration by parts we have that $U_0$ satisfies~\eqref{eq:finite initial mass U0} if and only if the probability measure $u_0$, defined by $U_0(x):=u_0((x,\infty))$ for $x\in\Rm$, satisfies
\begin{equation}\label{eq:finite initial mass little u0}
\int_{[0,\infty)} y e^{\sqrt 2 y} u_0(dy)<\infty.
\end{equation}
We will therefore equivalently refer to \eqref{eq:finite initial mass little u0} as the finite initial mass case, and
\begin{equation}\label{eq:infinite initial mass little u0}
\int_{[0,\infty)} y e^{\sqrt 2 y} u_0(dy)=\infty
\end{equation}
as the infinite initial mass case.

We treat these cases separately in the proof; we first prove Theorems~\ref{theo:convtoPimin} and~\ref{theo:Ltposition} under the assumption of finite initial mass,
and then complete the proofs of Theorems~\ref{theo:convtoPimin} and~\ref{theo:Ltposition} by covering the infinite initial mass case (see Section~\ref{subsec:outline} for an outline of the proof).
\begin{rmk}\label{rmk:FKPP asymptotics heavy-tailed}
As an illustrative example of the results in Theorem~\ref{theo:Ltposition}, suppose $U_0(x) \sim Ax^\nu e^{-\sqrt 2 x}$ as $x\to \infty$ for some fixed $\nu\in \R$ and $A\in (0,\infty)$, and let $(U(t,x),L_t)$ solve~\eqref{eq:FBP_CDF}. Then by Theorem~\ref{theo:Ltposition},
\begin{enumerate}
\item if $\nu<-2$, then there exists $c\in \R$ such that $L_t =\sqrt 2 t -\frac{3}{2\sqrt 2} \log t + c +o(1)$ as $t\to \infty$.
\item if $\nu=-2$, then $L_t=\sqrt{2}t-\frac{3}{2\sqrt{2}}\log t+\frac{1}{\sqrt{2}}\log \log t+\frac{1}{\sqrt{2}}\log \left(\frac{A}{2\sqrt \pi}\right)+o(1)$ as $t\to \infty$.
	\item if $\nu>-2$, then $L_t=\sqrt{2}t+\frac{\nu-1}{2\sqrt{2}}\log t+\frac{1}{\sqrt{2}}\log\left(\frac A {\sqrt \pi}\int_{0}^{\infty} y^{1+\nu} e^{-y^2 / 2 } d y\right)+o(1)$ as $t\to \infty$.
\end{enumerate}
Analogous behaviour is seen for the FKPP equation~\cite{Bramson1983} and for an FKPP boundary value problem with a given boundary~\cite{BBHR17} (see Section~\ref{subsubsec:FKPPlinearised}).
\end{rmk}

\begin{rmk}
Theorem~\ref{theo:Ltposition} tells us that the asymptotics $L_t =\sqrt 2 t -\frac{3}{2\sqrt 2} \log t + c +o(1)$ as $t\to \infty$ for some $c\in \mathbb{R}$ hold if and only if $U_0$ satisfies the finite initial mass condition~\eqref{eq:finite initial mass U0}, and otherwise $L_t-[\sqrt 2 t -\frac{3}{2\sqrt 2} \log t]\ra \infty$ as $t\ra\infty$. We will establish a similar characterisation in the pushmi-pullyu and pushed settings for solutions of a generalised free boundary problem in Section \ref{section:proof on generalised FBP} (see Remark \ref{rmk:necessary and sufficient for asymptotics generalised}).\end{rmk}

We can also characterise the domains of attraction of the faster travelling wave solutions $\Pi_c$ for $c>\sqrt 2$, and if $U_0(x)$ decays exponentially as $x\to \infty$ at a slower rate than $\sqrt{2}$, we can determine the long-term asymptotics of $(U(t,x),L_t)$.
\begin{theo} \label{theo:slowerdecay}
Let $U_0$ satisfy Assumption~\ref{assum:standing assumption ic}, and let $(U(t,x),L_t)$ denote the solution of the free boundary problem~\eqref{eq:FBP_CDF}.	
For $t> 0$, let
\begin{equation} \label{eq:mtslowdecay}
m(t):=\sup\left\{x\in \R:e^t \int_{-\infty}^\infty U_0(y)\frac{e^{-(x-y)^2/(2t)}}{\sqrt{2\pi t}}dy\ge 1\right\}.
\end{equation}
Then for any $c>\sqrt{2}$, the following are equivalent:
	\begin{enumerate}
		\item $\lim_{x\ra\infty}\frac{1}{x}\log U_0(x)=-c+\sqrt{c^2-2}$;\label{enum:asymptotics U0 heavy tailed for speed c}
		\item $U(t,x+L_t)\ra \Pi_c(x)$ uniformly in $x$ as $t\ra\infty$.\label{enum:convergence to c travelling wave}
	\end{enumerate}
		Moreover, if \ref{enum:asymptotics U0 heavy tailed for speed c} or equivalently \ref{enum:convergence to c travelling wave} holds, then
	\[
	\lim_{t\to\infty}\frac{L_t}t =c \quad \text{and}\quad \lim_{t\to\infty} \left(L_t-m(t)-\frac{1}{c-\sqrt{c^{2}-2}} \left(\log \sqrt{c^{2}-2}+\log\Big(c-\sqrt{c^{2}-2}\Big)\right)\right)=0.
	\]
\end{theo}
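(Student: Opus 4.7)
The plan is to sandwich $L_t$ between upper and lower bounds coming from the linear equation $\partial_t v=\tfrac12\Delta v+v$ and from shifted travelling wave profiles, and then extract convergence to $\Pi_c$ together with the precise constant by matching right-tail asymptotics. Write $\lambda:=c-\sqrt{c^2-2}$ (the smaller root of $\lambda^2-2c\lambda+2=0$), so that $\pi_c(y)\sim(c^2-2)^{-1/2}e^{-\lambda y}$ and $\Pi_c(y)\sim(\lambda\sqrt{c^2-2})^{-1}e^{-\lambda y}$ as $y\to\infty$; the function $v(t,x)$ appearing in the definition of $m(t)$ in~\eqref{eq:mtslowdecay} is precisely the linearised solution started from $U_0$.

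For \ref{enum:asymptotics U0 heavy tailed for speed c}$\Rightarrow$\ref{enum:convergence to c travelling wave} I would first prove matching upper and lower bounds on $L_t$. The upper bound comes from extending $U$ by $1$ to the left of $L_t$ and invoking the parabolic comparison machinery for~\eqref{eq:FBP_CDF} from~\cite{Berestycki2018} to get $U\le v$ everywhere; since $U(t,L_t)=1$, this forces $L_t\le m(t)$. The lower bound exploits that for every $a\in\R$ the shifted profile $(x,t)\mapsto\Pi_c(x-a-ct)$ is itself a classical solution of~\eqref{eq:FBP_CDF}. Picking any $t_0>0$, one has $U(t_0,\cdot)\equiv 1$ on $(-\infty,L_{t_0}]$, and the hypothesis $\tfrac1x\log U_0(x)\to-\lambda$ together with the sharp right tail of $\Pi_c$ allows one to choose $a<L_{t_0}$ with $U(t_0,\cdot)\ge\Pi_c(\cdot-a)$ pointwise on $\R$. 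Monotonicity of the free boundary problem then propagates this inequality: $U(t_0+s,\cdot)\ge\Pi_c(\cdot-a-cs)$ and $L_{t_0+s}\ge a+cs$ for all $s\ge 0$. Combined with the upper bound, $L_t/t\to c$.

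For the convergence of the profile, the family $\{x\mapsto U(t,x+L_t):t\ge 1\}$ is non-increasing, takes values in $[0,1]$, equals $1$ at $0$, and by the preceding sandwich has an exponential right tail of rate $\lambda$ uniform in $t$; it is therefore precompact in the locally uniform topology, and monotonicity upgrades this to uniform convergence on $\R$. Interior parabolic regularity applied to~\eqref{eq:FBP_CDF} rewritten in the moving frame $y=x-L_t$ lets me pass to the limit along subsequences, producing a bounded travelling wave solution of speed $c$ whose right tail is $(\lambda\sqrt{c^2-2})^{-1}e^{-\lambda y}$; by uniqueness of such a travelling wave (formula~\eqref{eq:speed c travelling wave}), every subsequential limit equals $\Pi_c$, which proves~\ref{enum:convergence to c travelling wave} along the full sequence.

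To pin down the constant, uniform convergence $U(t,\cdot+L_t)\to\Pi_c$ together with the pointwise bound $U\le v$ forces $U(t,y)\sim(\lambda\sqrt{c^2-2})^{-1}e^{-\lambda(y-L_t)}\sim v(t,y)$ in the regime $y-L_t$ large but $y-L_t=o(\sqrt t)$, where the cutoff at $1$ is inactive; evaluating at $y=m(t)$ and using $v(t,m(t))=1$ rearranges to $L_t-m(t)\to\lambda^{-1}\log(\lambda\sqrt{c^2-2})$, the stated value. For the reverse implication~\ref{enum:convergence to c travelling wave}$\Rightarrow$\ref{enum:asymptotics U0 heavy tailed for speed c} I would argue by contradiction together with the preservation of the exponential rate by the linear equation: the tail of $U(t,\cdot)$ at any fixed $t>0$ matches both that of $v(t,\cdot)$ (via $U\le v$ and the FBP being asymptotically linear far ahead of $L_t$) and that of $\Pi_c(\cdot-L_t)$ (by the assumed uniform convergence), so that $v(t,\cdot)$ has decay rate $\lambda$ at infinity; a Laplace-transform argument then forces $U_0$ itself to satisfy $\tfrac1x\log U_0(x)\to-\lambda$. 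The main obstacle I anticipate is this quantitative matching step: turning the qualitative convergence $U(t,\cdot+L_t)\to\Pi_c$ into the exact constant $\lambda^{-1}\log(\lambda\sqrt{c^2-2})$ requires uniform-in-$t$ control of the ``shoulder'' between the flat region $\{U\approx 1\}$ and the exponentially decaying tail, rather than merely $L_t-m(t)=O(1)$.
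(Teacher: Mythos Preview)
Your lower bound step has a genuine gap. The hypothesis $\tfrac{1}{x}\log U_0(x)\to -\lambda$ does \emph{not} give $U_0(x)\ge C e^{-\lambda x}$ for any $C>0$: take for instance $U_0(x)=e^{-\lambda x-\sqrt{x}}$ for large $x$. Since $\Pi_c(y)\sim (\lambda\sqrt{c^2-2})^{-1}e^{-\lambda y}$, no choice of $a$ yields $U(t_0,\cdot)\ge \Pi_c(\cdot-a)$ on the whole line (running for a finite time $t_0$ does not repair this, as the heat semigroup preserves the exponential rate but not the prefactor). One could salvage $\liminf L_t/t\ge c$ by comparing instead with $\Pi_{c'}$ for each $c'<c$, whose decay rate $\lambda'>\lambda$ is strictly faster; but this still leaves you without the uniform-in-$t$ tail of rate exactly $\lambda$ that your compactness and constant-matching steps rely on. Your own acknowledged ``main obstacle'' is therefore not a technicality at the end but the crux of the argument: converting $L_t-m(t)=O(1)$ into $L_t-m(t)\to x_c$ requires precise two-sided control of $U(t,m(t)+\cdot)$, not just one-sided comparison.

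The paper proceeds quite differently. From the Feynman-Kac representation in Lemma~\ref{lem:FKforinfinitemass} and the upper bound $L_s\le m(s)$ it extracts the two-sided sandwich
\[
\Esub{x}{U_0(B_t)e^{\Leb(\{s\in[0,t]:B_s\ge m(t-s)\})}}\;\le\; U(t,x)\;\le\; e^t\Esub{x}{U_0(B_t)},
\]
and then invokes Bramson's estimates for the linearised problem (the argument of \cite[Theorem~1]{Bramson1983}) to show that both bounds, evaluated at $x=m(t)+z$, behave like $\gamma(z)\Pi_c(z-x_c)$ with $\gamma(z)\to 1$. Proposition~\ref{prop:sandwichmeansconv} then upgrades this to uniform convergence of $U(t,\cdot+m(t))$ to $\Pi_c(\cdot-x_c)$, and Lemma~\ref{lem:mtoL} reads off both $U(t,\cdot+L_t)\to\Pi_c$ and $L_t-m(t)\to x_c$. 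For \ref{enum:convergence to c travelling wave}$\Rightarrow$\ref{enum:asymptotics U0 heavy tailed for speed c}, note that your ``tail of $U(t,\cdot)$ at fixed $t$'' argument cannot work: uniform convergence $U(t,\cdot+L_t)\to\Pi_c$ gives no information about the tail of $U(t,\cdot)$ for any \emph{fixed} $t$. The paper instead first deduces $L_t/t\to c$ from the assumed convergence via Lemma~\ref{lem:Bramsongronwall}, and then runs Bramson's \cite[Theorem~2]{Bramson1983} argument through the same Feynman-Kac sandwich to recover the decay rate of $U_0$.
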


\begin{rmk}
Theorems \ref{theo:convtoPimin} and \ref{theo:slowerdecay} imply that if
\[
\liminf_{x\rightarrow \infty}\tfrac{1}{x}\log U_0(x)< \limsup_{x\rightarrow \infty}\tfrac{1}{x}\log U_0(x),
\]
and if the latter is strictly greater than $-\sqrt{2}$, then $U(t,L_t+\cdot)$ cannot converge to any travelling wave as $t\rightarrow \infty$.
\end{rmk}
\medskip

The following result (sometimes referred to as {\it the magical relation}~\cite{Brunet2023}) is an important step in the proof of Theorems~\ref{theo:convtoPimin} and~\ref{theo:Ltposition}, and is also of interest in its own right. It relates the Laplace transform of the initial condition $U_0$ to another integral transform of the free boundary $(L_t)_{t>0}$. It was first developed by Brunet and Derrida in a discrete setting~\cite{BD2015} and then employed non-rigorously in the context of the present free boundary problem and of the FKPP equation ~\cite{Berestycki2018a,Berestycki2018b}; see also the recent work~\cite{Brunet2023} where an analogue of this relation is used to obtain fine asymptotics of the solution to the FKPP equation ahead of the front. 

\begin{theo}[Brunet-Derrida relation] \label{theo:magic formula}
Suppose $U_0$ satisfies Assumption~\ref{assum:standing assumption ic}, and let $(U(t,x),L_t)$ denote the solution of~\eqref{eq:FBP_CDF}.
Let $L_0=\inf\{x\in \R:U_0(x)<1\}\in \{-\infty\}\cup \R.$
Then for any $r\in (-\infty,\sqrt{2})\setminus \{0\}$,
	\begin{equation}\label{eq:magic formula}
		\int_{L_0}^{\infty}U_0(x)e^{rx}dx=-\frac{1}{r}e^{rL_0}+\frac{1}{r}\int_0^{\infty}e^{rL_t-(1+\frac{1}{2}r^2)t}dt.
	\end{equation}
\end{theo}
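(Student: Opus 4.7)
The proof is a standard duality argument: test the PDE for $U$ against the exponential $\phi(t,x):=e^{rx-(1+r^2/2)t}$, which is chosen because it satisfies the backward equation $\partial_t \phi + \tfrac12\partial_{xx}\phi + \phi = 0$. Define
\[
F(t):=\int_{L_t}^{\infty}U(t,x)\phi(t,x)\,dx.
\]
Using Leibniz's rule, the PDE $\partial_t U=\tfrac12\partial_{xx}U+U$ on $\{x>L_t\}$, and $U(t,L_t)=1$, I would compute
\[
F'(t) = -L'_t\,\phi(t,L_t)+\tfrac12\int_{L_t}^{\infty}\bigl(\partial_{xx}U-r^2 U\bigr)\phi\,dx.
\]
Two integrations by parts in $x$, using $\partial_x U(t,L_t)=0$, $U(t,L_t)=1$, and decay of $U,\partial_x U$ at $+\infty$, collapse the integral to a boundary contribution at $L_t$ and give
\[
F'(t)=\Bigl(\tfrac{r}{2}-L'_t\Bigr)e^{rL_t-(1+r^2/2)t}.
\]

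\textbf{Key steps.} Integrating from $0$ to $T$ and rewriting $L'_t e^{rL_t}=\tfrac{1}{r}\tfrac{d}{dt}e^{rL_t}$ via another integration by parts (in $t$), I would get
\[
F(T)-F(0^+)=\tfrac{1}{r}e^{rL_0}-\tfrac{1}{r}e^{rL_T-(1+r^2/2)T}-\tfrac{1}{r}\int_0^{T}e^{rL_t-(1+r^2/2)t}\,dt,
\]
using the algebraic identity $\tfrac{r}{2}-\tfrac{1+r^2/2}{r}=-\tfrac{1}{r}$. Passing to the limit $T\to\infty$ and using $F(0^+)=\int_{L_0}^{\infty}U_0(x)e^{rx}dx$ (from the initial trace condition) together with $F(T)\to 0$ and $e^{rL_T-(1+r^2/2)T}\to 0$ would then yield \eqref{eq:magic formula}. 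For initial data that is merely a c\`adl\`ag CDF, I would first approximate $U_0$ by smooth, compactly supported perturbations $U_0^{(n)}$ for which the manipulations are easily justified, prove the identity for each $U_0^{(n)}$, and pass to the limit using monotone or dominated convergence on both sides (exploiting the $L^1_{\mathrm{loc}}$ continuity of $t\mapsto U(t,\cdot)$ at $t=0$).

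\textbf{Main obstacles.} The real work is in the two tail estimates needed to justify the boundary-term manipulations and the passage $T\to\infty$. For $r<0$ everything is easy: $U\le 1$ gives $F(T)\le -\tfrac{1}{r}e^{rL_0-(1+r^2/2)T}\to 0$ because $L_t$ is bounded below by $L_0$ (the free boundary is nondecreasing, by the probabilistic interpretation via $N$-BBM and the results of \cite{Berestycki2018}), and the integration-by-parts boundary terms at $+\infty$ vanish trivially. The delicate regime is $0<r<\sqrt{2}$, where one needs (i) exponential decay of $U(t,\cdot)$ and $\partial_x U(t,\cdot)$ at $+\infty$ of rate strictly faster than $r$, and (ii) a quantitative bound $rL_T-(1+r^2/2)T\to-\infty$. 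Both of these would be obtained from a comparison-principle argument: since $U$ solves $\partial_t U=\tfrac12\Delta U+U$ on $\{x>L_t\}$ with $U(t,L_t)=1$, it is dominated by the solution $\bar U$ of the linearised problem on the whole line with initial data $U_0$ (plus appropriate Dirichlet data), which in turn can be written explicitly via the Feynman--Kac representation $\bar U(t,x)=e^t\Esub{x}{U_0(B_t)}$, yielding Gaussian-type tails and hence integrability of $U\phi$ against $\phi$ on the relevant range. The monotonicity of $L_t$ and its sublinear behaviour (controlled by the same comparison with $\bar U$) then secure $e^{rL_T-(1+r^2/2)T}\to 0$ since $-\tfrac12(r-\sqrt2)^2<0$. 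Finally, if $L_t$ is not $C^1$, I would interpret the $L'_t$ integrations as Stieltjes integrals against the (monotone) distribution function $t\mapsto L_t$; this change is purely notational and does not affect the final formula.
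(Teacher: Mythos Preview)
Your overall strategy—test against $\phi(t,x)=e^{rx-(1+r^2/2)t}$, derive a differential identity for $F(t)$, integrate, and let $T\to\infty$—matches the paper's (which works with the equivalent quantity $g(r,t)=e^{-rL_t+(1+r^2/2)t}F(t)$). However, two of your supporting claims are wrong, and they are load-bearing.

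First, the free boundary $t\mapsto L_t$ is \emph{not} nondecreasing. You use this both for the $r<0$ tail control and for interpreting $L'_t\,dt$ as a Stieltjes integral; neither survives. The paper instead proves (Lemma~\ref{lem:boundary locally Lipschitz from the left}) that for each $c<\sqrt2$ there is $t_c>0$ with $L_t-L_s\ge c(t-s)$ for all $t\ge s\ge t_c$, and separately obtains Lipschitz regularity of $L$ on $[\delta,\infty)$ under the extra hypothesis $U_0\le_s\Pi_c$ (Corollary~\ref{cor:locally Lipschitz if less stretched than a travelling wave}), so that weak derivatives exist. This is why the paper restricts first to compact-interface data with $U_0\le_s\Pi_c$ (its Step~1), then extends.

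Second, your $T\to\infty$ limits for $0<r<\sqrt2$ do not go through. The comparison $U\le\bar U=e^t\Esub{x}{U_0(B_t)}$ only yields $F(T)=O(1)$, not $F(T)\to0$; and $L_T$ is not ``sublinear'' (in fact $L_T/T\to\sqrt2$ for light tails, and can have $\limsup L_T/T>\sqrt2$ for heavy tails, in which case both sides of \eqref{eq:magic formula} are $+\infty$). The paper gets $F(T)\to0$ by bounding $g(r,t)=\int_0^\infty U(t,L_t+x)e^{rx}\,dx$ uniformly via the stretching lemma ($U(t,L_t+\cdot)\le\Pi_{\min}$) and showing $(1+r^2/2)t-rL_t\to+\infty$ from $L_t\le\sqrt2 t+O(1)$—both facts that hold only for the restricted initial data of Step~1. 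The extension to general $U_0$ (Steps~2--3) then proceeds by monotone approximation and a separate Fatou/dominated-convergence argument that also handles the case where both sides are infinite; your sketch does not address this.
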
 
Note that the statement of Theorem~\ref{theo:magic formula} in the cases $r\in (0,\sqrt 2)$ or $L_0=-\infty$ includes the possibility that both sides of~\eqref{eq:magic formula} are $+\infty$ (both sides are necessarily finite if $r<0$ and $L_0\in \R$; see Lemma~\ref{lem:boundary locally Lipschitz from the left} below).

The (non-rigorous) derivation of this relation  in~\cite{Berestycki2018a} included the condition that $U(t,L_t+x) \to \Pi_c(x)$ uniformly in $x$ as $t\to \infty$, for some $c\ge \sqrt 2$. No such condition is required here. Nonetheless, many of the key steps of the proof of Theorem~\ref{theo:magic formula} can be found  in~\cite{Berestycki2018a}. Our statement also clarifies when the relation is valid. In particular, we show in Section~\ref{section:magic formula} that for $r\in (\sqrt{2},\infty)$, both sides of~\eqref{eq:magic formula} may be finite whilst~\eqref{eq:magic formula} fails to hold (see Counterexample~\ref{counterexample:magic formula}).

We obtain the following relationship between $U_0$ and $\limsup_{t\ra\infty}\frac{1}{t}L_t$ as a corollary of Theorem~\ref{theo:magic formula}. For $U_0$ satisfying Assumption~\ref{assum:standing assumption ic},  let 
\begin{equation} \label{eq:r0U0defn}
r_0(U_0):=\sup\left( \{0\}\cup \{r\in (0,\sqrt{2}):\int_{0}^{\infty}e^{rx}U_0(x)dx<\infty\}\right).
\end{equation}
 We can define the above integral from $0$ to $+\infty$ (instead of from $L_0$ to $+\infty$) since $\int_{-\infty}^{y}e^{rx}U_0(x)dx$ is always finite, for any $r\in (0,\sqrt{2})$ and any $y\in \R$.
\begin{theo}\label{theo:initial condition limsup bdy relation}
Suppose $U_0$ satisfies Assumption~\ref{assum:standing assumption ic}, and let $(U(t,x),L_t)$ denote the solution of~\eqref{eq:FBP_CDF}.
Define $r_0=r_0(U_0)$ as in~\eqref{eq:r0U0defn}.
Then
\begin{equation}
\limsup_{t\ra\infty}\frac{1}{t}L_t=
\begin{cases}
\frac{1}{r_0}+\frac{r_0}{2} \quad &\text{if }r_0>0,\\
\infty \quad &\text{if }r_0=0.
\end{cases}
\end{equation}
\end{theo}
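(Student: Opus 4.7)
The plan is to derive the theorem from the Brunet--Derrida relation (Theorem~\ref{theo:magic formula}). Define the Laplace-type integral $\Psi(r):=\int_0^\infty e^{rL_t-(1+r^2/2)t}\,dt$ for $r\in(-\infty,\sqrt 2)\setminus\{0\}$. Since $U_0\le 1$, the integral $\int_{-\infty}^0 U_0(x)e^{rx}\,dx\le 1/r$ is finite for any $r>0$, and Theorem~\ref{theo:magic formula} then gives that for $r\in(0,\sqrt 2)$, the finiteness of $\Psi(r)$ is equivalent to the finiteness of $\int_0^\infty e^{rx}U_0(x)\,dx$. In view of the definition of $r_0$, this shows $\Psi(r)<\infty$ for all $r\in(0,r_0)$ and $\Psi(r)=\infty$ for all $r\in(r_0,\sqrt 2)$ (when $r_0<\sqrt 2$).

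Set $S:=\limsup_{t\to\infty} L_t/t$ and $\phi(r):=1/r+r/2$; note $\phi$ is continuous and strictly decreasing on $(0,\sqrt 2]$ with $\phi(\sqrt 2)=\sqrt 2$ and $\phi(r)\to+\infty$ as $r\downarrow 0$. The heart of the argument consists in two complementary bounds. First, the easy direction: if $S<\phi(r)$, then $\Psi(r)<\infty$, because $rL_t-(1+r^2/2)t\le -r\delta t$ for some $\delta>0$ and all large $t$, giving exponential decay of the integrand. Equivalently, $\Psi(r)=\infty$ forces $S\ge\phi(r)$. Second, the harder direction: if $\Psi(r)<\infty$, then $S\le\phi(r)$. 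Were $S>\phi(r)$, there would exist $\delta>0$ and $t_n\to\infty$ with $rL_{t_n}-(1+r^2/2)t_n\ge\delta t_n$; using a one-sided local Lipschitz property of $t\mapsto L_t$ (of the type furnished by Lemma~\ref{lem:boundary locally Lipschitz from the left}, which bounds how fast $L$ can decrease) one gets $L_t\ge L_{t_n}-C$ on $[t_n,t_n+1]$ for large $n$, whence $\int_{t_n}^{t_n+1} e^{rL_t-(1+r^2/2)t}\,dt\ge e^{\delta t_n-O(1)}\to\infty$, contradicting $\Psi(r)<\infty$.

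Combining these with the finiteness analysis of the first step yields the result. When $r_0\in(0,\sqrt 2)$, applying the easy direction for $r\in(r_0,\sqrt 2)$ gives $S\ge\phi(r)$ and hence, as $r\downarrow r_0$, $S\ge\phi(r_0)$; applying the hard direction for $r\in(0,r_0)$ gives $S\le\phi(r)$ and hence, as $r\uparrow r_0$, $S\le\phi(r_0)$. When $r_0=0$, only the easy direction is available, but letting $r\downarrow 0$ immediately shows $S=\infty$. The edge case $r_0=\sqrt 2$ requires one extra ingredient: because $U_0$ is non-increasing, the finiteness of $\int_0^\infty e^{rx}U_0(x)\,dx$ for all $r<\sqrt 2$ yields $\limsup_{x\to\infty}x^{-1}\log U_0(x)\le-\sqrt 2$, and Theorem~\ref{theo:convtoPimin} then upgrades the upper bound $S\le\sqrt 2$ to $\lim L_t/t=\sqrt 2=\phi(\sqrt 2)$.

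I expect the main obstacle to lie in the harder direction. Its validity rests on a quantitative one-sided regularity estimate for the free boundary --- namely that $L_t$ cannot drop by more than $O(1)$ on unit time intervals --- which does not follow from mere continuity of $t\mapsto L_t$, and which is precisely what the local Lipschitz lemma quoted above is designed to supply. Given that input, all that remains is routine bookkeeping exploiting the strict monotonicity and continuity of $\phi$ on $(0,\sqrt 2]$.
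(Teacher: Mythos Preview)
Your proposal is correct and follows essentially the same route as the paper: deduce from Theorem~\ref{theo:magic formula} that for $r\in(0,\sqrt 2)$ the finiteness of $\Psi(r)=\int_0^\infty e^{rL_t-(1+r^2/2)t}\,dt$ is equivalent to that of $\int_0^\infty e^{rx}U_0(x)\,dx$, and then argue via the one-sided Lipschitz bound (Lemma~\ref{lem:boundary locally Lipschitz from the left}) that $\Psi(r)<\infty$ forces $S\le\phi(r)$ while $\Psi(r)=\infty$ forces $S\ge\phi(r)$. The paper's write-up is virtually identical to yours.

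There is one wrinkle in your treatment of the edge case $r_0=\sqrt 2$. You invoke Theorem~\ref{theo:convtoPimin} to obtain $\lim L_t/t=\sqrt 2$, but in the paper's logical order Theorem~\ref{theo:convtoPimin} is proved \emph{after} Theorem~\ref{theo:initial condition limsup bdy relation} and in fact uses it (for the implication $1\Leftrightarrow 2$). The paper avoids this circularity by noting at the very start that $S\ge\sqrt 2$ holds for \emph{every} initial condition, via Lemma~\ref{lem:less stretching means slower boundary} and Lemma~\ref{lem:LHtlower}; combined with your upper bound $S\le\phi(r)\downarrow\sqrt 2$ as $r\uparrow\sqrt 2$, this closes the case directly. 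Alternatively you could cite Lemma~\ref{lem:Lttsqrt2}, which gives $\lim L_t/t=\sqrt 2$ under the hypothesis $\limsup_{x\to\infty}x^{-1}\log U_0(x)\le-\sqrt 2$ without relying on Theorem~\ref{theo:initial condition limsup bdy relation}.
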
 
Another simple consequence of Theorem~\ref{theo:magic formula} is that two solutions of~\eqref{eq:FBP_CDF} with different initial conditions cannot have the same free boundary.
\begin{theo} \label{theo:samebdysameU0}
	Suppose that $U_0^{(1)}$ and $U_0^{(2)}$ satisfy Assumption~\ref{assum:standing assumption ic}, and let $(U^{(1)}(t,x),L^{(1)}_t)$ and
	$(U^{(2)}(t,x),L^{(2)}_t)$ denote the solutions of~\eqref{eq:FBP_CDF} with initial conditions $U_0^{(1)}$ and $U_0^{(2)}$ respectively.
If $L^{(1)}_t=L^{(2)}_t$ for all $t>0$, then $U_0^{(1)}=U_0^{(2)}$.
\end{theo}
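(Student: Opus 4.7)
The plan is to apply Theorem~\ref{theo:magic formula} to each solution \emph{shifted forward} by an arbitrary time $t_0>0$, exploiting the fact that the starting boundary $L_{t_0}$ of the shifted problem is automatically finite, so that the Brunet--Derrida identity is genuinely informative (rather than a trivial $\infty=\infty$ statement). For each $i\in\{1,2\}$, the pair $(U^{(i)}(t_0+s,\cdot),\,L^{(i)}_{t_0+s})_{s>0}$ is itself a classical solution of~\eqref{eq:FBP_CDF} with initial datum $U^{(i)}(t_0,\cdot)$, and its associated starting value $L_0^{(i)}:=\inf\{x\in\Rm:U^{(i)}(t_0,x)<1\}$ satisfies $L_0^{(i)}\geq L_{t_0}>-\infty$.

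Applied with $r\in(-\infty,0)$, Theorem~\ref{theo:magic formula} yields finite quantities on both sides. Using the rearrangement $\frac{1}{r}e^{rL_0^{(i)}}=-\int_{L_0^{(i)}}^\infty e^{rx}\,dx$ and the fact that $U^{(i)}(t_0,\cdot)\equiv 1$ on $(-\infty,L_0^{(i)}]$, the identity can be rewritten as
\[
\int_{-\infty}^\infty \bigl[U^{(i)}(t_0,x)-1\bigr]\,e^{rx}\,dx \;=\; \frac{1}{r}\int_0^\infty e^{rL^{(i)}_{t_0+s}-(1+r^2/2)s}\,ds.
\]
The hypothesis $L_t^{(1)}\equiv L_t^{(2)}$ for $t>0$ forces the right-hand side to be the same for $i=1,2$, so subtracting yields $\int_{-\infty}^\infty h(x)\,e^{rx}\,dx=0$ for every $r<0$, where $h:=U^{(1)}(t_0,\cdot)-U^{(2)}(t_0,\cdot)$. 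By the boundary condition $U^{(i)}(t_0,x)=1$ on $(-\infty,L_{t_0}]$, the function $h$ is bounded, continuous on $\Rm$, and vanishes on $(-\infty,L_{t_0}]$. After the substitution $y=x-L_{t_0}$, the identity becomes $\int_0^\infty h(L_{t_0}+y)e^{-sy}\,dy=0$ for every $s>0$, and uniqueness of the Laplace transform (Lerch's theorem) forces $h\equiv 0$ a.e., hence identically by continuity. Thus $U^{(1)}(t_0,\cdot)\equiv U^{(2)}(t_0,\cdot)$ for every $t_0>0$.

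Finally, letting $t_0\to 0^+$ and using the convergence $U^{(i)}(t,\cdot)\to U_0^{(i)}$ in $L^1_{\mathrm{loc}}$ (from the definition of classical solution), we conclude $U_0^{(1)}=U_0^{(2)}$ almost everywhere, and hence pointwise since both are c\`adl\`ag. The only technicality is verifying that Theorem~\ref{theo:magic formula} applies in the shifted setting; this is immediate because $L_0^{(i)}\geq L_{t_0}$ is finite, and notably we do not need to know a priori that the two shifted starting values $L_0^{(i)}$ coincide --- that equality is a consequence of the conclusion $U^{(1)}(t_0,\cdot)\equiv U^{(2)}(t_0,\cdot)$ rather than an ingredient of the proof.
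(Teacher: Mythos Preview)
Your proof is correct and follows essentially the same strategy as the paper: shift forward to a time $t_0>0$ so that the starting boundary is finite, apply the Brunet--Derrida relation (Theorem~\ref{theo:magic formula}) to each shifted solution for $r<0$, deduce $U^{(1)}(t_0,\cdot)=U^{(2)}(t_0,\cdot)$ from a transform-uniqueness argument, and then let $t_0\to 0^+$.

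The one difference worth noting is in the uniqueness step. The paper observes directly that $L^{(1)}_{t_0}=L^{(2)}_{t_0}$ (this is part of the hypothesis, since $t_0>0$), and then applies its Lemma~\ref{lem:uniquness given same transform on open set}, which passes from equality of $\int_{L_0}^\infty e^{rx}U_0(x)\,dx$ for $r$ in an open real interval to equality of Fourier transforms via analytic continuation. Your rewriting $\int_{-\infty}^\infty[U^{(i)}(t_0,x)-1]e^{rx}\,dx$ absorbs the boundary term and lets you invoke Laplace-transform uniqueness (Lerch) directly on the difference $h$, which is arguably more elementary and, as you note, does not formally require matching the two starting values $L_0^{(i)}$ beforehand (though in fact they do coincide here). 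Both routes are short and equivalent in spirit; yours avoids the complex-analytic detour.

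Two small remarks: (i) by Proposition~\ref{prop:fbpsoln}(i) one actually has $L_0^{(i)}=L^{(i)}_{t_0}=L_{t_0}$, not merely $\geq$; (ii) continuity of $h$ (needed to upgrade from a.e.\ to pointwise equality) is guaranteed by $U\in C((0,\infty)\times\R)$ from Proposition~\ref{prop:fbpsoln}.
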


\medskip

In the remainder of this introduction, we present applications of Theorems~\ref{theo:convtoPimin},~\ref{theo:Ltposition},~\ref{theo:slowerdecay} and~\ref{theo:initial condition limsup bdy relation} to a generalised free boundary problem in Section~\ref{subsec:pushedFBP}, and then in Section~\ref{subsec:background}
we discuss background and related work.
In Section~\ref{subsec:notation}, we introduce some general notation, and in Section~\ref{subsec:outline} we give an outline of the proofs of our main results, along with a heuristic for the difference between the finite and infinite initial mass cases.
Sections~\ref{section:properties of free-boundary}-\ref{section:proof on generalised FBP}
contain the proofs of our results.
In Section \ref{section:properties of free-boundary}, we collect basic properties of solutions of the free boundary problem~\eqref{eq:FBP_CDF}. This includes Lemma~\ref{lem:FKforinfinitemass}, a novel Feynman-Kac representation for~\eqref{eq:FBP_CDF}, which will be fundamental to our proof in the infinite initial mass setting. Then, in Section \ref{section:magic formula}, we prove the Brunet-Derrida relation, Theorem~\ref{theo:magic formula}, along with Theorems~\ref{theo:initial condition limsup bdy relation} and~\ref{theo:samebdysameU0}.
In Section~\ref{sec:finitemass}, we prove the results stated in Theorems~\ref{theo:convtoPimin} and~\ref{theo:Ltposition} under the assumption of finite initial mass, and then in Section~\ref{sec:infiniteinitialmass}, we prove the results stated in Theorems~\ref{theo:convtoPimin} and~\ref{theo:Ltposition} under the assumption of infinite initial mass.
In Section~\ref{sec:mainthmpfs}, we combine the results proved in previous sections to complete the proofs of  Theorems~\ref{theo:convtoPimin},~\ref{theo:Ltposition} and~\ref{theo:slowerdecay}.
Finally, in Section~\ref{section:proof on generalised FBP} we state full versions of the results outlined in Section~\ref{subsec:pushedFBP}, and use Theorems~\ref{theo:convtoPimin},~\ref{theo:Ltposition},~\ref{theo:slowerdecay} and~\ref{theo:initial condition limsup bdy relation} to prove these results.

\subsection{Pushed and pulled fronts: a generalised version of \eqref{eq:FBP_CDF}}\label{subsec:pushedFBP}

In this subsection, we show some applications of our main results to the following generalisation of the free boundary problem~\eqref{eq:FBP_CDF},
which depends on a parameter $\beta\in \mathbb{R}$:
\begin{equation}\label{eq:generalised FBP_CDF}
	\begin{cases}
		\partial_tV=\frac{1}{2}\Delta V+V,\quad &t>0, \; x>L_t,\\
		V(t,x)= 1,\quad &t>0, \; x\leq L_t,\\
		\partial_xV(t, L_{t}+)= -\beta,\quad &t>0,\\
		V(t,\cdot ) \to V_0(\cdot) \quad &\text{in }L^1_{\mathrm{loc}}\text{ as } t\to 0.
	\end{cases}
\end{equation}
Note that setting $\beta=0$ in~\eqref{eq:generalised FBP_CDF} gives us the free boundary problem~\eqref{eq:FBP_CDF}.
This more general free boundary problem~\eqref{eq:generalised FBP_CDF} was studied in~\cite{Berestycki2018a}, where the first author, Brunet and Derrida showed (non-rigorously) that there is a transition from pulled to pushed behaviour at $\beta=\sqrt{2}$. 
(Note that compared to the notation in~\cite{Berestycki2018a}, we have a factor $\frac 12$ in front of the Laplacian and a minus sign in front of $\beta$.)
The same family of free boundary problems is analysed in forthcoming work of Demircigil and Henderson~\cite{Demircigil2025}, using rigorous PDE methods (at the end of this subsection, we will compare our results to the results in~\cite{Demircigil2025}).

We say that $(V(t,x),L_t)$ is a classical solution of~\eqref{eq:generalised FBP_CDF} if:
\begin{enumerate}[(i)]
\item  $L_t\in \R$ $\forall t>0$ and $t\mapsto L_t$ is continuous on $(0,\infty)$,
\item  $V :  (0,\infty) \times \R  \to [0, 1]$ with $V \in C^{1,2}(\{(t,x):t>0,\, x>L_t\})\cap C( (0,\infty) \times \R)$,
\item $\partial_x V(t_n,x_n)\to -\beta$ as $n\to \infty$ for any $t>0$ and $(t_n,x_n)\to (t,L_t)$ with $x_n>L_{t_n}$ $\forall n\in \mathbb N$,\label{enum:continuous along boundary -beta}
\item  $(V,L)$ satisfies~\eqref{eq:generalised FBP_CDF}.
\end{enumerate}
We will always be dealing with classical solutions of~\eqref{eq:generalised FBP_CDF} in this article. 

The notion of classical solution here differs from the notion of classical solution of~\eqref{eq:FBP_CDF} stated at the start of Section~\ref{subsec:mainresults}, in that here we assume that $\partial_xV(t,x)$ converges to $-\beta$ continuously along the boundary (condition~\eqref{enum:continuous along boundary -beta} above), whereas classical solutions of \eqref{eq:FBP_CDF} are only assumed to satisfy $\partial_xU(t,L_t)=0$ pointwise in $t>0$. This is because it is proved in~\cite{Berestycki2018a} that if $\partial_xU$ vanishes along the boundary pointwise in $t>0$, and the other conditions of a classical solution are satisfied, then $\partial_xU$ vanishes continuously along the boundary. Nevertheless, it is unclear (to us) whether this implication holds for~\eqref{eq:generalised FBP_CDF}, and the prescription of the derivative along the boundary in a continuous manner (as in condition~\eqref{enum:continuous along boundary -beta} above) is classical for free boundary problems (see e.g. \cite[p.216, Section 8]{Friedman1964} and \cite[p.4698, end of Section 1.2]{Chen2022}).

We will consider the case $\beta>0$, and impose the following standing assumption on the initial condition $V_0$ (which is the same as our Assumption~\ref{assum:standing assumption ic} for $U_0$):
 \begin{assum}\label{assum:standing assumption initial condition general beta}
Suppose $V_0:\Rm\to [0,1]$ is a c\`adl\`ag non-increasing function with $V_0(x)\to 1$ as $x\to -\infty$ and $V_0(x)\to 0$ as $x\to \infty$.
\end{assum}
We firstly provide an existence and uniqueness result for classical solutions of~\eqref{eq:generalised FBP_CDF}. 
\begin{theo}\label{theo:existence uniqueness classical solution V}
Take $\beta>0$, and
suppose that $V_0$ satisfies Assumption~\ref{assum:standing assumption initial condition general beta}. 
Then there exists a unique classical solution of~\eqref{eq:generalised FBP_CDF} with initial condition $V_0$.
\end{theo}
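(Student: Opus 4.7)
The plan is to treat the free boundary problem \eqref{eq:generalised FBP_CDF} as a one-phase Stefan-type problem and adapt the existence/uniqueness strategy behind Proposition~\ref{prop:fbpsoln} (the $\beta=0$ case proved in~\cite{Berestycki2018}). A useful preliminary observation is that, formally differentiating the identity $V(t,L_t)=1$ in $t$ along the boundary and using the PDE together with $\partial_xV(t,L_{t}+)=-\beta$, one obtains the Stefan-type velocity relation
\[
\beta \dot L_t \;=\; \tfrac{1}{2}\partial_{xx}V(t,L_{t}+)+1,
\]
which recasts \eqref{eq:generalised FBP_CDF} in the classical Stefan framework and suggests the strategy: solve a linear Dirichlet problem for $V$ given the boundary, then close the system via a fixed-point equation for $L$ arising from the Neumann condition.

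For existence I would argue by approximation and fixed point. First, approximate $V_0$ (satisfying Assumption~\ref{assum:standing assumption initial condition general beta}) by a smoother sequence $V_0^{(n)}$ converging in $L^1_{\mathrm{loc}}$. For smooth data, set up a contraction-mapping argument for the boundary in a H\"older space $C^{1+\alpha/2}([0,T])$: given a candidate $L$, solve the linear parabolic Dirichlet problem $\partial_tV^L=\tfrac12 \Delta V^L+V^L$ on $\{x>L_t\}$ with $V^L(t,L_t)=1$ and $V^L(0,\cdot)=V_0^{(n)}$ by parabolic potential theory (cf.~\cite{Friedman1964}), read off $\partial_xV^L(t,L_t+)$, and update $L$ via the Neumann condition to define $\Phi: L \mapsto \tilde L$. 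Short-time existence follows from local contraction of $\Phi$. To globalise I would use a priori bounds: the maximum principle gives $0\le V\le 1$, while $L_t$ is controlled from above by travelling-wave supersolutions $\Pi_c$ from \eqref{eq:speed c travelling wave} and from below by comparison with the initial profile. Passing $n\to\infty$ using stability of the fixed point under $L^1_{\mathrm{loc}}$ convergence of initial data yields a classical solution for general $V_0$.

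For uniqueness I would use a comparison argument. Given two classical solutions $(V^{(1)},L^{(1)})$ and $(V^{(2)},L^{(2)})$ sharing the same initial data, suppose the boundaries separate at some time $t_0$ with say $L^{(1)}_{t}\ge L^{(2)}_{t}$ on a neighborhood of $t_0$ but not identically. Applying the strong maximum principle to $V^{(1)}-V^{(2)}$ in the common region $\{x>L^{(1)}_t\}$, together with the Hopf lemma and the \emph{continuously-attained} Neumann condition (clause~(iii) in the definition of a classical solution of \eqref{eq:generalised FBP_CDF}), forces a contradiction with the prescribed slopes $-\beta$ on each boundary. Clause~(iii) is essential here; this is precisely the analogue of the subtle point highlighted in the text after Assumption~\ref{assum:standing assumption initial condition general beta}, which is automatic for $\beta=0$ but must be imposed a priori for $\beta\neq 0$.

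The main obstacle I expect is establishing enough regularity of the free boundary $L_t$ (at least H\"older continuity on compact subintervals of $(0,\infty)$) to justify applying parabolic Schauder estimates for $V$ up to $\{x=L_t\}$ and thereby to close the fixed-point iteration and the uniqueness comparison. This is delicate in the passage from smooth to merely c\`adl\`ag initial data. A secondary subtlety is that no integrability on $V_0$ is assumed, so heat-kernel representations for the linearised Dirichlet problem must be treated carefully; this I would handle by decomposing $V=1+W$ on $\{x>L_t\}$, where $W$ vanishes on the boundary and decays as $x\to+\infty$, reducing to an inhomogeneous Dirichlet problem with source $+1$ in the interior, for which standard parabolic boundary potential estimates apply.
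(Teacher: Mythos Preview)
Your direct Stefan-problem approach is a plausible strategy, and indeed the paper mentions that forthcoming work of Demircigil and Henderson~\cite{Demircigil2025} establishes existence and uniqueness by purely PDE methods. However, the paper takes a completely different and much shorter route that bypasses all the obstacles you identify.

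The paper's proof rests on an explicit bijection between classical solutions of~\eqref{eq:FBP_CDF} and classical solutions of~\eqref{eq:generalised FBP_CDF} that preserves the free boundary. Given $V_0$, define $U_0$ by~\eqref{eq:U0 formula from V0}, i.e.\ $U_0(x)=\tfrac{2}{\beta}e^{-2x/\beta}\int_{-\infty}^x e^{2z/\beta}V_0(z)\,dz$. Theorem~\ref{theo:mapping between FBP and general problem} shows that if $(U,L)$ solves~\eqref{eq:FBP_CDF} with this initial condition, then setting $V=U+\tfrac{\beta}{2}\partial_xU$ above the boundary and $V=1$ below it produces a classical solution of~\eqref{eq:generalised FBP_CDF} with initial condition $V_0$; this gives existence. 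Conversely, Proposition~\ref{prop:mapping inversion} shows that any classical solution $(V,L)$ of~\eqref{eq:generalised FBP_CDF} yields, via $U(t,x)=\tfrac{2}{\beta}e^{-2x/\beta}\int_{-\infty}^x e^{2z/\beta}V(t,z)\,dz$, a classical solution of~\eqref{eq:FBP_CDF} with the same boundary and initial condition $U_0$. Since solutions of~\eqref{eq:FBP_CDF} are unique (Proposition~\ref{prop:fbpsoln}), this forces $L=\tilde L$ and $U=\tilde U$ for any two solutions $(V,L)$, $(\tilde V,\tilde L)$ of~\eqref{eq:generalised FBP_CDF}, whence $V=\tilde V$ by Lemma~\ref{lem:assumpV0impliesU0}(i).

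What your approach would buy is self-containment: it does not rely on the $\beta=0$ theory. What the paper's approach buys is that the hard analysis (boundary regularity, passage from smooth to c\`adl\`ag initial data, comparison principles) is inherited wholesale from the $\beta=0$ case already established in~\cite{Berestycki2018}; the only new work is verifying the transformation is well defined in both directions, which is essentially algebraic plus some care with regularity up to the boundary (where the paper invokes~\cite{Chen2022} for $C^1$ regularity of level sets of $U$). Your fixed-point and Hopf-lemma arguments are not needed at all.
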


The following result shows that a solution of~\eqref{eq:generalised FBP_CDF} with initial condition $V_0$ can be constructed using the solution of~\eqref{eq:FBP_CDF} with an initial condition $U_0$ given by a function of $\beta$ and $V_0$; this result will allow us to apply our main results in Section~\ref{subsec:mainresults} to~\eqref{eq:generalised FBP_CDF}. This mapping from solutions of~\eqref{eq:FBP_CDF} to solutions of~\eqref{eq:generalised FBP_CDF} was first introduced in~\cite[Section 4.2]{Berestycki2018a}; here we make the mapping
fully rigorous under the aforementioned assumptions on $\beta$ and $V_0$. We will also construct, in Proposition~\ref{prop:mapping inversion} below, a mapping which is inverse to this (an argument that did not appear in~\cite{Berestycki2018a}, even non-rigorously). Taken together, these results will allow us to establish Theorem~\ref{theo:existence uniqueness classical solution V} from the existence and uniqueness of classical solutions to~\eqref{eq:FBP_CDF}.
\begin{theo}\label{theo:mapping between FBP and general problem}
Take $\beta>0$, and
suppose that $V_0$ satisfies Assumption~\ref{assum:standing assumption initial condition general beta}. 
Let
\begin{equation}\label{eq:U0 formula from V0}
U_0(x):=
\frac{2}{\beta}e^{-\frac{2}{\beta}x}\int_{-\infty}^xe^{\frac{2}{\beta}z}V_0(z)dz
\quad \forall x \in \R.
\end{equation}
Then $U_0$ satisfies Assumption~\ref{assum:standing assumption ic} (and in particular is non-increasing). 

Now let $(U(t,x),L_t)$ denote the unique classical solution of~\eqref{eq:FBP_CDF} with initial condition $U_0$, and define
\begin{equation}\label{eq:V defined in terms of U}
V(t,x):=\begin{cases}
U(t,x)+\frac{\beta}{2}\partial_xU(t,x),\quad & x>L_t,\; t>0,\\
1,\quad & x\leq L_t,\; t>0.
\end{cases}
\end{equation}
Then $(V(t,x),L_t)$ is a classical solution of~\eqref{eq:generalised FBP_CDF} with initial condition $V_0$. Moreover, $V(t,\cdot)$ is non-increasing for all $t> 0$. 
\end{theo}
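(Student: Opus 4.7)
The plan is to verify the three assertions of the theorem in turn: (a) $U_0$ satisfies Assumption~\ref{assum:standing assumption ic}; (b) the pair $(V,L)$ defined by~\eqref{eq:V defined in terms of U} is a classical solution of~\eqref{eq:generalised FBP_CDF} with initial data $V_0$; and (c) $V(t,\cdot)$ is non-increasing for every $t>0$. Part (a) is routine: writing $\alpha:=2/\beta$, the formula $U_0(x)=\alpha e^{-\alpha x}\int_{-\infty}^x e^{\alpha z}V_0(z)\,dz$ exhibits $U_0$ as the convolution of $V_0$ with a unit-mass exponential kernel, which gives continuity of $U_0$ and, via dominated convergence, the correct limits at $\pm\infty$. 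At every continuity point of $V_0$ one has $U_0'(x)=\alpha(V_0(x)-U_0(x))$, and because $U_0(x)$ is an exponentially weighted average of the non-increasing function $V_0$ on $(-\infty,x]$, one has $V_0(x)\le U_0(x)$ and hence $U_0'\le 0$.

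For part (b), the PDE on $\{x>L_t\}$ is immediate because $\partial_x U$ inherits the PDE from $U$ by interior parabolic regularity, so the linear combination $V=U+\frac{\beta}{2}\partial_xU$ satisfies it as well; the Dirichlet condition $V=1$ on $\{x\le L_t\}$ is built into~\eqref{eq:V defined in terms of U}. Continuity of $V$ across the free boundary follows from $V(t,L_t+)=U(t,L_t)+\frac{\beta}{2}\partial_xU(t,L_t+)=1+0=1$. The key computation is the boundary derivative $\partial_xV(t,L_t+)=-\beta$, which I would reduce to the pointwise identity $\partial_x^2U(t,L_t+)=-2$: using the PDE in the form $\partial_x^2U=2(\partial_tU-U)$ and observing that $\partial_tU(t,L_t+)=0$ (differentiate the identity $U(t,L_t)\equiv 1$ along the free boundary and use $\partial_xU(t,L_t+)=0$ to kill the cross term) gives $\partial_x^2U(t,L_t+)=-2$, whence $\partial_xV(t,L_t+)=0+\frac{\beta}{2}(-2)=-\beta$. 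Upgrading this pointwise identity to the continuous convergence required by condition~\eqref{enum:continuous along boundary -beta} will require uniform regularity of $U$ and its spatial derivatives up to the free boundary, which I would extract from the regularity theory in~\cite{Berestycki2018a}; this is the main technical obstacle. For the initial condition, I would test against $\phi\in C_c^\infty(\R)$ and integrate by parts to obtain
\[
\int_\R \phi\,V(t)\,dx=\int_{L_t}^\infty\bigl(\phi-\tfrac{\beta}{2}\phi'\bigr)U(t)\,dx-\tfrac{\beta}{2}\phi(L_t)+\int_{-\infty}^{L_t}\phi\,dx,
\]
which, upon letting $t\to 0$ and using $L_t\to L_0$ together with $U(t)\to U_0$ in $L^1_{\text{loc}}$, converges to the analogous expression with $U_0$ and $L_0$. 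A further integration by parts using the almost-everywhere identity $V_0=U_0+\frac{\beta}{2}U_0'$ identifies this limit with $\int\phi V_0\,dx$, and boundedness of $V$ by $1$ upgrades this weak convergence to convergence in $L^1_{\text{loc}}$.

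For part (c), I would use a translation-comparison argument: for fixed $s>0$, set $D(t,x):=V(t,x)-V(t,x+s)$, which satisfies the same parabolic PDE as $V$ on $\{x>L_t\}$. On the moving boundary, $D(t,L_t+)=1-V(t,L_t+s)\ge 0$ since $V\le 1$ (which follows from $V=U+\frac{\beta}{2}\partial_xU\le U\le 1$ once $U(t,\cdot)$ is known to be non-increasing; the latter can in turn be established by the analogous translation-comparison applied to~\eqref{eq:FBP_CDF}). Initially $D(0,\cdot)=V_0(\cdot)-V_0(\cdot+s)\ge 0$ by monotonicity of $V_0$. A parabolic maximum principle on the moving domain $\{x>L_t\}$—applied after a smooth, non-increasing approximation of $V_0$ to regularise the initial data, with stability in the approximation parameter—then yields $D\ge 0$, giving the desired monotonicity of $V(t,\cdot)$.
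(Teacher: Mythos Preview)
Your part (a) is correct and matches the paper's Lemma~\ref{lem:assumpV0impliesU0}. Your verification of the PDE, of continuity across the boundary, and your reduction of the boundary derivative to $\partial_tU(t,L_t+)=0$ are all correct. However, there are two genuine gaps.

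\textbf{Continuous convergence at the boundary.} You correctly flag condition~\eqref{enum:continuous along boundary -beta} as the main obstacle, but your proposed resolution points to the wrong source: \cite{Berestycki2018a} is the non-rigorous physics paper and contains no regularity theory for~\eqref{eq:FBP_CDF}. The paper instead invokes~\cite{Chen2022}, which shows that the level sets $\gamma(\epsilon,t):=\inf\{x>L_t:U(t,x)<1-\epsilon\}$ are $C^1$ in $(\epsilon,t)$ near the boundary. Differentiating $U(t,\gamma(\epsilon,t))=1-\epsilon$ gives $\partial_tU(t,\gamma(\epsilon,t))=-\partial_xU(t,\gamma(\epsilon,t))\,\partial_t\gamma(\epsilon,t)$; since $\partial_t\gamma$ is bounded near the boundary and $\partial_xU\to 0$ there, one obtains $\partial_tU(t_n,x_n)\to 0$ along any approaching sequence. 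Your chain-rule argument along $t\mapsto (t,L_t)$ only gives the pointwise conclusion and implicitly uses differentiability of $L$, which again comes from~\cite{Chen2022} rather than~\cite{Berestycki2018a} or~\cite{Berestycki2018}.

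\textbf{Boundedness of $V$ near $t=0$.} Both your initial-condition argument (upgrading weak convergence to $L^1_{\mathrm{loc}}$) and your maximum-principle argument in part (c) require $V$ to be bounded on $(0,T)\times\R$. You invoke $V\le 1$, but that is only one side; the issue is a uniform lower bound on $V=U+\tfrac\beta2\partial_xU$ as $t\downarrow 0$. The generic estimate $|\partial_xU(t,\cdot)|\le 3+t^{-1/2}$ from Proposition~\ref{prop:fbpsoln}(v) blows up. The paper closes this gap by using the special feature of this $U_0$, namely that $u_0=-U_0'$ is bounded (Lemma~\ref{lem:assumpV0impliesU0}(ii)), together with a Feynman--Kac formula (Lemma~\ref{lem:FKformulaforu0}) giving $|\partial_xU(t,x)|=u(t,x)\le e^t\|u_0\|_\infty$; this yields both boundedness of $V$ and the pointwise convergence $u(t,x)\to u_0(x)$ needed for the initial condition. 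The same Feynman--Kac representation then gives $V\ge 0$ and monotonicity of $V(t,\cdot)$ directly (equations~\eqref{eq:FKforVwithy}--\eqref{eq:FKforV}), avoiding the approximation-and-stability step in your maximum-principle route. Your PDE-based programme is not wrong in spirit, but without this Feynman--Kac input (or an equivalent uniform bound on $\partial_xU$ near $t=0$) the argument does not close.
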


We now outline our results about the long-term behaviour of classical solutions of the generalised free boundary problem~\eqref{eq:generalised FBP_CDF}. These results will be a straightforward consequence of applying our main results on the free boundary problem~\eqref{eq:FBP_CDF}, Theorems~\ref{theo:convtoPimin},~\ref{theo:Ltposition},~\ref{theo:slowerdecay} and~\ref{theo:initial condition limsup bdy relation}, with initial condition $U_0$ given by~\eqref{eq:U0 formula from V0}, and then using Theorems~\ref{theo:existence uniqueness classical solution V} and~\ref{theo:mapping between FBP and general problem}.
The full versions of these results are rather lengthy, and so we postpone them to Section~\ref{section:proof on generalised FBP}, where we also prove the results (and prove Theorems~\ref{theo:existence uniqueness classical solution V} and~\ref{theo:mapping between FBP and general problem}).

Firstly, for each $\beta>0$ we let $\Pi^{(\beta)}_{\min}$ denote the non-negative travelling wave solution of~\eqref{eq:generalised FBP_CDF} with minimal wave speed $c^{(\beta)}_{\min}$ (see Proposition~\ref{prop:travelling waves general beta} in Section~\ref{section:proof on generalised FBP}).
In Theorem~\ref{theo:convtoPimin general beta} in Section~\ref{section:proof on generalised FBP}, we show that if $V_0$ satisfies Assumption~\ref{assum:standing assumption initial condition general beta}, then letting $(V(t,x),L_t)$ denote the solution of~\eqref{eq:generalised FBP_CDF},
\[
V(t,x+L_t)\ra \Pi^{(\beta)}_{\min}(x) \quad \text{ uniformly in }x \text{ as }t\ra\infty
\]
if and only if $\limsup_{x\ra\infty}\frac{1}{x}\log V_0(x)\leq -\min(\sqrt{2},\frac{2}{\beta})$.

Secondly, in Theorem~\ref{theo:Ltposition general beta} in Section~\ref{section:proof on generalised FBP}, we determine the long-term asymptotics of the front position for solutions of~\eqref{eq:generalised FBP_CDF} with initial condition $V_0$ satisfying Assumption~\ref{assum:standing assumption initial condition general beta}, under the condition that 
\[
\limsup_{x\ra\infty}\tfrac{1}{x}\log V_0(x)\leq -\min(\sqrt{2},\tfrac{2}{\beta}).
\]
We let 
\begin{equation} \label{eq:Ibetaintro}
I_\beta:=
\begin{cases}
\int_0^{\infty}xe^{\sqrt{2}x}V_0(x)dx \quad &\text{if }0<\beta< \sqrt{2}, \\
\int_{-\infty}^{\infty}e^{\frac{2}{\beta}x}V_0(x)dx \quad &\text{if }\beta\geq  \sqrt{2}.
\end{cases}
\end{equation}
For simplicity, here we also assume that $I_\beta<\infty$.
This turns out to be the equivalent of the finite initial mass case for solutions of~\eqref{eq:generalised FBP_CDF};
in the full statement of Theorem~\ref{theo:Ltposition general beta} we also have results in the infinite initial mass case $I_\beta=\infty$.
Let $(V(t,x),L_t)$ denote the solution of~\eqref{eq:generalised FBP_CDF}; we show that
\begin{enumerate}
\item if $0<\beta <\sqrt{2}$ then $L_t=\sqrt{2}t-\frac{3}{2\sqrt{2}}\log t+c+o(1)$ as $t\ra\infty$, for some $c=c(V_0)\in \R$;
\item if $\beta=\sqrt{2}$ then $L_t=\sqrt{2}t-\frac{1}{2\sqrt{2}}\log t+\frac{1}{\sqrt{2}}\Big(\log(\sqrt 2 I_{\sqrt 2})-\log\sqrt{\pi}\Big)+o(1)$ as $t\ra\infty$;
\item if $\beta>\sqrt{2}$ then $L_t=c^{(\beta)}_{\min}t+\frac{\beta}{2} \left(\log (\frac{2}{\beta}I_{\beta})+\log(\beta^2-2)-2\log \beta\right)+o(1)$ as $t\ra\infty.$
\end{enumerate}
Note that in the cases $\beta=\sqrt{2}$ and $\beta>\sqrt{2}$, the constant term in the asymptotics is given by an explicit function of the initial condition $V_0$.

In the three regimes 1-3 above, we see a change in behaviour from \emph{pulled} behaviour when $\beta<\sqrt{2}$, to \emph{pushmi-pullyu} behaviour when $\beta=\sqrt{2}$, to \emph{pushed} behaviour when $\beta>\sqrt{2}$.
Roughly speaking, a front expansion is pulled when it is driven by the behaviour far ahead of the front, and pushed when it is driven by the behaviour close to the front; pushmi-pullyu behaviour is seen at the transition between these two regimes.
For initial conditions decaying sufficiently fast,
in pulled fronts (see 1~above) there is a logarithmic correction in the position of the front, whereas in pushmi-pullyu fronts (see 2~above) there is a different (smaller) logarithmic correction (first observed in~\cite{An2023b,Giletti2022,An2023a}),
and finally in pushed fronts (see 3~above) there is no logarithmic correction. 
The reader is referred to~\cite{Garnier2012,An2024} for more background on pulled, pushmi-pullyu and pushed fronts.

The pushmi-pullyu and pushed behaviour of the front position described above comes, via the mapping in Theorem~\ref{theo:mapping between FBP and general problem}, from the behaviour of solutions of~\eqref{eq:FBP_CDF} with heavy-tailed initial conditions (see the proof of Theorem~\ref{theo:Ltposition general beta} in Section~\ref{section:proof on generalised FBP}). This is perhaps surprising, as these are very different settings with very different phenomenologies.

We end this subsection by briefly comparing our work to the forthcoming work of Demircigil and Henderson~\cite{Demircigil2025}, who analyse the same family of free boundary problems~\eqref{eq:generalised FBP_CDF} for $\beta\geq 0$. In contrast to the largely (but not entirely) probabilistic techniques that we employ, their methods are entirely based on (very different) PDE techniques. 
Indeed, their formulation of the PDE is different, viewing it instead as a PDE on the real line with two disjoint regimes, rather than as a free boundary problem per se. Nevertheless, the PDE they consider is equivalent to~\eqref{eq:generalised FBP_CDF}.

As in the present article, Demircigil and Henderson establish existence and uniqueness of solutions of~\eqref{eq:generalised FBP_CDF} for $\beta\ge 0$.
Moreover, they establish the asymptotics of $L_t$ up to $\mathcal{O}(1)$, under various conditions on the initial condition $V_0$; these 
conditions fall within, and are much more restrictive than, the (sharp) \textit{finite initial mass} condition under which we establish similar asymptotics for $L_t$ up to $o(1)$. In contrast to the present work, none of their results cover the \textit{infinite initial mass} case.

\subsection{Background and related work} \label{subsec:background}

The free boundary problem~\eqref{eq:FBP} studied here and its variants are now the subject of a rich and growing literature.  In this subsection, we review some of the relevant works on this topic and  give some mathematical context to our results.

\subsubsection{Existence, uniqueness and equivalent formulations} \label{subsubsec:existencefbp}

The existence and uniqueness of a classical global solution for the free boundary problem~\eqref{eq:FBP} for an arbitrary initial Borel probability measure $u_0$ was first established in~\cite{Berestycki2018}, which improved on earlier work of Lee~\cite{Lee}, where a local existence result was proved under certain regularity conditions on $u_0$. By a classical solution, we mean a pair $(u(t,x),L_t)$ where 
\begin{itemize}
	\item $L_t\in \R$ $\forall t>0$ and $t\mapsto L_t$ is continuous on $(0,\infty)$,
	\item $u: (0,\infty)\times \R \to [0,\infty)$ with $u\in C^{1,2} (\{ (t,x): t>0, \, x>L_t \}) \cap C( (0,\infty)\times \R) $,
	\item $(u,L)$ satisfies~\eqref{eq:FBP}.
\end{itemize}
See the start of Section~\ref{section:properties of free-boundary} for a precise statement of the existence and uniqueness result from~\cite{Berestycki2018}.
We have already explained (immediately after~\eqref{eq:FBP_CDF}) how the solutions of~\eqref{eq:FBP} are in  one-to-one correspondence with those of~\eqref{eq:FBP_CDF}, but the same problem can also be further reformulated in several different ways.

\medskip

One approach which is particularly relevant for us here is the {\it inverse first passage time problem} representation.   Given a random variable $\zeta$ taking values in $(0,\infty)$, the inverse first passage time problem consists of finding an upper semi-continuous function $b :[0,\infty) \mapsto  \Rm$ such that the first passage time, 
\[
\tau_b := \inf \{t>0 : B_t \le b(t) \},
\]
of $b$ by a standard Brownian motion $B$ has the same distribution as $\zeta$. The initial distribution of $B_0$ can be specified as some probability measure $u_0$. 
When  $\zeta$ is an exponential random variable with mean one, it can be shown (see e.g.~\cite{Chen2022,Chen2011}) that the inverse first passage time problem has a unique solution $L$ which coincides with the solution of \eqref{eq:FBP} in the sense that $L_t$ is given by the free boundary in \eqref{eq:FBP} and $u(t,x)dx = \mathbb{P}(B_t \in dx  | \tau_L>t)$
(see Lemma \ref{lem:FKforwardstime} below, which is~\cite[Theorem~B.1]{Berestycki2024}).  

Inverse first passage time problems are a well-studied topic in probability theory, which can be traced back at least to a question asked by Shiryaev in 1976 at a Banach centre meeting (see~\cite[p.1320]{Zucca2009}), and to the work of Dudley and Gutmann in 1977~\cite{Dudley1977} and Anulova in 1981~\cite{Anulova1981}. Existence was first proved by Anulova for a two-sided version of the problem~\cite{Anulova1981}. Chen et al.~\cite{Cheng2006,Chen2011} then proved existence and uniqueness by making the connection with a free boundary problem similar to~\eqref{eq:FBP} and a corresponding variational inequality (when $\zeta$ is non-atomic). A second approach by Ekstr\"{o}m and Janson~\cite{Ekstrm2016} established a connection to an optimal stopping problem and proved uniqueness in a more general framework (see also the comprehensive book by Peskir and Shiryaev~\cite{Peskir2006}). The recent works~\cite{Berard2023, Klump2022, Klump2023} also make the connection with certain particle systems similar to the $N$-BBM.

It is interesting to note that it is shown in~\cite{Chen2022} that for any initial distribution $u_0$, the boundary is almost as smooth as the distribution of $\zeta$. It follows that $L\in C^\infty (0,\infty)$ in our case  (since $\zeta$ has an exponential distribution). The regularity of $L$ at 0 is also studied in~\cite{Chen2022} but depends on the behaviour of $u_0$ near $L_0$ in a non-trivial way.
We do not use the results in~\cite{Chen2022} to study \eqref{eq:FBP_CDF} in this article, 
except in the proof of Theorem~\ref{theo:mapping between FBP and general problem} which establishes a mapping to solutions of the generalised problem \eqref{eq:generalised FBP_CDF}.
In contrast, in \cite{Berestycki2018} the boundary $L$ is only proven to be continuous and, in general, the regularity of the boundary in free boundary problems is a delicate matter.  

\medskip

As observed in \cite{Lee}, the free boundary problem can also be recast as a so-called {\it Stefan problem}, by the following formal argument. Let $(u,L)$ be the solution of~\eqref{eq:FBP} and assume that $L$ is differentiable. Then by differentiating the relation $\int_{L_t}^\infty u(t,x)dx =1$ and using that $u(t,L_t)=0$, and then using that $\lim_{x\to \infty} \partial_x u(t,x) =0$, we get 
\[
0 = 0+\int_{L_t}^\infty \left( \tfrac12 \partial_{xx} u(t,x) + u(t,x)\right) dx =  1- \tfrac 12 \partial_x u(t,L_t),
\]
which gives us that $\partial_x u(t,L_t) =2$ for all $t>0$. 
Define $h(t,x) :=  \partial_x u(t,x)$.  Then $(h,L)$ solves
\begin{equation}\label{eq:Stefan}
	\left\{ 
	\begin{array}{ll}
		\partial_t h =  \frac1 2 \Delta h  +h, & t>0,\; x>L_t,\\
		h(t,L_t) =2, \, & t>0,  \\ 
		\dot{ L}_t = -\frac14  \partial_x h (t,L_t), \, &  t>0,
	\end{array}
	\right.
\end{equation}
where the last equality is obtained by differentiating $u(t,L_t)=0$, assuming that $\partial_{xx} u(t,L_t)$ exists (in the sense of a right limit). 
Conversely, it can be checked (following the same argument as in~\cite{Lee}) that if $(h,L)$ is a solution of~\eqref{eq:generalised FBP_CDF}, then if we define $u(t,x) = \int_{L_t}^{x\wedge L_t} h(t,y) dy$, we have that $(u,L)$ is a solution of \eqref{eq:FBP}.

Stefan problems originated from 19th century studies on phase changes, particularly ice formation and melting, where the interface between the solid and liquid phases corresponds to the moving boundary. As noted in \cite{Berestycki2018}, there is a vast literature devoted to the study of Stefan problems, see e.g.~\cite{Gupta2017}. The problem~\eqref{eq:Stefan} above is non-standard since it involves a (linear) reaction term in addition to the diffusion, and since $h$ is not non-negative.

\subsubsection{A linearised version of the FKPP equation}\label{subsubsec:FKPPlinearised}

One of the key motivations for studying~\eqref{eq:FBP_CDF} is that it is a linearised version of the well-known FKPP equation,
\begin{equation}\label{eq:true_FKPP}
\partial_t v  = \tfrac 12 \Delta v +v(1-v), \quad  t>0, \, x\in \R, 
\end{equation}
where $v : (0,\infty)  \times \R \ni (t,x)\mapsto v(t,x) \in [0,1]$. Introduced in 1937 independently by
Fisher~\cite{Fisher1937} and by Kolmogorov, Petrovskii and Piskunov~\cite{Kolmogorov1937}, it is the prototypical {\it reaction-diffusion} equation. Very heuristically, the behaviour of the solutions of \eqref{eq:true_FKPP} and of \eqref{eq:FBP_CDF} are driven by the same three key ingredients: the Laplacian which corresponds to diffusion, a growth term (the ``$+v$''), and a saturation mechanism (``$-v^2$'' in the FKPP equation, corresponding to the mass constraint in the free boundary problem), and should therefore be very similar.

Indeed, it is well known that for suitable initial conditions $v_0$ where $0\le v_0\le 1$, $v_0(x)$ is bounded away from 0 as
$x\to -\infty$ and $v_0(x)\to 0$ fast enough as $x\to \infty$, the solution  $v(t,x)$ of~\eqref{eq:true_FKPP} converges to a travelling wave: there exists a centring term $m(t)$ and an asymptotic shape $\omega_c(x)$
such that
\[
\lim_{t\to \infty} v(t,m(t)+x) = \omega_c(x),
 \]
where $\lim_{t\to \infty}m(t)/t = c$ and $\omega_c$ is a travelling wave solution of~\eqref{eq:true_FKPP} with speed $c$. In his seminal work~\cite{Bramson1983}, Bramson precisely quantified what ``fast enough'' decay of $v_0(x)$ as $x\to \infty$ means and how the centring term $m(t)$ and the asymptotic speed $c$ depend on the initial condition. Our Theorems~\ref{theo:convtoPimin},~\ref{theo:Ltposition}, and~\ref{theo:slowerdecay} are direct analogues of Bramson's results (see in particular Theorems~A (on p.5) and~B (p.6), and Theorems~3 (p.141), 4 (p.166) and~5 (p.177), all in~\cite{Bramson1983}). In particular, in the case of initial conditions $v_0$ with a {\it compact interface} (i.e.~such that for some $K<\infty$ we have $v_0(x) =0$ for $x\ge K$ and $v_0(x)=1$ for $x\le -K$), Bramson's centring term  can be chosen as 
\[
m(t)  = \sqrt 2 t -\frac 3{2\sqrt 2 } \log t. 
\]

In \cite{Ebert98}, Ebert and van Saarloos predicted  non-rigorously that if one defines $\mu^{(1/2)}_t := \sup\{x: v(t,x)\ge 1/2\}$, then for initial conditions $v_0$ with a compact interface, there exists $a=a(v_0)\in \R$ such that
\[
\mu^{(1/2)}_t   = \sqrt 2 t -\frac 3{2\sqrt 2 } \log t +a - \frac{3\sqrt \pi}{\sqrt{2t}}+o\left(\frac{1}{\sqrt{t}}\right) \quad\text{as $t\rightarrow \infty$}.
\]
This was (non-rigorously) shown to hold for solutions of~\eqref{eq:FBP_CDF} (with $L_t$ in lieu of $\mu^{(1/2)}_t $) in~\cite{Berestycki2018a}, using the Brunet-Derrida relation (Theorem~\ref{theo:magic formula}) and non-rigorous singularity analysis. More precisely, 
it is predicted in~\cite{Berestycki2018a} that if $\int^\infty_0 x^3e^{\sqrt 2 x} U_0(x)dx<\infty $  and $(U,L)$ is the solution of~\eqref{eq:FBP_CDF} with initial condition $U_0$, we have the full expansion 
\begin{equation} \label{eq:Ltfullexpansion}
L_t= \sqrt 2 t -\frac 3{2\sqrt 2 } \log t +a - \frac{3\sqrt \pi}{\sqrt{2t}}+\frac 9 {8\sqrt 2}  (5-6\log 2) \frac{\log t }{t} + o \left(\frac{\log t }{t}\right) \quad\text{as $t\rightarrow \infty$}.
\end{equation}
The same non-rigorous approach was then used for the true FKPP equation~\eqref{eq:true_FKPP}  in~\cite{Berestycki2018b} to obtain a similar expansion for $\mu^{(1/2)}_t $. This was subsequently proved fully rigorously by Graham~\cite{Graham} (in the  compact interface initial condition case) using purely analytical methods. 
Proving rigorously that for solutions of~\eqref{eq:FBP_CDF}, $L_t$ satisfies the fine asymptotics~\eqref{eq:Ltfullexpansion} in e.g.~the compact interface initial condition case remains open, since our results only determine $L_t$ up to $o(1)$ as $t\to \infty$.

Studying the fine asymptotics of FKPP front localisation was also the motivation behind \cite{ BBHR17,Henderson17}. In those two works, the free boundary problem \eqref{eq:FBP} was replaced by the following {\it boundary value problem}:
\[
\begin{cases}
	\partial_t u = \frac 1 2 \Delta u + u , \quad &t>0,\, x>L_t, \\  u(t,L_t)=0,\quad &t>0,
\end{cases}
\]
 where $t \mapsto L_t$ is a given function. 
For such a boundary value problem, the question becomes ``for which choices of $t \mapsto L_t $ does $u(t,x+L_t)$ converge in the large-time limit $t\to \infty$?''  If one picks a function $L_t$ which grows too slowly, then the total mass will grow unboundedly; conversely, if $L_t$  increases too fast, then it ``eats'' the mass and $u$ converges to $0$. Now take an initial condition such that $u(0,x) \sim A x^\nu e^{-\sqrt{2}x}$ as $x\to \infty$ and a twice differentiable boundary $t\mapsto L_t$ with the property that $\frac{d^2}{dt^2} L_t = \mathcal{O}(t^{-2})$ as $t \to  \infty$. It is shown in~\cite{BBHR17} that $u(t,L_t+\cdot)$ will converge if and only if $L_t$ has the same asymptotics (with a possibly different constant term) as those found for the position of the free boundary in our Theorem~\ref{theo:Ltposition} (see Remark~\ref{rmk:FKPP asymptotics heavy-tailed}). The Ebert-van Saarloos term is shown to arise if one wants the convergence to happen at the optimal rate.

We recall from Section~\ref{subsubsec:existencefbp} that it has been shown in~\cite{Chen2022} that solutions $(u,L)$ of the free boundary problem~\eqref{eq:FBP} have a $C^{\infty}(0,\infty)$ boundary $L$. It is tempting to take such a solution $(u,L)$ to~\eqref{eq:FBP} for which we know that $u(t,L_t+\cdot)$ converges, consider $L$ as a fixed boundary, and apply the results of~\cite{BBHR17}, as an alternative approach for obtaining the position of the front $L_t$ up to $o(1)$ (as in our Theorem~\ref{theo:Ltposition}). The problem is that this would require knowing that $\frac{d^2}{dt^2}L_t=\mathcal{O}(t^{-2})$, which we do not. This is therefore not a viable approach for now.

We also believe that for solutions of~\eqref{eq:FBP_CDF} with e.g.~compact interface initial conditions, $U(t,L_t+\cdot)$ should converge at a rate of $\frac{1}{t}$ to the minimal travelling wave $\Pi_{\min}$, as has been established for the classical FKPP equation~\cite{An2024}. The authors of~\cite{An2024} established a framework for obtaining the rate of convergence from knowing the asymptotics of the front up to $\mathcal{O}(1)$. However, this concerned PDEs on the whole real line, and did not cover the case of free boundary PDEs. We believe it should be possible to extend the results of~\cite{An2024} to free boundary PDEs, and thereby obtain the rate of convergence of $U(t,L_t+\cdot)$ to $\Pi_{\min}$ using the asymptotics for $L_t$ obtained in the present article, but this would be by no means straightforward.

\subsubsection{The hydrodynamic limit of the $N$-BBM}\label{subsec:N-BBM}

A second major motivation for studying the free boundary problem~\eqref{eq:FBP} comes from the fact that it arises as the hydrodynamic limit of the so-called $N$-BBM, a branching particle system with selection in which particles diffuse as independent Brownian motions on the real line, branch at rate one into two particles at the position of their parent, and --crucially-- the total number $N$ of particles is kept constant by removing the leftmost particle in the system after each branching event.  The overall picture is that we have a cloud of diffusing branching particles which is drifting to the right because of the selection rule. A discrete-time analogue of this model was introduced in the early 2000s by Brunet and Derrida \cite{Brunet1997,Brunet1999,BD2015,Brunet2006,Brunet2007} for two reasons: first it is a natural toy model for the evolution of a population under selection in which one may study the genealogy of the system, and second it can be thought of as a noisy version of the FKPP equation. 
The $N$-BBM itself was introduced in~\cite{Maillard2016} as a natural continuous-time Brunet-Derrida branching-selection system.

It was shown in~\cite{DeMasi2019} (and for a closely related particle system in the earlier work~\cite{Durrett2011}) that if the $N$ initial particle positions are i.i.d.~with distribution given by $u_0$, then the hydrodynamic limit of this system exists and coincides with the solution of~\eqref{eq:FBP}. More precisely, if we write $X_1(t), \ldots , X_N(t)$ for the positions of the $N$ particles at time $t$ and $\mu^N_t(\cdot) = \frac 1N \sum_{i=1}^N \delta_{X_i(t)}(\cdot)$ for the corresponding normalised empirical measure, then for any $a \in \R$ and $t>0$,
\[
\lim_{N\to \infty}\mu^N_t ( [a,\infty)) = U(t,a) \qquad \text{ a.s. and in } L^1,
\]
where $(U,L)$ is the solution of the free boundary problem~\eqref{eq:FBP_CDF} with initial condition $U_0(x)= u_0((x,\infty))$. The book \cite{Carinci2016} provides an excellent account of the free boundary problem~\eqref{eq:FBP_CDF} and its relationship to the $N$-BBM. More recently, Atar~\cite{Atar2025} and B{\'e}rard and Fr{\'e}nais~\cite{Berard2023} have introduced general formulations of~\eqref{eq:FBP_CDF}, which are used to generalise the above hydrodynamic limit theorem.

Recently, the first and third authors of the present article solved the so-called {\it selection principle} for the $N$-BBM \cite{Berestycki2024}. The $N$-BBM process centred by the position of its leftmost particle is a recurrent process, which therefore admits a unique invariant distribution $\psi^N $. It is natural to ask what $\psi^N$ should look like for large $N$. Note that since the centred $N$-BBM converges to its invariant distribution as $t\to \infty$, we are trying to describe what happens when we first let $t\to \infty $ and then $N\to \infty$. The {\it selection principle} asserts that the particle density under $\psi^N$ converges as $N\to \infty$ to $\pi_{\min}$, the minimal travelling wave solution of \eqref{eq:FBP} (i.e.~the particle system {\it selects} the minimal travelling wave among all possible travelling waves). 
In other words, the complementary cumulative distribution function of the empirical measure under $\psi^N$ converges to $\Pi_{\min}$ as $N\to \infty$.
This can be interpreted as saying that the limits $t\to \infty$ and $N\to \infty$ commute, since when we first let $N \to  \infty$ we get the solution of~\eqref{eq:FBP_CDF}, and in the present work we show that if we then let $t \to \infty$, the solution of~\eqref{eq:FBP_CDF} converges to $\Pi_{\min}$, under a suitable integrability condition on $U_0$.  
The situation is summarised in the following diagram:
 \begin{figure}[H]
 	\begin{center}
 		\begin{tikzcd}[column sep=6cm, row sep=3cm]
 			\substack{\text{$N$-BBM},\,  \mu^N_t} \arrow[r, "\text{\cite[Theorem 1]{DeMasi2019}}\quad N\ra\infty"] \arrow[d, "\substack{\text{\cite[Theorem 1.1]{Berestycki2024}}\\t\ra\infty}"]
 			& \substack{\text{Solution of free boundary problem}\\
 			\text{\eqref{eq:FBP_CDF}}, \, (U(t,\cdot),L_t)} \arrow[d, "\substack{\text{Theorem \ref{theo:convtoPimin}  }  \\t \ra \infty}"] \\
 			\substack{\text{Stationary distribution}\\ \text{for the $N$-BBM, $\psi^N$}} \arrow[r, "\text{\cite[Theorem 1.2]{Berestycki2024}}\quad  N\ra\infty"]
 			& \substack{\text{Minimal travelling wave, $\Pi_{\min}$}}
 		\end{tikzcd}
 	\end{center}
 \end{figure}

Note, however, that the aforementioned integrability conditions for convergence of solutions of~\eqref{eq:FBP_CDF} to the minimal travelling wave do not appear for the convergence of $\psi^N$ as $N\to \infty$. This represents the possibility of non-commuting limits. Indeed, take an $N$-BBM in which initially the particles are i.i.d.~with distribution given by $\pi_c$, for some $c>\sqrt{2}$. If we firstly take $t\to \infty$ and then $N\to\infty$, the centred $N$-BBM will converge to $\psi^N$ as $t\to \infty$ for fixed $N$, which then converges to $\pi_{\min}$ as $N\to\infty$. In contrast, if we first take $N\to\infty$, then by~\cite{DeMasi2019} the (normalised empirical measure of the) $N$-BBM will converge to the travelling wave with speed $c$. The shape of this travelling wave then trivially converges to $\pi_c\neq \pi_{\min}$ as $t\to\infty$. Heuristically, this demonstrates that if we start the $N$-BBM very close to one of the non-minimal travelling waves, then it will gradually move away from this travelling wave and stabilise on the minimal one.

\subsubsection{Connection with Brownian motion on $(0,\infty)$ with drift $-\sqrt 2$} \label{subsubsec:BMwithdrift}

Recall from Section~\ref{subsubsec:existencefbp} (see also Lemma~\ref{lem:FKforwardstime} below) that the inverse first passage time problem representation for the free boundary problem~\eqref{eq:FBP} with initial condition $u_0$ can be formulated as follows: We initialise a Brownian motion $(B_t)_{t\ge 0}$ according to $B_0\sim u_0$, and kill it instantaneously upon hitting the boundary $L_t$, i.e.~at the stopping time $\tau=\inf\{t>0:B_t\le L_t\}$. The boundary $L$ is chosen so that $\tau$ is an exponential random variable with mean 1, i.e.~$\p{\tau>t}=e^{-t}$. Then $(u(t,x),L_t)$ is the solution of~\eqref{eq:FBP}, where $u(t,x)$ satisfies
\begin{equation} \label{eq:stochrepintro}
	e^t\Pm_{u_0}(B_t\in dx,\tau>t)=u(t,x)dx.
\end{equation}

Equivalently, we can consider
\[
d X_t:=-dL_t+dB_t,
\]
and kill $X_t$ instantaneously upon hitting $0$. Note that $L_t$ is a finite variation process (one can prove this using results in Section~\ref{subsec:stretching} below, but we do not need this for any of our proofs), justifying the above semimartingale decomposition. 

Instead of choosing a finite variation process $L_t$ to fix the probability of survival, we could instead fix a negative drift, and allow the probability of survival to vary. If $u_0$ has finite initial mass, we will show that $(L_{t+s}-L_t)_{s\geq 0}$ converges uniformly on compacts as $t\to \infty$ to $(\sqrt{2}s)_{s\geq 0}$ (see the proof of Theorem~\ref{theo:convergence to the minimal travelling wave for finite initial mass} in Section~\ref{subsec:finiteinitmassconv}), so it is natural to fix the drift as $-\sqrt{2}$. Therefore we consider the killed process
\begin{equation}\label{eq:SDE for BM cst drift}
dX_t=dB_t-\sqrt{2}dt,\quad 0\leq t<\tau_0:=\inf\{t>0:X_{t-}=0\}.
\end{equation}

The asymptotic behaviour of this process is well understood, and displays surprising similarities with that of the FKPP equation and our free boundary problem. The analogue of travelling waves are known as quasi-stationary distributions (QSDs), and similarly there is a one-parameter family of them, $(\alpha_{\lambda})_{0<\lambda\leq 1}$ (see~\cite{Meleard2011} for a survey on absorbed Markov processes and their QSDs). Indeed, these correspond exactly to the travelling waves of the free boundary problem considered in this article, as observed in~\cite{GroismanJonckheer}. The analogue of the minimal travelling wave is known as the minimal QSD, and denoted by $\alpha_{\min}:=\alpha_1$.

Given a QSD $\alpha$, its domain of attraction consists of those probability measures $\mu$ such that
\begin{equation}\label{eq:convergence to QSD}
\mathcal{L}_{\mu}(X_t\lvert \tau_0>t)\ra \alpha \quad \text{as }t\rightarrow \infty.
\end{equation}
If~\eqref{eq:convergence to QSD} holds then we say that $\alpha$ is the quasi-limiting distribution for $\mu$; the investigation of this limit represents the principal question investigated in the theory of absorbed Markov processes.

The domain of attraction of the minimal QSD, $\alpha_{\text{min}}$, was obtained in \cite[Theorem 1.3]{Martinez1998} and \cite[Theorem 1.1]{Tough23}. Strikingly, it is exactly the same as the domain of attraction we obtain in Theorem \ref{theo:convtoPimin}, and the domain of attraction obtained by Bramson for the FKPP equation \cite[Theorem~A]{Bramson1983}. 

When~\eqref{eq:convergence to QSD} holds, one can investigate the rate of convergence. It was established by Oçafrain in \cite[Theorem 3.1]{Ocafrain2020} that one has a $\frac{1}{t}$ rate of convergence to the minimal QSD in Wasserstein distance, for all initial conditions $\mu$ satisfying an exponential moment condition (so in particular including all compactly supported initial conditions). This is precisely the rate of convergence that is expected to hold in the free boundary setting considered in this article, and is known more generally for pulled FKPP equations~\cite{An2024}.

Finally, we mention that just as the free boundary problem \eqref{eq:FBP} can be understood as the large $N$ limit of the $N$-BBM, as discussed in Section \ref{subsec:N-BBM}, the law of Brownian motion with drift $-\sqrt{2}$ can similarly be understood as the large $N$ limit of a particle system called the Fleming-Viot particle system \cite{Burdzy2000,Villemonais2011}. Just as there was a long-standing ``selection problem'' for the $N$-BBM (discussed in Section \ref{subsec:N-BBM}), there was a similar conjectured selection principle for this Fleming-Viot particle system. This selection principle was proven by the third author in \cite{Tough23} and, in fact, ideas from this proof played an important role in the proof of the selection principle for the $N$-BBM by the first and third authors \cite{Berestycki2024}. See \cite{GroismanJonckheer} for an excellent survey on the relationship between these two particle systems and their selection problems.

\subsection{Notation} \label{subsec:notation}
Throughout this article, for $x\in \Rm$, let $\mathbb P_x$ denote the probability measure under which $(B_t)_{t\ge 0}$ is a Brownian motion started at $x$, and let $\mathbb E_x$ denote the corresponding expectation.
Moreover, for a Borel probability measure $\mu$ on $\R$, let $\mathbb P_\mu$ denote the probability measure under which $(B_t)_{t\ge 0}$ is a Brownian motion with $B_0\sim \mu$, and let $\mathbb E_{\mu}$ denote the corresponding expectation.

For $t\ge 0$ and  $x,y\in \Rm$, let $(\xi^t_{x,y}(s),0\le s \le t)$ denote a Brownian bridge from $x$ at time 0 to $y$ at time $t$.

Let $(U^H(t,x),L_t^H)$ denote the solution of the free boundary problem~\eqref{eq:FBP_CDF} with Heaviside initial condition $H(x)=\1_{\{x<0\}}$. Let $L^H_0=0$.

\subsection{Outline of the proof of Theorems~\ref{theo:convtoPimin} and~\ref{theo:Ltposition}} \label{subsec:outline}

In this subsection, we outline some of the key ideas in the proofs of our main results, Theorems~\ref{theo:convtoPimin} and~\ref{theo:Ltposition}.
As mentioned after Definition~\ref{defin: fin init mass}, the proof is divided into two cases: the \textit{finite initial mass} case -- where the initial condition $U_0$ satisfies~\eqref{eq:finite initial mass U0}, and the \textit{infinite initial mass} case -- where $U_0$ satisfies~\eqref{eq:infinite initial mass U0}.

Here, we focus on the proof strategy in the finite initial mass case, which is very different from the strategy employed by Bramson in~\cite{Bramson1983} for the FKPP equation in the finite initial mass setting. The proof in the infinite initial mass  case is found in Section~\ref{sec:infiniteinitialmass}; it uses a novel Feynman-Kac representation for solutions of~\eqref{eq:FBP_CDF}, Lemma~\ref{lem:FKforinfinitemass}, and follows broadly a similar proof strategy to that employed by Bramson for the FKPP equation in the same infinite initial mass setting~\cite{Bramson1983}. We give a more detailed outline of the strategy at the beginning of Section~\ref{sec:infiniteinitialmass}.

\medskip

\noindent \textbf{Proof strategy for finite initial mass case.}
Recall from the stochastic representation of solutions of~\eqref{eq:FBP} given in~\eqref{eq:stochrepintro} that for $t>0$, 
\begin{equation} \label{eq:stochrepintro2}
\psub{u_0}{B_s>L_s \; \forall s\in (0,t)}=e^{-t}.
\end{equation}
The authors of~\cite{BBHR17} established that if we replace the free boundary $L_t$ with a fixed boundary 
\[
m(t):=\sqrt{2}t-\frac{3}{2\sqrt{2}}\log t+a+o(1),
\]
with some additional conditions on the $o(1)$ term, then for a broad subset of initial conditions $u_0$ with finite initial mass in the sense of~\eqref{eq:finite initial mass little u0},
\begin{equation}\label{eq:convtopimin}
e^t \psub{u_0}{B_t-m(t)\in dx,\,  B_s>m(s) \; \forall s\in (0,t)} \to K\pi_{\min}(x) \quad \text{as }t\to \infty,
\end{equation}
for some positive constant $K$.
(This can also be phrased as convergence of the solution of a PDE with Dirichlet boundary conditions at the boundary $m$.)
However, the results in~\cite{BBHR17} assumed that $u_0$ has a density (which we write as $u_0(x)$ in an abuse of notation)
that satisfies
\begin{equation} \label{eq:BBHR_condition}
u_0(x)=\mathcal{O}(x^{-\nu}e^{-\sqrt{2}x}) \quad \text{for some }\nu>\sqrt{2}.
\end{equation}
This is a slightly stronger assumption than the finite initial mass assumption~\eqref{eq:finite initial mass little u0}; for instance, an initial density of $u_0(x)=e^{-x}x^{-2}\log^{-2} x$ has finite initial mass, but does not satisfy~\eqref{eq:BBHR_condition}. We refine the estimates from~\cite{BBHR17} so as to extend the result~\eqref{eq:convtopimin} to all initial conditions $u_0$ with finite initial mass, when the boundary is given by 
\begin{equation} \label{eq:mtformulaoutline}
m(t):=\sqrt{2}t-\frac{3}{2\sqrt{2}}\log (t+1)+a;
\end{equation}
see Theorem~\ref{theo:extension of BBHR}.

In Section~\ref{subsec:finiteinitmassconv}, we use Theorem~\ref{theo:extension of BBHR} to establish convergence to the minimal travelling wave under the finite initial mass assumption.
We first consider the solution $(U^H(t,x),L^H_t)$ of \eqref{eq:FBP_CDF} with Heaviside initial condition.
By comparing $U^H$ to solutions of the classical FKPP equation with Heaviside initial condition, and using Bramson's front position asymptotics for the FKPP equation~\cite{Bramson1983}, we see that
\begin{equation} \label{eq:LHtloweroutline}
L^H_t\geq \sqrt{2}t-\frac{3}{2\sqrt{2}}\log t+\mathcal{O}(1).
\end{equation}
Moreover, using a {\it stretching lemma} established in~\cite{Berestycki2024}, we show in Section~\ref{subsec:stretching} that for $\epsilon>0$, for sufficiently large times $t$ we have $L^H_{t+s}-L^H_t\ge (\sqrt 2-\epsilon)s$ for $s\ge 0$,
and by comparison with the travelling wave $\Pi_{\min}$ we also have $L^H_{t+s}-L^H_t\le \sqrt 2 s$ for $s\ge 0$.
Therefore $(L^H_{t+s}-L^H_t)_{s\geq 0}$ converges locally uniformly as $t\to \infty$ to $(\sqrt{2}s)_{s\geq 0}$. 

By~\eqref{eq:LHtloweroutline}, we can choose $a$ in~\eqref{eq:mtformulaoutline} such that $m(t)\leq L^H_t$ for all $t$. 
Take an initial condition $U_0$ with finite initial mass and with $U_0(x)=1$ for $x<0$ (this assumption can be removed later on), and let $(U(t,x),L_t)$ denote the solution of~\eqref{eq:FBP_CDF}.
Another consequence of the stretching lemma (see Lemma \ref{lem:less stretching means slower boundary})
tells us that $L_t-L^H_t$ is non-decreasing in $t$, hence in particular $L_t\ge L^H_t$ $\forall t\ge 0$ and
\[
m(t)\leq L^H_t\leq L_t \quad \forall t\ge 0.
\]
Let $u_0$ denote the probability measure given by $u_0((x,\infty))=U_0(x)$ for $x\in \R$.
Our extension of~\eqref{eq:convtopimin} tells us that for large $t$, the probability that a Brownian motion survives until time $t$, after being initialised from $u_0$ and killed upon hitting $m(s)$, is asymptotically like $Ke^{-t}$ for some $K\in (0,\infty)$. Using the fact that $L_s\geq m(s)$ $\forall s\ge 0$ (and so the Brownian motion cannot hit $m(s)$ before hitting $L_s$), we argue that if $L_t-m(t)$ were to be unbounded, then there must be some $t<\infty$ such that the probability of a Brownian motion $B_s$ initiated from $u_0$ not hitting $L_s$ by time $t$ must be less than $e^{-t}$. By~\eqref{eq:stochrepintro2}, this is a contradiction, and so we conclude that $L_t-m(t)$ is bounded. This implies that
\[
L_t=\sqrt{2}t-\frac{3}{2\sqrt{2}}\log t+\mathcal{O}(1).
\]
It also implies that $L_t-L^H_t$ is bounded, and so since it is non-decreasing, $L_t-L^H_t$ must converge as $t\to \infty$. This implies that $(L_{t+s}-L_t)_{s\geq 0}$ converges locally uniformly to $(\sqrt{2}s)_{s\geq 0}$, using the corresponding fact for $L^H_t$. 
Using Theorem~\ref{theo:magic formula}, the Brunet-Derrida relation, with initial condition $U(t,L_t+\cdot)$, we conclude that $U(t,L_t+\cdot)$ converges uniformly to the unique initial condition for which the solution of~\eqref{eq:FBP_CDF} has free boundary $(\sqrt{2}s)_{s\geq 0}$, namely $\Pi_{\min}$.

We have now established convergence to the travelling wave and asymptotics of the free boundary up to $\mathcal{O}(1)$ as $t\to \infty$. Our next step is to improve our boundary asymptotics to $o(1)$. This is accomplished in Section \ref{subsec:bootstrap}. Since we know that $L_t-L^H_t$ converges as $t\ra\infty$, it suffices to prove the asymptotics for Heaviside initial condition, i.e.~to establish that for some $c\in \R$,
\[
L^H_t=\sqrt{2}t-\frac{3}{2\sqrt{2}}\log t+c+o(1) \quad \text{as }t\to \infty.
\]

We begin by taking $a\in \mathbb{R}$ sufficiently large that for $m(t)$ defined as in~\eqref{eq:mtformulaoutline}, we have  $m(t)\geq L^H_t$ for all $t\ge 0$. We consider Brownian motion started from $0$, killed upon hitting the barrier
\[
L'_t:=\begin{cases}
L^H_t\quad &\text{for }t\leq T_0,\\
m(t)\quad &\text{for }t>T_0,
\end{cases}
\]
for some $T_0<\infty$ which we will choose later. For some $R<\infty$ which we will also choose later on, we consider the events
\[
E'_t:=\{B_s>L'_s\; \forall s\in (0,t),\; B_t>L^H_t+R\}\quad\text{and}\quad E^H_t:=\{B_s>L^H_s\; \forall s\in (0,t),\; B_t>L^H_t+R\}.
\]
Note that $E'_t\subseteq E^H_t$, due to the ordering of the barriers. 

On the one hand, it follows from our extension of ~\eqref{eq:convtopimin}, by conditioning on $B_{T_0}$, that as $t\to \infty$,
\[
\Pm_0(B_s>L'_s \; \forall s\in (0,t),\; B_t>L^H_t+R)\sim K_{T_0}e^{-t}\Pi_{\min}(L^H_t-L'_t+R),
\]
for some constant $K_{T_0}\in (0,\infty)$, which depends on $T_0$ but not on $R$.
On the other hand, using the stochastic representation~\eqref{eq:stochrepintro}, and then convergence to the travelling wave for Heaviside initial condition, we see that as $t\to \infty$,
\[
\Pm_0(B_s>L^H_s\; \forall s\in (0,t),\; B_t>L^H_t+R)=e^{-t}U^H(t,L^H_t+R) \sim \Pi_{\min}(R)e^{-t}.
\]
Therefore, as $t\to \infty$,
\[
\psub{0}{B_s>L'_s\; \forall s\in (0,t) \; \big\lvert \; B_s>L^H_s \; \forall s\in (0,t),\; B_t>L^H_t+R}\sim \frac{K_{T_0}\Pi_{\min}(L^H_t-L'_t+R)}{\Pi_{\min}(R)}.
\]

We further prove, using that $L'_t-L^H_t$ is bounded, that $R$, $T_0<\infty$ can be chosen to make 
\[
\psub{0}{B_s>L'_s\; \forall s\in (0,t) \; \big\lvert \; B_s>L^H_s\; \forall s\in (0,t),\; B_t>L^H_t+R}
\]
as close to $1$ as we like for large times $t$. 
This ensures that large $R$, $T_0<\infty$ can be chosen to make 
\[
\frac{K_{T_0}\Pi_{\min}(L^H_t-L'_t+R)}{\Pi_{\min}(R)} 
\]
as ``close to convergent'' as $t\to \infty$ as we like (in a certain appropriate sense). However, for large $R<\infty$ we have
\[
\frac{K_{T_0}\Pi_{\min}(L^H_t-L'_t+R)}{\Pi_{\min}(R)}\approx K_{T_0}e^{-\sqrt{2}(L^H_t-L'_t)}.
\]  
This implies that $L^H_t-L'_t$ is as ``close to convergent'' as we like, and so must converge as $t\to \infty$, and we are done.

\medskip

\noindent \textbf{Heuristic for finite initial mass and infinite initial mass dichotomy.} As mentioned at the start of this subsection, the proof strategy for the finite initial mass case is very different to the strategy for the infinite initial mass case.
The fact that the two strategies only apply to one of the two cases is explained, at least heuristically, by the following dichotomy.

Recall again the stochastic representation of solutions of the free boundary problem~\eqref{eq:FBP} stated at the start of Section~\ref{subsubsec:BMwithdrift}.
We now consider some probability measure $u_0$, let $(u(t,x),L_t)$ denote the solution of the free boundary problem~\eqref{eq:FBP} with initial condition $u_0$, and take a Brownian motion $(B_t)_{t\ge 0}$ with $B_0\sim u_0$, killed at time $\tau=\inf\{t>0:B_t\le L_t\}$. One can then ask, if our Brownian motion survives until time $t$, where was it started? Of course, if a Brownian motion is started far to the right, then it will be easier for it to survive until time $t$. However, this must be balanced with the probability of the Brownian motion being started far to the right, which depends on the tail behaviour of $u_0$.

It turns out that there is a transition in behaviour. If the initial condition $u_0$ is sufficiently light tailed, then it is easier to be born close to $0$ and then survive for a long time, than to be born far from $0$ and survive for a long time. This corresponds to the initial condition having finite initial mass. Therefore, in the finite initial mass case, a Brownian motion that survives for a long time is likely to have been born close to $0$.

Conversely, in the infinite initial mass case, the initial condition is sufficiently heavy tailed that it is easier to be born far away from $0$ and survive for a long time, than to be born close to $0$ and survive for a long time. Therefore, a Brownian motion that survives for a long time is likely to have been born far to the right of $0$.

To demonstrate heuristically that these change in behaviour should depend on whether or not $u_0$ has finite initial mass, we consider the analogous killed process $(X_t)_{0\leq t<\tau_0}$ defined in~\eqref{eq:SDE for BM cst drift} in Section~\ref{subsubsec:BMwithdrift}, given by Brownian motion with constant drift $-\sqrt{2}$, killed when it hits $0$. The survival probabilities for $X_t$ over a given time horizon are known exactly, and are given by the formula~\cite[(2.4)]{Martinez1998}. We fix a probability measure $\mu$ and an arbitrary $a<\infty$, and define $\mu_a:=\frac{\mu_{\lvert_{(0,a]}}}{\mu((0,a])}$. Using \cite[(2.4)]{Martinez1998}, we can calculate
\[
\begin{split}
\psub{\mu}{X_0\leq a\lvert \tau_0>t}&=\frac{\Pm_{\mu}(X_0\leq a,\;\tau_0>t)}{\Pm_{\mu}(\tau_0>t)}\\
&=\frac{\mu((0,a])\Pm_{\mu_a}(\tau_0>t)}{\Pm_{\mu}(\tau_0>t)}\\&=\frac{e^{t+\frac{3}{2}\log t}\mu((0,a])\Pm_{\mu_a}(\tau_0>t)}{e^{t+\frac{3}{2}\log t}\Pm_{\mu}(\tau_0>t)}
\to \begin{cases}
\frac{\int_0^{a}xe^{\sqrt{2}x}\mu(dx)}{\int_0^{\infty}xe^{\sqrt{2}x}\mu(dx)}\quad &\text{if }\int_0^{\infty}xe^{\sqrt{2}x}\mu(dx)<\infty,\\
0\quad &\text{if }\int_0^{\infty}xe^{\sqrt{2}x}\mu(dx)=\infty
\end{cases}
\end{split}
\]
as $t\to \infty$.
This is exactly the claimed dichotomy.

\medskip

\noindent \textbf{Acknowledgements:}
The authors are grateful to Mete Demircigil and Christopher Henderson for fruitful discussions about their work and its relationship to our own. The authors are also grateful to Centre International de Rencontres Mathématiques for hosting the workshops `Random Walks: Interacting, Branching, and more' and `Non-local branching processes', and to Banff International Research Station for hosting the workshop `Emerging Connections between Reaction-Diffusion, Branching Processes, and Biology', all of which were greatly beneficial to the development of this article.  

SP is supported by a Royal Society University Research Fellowship.
The work of OT was partially supported by the EPSRC MathRad programme grant EP/W026899/2.

\section{Basic properties}\label{section:properties of free-boundary}

This section contains basic properties of solutions of the free boundary problem~\eqref{eq:FBP_CDF} that will be used in the remainder of the article.

Recall the definition of a classical solution of~\eqref{eq:FBP_CDF} at the start of Section~\ref{subsec:mainresults}.
The following existence and uniqueness result follows directly from results in~\cite{Berestycki2018}.
\begin{prop}[Existence and uniqueness] \label{prop:fbpsoln}
Suppose $U_0$ satisfies Assumption~\ref{assum:standing assumption ic}; then
a classical solution $(U(t,x),L_t)$ of the free boundary problem~\eqref{eq:FBP_CDF} exists and is unique, and satisfies
\begin{enumerate}[(i)]
\item $U(t,x)=1 \; \Leftrightarrow \; x\le L_t$ for $t>0$.
\item $\lim_{t\to 0}L_t = \inf\{x\in \R:U_0(x)<1\}\in \{-\infty\}\cup \R$. \label{enum:Lt_time0_limit}
\item $U(t,\cdot)$ is non-increasing with $U(t,x)>0$ $\forall x\in \R$ and $U(t,y)\to 0$ as $y\to \infty$ for $t>0$.
\item $U(t,\cdot)$ is strictly decreasing on $[L_t,\infty)$ for $t>0$, with $\partial_x U(t,x)<0$ for $t>0$, $x>L_t$.
\item $\partial_x U\in C^{1,2}(\{(t,x):t>0,\, x>L_t\})\cap C( (0,\infty) \times \R)$, and $u=-\partial_x U$ is the unique classical solution of~\eqref{eq:FBP}, where $u_0$ is the Borel measure satisfying $u_0((x,\infty))=U_0(x)$ $\forall x\in \R$.
Moreover, $|\partial_x U(t,x)|\le 3+t^{-1/2}$ for $t>0$ and $x\in \R$.
\item  $U(t,x)\to U_0(x)$ as $t\to 0$ for any $x\in \R$ such that $U_0$ is continuous at $x$ (i.e.~Lebesgue-a.e.~$x$).
\end{enumerate}
\end{prop}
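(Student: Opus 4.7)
The plan is to deduce this from the existence and uniqueness of classical solutions of \eqref{eq:FBP} established in~\cite{Berestycki2018}, together with the one-to-one correspondence between solutions of \eqref{eq:FBP} and \eqref{eq:FBP_CDF} indicated just after \eqref{eq:FBP_CDF}. Given $U_0$ satisfying Assumption~\ref{assum:standing assumption ic}, let $u_0$ be the associated Borel probability measure, and let $(u(t,x),L_t)$ be the unique classical solution of~\eqref{eq:FBP} with initial datum $u_0$ from~\cite{Berestycki2018}. Define $U(t,x):=\int_x^\infty u(t,y)\,dy$. I would then verify that $(U,L)$ is a classical solution of~\eqref{eq:FBP_CDF}: differentiating under the integral and using the PDE satisfied by $u$ gives the heat-type equation on $\{x>L_t\}$; the mass constraint $\int_{L_t}^\infty u(t,y)\,dy=1$ gives $U(t,x)=1$ for $x\le L_t$; the vanishing boundary value $u(t,L_t)=0$ yields $\partial_x U(t,L_t)=0$; and weak convergence of $u(t,\cdot)\,dx$ to $u_0$ as $t\to 0$ translates to $L^1_{\mathrm{loc}}$ convergence of $U(t,\cdot)$ to $U_0$ by testing against compactly supported continuous functions. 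Conversely, if $(U',L')$ is any classical solution of~\eqref{eq:FBP_CDF}, then $u':=-\partial_x U'$ is easily seen to be a classical solution of~\eqref{eq:FBP} with initial measure $u_0$, so uniqueness for~\eqref{eq:FBP} forces $(u',L')=(u,L)$ and hence $U'=U$.

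For the listed properties, each is inherited from a corresponding property of $u$ proved in~\cite{Berestycki2018}. Property~(i): ``$x\le L_t\Rightarrow U(t,x)=1$'' is the mass constraint, while the converse requires $u(t,y)>0$ for $y>L_t$, which is the strong maximum principle statement in~\cite{Berestycki2018}. Property~(ii) is the analogous behaviour of $L_t$ as $t\to 0$ for~\eqref{eq:FBP}. Properties~(iii) and~(iv) reduce to $u(t,\cdot)\ge 0$, $\int u(t,\cdot)=1$, and strict positivity of $u$ in the open set $\{x>L_t\}$. For~(v), the identity $\partial_x U=-u$ on $\{x>L_t\}$ and continuous extension by $0$ across the boundary (using $u(t,L_t)=0$) give the claimed regularity, and the bound $|\partial_x U(t,x)|\le 3+t^{-1/2}$ is precisely the bound on $u$ established in~\cite{Berestycki2018}. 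Property~(vi) upgrades the weak trace to pointwise convergence at continuity points of $U_0$: since each $U(t,\cdot)$ is non-increasing and $U_0$ is c\`adl\`ag and non-increasing, the weak convergence of $u(t,\cdot)\,dx$ to $u_0$ implies convergence of the complementary distribution functions at every continuity point of $U_0$, by the usual Portmanteau argument.

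Overall I do not expect any substantive obstacle, since the proposition is labelled as following directly from~\cite{Berestycki2018} and the real content lies in the bijection between~\eqref{eq:FBP} and~\eqref{eq:FBP_CDF}, which is already stated in the introduction. The most delicate bookkeeping point will be confirming the $t\to 0$ trace statement in~(vi): one must rule out the possibility that $U(t,\cdot)$ approaches some other c\`adl\`ag representative of $u_0$ differing from $U_0$ on a countable set. This is handled by the monotonicity of $U(t,\cdot)$ and the fact that $U_0$ is already uniquely determined by $u_0$ together with the right-continuity convention built into Assumption~\ref{assum:standing assumption ic}.
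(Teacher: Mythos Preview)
Your proposal is essentially correct and close in spirit to the paper's proof: both derive the proposition from results in~\cite{Berestycki2018}. Two differences are worth flagging.

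First, the paper's citations indicate that~\cite{Berestycki2018} already works directly with the CDF formulation~\eqref{eq:FBP_CDF}, so existence, uniqueness, and items (i), (ii), (iii), (v), (vi) are obtained by direct citation to results about $U$ (Theorems~1.1, 1.2, Proposition~1.3, Corollaries~2.1 and~3.3, Proposition~3.1, Lemmas~3.4 and~4.3 of~\cite{Berestycki2018}), rather than by passing through $u$ and integrating. Your route is valid but adds a layer of bookkeeping; in particular, your uniqueness argument requires checking that $-\partial_x U'$ is continuous on all of $(0,\infty)\times\R$ (not just $C^{0,1}$ in the interior), which is not immediate from the bare definition of a classical solution of~\eqref{eq:FBP_CDF} and is itself one of the outputs of~\cite{Berestycki2018}.

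Second, for item~(iv) you attribute the strict positivity of $u$ on $\{x>L_t\}$ to a strong maximum principle statement in~\cite{Berestycki2018}. The paper instead proves this here via a short Feynman-Kac argument: for $t>0$, $x>L_t$, and $\varepsilon\in(0,x-L_t)$, with $\tau=\tfrac{t}{2}\wedge\inf\{s\ge 0:B_s\le L_{t-s}+\varepsilon\}$ one has
\[
u(t,x)=\Esub{x}{e^{\tau}u(t-\tau,B_\tau)}\ge e^{t/2}\Esub{x}{\1_{\{B_s>L_{t-s}+\varepsilon\;\forall s\le t/2\}}u(t/2,B_{t/2})}>0,
\]
using that the conditional probability of the bridge event is positive and that $\int_{L_{t/2}+2\varepsilon}^\infty u(t/2,y)\,dy=U(t/2,L_{t/2}+2\varepsilon)>0$ by~(iii). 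The fact that the paper supplies this argument suggests the statement is not a black-box citation from~\cite{Berestycki2018}; your appeal to the strong maximum principle would work as an alternative, but you should be prepared to supply it rather than cite it.
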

\begin{proof}
The existence and uniqueness property and statements (i), (ii) and (vi) follow directly from~\cite[Theorems~1.1 and~1.2 and Proposition~1.3]{Berestycki2018}.
Statement (iii) follows from the comparison principle in~\cite[Theorem~1.1]{Berestycki2018} and~\cite[Corollary~3.3]{Berestycki2018}.
For statement~(v),~\cite[Theorem~1.1]{Berestycki2018} states that $\partial_x U\in C( (0,\infty) \times \R)$, 
~\cite[Corollary~2.1]{Berestycki2018} states that $u$ is the unique classical solution of~\eqref{eq:FBP},
~\cite[Lemma~4.3]{Berestycki2018} implies that $|\partial_x U(t,x)|\le 3+t^{-1/2}$,
and~\cite[Proposition~3.1 and Lemma~3.4]{Berestycki2018} combined with statement~(iii)
imply the rest of statement~(v).

Finally, for statement~(iv), for $t>0$ and $x\in \R$, let
$u(t,x):=-\partial_x U(t,x)\ge 0$.
Now fix $t>0$ and $x>L_t$, and take $\varepsilon\in (0,x-L_t)$.
Let $\tau=\frac t 2 \wedge \inf\{s\ge 0:B_s\le L_{t-s}+\varepsilon\}$.
Then by statement~(iv) and~\cite[Proposition~3.1 and Lemma~4.3]{Berestycki2018},
\[
u(t,x)=\Esub{x}{e^{\tau}u(t-\tau,B_\tau)}\ge e^{t/2}\Esub{x}{\1_{\{B_s>L_{t-s}+\varepsilon\;\forall s\le t/2\}}u(t/2,B_{t/2})}>0,
\] 
where the last inequality follows since $\inf_{y\ge L_{t/2}+2\epsilon}\psub{x}{B_s>L_{t-s}+\varepsilon\; \forall s\le t/2 \Big| B_{t/2}=y}>0$ and, by statement~(iii), $\int_{L_{t/2}+2\varepsilon}^\infty u(t/2,y)dy=U(t/2,L_{t/2}+2\varepsilon)>0$.
\end{proof}
In the remainder of the article, for $(U(t,x),L_t)$ a solution of~\eqref{eq:FBP_CDF}, we write
$U(0,\cdot):=U_0$ and 
\begin{equation} \label{eq:L0defn}
L_0:=
\inf\{x\in \R:U_0(x)<1\}
=\lim_{t\to 0}L_t \in \{-\infty\}\cup \R.
\end{equation}
The next result is a comparison principle proved in~\cite[Theorem~1.1]{Berestycki2018}.
\begin{prop}[Comparison principle] \label{prop:fbpcomparison}
Suppose $U_0^{(1)}$ and $U_0^{(2)}$ satisfy Assumption~\ref{assum:standing assumption ic} with
$U_0^{(1)}\le U_0^{(2)}$. Let $(U^{(1)}(t,x),L^{(1)}_t)$ and $(U^{(2)}(t,x),L^{(2)}_t)$ denote the solutions of~\eqref{eq:FBP_CDF} with initial conditions $U_0^{(1)}$ and $U_0^{(2)}$ respectively; then
\[
U^{(1)}(t,x)\le U^{(2)}(t,x) \quad \forall t>0, \, x\in \Rm.
\]
\end{prop}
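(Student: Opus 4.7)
The plan is to prove the comparison via a bootstrap: combining an ordering of the free boundaries $L^{(1)}_t \le L^{(2)}_t$ with the standard parabolic maximum principle on the region where both solutions satisfy the linear PDE. I would first observe that at $t=0$ the ordering is immediate from~\eqref{eq:L0defn}: since $U_0^{(1)} \le U_0^{(2)} \le 1$, the set $\{U_0^{(2)} < 1\}$ is contained in $\{U_0^{(1)} < 1\}$, so $L^{(1)}_0 \le L^{(2)}_0$. For positive times I would argue by contradiction using $T^* := \sup\{T > 0 : L^{(1)}_t \le L^{(2)}_t \text{ for all } t \in [0, T]\}$, which is strictly positive by continuity of the boundaries (Proposition~\ref{prop:fbpsoln}). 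If $T^* < \infty$, then $L^{(1)}_{T^*} = L^{(2)}_{T^*} =: \ell$. The PDE comparison statement (Step 2 below), applied on $[0, T^*]$, would give $W := U^{(2)} - U^{(1)} \ge 0$ on that interval, with $W$ satisfying the linear equation $\partial_t W = \frac12 \Delta W + W$ on the region $\{(t,x) : 0 < t \le T^*, \, x > L^{(2)}_t\}$ and $W(T^*, \ell) = 0$. The Hopf lemma then yields $\partial_x W(T^*, \ell^+) > 0$ unless $W \equiv 0$ throughout that region. However, the zero-Neumann conditions $\partial_x U^{(i)}(T^*, L^{(i)}_{T^*}) = 0$ imposed by~\eqref{eq:FBP_CDF} force $\partial_x W(T^*, \ell^+) = 0$, so $W \equiv 0$ on the region and hence $U^{(1)}(T^*, \cdot) \equiv U^{(2)}(T^*, \cdot)$. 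Forward uniqueness (Proposition~\ref{prop:fbpsoln}) then gives $U^{(1)} \equiv U^{(2)}$ for $t \ge T^*$, contradicting the definition of $T^*$.

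For the PDE comparison (Step 2), assuming $L^{(1)}_t \le L^{(2)}_t$ on some interval $[0, T]$, consider $W = U^{(2)} - U^{(1)}$ on $D := \{(t, x) : 0 < t \le T, \, x > L^{(2)}_t\}$. Both $U^{(i)}$ satisfy the linear equation $\partial_t U = \frac12 \Delta U + U$ on $D$, hence so does $W$. The parabolic boundary data are nonnegative: on the lateral boundary $x = L^{(2)}_t$ one has $W = 1 - U^{(1)}(t, L^{(2)}_t) \ge 0$ using $U^{(1)} \le 1$; $W \to 0$ as $x \to \infty$; and $W \to U_0^{(2)} - U_0^{(1)} \ge 0$ in $L^1_{\mathrm{loc}}$ as $t \to 0$. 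The gauge change $\tilde W := e^{-t} W$ removes the zeroth-order term, after which the standard parabolic maximum principle (with Tychonoff-type growth control supplied by $0 \le U^{(i)} \le 1$) yields $W \ge 0$ on $D$. Combined with the trivial inequality $U^{(1)}(t, x) \le 1 = U^{(2)}(t, x)$ for $x \le L^{(2)}_t$, this gives $U^{(1)} \le U^{(2)}$ on $[0, T] \times \R$, which is what is needed to close the bootstrap in Step 1.

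The main obstacle will be making the Hopf-lemma step in the boundary-ordering argument rigorous, since the standard maximum principle does not directly compare solutions on different moving spatial domains, and I am using the Neumann conditions at precisely the moment the boundaries touch. This depends on regularity of $U^{(i)}$ and $\partial_x U^{(i)}$ up to the free boundary (supplied by Proposition~\ref{prop:fbpsoln}); making the Hopf argument fully rigorous would likely require explicit local barriers near $(T^*, \ell)$. An alternative route is the stochastic representation via the inverse first passage time problem in Section~\ref{subsubsec:existencefbp}, coupling two Brownian motions with $B^{(1)}_0 \le B^{(2)}_0$ almost surely (possible by stochastic domination of $u_0^{(2)}$ over $u_0^{(1)}$) and comparing their survival under the two boundaries; however, this strategy still ultimately needs some boundary ordering to close, so some form of the PDE analysis above appears essential.
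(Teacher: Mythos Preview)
The paper does not prove this proposition directly; it simply cites \cite[Theorem~1.1]{Berestycki2018}. The argument there, and the natural one given the tools assembled in Section~\ref{section:properties of free-boundary}, goes via the FKPP approximation of Proposition~\ref{prop:Un}: each $U^{(i)}$ is the pointwise monotone limit as $n\to\infty$ of solutions $U_n^{(i)}$ to the semilinear equation $\partial_t U_n = \tfrac12 \Delta U_n + U_n - U_n^n$ on the whole line. For that equation the comparison principle is the classical one for scalar semilinear parabolic equations (the nonlinearity is Lipschitz on $[0,1]$), so $U_n^{(1)} \le U_n^{(2)}$ for every $n$, and the inequality passes to the limit. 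This is exactly how Lemma~\ref{lem:Bramsongronwall} is proved immediately afterwards.

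Your bootstrap via boundary ordering plus Hopf is a genuinely different strategy. Beyond the Hopf-regularity obstacle you already flag (the paper only establishes continuity of $t\mapsto L_t$; the $C^\infty$ regularity from \cite{Chen2022} is mentioned but not invoked here), there is a second gap you do not address: the claim that $T^*>0$ ``by continuity of the boundaries'' fails when $L^{(1)}_0 = L^{(2)}_0$. Continuity of each boundary separately does not rule out immediate crossing after $t=0$ (think of $U_0^{(1)}=H$ and $U_0^{(2)}$ a perturbation of $H$ supported on $[0,\infty)$, so both have $L_0=0$), and your Step~2 cannot be applied on an empty time interval. Repairing this would require a separate short-time argument, essentially reproducing the comparison you are trying to prove. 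The approximation route bypasses both difficulties entirely, since it never touches the free boundary.
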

The next result shows that the solution $U$ of~\eqref{eq:FBP_CDF} can be written as the limit of a sequence of solutions to FKPP equations, and is proved in~\cite[Theorem~1.2 and Proposition~1.3]{Berestycki2018}. 
\begin{prop} \label{prop:Un}
Suppose $U_0$ satisfies Assumption~\ref{assum:standing assumption ic}.
For each $n\in \Nm$,
let $U_n(t,x)$ solve
\begin{equation}\label{eq:CDF_n}
	\begin{cases}
		\partial_tU_n=\frac{1}{2}\Delta U_n+U_n-U_n^n, \quad t>0, \; x\in \Rm\\
		U_n(0,\cdot)=U_0.
	\end{cases}
\end{equation}
Then for any $t>0$ and $x\in \Rm$, the sequence $n\mapsto U_n(t,x)$ is non-decreasing with the following pointwise limit:
\begin{equation} \label{eq:UntoU}
	U(t,x)=\lim_{n\to \infty}U_n(t,x).
\end{equation}
\end{prop}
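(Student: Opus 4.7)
The plan is to proceed in three stages: establish uniform bounds and monotonicity in $n$; extract the pointwise limit $\tilde U$; and identify $\tilde U$ with the unique classical solution of~\eqref{eq:FBP_CDF}.

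For the first stage, the reaction $f_n(s) := s - s^n$ vanishes at $s=0$ and $s=1$, so the parabolic maximum principle applied to~\eqref{eq:CDF_n}, combined with Assumption~\ref{assum:standing assumption ic} (which gives $U_0 \in [0,1]$), yields $0 \le U_n \le 1$ pointwise. On $[0,1]$ we have $s^{n+1} \le s^n$, so
\[
\partial_t U_n - \tfrac12 \Delta U_n - f_{n+1}(U_n) = U_n^{n+1} - U_n^n \le 0,
\]
i.e.\ $U_n$ is a subsolution of the equation satisfied by $U_{n+1}$. Since both have the same bounded initial data $U_0$ and $f_{n+1}$ is Lipschitz on $[0,1]$, the comparison principle for semilinear parabolic equations on $\R$ yields $U_n \le U_{n+1}$.

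For the second stage, the bounded monotone sequence $(U_n)$ converges pointwise to some $\tilde U : (0,\infty) \times \R \to [0,1]$. Interior parabolic Schauder estimates, combined with the uniform bound $|U_n - U_n^n| \le 1$, upgrade this to $C^{1,2}_{\mathrm{loc}}$ convergence of $U_n$ to $\tilde U$ on compact subsets of $\{\tilde U < 1\}$. On any such subset, $U_n \le c < 1$ eventually, so $U_n^n \to 0$ uniformly, and passing to the limit in~\eqref{eq:CDF_n} gives $\partial_t \tilde U = \tfrac12 \Delta \tilde U + \tilde U$ on $\{\tilde U < 1\}$.

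For the third stage, set $\tilde L_t := \inf\{x : \tilde U(t,x) < 1\}$ (well-defined since monotonicity of $U_0$ is preserved under the flow, so $\tilde U(t,\cdot)$ is non-increasing). It remains to verify that $(\tilde U, \tilde L_t)$ satisfies the defining properties of a classical solution of~\eqref{eq:FBP_CDF}: that $t \mapsto \tilde L_t$ is real-valued and continuous on $(0,\infty)$, that $\tilde U$ is continuous on $(0,\infty) \times \R$ (in particular, up to the free boundary), that the smooth-fit condition $\partial_x \tilde U(t, \tilde L_t) = 0$ holds, and that $\tilde U(t,\cdot) \to U_0$ in $L^1_{\mathrm{loc}}$ as $t \to 0$. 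Once these are in place, uniqueness in Proposition~\ref{prop:fbpsoln} forces $\tilde U = U$.

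The main obstacle is the smooth-fit condition $\partial_x \tilde U(t, \tilde L_t) = 0$. Heuristically, the penalisation $U_n^n$ in~\eqref{eq:CDF_n} removes mass precisely where $U_n$ is close to $1$, so in the limit the total mass $\int_{\tilde L_t}^\infty (-\partial_x \tilde U)(t,y)\,dy$ should be conserved at the value $1$ for all $t > 0$; formally differentiating this identity and using the PDE on $\{\tilde U < 1\}$ yields the zero-flux condition at the boundary. A full justification of the boundary regularity and of the smooth-fit condition is the content of \cite[Theorem~1.2 and Proposition~1.3]{Berestycki2018}.
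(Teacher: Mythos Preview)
Your proposal is correct and aligns with the paper's treatment: the paper does not give an independent proof of this proposition but simply records it as a consequence of \cite[Theorem~1.2 and Proposition~1.3]{Berestycki2018}, which is exactly the reference you invoke at the end for the hard boundary-regularity step. Your sketch of the monotonicity and interior-limit arguments is accurate and adds useful detail, but since the decisive step (smooth fit and continuity of the free boundary) is deferred to the same source, the two treatments are essentially the same.
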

The following elementary lemma bounds the difference between solutions of~\eqref{eq:FBP_CDF} with different initial conditions.
\begin{lem} \label{lem:Bramsongronwall}
	Suppose $U_0^1$ and $U_0^2$ satisfy Assumption~\ref{assum:standing assumption ic}, and let $(U^1(t,x),L^1_t)$ and $(U^2(t,x),L^2_t)$ solve~\eqref{eq:FBP_CDF} with initial conditions $U_0^1$ and $U_0^2$ respectively. Then for $t>0$,
	\[
	U^2(t,x)-U^1(t,x)\le e^t \sup_{y\in \Rm}\left(U^2_0(y)-U^1_0(y)\right) \quad \forall x\in \Rm.
	\]
\end{lem}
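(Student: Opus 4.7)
The plan is to exploit the FKPP approximation from Proposition~\ref{prop:Un}, which expresses each $U^i$ as a monotone limit of classical solutions $U^i_n$ of the semilinear equation~\eqref{eq:CDF_n} on the whole real line. Doing so converts the free-boundary estimate into a standard bound for a semilinear reaction-diffusion equation, which can then be passed to the limit. Let me write $M:=\sup_{y\in \R}(U_0^2(y)-U_0^1(y))$, noting that $M\ge 0$ since both $U_0^i(y)\to 0$ as $y\to \infty$ (so it is enough to prove the claim in the case $M\ge 0$; the case $M<0$ would not be addressed directly by this strategy, but the lemma statement only needs $M\ge 0$ by the above).

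First I would fix $n\in\Nm$ and set $h_n(t,x):=U_n^2(t,x)-U_n^1(t,x)$ where each $U_n^i$ solves~\eqref{eq:CDF_n} with initial data $U_0^i$. Standard comparison with the constant solutions $0$ and $1$ gives $U_n^i(t,x)\in [0,1]$. Writing $f(u):=u-u^n$ and linearising,
\[
\partial_t h_n=\tfrac12\Delta h_n+g_n\, h_n,\qquad g_n(t,x):=\int_0^1 f'\bigl(\theta U_n^2(t,x)+(1-\theta)U_n^1(t,x)\bigr)\,d\theta.
\]
Since $f'(u)=1-nu^{n-1}\le 1$ for $u\ge 0$, we have $g_n\le 1$ pointwise.

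Next I would apply the Feynman--Kac formula for this linear equation with bounded potential $g_n$: for $t>0$ and $x\in \R$,
\[
h_n(t,x)=\Esub{x}{\,h_n(0,B_t)\,\exp\!\Big(\textstyle\int_0^t g_n(t-s,B_s)\,ds\Big)\,}.
\]
Inside the expectation, the exponential factor is at most $e^t$ because $g_n\le 1$, and $h_n(0,B_t)=U_0^2(B_t)-U_0^1(B_t)\le M$. Splitting according to the sign of $h_n(0,B_t)$: if $h_n(0,B_t)\ge 0$, then $h_n(0,B_t)\,e^{\int_0^t g_n\,ds}\le M\,e^t$; if $h_n(0,B_t)<0$, then the product is negative and hence $\le M e^t$ (using $M\ge 0$). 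In either case the integrand is bounded above by $Me^t$, so $h_n(t,x)\le e^t M$ for all $t>0$ and $x\in \R$.

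Finally I would pass to the limit using Proposition~\ref{prop:Un}: since $U_n^i(t,x)\to U^i(t,x)$ pointwise as $n\to\infty$, the pointwise bound $U_n^2(t,x)-U_n^1(t,x)\le e^tM$ transfers to $U^2(t,x)-U^1(t,x)\le e^tM$, which is the desired inequality. There is no real obstacle in this argument; the only mildly delicate point is justifying the Feynman--Kac representation, which is standard given that $h_n$ is a bounded classical solution of a linear parabolic equation with a bounded (indeed continuous) potential $g_n$ on $(0,\infty)\times\R$. Alternatively, one could bypass Feynman--Kac entirely by verifying that $\bar U(t,x):=U_n^1(t,x)+e^tM$ is a supersolution of~\eqref{eq:CDF_n} with $\bar U(0,\cdot)\ge U_0^2$ --- a direct computation using $\bar U\ge U_n^1\ge 0$ and hence $\bar U^n\ge (U_n^1)^n$ --- and then invoking the comparison principle for the semilinear equation; this would give the same conclusion.
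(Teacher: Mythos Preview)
Your proof is correct and follows essentially the same approach as the paper: pass to the FKPP approximants $U_n^i$ via Proposition~\ref{prop:Un}, prove the bound at the level of~\eqref{eq:CDF_n}, and let $n\to\infty$. The paper simply cites \cite[Lemma~3.2]{Bramson1983} for the intermediate comparison step, whereas you spell it out (via Feynman--Kac, with the supersolution alternative); both arguments are standard realisations of that cited lemma.
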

\begin{proof}
Let $U^{n,1}$ and $U^{n,2}$ denote solutions of~\eqref{eq:CDF_n} with initial conditions $U_0^1$ and $U_0^2$ respectively.
By the comparison principle applied to the PDE satisfied by $U^{n,1}-U^{n,2}$ (see~\cite[Lemma 3.2]{Bramson1983}),
 and then letting $n\to \infty$ and using~\eqref{eq:UntoU} in Proposition~\ref{prop:Un}, the result follows.
\end{proof}

\subsection{Feynman-Kac formulae} \label{subsec:FK}

We will use the following Feynman-Kac formula later in this section, and also in Section~\ref{section:magic formula} and extensively in Section~\ref{sec:infiniteinitialmass}.
\begin{lem} \label{lem:FKforinfinitemass}
Suppose $U_0$ satisfies Assumption~\ref{assum:standing assumption ic}, and let $(U(t,x),L_t)$ denote the solution of~\eqref{eq:FBP_CDF}.
	For $t>0$ and $x\in \Rm$,
	\[
	U(t,x)=\Esub{x}{U_0(B_t)e^{\Leb(\{s\in [0,t] :B_s\ge L_{t-s}\})}}.
	\]
\end{lem}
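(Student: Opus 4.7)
The plan is to deduce the identity by approximating $U$ via the FKPP-type solutions $U_n$ from Proposition~\ref{prop:Un}, applying the classical Feynman-Kac formula to each $U_n$, and then passing to the limit $n \to \infty$.

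Each $U_n$ is a bounded smooth solution of \eqref{eq:CDF_n}, which may be rewritten as the linear parabolic equation
\[
\partial_t U_n = \tfrac{1}{2}\Delta U_n + V_n\,U_n, \qquad V_n(t,x):=1-U_n(t,x)^{n-1} \in [0,1].
\]
Since $V_n$ is bounded and continuous and $U_0 \in L^\infty$, the classical Feynman-Kac formula gives
\[
U_n(t,x) = \Esub{x}{U_0(B_t)\exp\!\left(\int_0^t V_n(t-s,B_s)\,ds\right)}. \qquad (\ast_n)
\]
By Proposition~\ref{prop:Un}, $U_n(t,x)\nearrow U(t,x)$ pointwise, and since $V_n\in[0,1]$ and $U_0\le 1$, the integrand in $(\ast_n)$ is uniformly bounded by $e^t$. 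Hence by dominated convergence it suffices to show that, $\Pm_x$-a.s.,
\[
\int_0^t V_n(t-s,B_s)\,ds \longrightarrow \Leb\!\bigl(\{s\in[0,t]:B_s\ge L_{t-s}\}\bigr) \quad\text{as } n\to\infty.
\]
The pointwise behaviour of the integrand splits into two cases. On $\{B_s>L_{t-s}\}$, Proposition~\ref{prop:fbpsoln}(i) gives $U(t-s,B_s)<1$, and the monotone convergence $U_n\le U$ implies $U_n(t-s,B_s)^{n-1}\to 0$, so $V_n(t-s,B_s)\to 1$. The equality set $\{s:B_s=L_{t-s}\}$ is Lebesgue-null. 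On $\{B_s<L_{t-s}\}$ one has $U(t-s,B_s)=1$, and the task is to show $U_n(t-s,B_s)^{n-1}\to 1$, i.e.\ that $U_n(t-s,B_s)$ saturates to $1$ faster than $1/n$.

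The main obstacle is precisely this delicate last step: the monotone convergence $U_n\nearrow U$ of Proposition~\ref{prop:Un} is not quantitative enough on its own. To overcome this, I would exploit the FKPP structure of \eqref{eq:CDF_n}: linearising at $U_n=1$, the reaction rate for $w_n:=1-U_n$ is $(n-1)w_n+O(w_n^2)$, so perturbations from $1$ decay at rate proportional to $n$. Combining this with the fact that $U_0\equiv 1$ strictly to the left of $L_0$, together with the monotonicity of $U_n(t,\cdot)$ in $x$ (inherited from $U_0$), and a parabolic comparison with a suitable supersolution, one should obtain a bound of the form $1-U_n(t-s,y)\le C\,e^{-cn}$ for $y$ strictly below $L_{t-s}$ (with $c,C$ depending on the gap $L_{t-s}-y$ and on $t-s$). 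This gives $V_n(t-s,B_s)\to 0$ on $\{B_s<L_{t-s}\}$, and the dominated convergence argument above then completes the proof.

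An alternative route, should the quantitative estimate on the saturation of $U_n$ prove awkward, is to avoid the limit $n\to\infty$ altogether and apply a generalised It\^o formula directly to $U$. Extending $U(t,\cdot)$ to $\R$ via the prescription $U(t,x)=1$ for $x\le L_t$, the function $U$ is $C^1$ on $(0,\infty)\times\R$ (using $\partial_x U(t,L_t)=0$) and $\partial_{xx}U$ exists and is bounded off the Lebesgue-null free boundary, with a jump of size $-2$ across it. On each of the open regions $\{x>L_t\}$ and $\{x<L_t\}$, $U$ satisfies the linear PDE $\partial_t U=\tfrac12\Delta U + \1_{\{x\ge L_t\}}U$ classically (both sides vanish below). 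Applying It\^o's formula (in the Krylov/Sobolev sense, valid for functions with bounded a.e.\ Hessian) to $Y_s:=U(t-s,B_s)\exp(\int_0^s \1_{\{B_r\ge L_{t-r}\}}dr)$ makes $Y_s$ a local martingale, since the drift terms cancel by the PDE; $L^\infty$ bounds on $\partial_x U$ from Proposition~\ref{prop:fbpsoln}(v) upgrade this to a true martingale on $[0,t]$, and equating $\mathbb E_x[Y_t]=Y_0=U(t,x)$ yields the formula.
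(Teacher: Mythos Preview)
Your primary approach---apply Feynman--Kac to each $U_n$ and pass to the limit via dominated convergence---is exactly the paper's proof. You have also correctly isolated the only non-trivial step: showing that $U_n(t-s,B_s)^{n-1}\to 1$ on $\{B_s<L_{t-s}\}$, i.e.\ that $1-U_n(s,y)=o(1/n)$ for $y<L_s$. The paper dispatches this in one line by citing \cite[Propositions~4.7 and~5.4]{Berestycki2018}, which give precisely this saturation estimate. Your linearisation heuristic (that $w_n=1-U_n$ decays at rate $\sim n$) points in the right direction, but making it rigorous from scratch would require a genuine comparison argument that is not quite as immediate as you suggest; invoking the existing result is much cleaner.

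Your alternative route via a direct It\^o formula for $U$ is not what the paper does, but is a legitimate approach. The key point making it viable is that $\partial_x U$ is continuous across the free boundary (Proposition~\ref{prop:fbpsoln}(v)), so no local-time term arises; the jump in $\partial_{xx}U$ across $\{x=L_t\}$ is harmless for It\^o's formula applied to $W^{2,\infty}_{\mathrm{loc}}$ functions. If carried out carefully (handling the time variable and the possible irregularity of $L_t$ near $t=0$), this would yield a self-contained proof that does not rely on the saturation estimate from \cite{Berestycki2018}.
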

\begin{proof}
Recall the definition of $U_n$ in~\eqref{eq:CDF_n}.
	For $n\in \Nm$, by the Feynman-Kac formula (see e.g.~\cite[Corollary~3.2]{Berestycki2018}), for $t>0$ and $x\in \Rm$,
	\begin{equation} \label{eq:FKun}
		U_n(t,x)=\Esub{x}{U_0(B_t)e^{\int_0^t (1-U_n(t-s,B_s)^{n-1})ds}}.
	\end{equation}
	By~\cite[Propositions~4.7 and~5.4]{Berestycki2018},
	for $s> 0$ and $y<L_s$, we have
	\[
	U_n(s,y)^{n-1} \to 1 \quad \text{as }n\to \infty.
	\]
	Moreover, by Proposition~\ref{prop:Un} and then by Proposition~\ref{prop:fbpsoln}(i), for $s> 0$ and $y>L_s$, 
	\[
	U_n(s,y)^{n-1} \le U(s,y)^{n-1} \to 0 \quad \text{as }n\to \infty.
	\]
	Therefore, by dominated convergence, $\mathbb P_x$-almost surely
	\[
	\int_0^t (1-U_n(t-s,B_s)^{n-1})ds
	=\int_0^t (1-U_n(t-s,B_s)^{n-1})\1_{B_s\neq L_{t-s}} ds
	\to \Leb(\{s\in [0,t]:B_s\ge L_{t-s}\})
	\]
	as $n\to \infty$.
	The result follows by taking the limit as $n\to \infty$ on both sides of~\eqref{eq:FKun} and using~\eqref{eq:UntoU} in Proposition~\ref{prop:Un} on the left-hand side and another application of dominated convergence on the right-hand side.
\end{proof}
We say that $U_0:\R\to [0,1]$ has a \textit{compact interface} if there exists $K_0<\infty$ such that $U_0(x)=1$ for $x\le -K_0$ and $U_0(x)=0$ for $x\ge K_0$.
We can now prove Gaussian tail bounds on $U(t,L_t+\cdot)$ and $|\partial_x U(t,L_t+\cdot)|$ when $U_0$ has a compact interface.
\begin{lem} \label{lem:GaussiantailU}
Suppose $U_0$ satisfies Assumption~\ref{assum:standing assumption ic} and has a compact interface; let $(U(t,x),L_t)$ solve~\eqref{eq:FBP_CDF}.
Then for $0<\delta<T$, there exist $c_{\delta,T}>0$ and $C_{\delta,T}<\infty$ such that
\[
U(t,L_t+x)\le C_{\delta,T} e^{-c_{\delta,T}x^2}
\quad \text{and}\quad
|\partial_x U(t,L_t+x)|\le C_{\delta,T} e^{-c_{\delta,T}x^2}
\quad \forall t\in [\delta, T], \, x\ge 0.
\]
\end{lem}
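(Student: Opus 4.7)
The strategy is to apply the two Feynman--Kac representations available for solutions of the free boundary problem: Lemma~\ref{lem:FKforinfinitemass} above for $U$, and the forward-time (inverse first passage time) representation $u(t,x)\,dx=e^t\Pm_{u_0}(B_t\in dx,\tau_L>t)$ of Lemma~\ref{lem:FKforwardstime} for $u=-\partial_xU$. The compact interface assumption means $U_0\equiv 0$ on $[K_0,\infty)$ and the probability measure $u_0$ is supported on $[-K_0,K_0]$. Since $L\in C((0,\infty))$ by Proposition~\ref{prop:fbpsoln}, the quantity $M:=\sup_{t\in[\delta,T]}|L_t|$ is finite, which is the only ``boundary information'' we need.

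For the bound on $U$, Lemma~\ref{lem:FKforinfinitemass}, together with the elementary occupation time bound $\Leb(\{s\in[0,t]:B_s\ge L_{t-s}\})\le t\le T$ and $U_0\le \1_{(-\infty,K_0)}$, gives
\[
U(t,L_t+x)\le e^T\,\Pm_{L_t+x}(B_t\le K_0)=e^T\,\p{B_t\le K_0-L_t-x}\le e^T\,\p{B_t\le K_0+M-x}.
\]
For $x\ge 2(K_0+M)$ the right-hand side is bounded by the standard Gaussian tail $e^T e^{-x^2/(8T)}$ (using that $e^{-x^2/(8t)}\le e^{-x^2/(8T)}$ for $t\le T$); for $x\in[0,2(K_0+M))$ the trivial bound $U\le 1$ suffices. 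Choosing $c_{\delta,T}=1/(8T)$ and enlarging $C_{\delta,T}$ to absorb $e^{c_{\delta,T}\cdot 4(K_0+M)^2}$ yields the full bound.

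For the derivative bound, note $|\partial_xU(t,y)|=u(t,y)$ for $y\ge L_t$ (using $\partial_xU(t,L_t)=0$ from the free boundary condition). Dropping the killing event in Lemma~\ref{lem:FKforwardstime} and using $t\ge\delta$ in the prefactor of the Gaussian kernel gives
\[
u(t,L_t+x)\le \frac{e^T}{\sqrt{2\pi\delta}}\int_{[-K_0,K_0]}e^{-(L_t+x-y)^2/(2T)}\,u_0(dy).
\]
For $x\ge 2(K_0+M)$ and $y\in[-K_0,K_0]$, $L_t+x-y\ge x-K_0-M\ge x/2$, giving the required decay $\frac{e^T}{\sqrt{2\pi\delta}}e^{-x^2/(8T)}$; and for $x\in[0,2(K_0+M))$, Proposition~\ref{prop:fbpsoln}(v) provides the uniform bound $|\partial_xU|\le 3+\delta^{-1/2}$, which is absorbed into $C_{\delta,T}$ as before. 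The argument is essentially routine once the two Feynman--Kac representations are in hand, so there is no real obstacle; the only subtle point is that the backward-time representation in Lemma~\ref{lem:FKforinfinitemass} is not naturally adapted to $\partial_xU$, which is why one must invoke the forward-time representation of Lemma~\ref{lem:FKforwardstime} for the gradient estimate.
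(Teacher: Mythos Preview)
Your proof is correct. The bound on $U$ is essentially identical to the paper's. For $|\partial_x U|$, however, you take a genuinely different and more direct route. The paper uses the backward-time Feynman--Kac representation for $u=-\partial_xU$ from \cite[Proposition~3.1]{Berestycki2018}, with the stopping time $\tau=\tfrac t2\wedge\inf\{s\ge 0:B_s\le L_{t-s}+1\}$; it then splits $u(t,x)=\mathbb E_x[e^\tau u(t-\tau,B_\tau)]$ into a boundary term (controlled using the uniform bound $u\le 3+(t/2)^{-1/2}$ from Proposition~\ref{prop:fbpsoln}(v) together with a reflection-principle estimate on $\mathbb P_x(\inf_{s\le t/2}B_s\le K_1+1)$) and an interior term at time $t/2$ (controlled by integration by parts against the already-established Gaussian bound on $U(t/2,\cdot)$). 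Your argument, via the forward-time representation of Lemma~\ref{lem:FKforwardstime} and simply dropping the killing condition, bypasses this two-step decomposition entirely: you never need to invoke the bound on $U$ to get the bound on $\partial_xU$, and you avoid the auxiliary stopping time. The paper's route has the virtue of using only the backward-time formula (so it would still work in settings where no forward-time representation is available), but here your argument is cleaner. One small point worth making explicit: the passage from the measure inequality $\mathbb P_{u_0}(B_t\in dx,\tau>t)\le \mathbb P_{u_0}(B_t\in dx)$ to a pointwise inequality of densities is justified because both sides have continuous densities (namely $e^{-t}u(t,\cdot)$ and the Gaussian convolution respectively), so the interval-wise inequality forces the pointwise one.
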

\begin{proof}
Take $K_0<\infty$ such that $U_0(x)=0$ for $x\ge K_0$.
By Lemma~\ref{lem:FKforinfinitemass}, for $s\in [0,T]$ and $x\in \R$,
\begin{equation} \label{eq:GaussiantailU}
U(s,x)\le e^s \Esub{x}{U_0(B_s)}\le e^T \psub{x}{B_s\le K_0}
\le e^T (e^{-\frac 1 {2T}(x-K_0)^2} +\1_{\{x<K_0\}}).
\end{equation}
Take $K_1<\infty$ such that $|L_t|\le K_1$ $\forall t\le T$.
For $t>0$ and $x\in \R$, let $u(t,x)=-\partial_x U(t,x)\ge 0$.
Then by Proposition~\ref{prop:fbpsoln}(v) and the Feynman-Kac formula (see e.g.~\cite[Proposition~3.1]{Berestycki2018}),
for $t>0$ and $x>L_t+1$, letting $\tau=\frac t 2 \wedge \inf\{s\ge 0:B_s\le L_{t-s}+1\}$,
\begin{align} \label{eq:Gaussiantailu}
u(t,x)&=\Esub{x}{e^\tau u(t-\tau,B_{\tau})} \notag \\
&\le e^{t/2}(3+(t/2)^{-1/2})\psub{x}{\inf_{s\le t/2}B_s\le K_1+1}
+e^{t/2}\int_{L_{t/2}+1}^\infty u(t/2,y)\frac 1 {\sqrt{\pi t}}e^{-(x-y)^2/t}dy,
\end{align}
where in the second inequality we used Proposition~\ref{prop:fbpsoln}(v) and that $L_{t-s}+1\le K_1+1$ $\forall s\le t/2$.
Using the reflection principle and a Gaussian tail bound for the first term on the right-hand side of~\eqref{eq:Gaussiantailu}, and using integration by parts and~\eqref{eq:GaussiantailU} (with $s=t/2$) for the second term on the right-hand side of~\eqref{eq:Gaussiantailu}, it follows that there exist $c>0$ and $C<\infty$ such that
\begin{equation} \label{eq:Gaussiantailu2}
u(t,x)\le Ce^{-cx^2} \quad \forall t\in [\delta, T], \, x>L_t+1.
\end{equation}
Using the bound on $\partial_x U$ in Proposition~\ref{prop:fbpsoln}(v) again, the result follows from~\eqref{eq:GaussiantailU} and~\eqref{eq:Gaussiantailu2}.
\end{proof}
Recall from Section~\ref{subsec:notation} that
for a Borel probability measure $\mu$ on $\R$, we let $\mathbb P_\mu$ denote the probability measure under which $(B_t)_{t\ge 0}$ is a Brownian motion with $B_0\sim \mu$.
The following Feynman-Kac representation is proved in~\cite{Berestycki2024}, and will be used in Section~\ref{sec:finitemass}.
\begin{lem}[Theorem~B.1 in~\cite{Berestycki2024}] \label{lem:FKforwardstime}
Suppose $U_0$ satisfies Assumption~\ref{assum:standing assumption ic}, and let $u_0$ denote the probability measure such that $u_0((x,\infty))=U_0(x)$ $\forall x\in \R$.
Let $(U(t,x),L_t)$ denote the solution of~\eqref{eq:FBP_CDF} with initial condition $U_0$, and let
$\tau=\inf\{t>0:B_t\le L_t\}$. Then for any $t>0$,
\begin{align*}
\psub{u_0}{\tau>t}&=e^{-t}\\
\text{and }\quad \psub{u_0}{B_t>x \lvert \tau>t}&=U(t,x) \; \forall x\in \R.
\end{align*}
\end{lem}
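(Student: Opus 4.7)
The plan is to identify $v(t,x) := e^{-t} u(t,x)$, where $u = -\partial_x U$, with the sub-probability density of $B_t$ on the survival event $\{\tau > t\}$ under $\mathbb{P}_{u_0}$. Once this identification is made, both stated identities follow immediately by integrating in $x$.

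First I would derive the PDE satisfied by $v$. By Proposition~\ref{prop:fbpsoln}(v), $u$ is the classical solution of~\eqref{eq:FBP}, so $\partial_t u = \tfrac{1}{2}\Delta u + u$ on $\{x > L_t\}$ with Dirichlet data $u(t,L_t) = 0$. Consequently $v$ satisfies the pure heat equation $\partial_t v = \tfrac{1}{2}\Delta v$ on $\{x > L_t\}$ with $v(t,L_t) = 0$, together with the weak initial condition $v(t,x)\,dx \to u_0(dx)$ as $t \to 0$ (inherited from the convergence $U(t,\cdot) \to U_0$ in $L^1_{\mathrm{loc}}$ prescribed by~\eqref{eq:FBP_CDF}).

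Next I would identify $v$ with the sub-probability density of killed Brownian motion. Define $q(t,x)$ by $q(t,x)\,dx := \mathbb{P}_{u_0}(B_t \in dx,\, \tau > t)$. By the classical theory of parabolic equations on a domain bounded by a continuous curve, $q$ satisfies the same heat equation, the same Dirichlet boundary condition, and the same weak initial datum as $v$. Uniqueness, via the maximum principle applied to the difference of two candidate solutions (after approximation of $u_0$ by smooth densities), then yields $v \equiv q$ on $\{(t,x): t > 0,\, x > L_t\}$. An alternative probabilistic route would start from Lemma~\ref{lem:FKforinfinitemass}: writing $U_0(B_t) = u_0((B_t,\infty))$, interchanging the expectation with the integral against $u_0$, and then applying time reversal of the Brownian bridge (which converts the backward barrier condition $B_s \ge L_{t-s}$ into the forward barrier condition $B_s \ge L_s$, using symmetry of the heat kernel $p_t(x,y) = p_t(y,x)$) should yield the same identification directly. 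Once $v = q$ is known, integrating gives
\[
\mathbb{P}_{u_0}(\tau > t) = \int_{\R} q(t,x)\,dx = \int_{L_t}^{\infty} v(t,x)\,dx = e^{-t}\int_{L_t}^{\infty} u(t,x)\,dx = e^{-t} U(t,L_t) = e^{-t},
\]
using $U(t,L_t) = 1$ from~\eqref{eq:FBP_CDF}, and similarly
\[
\mathbb{P}_{u_0}(B_t > x \mid \tau > t) = e^{t}\int_{x}^{\infty} q(t,y)\,dy = e^{t}\int_{x}^{\infty} v(t,y)\,dy = U(t,x).
\]

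The main technical obstacle is justifying the PDE characterisation of $q$ when $u_0$ is an arbitrary Borel probability measure. The identification is essentially classical when $u_0$ has a smooth density and the boundary $L$ is smoother than merely continuous, so the general case requires approximation: approximate $u_0$ weakly by smooth compactly supported densities $u_0^n$, pass to the limit on the PDE side using continuity of the free boundary solution in the initial data (via Proposition~\ref{prop:fbpcomparison} and Lemma~\ref{lem:Bramsongronwall}), and on the probabilistic side using continuity of the survival probability under weak convergence of initial laws (a standard Portmanteau-type argument given that the boundary of the survival event has zero probability under the limiting law). The possibility that $L_0 = -\infty$ together with the limited a priori regularity of $L$ at $t = 0$ (Proposition~\ref{prop:fbpsoln}(ii)) require some care, but are accommodated by the weak formulation of the initial datum.
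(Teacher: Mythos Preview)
The paper does not supply its own proof of this lemma; it is stated with a citation to \cite{Berestycki2024} and used as a black box. So there is nothing in the paper to compare against, and your proposal must be judged on its own terms.

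Your primary route---identifying $v=e^{-t}u$ with the sub-probability density $q$ of Brownian motion killed at the moving barrier, via uniqueness for the Dirichlet heat problem---is the standard argument and is correct in outline. There is, however, a confusion in the approximation step. You propose to approximate $u_0$ by smooth $u_0^n$ and then ``pass to the limit on the PDE side using continuity of the free boundary solution in the initial data (via Proposition~\ref{prop:fbpcomparison} and Lemma~\ref{lem:Bramsongronwall})''. But the barrier $L$ is fixed once and for all by the original $U_0$; in the approximation you only vary the initial datum of a \emph{linear} heat problem on the \emph{fixed} domain $\{x>L_t\}$. The relevant continuity is therefore that of the linear Dirichlet problem in its initial measure, not continuity of the nonlinear map $U_0\mapsto(U,L)$. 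Proposition~\ref{prop:fbpcomparison} and Lemma~\ref{lem:Bramsongronwall} are beside the point here.

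Your alternative route via Lemma~\ref{lem:FKforinfinitemass} does not work as written. That lemma carries the \emph{soft} weight $\exp\bigl(\mathrm{Leb}\{s:B_s\ge L_{t-s}\}\bigr)$, not the \emph{hard} indicator $\mathbf{1}_{\{B_s>L_{t-s}\ \forall s\}}$; time reversal of the bridge swaps $L_{t-s}$ for $L_s$ but leaves the soft weight intact, and you never recover the hard survival event that defines $\tau$. If you want a probabilistic shortcut, the right starting point within the paper is the later Lemma~\ref{lem:FKformulaforu0} (a hard-killing backward Feynman--Kac for $u$ when $u_0$ has a bounded density): from that, bridge time reversal and integration in $x$ give the statement directly for bounded densities, and a genuine approximation argument then extends to arbitrary $u_0$.
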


\subsection{Stretching lemma and consequences} \label{subsec:stretching}

Suppose $U,V:\R\to [0,1]$.
We say that $U$ is \textit{more stretched} than $V$, denoted by $U\geq_s V$, if for any $c\in \R$ and $x_1,x_2\in \R$ with $x_1\leq x_2$ we have
\begin{equation}\label{eq:defin 1 of being more stretched}
	U(x_1)> V(x_1+c)\Rightarrow U(x_2)\geq V(x_2+c).
\end{equation}
We write $U\geq_s V$ and $V\leq_s U$ interchangeably.
The following {\it stretching lemma} was proved in \cite{Berestycki2024}.
\begin{lem}[Stretching lemma, Lemma 2.11 in \cite{Berestycki2024}]\label{lem:extended maximum principle}
Suppose $U_0$ and $V_0$ satisfy Assumption~\ref{assum:standing assumption ic}, and let $U(t,x)$, $V(t,x)$ denote the solutions of~\eqref{eq:FBP_CDF} with initial conditions $U_0$, $V_0$ respectively. If $U_0\geq_s V_0$, then $U(t,\cdot) \geq_s V(t,\cdot)$ for all $t\geq 0$. 
\end{lem}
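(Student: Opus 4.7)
The plan is to approximate $U$ and $V$ by the FKPP solutions $U_n$, $V_n$ of~\eqref{eq:CDF_n} from Proposition~\ref{prop:Un}, and, for each fixed shift $c \in \R$, apply a Sturmian zero-counting argument to
\[
W_n(t,x) := U_n(t,x) - V_n(t,x+c),
\]
before passing to the limit $n\to\infty$ using pointwise monotone convergence.

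Translation invariance of~\eqref{eq:CDF_n} implies that $(t,x)\mapsto V_n(t,x+c)$ again solves~\eqref{eq:CDF_n}, so $W_n$ satisfies the linear parabolic equation
\[
\partial_t W_n = \tfrac12 \Delta W_n + a_n(t,x)\,W_n \quad\text{on } (0,\infty)\times\R,
\]
with the continuous, uniformly bounded coefficient $a_n(t,x) := 1 - (U_n(t,x)^n - V_n(t,x+c)^n)/(U_n(t,x) - V_n(t,x+c))$ (extended by $1 - n\,U_n(t,x)^{n-1}$ on the coincidence set). The hypothesis $U_0\geq_s V_0$ is precisely the statement that $W_n(0,\cdot)$ has at most one sign change, and this change goes from non-positive on the left to non-negative on the right.

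I would then invoke the classical Sturmian property that the number of sign changes of $x\mapsto W_n(t,x)$ is non-increasing in $t$. The boundary behaviour $W_n(t,x)\to 0$ as $x\to\pm\infty$ (since both $U_n(t,\cdot)$ and $V_n(t,\cdot+c)$ share the limits $1$ at $-\infty$ and $0$ at $+\infty$), together with continuity of $W_n$ in $t$, rules out any reversal of the direction of the at-most-one sign change: if at some $t>0$ one had $x_1<x_2$ with $W_n(t,x_1)>0$ and $W_n(t,x_2)<0$, then the vanishing of $W_n$ as $x\to+\infty$ would force an additional sign change to the right of $x_2$, contradicting the Sturmian bound. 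Hence $U_n(t,\cdot)\geq_s V_n(t,\cdot)$ for each $t>0$ and each $c$. Letting $n\to\infty$, the pointwise monotone convergence $U_n\nearrow U$, $V_n\nearrow V$ from Proposition~\ref{prop:Un} preserves the defining pointwise implications of $\geq_s$, yielding $U(t,\cdot)\geq_s V(t,\cdot)$.

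The main obstacle is making the Sturmian step fully rigorous. Zeros of $W_n(t,\cdot)$ may cluster or fill an interval (in particular if $U_0$ and $V_0(\cdot+c)$ agree on an interval for some $c$), so the naive ``counting zeros'' formulation must be replaced by a count of maximal sign-constant regions; alternatively, one may first perturb $V_0$ slightly to achieve only transverse sign changes, prove the result there, and then pass to a limit. A secondary technical point is the direction-preservation argument sketched above, which requires Gaussian-type decay of $W_n(t,x)$ as $x\to+\infty$ uniform in $t$ on compact intervals, a standard consequence of parabolic estimates for~\eqref{eq:CDF_n}.
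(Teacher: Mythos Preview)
The paper does not give its own proof of this lemma; it is quoted verbatim from \cite{Berestycki2024}. Your route---reduce to the FKPP approximants $U_n,V_n$ via Proposition~\ref{prop:Un}, apply a Sturmian lap-number argument to $W_n=U_n-V_n(\cdot+c)$, then pass to the limit---is the natural one and is essentially how the analogous statement for the classical FKPP equation is proved. The limiting step works cleanly with the asymmetric definition of~$\ge_s$: if $U(t,x_1)>V(t,x_1+c)$ then, since $V_n\le V$ and $U_n\to U$, one has $U_n(t,x_1)>V_n(t,x_1+c)$ for all large~$n$, hence $U_n(t,x_2)\ge V_n(t,x_2+c)$, and letting $n\to\infty$ gives $U(t,x_2)\ge V(t,x_2+c)$. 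This is exactly where the mixed $>$/$\ge$ definition matters, as the paper remarks just after the lemma.

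There is, however, a genuine gap in your orientation-preservation step. Knowing $W_n(t,\cdot)\to 0$ at $+\infty$ does \emph{not} force an extra sign change to the right of~$x_2$: if $W_n(t,x_2)<0$, the function can perfectly well stay negative on $[x_2,\infty)$ while tending to~$0$. So the Sturmian bound ``at most one sign change'' alone does not exclude the reversed pattern $(+,-)$. A correct argument runs as follows. The set $\{t>0:\exists\, x_1<x_2\text{ with }W_n(t,x_1)>0,\,W_n(t,x_2)<0\}$ is open in $(0,\infty)$ by continuity of~$W_n$. If it is nonempty, let $t^*$ be its infimum; then $t^*>0$ (the initial $(-,+)$ pattern persists to small positive times at continuity points of the data, and the lap-number bound then forces the global pattern), and at $t^*$ the pattern is either $(-,+)$ or degenerate (one sign only). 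In the first case continuity gives the $(-,+)$ pattern on a neighbourhood of~$t^*$, contradicting the definition of~$t^*$; in the second case the comparison principle for~\eqref{eq:CDF_n} propagates the single sign to all $t\ge t^*$, again a contradiction. This replaces your boundary-behaviour heuristic and makes the argument rigorous without needing the Gaussian decay you mention.
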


Note that \eqref{eq:defin 1 of being more stretched} differs slightly from the notion of being `more stretched' given for the FKPP equation by Bramson in~\cite[p.33]{Bramson1983}, where~\eqref{eq:defin 1 of being more stretched} is assumed to hold with both inequalities given by $>$, and also with both inequalities given by $\ge$. Since solutions $(U(t,x),L_t)$ of \eqref{eq:FBP_CDF} have $U(t,x)=1$ for $t>0$ and $x\le L_t$, the stretching lemma would not be true were we to define `more stretched' as in \cite[p.33]{Bramson1983}.

We note the following easy consequence of the definition of $U\ge_s V$.
\begin{lem} \label{lem:stretchdecr}
Suppose $U,V:\R\to [0,1]$ and $x_1,x_2\in \R$ with $U\ge_s V$, $U(x_1)=V(x_2)$ and $V$ continuous and strictly decreasing on $[x_2,\infty)$.
Then $U(x_1+y)\ge V(x_2+y)$ $\forall y\ge 0$.
\end{lem}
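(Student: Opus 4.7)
The plan is to reduce this to a direct application of the defining implication \eqref{eq:defin 1 of being more stretched} of $U \ge_s V$, after a small perturbation trick that converts the equality $U(x_1) = V(x_2)$ into the strict inequality required to trigger the implication.

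Concretely, for each $\epsilon > 0$ set $c_\epsilon := x_2 - x_1 + \epsilon$. Since $V$ is strictly decreasing on $[x_2,\infty)$, I have
\[
V(x_1 + c_\epsilon) \;=\; V(x_2 + \epsilon) \;<\; V(x_2) \;=\; U(x_1),
\]
so $U(x_1) > V(x_1 + c_\epsilon)$. Now fix any $y \ge 0$ and apply \eqref{eq:defin 1 of being more stretched} with the shift $c_\epsilon$ and the pair of points $x_1 \le x_1 + y$: the implication yields $U(x_1 + y) \ge V((x_1 + y) + c_\epsilon) = V(x_2 + y + \epsilon)$.

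Finally, letting $\epsilon \downarrow 0$ and invoking continuity of $V$ on $[x_2,\infty)$ (applicable since $x_2 + y + \epsilon \in [x_2,\infty)$), I obtain $U(x_1 + y) \ge V(x_2 + y)$, as required.

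There is no real obstacle here; the only subtlety is that $U(x_1) = V(x_2)$ is a non-strict equality, whereas the hypothesis of \eqref{eq:defin 1 of being more stretched} is strict. The $\epsilon$-perturbation together with the strict monotonicity and continuity of $V$ on $[x_2,\infty)$ is exactly what is needed to bridge this gap, and both hypotheses on $V$ are used essentially. Note that this argument does not require any regularity of $U$.
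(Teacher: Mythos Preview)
Your proof is correct and is essentially identical to the paper's own argument: the paper also perturbs by $\varepsilon>0$ to get $U(x_1)=V(x_2)>V(x_2+\varepsilon)$, applies the defining implication of $\ge_s$ to obtain $U(x_1+y)\ge V(x_2+\varepsilon+y)$, and then lets $\varepsilon\to 0$. Your write-up is slightly more explicit about the role of continuity in the limiting step, but the approach is the same.
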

\begin{proof}
Since $V$ is strictly decreasing on $[x_2,\infty)$, for any $\varepsilon>0$ we have $U(x_1)=V(x_2)>V(x_2+\varepsilon)$.
Hence by the definition of $U\ge_s V$ in~\eqref{eq:defin 1 of being more stretched}, for $y\ge 0$ we have $U(x_1+y)\ge V(x_2+\varepsilon+y)$. By letting $\varepsilon\to 0$, the result follows.
\end{proof}

Recall from Section~\ref{subsec:notation} that $(U^H(t,x),L_t^H)$ is the solution of~\eqref{eq:FBP_CDF} with Heaviside initial condition
$H(x)=\1_{\{x<0\}}$. We can use the stretching lemma and a classical result of Bramson for the FKPP equation to prove the following lower bound on $L^H_t$ for large $t$.
\begin{lem} \label{lem:LHtlower}
There exists $C_0<\infty$ such that 
\[
L^H_t \ge \sqrt{2} t -\frac{3}{2\sqrt 2}\log(t+1)-C_0 \quad \forall t\ge 0.
\]
\end{lem}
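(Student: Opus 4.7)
The approach combines three ingredients: a comparison of $U^H$ with the classical FKPP equation, Bramson's front-position asymptotics, and the stretching lemma to bridge fixed level sets of $U^H(t,\cdot)$ with the free boundary $L^H_t$ itself.

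First, by Proposition~\ref{prop:Un} taken with $n=2$, $U^H(t,x)\geq U_2(t,x)$ pointwise, where $U_2$ solves the classical FKPP equation $\partial_t U_2=\tfrac12\Delta U_2+U_2(1-U_2)$ with $U_2(0,\cdot)=H$. Bramson's theorem~\cite{Bramson1983} then gives, for each fixed $\alpha\in(0,1)$,
\[
m^\alpha_2(t):=\sup\{x:U_2(t,x)\geq \alpha\}\;=\;\sqrt 2\,t-\tfrac{3}{2\sqrt 2}\log t+c_\alpha+o(1)\qquad\text{as }t\to\infty,
\]
for some $c_\alpha\in\R$, and hence $\sup\{x:U^H(t,x)\geq \alpha\}\geq m^\alpha_2(t)$.

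The main step is to convert this level-set bound into a bound on $L^H_t$, which I would carry out using the stretching lemma. A direct case analysis based on~\eqref{eq:defin 1 of being more stretched} gives $\Pi_{\min}\geq_s H$; since the FBP solution with initial condition $\Pi_{\min}$ is the minimal travelling wave $(\Pi_{\min}(\cdot-\sqrt 2 t),\sqrt 2 t)$, Lemma~\ref{lem:extended maximum principle} upgrades this to $\Pi_{\min}(\cdot-\sqrt 2 t)\geq_s U^H(t,\cdot)$ for every $t\geq 0$. Applying Lemma~\ref{lem:stretchdecr} with $x_1=\sqrt 2 t$ and $x_2=L^H_t$---noting that $\Pi_{\min}(0)=1=U^H(t,L^H_t)$ and that $U^H(t,\cdot)$ is continuous and strictly decreasing on $[L^H_t,\infty)$ by Proposition~\ref{prop:fbpsoln}(iv)---then yields the shape bound
\[
U^H(t,L^H_t+y)\;\leq\;\Pi_{\min}(y)\qquad\forall y\geq 0,
\]
so $\sup\{x:U^H(t,x)\geq \alpha\}\leq L^H_t+\Pi_{\min}^{-1}(\alpha)$. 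Combining this with the Bramson bound gives $L^H_t\geq \sqrt 2 t-\tfrac{3}{2\sqrt 2}\log t+c_\alpha-\Pi_{\min}^{-1}(\alpha)+o(1)$ as $t\to\infty$. To extend the inequality to all $t\geq 0$, I would use that $L^H_t$ extends continuously to $[0,\infty)$ with $L^H_0=0$ (Proposition~\ref{prop:fbpsoln}(ii)), so it is bounded below on any compact interval; replacing $\log t$ by $\log(t+1)$ makes the target right-hand side finite at $t=0$ and only weakens the bound for $t\geq 1$, so a suitably enlarged $C_0$ absorbs the small-$t$ regime.

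The main obstacle is the passage from ``$U^H\geq \alpha$ at a position $\sim \sqrt 2 t-\tfrac{3}{2\sqrt 2}\log t$'' to ``$U^H=1$ there, i.e.\ this position lies to the left of $L^H_t$''. The FKPP comparison and Bramson only deliver the former; the Feynman--Kac tools of Section~\ref{subsec:FK} naturally produce the matching upper bound $L^H_t\leq \sqrt 2 t$ rather than a lower one. The stretching comparison to $\Pi_{\min}$ resolves this: the Heaviside is the sharpest admissible initial condition, so $U^H(t,L^H_t+\cdot)$ drops from $1$ at least as quickly as the travelling-wave shape $\Pi_{\min}$, placing any fixed level set $\alpha\in(0,1)$ within a uniformly bounded distance $\Pi_{\min}^{-1}(\alpha)$ of $L^H_t$.
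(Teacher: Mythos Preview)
Your proof is correct and follows essentially the same route as the paper: compare $U^H$ to the FKPP solution $U_2$ via Proposition~\ref{prop:Un}, invoke Bramson's front asymptotics, and then use the stretching comparison $\Pi_{\min}\ge_s U^H(t,\cdot)$ together with Lemma~\ref{lem:stretchdecr} to show the $\alpha$-level set of $U^H(t,\cdot)$ lies within $\Pi_{\min}^{-1}(\alpha)$ of $L^H_t$. The paper fixes $\alpha=1/2$ and cites a form of Bramson's bound already stated with $\log(t+1)$, whereas you keep $\alpha$ general and handle small $t$ by a separate continuity argument, but these are cosmetic differences.
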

\begin{proof}
For $n\in \Nm$, let $U^H_n(t,x)$ solve~\eqref{eq:CDF_n} with $U_0(x)=\1_{\{x<0\}}$, and for $t\ge 0$, let
\[
m_{1/2}^{(n)}(t)=\sup\{x\in \R:U^H_n(t,x)\ge 1/2\}\quad \text{and}\quad m_{1/2}(t)=\sup\{x\in \R:U^H(t,x)\ge 1/2\}.
\]
Then by Proposition~\ref{prop:Un} we have $m_{1/2}(t)\ge m_{1/2}^{(n)}(t)$ $\forall n\in \Nm$, $t\ge 0$.
By~\cite[Theorem~3, p.~141]{Bramson1983},
there exists $K<\infty$ such that
\[
m_{1/2}^{(2)}(t) \ge \sqrt{2} t -\frac{3}{2\sqrt 2}\log(t+1)-K \quad \forall t\ge 0.
\]
By Lemma~\ref{lem:extended maximum principle}, since $H\le_s \Pi_{\min}$ we have $U^H(t,\cdot)\le_s \Pi_{\min}$ for $t\ge 0$.
Then by Lemma~\ref{lem:stretchdecr} and Proposition~\ref{prop:fbpsoln}(iv), and since $\Pi_{\min}(0)=1$, for $t>0$ it follows that
$U^H(t,L^H_t+\Pi_{\min}^{-1}(1/2))\le 1/2$.
Therefore
\[
m_{1/2}(t)\le L^H_t+\Pi_{\min}^{-1}(1/2) \quad \forall t>0,
\]
and the result follows.
\end{proof}

Recall~\eqref{eq:L0defn};
the following lemma is proved in~\cite{Berestycki2024} as a consequence of the comparison principle.
\begin{lem}[Lemma 2.12 in \cite{Berestycki2024}] \label{lem:less stretching means slower boundary}
Suppose $U_0$ and $V_0$ satisfy Assumption~\ref{assum:standing assumption ic}, and let $(U(t,x),L^U_t)$ and $(V(t,x),L^V_t)$ denote the solutions of~\eqref{eq:FBP_CDF} with initial conditions $U_0$, $V_0$ respectively.
	If $U_0\geq_s V_0$ and $L^U_0,L^V_0>-\infty$, then $L^U_t-L^U_0\geq L^V_t-L^V_0$ for all $t\geq 0$.
\end{lem}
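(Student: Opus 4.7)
The plan is to reduce to the case $L^U_0 = L^V_0$ via translation, then establish a pointwise comparison $U_0 \ge W_0$ (for a translated version $W_0$ of $V_0$), and finally apply the comparison principle (Proposition~\ref{prop:fbpcomparison}).

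\medskip

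\noindent\textbf{Step 1 (Reduction by translation).} Set $c := L^U_0 - L^V_0 \in \R$, let $W_0(x) := V_0(x-c)$, and let $(W(t,x),L^W_t)$ denote the solution of~\eqref{eq:FBP_CDF} with initial condition $W_0$. The function $W_0$ satisfies Assumption~\ref{assum:standing assumption ic}, and by translation invariance of~\eqref{eq:FBP_CDF} we have $L^W_t = L^V_t + c$ for all $t \ge 0$; in particular $L^W_0 = L^U_0$. Moreover, a direct inspection of~\eqref{eq:defin 1 of being more stretched} (shift $c''$ for $W_0$ corresponds to shift $c''-c$ for $V_0$) shows that $U_0 \ge_s V_0$ implies $U_0 \ge_s W_0$. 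It therefore suffices to prove $L^U_t \ge L^W_t$ for every $t \ge 0$.

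\medskip

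\noindent\textbf{Step 2 (Pointwise comparison $U_0 \ge W_0$).} For $x < L^U_0 = L^W_0$ both functions equal $1$. For $x \ge L^U_0$ I use a limiting argument with a small positive shift. Fix $\epsilon > 0$, set $x_1 := L^U_0 - \epsilon/2$, and apply the $\ge_s$ relation with shift $c' := \epsilon$. Then $U_0(x_1) = 1$, while $W_0(x_1 + c') = W_0(L^U_0 + \epsilon/2) < 1$: indeed, since $L^W_0 = \inf\{x : W_0(x) < 1\}$, for every $\delta > 0$ there exists $y \in [L^W_0, L^W_0 + \delta)$ with $W_0(y) < 1$, and monotonicity then gives $W_0(L^W_0 + \delta) \le W_0(y) < 1$. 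The strict inequality $U_0(x_1) > W_0(x_1 + c')$ combined with $\ge_s$ yields
\[
U_0(x_2) \ge W_0(x_2 + \epsilon) \quad \text{for every } x_2 \ge L^U_0 - \epsilon/2.
\]
Letting $\epsilon \to 0^+$ and using right-continuity of $W_0$ at $x_2$ gives $U_0(x_2) \ge W_0(x_2)$ for all $x_2 \ge L^U_0$, completing the pointwise comparison.

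\medskip

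\noindent\textbf{Step 3 (Comparison principle).} Applying Proposition~\ref{prop:fbpcomparison} to the pointwise inequality $W_0 \le U_0$ yields $W(t,x) \le U(t,x)$ for all $t>0$ and $x \in \R$. If $x \le L^W_t$ then $W(t,x) = 1$ by Proposition~\ref{prop:fbpsoln}(i), so $U(t,x) \ge 1$, hence $U(t,x) = 1$, and consequently $x \le L^U_t$ by Proposition~\ref{prop:fbpsoln}(i) again. Therefore $L^U_t \ge L^W_t = L^V_t + c$ for every $t>0$, which rearranges to $L^U_t - L^U_0 \ge L^V_t - L^V_0$. The same inequality at $t=0$ holds trivially by choice of $c$.

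\medskip

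\noindent\textbf{Main obstacle.} The only non-routine step is the pointwise comparison in Step~2. Applying the $\ge_s$ definition with shift $c'=0$ fails to produce any point $x_1$ with strict inequality $U_0(x_1) > W_0(x_1)$, since $U_0 = W_0 = 1$ below the common left-boundary and the sign of $U_0 - W_0$ above the boundary is what we are trying to establish. The key idea is to trade in a small positive shift $c' = \epsilon$, which makes the strict inequality at $x_1 = L^U_0 - \epsilon/2$ automatic (using that $W_0$ drops strictly below $1$ on any right-neighbourhood of its boundary), and then recover the unshifted inequality in the limit by right-continuity. The finiteness hypothesis $L^U_0, L^V_0 > -\infty$ is used here to make the translation step well-defined.
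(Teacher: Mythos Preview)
Your proof is correct and follows precisely the approach the paper indicates: the paper does not prove this lemma but cites it from \cite{Berestycki2024}, noting only that it is ``a consequence of the comparison principle''. Your argument makes this explicit---translate to align the left boundaries, use the $\geq_s$ relation with a small positive shift together with right-continuity to obtain the pointwise inequality $U_0\geq W_0$, and then apply Proposition~\ref{prop:fbpcomparison}.
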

As a useful consequence of Lemma~\ref{lem:less stretching means slower boundary}, we can establish the following lower bound on the speed of the free boundary.
\begin{lem}\label{lem:boundary locally Lipschitz from the left}
	There exist $(t_c:-\infty<c<\sqrt{2})$ such that:
	\begin{enumerate}
		\item $t_c\ra 0$ as $c\ra -\infty$;
		\item for any $U_0$ satisfying Assumption~\ref{assum:standing assumption ic}, letting $(U(t,x),L_t)$ denote the solution of \eqref{eq:FBP_CDF}, 
		\[
		L_t-L_s\geq c(t-s) \; \forall t_{c}\leq s\leq t.
		\]
	\end{enumerate}
\end{lem}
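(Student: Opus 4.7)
The plan is, in three steps, to reduce the claim to a statement about the Heaviside boundary $L^H$ using the stretching lemma; to handle ``large'' time increments via a superadditivity property of $L^H$ combined with the explicit lower bound from Lemma~\ref{lem:LHtlower}; and to handle short time increments via a local Lipschitz-from-below estimate on $L^H$.

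First, any $U_0$ satisfying Assumption~\ref{assum:standing assumption ic} satisfies $U_0 \ge_s H$: if $U_0(x_1) > H(x_1+c)$, then since $U_0 \le 1$ and $H \in \{0,1\}$ we must have $H(x_1+c)=0$, i.e., $x_1+c\ge 0$; for $x_2 \ge x_1$ this forces $H(x_2+c)=0 \le U_0(x_2)$. By Lemma~\ref{lem:extended maximum principle}, $U(s,\cdot) \ge_s U^H(s,\cdot)$ for all $s \ge 0$, and since $L_s, L^H_s \in \R$ for $s>0$ by Proposition~\ref{prop:fbpsoln}, Lemma~\ref{lem:less stretching means slower boundary} applied with $U(s,\cdot)$ and $U^H(s,\cdot)$ as initial conditions yields $L_t - L_s \ge L^H_t - L^H_s$ for all $t \ge s > 0$. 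Specialising to $U = U^H$ gives the superadditivity $L^H_{s+r} - L^H_s \ge L^H_r$ for all $s, r \ge 0$, and reduces the lemma to proving the same statement for $L^H$.

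Next, combining superadditivity with Lemma~\ref{lem:LHtlower},
\[
L^H_{s+r} - L^H_s \ \ge\ L^H_r \ \ge\ \sqrt{2}\, r - \tfrac{3}{2\sqrt{2}}\log(r+1) - C_0,
\]
which is $\ge cr$ whenever $(\sqrt{2}-c)\,r \ge \tfrac{3}{2\sqrt{2}}\log(r+1) + C_0$. Letting $r_0(c)$ be the smallest such $r$, one has $r_0(c) \to 0$ as $c \to -\infty$. This gives the desired bound for $t-s \ge r_0(c)$, uniformly in $s \ge 0$.

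The main obstacle is the remaining case $0 \le t-s < r_0(c)$, where one needs a local Lipschitz-from-below bound of the form $L^H_{s+r} - L^H_s \ge -K(s)\,r$ for small $r \ge 0$, with $K(s) < \infty$ for $s>0$ and $K(s) \to \infty$ as $s \to 0$. I would approach this via the Feynman-Kac representation (Lemma~\ref{lem:FKforinfinitemass}) applied to the smooth initial condition $U^H(s,\cdot)$, combined with the Gaussian tail bound in Lemma~\ref{lem:GaussiantailU} and the gradient bound $|\partial_x U^H| \le 3 + t^{-1/2}$ from Proposition~\ref{prop:fbpsoln}(v). Heuristically, a drop $L^H_s - L^H_{s+r} = \Delta > 0$ would force $U^H(s+r, L^H_s)$ to decrease from $1$ by roughly $\Delta^2$ via the quadratic vanishing of $1 - U^H$ at the boundary (a consequence of the boundary condition $\partial_x U^H(t, L^H_t) = 0$), while the PDE and the above estimates control this decrease in terms of $r$; together these should give $\Delta \le K(s)\, r$ for a suitable $K(s)$ with the required behaviour. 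Once this linear bound is secured, setting $t_c$ so that $K(t_c) = |c|$ completes the proof, with $t_c \to 0$ as $c \to -\infty$.
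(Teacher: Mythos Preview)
Your reduction to $L^H$ (Step 1) and the large-increment estimate via superadditivity $L^H_{s+r}-L^H_s\ge L^H_r$ combined with Lemma~\ref{lem:LHtlower} (Steps 2--3) are correct. The genuine gap is Step~4. Your heuristic for the local Lipschitz-from-below bound does not actually yield a linear estimate: the quadratic vanishing of $1-U^H$ at the boundary gives $1-U^H(s+r,L^H_s)\gtrsim \Delta^2$, while any bound on $\partial_t U^H$ near the boundary gives $1-U^H(s+r,L^H_s)\lesssim r$. Together these produce only $\Delta\lesssim \sqrt{r}$, a H\"older-$\tfrac12$ bound. That is not enough: you need $-\Delta\ge c r$ for small $r$ and fixed $c<0$, i.e.\ $\Delta\le |c|\,r$, and $\sqrt{r}\gg r$ as $r\to 0$. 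Upgrading to Lipschitz would require controlling $\partial_{xx}U^H$ (equivalently $\partial_x u^H$) at the boundary, which is not among the tools you list; in the paper such regularity ultimately comes from \cite{Chen2022} and is not used here.

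The paper's proof sidesteps this entirely by exploiting a property strictly stronger than superadditivity: applying Lemma~\ref{lem:extended maximum principle} to $U^H(t_1,\cdot)\le_s U^H(t_2,\cdot)$ and Lemma~\ref{lem:less stretching means slower boundary} gives that $t\mapsto L^H_{t+r}-L^H_t$ is non-decreasing for each fixed $r\ge 0$ (you only used the case $t_1=0$). Equivalently, $s\mapsto (L^H_s-L^H_0)/s$ is non-decreasing. One then picks $T_c$ with $L^H_{T_c}\ge cT_c$ via Lemma~\ref{lem:LHtlower}, sets $f(s)=L^H_s-\tfrac{L^H_{T_c}}{T_c}s$ on $[0,T_c]$, and takes $t_c\in(0,T_c)$ a minimiser of $f$. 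For $s\in[t_c,T_c]$ one gets $L^H_s-L^H_{t_c}\ge \tfrac{L^H_{T_c}}{T_c}(s-t_c)\ge c(s-t_c)$ directly from $f(s)\ge f(t_c)$, and the increasing-increments property then propagates this to all $s\ge t\ge t_c$. No local regularity estimate on $L^H$ is needed. The statement $t_c\to 0$ as $c\to-\infty$ follows because $T_c$ can be taken arbitrarily small when $|c|$ is large.
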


\begin{proof}
Recall from Section~\ref{subsec:notation} that $(U^H(t,x),L_t^H)$ is the solution of~\eqref{eq:FBP_CDF} with Heaviside initial condition
$H(x)=\1_{\{x<0\}}$. It follows from the stretching lemma (Lemma~\ref{lem:extended maximum principle}) that 
	\begin{equation}\label{eq:Heaviside becomes more stretched}
		H \leq_s U^H(s,\cdot) \leq_s U^H(t,\cdot) \quad\text{for all}\quad 0\leq s\leq t<\infty.
	\end{equation}
For any $n>0$, by \eqref{eq:Heaviside becomes more stretched} and Lemma \ref{lem:less stretching means slower boundary}, $(L^H_{\frac{k+1}{n}}-L^H_{\frac{k}{n}})_{k=0}^\infty$ is non-decreasing in $k$.
Therefore, by first considering the case $s,t\in \mathbb Q$ and then using that $u\mapsto L^H_u$ is continuous,
\begin{equation} \label{eq:LHtgetsfaster}
\frac{L_s^H-L_0^H}{s}\leq \frac{L_{t}^H-L_0^H}{t} \quad \forall 0<s\le t.
\end{equation}
	
	We now essentially follow the proof of the mean value theorem. For any $c<\sqrt{2}$ we can choose $ T_c>0$ such that $L^H_{ T_c}-L^H_0\geq c T_c$ by Lemma~\ref{lem:LHtlower}. 
	We now let
	\[
	f(s)= L^H_s-L^H_0-\frac{L^H_{ T_c}-L^H_0}{ T_c}s\quad \text{for }s\in [0,T_c],
	\]
	and choose $t_c\in (0, T_c)$ such that $f(t_c)=\inf_{s\in [0,T_c]}f(s)$, which is possible since $f(0)=0=f(T_c)$ and $f(s)\le 0$ $\forall s\in (0,T_c)$ by~\eqref{eq:LHtgetsfaster}.
	Then for any $s\in [t_c, T_c]$,
	\[
	L^H_s-L^H_{t_c}=(L^H_s-L^H_0)-(L^H_{t_c}-L^H_0)\geq \frac{L^H_{ T_c}-L^H_0}{ T_c}(s-t_c)\geq c(s-t_c),
	\]
	where the last inequality follows from our choice of $T_c$.
	By \eqref{eq:Heaviside becomes more stretched} and Lemma \ref{lem:less stretching means slower boundary}, it follows that
	$L^H_s-L^H_{t}\geq c(s-t)$ for all $s\geq t\ge t_c$.
	
	Now to see that $t_c$ can be chosen so that $t_c\ra 0$ as $c\ra -\infty$, note that $\frac{L^H_{\delta}-L^H_0}{\delta}$ is finite for any $\delta>0$, and therefore, for any $\delta>0$, for $K>0$ sufficiently large, for $c\le -K$ we can choose $T_c\le \delta$ and hence $t_c<\delta$ in the above argument.
	
	Finally, take $U_0$ satisfying Assumption~\ref{assum:standing assumption ic}, and let $(U(t,x),L_t)$ denote the solution of \eqref{eq:FBP_CDF}.
By Lemma~\ref{lem:extended maximum principle}, we have $U(s,\cdot)\geq_s U^H(s,\cdot)$ for any $s\ge 0$, and so the result follows from Lemma~\ref{lem:less stretching means slower boundary}.
\end{proof}
Recall the definition of $\Pi_c$ for $c\ge \sqrt{2}$ in~\eqref{eq:Picdefn}. We now give an upper bound on the speed of the free boundary, if the initial condition is less stretched than $\Pi_c$ for some $c\ge \sqrt 2$.
\begin{lem}\label{lem:boundary Lipschitz from the right when less stretched than a travelling wave}
Suppose $U_0$ satisfies Assumption~\ref{assum:standing assumption ic}, and that
$U_0\leq_s \Pi_c$ for some $c\ge \sqrt{2}$. 
Let $(U(t,x),L_t)$ denote the solution of~\eqref{eq:FBP_CDF}.
Then $L_t-L_s\leq c(t-s)$ for all $0\leq s\leq t$.
\end{lem}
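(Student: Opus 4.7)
The plan is to combine a Markov restart of the free boundary problem with the stretching lemma and Lemma~\ref{lem:less stretching means slower boundary}. Fix $s\geq 0$. By the uniqueness statement in Proposition~\ref{prop:fbpsoln}, $(U(s+\tau,\cdot),L_{s+\tau})_{\tau\geq 0}$ is the unique classical solution of~\eqref{eq:FBP_CDF} with initial condition $U(s,\cdot)$, whose initial boundary in this restarted problem is $L_s$. On the other hand, the travelling wave $V(\tau,x):=\Pi_c(x-c\tau)$, with boundary $L^V_\tau=c\tau$, is the unique classical solution of~\eqref{eq:FBP_CDF} with initial condition $\Pi_c$, whose initial boundary is $L^V_0=0$.

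The key claim is that $U(s,\cdot)\leq_s \Pi_c$, so that Lemma~\ref{lem:less stretching means slower boundary} can be applied to this pair. Applying the stretching lemma (Lemma~\ref{lem:extended maximum principle}) to the initial conditions $U_0\leq_s \Pi_c$ gives $U(s,\cdot)\leq_s V(s,\cdot)=\Pi_c(\cdot-cs)$. The relation~\eqref{eq:defin 1 of being more stretched} is manifestly translation-invariant in either argument (any shift can be absorbed into the free parameter $c$ of the definition), so this is equivalent to $U(s,\cdot)\leq_s \Pi_c$. To invoke Lemma~\ref{lem:less stretching means slower boundary} we also need both initial boundaries to be finite: $L^V_0=0$ is trivial, and $L_s\in\R$ for $s>0$ by the definition of a classical solution; for $s=0$ we have $L_0>-\infty$, because $L_0=-\infty$ would mean $U_0(y)<1$ for all $y\in\R$, in which case setting $x_1=0$ in~\eqref{eq:defin 1 of being more stretched} would force $\Pi_c(x_2)\geq U_0(y)$ for every $x_2\geq 0$ and every $y\in\R$, contradicting $\Pi_c(x_2)\to 0$ as $x_2\to\infty$ together with $\sup_y U_0(y)=1$ (Assumption~\ref{assum:standing assumption ic}).

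Lemma~\ref{lem:less stretching means slower boundary}, applied to the restarted problem and to $V$, then yields
\[
ct = L^V_t-L^V_0 \;\geq\; L_{s+t}-L_s \quad\text{for all }t\geq 0,
\]
which, after renaming $s+t\mapsto t$, is the desired inequality $L_t-L_s\leq c(t-s)$ for all $0\leq s\leq t$. There is no substantial obstacle: the only points requiring care are the legitimacy of the Markov restart (immediate from the uniqueness in Proposition~\ref{prop:fbpsoln} together with continuity of $U$ at time $s$) and the translation invariance of $\leq_s$ that lets us replace the time-$s$ profile $\Pi_c(\cdot-cs)$ by $\Pi_c$ before invoking Lemma~\ref{lem:less stretching means slower boundary}.
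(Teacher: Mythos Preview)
Your proof is correct and follows essentially the same approach as the paper: both arguments observe that $L_0>-\infty$ from the stretching relation, propagate $U(s,\cdot)\leq_s\Pi_c$ via the stretching lemma (Lemma~\ref{lem:extended maximum principle}), and then apply Lemma~\ref{lem:less stretching means slower boundary} after restarting at time $s$. Your version is simply more explicit about the Markov restart and the translation invariance of $\leq_s$, points which the paper leaves implicit.
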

\begin{proof}
Observe that by~\eqref{eq:defin 1 of being more stretched}, $U_0\leq_s \Pi_c$ for some $c\ge \sqrt 2$ and $U_0\not\equiv 0$ implies $L_0>-\infty$.
Also, by Lemma~\ref{lem:extended maximum principle} we have $U(s,\cdot)\leq_s \Pi_c$ for all $s\ge 0.$
The result is then a simple consequence of Lemma~\ref{lem:less stretching means slower boundary} applied with initial condition $U(s,\cdot)$.
\end{proof}

We obtain the following immediate corollary of Lemma~\ref{lem:boundary locally Lipschitz from the left} and Lemma~\ref{lem:boundary Lipschitz from the right when less stretched than a travelling wave}.
\begin{cor}\label{cor:locally Lipschitz if less stretched than a travelling wave}
Suppose $U_0$ satisfies Assumption~\ref{assum:standing assumption ic}, and that
$U_0\leq_s \Pi_c$ for some $c\ge \sqrt{2}$. 
Let $(U(t,x),L_t)$ denote the solution of~\eqref{eq:FBP_CDF}.
Then $t\mapsto L_t$ is Lipschitz continuous on $[\delta, \infty)$
 for any $\delta>0$.
\end{cor}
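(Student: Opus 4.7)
The plan is to combine the two preceding lemmas directly, which is why the author labels this an immediate corollary. Fix $\delta>0$. I would first invoke Lemma~\ref{lem:boundary locally Lipschitz from the left}: since $t_{c'}\to 0$ as $c'\to-\infty$, there exists some $c'<0$ (depending on $\delta$) such that $t_{c'}\le \delta$, and hence
\[
L_t-L_s\ge c'(t-s)\qquad \text{for all } \delta\le s\le t.
\]
Taking $\kappa_1:=|c'|<\infty$, this provides the one-sided Lipschitz bound $L_s-L_t\le \kappa_1(t-s)$ on $[\delta,\infty)$.

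Next I would apply Lemma~\ref{lem:boundary Lipschitz from the right when less stretched than a travelling wave}, which under the hypothesis $U_0\le_s \Pi_c$ (with $c\ge\sqrt 2$) yields the complementary bound
\[
L_t-L_s\le c(t-s)\qquad \text{for all } 0\le s\le t,
\]
and in particular on $[\delta,\infty)$. Setting $K:=\max(\kappa_1,c)<\infty$ and combining, for all $s,t\in [\delta,\infty)$ we have $|L_t-L_s|\le K|t-s|$, which is precisely Lipschitz continuity on $[\delta,\infty)$.

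There is no real obstacle here: the only mild subtlety is ensuring that the constant $c'$ from Lemma~\ref{lem:boundary locally Lipschitz from the left} can be chosen (depending on $\delta$) large enough in modulus to force $t_{c'}\le\delta$, which is exactly the content of item~1 of that lemma. Note that the resulting Lipschitz constant $K$ depends on $\delta$, which is consistent with the corollary only claiming Lipschitz continuity on $[\delta,\infty)$ rather than global Lipschitz continuity on $(0,\infty)$.
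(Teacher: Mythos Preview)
Your proof is correct and is exactly the argument the paper has in mind: the corollary is stated as an immediate consequence of Lemma~\ref{lem:boundary locally Lipschitz from the left} and Lemma~\ref{lem:boundary Lipschitz from the right when less stretched than a travelling wave}, and you have combined them precisely as intended.
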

We will also use the following result about the long-term asymptotics of the position of the free boundary.
\begin{lem} \label{lem:Lttsqrt2}
Suppose $U_0$ satisfies Assumption~\ref{assum:standing assumption ic} with $\limsup_{x\to \infty} \frac 1x \log U_0(x)\le -\sqrt{2}$.
Let $(U(t,x),L_t)$ denote the solution of~\eqref{eq:FBP_CDF};
then
\[
\lim_{t\to \infty}\frac{L_t}t=\sqrt 2.
\]
\end{lem}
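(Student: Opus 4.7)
The strategy is to prove $\liminf_{t\to\infty} L_t/t \geq \sqrt 2$ and $\limsup_{t\to\infty} L_t/t \leq \sqrt 2$ separately, noting that only the second bound will require the decay hypothesis on $U_0$.

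The lower bound is essentially free from Lemma~\ref{lem:boundary locally Lipschitz from the left}: for each $c<\sqrt 2$ there exists $t_c>0$ (independent of $U_0$) such that $L_t \geq L_{t_c} + c(t-t_c)$ for all $t\geq t_c$. Dividing by $t$, sending $t\to\infty$ and then $c\nearrow \sqrt 2$ gives $\liminf_{t\to\infty} L_t/t \geq \sqrt 2$.

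For the upper bound I would argue by contradiction. Plugging $x=L_t$ into the Feynman-Kac representation of Lemma~\ref{lem:FKforinfinitemass} and bounding the occupation-time exponent by $t$, the normalisation $U(t,L_t)=1$ yields
\[
1 \leq e^t\,\Esub{L_t}{U_0(B_t)}.
\]
Fix any $\epsilon\in(0,\sqrt 2)$. By the hypothesis $\limsup_{x\to\infty}x^{-1}\log U_0(x)\leq -\sqrt 2$ there exists $K_\epsilon<\infty$ such that $U_0(x)\leq e^{-(\sqrt 2-\epsilon)x}$ for all $x\geq K_\epsilon$. Splitting the expectation at $K_\epsilon$ and using $U_0\leq 1$ on the lower part,
\[
\Esub{L_t}{U_0(B_t)} \leq \psub{L_t}{B_t\leq K_\epsilon} + \Esub{L_t}{e^{-(\sqrt 2-\epsilon)B_t}}.
\]
Suppose, for contradiction, that $L_{t_n}\geq c\,t_n$ for some $c>\sqrt 2$ along a sequence $t_n\to\infty$. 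A Gaussian tail bound controls the first term by $\exp\!\bigl(-(L_{t_n}-K_\epsilon)^2/(2t_n)\bigr) \leq \exp\!\bigl(-c^2 t_n/2 + O(1)\bigr)$, and the second term is the standard Gaussian exponential moment,
\[
\Esub{L_{t_n}}{e^{-(\sqrt 2-\epsilon)B_{t_n}}} = \exp\!\Bigl(t_n\bigl[-(\sqrt 2-\epsilon)(L_{t_n}/t_n)+(\sqrt 2-\epsilon)^2/2\bigr]\Bigr) \leq \exp\!\Bigl(t_n\bigl[-(\sqrt 2-\epsilon)c+(\sqrt 2-\epsilon)^2/2\bigr]\Bigr).
\]
Since $c>\sqrt 2$ one has $c^2/2>1$, and at $\epsilon=0$ the coefficient of $t_n$ in the second exponent equals $1-\sqrt 2\,c<-1$; hence for all sufficiently small $\epsilon>0$ both coefficients are strictly less than $-1$. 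Both terms are therefore $o(e^{-t_n})$, contradicting $\Esub{L_{t_n}}{U_0(B_{t_n})}\geq e^{-t_n}$.

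The main (mild) obstacle is the optimisation at the end: one must check that the Gaussian exponential-moment rate $-(\sqrt 2-\epsilon)c+(\sqrt 2-\epsilon)^2/2$ drops below $-1$ once $c>\sqrt 2$ and $\epsilon$ is taken small enough. The arithmetic is immediate, but it is this balance --- between the growth rate $e^t$ coming from the linear FKPP drift and the decay rate of the heat kernel against $e^{-\sqrt 2 x}$ --- that pins the critical velocity at exactly $\sqrt 2$ and explains why the decay assumption in the lemma is sharp.
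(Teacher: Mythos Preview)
Your proof is correct and follows essentially the same approach as the paper. Both arguments use the Feynman--Kac bound $U(t,x)\le e^t\,\Esub{x}{U_0(B_t)}$ from Lemma~\ref{lem:FKforinfinitemass} for the upper bound and comparison with the Heaviside solution for the lower bound; the only cosmetic difference is that the paper packages the upper-bound calculation as ``the level set of $e^t\Esub{x}{U_0(B_t)}$ has speed $\sqrt 2$'' (citing Bramson), whereas you carry out the Gaussian/exponential-moment estimate explicitly by contradiction, and for the lower bound the paper invokes Lemma~\ref{lem:less stretching means slower boundary} together with Lemma~\ref{lem:LHtlower} while you cite their consequence Lemma~\ref{lem:boundary locally Lipschitz from the left}.
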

\begin{proof}
For $t\ge 0$ and $x\in \R$, let
\[
U^+(t,x):=e^t \Esub{x}{U_0(B_t)},
\]
and for $t\ge 0$,
let $L^+_t:=\sup\{x\in \Rm:U^+(t,x)\ge 1\}$.
Then since $\limsup_{x\to \infty} \frac 1x \log U_0(x)\le -\sqrt{2}$ and $\lim_{x\to -\infty}U_0(x)=1$, by an easy calculation
(see e.g.~\cite[Corollary~1 of Lemma~4.2]{Bramson1983}),
\begin{equation} \label{eq:mphilimit}
	\lim_{t\to \infty} \frac{L^+_t}{t}= \sqrt{2}.
\end{equation}
By Lemma~\ref{lem:FKforinfinitemass}, 
we have $U(t,x)\le U^+(t,x)$ $\forall t>0,$ $x\in \Rm$, and so
$L_t\le L^+_t$ $\forall t> 0$.
Since we also have $L_{t}-L_1\ge L^H_{t-1}$ for $t\ge 1$ by Lemma~\ref{lem:less stretching means slower boundary}, the result follows from Lemma~\ref{lem:LHtlower}.
\end{proof}

We end this section with the following convergence result, which will be used in Section~\ref{section:magic formula}.
\begin{lem} \label{lem:convofLn}
Suppose, for each $n\in \Nm$, $U_0^{(n)}$ satisfies Assumption~\ref{assum:standing assumption ic}, and let $(U^{(n)}(t,x),L^{(n)}_t)$ solve~\eqref{eq:FBP_CDF} with initial condition $U_0^{(n)}$.
Suppose $U_0$ satisfies Assumption~\ref{assum:standing assumption ic}, and $(U^{(n)}_0)_{n\in \Nm}$ is a non-decreasing sequence converging uniformly on $\R$ to $U_0$ as $n\to \infty$.
Let $(U(t,x),L_t)$ solve~\eqref{eq:FBP_CDF} with initial condition $U_0$.
Then for each $t>0$, $(L^{(n)}_t)_{n\in \Nm}$ is a non-decreasing sequence with
\[
L^{(n)}_t \to L_t \quad \text{as }n\to \infty.
\]
\end{lem}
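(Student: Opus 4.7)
The plan starts by setting $L^\infty_t:=\lim_n L^{(n)}_t$, which exists and satisfies $L^\infty_t\le L_t$ because $U_0^{(n)}\le U_0^{(n+1)}\le U_0$ forces $U^{(n)}\le U^{(n+1)}\le U$ via the comparison principle (Proposition~\ref{prop:fbpcomparison}), and the characterisation $L_t=\sup\{x:U(t,x)=1\}$ from Proposition~\ref{prop:fbpsoln}(i) then gives the claimed ordering on the free boundaries. The goal is to show $L^\infty_t=L_t$ for each $t>0$.

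The second step passes to the limit in the Feynman-Kac representation of Lemma~\ref{lem:FKforinfinitemass} applied to each $(U^{(n)},L^{(n)})$. By Lemma~\ref{lem:Bramsongronwall} and the uniform convergence $U_0^{(n)}\to U_0$, we have $U^{(n)}(t,x)\to U(t,x)$ uniformly in $x$. On the right-hand side, $U_0^{(n)}(B_t)\to U_0(B_t)$ pointwise, the sets $\{s\in[0,t]:B_s\ge L^{(n)}_{t-s}\}$ are nested (since $L^{(n)}$ is non-decreasing in $n$) and decrease to $\{s:B_s\ge L^\infty_{t-s}\}$, and the integrand is dominated by $e^t$. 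Dominated convergence and a second application of Lemma~\ref{lem:FKforinfinitemass} directly to $(U,L)$ thus give
\[
U(t,x)=\mathbb{E}_x\!\left[U_0(B_t)\,e^{\Leb(\{s\in[0,t]:B_s\ge L^\infty_{t-s}\})}\right]=\mathbb{E}_x\!\left[U_0(B_t)\,e^{\Leb(\{s\in[0,t]:B_s\ge L_{t-s}\})}\right].
\]
Since $L^\infty\le L$ implies the first integrand dominates the second, the $\mathbb{P}_x$-a.s.\ equality forces $U_0(B_t)\cdot \Leb(\{s:L^\infty_{t-s}\le B_s<L_{t-s}\})=0$. Writing $M:=\sup\{y:U_0(y)>0\}\in(-\infty,\infty]$ (with $M>-\infty$ by Assumption~\ref{assum:standing assumption ic}), Fubini then yields $\int_0^t\mathbb{P}_x(B_t<M,\,B_s\in[L^\infty_{t-s},L_{t-s}))\,ds=0$ for every $x$. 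Since the Gaussian joint density of $(B_s,B_t)$ is strictly positive, this forces $L^\infty_r=L_r$ for Lebesgue-a.e.\ $r\in(0,t)$, and hence for a.e.\ $r>0$ as $t$ was arbitrary.

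The final and main obstacle is upgrading this almost-everywhere identity to \emph{pointwise} equality. For a fixed $t_0>0$, I invoke Lemma~\ref{lem:boundary locally Lipschitz from the left} with $c<0$ sufficiently negative that $t_c<t_0$; the constants $t_c$ there are uniform over all initial conditions satisfying Assumption~\ref{assum:standing assumption ic}, so $L^{(n)}_{t_0}-L^{(n)}_s\ge c(t_0-s)$ holds for all $s\in[t_c,t_0]$ and all $n$. Letting $n\to\infty$ gives $L^\infty_{t_0}\ge L^\infty_s+c(t_0-s)$ on the same range. Because $\{r:L^\infty_r=L_r\}$ has full Lebesgue measure and is therefore dense in $(t_c,t_0)$, one may select $s_k\nearrow t_0$ with $L^\infty_{s_k}=L_{s_k}$. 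The continuity of $r\mapsto L_r$ (Proposition~\ref{prop:fbpsoln}) then gives $L^\infty_{t_0}\ge L_{s_k}+c(t_0-s_k)\to L_{t_0}$, and combined with $L^\infty_{t_0}\le L_{t_0}$ this yields $L^\infty_{t_0}=L_{t_0}$. Without the uniform, initial-condition-independent one-sided Lipschitz bound, isolated downward ``spikes'' of $L^\infty$ at individual times could not be ruled out despite the a.e.\ agreement, which is why I anticipate this last step to be the critical one.
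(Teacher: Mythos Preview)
Your proof is correct and takes a genuinely different route from the paper's. The paper argues directly and quantitatively: fixing $t>0$ and a small $\varepsilon>0$, it shows by contradiction (via Lemma~\ref{lem:FKforinfinitemass}) that for $n$ large the boundary $L^{(n)}_s$ cannot stay below $L_{t-2\varepsilon}-2\varepsilon^{1/3}$ throughout $[t-2\varepsilon,t]$, because otherwise $U^{(n)}(t,\cdot)$ would exceed $1$; it then combines this with Lemma~\ref{lem:less stretching means slower boundary} and continuity of $L$ and $L^H$ to conclude. Your argument is more indirect but conceptually clean: you pass to the limit in the Feynman--Kac representation to obtain \emph{two} representations of $U(t,x)$, one with $L^\infty$ and one with $L$, and use the ordering $L^\infty\le L$ together with positivity of Gaussian densities to force $L^\infty_r=L_r$ for a.e.\ $r$; you then invoke the uniform one-sided Lipschitz bound of Lemma~\ref{lem:boundary locally Lipschitz from the left} to upgrade to pointwise equality. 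The paper's proof is shorter and avoids the a.e.-to-pointwise step entirely, while your approach highlights a structural fact (that the limiting boundary already yields the correct Feynman--Kac formula for $U$) and makes essential use of the uniformity of the constants $t_c$ in Lemma~\ref{lem:boundary locally Lipschitz from the left} over all initial data---a dependence the paper's argument does not need.
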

\begin{proof}
Note first that by the comparison principle in Proposition~\ref{prop:fbpcomparison}, $L^{(n)}_t\le L_t$ $\forall t>0,$ $n\in \Nm$.
By Lemma~\ref{lem:Bramsongronwall}, for any $s>0$ and $x\in \R$ we have that $U^{(n)}(s,x)\to U(s,x)$ as $n\to \infty$.

Now fix $t>0$, and take $\varepsilon\in (0,t/2)$ sufficiently small that
\begin{equation} \label{eq:epssmallassump}
e^{2\varepsilon}(1-\varepsilon)(1-4e^{-\varepsilon^{-1/3}/4})>1.
\end{equation}
Then take $N_\varepsilon<\infty$ such that $U^{(n)}(t-2\varepsilon,L_{t-2\varepsilon})\ge 1-\varepsilon$ for $n\ge N_\varepsilon$.
Take $n\ge N_\varepsilon$ and suppose (aiming for a contradiction) that $L^{(n)}_s<L_{t-2\varepsilon}-2\varepsilon^{1/3}$ $\forall s\in [t-2\varepsilon,t]$.
Then by Lemma~\ref{lem:FKforinfinitemass},
\begin{align*}
U^{(n)}(t,L_{t-2\varepsilon}-\varepsilon^{1/3})&\ge e^{2\varepsilon}(1-\varepsilon)\psub{L_{t-2\varepsilon}-\varepsilon^{1/3}}{B_s\ge L_{t-2\varepsilon}-2\varepsilon^{1/3}\, \forall s\in [t-2\varepsilon,t], B_t\le L_{t-2\varepsilon}}\\
&\ge e^{2\varepsilon}(1-\varepsilon)(1-4e^{-\varepsilon^{2/3}/(4\varepsilon)}),
\end{align*}
where the second inequality follows from the reflection principle and a Gaussian tail bound.
By~\eqref{eq:epssmallassump}, this gives us a contradiction.
Therefore, using Lemma~\ref{lem:less stretching means slower boundary} and recalling the definition of $L^H_t$ in Section~\ref{subsec:notation}, for $n\ge N_\varepsilon$,
\[
L^{(n)}_t\ge L_{t-2\varepsilon}-2\varepsilon^{1/3}+\inf_{u\in [0,2\varepsilon]}L^H_u.
\]
Since $\varepsilon>0$ could be chosen arbitrarily small, and $(L_s)_{s>0}$ and $(L^H_s)_{s\ge 0}$ are continuous with $L^H_0=0$, the result follows.
\end{proof}

\section{The Brunet-Derrida relation}\label{section:magic formula}

In this section, we prove Theorems~\ref{theo:magic formula},~\ref{theo:initial condition limsup bdy relation} and~\ref{theo:samebdysameU0}.
Before proving these results, we start by briefly explaining the following 
illustrative example mentioned in Section~\ref{subsec:mainresults} after Theorem~\ref{theo:magic formula}, which shows that
for $r\in (\sqrt{2},\infty)$, both sides of~\eqref{eq:magic formula} may be finite whilst~\eqref{eq:magic formula} fails to hold.
Recall from Section~\ref{subsec:notation} that we let $(U^H(t,x),L_t^H)$ denote the solution of~\eqref{eq:FBP_CDF} with Heaviside initial condition.
\begin{counterexample}\label{counterexample:magic formula}
	Take an arbitrary $r\in (\sqrt{2},\infty)$. For $T>0$, let $(V^T(t,x),L^T_t)$ denote the solution of~\eqref{eq:FBP_CDF} with initial condition $V^T_0(x):=U^H(T,x+L^H_T)$ $\forall x\in \R$.
	Then for any $T>0$, both $\int_{L_0^T}^{\infty}V^T_0(x)e^{rx}dx$ and $\int_{0}^{\infty}e^{rL^T_t-(1+\frac{1}{2}r^2)t}dt$ are finite, but there exists $T>0$ such that
	\begin{equation} \label{eq:magiccounter}
	\int_{L_0^T}^{\infty}V^T_0(x)e^{rx}dx\neq -\frac{1}{r}+\frac{1}{r}	\int_0^{\infty}e^{rL^T_t-(1+\frac{1}{2}r^2)t}dt.
	\end{equation}
\end{counterexample}
\begin{proof}	
By the definition of $V_0^T$, for $t\ge 0$ and $x\in \R$ we have
	\[
	V^T(t,x)=U^H(T+t, x+L^H_{T}) \quad \text{and}\quad  L^T_t:=L^H_{T+t}-L^H_T.
	\]
By Lemma~\ref{lem:GaussiantailU},	
	for any fixed $T>0$, $V^T_0(x)$ satisfies a Gaussian tail bound for $x\ge 0$, and so the left-hand side of \eqref{eq:magiccounter} is finite. 	
	However, by Theorem~\ref{theo:convtoPimin} we have that $V^T_0$ converges uniformly to $\Pi_{\min}$ as $T\to \infty$. Since $\int_{0}^{\infty}e^{rx}\Pi_{\min}(x)dx=+\infty$ (by~\eqref{eq:Pimindefn} and~\eqref{eq:minimal travelling wave}, and since $r>\sqrt{2}$), 
and since $L^T_0=0$,	
	it follows that 
	\begin{equation} \label{eq:VT0intinf}
	\int_{L^T_0}^{\infty}V^T_0(x)e^{rx}dx\ra \infty \quad \text{as }T\to \infty.
	\end{equation}
	
	On the other hand, $L^T_{t}= L^H_{T+t}-L^H_T\leq \sqrt{2}t$ for any $t\geq 0$, by Lemma \ref{lem:boundary Lipschitz from the right when less stretched than a travelling wave}. Therefore, for any $T>0$,
	\[
	\int_0^{\infty}e^{rL^T_t-(1+\frac{1}{2}r^2)t}dt\leq \int_0^{\infty}e^{(\sqrt{2}r-(1+\frac{1}{2}r^2))t}dt<\infty,
	\]
	where the second inequality holds because $r>\sqrt{2}$.
	The result now follows by~\eqref{eq:VT0intinf}.	
\end{proof}

\subsection{Proof of Theorem \ref{theo:magic formula}} \label{subsec:magicproof}
We will prove Theorem \ref{theo:magic formula} by establishing~\eqref{eq:magic formula} for $r\in (-\infty,\sqrt{2})\setminus \{0\}$ for an increasingly large family of initial conditions $U_0$, in the following three steps. 

In Step 1, we will prove~\eqref{eq:magic formula} for a narrow class of initial conditions $U_0$, precisely those satisfying the following assumption.
Recall from Section~\ref{section:properties of free-boundary} that we say that $U_0:\R\to [0,1]$ has a \textit{compact interface} if there exists $K_0<\infty$ such that $U_0(x)=1$ for $x\le -K_0$ and $U_0(x)=0$ for $x\ge K_0$.
\begin{assum}\label{assum:initial assumption on initial condition}
Suppose $U_0$ satisfies Assumption \ref{assum:standing assumption ic}, $U_0$ has a compact interface, $L_0:=\inf\{x\in \R:U_0(x)<1\}=0$,
and 
 $U_0\leq_s \Pi_c$ for some $c\in [\sqrt{2},\infty)$. 
\end{assum}
We will prove~\eqref{eq:magic formula} under this assumption by following the strategy in the non-rigorous argument in~\cite{Berestycki2018a}.

In Step 2, we will then extend \eqref{eq:magic formula} to those initial conditions satisfying the following assumption.
\begin{assum}\label{assum:smooth assumption on initial condition}
Suppose $U_0$ satisfies Assumption \ref{assum:standing assumption ic}, $L_0:=\inf\{x\in \R:U_0(x)<1\}>-\infty$, $U_0\in C^2((L_0,\infty))\cap C(\Rm)$, and $U_0'(x)<0$ $\forall x>L_0$. 
\end{assum}
Finally, in Step 3, we will extend \eqref{eq:magic formula} to all initial conditions $U_0$ satisfying Assumption \ref{assum:standing assumption ic}.

We will use the following existence and uniqueness result for solutions of integral equations.
\begin{lem} \label{lem:picardintegralform}
Suppose $F:[0,\infty)\times \R \to \R$ is measurable, and there exists $C<\infty$ such that
\[
|F(t,x)-F(t,y)|\le C|x-y| \quad \text{and} \quad |F(t,x)|\le C(1+|x|) \quad \forall t\ge 0, \, x,y\in \R.
\]
Then for any $y_0\in \R$, a continuous solution $(y(t),t\ge 0)$ of the integral equation
\begin{equation} \label{eq:picardintegraleqn}
y(t)-y(0)=\int_0^tF(s,y(s))ds \quad \text{for }t\geq 0,\quad y(0)=y_0
\end{equation}
exists and is unique.
\end{lem}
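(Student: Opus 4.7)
The plan is a standard Picard--Lindel\"of / Banach fixed point argument applied to the integral equation, with the linear growth bound used to exclude finite-time blow-up so that the solution extends globally. For each $T>0$ I will produce a unique continuous solution on $[0,T]$; uniqueness then allows these local solutions to be glued into a unique global solution on $[0,\infty)$.

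Fix $T>0$ and equip $X_T := C([0,T],\R)$ with the weighted supremum norm $\|y\|_\lambda := \sup_{t\in[0,T]} e^{-\lambda t}|y(t)|$, for a parameter $\lambda>0$ to be chosen. Define
\[
\Phi : X_T \to X_T, \qquad (\Phi y)(t) := y_0 + \int_0^t F(s,y(s))\,ds.
\]
The linear growth assumption gives $|F(s,y(s))|\le C(1+\sup_{[0,T]}|y|)$, so the integrand is bounded on $[0,T]$ for every $y\in X_T$; hence $\Phi y$ is not only well-defined and finite but in fact Lipschitz continuous in $t$, so $\Phi y \in X_T$. Using the Lipschitz bound on $F$,
\[
|(\Phi y_1)(t)-(\Phi y_2)(t)|\le C\int_0^t |y_1(s)-y_2(s)|\,ds \le C\|y_1-y_2\|_\lambda\int_0^t e^{\lambda s}\,ds\le \tfrac{C}{\lambda}e^{\lambda t}\|y_1-y_2\|_\lambda,
\]
so $\|\Phi y_1-\Phi y_2\|_\lambda \le \tfrac{C}{\lambda}\|y_1-y_2\|_\lambda$. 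Taking $\lambda = 2C$, $\Phi$ is a $1/2$-contraction on the Banach space $(X_T, \|\cdot\|_\lambda)$, and the Banach fixed point theorem produces a unique fixed point $y \in X_T$, which is by construction a continuous solution of \eqref{eq:picardintegraleqn} on $[0,T]$.

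For uniqueness among all continuous solutions on $[0,T]$ (not merely within the contraction framework), if $y_1,y_2$ both solve \eqref{eq:picardintegraleqn}, then subtracting and using the Lipschitz bound gives $|y_1(t)-y_2(t)|\le C\int_0^t|y_1(s)-y_2(s)|\,ds$, and Gronwall's inequality forces $y_1\equiv y_2$ on $[0,T]$. Since $T>0$ was arbitrary, the solutions on $[0,T_1]$ and $[0,T_2]$ with $T_1<T_2$ agree on $[0,T_1]$ by uniqueness, so they paste into a unique continuous solution on all of $[0,\infty)$. There is no serious obstacle: this is a textbook Cauchy--Lipschitz result, and the only role of the linear growth bound (which is not implied by the Lipschitz bound alone, since the Lipschitz condition controls only differences in $x$) is to guarantee $\Phi y\in X_T$ for every $y\in X_T$ and every $T$, preventing any blow-up.
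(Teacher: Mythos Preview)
Your proof is correct. Both you and the paper apply the Banach fixed-point theorem, but with a genuine difference in execution: the paper works on a short interval $[0,\delta]$ with $\delta<1/C$ under the ordinary sup norm, obtains a contraction with constant $C\delta<1$, and then extends by induction over successive intervals $[k\delta,(k+1)\delta]$. You instead work directly on $[0,T]$ for arbitrary $T$ by introducing the Bielecki weighted norm $\|y\|_\lambda=\sup_t e^{-\lambda t}|y(t)|$ with $\lambda=2C$, which makes the map a $\tfrac12$-contraction in one shot and avoids the time-stepping induction. Your approach is slightly slicker in that it handles each $[0,T]$ at once and makes the role of the linear growth bound (ensuring $\Phi$ maps $X_T$ into itself) explicit; the paper's approach is marginally more elementary in that it uses only the standard sup norm. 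Both are textbook variants of Picard--Lindel\"of and nothing of substance separates them.
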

This result follows from the same proof as for the Picard-Lindel\"of theorem, which gives an existence and uniqueness result for the solution of the ODE $\dot{y}(t)=F(t,y(t))$. In the Picard-Lindel\"of theorem, $F(t,y)$ is also assumed to be continuous in $t$, in order to show that the solution of the integral equation is a classical solution of the ODE, which we do not claim here.
\begin{proof}
The proof is standard; we recall it here briefly for completeness.
Fix $\delta \in (0,1/C)$, and let $C([0,\delta];y_0)$ denote the space of continuous functions $(x(s),s\in [0,\delta])$ with $x(0)=y_0$, equipped with the subspace metric coming from the uniform norm.
Define a map
\begin{align*}
\Gamma:C([0,\delta];y_0) &\to C([0,\delta];y_0)\\
x=(x(t),t\in [0,\delta]) &\mapsto (\Gamma_t(x),t\in [0,\delta]), \quad  \text{where}\quad 
\Gamma_t(x)=y_0+\int_0^t F(s,x(s))ds.
\end{align*}
Then for $x_1,x_2\in C([0,\delta];y_0)$, we have $\|\Gamma(x_1)-\Gamma(x_2)\|_\infty \le C \delta \|x_1-x_2\|_{\infty}$.
Since $C\delta<1$, it follows from the Banach fixed-point theorem that $\Gamma$ has a unique fixed point in $C([0,\delta];y_0)$, and therefore~\eqref{eq:picardintegraleqn} has a unique continuous solution on $[0,\delta]$.
By an induction argument over successive time intervals $[k\delta,(k+1)\delta]$ for $k\in \Nm$, the result follows.
\end{proof}

\begin{proof}[Proof of Theorem~\ref{theo:magic formula}]
\textbf{Step 1:} Proof of~\eqref{eq:magic formula} for $U_0$ satisfying Assumption~\ref{assum:initial assumption on initial condition}.

We will first prove \eqref{eq:magic formula} for initial conditions $U_0$ satisfying the following assumption.
\begin{assum}\label{assum:initial assumption 1 on initial condition}
Take $U'_0$ satisfying Assumption \ref{assum:initial assumption on initial condition} and take $\delta>0$, 
let $(U'(t,x),L^{U'}_t)$ solve~\eqref{eq:FBP_CDF} with initial condition $U'_0$,
and let
\begin{equation}\label{eq:U0' evolved time delta}
U_0(x)= U'(\delta, L^{U'}_{\delta}+x) \quad \forall x\in \R.
\end{equation}
\end{assum}
In other words, we obtain $U_0$ by taking some initial condition satisfying Assumption \ref{assum:initial assumption on initial condition} and evolving it for some small time $\delta>0$. 
Once we have established~\eqref{eq:magic formula} for $U_0$ satisfying Assumption~\ref{assum:initial assumption 1 on initial condition},
we will then conclude Step 1 by taking the limit as $\delta\ra 0$.

Now fix $\delta>0$ and $U_0'$ satisfying Assumption \ref{assum:initial assumption on initial condition}, and let $U_0$ be given by~\eqref{eq:U0' evolved time delta}. 
Let $(U(t,x),L_t)$ solve~\eqref{eq:FBP_CDF} with initial condition $U_0$, and let $L_0:=0=\lim_{t\to 0}L_t$ and $U(0,\cdot)=U_0$.

It follows from Corollary \ref{cor:locally Lipschitz if less stretched than a travelling wave} that $t\mapsto L_t$ is Lipschitz continuous on $[0,\infty)$, and hence is weakly differentiable with essentially bounded derivative. In particular, $t\mapsto L_t$ is in the Sobolev space $W^{1,\infty}$. We write $\dot{L}_t$ for the weak derivative of $L_t$. In the remainder of Step~1, time derivatives should be understood as weak derivatives.

By Lemma~\ref{lem:GaussiantailU}, using the definition of $U_0$ in~\eqref{eq:U0' evolved time delta} and that $U'_0$ has a compact interface,
for any $T>0$ there exist $C_T<\infty$ and $c_T>0$ such that 
\begin{equation}\label{eq:convergence of first derivative to 0}
	U(t,L_t+x)\leq C_Te^{-c_T x^2} \quad\text{and}\quad \lvert \partial_xU(t,L_t+x)\rvert\leq C_Te^{-c_Tx ^2}
	\quad \forall t\in [0, T],\, x\geq 0.
\end{equation}
By Proposition~\ref{prop:fbpsoln}, $U'\in C^{1,2}(\{(t,x):t>0,\, x>L^{U'}_t\})$, and therefore, by~\eqref{eq:U0' evolved time delta}, for any $\epsilon>0$ and $T>0$, $\Delta U$ is bounded on $\{(t,x):0\leq t\leq T, \, L_t+\epsilon\leq x\leq L_t+\epsilon^{-1}\}$.

We now follow the proof strategy from the non-rigorous argument in~\cite{Berestycki2018a}. We define
\begin{equation} \label{eq:grtdefn}
	g(r,t):=\int_0^{\infty}U(t,L_t+x)e^{rx}dx \quad \text{for }r< \sqrt{2}, \, t\ge 0.
\end{equation}
We will show that $(g(r,t),t\ge 0)$ is the unique solution of an integral equation.
Take $T\in (0,\infty)$, $0\leq t_1<t_2\le T$ and $r<\sqrt{2}$. Then we have that
\[
g(r,t_2)-g(r,t_1)=\lim_{\epsilon\ra 0}\Big[\int_{\epsilon}^{\frac{1}{\epsilon}}U(t_2,L_{t_2}+x)e^{rx}dx-\int_{\epsilon}^{\frac{1}{\epsilon}}U(t_1,L_{t_1}+x)e^{rx}dx\Big].
\]
Then for $\epsilon>0$, 
since $\dot{L}_t$ and $U$ are bounded, and $\partial_xU$ and $\Delta U$ are bounded on $\{(t,x):0\leq t\leq T, \, L_t+\epsilon\leq x\leq L_t+\epsilon^{-1}\}$,
and using Fubini's theorem for the last equality, 
 we have that
\begin{align} \label{eq:gt1t2Fubini}
	&\int_{\epsilon}^{\frac{1}{\epsilon}}U(t_2,L_{t_2}+x)e^{rx}dx-\int_{\epsilon}^{\frac{1}{\epsilon}}U(t_1,L_{t_1}+x)e^{rx}dx \notag \\
	&\quad =\int_{\epsilon}^{\frac{1}{\epsilon}}\int_{t_1}^{t_2}\partial_t[U(t,L_t+x)e^{rx}]dt dx \notag \\
	&\quad =\int_{\epsilon}^{\frac{1}{\epsilon}}\int_{t_1}^{t_2}\Big[\partial_xU(t,L_t+x)\dot{L}_t+\partial_tU(t,L_t+x)\Big]e^{rx}dt dx \notag \\
	&\quad =\int_{\epsilon}^{\frac{1}{\epsilon}}\int_{t_1}^{t_2}\Big[\partial_xU(t,L_t+x)\dot{L}_t+\tfrac{1}{2}\Delta U(t,L_t+x)+U(t,L_t+x)\Big]e^{rx}dtdx \notag \\
	&\quad = \int_{t_1}^{t_2}\int_{\epsilon}^{\frac{1}{\epsilon}}\Big[\partial_xU(t,L_t+x)\dot{L}_t+U(t,L_t+x)\Big]e^{rx}dx dt 
	+\int_{t_1}^{t_2}\int_{\epsilon}^{\frac{1}{\epsilon}}\tfrac{1}{2}\Delta U(t,L_t+x)e^{rx}dx dt.
\end{align}
Then by integration by parts, we have that for $t\in [t_1,t_2]$,
\begin{align} \label{eq:DeltaUintbyparts}
&\int_{\epsilon}^{\frac{1}{\epsilon}}\Delta U(t,L_t+x)e^{rx}dx \notag \\
	&\quad = \Big[\partial_x U (t,L_t+x)e^{rx}\Big]_{x=\epsilon}^{x=\frac{1}{\epsilon}}-r\Big[ U (t,L_t+x)e^{rx}\Big]_{x=\epsilon}^{x=\frac{1}{\epsilon}}+r^2\int_{\epsilon}^{\frac{1}{\epsilon}}U(t,L_t+x)e^{rx} dx.
\end{align}
Putting~\eqref{eq:gt1t2Fubini} and~\eqref{eq:DeltaUintbyparts} together, we obtain that
\begin{align} \label{eq:gt1gt2witheps}
&g(r,t_2)-g(r,t_1) \notag \\
&=\lim_{\epsilon\ra 0}\int_{t_1}^{t_2}\Big\{\int_{\epsilon}^{\frac{1}{\epsilon}}\Big[\partial_xU(t,L_t+x)\dot{L}_t+U(t,L_t+x)\Big]e^{rx}dx \notag \\ 
&\qquad +\frac{1}{2}\Big[\partial_x U (t,L_t+x)e^{rx}\Big]_{x=\epsilon}^{x=\frac{1}{\epsilon}}-\frac{r}{2}\Big[ U (t,L_t+x)e^{rx}\Big]_{x=\epsilon}^{x=\frac{1}{\epsilon}}+\frac{r^2}{2}\int_{\epsilon}^{\frac{1}{\epsilon}}U(t,L_t+x)e^{rx} dx\Big\}dt.
\end{align}
By Proposition~\ref{prop:fbpsoln} and the definition of $U_0$ in~\eqref{eq:U0' evolved time delta}, for any $t\ge 0$ we have
\[
U(t,L_t+x)\to 1 \quad \text{and} \quad \partial_xU(t,L_t+x)\to 0 \quad \text{as } x\downarrow 0.
\]
Therefore, using the dominated convergence theorem, the boundedness of $\dot{L}_t$, and the Gaussian tail bounds from~\eqref{eq:convergence of first derivative to 0}, we can take the $\epsilon\ra 0$ limit of each of the terms in~\eqref{eq:gt1gt2witheps} separately, and obtain 
\begin{equation} \label{eq:gt1t2limit}
g(r,t_2)-g(r,t_1)=\int_{t_1}^{t_2}\Big\{\dot{L}_t\int_{0}^{\infty}\partial_xU(t,L_t+x)e^{rx}dx+\frac{r}{2}+\left(1+\frac{r^2}{2}\right)g(r,t)\Big\}dt.
\end{equation}
We now use integration by parts and~\eqref{eq:convergence of first derivative to 0} to see that
\[
\int_{0}^{\infty}\partial_xU(t,L_t+x)e^{rx}dx=[U(t,L_t+x)e^{rx}]_{x=0}^{x=\infty}-r\int_{0}^{\infty}U(t,L_t+x)e^{rx}dx=-1-rg(r,t).
\]
Substituting into~\eqref{eq:gt1t2limit}, we see that for any $T\in (0,\infty)$ and $0\leq t_1<t_2\le T$,
\begin{equation}
	g(r,t_2)-g(r,t_1)=\int_{t_1}^{t_2}\left( -\dot{L}_t-r\dot{L}_tg(r,t)+\frac{r}{2}+\left(1+\frac{r^2}{2}\right)g(r,t)\right) dt.
\end{equation}
It follows that for any $r<\sqrt{2}$ fixed, $(g(r,t),t\ge 0)$ is a solution to the integral equation
\begin{equation}\label{eq:integral equation g satisfies}
	y(t)-y(0)=\int_0^t\left( \Big[\frac{r}{2}-\dot{L}_s\Big]+\Big[\frac{r^2}{2}+1-r\dot{L}_s\Big]y(s)\right) ds \quad \text{for }t\ge 0,\quad y(0)=g(r,0),
\end{equation}
and by~\eqref{eq:convergence of first derivative to 0} we see that $(g(r,t),t\ge 0)$ is continuous in $t$.

For $t\ge 0$ and $y\in \R$, let 
\[
F(t,y):=\left(\frac{r}{2}-\dot{L}_t\right)+\left(\frac{r^2}{2}+1-r\dot{L}_t\right)y.
\]
Then by applying Lemma~\ref{lem:picardintegralform}, using that
$\dot{L}_t$ is bounded (recall that this followed from Corollary~\ref{cor:locally Lipschitz if less stretched than a travelling wave} and our assumption on $U_0$), we see that $(g(r,t),t\ge 0)$ is the unique continuous solution to the integral equation~\eqref{eq:integral equation g satisfies}.

We now take $r\in (-\infty,\sqrt{2})\setminus \{0\}$ and define, for $t\ge 0$,
\begin{equation} \label{eq:Itdefn}
I_t:=\Big(\frac{r^2}{2}+1\Big)t-rL_t
\end{equation}
and
\[
Y(t):=-\frac{1}{r}+\Big[g(r,0)+\frac{1}{r}-\frac{1}{r}\int_0^te^{-I_s}ds\Big]e^{I_t}.
\]
Since $L_t$ is weakly differentiable in $t$, so is $Y(t)$. Since $L_0=0$, we have
$
Y(0)=g(r,0).
$
Moreover, for $t>0$, writing $\dot{I}_t$ for the weak derivative of $I_t$,
\begin{align*}
	\partial_tY(t)&=-\frac{1}{r}e^{-I_t}e^{I_t}+\Big[g(r,0)+\frac{1}{r}-\frac{1}{r}\int_0^te^{-I_s}ds\Big]\dot{I}_te^{I_t}\\
	&=-\frac{1}{r}+\dot{I}_t\Big(Y(t)+\frac{1}{r}\Big)\\
	&=-\frac{1}{r}+\Big(\frac{r^2}{2}+1-r\dot{L}_t\Big)\Big(Y(t)+\frac{1}{r}\Big)\\
	&=\Big(\frac{r^2}{2}+1-r\dot{L}_t\Big)Y(t)+\Big(\frac{r}{2}-\dot{L}_t\Big).
\end{align*}
It follows that $(Y(t),t\ge 0)$ is also a solution of the integral equation \eqref{eq:integral equation g satisfies}, and so for $t\ge 0$,
\begin{equation}\label{eq:formula for g as ODE solution}
	g(r,t)=Y(t)=-\frac{1}{r}+\Big[g(r,0)+\frac{1}{r}-\frac{1}{r}\int_0^te^{-I_s}ds\Big]e^{I_t}.
\end{equation}

We now show that $g(r,t)$ is bounded as $t\to \infty$, and $I_t\to \infty$ as $t\to \infty$, which we will then combine with~\eqref{eq:formula for g as ODE solution} to show that~\eqref{eq:magic formula} holds.
To bound $g(r,t)$, note first that
since $U_0'$ has a compact interface (by Assumption~\ref{assum:initial assumption 1 on initial condition}), 
there exists $C<\infty$ such that
\begin{equation} \label{eq:U0sandwich}
\1_{\{x+C<0\}}\leq U'_0(x)\leq \1_{\{x-C<0\}} \quad \forall x\in \R.
\end{equation}
Recall from Section~\ref{subsec:notation} that we write $(U^H(t,x),L_t^H)$ for the solution of~\eqref{eq:FBP_CDF} with Heaviside initial condition
$H(x)=\1_{\{x<0\}}$. Then by the comparison principle in Proposition \ref{prop:fbpcomparison},
\[
U^H(t,x+C)\leq U'(t,x)\leq U^H(t,x-C) \quad \forall t>0,\, x\in \R.
\]
In particular, since $U'(t,L^H_t-C)\ge U^H(t,L^H_t)=1$ for $t>0$, we have
\begin{equation} \label{eq:LU'lower}
L^{U'}_t\geq L^H_t-C \quad \forall t>0.
\end{equation}
On the other hand, by Lemma~\ref{lem:extended maximum principle} we have $U^H(t,\cdot)\le_s \Pi_{\min}$ $\forall t>0$, 
and so by Lemma~\ref{lem:stretchdecr} and Proposition~\ref{prop:fbpsoln}(iv), and since $\Pi_{\min}(0)=1$, for $t>0$ we have
$U^H(t,L^H_t+x)\leq \Pi_{\min}(x)$ for all $x\geq 0$. 
Therefore, for $t>0$ and $x\ge 0$,
\begin{equation} \label{eq:U'upper}
U'(t,L^H_t+C+x)\leq U^H(t,L^H_t+x)\leq \Pi_{\min}(x).
\end{equation}
Combining~\eqref{eq:LU'lower} and~\eqref{eq:U'upper}, we obtain
\[
U'(t,L^{U'}_t+2C+x)\leq \Pi_{\min}(x) \quad \forall t>0,\, x\ge 0.
\]
By the definition of $U_0$ in~\eqref{eq:U0' evolved time delta} and the definition of $g(r,t)$ in~\eqref{eq:grtdefn}, and since $\Pi_{\min}(y)=1$ for $y\le 0$,
it follows that 
\begin{equation} \label{eq:gsupbound}
\sup_{t\ge 0}g(r,t)\leq\int_{0}^{\infty}\Pi_{\min}(x-2C)e^{rx}dx<\infty,
\end{equation}
where the second inequality follows by~\eqref{eq:Pimindefn} and~\eqref{eq:minimal travelling wave} since $r<\sqrt{2}$. 

We now establish a lower bound on $I_t$ for large $t$.
By~\eqref{eq:U0sandwich}, we have $U'_0(x)\leq \Pi_{\min}(x-C)$ $\forall x\in \R$, and so by Proposition~\ref{prop:fbpcomparison} and since $\Pi_{\min}$ is the shape of the travelling wave with speed $\sqrt{2}$, 
for $t>0$ we have
$U'(t,\sqrt{2}t+x)\le \Pi_{\min}(x-C)$ $\forall x\in \R$, which implies
 that
\[
L_t^{U'}\leq \sqrt{2}t+C \quad \forall t>0.
\]
Moreover, for $t\ge 0$ we have $L_t=L^{U'}_{t+\delta}-L^{U'}_\delta\le L^{U'}_{t+\delta}+C-L^H_{\delta}$ by~\eqref{eq:LU'lower}.
Therefore, by~\eqref{eq:Itdefn}, in the case $r\in (0,\sqrt{2})$, for $t\ge 0$,
\[
I_t=\Big(\frac{r^2}{2}+1\Big)t-rL_t\geq \Big(\frac{r^2}{2}-\sqrt{2}r+1\Big)t-\sqrt{2}r\delta -2rC+rL^H_\delta.
\]
Since $\frac{r^2}{2}-\sqrt{2}r+1=(\frac{r}{\sqrt{2}}-1)^2>0$, it follows that there exists $c>0$ such that $I_t\ge ct$ for $t$ sufficiently large.
In the case $r<0$, by Lemma~\ref{lem:less stretching means slower boundary}
and then by Lemma~\ref{lem:LHtlower} we have
$L_t=L^{U'}_{t+\delta}-L^{U'}_\delta\ge L^H_t-L^H_0\ge 0$ for $t$ sufficiently large,
and so again there exists $c>0$ such that $I_t\ge ct$ for $t$ sufficiently large.

We now have that for any $r\in (-\infty,\sqrt{2})\setminus \{0\}$, by~\eqref{eq:gsupbound} and since $g(r,t)\ge 0$ by~\eqref{eq:grtdefn},
$\sup_{t\ge 0}|g(r,t)|<\infty$, and $\int_0^{\infty}e^{-I_t}dt<\infty$ with $I_t\to \infty$ as $t\to \infty$.
It therefore follows from~\eqref{eq:formula for g as ODE solution} that
\[
g(r,0)+\frac{1}{r}-\frac{1}{r}\int_0^{\infty}e^{-I_t}dt=0.
\]
Note that the above step is where the assumption of convergence to a travelling wave solution was used in the non-rigorous argument in~\cite{Berestycki2018a}, but as we have seen, this is not needed.

By the definition of $g(r,0)$ in~\eqref{eq:grtdefn} and since $U(0,\cdot)=U_0$ and $L_0=0$, and by the definition of $I_t$ in~\eqref{eq:Itdefn}, we conclude that~\eqref{eq:magic formula} holds (with both sides finite) for any $r\in (-\infty,\sqrt{2})\setminus \{0\}$ whenever the initial condition $U_0$ satisfies Assumption~\ref{assum:initial assumption 1 on initial condition}.
	
It follows that for any $U_0$ satisfying Assumption~\ref{assum:initial assumption on initial condition}, letting $(U(t,x),L_t)$ denote the solution of~\eqref{eq:FBP_CDF}, for any $r\in (-\infty,\sqrt{2})\setminus \{0\}$ and $\delta>0$,
\begin{equation}\label{eq:magic formula delta}
		\int_{0}^{\infty}U(\delta,L_\delta+x)e^{rx}dx=-\frac{1}{r}+\frac{1}{r}\int_0^{\infty}e^{r(L_{t+\delta}-L_{\delta})-(1+\frac{1}{2}r^2)t}dt.
	\end{equation}
We now send $\delta\ra 0$ to complete Step~1.
For the left-hand side of~\eqref{eq:magic formula delta}, we can write
\[
\int_{0}^{\infty}U(\delta,L_\delta+x)e^{rx}dx
=e^{-rL_{\delta}}\int_{L_\delta}^{\infty}U(\delta,x)e^{rx}dx
\to \int_{0}^{\infty}U_0(x)e^{rx}dx \quad \text{as }\delta \to 0,
\]
by dominated convergence, since $U(\delta,\cdot)\to U_0$ as $\delta \to 0$ almost everywhere by Proposition~\ref{prop:fbpsoln}(vi), and $L_\delta\to L_0=0$ as $\delta\to 0$, and using the Gaussian tail bound on $U$ from Lemma~\ref{lem:GaussiantailU} combined with the fact that $\sup_{t\le 1}|L_t|<\infty$ to give a dominating function.
 	For the right-hand side of~\eqref{eq:magic formula delta}, we have
 	\[
 	\int_0^{\infty}e^{r(L_{t+\delta}-L_{\delta})-(1+\frac{1}{2}r^2)t}dt
 	=e^{-rL_\delta +(1+\frac{1}{2}r^2)\delta}\int_{\delta}^{\infty}e^{rL_s-(1+\frac{1}{2}r^2)s}ds
 	\to \int_{0}^{\infty}e^{rL_s-(1+\frac{1}{2}r^2)s}ds
 	\]
 	as $\delta\to 0$, since $L_0=0$ and so $\sup_{s\le \delta}|L_s|\to 0$ as $\delta\to 0$.
Therefore, by~\eqref{eq:magic formula delta}, for any $U_0$ satisfying Assumption~\ref{assum:initial assumption on initial condition}, \eqref{eq:magic formula} holds for $r\in (-\infty,\sqrt{2})\setminus\{0\}$.

\medskip

\noindent \textbf{Step 2:} Proof of~\eqref{eq:magic formula} for $U_0$ satisfying Assumption~\ref{assum:smooth assumption on initial condition}.

We now take an arbitrary $U_0$ satisfying Assumption \ref{assum:smooth assumption on initial condition} such that $L_0:=\inf\{x\in \R:U_0(x)<1\}=0$. We recall from the start of Section~\ref{subsec:magicproof} that this means that $U_0$ satisfies Assumption~\ref{assum:standing assumption ic}, $U_0\in C^2((0,\infty))\cap C(\Rm)$, and $U_0'(x)<0$ $\forall x>0$. 

We claim that there exists a non-decreasing sequence of initial conditions $(U_0^{(n)})_{n=1}^\infty$ converging pointwise to $U_0$ such that each $U_0^{(n)}$ satisfies Assumption~\ref{assum:initial assumption on initial condition}.
Indeed, we can construct such a sequence as follows.
Since $U_0$ is strictly decreasing on $(0,\infty)$ with $U_0(0)=1$, we can take $f:(0,1)\ra (0,\infty)$ such that $U_0'(x)=-f(U_0(x))$ $\forall x>0$. 
Note that $f$ is locally Lipschitz continuous by the following argument:
For $y\in (0,1)$, take $K<\infty$ such that $U_0(1/K)>y>U_0(K)$, and let $c:=\inf_{x\in [1/K,K]}(-U'_0(x))>0$ and $C=\sup_{x\in [1/K,K]}|U_0''(x)|<\infty$.
Take $\delta>0$ such that $(y-\delta,y+\delta)\subseteq [U_0(K),U_0(1/K)]$. Then for $y_1,y_2\in (y-\delta,y+\delta)$, there exist $x_1,x_2\in [1/K,K]$ such that $U_0(x_1)=y_1$ and $U_0(x_2)=y_2$.
We now have $|y_1-y_2|=|U_0(x_1)-U_0(x_2)|\ge c|x_1-x_2|$ and 
\[|f(y_1)-f(y_2)|=|U'_0(x_1)-U'_0(x_2)|\le C|x_1-x_2|\le C c^{-1}|y_1-y_2|,\]
which implies that $f$ is locally Lipschitz continuous, as claimed.

For $n\in \Nm$, we then define
\[
f_n(y):=f(y)\vee \frac 1 n \quad \text{for }y\in (0,1).
\]
Since $f$ is locally Lipschitz continuous, so too is $f_n$.
For $u\in [0,1]$, let
\[
F_n(u):=\int_u^1 \frac{ds}{f_n(s)}<\infty.
\]
Then $F_n$ is $C^1$ on $(0,1)$, with $F_n'(u)=-1/f_n(u)<0$ for $u\in (0,1)$.
Therefore, by the inverse function theorem, $F_n^{-1}:(0,F_n(0))\to (0,1)$ is $C^1$ with $(F_n^{-1})'(y)=-f_n(F_n^{-1}(y))$.
Define $U_0^{(n)}:\R\to [0,1]$ by letting
\[
U^{(n)}_0(x)=
\begin{cases}
1 \quad &\text{for }x\le 0\\
F_n^{-1}(x) \quad &\text{for }x\in (0,F_n(0))\\
0 \quad &\text{for }x>F_n(0).
\end{cases}
\]
Then $U_0^{(n)}$ is continuous and non-increasing, and has a compact interface with $L_0^{(n)}:=\inf\{x\in \R:U_0^{(n)}(x)<1\}=0$.
Moreover, by~\eqref{eq:Picdefn} and~\eqref{eq:speed c travelling wave}, for $c\ge \sqrt 2$ sufficiently large,
$\partial_x \Pi_c(x)\ge -1/n$ $\forall x\in \R$, and so since we also have
$(U_0^{(n)})'(y)\le -1/n$ $\forall y\in (0,F_n(0))$ and $U_0^{(n)}(y)=0$ $\forall y\ge F_n(0)$, it follows from the definition of $\le_s$ in~\eqref{eq:defin 1 of being more stretched} that $U_0^{(n)}\le_s \Pi_c$.
Therefore $U_0^{(n)}$ satisfies Assumption~\ref{assum:initial assumption on initial condition}, and so by Step~1,
letting $(U^{(n)}(t,x),L^{(n)}_t)$ denote the solution of~\eqref{eq:FBP_CDF} with initial condition $U_0^{(n)}$, for $r\in (-\infty,\sqrt{2})\setminus \{0\}$,
	\begin{equation}\label{eq:magic formulaU0(n)}
		\int_{0}^{\infty}U_0^{(n)}(x)e^{rx}dx=-\frac{1}{r}+\frac{1}{r}\int_0^{\infty}e^{rL^{(n)}_t-(1+\frac{1}{2}r^2)t}dt.
	\end{equation}
	
To see that $U_0^{(n)}\uparrow U_0$ pointwise as $n\to \infty$,
note that for $0<x<y$,
\[
\int_{U_0(y)}^{U_0(x)}\frac{ds}{f(s)}=-\int_x^y \frac{U'_0(z)}{f(U_0(z))}dz=y-x,
\]
and so letting $x\to 0$, for $y>0$,
\[
\int_{U_0(y)}^{1}\frac{ds}{f(s)}=y.
\]
Now fix $x>0$ and take $\epsilon \in (0,U_0(x))$.
Then by monotone convergence, for $n$ sufficiently large,
\[
F_n(U_0(x)-\epsilon)\ge \int_{U_0(x)}^1 \frac{ds}{f(s)}=x,
\]
and $F_n(U_0(x))\le x$, and so $U_0(x)-\epsilon\le U^{(n)}_0(x)\le U_0(x)$.
Also, for $n\in \Nm$ we have $F_n(0)\le F_{n+1}(0)$, and for $x\in (0,F_n(0))$ we have
$x=F_n(U^{(n)}_0(x))\le F_{n+1}(U_0^{(n)}(x))$,
and so $U^{(n+1)}_0(x)\ge U^{(n)}_0(x)$.
Hence $U_0^{(n)}\uparrow U_0$ pointwise as $n\to \infty$, as claimed.

By Dini's theorem and since $U_0(x)\to 0$ as $x\to \infty$, it follows that $U_0^{(n)}\to U_0$ uniformly on $\R$ as $n\to \infty$.
Let $(U(t,x),L_t)$ solve~\eqref{eq:FBP_CDF} with initial condition $U_0$.
Then by Lemma~\ref{lem:convofLn}, for each $t>0$, $L^{(n)}_t \uparrow L_t$ as $n\to \infty$.
Moreover, recalling the definition of $L^H_t$ in Section~\ref{subsec:notation},
we have
 $L^{(n)}_t\ge L^H_t$ $\forall t\ge 0,$ $n\in \Nm$ by Lemma~\ref{lem:less stretching means slower boundary} and $L^H_t\ge L^H_{t_0}$ $\forall t\ge t_0$ by Lemma~\ref{lem:boundary locally Lipschitz from the left}.
 Therefore,
using monotone convergence on the left-hand side of~\eqref{eq:magic formulaU0(n)}, and on the right-hand side using monotone convergence in the case $r>0$ and dominated convergence in the case $r<0$, we have that~\eqref{eq:magic formula} holds with initial condition $U_0$ for any $r\in (-\infty, \sqrt 2) \setminus \{0\}$.

Then for general $U_0$ satisfying Assumption~\ref{assum:smooth assumption on initial condition} (without assuming $L_0=0$), since we have now shown that~\eqref{eq:magic formula} holds with initial condition $U_0(\cdot+L_0)$, it follows immediately that~\eqref{eq:magic formula} holds with initial condition $U_0$, which completes Step 2.

\medskip

\noindent \textbf{Step 3:} Proof of~\eqref{eq:magic formula} for $U_0$ satisfying Assumption~\ref{assum:standing assumption ic}.

We fix $r\in (-\infty,\sqrt{2})\setminus \{0\}$. We now take $U_0$ to be an arbitrary initial condition satisfying Assumption \ref{assum:standing assumption ic}, let $(U(t,x),L_t)$ denote the corresponding solution to \eqref{eq:FBP_CDF}, and let $L_0:=\inf\{x\in \R:U_0(x)<1\}$. Then $U(t,\cdot)$ satisfies Assumption \ref{assum:smooth assumption on initial condition} for any $t>0$ by Proposition \ref{prop:fbpsoln}. 
Therefore, by Step 2, for any $t>0$ we have
\begin{equation} \label{eq:magicformula Ut}
-\frac{1}{r}e^{rL_t}+\frac{1}{r}\int_0^{\infty}e^{rL_{t+s}-(1+\frac{1}{2}r^2)s}ds=\int_{L_t}^{\infty}U(t,x)e^{rx}dx.
\end{equation}
We will complete the proof by taking the limit of both sides as $t\ra 0$.

It is easy to see that as $t\to 0$,
\[
\int_0^{\infty}e^{rL_{t+s}-(1+\frac{1}{2}r^2)s}ds
=e^{(1+\frac 12 r^2)t}\int_t^{\infty}e^{rL_{s}-(1+\frac{1}{2}r^2)s}ds
\ra \int_0^{\infty}e^{rL_s-(1+\frac{1}{2}r^2)s}ds,
\]
where possibly the right-hand side or both sides are equal to $+\infty$. Moreover, by Proposition~\ref{prop:fbpsoln}, as $t\to 0$ we have $e^{rL_t}\to e^{rL_0}\in \Rm\cup\{+\infty\}$.

Therefore by Fatou's lemma and Proposition~\ref{prop:fbpsoln}, and then by~\eqref{eq:magicformula Ut}, we have
\[
\int_{L_0}^{\infty}U_0(x)e^{rx}dx\leq \liminf_{t\ra 0}\int_{L_t}^{\infty}U(t,x)e^{rx}dx=-\frac{1}{r}e^{rL_0}+\frac{1}{r}\int_0^{\infty}e^{rL_s-(1+\frac{1}{2}r^2)s}ds.
\]
Thus if $\int_{L_0}^{\infty}U_0(x)e^{rx}dx=+\infty$, then \eqref{eq:magic formula} holds, with both sides being $+\infty$. 

We now assume the converse, i.e.~that
\begin{equation}\label{eq:ic bded exp moment}
\int_{L_0}^{\infty}U_0(x)e^{rx}dx<\infty,
\end{equation}
which implies in particular that $rL_0<\infty$.
It remains only to establish that under assumption~\eqref{eq:ic bded exp moment}, as $t\to 0$,
\begin{equation}\label{eq:convergence exp func of ics}
\int_{L_t}^{\infty}U(t,x)e^{rx}dx\ra \int_{L_0}^{\infty}U_0(x)e^{rx}dx.
\end{equation}
Our strategy is to apply the dominated convergence theorem, so we seek to construct a dominating function.

By the Feynman-Kac formula in Lemma~\ref{lem:FKforinfinitemass}, we see that
\[
U(t,x)\leq e^t\Esub{x}{U_0(B_t)} \quad \forall t>0,\, x\in \R.
\]
Therefore, since $U_0$ is non-increasing, for  $t\in [0,1]$ and $x\in \R$,
\[
U(t,x)\leq e\left( \Esub{x}{U_0(B_t)\1_{\{B_t<x\}}}+U_0(x)\right).
\]
Then since $\psub{x}{B_1<u}\geq \psub{x}{B_1<u}$ for all $t\in [0,1]$, $x\in \R$ and $u<x$, it follows that
\[
\Esub{x}{U_0(B_t)\1_{\{B_t<x\}}}\le \Esub{x}{U_0(B_1)\1_{\{B_1<x\}}}\le \Esub{x}{U_0(B_1)}.
\]
We conclude that for  $t\in [0,1]$ and $x\in \R$,
\[
U(t,x)\leq e\left(  \Esub{0}{U_0(x+B_1)}+U_0(x)\right).
\]
We now observe that 
by Fubini's theorem,
\begin{align*}
\int_{L_0-1}^{\infty}e^{rx}\Esub{0}{U_0(x+B_1)}dx
&=\Esub{0}{e^{-rB_1}\int_{L_0-1+B_1}^\infty e^{ry}U_0(y)dy}\\
&\le \Esub{0}{e^{-rB_1}\Big((|B_1|+1)(e^{rL_0}+e^{r(L_0-1+B_1)})+\int_{L_0}^\infty e^{ry}U_0(y)dy\Big)}.
\end{align*}
Hence if~\eqref{eq:ic bded exp moment} holds, then
\[
\int_{L_0-1}^{\infty}e^{rx}\Big(e\left(  \Esub{0}{U_0(x+B_1)}+U_0(x)\right)\Big)dx<\infty.
\]
We can therefore apply the dominated convergence theorem to establish~\eqref{eq:convergence exp func of ics}. 

This concludes the proof of Theorem \ref{theo:magic formula}.
\end{proof}

\subsection{Proof of Theorem \ref{theo:initial condition limsup bdy relation}}

Recall the definition of $r_0(U_0)$ in~\eqref{eq:r0U0defn}. We now use Theorem~\ref{theo:magic formula} to prove Theorem~\ref{theo:initial condition limsup bdy relation}.
\begin{proof}[Proof of Theorem~\ref{theo:initial condition limsup bdy relation}]
Note that by Lemma~\ref{lem:less stretching means slower boundary} we have $L_t-L_1\ge L^H_{t-1}$ for $t\ge 1$, and so by Lemma~\ref{lem:LHtlower} we have $\limsup_{t\ra\infty}\frac{1}{t}L_t\ge \sqrt 2$.
We define
\begin{equation} \label{eq:Lbardefn}
\bar L:=\limsup_{t\ra\infty}\frac{1}{t}L_t\in [\sqrt{2},\infty ]
\end{equation}
and let $r_0=r_0(U_0)$.

Now suppose that $r_0>0$, and take $r\in (0,r_0)$.
Then by~\eqref{eq:r0U0defn} we have $\int_0^{\infty}e^{rx}U_0(x)dx<\infty$ and $r<\sqrt 2$, and so by Theorem~\ref{theo:magic formula} and since $\int_{L_0\wedge 0}^0e^{rx}U_0(x)dx<\infty$ and $e^{rL_0}<\infty$, we have
\begin{equation} \label{eq:finiteLtintegral}
\int_{0}^\infty e^{rL_t-(1+\frac{1}{2}r^2)t}dt<\infty.
\end{equation}
Recall from Lemma~\ref{lem:boundary locally Lipschitz from the left} that there exists $t_0>0$ such that, in particular,
$\inf_{s\in [0,1]}L_{t+s}\ge L_t$ for any $t\ge t_0$.
Therefore we must have
\[
r\bar L-(1+\tfrac{1}{2}r^2)\leq 0,
\]
because otherwise the integral on the left-hand side of~\eqref{eq:finiteLtintegral} would be infinite. 
It follows that
$
\bar L\leq \frac{1}{r}+\frac{r}{2}.
$
Since $r\in (0,r_0)$ was arbitrary, it follows that if $r_0>0$ then
\begin{equation} \label{eq:Lbarupperbd}
\bar L\leq \frac{1}{r_0}+\frac{r_0}{2}.
\end{equation}

Conversely, suppose that $r_0\in [0,\sqrt{2})$ and take $r\in (r_0,\sqrt 2)$. Then by~\eqref{eq:r0U0defn},
\[
\int_0^{\infty}e^{rx}U_0(x)dx=\infty.
\]
By Theorem~\ref{theo:magic formula} and since $L_0<\infty$,
it follows that
\begin{equation} \label{eq:infiniteLtintegral}
\int_{0}^{\infty}e^{rL_t-(1+\frac{1}{2}r^2)t}dt=\infty.
\end{equation}
Therefore
\[
r\bar L-(1+\tfrac{1}{2}r^2)\geq 0,
\]
since otherwise the integral on the left-hand side of~\eqref{eq:infiniteLtintegral} would be finite. Therefore
$
\bar L\geq \frac{1}{r}+\frac{r}{2}.
$
Now since $r\in (r_0,\sqrt 2)$ was arbitrary, we see that if $r_0=0$ then $\bar L=\infty$ and if $r_0\in (0,\sqrt{2})$ then
\begin{equation} \label{eq:Lbarlowerbd}
\bar L\geq \frac{1}{r_0}+\frac{r_0}{2}.
\end{equation}
By combining~\eqref{eq:Lbarupperbd} and~\eqref{eq:Lbarlowerbd}, and using~\eqref{eq:Lbardefn} in the case $r_0=\sqrt 2$, the result follows.
\end{proof}

\subsection{Proof of Theorem~\ref{theo:samebdysameU0}}
The following result will be applied together with Theorem~\ref{theo:magic formula} to prove Theorem~\ref{theo:samebdysameU0}.
\begin{lem}\label{lem:uniquness given same transform on open set}
	Suppose $U_0^{(1)}$ and $U_0^{(2)}$ both satisfy Assumption~\ref{assum:standing assumption ic} and are continuous, 
and let $L_0^{(i)}:=\inf\{x\in \R:U_0^{(i)}(x)<1\}$
for $i\in \{1,2\}$. Suppose $L_0^{(1)}=L_0^{(2)}>-\infty$,	
	and suppose that there is some open $W\subseteq (-\infty,0)$ such that 
	\[
	\int_{L_0^{(1)}}^\infty e^{rx}U_0^{(1)}(x)dx=\int_{L_0^{(2)}}^\infty e^{rx}U_0^{(2)}(x)dx \quad \forall r\in W.
	\]
Then $U_0^{(1)}=U_0^{(2)}$.
\end{lem}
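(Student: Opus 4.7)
The plan is to transform the hypothesis into the vanishing of a one-sided Laplace transform, extend this vanishing from the open set $W$ to all of $(-\infty, 0)$ by analytic continuation, and then invoke Laplace-transform uniqueness.

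First, I would set $f := U_0^{(1)} - U_0^{(2)}$. Since both $U_0^{(i)}$ are continuous, non-increasing, and satisfy $U_0^{(i)}(x) = 1$ for $x \le L_0^{(i)}$, the common value $L_0 := L_0^{(1)} = L_0^{(2)}$ guarantees that $f$ is continuous on $\R$, bounded by $1$, vanishes on $(-\infty, L_0]$, and tends to $0$ as $x \to \infty$. The hypothesis then rewrites as
\[
g(r) := \int_{L_0}^\infty e^{rx} f(x) dx = 0 \quad \text{for every } r \in W.
\]
Because $|f| \le 1$, this integral converges absolutely for every complex $r$ with $\operatorname{Re}(r) < 0$, and a standard differentiation-under-the-integral argument shows that $g$ extends to a holomorphic function on the connected open half-plane $\{r \in \mathbb{C} : \operatorname{Re}(r) < 0\}$. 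Since $W$ is a non-empty open subset of the real axis contained in this half-plane, the identity theorem forces $g \equiv 0$ on the whole half-plane, and in particular $g(r) = 0$ for every real $r < 0$.

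Next, I would set $\tilde f(y) := f(L_0 + y)$ for $y \ge 0$; then $\tilde f$ is continuous and bounded on $[0,\infty)$ with $\tilde f(0) = 0$ and $\tilde f(y) \to 0$ as $y \to \infty$. Substituting $s = -r$ and $y = x - L_0$ turns the conclusion of the previous paragraph into
\[
\int_0^\infty e^{-sy} \tilde f(y) dy = 0 \quad \text{for every } s > 0.
\]
The change of variable $t = e^{-y}$, together with $h(t) := \tilde f(-\log t)$ on $(0,1]$ and $h(0) := 0$, produces a function $h \in C([0,1])$ (continuity at $0$ using $\tilde f(\infty) = 0$) satisfying $\int_0^1 t^{n} h(t) dt = 0$ for every integer $n \ge 0$. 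By the Weierstrass approximation theorem, polynomials are uniformly dense in $C([0,1])$, so $\int_0^1 q\,h = 0$ for every $q \in C([0,1])$; taking $q = h$ yields $\int_0^1 h^2 = 0$, and continuity of $h$ forces $h \equiv 0$. Hence $\tilde f \equiv 0$, i.e.~$f \equiv 0$ on $[L_0, \infty)$, which combined with $f \equiv 0$ on $(-\infty, L_0]$ gives $U_0^{(1)} = U_0^{(2)}$ on $\R$.

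The main obstacle I anticipate is just verifying the analyticity of $g$ on the half-plane, which is what unlocks the extension from $W$ to all $r < 0$; everything else reduces to a standard moment problem on $[0,1]$. The continuity of the $U_0^{(i)}$ and the common value of $L_0^{(i)}$ are essential to make the reduction clean, avoiding any boundary-term complications from atoms of the associated probability measures at $L_0$.
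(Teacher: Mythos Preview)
Your proof is correct. Both you and the paper use analytic continuation on the open left half-plane $\{\operatorname{Re}(r)<0\}$ to extend the vanishing from $W$, but the second halves diverge. The paper integrates by parts to pass from $\int e^{rx}U_0^{(i)}(x)\,dx$ to the Laplace transforms $\int e^{rx}\mu_0^{(i)}(dx)$ of the associated probability measures, then pushes the holomorphic extension continuously to the imaginary axis and invokes uniqueness of the Fourier transform of finite measures. You instead stay with the difference $f=U_0^{(1)}-U_0^{(2)}$, need only the real values $r<0$, and finish via the substitution $t=e^{-y}$ and the Weierstrass moment argument on $C([0,1])$. Your route is more self-contained and avoids the (mild) issue of justifying the continuous extension to $\partial\mathbb{H}$; the paper's route is shorter once one is willing to quote Fourier uniqueness for measures. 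One small cosmetic point: under $t=e^{-y}$ the Laplace integral becomes $\int_0^1 t^{s-1}h(t)\,dt$, so your moment condition $\int_0^1 t^n h(t)\,dt=0$ corresponds to $s=n+1$; it would be worth making that substitution explicit.
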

\begin{proof}
	Let $\mu_0^{(1)}$ and $\mu_0^{(2)}$ denote the Borel probability measures such that $U_0^{(1)}(x)=\mu_0^{(1)}((x,\infty))$ and $U_0^{(2)}(x)=\mu_0^{(2)}((x,\infty))$ for all $x\in \Rm$. Then by integration by parts, using that $U_0^{(i)}(L_0^{(i)})=1$ and $L_0^{(i)}>-\infty$ for $i\in \{1,2\}$ and that $W\subseteq (-\infty,0)$, we see that
	\begin{equation} \label{eq:intbypartsW}
		-\frac 1 r e^{rL_0^{(1)}}+\frac 1r \int_{L_0^{(1)}}^\infty e^{rx}\mu_0^{(1)}(dx)=-\frac 1 r e^{rL_0^{(2)}}+\frac 1r \int_{L_0^{(2)}}^\infty e^{rx}\mu_0^{(2)}(dx) \quad \forall r\in W.
	\end{equation}
	We write $\Hm:=\{z\in \mathbb{C}:\text{Re}(z)<0\}$. For $z\in \Hm$, we let
	\[
	f_1(z)= \int_{L_0^{(1)}}^\infty e^{z x}\mu_0^{(1)}(dx)\quad\text{and}\quad f_2(z)= \int_{L_0^{(2)}}^\infty e^{z x}\mu_0^{(2)}(dx).
	\]
Then since $\mu_0^{(1)}$ and $\mu_0^{(2)}$ are probability measures and $L_0^{(i)}>-\infty$ for $i\in \{1,2\}$, $f_1$ and $f_2$ are both well defined and complex differentiable on $\Hm$, with continuous extensions to $\partial \Hm$. 
Then by~\eqref{eq:intbypartsW} and since $L_0^{(1)}=L_0^{(2)}$,
we have that $f_1$ and $f_2$ are holomorphic functions with $f_1=f_2$ on $W$, and therefore $f_1=f_2$ on $\bar \Hm$.
Therefore $\mu_0^{(1)}$ and $\mu_0^{(2)}$ have the same Fourier transform, and so $U_0^{(1)}=U_0^{(2)}$.
\end{proof}
We can now prove Theorem~\ref{theo:samebdysameU0}.
\begin{proof}[Proof of Theorem~\ref{theo:samebdysameU0}]
Suppose $L^{(1)}_t=L^{(2)}_t$ $\forall t>0$.
Take $\delta>0$. By Proposition~\ref{prop:fbpsoln}, for each $i\in \{1,2\}$, we have that $U^{(i)}(\delta,\cdot)$ is continuous and satisfies Assumption~\ref{assum:standing assumption ic}, and $L^{(i)}_\delta>-\infty$.
By Theorem~\ref{theo:magic formula}, for any $r\in (-\infty,0)$,
\[
\int_{L^{(1)}_\delta}^{\infty}U^{(1)}(\delta, x)e^{rx}dx=-\frac{1}{r}e^{rL^{(1)}_\delta}+\frac{1}{r}\int_0^{\infty}e^{rL^{(1)}_{t+\delta}-(1+\frac{1}{2}r^2)t}dt
=\int_{L^{(2)}_\delta}^{\infty}U^{(2)}(\delta, x)e^{rx}dx,
\]
where the second equality follows since $L^{(1)}_s=L^{(2)}_s$ $\forall s>0$, and by applying Theorem~\ref{theo:magic formula} again.
Therefore, since $L^{(1)}_\delta=L^{(2)}_\delta$, by Lemma~\ref{lem:uniquness given same transform on open set} we have
$U^{(1)}(\delta,\cdot)=U^{(2)}(\delta,\cdot)$.
For each $i\in \{1,2\}$, we have $U^{(i)}(\delta,\cdot)\to U^{(i)}_0$ in $L^1_{\mathrm{loc}}$ as $\delta\to 0$, and $U_0^{(i)}$ is c\`adl\`ag non-increasing, and so $U_0^{(1)}=U_0^{(2)}$.
\end{proof}

\section{Finite initial mass case} \label{sec:finitemass}

In this section, we will often assume that $U_0:\R\to [0,1]$ satisfies
\begin{equation} \label{eq:finiteinitialmassinsection}
\int_0^{\infty} y e^{\sqrt 2 y} U_0(y)  d y<\infty,
\end{equation}
which we refer to as the finite initial mass condition (as mentioned after Theorem~\ref{theo:Ltposition}).
We will show that for $U_0$ satisfying Assumption~\ref{assum:standing assumption ic} and~\eqref{eq:finiteinitialmassinsection}, letting $(U(t,x),L_t)$ denote the solution of the free boundary problem~\eqref{eq:FBP_CDF} with initial condition $U_0$,
\[
U(t,x+L_t)\ra \Pi_{\min}(x)\text{ uniformly in }x \text{ as }t\ra\infty,
\]
and there exists $c=c(U_0)\in \R$ such that 
	\[
	L_t = \sqrt 2 t -\frac3{2\sqrt 2 } \log t +c +o(1) \quad \text{as }t\to \infty.
	\]

\subsection{Convergence to $K\pi_{\min}$ for solutions of a PDE with given boundary} \label{subsec:linearproblem}
 
We begin by proving the following result, which is an extension of \cite[Theorem 1, Case (d)]{BBHR17}. See Section \ref{subsec:outline} for a discussion of the difference between the two results. Recall from before Lemma~\ref{lem:FKforwardstime} in Section~\ref{subsec:FK} that for a Borel probability measure $\mu$ on $\R$, we let $\mathbb P_\mu$ denote the probability measure under which $(B_t)_{t\ge 0}$ is a Brownian motion with $B_0\sim \mu$.
This result will be combined with the Feynman-Kac representation in Lemma~\ref{lem:FKforwardstime} for solutions of~\eqref{eq:FBP_CDF} to determine the asymptotics for $L_t$ in Sections~\ref{subsec:finiteinitmassconv} and~\ref{subsec:bootstrap} below.
\begin{theo}\label{theo:extension of BBHR}
	Take $a\in \R$ and $t_0\in [0,\infty)$, and for $t\ge 0$, let
	\begin{equation} \label{eq:mtdefn}
	m(t):=\sqrt 2(t+t_0)-\frac{3}{2\sqrt 2}\log(t+t_0+1)+a.
	\end{equation}
	Suppose $\mu$ is a Borel probability measure on $\R$, supported on $[m(0),\infty)$, and that
	\begin{equation} \label{eq:finiteinitmassforlinear}
		\int_{[0,\infty)}xe^{\sqrt 2 x}\mu(dx)<\infty.
	\end{equation}
Then
there exists $K\in (0,\infty)$ such that
	\begin{equation} \label{eq:pmuBt}
	e^t \psub{\mu}{B_t>m(t)+y, \, B_s>m(s)\; \forall s\in (0,t)}\to K \Pi_{\min}(y) 
	\end{equation}
	as $t\to \infty$, uniformly in $y\ge 0$.
\end{theo}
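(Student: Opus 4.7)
The strategy is to follow~\cite{BBHR17} (Case (d)) while sharpening the key uniform bridge estimate so that it matches the sharp finite initial mass condition~\eqref{eq:finiteinitmassforlinear}. Write $\phi(s) := m(s) - \sqrt 2 s = \sqrt 2\, t_0 + a - \tfrac{3}{2\sqrt 2}\log(s+t_0+1)$, a decreasing function with $\phi(s) \sim -\tfrac{3}{2\sqrt 2}\log s$ as $s\to\infty$. First I would condition on $B_0 = x$ and apply a Cameron--Martin/Girsanov transformation to strip a $-\sqrt 2$ drift from $B_s - \sqrt 2 s$; for each $x \ge m(0) = \phi(0)$ this gives
\begin{equation*}
e^t\, \psub{x}{B_s > m(s)\;\forall s \in (0,t),\; B_t > m(t)+y}
\;=\; e^{\sqrt 2 x}\, \Esub{x}{e^{-\sqrt 2 B_t}\,\1_{\{B_s > \phi(s)\,\forall s \in (0,t),\, B_t > \phi(t)+y\}}},
\end{equation*}
where on the right $B$ is standard Brownian motion from $x$. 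Integrating against $\mu(dx)$, the weight $e^{\sqrt 2 x}$ is exactly the factor controlled by~\eqref{eq:finiteinitmassforlinear}, so the analysis reduces to the driftless problem on the right.

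Second, I would decompose by conditioning on $B_t = z$, writing the inner expectation as
\begin{equation*}
\int_{\phi(t)+y}^\infty \frac{e^{-\sqrt 2 z}}{\sqrt{2\pi t}}\, e^{-(z-x)^2/(2t)}\, q_t(x,z)\, dz,
\end{equation*}
with $q_t(x,z) := \Pm(\xi^t_{x,z}(s) > \phi(s)\;\forall s \in (0,t))$. The change of variables $z = \phi(t) + u$ turns $e^{-\sqrt 2 z}$ into $(t+t_0+1)^{3/2}\,e^{-\sqrt 2(\sqrt 2 t_0 + a)}\,e^{-\sqrt 2 u}$, so the explicit $\log$-correction in $m$ produces exactly the $t^{3/2}$ needed to balance the $\tfrac1{\sqrt t}$ from the Gaussian and the $\tfrac1t$ from $q_t$. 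The technical heart is to prove, for $x \ge \phi(0)$ and $u \ge 0$,
\begin{equation*}
q_t(x,z) \;=\; \frac{2(x-\phi(0))\,u}{t}\bigl(1+o(1)\bigr) \quad \text{as }t\to\infty,
\end{equation*}
uniformly on $(x,u)$-compacts, and the matching global bound
\begin{equation*}
q_t(x,z) \;\le\; C\,\frac{(1 + x - \phi(0))(1 + u)}{t}, \qquad t \ge 1,\; x \ge \phi(0),\; u \ge 0.
\end{equation*}
These are reflection-principle computations for a Brownian bridge above a logarithmic barrier, analogous to those in~\cite{Bramson1983,BBHR17}; pushing them to the tight form above is the bulk of the work.

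With these inputs in hand, the integrand converges pointwise to a constant multiple of $(x-\phi(0))\,u\,e^{-\sqrt 2 u}$; integration in $u$ yields $\Pi_{\min}(y)$ up to an explicit constant, and the upper bound together with~\eqref{eq:finiteinitmassforlinear} and $\int_0^\infty u e^{-\sqrt 2 u}\, du < \infty$ supplies an integrable envelope for dominated convergence in both the $u$- and $\mu$-integrals. This delivers~\eqref{eq:pmuBt} with
\begin{equation*}
K \;=\; c_{a,t_0} \int_{[m(0),\infty)} (x - \phi(0))\, e^{\sqrt 2 x}\, \mu(dx) \;\in\; (0,\infty),
\end{equation*}
for an explicit $c_{a,t_0} > 0$; uniformity in $y \ge 0$ comes from the boundedness of $\Pi_{\min}$ together with a $y$-uniform exponential tail on the integrand, allowing one to pass from uniform convergence on $y$-compacts to uniform convergence on $[0,\infty)$. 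The principal obstacle is the sharpness of the bridge estimate: any polynomial-in-$x$ or polynomial-in-$u$ slack in $q_t$ would fail to combine with the sharp moment~\eqref{eq:finiteinitmassforlinear}, so the leading-order asymptotic and upper bound must be proved with no logarithmic or polynomial slack in $x$ or $u$. This is precisely the refinement that is missing from~\cite{BBHR17}, where the stronger assumption $u_0(x) = O(x^{-\nu}e^{-\sqrt 2 x})$ with $\nu > \sqrt 2$ absorbs such slack.
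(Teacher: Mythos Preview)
Your strategy is correct and structurally matches the paper's, but the paper shortcuts what you call ``the bulk of the work''. Rather than proving the bridge estimates $q_t(x,\phi(t)+u)\sim c\,xu/t$ and $q_t\le C(1+x)(1+u)/t$ from scratch, the paper imports the fundamental-solution formula from~\cite{BBHR17} (stated as Proposition~\ref{prop:BBHRresult}), which gives $q(t,x,m(t)+y)=At\sinh(xy/t)\,e^{(\sqrt2+o(1))(x-y)-(x^2+y^2)/(2t)}\psi_t(x,y)$ with $\psi_t$ uniformly bounded and $\psi_t(x,0)\to\Psi(x)\in(0,\infty)$. This already encodes the Girsanov shift and the sharp linear-in-$x$, linear-in-$y$ bridge behaviour you need. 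The actual refinement over~\cite{BBHR17} is then purely in the dominated-convergence bookkeeping: on $\{xy\le 2t\}$ the $\sinh$ factor yields the envelope $Cxy\,e^{\sqrt2 x-y}$, integrable against $\mu(dx)\,dy$ exactly under~\eqref{eq:finiteinitmassforlinear}, while $\{xy>2t\}$ is disposed of via the Gaussian factor $e^{-(x-y)^2/(2t)}$ after a further split $y\lessgtr t^{1/3}$. So the sharpening needed is not a tighter bridge estimate but a cleaner organisation of the integral so that the envelope matches the minimal moment hypothesis; your route would work but re-derives machinery already available in~\cite{BBHR17}.

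One correction to your pointwise limit: the asymptotic $q_t\sim 2(x-\phi(0))u/t$ with a universal constant $2$ is not right. The curvature of the logarithmic barrier near $s=0$ contributes an $x$-dependent multiplicative factor---precisely the $\Psi(x)$ of Proposition~\ref{prop:BBHRresult}---so after normalising $m(0)=0$ the limiting constant is $K=\bar A\int x\,e^{\sqrt2 x}\Psi(x)\,\mu(dx)$ rather than the formula you wrote. This does not break the argument, since $\Psi$ is bounded above and away from zero, but your explicit expression for $K$ is off by this factor.
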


\begin{rmk}
	For $(m(t),t\ge 0)$ and $\mu$ as in the statement of Theorem~\ref{theo:extension of BBHR}, let $u(t,x)$ solve
	\begin{equation} \label{eq:linearPDEwbdy}
		\begin{cases}
		\partial_tu=\frac{1}{2}\Delta u+u,\quad &t>0, \, x>m(t),\\
		u(t,x)=0, \quad &t>0, \, x\le m(t),\\
		u(t,x )dx \to \mu(dx) &\text{weakly as } t\to 0.
	\end{cases}
\end{equation}
Then for $t>0$ and $y\in \R$,
\[
u(t,y)dy = e^t \psub{\mu}{B_t \in dy,\, B_s>m(s)\; \forall s\in (0,t)}
\]
(see Proposition~\ref{prop:BBHRresult} below), and so Theorem~\ref{theo:extension of BBHR} is equivalent to the statement that uniformly in $x\ge 0$,
\[
\int_{x}^\infty u(t,m(t)+y)dy \to K\Pi_{\min}(x) \quad \text{as }t\to \infty.
\]
However, the interpretation in~\eqref{eq:pmuBt} is the one that will we use later in this section.
\end{rmk}

We will use the following result from~\cite{BBHR17} about the fundamental solution of~\eqref{eq:linearPDEwbdy} in the proof of Theorem~\ref{theo:extension of BBHR}.
\begin{prop} \label{prop:BBHRresult}
Suppose $(m(s),s\ge 0)$ is twice continuously differentiable, and satisfies $m(0)=0$ and $m''(s)=\mathcal O(s^{-2})$ as $s\to \infty$.
For $x\in \R$ fixed, let $q(t,x,y)$ solve
\begin{equation*}
		\begin{cases}
		\partial_t q=\frac{1}{2}\partial_y^2 q+q,\quad &t>0, \, y>m(t),\\
		q(t,x,y)=0, \quad &t>0, \, y\le m(t),\\
		q(0,x,y)=\delta(y-x),
	\end{cases}
\end{equation*}
where $\delta$ is the Dirac distribution, i.e. for $t>0$ and $x,y\in \R$,
\[
q(t,x,y)dy=e^t \psub{x}{B_t\in dy, B_s>m(s) \; \forall s\in (0,t)}.
\]
Then there exist $0<K_1<K_2<\infty$ and $T<\infty$ such that for $t>0$ and $x,y\ge 0$,
\[
		q(t,x,m(t)+y)=\frac{\sqrt{2}\sinh(\frac{xy}{t})}{\sqrt{\pi t}}e^{\frac{m(t)}{t}(x-y)-\frac{x^2+y^2}{2t}+t-\frac{1}{2}\int_0^t(m'(s))^2ds}\psi_t(x,y),
	\]
	where $\psi_t(x,y)$ satisfies
	\begin{enumerate}[(i)]
	\item $K_1\le \psi_t(x,y)\le K_2$ $\forall t>0$, $x,y\ge 0$;
	\item $|\psi_t(x,y)-\psi_t(x,0)|\le K_2 y\frac{\log t}{t}\psi_t(x,0)$ $\forall t\ge T$, $x,y\ge 0$;
	\item $\Psi(x):=\lim_{t\to \infty}\psi_t(x,0)$ exists for $x\ge 0$, and satisfies $0<\Psi(x)<\infty$.
	\end{enumerate}
\end{prop}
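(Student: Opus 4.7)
The plan is to obtain an exact probabilistic expression for $q(t,x,m(t)+y)$ via a Brownian-bridge decomposition combined with a Cameron--Martin shift that straightens the boundary $m$. Let $p_t(a,b) = (2\pi t)^{-1/2}e^{-(a-b)^2/(2t)}$ and let $\xi^t_{a,b}$ denote a Brownian bridge from $a$ to $b$ on $[0,t]$; set $\beta_s := \xi^t_{x,y}(s) - x - (s/t)(y-x)$ for the associated $0$-to-$0$ bridge. Starting from
\[
q(t,x,m(t)+y)\,dy = e^t\, p_t(x,m(t)+y)\, \mathbb P\bigl(\xi^t_{x,m(t)+y}(s) > m(s)\ \forall s\in(0,t)\bigr)\,dy,
\]
the key observation is that on setting $\delta(s) := m(s) - (s/t)m(t)$, the function $\delta$ vanishes at both endpoints by $m(0)=0$, and the bridge event translates cleanly to $\{\xi^t_{x,y}(s) > \delta(s)\ \forall s\}$.

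The Cameron--Martin formula on the bridge, used to shift $\beta$ by $-\delta$, then gives
\[
\mathbb P\bigl(\xi^t_{x,y}(s) > \delta(s)\ \forall s\bigr) = \mathbb E\!\left[\1_{\xi^t_{x,y}(s) > 0\ \forall s}\exp\!\bigl(-\textstyle\int_0^t \delta'(s)\,d\beta_s - \tfrac12\int_0^t \delta'(s)^2\,ds\bigr)\right].
\]
Integration by parts using $\beta_0=\beta_t=0$ and $\delta''=m''$ turns $-\int\delta'\,d\beta$ into $\int_0^t m''(s)\beta_s\,ds$; the reflection principle gives $\mathbb P(\xi^t_{x,y}(s)>0\ \forall s) = 2e^{-xy/t}\sinh(xy/t)$, producing the $\sinh$ prefactor; and the identity $\int_0^t \delta'(s)^2\,ds = \int_0^t m'(s)^2\,ds - m(t)^2/t$ (from $\delta'=m'-m(t)/t$) allows one to reassemble the deterministic factors to exactly match the explicit formula claimed, with
\[
\psi_t(x,y) = \mathbb E\!\left[\exp\!\Bigl(\textstyle\int_0^t m''(s)\beta_s\,ds\Bigr)\Bigm|\,\xi^t_{x,y}(s) > 0\ \forall s\in(0,t)\right].
\]

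For (i), the assumption $m''(s) = O((s+1)^{-2})$ gives $\int_0^\infty|m''(s)|\,ds < \infty$, whence $|\int_0^t m''(s)\beta_s\,ds| \le C\sup_{s\in[0,t]}|\beta_s|$; since $\sup_s|\beta_s|$ has uniform Gaussian-type tails under both the bridge measure and the positivity conditioning (via a Bessel(3)-bridge comparison), this yields $\psi_t\le K_2$, and the matching lower bound $\psi_t\ge K_1$ follows from Jensen applied to the conditional expectation together with the same uniform $L^1$ bound. For (ii), coupling $\xi^t_{x,y}$ and $\xi^t_{x,0}$ on the positivity event yields $|\psi_t(x,y)-\psi_t(x,0)| \le K_2\, y\,(\log t/t)\,\psi_t(x,0)$, with the $\log t/t$ factor coming from the perturbation of the survival normalisation when the endpoint shifts by $y$. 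For (iii), as $t\to\infty$ the positively conditioned bridge $\xi^t_{x,0}$ converges weakly on compact time intervals to a Bessel(3) process started from $x$, and integrability of $|m''|$ permits dominated convergence to deliver a positive finite limit $\Psi(x)$.

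The main obstacle is the uniform lower bound in (i) in the degenerate regime $x,y\downarrow 0$, where the positivity conditioning becomes maximally restrictive and the naive $L^1$ estimate on $\beta$ could degenerate; the cleanest route is a single Bessel(3)-bridge representation valid uniformly in the endpoints, which interpolates between the near-trivial regime of large $x,y$ and the strongly distorted regime of small $x,y$.
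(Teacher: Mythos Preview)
The paper does not prove this proposition independently: its proof consists entirely of a citation to Eq.~(28) and Proposition~8 of \cite{BBHR17}, with the observation that a factor of $\sqrt{2}$ (from the $\tfrac12$ in front of the Laplacian) and a swap of the roles of $x$ and $y$ are needed to match conventions. Your proposal, by contrast, attempts a self-contained proof. The derivation of the explicit formula is correct and is precisely the approach taken in \cite{BBHR17}: the Cameron--Martin shift by $\delta$ straightens the boundary, integration by parts turns the stochastic integral into $\int_0^t m''(s)\beta_s\,ds$, and the deterministic pieces reassemble exactly as you claim, yielding your representation of $\psi_t$ as a conditional expectation.

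The gap is in your argument for property~(i). You bound $\bigl|\int_0^t m''(s)\beta_s\,ds\bigr|\le C\sup_{s\in[0,t]}|\beta_s|$ and assert that $\sup_s|\beta_s|$ has exponential moments uniform in $t$. This is false: $\beta_{t/2}$ has unconditional variance $t/4$, and the positivity conditioning only repels $\xi$ upward (for small $x,y$ the conditioned process is close to a Brownian excursion of length $t$, whose midpoint is of order $\sqrt t$), so $\mathbb{E}\bigl[e^{C\sup_s|\beta_s|}\bigr]\to\infty$ as $t\to\infty$. The integrability $\int_0^\infty|m''|<\infty$ is too weak here; what is actually used in \cite{BBHR17} is the pointwise decay of $m''(s)$ against the growth of $\beta_s$, together with a careful Bessel-bridge analysis in the degenerate regime $xy/t\to 0$, where \emph{both} bounds in~(i) are delicate, not just the lower one you flag at the end. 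Your sketches for~(ii) and~(iii) are likewise too thin: the claim that the tail $\int_T^t m''(s)\beta_s\,ds$ is negligible uniformly in $t$, needed for the limit in~(iii), requires the same finer control and does not follow from integrability of $|m''|$ alone.
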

\begin{proof}
The result follows directly from~\cite[Eq.~(28)]{BBHR17} and~\cite[Proposition 8]{BBHR17}, noting that we exchanged the roles of $x$ and $y$ in our notation compared to the notation in~\cite{BBHR17}, and in the definition of $q$ in~\cite[Eq.~(19)]{BBHR17} there is no $\frac 12$ factor in the $\partial_x^2 q$ term, so $q(t,x,y)$ in our notation corresponds to $\sqrt{2}q(t,\sqrt{2}y,\sqrt{2}y)$ in the notation of~\cite{BBHR17},
and $m(s)$ in our notation corresponds to $\sqrt{2}m(s)$ in the notation of~\cite{BBHR17}.
\end{proof}

\begin{proof}[Proof of Theorem~\ref{theo:extension of BBHR}]
Since $\mu$ is a finite measure,~\eqref{eq:finiteinitmassforlinear} implies that ~$\int_0^{\infty}xe^{\sqrt 2 x}\mu_{y}(dx)<\infty$, where $\mu_{y}$ is given by $\mu_{y}(E)=\mu(E+y)$ for $E\in \mathscr B(\R)$, for any $y\in \R$. Therefore
we may assume without loss of generality that $a=\frac{3}{2\sqrt 2}\log(t_0+1)-\sqrt 2 t_0$ and so $m(0)=0$.

Then by~\eqref{eq:mtdefn}, for $t>0$,
	\begin{align*}
	m'(t)&=\sqrt 2-\frac{3}{2\sqrt{2}(t+t_0+1)},\quad m''(t)=\frac{3}{2\sqrt{2}(t+t_0+1)^2}, \\
	\text{and }\quad \frac{m(t)}{t}&=\sqrt{2}-\frac{3}{2\sqrt{2}t}\log(t+t_0+1)+\frac{\sqrt 2 t_0+a}{t}.
	\end{align*}
	Moreover, for $t>0$,
	\begin{align*}
	\int_0^t(m'(s))^2ds &=\int_0^t \left(2-\frac{3}{1+s+t_0}+\frac{9}{8(1+s+t_0)^2}\right)ds\\
	&=2t-3[\log(1+t+t_0)-\log (1+t_0)]+\frac{9}{8}[(1+t_0)^{-1}-(1+t)^{-1}].
	\end{align*}
	
	In the remainder of the proof, we let $A=A(t)$ denote a real-valued function of $t>0$ that converges as $t\ra\infty$ to some positive real number, and which may change from line to line. We write $\bar A=\lim_{t\to \infty}A(t)\in (0,\infty)$ for its limit, which may also change from line to line.
	Moreover, $C=C(t,x,y)$ will denote a real-valued function of $t>0$, $x\ge 0$ and $y\geq 0$ that is bounded uniformly over all $t\geq T_0$, $x\ge 0$ and $y\ge 0$ for some $T_0\in (0,\infty)$, and $T_0$ and $C$ may increase from line to line. 
	
	Then by Proposition~\ref{prop:BBHRresult}, for $t>0$ and $x,y\ge 0$,
	\begin{align} \label{eq:qtformulaforthm}
	q(t,x,m(t)+y)&= A\sinh\left(\frac{xy}{t}\right)t^{-\frac{1}{2}}
	e^{(\sqrt 2-\frac{3}{2\sqrt 2 t}\log(t+t_0+1)+\frac{\sqrt 2 t_0+a}{t})(x-y)-\frac{x^2+y^2}{2t}+t-t+\frac{3}{2}\log(1+t+t_0)}\psi_t(x,y) \notag \\
		&=A t\sinh\left(\frac{xy}{t}\right)
	e^{(\sqrt 2-\frac{3}{2\sqrt 2 t}\log(t+t_0+1)+\frac{\sqrt 2 t_0+a}{t})(x-y)-\frac{x^2+y^2}{2t}}\psi_t(x,y).
	\end{align}
	We will now decompose $q(t,x,m(t)+y)$ by writing
	\begin{equation} \label{eq:qtdecompose}
	q(t,x,m(t)+y)=q(t,x,m(t)+y)\1_{\{xy\leq 2t\}}+q(t,x,m(t)+y)\1_{\{xy> 2t\}},
	\end{equation}
	and we will control the two terms on the right-hand side separately.
	
First, note that by property~(i) in Proposition~\ref{prop:BBHRresult} and since $\sup_{z\in (0,2]}(z^{-1}\sinh(z))<\infty$, we have
that for $t>0$ sufficiently large that $\sqrt 2-\frac{3}{2\sqrt 2 t}\log(t+t_0+1)+\frac{\sqrt 2 t_0+a}{t}\in [1,\sqrt 2]$,
for $x,y\ge 0$,
\begin{equation} \label{eq:qdomconvbd}
q(t,x,m(t)+y)\1_{\{xy\leq 2t\}}
	\le Cxy
	e^{\sqrt{2}x}e^{-y}.
\end{equation}
Moreover, since $\sinh(z)\sim z$ as $z\to 0$, and by properties~(ii) and~(iii) in Proposition~\ref{prop:BBHRresult},
for $x,y\ge 0$ fixed, as $t\to \infty$,
\begin{equation} \label{eq:qpointwiselimit}
q(t,x,m(t)+y)\1_{\{xy\leq 2t\}}
\to \bar A xy e^{\sqrt{2}(x-y)}\Psi(x),
\end{equation}
where we recall from property~(iii) in Proposition~\ref{prop:BBHRresult} that $\Psi(x)\in (0,\infty)$.

Recall our assumption~\eqref{eq:finiteinitmassforlinear} on $\mu$.
Then by~\eqref{eq:qdomconvbd} and~\eqref{eq:qpointwiselimit} and the dominated convergence theorem,
using that
$ye^{-y}$ is integrable with respect to $dy$ over $\{y\ge 0\}$ and $xe^{\sqrt 2 x}$ is integrable with respect to $\mu(dx)$ over $\{x\ge 0\}$,
	as $t\to \infty$, 
	\begin{equation}\label{eq:L1 limit of first piece Roberts extension}
		\int_0^\infty \left|\int_{[0,\infty)} q(t,x,m(t)+y)\1_{\{xy\leq 2t\}} \mu(dx) -Kye^{-\sqrt 2y}\right| dy \to 0,
	\end{equation}
	where
	\[
	K=\bar A\int_{[0,\infty)} xe^{\sqrt 2x}\Psi(x)\mu(dx)\in (0,\infty).
	\]
	
	We now decompose the second term on the right-hand side of~\eqref{eq:qtdecompose} by writing
	\begin{equation} \label{eq:qtdecomposeagain}
	q(t,x,m(t)+y)\1_{\{xy> 2t\}}=q(t,x,m(t)+y)\1_{\{xy> 2t, \, y\leq t^{1/3}\}}
		+q(t,x,m(t)+y)\1_{\{xy> 2t, \, y> t^{1/3}\}}.
	\end{equation}
	Note that for $t>0$ and $x,y\ge 0$, 
	\begin{equation} \label{eq:sinhbd}
	\sinh\left(\frac{xy}{t}\right)
	e^{-\frac{x^2+y^2}{2t}}\le \tfrac 12 e^{\frac{xy}{t}}e^{-\frac{x^2+y^2}{2t}}=\tfrac 12 e^{-\frac{(x-y)^2}{2t}}.
	\end{equation}
	Therefore by~\eqref{eq:qtformulaforthm} and property~(i) in Proposition~\ref{prop:BBHRresult}, for $t>0$ sufficiently large and $x,y\ge 0$, 
	\[
	q(t,x,m(t)+y)\1_{\{xy> 2t, \, y\leq t^{1/3}\}}\leq Cte^{-\frac{(x-y)^2}{2t}} e^{\sqrt 2x}e^{-y}\1_{\{xy> 2t, \, y\leq t^{1/3}\}}.
	\]
	For $t\ge 1$ and $x,y\ge 0$ with $xy>2t$ and $y\leq t^{1/3}$, we have $x>2t^{2/3}$ and so $(x-y)^2>t^{4/3}$. Thus, for $t>0$ sufficiently large and $x,y\ge 0$,
	\[
	q(t,x,m(t)+y)\1_{\{xy> 2t, \, y\leq t^{1/3}\}}\leq Cte^{-\frac{1}{2}t^{1/3}}e^{\sqrt 2x}e^{-y}.
	\]
	By~\eqref{eq:finiteinitmassforlinear} and since $\mu$ is a finite measure, the right-hand side is integrable with respect to $\mu(dx)dy$ over $\{x\ge 0,\, y \ge 0\}$ with integral converging to $0$ as $t\ra\infty$, and so as $t\ra\infty$,
	\begin{equation}\label{eq:convergence of second piece Roberts extension}
		\int_0^\infty \int_{[0,\infty)} q(t,x,m(t)+y)\1_{\{xy> 2t, \, y\leq t^{1/3}\}}\mu(dx) dy\ra 0.
	\end{equation}

	Finally, for the second term on the right-hand side of~\eqref{eq:qtdecomposeagain}, 
	by~\eqref{eq:qtformulaforthm},~\eqref{eq:sinhbd} and property~(i) in Proposition~\ref{prop:BBHRresult},
	for $t>0$ sufficiently large and $x,y\ge 0$,
	\[
		q(t,x,m(t)+y)\1_{\{xy> 2t, \, y> t^{1/3}\}}\leq Cte^{\sqrt{2}x}e^{-y}\1_{\{xy> 2t, \, y> t^{1/3}\}}
		\leq Cte^{\sqrt{2}x}e^{-\frac{1}{2}t^{1/3}}e^{-\frac{y}{2}}.
	\]
Thus, using~\eqref{eq:finiteinitmassforlinear} and that $\mu$ is a finite measure, as $t\ra\infty$,
	\begin{equation}\label{eq:convergence of third piece Roberts extension}
		\int_0^\infty \int_{[0,\infty)} q(t,x,m(t)+y)\1_{\{xy> 2t, \, y> t^{1/3}\}} \mu(dx) dy\ra 0.
	\end{equation}
	
	By the definition of $q(t,x,y)$ in Proposition~\ref{prop:BBHRresult}, we have that for $t>0$ and $z\ge 0$,
	\[
	e^t \psub{\mu}{B_t>m(t)+z, \, B_s>m(s)\; \forall s\in (0,t)}
=\int_{z}^\infty \int_{[0,\infty)} q(t,x,m(t)+y)\mu(dx) dy.
	\]
	Therefore, recalling the definition of $\Pi_{\min}$ in~\eqref{eq:Pimindefn} and~\eqref{eq:minimal travelling wave}, the result follows directly from \eqref{eq:L1 limit of first piece Roberts extension}, \eqref{eq:convergence of second piece Roberts extension} and \eqref{eq:convergence of third piece Roberts extension}.
\end{proof}

\subsection{Convergence to $\Pi_{\min}$ for solutions of free boundary problem}\label{subsec:finiteinitmassconv}
In this subsection, under the finite initial mass assumption~\eqref{eq:finiteinitialmassinsection} on the initial condition $U_0$, we will prove that solutions of the free boundary problem~\eqref{eq:FBP_CDF} converge to the travelling wave $\Pi_{\min}$, and establish preliminary results on the asymptotics of the free boundary position, which we will then strengthen in Section~\ref{subsec:bootstrap} below.
Recall that $(U^H(t,x),L^H_t)$, defined in Section~\ref{subsec:notation}, is the solution of ~\eqref{eq:FBP_CDF} with Heaviside initial condition.
\begin{theo}\label{theo:convergence to the minimal travelling wave for finite initial mass}
Suppose $U_0$ satisfies Assumption~\ref{assum:standing assumption ic} and~\eqref{eq:finiteinitialmassinsection}.
	Let $(U(t,x),L_t)$ denote the solution of~\eqref{eq:FBP_CDF} with initial condition $U_0$.
	Then
	\begin{equation} \label{eq:theoconvtoPimin}
	U(t,L_t+x)\ra \Pi_{\text{min}}(x)
	\end{equation}
	uniformly in $x\in \R$ as $t\ra \infty$. Moreover, as $t\to \infty$,
	\begin{equation}\label{eq:O(1) asymptotics of free-bdy}
		L_t=\sqrt{2}t-\frac{3}{2\sqrt{2}}\log t+\calO(1).
	\end{equation}
	Furthermore, there exists $a\in\Rm$ such that
	\begin{equation} \label{eq:theoLHtLtconv}
		L_t - L^H_t\to a \quad \text{as }t\to \infty.
	\end{equation}
\end{theo}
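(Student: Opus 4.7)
The plan is to combine the Heaviside comparison provided by the stretching lemma with the heat-equation estimate of Theorem~\ref{theo:extension of BBHR} and the exact survival identity of Lemma~\ref{lem:FKforwardstime}. We first reduce to $L_0:=\inf\{x:U_0(x)<1\}=0$ by a time shift (if needed) followed by a spatial translation: the time shift preserves the integrability condition~\eqref{eq:finiteinitialmassinsection} via the Feynman-Kac bound of Lemma~\ref{lem:FKforinfinitemass}. Under this reduction the probability measure $u_0$ defined by $u_0((x,\infty))=U_0(x)$ is supported on $[0,\infty)$, and $U_0\ge_s H$ holds vacuously, so by Lemma~\ref{lem:extended maximum principle} together with Lemma~\ref{lem:less stretching means slower boundary} applied between arbitrary times $t_1<t_2$, the quantity $t\mapsto L_t-L^H_t$ is non-decreasing and non-negative.

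To obtain~\eqref{eq:O(1) asymptotics of free-bdy}, Lemma~\ref{lem:LHtlower} allows us to fix $a_0\le 0$ such that $m(t):=\sqrt 2\, t-\frac{3}{2\sqrt 2}\log(t+1)+a_0$ satisfies $m(t)\le L^H_t\le L_t$ for all $t\ge 0$ and $m(0)\le 0$. Then Theorem~\ref{theo:extension of BBHR} applied to $u_0$ with this barrier yields, for some $K\in(0,\infty)$,
\[
e^t\,\psub{u_0}{B_s>m(s)\ \forall s\in(0,t),\ B_t>m(t)+R}\;\longrightarrow\;K\,\Pi_{\min}(R)\qquad (t\to\infty),
\]
uniformly in $R\ge 0$. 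If $\limsup_{t\to\infty}(L_t-m(t))=+\infty$, pick $R$ with $K\,\Pi_{\min}(R)<1/2$ and a sequence $t_n\uparrow\infty$ with $L_{t_n}>m(t_n)+R$; then Lemma~\ref{lem:FKforwardstime} combined with $L_s\ge m(s)$ gives
\[
1 = e^{t_n}\psub{u_0}{B_s>L_s\ \forall s\in(0,t_n)} \le e^{t_n}\psub{u_0}{B_s>m(s)\ \forall s\in(0,t_n),\ B_{t_n}>m(t_n)+R}\to K\,\Pi_{\min}(R)<\tfrac{1}{2},
\]
a contradiction. Hence $L_t-m(t)=\mathcal O(1)$, which proves~\eqref{eq:O(1) asymptotics of free-bdy} and makes $L_t-L^H_t$ bounded above; combined with its monotonicity this forces $L_t-L^H_t\to a$ for some $a\in\R$, giving~\eqref{eq:theoLHtLtconv}.

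For the convergence~\eqref{eq:theoconvtoPimin}, Lemma~\ref{lem:boundary Lipschitz from the right when less stretched than a travelling wave} applied to $U^H(t,\cdot)\le_s\Pi_{\min}$ together with Lemma~\ref{lem:boundary locally Lipschitz from the left} yields $L^H_{t+s}-L^H_t\to\sqrt 2\, s$ locally uniformly in $s\ge 0$ as $t\to\infty$; combined with~\eqref{eq:theoLHtLtconv} this upgrades to $L_{t+s}-L_t\to\sqrt 2\, s$ locally uniformly. Since $(U(t+\cdot,L_t+\cdot),L_{t+\cdot}-L_t)$ solves~\eqref{eq:FBP_CDF} with initial condition $U(t,L_t+\cdot)$, the Brunet-Derrida relation of Theorem~\ref{theo:magic formula} gives, for $r\in(-\infty,\sqrt 2)\setminus\{0\}$,
\[
\int_0^\infty U(t,L_t+x)e^{rx}\,dx \;=\; -\tfrac{1}{r}+\tfrac{1}{r}\int_0^\infty e^{r(L_{t+s}-L_t)-(1+\frac{r^2}{2})s}\,ds;
\]
passing to $t\to\infty$ by dominated convergence (using $L_{t+s}-L_t\le\sqrt 2\, s+\mathcal O(1)$ together with the fact that $r^2/2-r\sqrt 2+1=(r-\sqrt 2)^2/2>0$ supplies an integrable dominator), the right-hand side converges to the analogous expression with $L_{t+s}-L_t$ replaced by $\sqrt 2\, s$, which by a further application of Theorem~\ref{theo:magic formula} equals $\int_0^\infty\Pi_{\min}(x)e^{rx}dx$. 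The family $\{U(t,L_t+\cdot)\}_{t\ge 1}$ consists of uniformly bounded non-increasing functions with $U(t,L_t)=1$, so by Helly's selection theorem any subsequence has a further subsequence converging pointwise almost everywhere to some limit $V_0$; any such $V_0$ has the same Laplace transform as $\Pi_{\min}$ on $(-\infty,\sqrt 2)\setminus\{0\}$ and hence equals $\Pi_{\min}$ by Lemma~\ref{lem:uniquness given same transform on open set}. Pointwise convergence of $U(t,L_t+\cdot)$ to $\Pi_{\min}$ follows, and uniform convergence then comes from monotonicity of $U(t,L_t+\cdot)$, continuity of $\Pi_{\min}$ with $\Pi_{\min}(0)=1=U(t,L_t)$, and $\Pi_{\min}(x)\to 0$ as $x\to\infty$.

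The hard part will be the contradiction step in the second paragraph: it depends crucially on Theorem~\ref{theo:extension of BBHR} holding under the sharp finite-initial-mass condition~\eqref{eq:finiteinitialmassinsection} rather than under the strictly stronger tail bound~\eqref{eq:BBHR_condition} used in~\cite{BBHR17}. Extending the BBHR heat-equation estimate to this optimal setting is therefore the principal technical obstacle, and once it is in hand the remainder of the proof is a relatively direct combination of the Heaviside comparison, the exact Feynman-Kac identity, and the Brunet-Derrida relation.
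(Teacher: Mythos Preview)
Your proposal is correct and follows essentially the same strategy as the paper: monotonicity of $L_t-L^H_t$ via the stretching lemma, the upper bound on $L_t-m(t)$ by contradiction using Theorem~\ref{theo:extension of BBHR} together with the exact survival identity of Lemma~\ref{lem:FKforwardstime}, and then the Brunet--Derrida relation to pass from $L_{t+s}-L_t\to\sqrt 2 s$ to uniform convergence of the profile. Two small points are worth flagging.

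First, a minor difference in the reduction step: rather than time-shifting and translating so that $L_0=0$, the paper instead compares $U_0$ with $U_0^+(x):=\1_{\{x<1\}}+U_0(x)\1_{\{x\ge 1\}}$ and applies the contradiction argument (your paragraph~2, which is the paper's Lemma~\ref{lem:L-mbound}) to $U_0^+$. This avoids having to check that the finite initial mass condition survives a time shift, and keeps the monotonicity statement for the \emph{original} $L_t-L^H_t$ rather than for a shifted version. Your route also works, but the $U_0^+$ trick is slightly cleaner.

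Second, a genuine (though easily repaired) gap: your dominated convergence justification ``using $L_{t+s}-L_t\le\sqrt 2\, s+\mathcal O(1)$'' only bounds the exponent $r(L_{t+s}-L_t)-(1+\tfrac{r^2}{2})s$ from above when $r>0$. For $r<0$ you need the \emph{lower} bound $L_{t+s}-L_t\ge(\sqrt 2-\varepsilon)s-\mathcal O(1)$, which comes from Lemma~\ref{lem:boundary locally Lipschitz from the left} applied to $L^H$ together with~\eqref{eq:theoLHtLtconv}. The paper handles both signs of $r$ by the two-sided estimate $|L_{t+s}-L_t-\sqrt 2 s|\le 2A+\varepsilon s$. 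Once this is fixed, your identification of the limit via Helly and Lemma~\ref{lem:uniquness given same transform on open set} is a valid alternative to the paper's argument (which instead integrates by parts and concludes weak convergence of the densities $-\partial_x U(t,L_t+\cdot)\,dx$ to $\pi_{\min}(x)\,dx$); just be sure to note that any Helly limit $V_0$ is continuous with $\inf\{x:V_0(x)<1\}=0$, since Lemma~\ref{lem:uniquness given same transform on open set} requires both.
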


In order to prove Theorem~\ref{theo:convergence to the minimal travelling wave for finite initial mass}, we will show that $(L_t-L^H_t)_{t\ge 0}$ is non-decreasing and bounded, and that $L_t$ satisfies the asymptotics~\eqref{eq:O(1) asymptotics of free-bdy}.
Then we will show that for any $s\ge 0$ fixed, $L_{t+s}-L_t\to \sqrt 2 s$ as $t\to \infty$, which we will combine with the Brunet-Derrida formula (Theorem~\ref{theo:magic formula}) to prove convergence to $\Pi_{\min}$.
We begin by proving the following simple consequence of Lemma~\ref{lem:less stretching means slower boundary}. 
	\begin{lem} \label{lem:LtLhtincr}
	Suppose $U_0$ satisfies Assumption~\ref{assum:standing assumption ic},
let $L_0=\inf\{x\in \R:U_0(x)<1\}$,	
	 and let $(U(t,x),L_t)$ denote the solution of~\eqref{eq:FBP_CDF}.
	Then $t\mapsto L_t-L^H_t$ is a non-decreasing function on $[0,\infty)$. 
	\end{lem}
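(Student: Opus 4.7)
The plan is to reduce this lemma to the combination of the stretching lemma (Lemma~\ref{lem:extended maximum principle}) and the boundary-comparison result of Lemma~\ref{lem:less stretching means slower boundary}. The key preliminary observation is that the Heaviside profile $H(x)=\1_{\{x<0\}}$ is less stretched than any $U_0$ satisfying Assumption~\ref{assum:standing assumption ic}. Indeed, since $0\leq U_0\leq 1$ and $H\in\{0,1\}$, the inequality $U_0(x_1)>H(x_1+c)$ forces $H(x_1+c)=0$, i.e.\ $x_1+c\geq 0$, and hence $x_2+c\geq 0$ for any $x_2\geq x_1$, giving $H(x_2+c)=0\leq U_0(x_2)$. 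This is precisely~\eqref{eq:defin 1 of being more stretched}, so $U_0\geq_s H$.

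Applying the stretching lemma with this initial comparison then yields $U(s,\cdot)\geq_s U^H(s,\cdot)$ for every $s\geq 0$. The next step is to exploit Lemma~\ref{lem:less stretching means slower boundary} by restarting the free boundary problem at a positive time. Fix $s>0$; by Proposition~\ref{prop:fbpsoln} both $L_s$ and $L^H_s$ are finite. By uniqueness of classical solutions, the solutions of~\eqref{eq:FBP_CDF} with initial conditions $U(s,\cdot)$ and $U^H(s,\cdot)$ are respectively $(U(s+r,\cdot),L_{s+r})_{r\geq 0}$ and $(U^H(s+r,\cdot),L^H_{s+r})_{r\geq 0}$. Since the stretching comparison $U(s,\cdot)\geq_s U^H(s,\cdot)$ holds and both starting free boundary values lie in $\R$, Lemma~\ref{lem:less stretching means slower boundary} gives
\[
L_{s+r}-L_s\geq L^H_{s+r}-L^H_s \quad \text{for all } r\geq 0,
\]
which rearranges to $L_{s+r}-L^H_{s+r}\geq L_s-L^H_s$, i.e.\ the monotonicity of $t\mapsto L_t-L^H_t$ on $(0,\infty)$.

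Finally, to extend the monotonicity down to $t=0$, I would split according to whether $L_0$ is finite. If $L_0=-\infty$ then $L_0-L^H_0=-\infty$ and the inequality against any $L_t-L^H_t$ is vacuous. If $L_0\in\R$, then by Proposition~\ref{prop:fbpsoln}(\ref{enum:Lt_time0_limit}) we have $L_s\to L_0$ as $s\downarrow 0$, and similarly $L^H_s\to L^H_0=0$, so passing to the limit $s\downarrow 0$ in the inequality just established yields $L_t-L^H_t\geq L_0-L^H_0$ for every $t>0$. There is no genuine obstacle here: once the stretching lemma and Lemma~\ref{lem:less stretching means slower boundary} are in hand, this is essentially bookkeeping. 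The only subtlety worth flagging is the need to restart from a positive time $s$, which is what ensures the hypothesis $L^U_0,L^V_0>-\infty$ in Lemma~\ref{lem:less stretching means slower boundary} is satisfied even when $L_0=-\infty$ for the original initial condition.
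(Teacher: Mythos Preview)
Your proposal is correct and follows essentially the same approach as the paper: establish $U(s,\cdot)\geq_s U^H(s,\cdot)$ via the stretching lemma, then apply Lemma~\ref{lem:less stretching means slower boundary} at a positive restart time $s$ (where both boundaries are finite), and finally pass to the limit $s\downarrow 0$ using Proposition~\ref{prop:fbpsoln}(\ref{enum:Lt_time0_limit}). Your treatment is slightly more explicit than the paper's in verifying $U_0\geq_s H$ directly and in separating the case $L_0=-\infty$, but the logical structure is identical.
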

	\begin{proof}
	Take $0<s\le t <\infty$.
	Then by Proposition~\ref{prop:fbpsoln} we have $L_s,L^H_s>-\infty$, and by Lemma~\ref{lem:extended maximum principle} we have $L_s\ge_s L^H_s$.
	By Lemma~\ref{lem:less stretching means slower boundary}, it follows that $L_t-L_s\ge L^H_t-L^H_s$.
	Hence $t\mapsto L_t-L^H_t$ is a non-decreasing function on $(0,\infty)$, and the result follows since $L_t-L^H_t\to L_0-L^H_0$ as $t\to 0$ by Proposition~\ref{prop:fbpsoln}.
	\end{proof}
	For $t\ge 0$, let
	\begin{equation} \label{eq:mtfinitemassdefn}
		m(t):=\sqrt{2}t-\frac{3}{2\sqrt{2}}\log (t+1).
	\end{equation}
Recall from Lemma~\ref{lem:LHtlower} that there exists $C_0<\infty$ such that 
\begin{equation} \label{eq:LHtlowerinconv}
L^H_t\ge m(t) -C_0 \quad \forall t\ge 0.
\end{equation}
We now use Theorem~\ref{theo:extension of BBHR} and the Feynman-Kac representation in Lemma~\ref{lem:FKforwardstime} to show that
under a suitable condition on the initial condition $U_0$, we have
$L_t\le m(t)+\mathcal O(1)$ as $t\to \infty$.	
	\begin{lem} \label{lem:L-mbound}
	Suppose $u_0$ is a Borel probability measure supported on $[0,\infty)$, and suppose $u_0$ satisfies
	\[
	\int_{[0,\infty)}xe^{\sqrt{2}x}u_0(dx)<\infty.
	\]
	Let $(U(t,x),L_t)$ denote the solution of~\eqref{eq:FBP_CDF} with initial condition $U_0$ given by $U_0(x)=u_0((x,\infty))$ $\forall x\in \R$.
	Then
		\[
		\sup_{t> 0} (L_t-m(t))< \infty.
		\]
	\end{lem}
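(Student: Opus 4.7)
The plan is to combine the Feynman--Kac identity $\mathbb P_{u_0}(\tau>t)=e^{-t}$ from Lemma~\ref{lem:FKforwardstime} (where $\tau:=\inf\{s>0:B_s\le L_s\}$) with the sharp survival asymptotic from Theorem~\ref{theo:extension of BBHR}, applied to Brownian motion killed at a \emph{fixed} lower barrier that sits below $L_t$. Heuristically, if $L_t-m(t)\to\infty$ along some subsequence, then the requirement that a Brownian motion initialised from $u_0$ end above $L_t$ at time $t$ while staying above the lower barrier throughout $(0,t)$ would cost strictly less than $e^{-t}$, contradicting the Feynman--Kac identity.

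First, because $u_0$ is supported on $[0,\infty)$ we have $L_0=0=L_0^H$, so Lemma~\ref{lem:LtLhtincr} yields $L_t\ge L_t^H$ for all $t\ge 0$. Combining with Lemma~\ref{lem:LHtlower} (and replacing $C_0$ by $\max(C_0,0)$ if necessary), we obtain $L_t\ge m(t)-C_0$ for all $t\ge 0$. Next, invoke Theorem~\ref{theo:extension of BBHR} with $t_0=0$, $a=-C_0$ and $\mu=u_0$: the integrability hypothesis is exactly the finite initial mass assumption of the lemma, and $\supp u_0\subseteq[0,\infty)\subseteq[-C_0,\infty)$ covers the support condition. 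This produces a constant $K\in(0,\infty)$ such that
\[
e^t\,\mathbb P_{u_0}\bigl(B_t>m(t)-C_0+y,\ B_s>m(s)-C_0\ \forall s\in(0,t)\bigr)\ \longrightarrow\ K\,\Pi_{\min}(y)\quad\text{as }t\to\infty,
\]
\emph{uniformly} in $y\ge 0$.

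To conclude, set $y_t:=L_t-m(t)+C_0\ge 0$. On $\{\tau>t\}$ we have $B_s>L_s\ge m(s)-C_0$ for all $s\in(0,t]$ and, in particular, $B_t>L_t=m(t)-C_0+y_t$; hence
\[
e^{-t}=\mathbb P_{u_0}(\tau>t)\le \mathbb P_{u_0}\bigl(B_t>m(t)-C_0+y_t,\ B_s>m(s)-C_0\ \forall s\in(0,t)\bigr).
\]
Multiplying by $e^t$ and using the uniform convergence with error tolerance $\tfrac12$ gives $1\le K\,\Pi_{\min}(y_t)+\tfrac12$ for all sufficiently large $t$, i.e.\ $\Pi_{\min}(y_t)\ge (2K)^{-1}>0$. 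Since $\Pi_{\min}$ is continuous, non-increasing, and vanishes at infinity, this bounds $y_t$ for large $t$, and hence bounds $L_t-m(t)$. Boundedness of $L_t-m(t)$ on each interval $(0,T]$ is immediate from continuity of $t\mapsto L_t$ together with $L_t\to L_0=0=m(0)$ as $t\to 0$, giving $\sup_{t>0}(L_t-m(t))<\infty$. The only delicate ingredient is the \emph{uniformity in $y\ge 0$} in Theorem~\ref{theo:extension of BBHR}: this is what allows us to evaluate the asymptotic at the moving argument $y_t$, which may a priori grow without bound along the sequence on which we need control.
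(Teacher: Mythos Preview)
Your proof is correct and follows essentially the same approach as the paper's: both establish the lower barrier $L_t\ge m(t)-C_0$ via Lemma~\ref{lem:LtLhtincr} and Lemma~\ref{lem:LHtlower}, apply Theorem~\ref{theo:extension of BBHR} to the fixed barrier $m(\cdot)-C_0$, and combine with the Feynman--Kac identity $\mathbb P_{u_0}(\tau>t)=e^{-t}$ from Lemma~\ref{lem:FKforwardstime}; your direct argument (evaluating the uniform limit at the moving point $y_t$ and inverting $\Pi_{\min}$) replaces the paper's contradiction argument but is otherwise identical. One small slip: the support assumption on $u_0$ only gives $L_0\ge 0=L_0^H$, not $L_0=0$ (consider $u_0=\delta_1$), but this is exactly what you need for Lemma~\ref{lem:LtLhtincr} to yield $L_t\ge L_t^H$, and $L_0<\infty$ still holds, so the rest of the argument is unaffected.
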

	\begin{proof}
We assume, aiming for a contradiction, that there exists $(t_k)_{k=1}^\infty\subset (0,\infty)$ with $t_k\to \infty$ as $k\to \infty$ such that
		\begin{equation}\label{eq:assumption for contradiction Lt larger than tilde L t}
			L_{t_k}-m(t_k)\ra \infty \quad  \text{as }k\to \infty.
		\end{equation}
Let $L_0=\inf\{x\in \R:u_0((x,\infty))<1\}$; then $L_0\ge 0$ because $u_0$ is supported on $[0,\infty)$.
Therefore, by Lemma~\ref{lem:LtLhtincr} and since $L_0-L^H_0\ge 0$,
and then by~\eqref{eq:LHtlowerinconv},		
		\begin{equation}\label{eq:comparison 1 of all boundaries in proof}
			L_t\geq L^H_t\geq m(t)-C_0 \quad \forall t\ge 0.
		\end{equation}
		
		Now define the stopping times 
		\begin{equation} \label{eq:tauLtaumdefn}
		\tau_L:=\inf\{t>0:B_t\leq L_t\} \quad \text{and} \quad \tau_m:=\inf\{t>0:B_t\leq m(t)-C_0\}.
		\end{equation}		
Clearly, for $t>0$, if $\tau_L>t$ then $B_t>L_t$. 
Moreover, by~\eqref{eq:comparison 1 of all boundaries in proof},
we have $\tau_L\leq \tau_m$, and so $\tau_L>t$ implies that $\tau_m>t$.
Therefore, recalling the definition of $\mathbb P_{u_0}$ from before Lemma~\ref{lem:FKforwardstime}, for $t>0$,
\begin{equation} \label{eq:tauLgiventaum}
\psub{u_0}{\tau_L>t}=\psub{u_0}{\tau_m>t,\tau_L>t}
\leq \psub{u_0}{\tau_m>t,B_t>L_t}.
\end{equation}
By Theorem~\ref{theo:extension of BBHR}, there exists $K\in (0,\infty)$ such that as $t\to \infty$, uniformly in $y\ge 0$,
\[
e^t \psub{u_0}{B_t>m(t)-C_0+y,\, \tau_m>t}\to K \Pi_{\min}(y).
\]
Fix $y_0\in (0,\infty)$, and using~\eqref{eq:assumption for contradiction Lt larger than tilde L t}, take $k_0\in \Nm$ sufficiently large that
\[
L_{t_k}-m(t_k)\ge y_0-C_0 \; \forall k\ge k_0.
\]
Then for $k\ge k_0$, if $B_{t_k}>L_{t_k}$ then $B_{t_k}>m(t_k)+y_0-C_0$.
It follows that
\begin{equation*}
\limsup_{k\to \infty} e^{t_k}\psub{u_0}{B_{t_k}>L_{t_k},\, \tau_m>t_k}\le K\Pi_{\min}(y_0).
\end{equation*}
Therefore, by~\eqref{eq:tauLgiventaum} and since $y_0$ can be taken arbitrarily large,
		\[
		e^{t_k}\psub{u_0}{\tau_L>t_k}\ra 0 \quad \text{as }k\to \infty.
		\]
 But by the Feynman-Kac representation in Lemma~\ref{lem:FKforwardstime}, for any $t>0$,
		\[
		e^t \psub{u_0}{\tau_L>t}=1.
		\]
This gives us a contradiction, and completes the proof.	
	\end{proof}
	
	We are now ready to prove Theorem~\ref{theo:convergence to the minimal travelling wave for finite initial mass}.
\begin{proof}[Proof of Theorem~\ref{theo:convergence to the minimal travelling wave for finite initial mass}]
Define $U^+_0:\R\to [0,1]$ by letting
\[
U^+_0(x)=
\begin{cases}
1 \quad &\text{for }x<1,\\
U_0(x) \quad &\text{for }x\ge 1.
\end{cases}
\]
Then let $u_0^+$ denote the Borel probability measure on $\R$ given by $u^+_0((x,\infty))=U_0^+(x)$ $\forall x\in \R$.
Let $(U^+(t,x),L^+_t)$ denote the solution of~\eqref{eq:FBP_CDF} with initial condition $U^+_0$.
Since $U_0\le U^+_0$, by Proposition~\ref{prop:fbpcomparison} we have $L_t\le L^+_t$ $\forall t>0$.
Moreover, since $U_0$ satisfies~\eqref{eq:finiteinitialmassinsection}, we have
\[
\int_0^{\infty} y e^{\sqrt 2 y} U^+_0(y)  d y<\infty,
\]
and so using integration by parts,
\[
\int_{[0,\infty)}xe^{\sqrt{2}x}u^+_0(dx)=\int_0^\infty (1+\sqrt{2}x)e^{\sqrt 2 x}U^+_0(x)dx <\infty.
\]
Therefore, since $U^+_0$ is supported on $[0,\infty)$,
by Lemma~\ref{lem:L-mbound} there exists $C'<\infty$ such that
$L^+_t \le m(t)+C'$ $\forall t>0$.
By Lemma~\ref{lem:LtLhtincr} and then by~\eqref{eq:LHtlowerinconv}, we also have that for $t\ge 1$,
\[
L_t\ge L_t^H+L_1-L^H_1\ge m(t)-C_0+L_1-L^H_1.
\]
We now have that for $t\ge 1$,
\begin{equation} \label{eq:LtLHtsandwich}
m(t)-C_0+L_1-L^H_1\le L^H_t+L_1-L^H_1\le L_t\le L^+_t \le m(t)+C',
\end{equation}
which, in particular, completes the proof of~\eqref{eq:O(1) asymptotics of free-bdy}.

By~\eqref{eq:LtLHtsandwich}, we also have that there exists $A<\infty$ such that 
\begin{equation} \label{eq:LtLHtseqbound}
|L_t-L^H_t|\le A \quad \forall t\ge 1.
\end{equation}
Moreover, $t\mapsto L_t-L^H_t$ is non-decreasing by Lemma~\ref{lem:LtLhtincr}, and so there exists $a\in \R$ such that 
$\lim_{t\to \infty}(L_t-L^H_t)=a$, which establishes~\eqref{eq:theoLHtLtconv}.

It remains to prove~\eqref{eq:theoconvtoPimin}; we will use the Brunet-Derrida relation (Theorem~\ref{theo:magic formula}), which tells us that for $t>0$ and $r\in (-\infty,\sqrt{2})\setminus\{0\}$,
\begin{equation} \label{eq:magicforconvtoPi}
\int_0^{\infty} U(t,x+L_t)e^{rx}dx=-\frac 1 r + \frac 1r \int_0^\infty e^{r(L_{t+s}-L_t)-(1+\frac 12 r^2)s}ds.
\end{equation}
For $\varepsilon>0$, recalling the definition of $t_{\sqrt{2}-\varepsilon}$ in Lemma~\ref{lem:boundary locally Lipschitz from the left}, we have by Lemma~\ref{lem:boundary locally Lipschitz from the left} and Lemma~\ref{lem:boundary Lipschitz from the right when less stretched than a travelling wave} that for $t\ge t_{\sqrt{2}-\varepsilon}$ and $s\ge 0$,
\begin{equation} \label{eq:LHtspeed}
(\sqrt{2}-\varepsilon)s \le L^H_{t+s}-L^H_t \le \sqrt{2} s.
\end{equation}
Hence for $s\ge 0$ fixed,
\begin{equation} \label{eq:LtsLtconv}
L_{t+s}-L_t=L^H_{t+s}-L^H_t +(L_{t+s}-L^H_{t+s})-(L_t-L^H_t)\to \sqrt 2 s
\end{equation}
as $t\to \infty$, by~\eqref{eq:LHtspeed} and since $L_u-L^H_u$ converges as $u\to \infty$. Moreover, for $t\ge t_{\sqrt{2}-\varepsilon}\vee 1$ and $s\ge 0$,
by~\eqref{eq:LtLHtseqbound} and then by~\eqref{eq:LHtspeed},
\[
|L_{t+s}-L_t -\sqrt 2 s|\le 2A +|L^H_{t+s}-L^H_t -\sqrt 2 s|
\le 2A +\varepsilon s.
\]
For $r\in (-\infty,\sqrt{2})\setminus\{0\}$, take $\varepsilon>0$ sufficiently small that $|r|\varepsilon -(\frac{r}{\sqrt 2}-1)^2<0.$
Then for $t\ge t_{\sqrt{2}-\varepsilon}\vee 1$ and $s\ge 0$,
\begin{equation} \label{eq:fordomconvmagicPi}
r(L_{t+s}-L_t)-(1+\tfrac 12 r^2)s
=-(\tfrac{r}{\sqrt 2}-1)^2s+r(L_{t+s}-L_t-\sqrt 2 s)
\le (|r|\varepsilon -(\tfrac{r}{\sqrt 2}-1)^2)s+2A.
\end{equation}
Therefore, by~\eqref{eq:magicforconvtoPi},~\eqref{eq:LtsLtconv},~\eqref{eq:fordomconvmagicPi} and dominated convergence,
for any $r\in (-\infty,\sqrt{2})\setminus\{0\}$, as $t\to \infty$,
\begin{equation*} 
\int_0^{\infty} U(t,x+L_t)e^{rx}dx\to -\frac 1 r + \frac 1r \int_0^\infty e^{(\sqrt 2 r-1-\frac 12 r^2)s}ds=\frac{2\sqrt 2 -r}{(r-\sqrt 2)^2}
=\int_0^\infty \Pi_{\min}(x)e^{rx}dx,
\end{equation*}
where the last equality follows from~\eqref{eq:Pimindefn} and~\eqref{eq:minimal travelling wave}.
By integration by parts, it follows that
\begin{equation*} 
\int_0^{\infty} (-\partial_x U(t,x+L_t))e^{rx}dx\to 
\int_0^\infty \pi_{\min}(x)e^{rx}dx \quad \text{as }t\to \infty,
\end{equation*}
and so $-\partial_x U(t,x+L_t)dx$ converges weakly to $\pi_{\min}(x)dx$ as $t\to \infty$. 
Hence $U(t,\cdot+L_t)\to \Pi_{\min}$ pointwise as $t\to \infty$.

For $n\in \Nm$, letting $x_k:=\Pi_{\min}^{-1}(k/n)$ for $k\in \{1,\ldots ,n-1\}$, take $t$ sufficiently large that 
$|U(t,x_k+L_t)-\Pi_{\min}(x_k)|\le n^{-1}$ $\forall k\in \{1,\ldots ,n-1\}$.
Then since $U(t,\cdot)$ and $\Pi_{\min}$ are non-increasing and non-negative, with $U(t,x+L_t)=1=\Pi_{\min}(x)$ $\forall x\le 0$,
we have $|U(t,x+L_t)-\Pi_{\min}(x)|\le 2n^{-1}$ $\forall x\in \R$.
Therefore $U(t,\cdot+L_t)\to \Pi_{\min}$ uniformly as $t\to \infty$, which completes the proof.
\end{proof}

\subsection{Bootstrapping the boundary asymptotics} \label{subsec:bootstrap}

In this subsection, we will improve the asymptotics for $L_t$ given by~\eqref{eq:O(1) asymptotics of free-bdy} in Theorem~\ref{theo:convergence to the minimal travelling wave for finite initial mass} by proving the following finer asymptotics for $L^H_t$, which we can then combine with~\eqref{eq:theoLHtLtconv}.
\begin{theo}\label{theo:bootstrap boundary asymptotics}
There exists $x^H\in \Rm$ such that
	\begin{equation}
		L^H_t=\sqrt{2}t-\frac{3}{2\sqrt{2}}\log t+x^H+o(1) \quad \text{as }t\to \infty.
	\end{equation}
\end{theo}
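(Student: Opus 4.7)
The plan is to implement the bootstrapping argument outlined in Section~\ref{subsec:outline}. By Theorem~\ref{theo:convergence to the minimal travelling wave for finite initial mass} applied with $U_0 = H$, we already know $L^H_t = \sqrt{2}t - \frac{3}{2\sqrt{2}}\log t + \calO(1)$, so $a \in \R$ can be chosen large enough that, setting $m(t) := \sqrt{2}t - \frac{3}{2\sqrt{2}}\log(t+1) + a$, we have $m(t) \geq L^H_t$ for all $t \geq 0$. For parameters $T_0, R > 0$ to be optimised at the end, I would introduce the composite barrier $L'_t := L^H_t$ for $t \leq T_0$ and $L'_t := m(t)$ for $t > T_0$, and the nested events
\[
E'_t := \{B_s > L'_s \ \forall s \in (0,t),\ B_t > L^H_t + R\}, \qquad E^H_t := \{B_s > L^H_s \ \forall s \in (0,t),\ B_t > L^H_t + R\},
\]
so that $E'_t \subseteq E^H_t$.

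The first step is to derive matching asymptotics for $\psub{0}{E'_t}$ and $\psub{0}{E^H_t}$. Conditioning on $\calF_{T_0}$ and using the Markov property, one applies Theorem~\ref{theo:extension of BBHR} to the (sub-probability) measure $\nu(dx) := \psub{0}{B_{T_0} \in dx,\, B_s > L^H_s\ \forall s \leq T_0}$, which has a finite $\int x e^{\sqrt{2} x}$ moment by the Gaussian tail estimate of Lemma~\ref{lem:GaussiantailU} applied to $U^H$ at time $T_0$. This yields
\[
\psub{0}{E'_t} \sim K_{T_0}\, e^{-t}\, \Pi_{\min}\bigl(L^H_t - m(t) + R\bigr) \quad \text{as } t \to \infty,
\]
for some $K_{T_0} \in (0,\infty)$ depending on $T_0$ but not on $R$, provided $R$ is chosen large enough that $R + L^H_t - m(t) \geq 0$ for all $t$, which is possible since $m - L^H$ is bounded. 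On the other hand, Lemma~\ref{lem:FKforwardstime} applied with Heaviside initial condition gives $\psub{0}{E^H_t} = e^{-t}\, U^H(t, L^H_t + R)$, and the uniform convergence $U^H(t, L^H_t + \cdot) \to \Pi_{\min}$ from Theorem~\ref{theo:convergence to the minimal travelling wave for finite initial mass} then delivers
\[
\psub{0}{E^H_t} \sim e^{-t}\, \Pi_{\min}(R) \quad \text{as } t \to \infty.
\]

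The main obstacle is to show that for any prescribed $\epsilon > 0$ one can choose $T_0$ and $R$ large enough that $\liminf_{t \to \infty} \psub{0}{E'_t \mid E^H_t} \geq 1 - \epsilon$. The intuition is clear: conditionally on $E^H_t$, the path $(B_s)_{T_0 \leq s \leq t}$ behaves like Brownian motion conditioned to stay above $L^H_s$, which by convergence to $\Pi_{\min}$ is pushed well above $L^H_s$ for $s$ close to $t$, and by Brownian bridge estimates also stays well above $L^H_s$ uniformly over $[T_0, t]$ when $R$ and $T_0$ are large. Since raising the barrier by the bounded quantity $m(s) - L^H_s$ is then unlikely to cause a crossing, the ratio of probabilities tends to $1$. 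Making this quantitative requires uniform Brownian bridge barrier-crossing estimates, combined with the explicit density bounds of Proposition~\ref{prop:BBHRresult} to handle the tail of the conditional endpoint distribution, and is the most delicate step of the argument.

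Granting the previous step, for every $\epsilon > 0$ there exist $T_0, R$ such that
\[
1 - \epsilon \;\leq\; \liminf_{t\to\infty} \frac{K_{T_0}\, \Pi_{\min}\bigl(L^H_t - m(t) + R\bigr)}{\Pi_{\min}(R)} \;\leq\; \limsup_{t\to\infty} \frac{K_{T_0}\, \Pi_{\min}\bigl(L^H_t - m(t) + R\bigr)}{\Pi_{\min}(R)} \;\leq\; 1 + \epsilon.
\]
Using the explicit formula $\pi_{\min}(y) = 2y e^{-\sqrt{2}y}$ from~\eqref{eq:minimal travelling wave} together with the boundedness of $L^H_t - m(t)$, for $R$ sufficiently large the ratio $\Pi_{\min}(L^H_t - m(t) + R)/\Pi_{\min}(R)$ is within any prescribed multiplicative factor of $e^{-\sqrt{2}(L^H_t - m(t))}$. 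Hence $\limsup_t (L^H_t - m(t)) - \liminf_t (L^H_t - m(t))$ can be made arbitrarily small, so $L^H_t - m(t)$ converges to some limit $x^H \in \R$. Since $m(t) = \sqrt{2}t - \frac{3}{2\sqrt{2}}\log t + o(1)$, this gives $L^H_t = \sqrt{2}t - \frac{3}{2\sqrt{2}}\log t + x^H + o(1)$ as $t \to \infty$, as required.
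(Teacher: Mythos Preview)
Your proposal is correct and follows essentially the same bootstrapping architecture as the paper: the events $E'_t\subseteq E^H_t$, the two asymptotics via Theorem~\ref{theo:extension of BBHR} and Lemma~\ref{lem:FKforwardstime}, and the final passage from a near-unity conditional probability to convergence of $L^H_t-m(t)$ via the tail behaviour of $\Pi_{\min}$. The one point worth flagging is your ``main obstacle'': the paper isolates this as a separate lemma (Lemma~\ref{lem:hitting boundary plus a conditional on having high final value is small}) and proves it not through Brownian-bridge barrier estimates and Proposition~\ref{prop:BBHRresult} as you suggest, but by the strong Markov property at the first time after $T_0$ that $B$ touches $L^H_\cdot+2C$, combined with the stretching comparison (Lemma~\ref{lem:less stretching means slower boundary}) and the Feynman--Kac identity $\psub{0}{\tau\ge n}=e^{-n}$ to produce a summable series over the hitting interval; this route exploits the structure of the free-boundary problem rather than generic bridge estimates.
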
	
	As in~\eqref{eq:mtfinitemassdefn}, for $t\ge 0$, let
		\[
	m(t):=\sqrt{2}t-\frac{3}{2\sqrt{2}}\log (t+1).
	\]
We have from~\eqref{eq:O(1) asymptotics of free-bdy} in Theorem~\ref{theo:convergence to the minimal travelling wave for finite initial mass} and Proposition~\ref{prop:fbpsoln} that there exists $C\in (0,\infty)$ such that
	\begin{equation} \label{eq:sandwich Lt}
		m(t)-C\leq L^H_t\leq m(t)+C   \quad \forall t\geq 0.
	\end{equation}
	We define the stopping time
	\begin{equation} \label{eq:tauforLH}
	\tau:=\inf\{t>0:B_t\leq L^H_t\}.
	\end{equation}
	We will use the following technical lemma in the proof of Theorem~\ref{theo:bootstrap boundary asymptotics}.
	\begin{lem}\label{lem:hitting boundary plus a conditional on having high final value is small}
		For any $\epsilon>0$, there exist $T_0\in (10,\infty)$ and $R\in [10C,\infty)$ such that
		\begin{equation}\label{eq:hitting boundary plus a conditional on having high final value is small}
			\limsup_{t\to \infty}\psub{0}{\left. \exists s\in [T_0,t] \text{ such that } B_s\leq L^H_s+2C \, \right| B_t> L^H_t+R,\tau\ge t}\leq \epsilon.
		\end{equation}
	\end{lem}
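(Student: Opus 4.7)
My approach is to write the lemma's conditional probability as $1 - \psub{0}{G^{T_0}_t \mid A^R_t}$, where $G^{T_0}_t := \{B_s > L^H_s + 2C \ \forall s\in [T_0, t]\}$ and $A^R_t := \{\tau \ge t,\, B_t > L^H_t + R\}$, and to establish $\psub{0}{G^{T_0}_t \mid A^R_t} \ge 1-\epsilon$ for appropriate $T_0, R$ by estimating the numerator and denominator separately to leading order $e^{-t}$ as $t\to\infty$.

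For the denominator, Lemma \ref{lem:FKforwardstime} applied with $u_0 = \delta_0$ (the measure corresponding to the Heaviside initial condition) yields the exact identity $\psub{0}{A^R_t} = e^{-t}U^H(t, L^H_t + R)$, which converges to $e^{-t}\Pi_{\min}(R)$ as $t\to\infty$ by Theorem \ref{theo:convergence to the minimal travelling wave for finite initial mass} applied to Heaviside. For the numerator I would condition on $B_{T_0}$ via the strong Markov property together with Lemma \ref{lem:FKforwardstime} (which gives $B_{T_0}\mid \tau > T_0 \sim u^H(T_0, \cdot)$), writing
\[
\psub{0}{A^R_t \cap G^{T_0}_t} = e^{-T_0} \int_{L^H_{T_0}+2C}^\infty u^H(T_0, y)\, \psub{y}{\tilde B_v > L^H_{T_0+v}+2C\ \forall v,\ \tilde B_{t-T_0} > L^H_t + R}\,dy.
\]
Sandwiching $L^H_{\cdot}+2C$ between $m(\cdot)+C$ and $m(\cdot)+3C$ using \eqref{eq:sandwich Lt} and similarly for $L^H_t + R$, I would apply Theorem \ref{theo:extension of BBHR} (with $t_0 = T_0$ and $a\in\{C, 3C\}$) to the resulting deterministic-barrier probabilities, producing asymptotics of the form $(1+o(1))\,K(y - m(T_0) - C)\,\Pi_{\min}(R-2C)\,e^{-(t-T_0)}$ for the inner probability, where $K(\hat y) = \hat y\, e^{\sqrt 2\hat y}\Psi(\hat y)$ arises in the proof of Theorem \ref{theo:extension of BBHR}.

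Combining, and using that $u^H(T_0, L^H_{T_0}+\cdot) \to \pi_{\min}$ as $T_0\to\infty$ by Theorem \ref{theo:convergence to the minimal travelling wave for finite initial mass} together with the large-$R$ tail $\Pi_{\min}(R-2C)/\Pi_{\min}(R) \to e^{2\sqrt 2 C}$, the ratio $\psub{0}{G^{T_0}_t\mid A^R_t}$ should admit an explicit asymptotic limit expressible in terms of a $(T_0, R)$-dependent integral, which I plan to show can be made at least $1-\epsilon$ by first taking $R$ large (to sharpen the $\Pi_{\min}$-tail ratio) and then $T_0$ large (so that $u^H(T_0, \cdot)$ is well approximated by $\pi_{\min}$). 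The main obstacle is the transition region $y \in (L^H_{T_0}+2C,\, m(T_0)+3C)$ where the lower $m$-sandwich fails to furnish a non-trivial Theorem \ref{theo:extension of BBHR} estimate (since $\delta_y$ would not be supported above the shifted barrier $m(T_0)+3C$). I plan to treat this region via a short-time Gaussian step that lifts the starting point above $m(T_0)+3C$ with uniformly positive probability, absorbing the correction into an $O(1)$ multiplicative constant which does not disturb the $\epsilon$-argument in the limit $T_0, R\to\infty$.
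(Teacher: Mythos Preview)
Your plan has a genuine gap at the step where you force the ratio to be at least $1-\epsilon$. Lower-bounding the numerator by replacing $L^H_\cdot+2C$ on $[T_0,t]$ with $m(\cdot)+3C$ and applying Theorem~\ref{theo:extension of BBHR} yields $\liminf_{t\to\infty}\psub{0}{G^{T_0}_t\mid A^R_t}\ge \kappa_{T_0}\,\Pi_{\min}(R-2C)/\Pi_{\min}(R)$ for a constant $\kappa_{T_0}$ depending on $T_0$ but not $R$. Since the ratio is always $\le 1$, one has $\kappa_{T_0}\le \Pi_{\min}(R)/\Pi_{\min}(R-2C)$ for every $R$, hence $\kappa_{T_0}\le e^{-2\sqrt 2 C}$; thus after sending $R\to\infty$ your lower bound is $\kappa_{T_0}e^{2\sqrt 2 C}\le 1$, but nothing forces near-equality. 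The replacement barrier $m+3C$ lies a distance $(m-L^H)+C\in[0,2C]$ above the true barrier $L^H+2C$, and this gap --- hence the slack in $\kappa_{T_0}$ --- is governed by the unknown oscillation of $L^H_s-m(s)$, which is exactly what Theorem~\ref{theo:bootstrap boundary asymptotics} is meant to pin down. Sending $T_0\to\infty$ does not help: a formal substitution $u^H(T_0,L^H_{T_0}+\cdot)\to\pi_{\min}$ in the integral defining $\kappa_{T_0}$ diverges (since $\pi_{\min}(z)K(z)\sim z^2$), reflecting the criticality of the problem, and the actual value remains tied to the unknown offset $L^H_{T_0}-m(T_0)$. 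The same objection applies to your treatment of the transition region: any genuine $O(1)$ multiplicative constant in the numerator lower bound \emph{does} disturb the $\epsilon$-argument, because the lemma requires the ratio to approach $1$, not merely to be bounded away from $0$.

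The paper avoids this by applying the strong Markov property at the \emph{hitting time} $\hat\tau=\inf\{s\ge T_0:B_s\le L^H_s+2C\}\wedge\tau$ rather than at the fixed time $T_0$. At time $\hat\tau$ one knows $B_{\hat\tau}\in[L^H_{\hat\tau},L^H_{\hat\tau}+2C]$, so the subsequent survival-and-reach-$R$ probability is bounded via the \emph{exact} Feynman-Kac identity for the free boundary problem itself (using a shifted Heaviside boundary and Lemma~\ref{lem:less stretching means slower boundary}), with no sandwich. This produces a bound $\lesssim e^{-(t-s)}\Pi_{\min}(\Delta_{s,t}+R)$ with $\Delta_{s,t}\gtrsim\frac{3}{2\sqrt 2}\log\bigl(s(t-s)/t\bigr)$; summing over integer hitting times $s\in[T_0,t-T_0]$ and dividing by $e^{-t}\Pi_{\min}(R)$ yields a series bounded by $O(T_0^{-r})$ for some $r>0$. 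The remaining range $\hat\tau\in[t-T_0,t]$ is handled by a crude Gaussian estimate, made small by taking $R$ large.
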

	We defer the proof of Lemma \ref{lem:hitting boundary plus a conditional on having high final value is small} until after the proof of Theorem~\ref{theo:bootstrap boundary asymptotics}.
	We will also use the following elementary lemma in the proof of Theorem~\ref{theo:bootstrap boundary asymptotics}.
	\begin{lem} \label{lem:Pimin}
	For $d\ge 0$, let
	\begin{equation} \label{eq:fmindefn}
	f_{\min}(d):=\sup_{x\ge 0}\frac{\Pi_{\min}(x+d)}{\Pi_{\min}(x)}.
	\end{equation}
	Then $f_{\min}(d)<1$ $\forall d>0$.
	\end{lem}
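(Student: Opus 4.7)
The plan is to compute $\Pi_{\min}$ explicitly on $[0,\infty)$ from its definition in~\eqref{eq:Pimindefn} and~\eqref{eq:minimal travelling wave}, reduce the sup to a one-dimensional elementary optimisation, and finish with the classical strict inequality $1+u<e^u$ for $u>0$.

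First, I would perform the integration in~\eqref{eq:Pimindefn} using integration by parts to obtain the closed form
\[
\Pi_{\min}(x) \;=\; (1+\sqrt{2}\,x)\,e^{-\sqrt 2 x} \qquad \text{for } x\geq 0,
\]
with $\Pi_{\min}(x)=1$ for $x\leq 0$. Substituting this into~\eqref{eq:fmindefn}, the ratio appearing inside the supremum factors as
\[
\frac{\Pi_{\min}(x+d)}{\Pi_{\min}(x)}
\;=\; e^{-\sqrt 2 d}\,\Big(1+\tfrac{\sqrt 2\,d}{1+\sqrt 2\,x}\Big),
\qquad x\geq 0,\; d\geq 0,
\]
which is manifestly strictly decreasing in $x$ on $[0,\infty)$ and tends to $e^{-\sqrt 2 d}$ as $x\to\infty$.

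Hence the supremum in~\eqref{eq:fmindefn} is attained at $x=0$ and equals $f_{\min}(d)=(1+\sqrt 2\,d)\,e^{-\sqrt 2 d}$. Writing $u=\sqrt 2\, d$, the claim $f_{\min}(d)<1$ for $d>0$ reduces to the elementary inequality $1+u<e^u$ for all $u>0$, which follows from the Taylor expansion of $e^u$ (or equivalently from strict convexity of $u\mapsto e^u-u$ combined with the value at $u=0$).

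There is no serious obstacle here: the only thing to keep straight is the piecewise definition of $\Pi_{\min}$ (which is why the sup is naturally considered over $x\geq 0$, since for $x<0$ the ratio would require treating $\Pi_{\min}(x)=1$ separately, but such $x$ give values no larger than the one at $x=0$ after a quick comparison). Alternatively, one could avoid the explicit formula by noting that $\log \Pi_{\min}$ is strictly concave on $[0,\infty)$ (since $\tfrac{d}{dx}\log\Pi_{\min}(x) = -2x/(1+\sqrt 2 x)$ is strictly decreasing), so $x\mapsto \log\Pi_{\min}(x+d)-\log\Pi_{\min}(x)$ is strictly decreasing in $x$, and then use $\Pi_{\min}(d)<\Pi_{\min}(0)=1$ to conclude; but the direct computation above seems cleanest.
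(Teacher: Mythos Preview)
Your proof is correct and follows essentially the same approach as the paper: both compute the explicit ratio $\frac{\Pi_{\min}(x+d)}{\Pi_{\min}(x)}=e^{-\sqrt 2 d}\big(1+\frac{\sqrt 2 d}{1+\sqrt 2 x}\big)$. The paper then bounds this by $e^{-\sqrt 2 d}+\frac{d e^{-\sqrt 2 d}}{x}$ and argues via continuity and the limsup at infinity, whereas you go slightly more directly by observing the ratio is strictly decreasing in $x$, so the supremum is attained at $x=0$ and equals $(1+\sqrt 2 d)e^{-\sqrt 2 d}<1$.
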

	\begin{proof}
	 We fix $d>0$. We observe that since $x\mapsto \frac{\Pi_{\min}(x+d)}{\Pi_{\min}(x)}$ is continuous on $[0,\infty)$ and since $\Pi_{\min}(x+d)<\Pi_{\min}(x)$ for $x\ge 0$, it suffices to prove that $\limsup_{x\ra\infty}\frac{\Pi_{\min}(x+d)}{\Pi_{\min}(x)}<1$. But by~\eqref{eq:minimal travelling wave} and~\eqref{eq:Pimindefn}, we have that for $x\ge 0$,
		\[
		\Pi_{\min}(x+d)=\int_{x+d}^{\infty}2ye^{-\sqrt{2}y}dy=\int_{x}^{\infty}2(y+d)e^{-\sqrt{2}(y+d)}dy=e^{-\sqrt{2}d}\Big(\Pi_{\min}(x)+2d\int_x^{\infty}e^{-\sqrt{2}y}dy\Big).
		\]
		It follows that for any $x\ge 0$ and $d>0$,
		\begin{equation}\label{eq:inequality for ratio of Pi mins}
			\frac{\Pi_{\min}(x+d)}{\Pi_{\min}(x)}\leq e^{-\sqrt{2}d}+\frac{de^{-\sqrt{2}d}}{x}.
		\end{equation}
		Taking $\limsup_{x\to \infty}$ on both sides, the result follows.
		\end{proof}
	We now use Lemma~\ref{lem:hitting boundary plus a conditional on having high final value is small}, Theorem~\ref{theo:convergence to the minimal travelling wave for finite initial mass}, Lemma~\ref{lem:FKforwardstime}, Theorem~\ref{theo:extension of BBHR} and Lemma~\ref{lem:Pimin} to prove Theorem~\ref{theo:bootstrap boundary asymptotics}.
	\begin{proof}[Proof of Theorem~\ref{theo:bootstrap boundary asymptotics}]
	Recall the definition of $C\in (0,\infty)$ in~\eqref{eq:sandwich Lt}, and
	let
	\begin{equation} \label{eq:IandS}
	I=\liminf_{t\ra \infty}(m(t)+C-L^H_t)\quad \text{and} \quad S=\limsup_{t\ra\infty}(m(t)+C-L^H_t).
	\end{equation}
	We now fix an arbitrary $\epsilon>0$.
	Then by Lemma~\ref{lem:hitting boundary plus a conditional on having high final value is small}, we can fix $T_0\in (10,\infty)$ and $R\in [10C,\infty)$ satisfying~\eqref{eq:hitting boundary plus a conditional on having high final value is small} for this $\epsilon>0$. 
	We then define the modified boundary
	\begin{equation} \label{eq:Lt'defn}
	L'_t:=\begin{cases}
		L^H_t\quad &\text{for }0\leq t<T_0\\
		m(t)+C\quad &\text{for }t\geq T_0,
	\end{cases}
	\end{equation}
	and define the stopping time
	\begin{equation} \label{eq:tau'defn}
	\tau '=\inf\{t> 0:B_t\le L'_t\}.
	\end{equation}
	We note that by~\eqref{eq:sandwich Lt},
	\begin{equation} \label{eq:L'tsandwich}
	L^H_t\leq L'_t\leq L^H_t+2C\quad \forall t\geq 0.
	\end{equation}
	Recall the definition of $\tau$ in~\eqref{eq:tauforLH}.
By the Feynman-Kac representation in Lemma~\ref{lem:FKforwardstime}, we have
\begin{equation} \label{eq:taulimit}
	e^t\psub{0}{B_t> L^H_t+R,\tau\ge t} = U^H(t,L^H_t+R) \ra \Pi_{\min}(R)
\end{equation}
	as $t\ra \infty$, where the convergence follows from~\eqref{eq:theoconvtoPimin} in Theorem~\ref{theo:convergence to the minimal travelling wave for finite initial mass}.
	
	We now take an arbitrary increasing sequence $(t_n)_{n=1}^\infty\subset (0,\infty)$ such that $t_n\to \infty$ as $n\to \infty$ and $L'_{t_n}-L^H_{t_n}$
	converges as $n\ra\infty$. We let $c:=\lim_{n\to \infty}(L'_{t_n}-L^H_{t_n}) \in [0,2C]$.
For $n\in \Nm$ sufficiently large that $t_n\ge T_0$, let
\begin{equation} \label{eq:a_ndefn}
a_n:=e^{t_n-T_0}\psub{0}{\left. B_{t_n}> L^H_{t_n}+R,\tau'> t_n \right| \tau'>T_0}.
\end{equation}	
Then we can write
\begin{equation} \label{eq:proba_n}
e^{t_n}\psub{0}{B_{t_n}> L^H_{t_n}+R,\tau'> t_n}=a_n e^{T_0}\psub{0}{\tau'>T_0}.
\end{equation}
We will now use Theorem~\ref{theo:extension of BBHR} to determine the limit of the left-hand side of~\eqref{eq:proba_n} as $n\to \infty$.
By the definition of $\tau'$ in~\eqref{eq:tau'defn} and then by the definition of $\tau$ in~\eqref{eq:tauforLH},
\[
\{\tau'>T_0\}=\{B_{T_0}>m(T_0)+C,\, B_s>L^H_s\; \forall s\in (0,T_0)\}
=\{B_{T_0}>m(T_0)+C,\, \tau>T_0\}.
\]
Therefore, by Lemma~\ref{lem:FKforwardstime} and then by Proposition~\ref{prop:fbpsoln},
\begin{equation} \label{eq:probtau'}
\psub{0}{\tau' >T_0}=\psub{0}{\tau>T_0,\, B_{T_0}>m(T_0)+C}=e^{-T_0}U^H(T_0,m(T_0)+C)>0.
\end{equation}
Let $\mu_{T_0}$ denote the law of $B_{T_0}$ under $\psub{0}{\cdot | \tau'>T_0}$.
Then
\[
\int_{[0,\infty)}xe^{\sqrt 2 x}\mu_{T_0}(dx)
\le \frac{\Esub{0}{B_{T_0}e^{\sqrt 2 B_{T_0}}}}{\psub{0}{\tau' >T_0}}<\infty.
\]
Hence by Theorem~\ref{theo:extension of BBHR}, there exists $C_{T_0}\in (0,\infty)$ depending on $T_0$ such that
\[
e^t \psub{\mu_{T_0}}{B_t>m(t+T_0)+C+y, B_s>m(s+T_0)+C \; \forall s\in (0,t)}\to C_{T_0}\Pi_{\min}(y)
\]
as $t\to \infty$, uniformly in $y\ge 0$.
By the definition of $\tau'$ in~\eqref{eq:tau'defn} and the definition of $L'_t$ in~\eqref{eq:Lt'defn}, it follows that
\[
e^{t-T_0} \psub{0}{\left. B_t>L'_t+y, \tau'\ge t \right| \tau'>T_0}\to C_{T_0}\Pi_{\min}(y)
\]
as $t\to \infty$, uniformly in $y\ge 0$.
Then for $\delta>0$, since $c=\lim_{n\to \infty}(L'_{t_n}-L^H_{t_n})$ we have that for $n$ sufficiently large, $|L'_{t_n}-L^H_{t_n}-c|<\delta$, and so, recalling the definition of $a_n$ in~\eqref{eq:a_ndefn},
\[
\limsup_{n\to \infty}a_n\le C_{T_0}\Pi_{\min}(R-c-\delta)
\quad \text{and }\quad \liminf_{n\to \infty}a_n\ge C_{T_0}\Pi_{\min}(R-c+\delta).
\]
Since $\delta>0$ was arbitrary and $\Pi_{\min}$ is continuous, it follows that $\lim_{n\to \infty}a_n= C_{T_0}\Pi_{\min}(R-c)$. Therefore by~\eqref{eq:proba_n} and~\eqref{eq:probtau'},
letting $C'_{T_0}=C_{T_0}U^H(T_0,m(T_0)+C)\in (0,\infty)$,
\begin{equation} \label{eq:tau'limit}
\lim_{n\to \infty} e^{t_n}\psub{0}{B_{t_n}> L^H_{t_n}+R,\tau'\ge  t_n}=  C'_{T_0}\Pi_{\min}(R-c).
\end{equation}

By~\eqref{eq:L'tsandwich}, we have $\tau'\ge \tau$, and so for $t\ge T_0$ sufficiently large,  
\begin{align*}
\psub{0}{B_{t}> L^H_{t}+R,\tau\geq t}
&\geq \psub{0}{B_{t}> L^H_{t}+R,\tau' \geq t}\\
&=\psub{0}{B_{t}> L^H_{t}+R,\tau\geq t}\psub{0}{\left. \tau'\ge t \right| B_{t}> L^H_{t}+R,\tau\geq t}\\
&\geq (1-2\epsilon)\psub{0}{B_{t}> L^H_{t}+R,\tau\geq t},
\end{align*}
where in the last line we used~\eqref{eq:L'tsandwich} and the fact
that we chose $T_0$ and $R$ 
 satisfying~\eqref{eq:hitting boundary plus a conditional on having high final value is small}
in Lemma~\ref{lem:hitting boundary plus a conditional on having high final value is small}.
It then follows from~\eqref{eq:taulimit} and~\eqref{eq:tau'limit} that for any $c\in \R$ such that there exists an increasing sequence $t_n\to \infty$ with $c=\lim_{n\to \infty}(L'_{t_n}-L^H_{t_n})$, we have
	\begin{equation}\label{eq:relationship between m(R-delta) and m(R)}
		\Pi_{\min}(R)\geq C'_{T_0}\Pi_{\min}(R-c)\geq (1-2\epsilon)\Pi_{\min}(R).
	\end{equation}
	
	We now note that by the definition of $I$ and $S$ in~\eqref{eq:IandS} and the definition of $L'_t$ in~\eqref{eq:Lt'defn}, we have
	\[
	I=\liminf_{t\ra \infty}(L'_t-L^H_t)\quad \text{and} \quad S=\limsup_{t\ra\infty}(L'_t-L^H_t).
	\]
	We can therefore take increasing sequences $t^-_n,t^+_n\ra \infty$ such that
	\[
	L'_{t^+_n}-L^H_{t^+_n}\ra S\quad\text{and}\quad L'_{t^-_n}-L^H_{t^-_n}\ra I \quad \text{as }n\to \infty.
	\]
Since $\Pi_{\min}$ is non-increasing, it follows from \eqref{eq:relationship between m(R-delta) and m(R)} that
	\[
	\Pi_{\min}(R)\geq C'_{T_0}\Pi_{\min}(R-S)\geq C'_{T_0}\Pi_{\min}(R-I)\geq (1-2\epsilon)\Pi_{\min}(R).
	\]
	Therefore
	\begin{equation}\label{eq:inequality of mass to the right of of R-I to R-S}
		1\geq \frac{\Pi_{\min}(R-I)}{\Pi_{\min}(R-S)}\geq 1-2\epsilon .
	\end{equation}
	Note that $R$ depends on our choice of $\epsilon$, but $I$ and $S$ do not.
	Recall that we chose $R\ge 10C$ at the beginning of the proof, and that $I,S \in [0,2C]$ by~\eqref{eq:sandwich Lt} and~\eqref{eq:IandS}.
	Therefore, by the definition of $f_{\min}$ in~\eqref{eq:fmindefn}, we can 
observe that~\eqref{eq:inequality of mass to the right of of R-I to R-S} implies that
	\[
	f_{\min}(S-I)\geq 1-2\epsilon.
	\]
	Since we chose $\epsilon>0$ to be fixed and arbitrary, it follows that $f_{\min}(S-I)=1$, and hence $S=I$ by Lemma~\ref{lem:Pimin}. By~\eqref{eq:IandS}, this completes the proof.
	\end{proof}		
	We finally conclude this section by proving Lemma \ref{lem:hitting boundary plus a conditional on having high final value is small}, using the asymptotics for $L^H_t$ in Theorem~\ref{theo:convergence to the minimal travelling wave for finite initial mass}, the Feynman-Kac representation in Lemma~\ref{lem:FKforwardstime}, and consequences of the stretching lemma.	
	\begin{proof}[Proof of Lemma \ref{lem:hitting boundary plus a conditional on having high final value is small}]
		We fix $T_0\in (10,\infty)$ and $R\in [10C,\infty)$ to be chosen later. 
		Define the stopping time
		\begin{equation} \label{eq:hattaudefn}
			\hat{\tau}:=\inf\{t\geq T_0:B_t\leq L^H_t+2C\}\wedge \tau,
		\end{equation}
		where $\tau=\inf\{t>0:B_t\le L^H_t\}$ as in~\eqref{eq:tauforLH}.
		
		For $s\ge 0$ and $x\in \R$, we write $\Pm_{x,s}$ for the probability measure under which $(B_t)_{t\ge s}$ is a Brownian motion started from time $s$ at position $x$, so that $\Pm_{x,s}(B_s=x)=1$. 
		For $s\ge 0$, we let
			\begin{equation} \label{eq:tausdefn}
		\tau_s:=\inf\{t\geq s:B_t\leq L^H_t \}.
		\end{equation}
We now fix an arbitrary $s\geq T_0$ and $x\in [L^H_s,L^H_s+2C]$;		
		we will obtain an upper bound on
		\[
		\Pm_{x,s}(B_t> L^H_t+R,\tau_s \ge t).
		\]
	
	We define another boundary
	\begin{equation} \label{eq:barLdefn}
	L^-_t:=L^H_s+L^H_{t-s+1}-L^H_1 \quad \text{for }t\ge s.
\end{equation}
By Lemma~\ref{lem:extended maximum principle}, and since $s>1$, we have $U^H(s,\cdot)\ge_s U^H(1,\cdot)$, and so by Lemma~\ref{lem:less stretching means slower boundary}, for $t\ge s$,
\[
L^H_t-L^H_s \ge L^H_{t-s+1}-L^H_1.
\]
Therefore
\[
L^-_t\le L^H_t \quad \forall t\ge s.
\]
We now compare the event of hitting the boundary $u\mapsto L^H_{s+u}$ with the event of hitting $u\mapsto \bar L_{u+s}$.
 We define
\begin{equation} \label{eq:tau-defn}
\tau^-_s:=\inf\{t\geq s:B_t\leq L^-_t \}.
\end{equation} 
We see that since $L^-_t\le L^H_t$ $\forall t\ge s$ and so $\tau^-_s\ge \tau_s$,
and then by the definitions of $L^-_u$ in~\eqref{eq:barLdefn} and $\tau_u$ in~\eqref{eq:tausdefn}, for $t\ge s$,
		\begin{align} \label{eq:probhitLH}
			\Pm_{x,s}(B_t> L^H_t+R,\, \tau_s\ge t)
			&\leq \Pm_{x,s}(B_t> L^-_{t}+(L^H_t-L^-_t)+R,\, \tau^-_s\ge t) \notag \\
			&= \Pm_{x-L^H_s+L^H_1,1}(B_{t-s+1}> L^H_{t-s+1}+(L^H_t-L^-_t)+R,\, \tau_1\ge t-s+1) \notag \\
			&\leq \Pm_{L^H_1+2C,1}(B_{t-s+1}> L^H_{t-s+1}+(L^H_t-L^-_t)+R,\, \tau_1\ge t-s+1) \notag \\
			&\leq \frac{\Pm_{0,0}(B_{t-s+1}> L^H_{t-s+1}+(L^H_t-L^-_t)+R,\, \tau\ge t-s+1)}{\Pm_{0,0}(\tau\ge 1,\, B_1> L^H_1+2C)},
		\end{align}
		where the penultimate inequality follows since $x-L^H_s+L^H_1\le L^H_1+2C$ by our assumption on $x$, and the last inequality follows from the Markov property at time $1$.
		
		By Lemma~\ref{lem:FKforwardstime} and then by Proposition~\ref{prop:fbpsoln}, for $r\ge 0$ and $y>0$ we have
		\[
		e^r \Pm_{0,0}(\tau\ge r,B_r> L^H_r+y)
		=U^H(r,L^H_r+y) \in (0,1).
		\]
		Therefore we can define
		\begin{equation} \label{eq:c1defn}
		c_1:=\frac{1}{\Pm_{0,0}(\tau\ge 1,B_1> L^H_1+2C)}\in (0,\infty).
		\end{equation}
Moreover, since $U^H(r,\cdot)\le_s \Pi_{\min}$ for $r\ge 0$ by Lemma~\ref{lem:extended maximum principle}, we have 
\begin{equation}\label{eq:prob tau at least r, Br at least LrH+y} e^r \Pm_{0,0}(\tau\ge r,B_r> L^H_r+y)=U^H(r,L^H_r+y)\le \Pi_{\min}(y) \quad \forall r\ge 0,\, y>0 
\end{equation}
by Lemma~\ref{lem:stretchdecr}. By substituting~\eqref{eq:c1defn} and~\eqref{eq:prob tau at least r, Br at least LrH+y} into~\eqref{eq:probhitLH},
it follows that	for $t\ge s$,
\begin{equation} \label{eq:probhitLH2}
		\Pm_{x,s}(B_t> L^H_t+R,\tau_s\ge t)\leq c_1 e^{-(t-s+1)}\Pi_{\min}(L^H_t-L^-_t+R) \le c_1 e^{-(1+t-s)}.
\end{equation}
		
		We now observe that by~\eqref{eq:barLdefn}, for $t\ge s$, 
		\[
		L^H_t-L^-_t=L^H_t-L^H_s-L^H_{t-s+1}+L^H_1.
		\]	
		Therefore, by~\eqref{eq:O(1) asymptotics of free-bdy} in Theorem~\ref{theo:convergence to the minimal travelling wave for finite initial mass} and since $s\ge 1$, there exists a constant $K_1<\infty$ (not depending on $s$) such that for any $t\ge s$,
		\begin{equation} \label{eq:LHL-bound}
			L^H_t-L^-_t\ge -\frac{3}{2\sqrt{2}}\Big(\log t-\log s-\log(t-s+1)\Big)-K_1.
		\end{equation}
		To shorten notation we now define, for $0< s \le t<\infty$,
		\begin{equation} \label{eq:FstRdefn}
			F(s ,t,R):=c_1 e^{-1}\Pi_{\min}\Big(-\frac{3}{2\sqrt{2}}\Big(\log t-\log s -\log(t-s +1)\Big)-K_1+R\Big).
		\end{equation}
		Thus by~\eqref{eq:probhitLH2} and~\eqref{eq:LHL-bound}, for any $t\ge s\ge T_0$ and $x\in [L^H_s,L^H_s+2C]$,
		\begin{equation} \label{eq:upper bound Pimin}
	\Pm_{x,s}(B_t> L^H_t+R,\tau_s\ge t)\leq e^{-(t-s)}F(s,t,R).
		\end{equation} 
		
		Recall the definition of $\hat{\tau}$ in~\eqref{eq:hattaudefn} and the definition of $\tau_s$ in~\eqref{eq:tausdefn}.
Then for $t\ge 2T_0$, using the strong Markov property at time $\hat\tau$ in the first line, and using~\eqref{eq:upper bound Pimin} and the definition of $\hat\tau$ in the second line,		
		\begin{align} \label{eq:probtauhatearly}
			\psub{0}{\hat{\tau}\leq t-T_0,\tau \ge t,B_t> L^H_t+R}
			&=\Esub{0}{\1_{\{\hat{\tau}\leq t-T_0,\, \tau\geq \hat \tau\}}\psub{B_{\hat{\tau}},\hat{\tau}}{\tau_{\hat \tau}\ge t,B_{t}> L^H_t+R}} \notag \\
			&\leq \Esub{0}{\1_{\{T_0\le \hat{\tau}\leq t-T_0,\, \tau\geq \hat \tau\}}e^{-(t-\hat \tau)}F(\hat \tau, t,R)} \notag \\
			&= \sum_{n=\lfloor T_0\rfloor}^{\lfloor t-T_0\rfloor}
			\Esub{0}{\1_{\{\hat{\tau}\in [n,n+1)\}}\1_{\{T_0\le \hat{\tau}\leq t-T_0,\, \tau\geq \hat \tau\}}e^{-(t-\hat \tau)}F(\hat \tau, t,R)} \notag \\
			&\le \sum_{n=\lfloor T_0\rfloor}^{\lfloor t-T_0\rfloor}
			e^{n-t+1}\sup_{s\in [n,n+1)}F(s,t,R)
			\psub{0}{\tau\ge n} \notag \\
			&=e^{-t+1}\sum_{n=\lfloor T_0\rfloor}^{\lfloor t-T_0\rfloor}\sup_{s\in [n,n+1)}F(s,t,R),
		\end{align}
		where the last line follows since $\psub{0}{\tau\ge n}=e^{-n}$ for each $n$ by Lemma~\ref{lem:FKforwardstime}.
		
		We now claim that for any $t>1$ fixed,
		\begin{equation} \label{eq:Fstclaim}
			s\mapsto F(s,t,R)\quad \text{is}\quad \begin{cases}
				\text{non-increasing on }\; (1,(1+t)/2],\\
				\text{non-decreasing on }\; [(1+t)/2,t].
			\end{cases}
		\end{equation}
		Indeed, to prove the claim, for $s\in (1,t]$,
			we calculate 
			\[
			\frac{\partial}{\partial s}(\log t-\log s-\log (t-s+1))=-\frac{1}{s}+\frac{1}{t-s+1}\begin{cases}
				\le 0\quad \text{for }s\in (1,(1+t)/2],\\
				\ge 0\quad \text{for }s\in [(1+t)/2,t].
			\end{cases}
			\]
			The claim~\eqref{eq:Fstclaim} then follows from the definition of $F$ in~\eqref{eq:FstRdefn} and the fact that $x\mapsto \Pi_{\min}(x)$ is non-increasing.

		It follows from~\eqref{eq:Fstclaim} that for $t\ge 2T_0$ and $n\in \mathbb Z \cap [\lfloor T_0 \rfloor, \lfloor t-T_0 \rfloor ]$,
		\[
		\sup_{s\in [n,n+1)} F(s,t,R)\leq F(n,t,R)+F(n+1,t,R).
		\]
		Therefore, by~\eqref{eq:probtauhatearly}, for $t\ge 2T_0$, 
		\[
		\psub{0}{\hat{\tau}\leq t-T_0,\tau \ge t,B_t> L^H_t+R}\leq 2e^{-t+1}\sum_{n=\lfloor T_0\rfloor}^{\lfloor t-T_0\rfloor+1}F(n,t,R).
		\]
		On the other hand, by Lemma~\ref{lem:FKforwardstime} and then by~\eqref{eq:theoconvtoPimin} in Theorem~\ref{theo:convergence to the minimal travelling wave for finite initial mass}, we have that for any $t>0$,
		\begin{equation}\label{eq:probability of surviving and being above Lt+R}
			e^t\psub{0}{\tau\ge t,B_t> L^H_t+R}=U^H(t,L^H_t+R)
			\ra \Pi_{\min}(R)
		\end{equation}
		as $t\ra\infty$. It follows that
		\begin{equation} \label{eq:sum of F}
			\limsup_{t\ra\infty}\psub{0}{\hat{\tau}\leq t-T_0\Big\lvert \tau\ge t,B_t> L^H_t+R}\leq 2e \limsup_{t\ra\infty}\sum_{n=\lfloor T_0\rfloor}^{\lfloor t-T_0\rfloor+1}\frac{F(n,t,R)}{\Pi_{\min}(R)}.
		\end{equation}
		
		We now write for $0<s\le t<\infty$, recalling that $K_1<\infty$ is the fixed constant in the definition of $F$ in~\eqref{eq:FstRdefn},
\begin{equation} \label{eq:Deltastdefn}
\Delta_{s,t}:=-\frac{3}{2\sqrt{2}}\Big(\log t-\log s-\log(t-s+1)\Big)-K_1.
\end{equation}
		Note that since $s(1+t-s)\ge \frac 12 t (s\wedge (t-s))$ for $0<s\le t$,
		we have that if $s\wedge (t-s)$ is sufficiently large then
		$\Delta_{s,t}\ge 1$. 
		We then use the definition of $F$ in~\eqref{eq:FstRdefn},
		and~\eqref{eq:inequality for ratio of Pi mins} in the proof of Lemma~\ref{lem:Pimin}, to see that
		for $T_0$ sufficiently large (not depending on anything else),
		for $t\ge 2T_0$ and $n\in  [\lfloor T_0 \rfloor, \lfloor t-T_0 \rfloor +1]$,
		\[
		\frac{F(n,t,R)}{\Pi_{\min}(R)}=c_1e^{-1}\frac{\Pi_{\min}(\Delta_{n,t}+R)}{\Pi_{\min}(R)}\leq c_1 e^{-1} e^{-\sqrt{2}\Delta_{n,t}}\left(1+\frac{\Delta_{n,t}}{R}\right).
		\]
		
Now fix $\delta>0$ sufficiently small that
\begin{equation} \label{eq:choosedelta}
\tfrac{3(\sqrt{2}-\delta)}{2\sqrt{2}}\in (1,2).
\end{equation}
Recall that we are assuming that $R\ge 10C$, and recall the definition of $c_1$ in~\eqref{eq:c1defn}.
Then there exists a constant $K_2<\infty$ depending only upon our choice of $\delta$ and on $C$ (which is fixed), such that
for any $T_0$ sufficiently large (not depending on anything else),
		for $t\ge 2T_0$ and $n\in  [\lfloor T_0 \rfloor, \lfloor t-T_0 \rfloor +1]$,
		\[
		\frac{F(n,t,R)}{\Pi_{\min}(R)}\leq K_2 e^{-(\sqrt{2}-\delta)\Delta_{n,t}}.
		\]
By~\eqref{eq:sum of F} and then by the definition of $\Delta_{n,t}$ in~\eqref{eq:Deltastdefn}, we therefore have that for $T_0$ sufficiently large,
		\begin{align*}
&\limsup_{t\ra\infty}\psub{0}{\hat{\tau}\leq t-T_0\Big\lvert \tau\ge t,B_t> L^H_t+R}\\&\leq 2eK_2 \limsup_{t\ra\infty}\sum_{n=\lfloor T_0\rfloor}^{\lfloor t-T_0\rfloor+1}e^{-(\sqrt{2}-\delta)\Delta_{n,t}}\\
			&\leq 2eK_2 e^{(\sqrt 2 -\delta)K_1} \limsup_{t\ra\infty}
			\sum_{n=\lfloor T_0\rfloor}^{\lfloor t-T_0\rfloor+1}\Big(\frac{t}{n(t-n+1)}\Big)^{\frac{3(\sqrt{2}-\delta)}{2\sqrt{2}}}.
		\end{align*}
		By our choice of $\delta$ in~\eqref{eq:choosedelta}, we have $r:=\frac{3(\sqrt{2}-\delta)}{2\sqrt{2}}-1\in (0,1)$.
		Therefore, for $T_0$ sufficiently large,
	\begin{align} \label{eq:limsupearlytauhatbd}
		\limsup_{t\ra\infty}\psub{0}{\hat{\tau}\leq t-T_0\Big\lvert \tau\ge t,B_t> L^H_t+R}
			&\leq 2eK_2 e^{\sqrt 2 K_1} \limsup_{t\ra\infty} \int_{T_0-2}^{t+2-T_0}\Big(\frac{1}{s}+\frac{1}{t-s+1}\Big)^{1+r}ds \notag \\
			&\leq 2eK_2 e^{\sqrt 2 K_1}\cdot 2^{2+r}\int_{T_0-2}^{\infty}s^{-(1+r)}ds \notag \\
			&= 2^{3+r}r^{-1} eK_2 e^{\sqrt 2 K_1}(T_0-2)^{-r}.
	\end{align}
		
		We now fix $\epsilon>0$. It follows from~\eqref{eq:limsupearlytauhatbd} that we can choose $T_0\in [10,\infty)$ sufficiently large that for any $R\in [10C,\infty)$ we have
		\begin{equation}\label{eq:bound on cond prob 1}
			\limsup_{t\ra\infty}\psub{0}{\hat{\tau}\leq t-T_0\Big\lvert \tau\ge t,B_t> L^H_t+R}<\epsilon.
		\end{equation}
		We henceforth fix such a $T_0\in [10,\infty)$.
		
		On the other hand, for $t\ge 2T_0$ we have the crude bound
		\begin{align} \label{eq:probtauhatcrude}
		\psub{0}{t-T_0\leq \hat{\tau}\leq t,\, \tau>t,\, B_t> L^H_t+R}
		&\le \Esub{0}{ \1_{\{t-T_0\leq \hat{\tau}\leq t\}} \1_{\{\tau\ge \hat \tau\}} \psub{B_{\hat\tau},\hat \tau}{B_t>L^H_t+R}} \notag \\
		&\le \psub{0}{\tau\geq t-T_0}\sup_{t-T_0\leq s\leq t}\psub{L^H_s+2C,s}{B_t> L^H_t+R} \notag \\
		&=e^{-( t-T_0)}\sup_{t-T_0\leq s\leq t}\psub{L^H_s+2C,s}{B_t> L^H_t+R},
		\end{align}
		where the first inequality follows from the strong Markov property at time $\hat \tau$, the second inequality follows from the definition of $\hat\tau$ in~\eqref{eq:hattaudefn}, and the last inequality follows from Lemma~\ref{lem:FKforwardstime}.
		
		By Lemma~\ref{lem:less stretching means slower boundary}, for $0\le s \le t$
		we have 
		\[
		L^H_t-L^H_s\ge L^H_{t-s}-L^H_0\ge \inf_{u\ge 0}L^H_u>-\infty,
		\]
		where the last inequality follows from Lemma~\ref{lem:LHtlower}.
		Letting $K_3:=-\inf_{u\ge 0}L^H_u<\infty$, 
		we can write
		\begin{align} \label{eq:crudeGaussian}
		\sup_{t-T_0\leq s\leq t}\psub{L^H_s+2C,s}{B_t> L^H_t+R}
		&\le \sup_{t-T_0\leq s\leq t}\psub{0}{B_{t-s}> R-2C-K_3} \notag \\
		&\le \1_{\{R\le 2C+K_3\}}+\psub{0}{B_{T_0}> R-2C-K_3}\1_{\{R> 2C+K_3\}}.
		\end{align}
		
		We now combine~\eqref{eq:probtauhatcrude} and~\eqref{eq:crudeGaussian} with~\eqref{eq:probability of surviving and being above Lt+R} to see that
		\begin{align*}
		&\limsup_{t\ra\infty}\psub{0}{t-T_0\leq \hat{\tau}\leq t\Big| \tau\ge t,B_t> L^H_t+R}\\
		&\quad \leq e^{T_0}\frac{\1_{\{R\le 2C+K_3\}}+\psub{0}{B_{T_0}> R-2C-K_3}\1_{\{R> 2C+K_3\}}}{\Pi_{\min}(R)}.
		\end{align*}
		Since $\psub{0}{B_{T_0}> \cdot}$ has Gaussian tails and $\Pi_{\min}(x)$ decays exponentially as $x\to \infty$, we can choose $R\in [10C,\infty)$ sufficiently large that 
		\begin{equation}\label{eq:bound on cond prob 2}
			\limsup_{t\ra\infty}\psub{0}{t-T_0\leq \hat{\tau}\leq t\Big| \tau\ge t,B_t> L^H_t+R}\leq \epsilon.
		\end{equation}
		Combining \eqref{eq:bound on cond prob 1} and \eqref{eq:bound on cond prob 2}, we see that for any $\epsilon>0$ we can choose $ T_0\in [10,\infty)$ and $R\in [10C,\infty)$ such that
		\[
		\limsup_{t\ra\infty}\psub{0}{\hat{\tau}\leq t\Big| \tau\ge t,B_t> L^H_t+R}\leq 2\epsilon.
		\]
		By the definition of $\hat \tau$ in~\eqref{eq:hattaudefn}, this concludes the proof of Lemma \ref{lem:hitting boundary plus a conditional on having high final value is small}.	
	\end{proof}

\section{Infinite initial mass case} \label{sec:infiniteinitialmass}

In this section, we will often make the following assumptions on the initial condition $U_0$:
\begin{align}
	&\limsup_{x\to \infty} \tfrac 1x \log U_0(x)\le -\sqrt{2} \label{eq:assump_infmass1}\\
	&\text{and }\qquad \int_0^{\infty} ye^{\sqrt{2}y}U_0(y)dy=\infty.\label{eq:assump_infmass2}
\end{align}
We will prove the following results in this section.
\begin{theo} \label{theo:infinitemassconv}
	Suppose $U_0$ satisfies Assumption~\ref{assum:standing assumption ic},~\eqref{eq:assump_infmass1} and~\eqref{eq:assump_infmass2}.
	Let $(U(t,x),L_t)$ denote the solution of the free boundary problem~\eqref{eq:FBP_CDF} with initial condition $U_0$.
	Then
	\[
	\sup_{x\in \Rm}|U(t,x+L_t)-\Pi_{\min}(x)|\to 0 \quad \text{as }t\to \infty.
	\]
\end{theo}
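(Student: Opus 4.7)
My plan is to deduce the shape convergence $U(t,\cdot+L_t)\to\Pi_{\min}$ from the Brunet-Derrida relation (Theorem~\ref{theo:magic formula}), reducing the problem to an asymptotic statement on the increments $L_{t+s}-L_t$ of the free boundary.

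Applied to the initial data $U(t,L_t+\cdot)$ at time $t$ (which satisfies Assumption~\ref{assum:standing assumption ic} by Proposition~\ref{prop:fbpsoln}), Theorem~\ref{theo:magic formula} gives, for each $r\in(-\infty,\sqrt 2)\setminus\{0\}$,
\begin{equation*}
\int_0^\infty U(t,L_t+y)e^{ry}dy=-\frac{1}{r}+\frac{1}{r}\int_0^\infty e^{r(L_{t+s}-L_t)-(1+\frac{1}{2}r^2)s}ds.
\end{equation*}
Since $\int_0^\infty\Pi_{\min}(y)e^{ry}dy=-\tfrac{1}{r}+\tfrac{1}{r}\int_0^\infty e^{(\sqrt 2 r-1-\frac{r^2}{2})s}ds$, if one can justify dominated convergence in the right-hand integral to its $L_{t+s}-L_t=\sqrt{2}s$ limit, then Laplace transform convergence follows for each such $r$, and one concludes $U(t,L_t+\cdot)\to\Pi_{\min}$ pointwise, then uniformly by Dini's theorem (as at the end of the proof of Theorem~\ref{theo:convergence to the minimal travelling wave for finite initial mass}).

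The key analytic input needed is therefore that $L_{t+s}-L_t\to\sqrt{2}s$ locally uniformly in $s$ as $t\to\infty$, together with a uniform-in-$t$ upper bound of the form $L_{t+s}-L_t\leq\sqrt{2}s+\epsilon s+C_\epsilon$ for any $\epsilon>0$. The lower bound $L_{t+s}-L_t\geq(\sqrt 2-\epsilon)s$ for all $t$ sufficiently large follows from Lemma~\ref{lem:boundary locally Lipschitz from the left} together with Lemma~\ref{lem:Lttsqrt2}. For the matching upper bound, I would use the novel Feynman-Kac formula of Lemma~\ref{lem:FKforinfinitemass}: the identity
\begin{equation*}
1=U(t+s,L_{t+s})=\Esub{L_{t+s}}{U_0(B_{t+s})e^{\Leb\{u\in[0,t+s]:B_u\geq L_{t+s-u}\}}}
\end{equation*}
couples the size of $L_{t+s}$ to the decay of $U_0$ and the Brownian motion near the time-reversed boundary. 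Decomposing the expectation into trajectories that stay close to the boundary (contributing the bulk $e^{t+s}$-weighting) and those that excur into the tail of $U_0$, and using~\eqref{eq:assump_infmass1} along with Gaussian estimates, the plan is to force $L_{t+s}-L_t\leq\sqrt{2}s+o(1)$ with subexponential-in-$s$ control of the error, which suffices for dominated convergence after writing $r(L_{t+s}-L_t)-(1+\tfrac{r^2}{2})s=-(r/\sqrt 2-1)^2 s+r(L_{t+s}-L_t-\sqrt 2 s)$.

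\textbf{Main obstacle.} The hard step is the upper bound $L_{t+s}-L_t\leq\sqrt{2}s+o(1)$ with enough uniformity in $s$. In the finite initial mass case (Section~\ref{sec:finitemass}) the analogous fact was obtained by showing $L_t-L_t^H\to a\in\Rm$, reducing the question to the Heaviside boundary; but under assumption~\eqref{eq:assump_infmass2} we have $L_t-L_t^H\to+\infty$ (consistent with Theorem~\ref{theo:Ltposition}), so this Heaviside comparison is unavailable. Executing the direct Feynman-Kac argument via Lemma~\ref{lem:FKforinfinitemass} requires careful control of the contribution from paths ending deep in the tail of $U_0$---precisely the heuristic ``a Brownian motion that survives for a long time is likely to have been born far to the right of $0$'' from Section~\ref{subsec:outline}---and is presumably the technical heart of Section~\ref{sec:infiniteinitialmass}.
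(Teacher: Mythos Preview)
Your reduction via the Brunet--Derrida relation is exactly the argument the paper uses in the \emph{finite} initial mass case (proof of Theorem~\ref{theo:convergence to the minimal travelling wave for finite initial mass}), and you correctly pinpoint why it does not carry over: the increment upper bound $L_{t+s}-L_t\le\sqrt 2\,s+o(1)$ cannot be obtained from comparison with the Heaviside boundary, since $L_t-L_t^H\to+\infty$ here.

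However, the paper does \emph{not} proceed by first establishing this increment upper bound. Its route is entirely different: it proves \emph{tail control of the profile} $U(t,m(t)+\cdot)$ directly, by a Bramson-style trajectorial analysis of the Feynman--Kac expression from Lemma~\ref{lem:FKforinfinitemass} (Sections~\ref{subsec:inf_belowbeta}--\ref{subsec:inf_asympt}). Trajectories that dip below a carefully constructed curve $\underline{\mathcal M}_{r,t}$ contribute negligibly (Proposition~\ref{prop:Gxy}); for the remaining trajectories, Brownian bridge estimates and entropic repulsion (Lemma~\ref{lem:probaboveinterval}) yield the tail upper bound (Corollary~\ref{cor:x2x1}). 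Combined with the easy lower bound coming from stretching and $U^H(t,L^H_t+\cdot)\to\Pi_{\min}$, this gives a two-sided sandwich on $U(t,m(t)+z)$ for large $z$ (Proposition~\ref{prop:UhasPimintail}), from which convergence follows via a general sandwich-implies-convergence result (Proposition~\ref{prop:sandwichmeansconv}) and the re-centring Lemma~\ref{lem:mtoL}. Fine asymptotics of $L_t$ (Theorem~\ref{theo:infmassfront}) are obtained only \emph{after} convergence is established, and under the additional decay condition~\eqref{eq:stretched exponential U0 proof section}.

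The gap in your proposal is that your ``main obstacle'' is essentially the whole problem. Your suggested decomposition of the Feynman--Kac expectation into near-boundary and far-tail trajectories is too vague to constitute a plan: the far-tail contribution is precisely what dominates in the infinite-mass case (by the very heuristic you cite from Section~\ref{subsec:outline}), so controlling it requires genuine work --- and this is exactly what the paper's trajectorial machinery (the curve $\underline{\mathcal M}_{r,t}$, the quantities $\beta_r(t)$ and $b_r(t)$, the entropic repulsion estimates) is built to do. There is no evident shortcut that avoids analysis of comparable depth, and in particular the paper does not extract the increment upper bound as a stepping stone.
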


\begin{theo} \label{theo:infmassfront}
	Suppose $U_0$ satisfies Assumption~\ref{assum:standing assumption ic},~\eqref{eq:assump_infmass1} and~\eqref{eq:assump_infmass2}, and moreover that for some $\gamma<1/2$,
\begin{equation}\label{eq:stretched exponential U0 proof section}
U_0(x)\leq e^{x^{\gamma}-\sqrt{2}x}
\end{equation}
for all $x$ sufficiently large. 
	Let $(U(t,x),L_t)$ denote the solution of the free boundary problem~\eqref{eq:FBP_CDF} with initial condition $U_0$. For $t>0$, let
	\begin{equation} \label{eq:bdefninfmass}
	b(t):=2^{-1 / 2} \log \left(\int_0^{\infty} y e^{\sqrt 2 y} U_0(y) e^{-y^2 / (2 t)} d y+1\right).
	\end{equation}
	Then
	\[
	L_t-m(t)\to 0 \quad \text{as }t\to \infty,
	\]
	where
	\begin{equation} \label{eq:mtheodef}
	m(t)=\sqrt 2 t-\frac 3{2\sqrt 2} \log t+b(t)-\frac 1 {\sqrt{2}} \log {\sqrt{\pi}}.
	\end{equation}
\end{theo}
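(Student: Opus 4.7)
The plan is to adapt Bramson's treatment of the FKPP equation in the infinite-initial-mass regime~\cite{Bramson1983} to the present free boundary problem, using the novel Feynman--Kac representation from Lemma~\ref{lem:FKforinfinitemass} in place of the classical McKean representation, together with the convergence to the minimal travelling wave from Theorem~\ref{theo:infinitemassconv}.

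The strategy is to derive a sharp two-sided estimate for $U(t, L_t + y)$ with $y = y(t)\to\infty$ slowly. Writing $\tilde U_0(z) := e^{\sqrt 2 z}U_0(z)$ (so that~\eqref{eq:stretched exponential U0 proof section} reads $\tilde U_0(z)\leq e^{z^\gamma}$ with $\gamma<1/2$), and applying a Girsanov shift justified by Lemma~\ref{lem:Lttsqrt2} (which gives $L_t/t\to\sqrt 2$), the Feynman--Kac expectation $\mathbb{E}_{L_t+y}[U_0(B_t)\,e^{\Leb(\{s:B_s\geq L_{t-s}\})}]$ reduces to a ballot-type problem for Brownian motion above a nearly-flat barrier. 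The reflection-principle bridge density
\[
\psub{y}{\inf_{s\leq t}B_s>0,\,B_t\in dw}
= (2\pi t)^{-1/2}\bigl(e^{-(w-y)^2/(2t)}-e^{-(w+y)^2/(2t)}\bigr)\,dw
\sim \frac{2yw}{t\sqrt{2\pi t}}\,e^{-(w^2+y^2)/(2t)}\,dw
\]
(with the asymptotic valid for $w,y=o(\sqrt t)$, which is the effective range under the sub-exponential bound on $\tilde U_0$) then produces, schematically, an estimate of the form
\[
U(t, L_t + y) \;\sim\; \frac{2y\,e^{-\sqrt 2 y}}{\sqrt{2\pi t^3}}\,e^{-\sqrt 2(L_t - m(t))}\cdot \Bigl(1 + \int_0^\infty y\,e^{\sqrt 2 y}U_0(y)\,e^{-y^2/(2t)}\,dy\Bigr),
\]
the factor $e^{-\sqrt 2(L_t - m(t))}$ encoding a Girsanov correction for deviations of the actual barrier from the reference trajectory $m(t)$.

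By Theorem~\ref{theo:infinitemassconv}, $U(t, L_t + y) \to \Pi_{\min}(y) = (\sqrt 2 y + 1)e^{-\sqrt 2 y}$ uniformly in $y$. Matching the sharp estimate above with the large-$y$ asymptotic $\Pi_{\min}(y)\sim \sqrt 2 y\,e^{-\sqrt 2 y}$ (taking $y=y(t)\to\infty$ slowly to exploit the uniformity), cancelling the common factor $y\,e^{-\sqrt 2 y}$, and using the definitions~\eqref{eq:bdefninfmass} and~\eqref{eq:mtheodef} of $b(t)$ and $m(t)$, gives $e^{-\sqrt 2(L_t - m(t))}\to 1$, i.e.~$L_t - m(t)\to 0$. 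The normalisation $(2\pi t^3)^{-1/2}$ of the bridge density supplies the $-\tfrac{3}{2\sqrt 2}\log t$ and $-\tfrac{1}{\sqrt 2}\log\sqrt\pi$ terms in $m(t)$, while the heavy-tail integral in $b(t)$ appears directly from the integral transform of $\tilde U_0$ against the Gaussian.

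The main obstacle is the rigorous derivation of the sharp Brownian-bridge estimate, in particular controlling the Girsanov correction arising from the fact that $s\mapsto L_{t-s}$ is not exactly linear of slope $\sqrt 2$: its deviation from linearity involves both a logarithmic and a $b(\cdot)$ correction. The strategy is a bootstrap argument in the spirit of Section~\ref{subsec:bootstrap}: first establish a rough a-priori bound $L_t - m(t) = O(1)$ (or similar) via the linear upper bound $U(t,x)\leq e^t\mathbb{E}_x[U_0(B_t)]$ combined with careful Gaussian asymptotics under~\eqref{eq:stretched exponential U0 proof section} and Theorem~\ref{theo:infinitemassconv}; this reduces the barrier deviation to a Girsanov-controllable size. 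A refinement then pins $L_t - m(t)$ down to $o(1)$. The constraint $\gamma < 1/2$ is the sharp threshold at which the bridge-density expansion $\sim 2yw/(t\sqrt{2\pi t})$ is valid throughout the effective integration window $w\lesssim \sqrt t$; weakening it would invalidate the Taylor approximation and the argument would fail.
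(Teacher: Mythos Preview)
Your high-level plan --- Feynman--Kac representation, reduce to a ballot problem with nearly-linear barrier, match the resulting bridge asymptotics against Theorem~\ref{theo:infinitemassconv} --- is indeed the paper's strategy (itself an adaptation of Bramson). But your proposal skips over the hardest step, and the ``Girsanov shift'' you invoke does not accomplish it.

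The Feynman--Kac formula in Lemma~\ref{lem:FKforinfinitemass} gives
\[
U(t,x)=\Esub{x}{U_0(B_t)\,e^{\Leb(\{s\in [0,t]:B_s\ge L_{t-s}\})}},
\]
which is \emph{not} a ballot problem: the exponential weight does not collapse to $e^t\1_{\{B_s>L_{t-s}\;\forall s\}}$. The inequality $e^{\Leb}\ge e^t\1_{\{\text{above}\}}$ gives the lower bound for free, but the matching upper bound --- that trajectories which dip below $L_{t-\cdot}$ contribute negligibly --- is precisely the content of Proposition~\ref{prop:s0}, whose proof occupies Sections~\ref{subsec:inf_belowbeta}--\ref{subsec:inf_asympt}. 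That argument requires introducing the auxiliary curve $\underline{\mathcal M}_{r,t}$ sitting $\Theta((s\wedge(t-s))^{1/3})$ below $L$, showing (Proposition~\ref{prop:Gxy}) that trajectories going below $\underline{\mathcal M}_{r,t}$ spend so much time below $L$ that their $e^{\Leb}$-weight is small, and then using Bramson's entropic repulsion (Lemma~\ref{lem:probaboveinterval}) to show that staying above $\underline{\mathcal M}_{r,t}$ and staying above $L$ have nearly equal probability. No Girsanov shift produces this reduction; a drift change alters the law of $B$, not the structure of the $e^{\Leb}$ weight.

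Once Proposition~\ref{prop:s0} is in hand, the paper does not run a direct bootstrap on $L_t-m(t)$ as you suggest. Instead it invokes Proposition~\ref{prop:frontinfiniteinitial} (the analogue of~\cite[Theorem~5]{Bramson1983}), which for \emph{any} barrier $m$ satisfying~\eqref{eq:mconditions} and the asymptotic~\eqref{eq:Dvasymp} concludes $m(t)-L_t\to x_0$, by comparing the expectations with barriers $L$ and $m$ via ratio bounds. Your displayed formula with the factor $e^{-\sqrt 2(L_t-m(t))}$ is circular until such a comparison is established. Finally, your explanation of the constraint $\gamma<1/2$ is not quite the mechanism: the paper (Lemma~\ref{lem:stretched exponential implies b condition}) shows $\gamma<1/2$ is equivalent to $b(t)\le t^\delta$ for some $\delta<1/3$, and $\delta<1/3$ is what Bramson's argument needs for $m(t)$ to be linear enough that the barrier can be replaced by $m(t)s/t$ in the expectation on the left of~\eqref{eq:Dvasymp}.
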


We now give a heuristic overview of the proof of Theorems~\ref{theo:infinitemassconv} and~\ref{theo:infmassfront}.
The proof follows a simplified version of the strategy used by Bramson in~\cite{Bramson1983} to prove the corresponding results for solutions of the FKPP equation with initial condition $U_0$ satisfying~\eqref{eq:assump_infmass1} and~\eqref{eq:assump_infmass2}.

By the Feynman-Kac formula in Lemma~\ref{lem:FKforinfinitemass}, for $t>0$ and $x\in \R$,
\begin{equation} \label{eq:infheur_lower}
U(t,x)\ge e^t \Esub{x}{U_0(B_t)\1_{\{B_s>L_{t-s}\; \forall s\in [0,t]\}}}.
\end{equation}
For $2\le r\le t$, we will define a curve $\underline{\mathcal M}_{r,t}$ where $\underline{\mathcal M}_{r,t}(t-s)\approx L_{t-s}-\Theta(1)(s\wedge (t-s))^{1/3}$ for $s\in [3r,t-3r]$.
Then we can show (see Proposition~\ref{prop:upperbdinfmass}) that for large $r$, there exists $N_r(t)\to \infty$ as $t\to\infty$ such that for large $t$ and $x\ge L_t$,
\begin{equation} \label{eq:infheur_upper}
U(t,x)\le (1+o(1))e^t \Esub{x}{U_0(B_t)\1_{\{B_s>\underline{\mathcal M}_{r,t}(t-s)\; \forall s\in [3r,t-3r]\}}\1_{\{B_t>N_r(t)\}}}.
\end{equation}
Indeed, to establish~\eqref{eq:infheur_upper}, we show that
\begin{itemize}
\item trajectories $(B_s)_{s\in [0,t]}$ that go below $\underline{\mathcal M}_{r,t}(t-s^*)$ at some time $s^*\in [3r,t-3r]$ are likely to spend a long time below $L_{t-s}$, and so $\Leb(\{s\in [0,t]:B_s\ge L_{t-s}\})$ is likely to be significantly less than $t$ for these trajectories, which means that their contribution to the formula for $U(t,x)$ in Lemma~\ref{lem:FKforinfinitemass} is small (see Proposition~\ref{prop:Gxy}).
\item using the infinite initial mass condition~\eqref{eq:assump_infmass2}, trajectories $(B_s)_{s\in [0,t]}$ that stay above $\underline{\mathcal M}_{r,t}(t-s)$ with $B_t\le \mathcal O(1)$ make a small contribution to the formula for $U(t,x)$ in Lemma~\ref{lem:FKforinfinitemass} (see Lemma~\ref{lem:Bt_ends_big}).
\end{itemize}
We then use~\eqref{eq:infheur_upper} and Brownian bridge estimates (using that $\underline{\mathcal M}_{r,t}$ is close enough to a straight line from $o(t)$ at time $0$ to $L_t=\sqrt{2}t+o(t)$ at time $t$)
to show that (see Corollary~\ref{cor:x2x1}) for large $r$ and $t$, for $x_1,x_2\in [L_t,L_t+o(t)]$ with $x_1\le x_2$, with $x_1-L_t$ large and $x_2-x_1$ not too large,
\begin{align*}
U(t,x_2)
&\le o(1)\\
&\;\; +(1+o(1))e^t e^{-(\sqrt 2-o(1))(x_2-x_1)}\frac{x_2-L_t}{x_1-L_t}\Esub{x_1}{U_0(B_t)\1_{\{B_s>\underline{\mathcal M}_{r,t}(t-s)\; \forall s\in [3r,t-3r]\}}\1_{\{B_t>N_r(t)\}}}\\
&\le o(1)+(1+o(1)) e^{-(\sqrt 2-o(1))(x_2-x_1)}\frac{x_2-L_t}{x_1-L_t}U(t,x_1),
\end{align*}
where the second inequality follows by~\eqref{eq:infheur_lower}, and since $N_r(t)\to \infty$ as $t\to \infty$, and since when $x-L_t$ and $y-L_0$ are large,
\begin{equation} \label{eq:infheur_sameprob}
\p{\xi^t_{x,y}(s)>\underline{\mathcal M}_{r,t}(t-s)\; \forall s\in [3r,t-3r]}
=(1+o(1))\p{\xi^t_{x,y}(s)>L_{t-s}\; \forall s\in [0,t]}.
\end{equation}
Indeed,~\eqref{eq:infheur_sameprob} follows from an entropic repulsion result for Brownian bridge (see Lemma~\ref{lem:probaboveinterval}).

We now have an upper bound on the tail behaviour of $U(t,\cdot)$ for large $t$.
By the stretching lemma, along with the convergence of $U^H(t,L^H_t+\cdot)$ to $\Pi_{\min}$ from Theorem~\ref{theo:convergence to the minimal travelling wave for finite initial mass}, and the tail asymptotics of $\Pi_{\min}$, we can also show that for large $t$ and $L_t\le x_1\le x_2$ with $x_1-L_t$ large (but $x_2-L_t$ not too large),
\[
U(t,x_2)
\ge (1-o(1)) e^{-\sqrt 2(x_2-x_1)}\frac{x_2-L_t}{x_1-L_t}U(t,x_1).
\]
By combining the upper and lower bounds on the tail behaviour of $U(t,\cdot)$ for large $t$, we can prove Theorem~\ref{theo:infinitemassconv}.
(See Proposition~\ref{prop:sandwichmeansconv} for a tail condition on $U(t,m(t)+\cdot)$ for suitable $m(t)$ that implies convergence to $\Pi_{\min}$.)

For the proof of Theorem~\ref{theo:infmassfront}, we will show (see Proposition~\ref{prop:s0}) using (roughly)~\eqref{eq:infheur_lower},~\eqref{eq:infheur_upper} and~\eqref{eq:infheur_sameprob} that for any $s_0\ge 0$, for $t$ large and $x-L_t$ large,
\[
U(t,x)=(1+o(1)) e^t \Esub{x}{U_0(B_t)\1_{\{B_s>L_{t-s}\; \forall s\in [0,t-s_0]\}}}.
\]
Armed with this expression for $U(t,x)$ and the convergence result in Theorem~\ref{theo:infinitemassconv}, the same estimates  on quantities of the form $\Esub{x}{U_0(B_t)\1_{\{B_s>m(t-s) \; \forall s\in [0,t-s_0]\}}}$ as proved and used in~\cite{Bramson1983} (see Proposition~\ref{prop:frontinfiniteinitial})
will imply Theorem~\ref{theo:infmassfront}.

\subsection{Convergence to $\Pi_{\min}$} \label{subsec:inf_conv}

In this subsection, we use the same strategy as in~\cite{Bramson1983} to show that if $U(t,x+m(t))$ can be sandwiched between two suitable approximations of $\Pi_{\min}(x)$ for large $t$ and large $x$, then $U(t,x+m(t))$ must converge to $\Pi_{\min}(x)$ uniformly in $x$ as $t\to \infty$.
We begin by proving the following lemma, which corresponds to~\cite[Lemma 3.4]{Bramson1983};
we prove the result for general travelling waves $\Pi_c$ instead of just the minimal travelling wave $\Pi_{\min}$ because we will use the general result in the proof of Theorem~\ref{theo:slowerdecay} in Section~\ref{sec:mainthmpfs}.
Recall the definition of $\Pi_c$ in~\eqref{eq:Picdefn} and~\eqref{eq:speed c travelling wave}.
\begin{lem} \label{lem:Bramsonconv}
	Take $c\ge \sqrt{2}$, and suppose $U_0$ satisfies Assumption~\ref{assum:standing assumption ic} with $U_0(x)=\gamma(x)\Pi_{c}(x)$ $\forall x\in \R$, where $\gamma(x)\to 1$ as $x\to \infty$. 
	Let $(U(t,x),L_t)$ solve the free boundary problem~\eqref{eq:FBP_CDF}.
	Then
	\[
	\sup_{x\ge -\frac 18 \log t}|U(t,x+ct)-\Pi_{c}(x)|\to 0 \quad \text{as }t\to \infty.
	\]
\end{lem}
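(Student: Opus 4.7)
My plan is to adapt Bramson's strategy from his Lemma~3.4 to the free boundary setting. The starting observation is that $(\Pi_c(x-ct), ct)$ is itself the classical solution of~\eqref{eq:FBP_CDF} with initial condition $\Pi_c$, so applying Lemma~\ref{lem:FKforinfinitemass} to both the travelling wave and to $U$ yields the two representations
\[
U(t, x+ct) = \Es_{x+ct}\!\left[U_0(B_t)\, e^{\Leb\{s \in [0,t]:\, B_s \ge L_{t-s}\}}\right],\qquad \Pi_c(x) = \Es_{x+ct}\!\left[\Pi_c(B_t)\, e^{\Leb\{s \in [0,t]:\, B_s \ge c(t-s)\}}\right].
\]
Subtracting and using $U_0 = \gamma\Pi_c$, the difference decomposes into a contribution from $U_0 - \Pi_c = (\gamma-1)\Pi_c$, which is small at infinity because $\gamma \to 1$, and a contribution from the difference of the two exponential weights, which should be small because $L_{t-s}$ is close to $c(t-s)$. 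The latter follows from Lemma~\ref{lem:Lttsqrt2} when $c=\sqrt 2$ and from Theorem~\ref{theo:initial condition limsup bdy relation} when $c>\sqrt 2$, with the $\limsup$ in the latter case upgraded to a genuine limit via a sub-/super-solution sandwich using shifts of $\Pi_c$ applied through Proposition~\ref{prop:fbpcomparison}.

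For the first contribution, I would fix $\eta > 0$, pick $A_\eta$ with $|\gamma-1| \le \eta$ on $[A_\eta,\infty)$, bound $|U_0(B_t) - \Pi_c(B_t)| \le \eta\,\Pi_c(B_t) + \1_{\{B_t \le A_\eta\}}$, and integrate against the Gaussian law of $B_t$ weighted by the exponential $e^{\Leb(\cdot)} \le e^t$. The $\eta\,\Pi_c$ piece contributes at most $\eta\,\Pi_c(x)\le \eta$ uniformly. The indicator piece is controlled by $e^t\, \p{\cdot}$-style bounds on $\{B_t \le A_\eta\}$ starting from $x+ct$, which for $c > \sqrt 2$ decays like $e^{t(1-c^2/2)}$ and is immediately $o(1)$; for $c = \sqrt 2$ the crude Gaussian tail gives only $O(1)$, and one must retain the Feynman--Kac weight, exploiting that any trajectory reaching $B_t \le A_\eta$ from $x+\sqrt 2\,t$ must spend a substantial fraction of time below $\sqrt 2\,(t-s)$, producing exponential suppression from the complementary $e^{-\Leb\{s: B_s < c(t-s)\}}$ factor. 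For the second contribution, the inequality $|e^a - e^b| \le e^{a\vee b}\,|a-b|$ reduces the estimate to controlling $\Leb\{s \in [0,t]: \min(L_{t-s}, c(t-s)) \le B_s < \max(L_{t-s}, c(t-s))\}$, which is a Brownian crossing probability that is small once $|L_{t-s} - c(t-s)|$ is uniformly $o(\sqrt{t-s})$.

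The main obstacle is making all these estimates uniform in $x$ down to $x = -\tfrac{1}{8}\log t$. At the lower end of this range, after a Girsanov tilt by drift $-c$ that flattens the moving barriers into essentially constant ones, the tilted Brownian trajectory sits only a logarithmic distance above the barrier, so the ballot- and reflection-principle estimates that control both the event $\{B_t \le A_\eta\}$ and the straddling event become sharp. The precise threshold $-\tfrac{1}{8}\log t$ is presumably dictated by matching the sharp Gaussian exponent $-x^2/(2t)$ against the prefactor $e^t$, and carefully keeping track of the constants in these estimates --- particularly in the $c = \sqrt 2$ case, where there is no exponential slack left over after the cancellation $e^t \cdot e^{-c^2t/2} = 1$ --- is where the calculation becomes most delicate.
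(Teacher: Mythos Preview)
Your approach is genuinely different from the paper's, and it has a real gap.

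The paper never touches the Feynman--Kac formula here. Instead, for each $N$ it defines a modified initial datum $U_0^N$ that agrees with $U_0$ on $(N,\infty)$ and equals a shifted travelling wave $\Pi_c(\cdot+\delta_N)$ on $(-\infty,N]$, with $\delta_N\to 0$ chosen so that $U_0^N$ is continuous. Because $\gamma(x)\to 1$, for any $\delta>0$ one can take $N$ large enough that $\Pi_c(\cdot+\delta)\le U_0^N\le \Pi_c(\cdot-\delta)$ \emph{globally}, and the comparison principle then propagates this sandwich for all time. The discrepancy between $U$ and $U^N$ is handled by Lemma~\ref{lem:equalonpositive} (Bramson's Lemma~3.3 transferred via the $U_n$ approximation), which gives $|U-U^N|\le C_1 t^{-1/4}$ for $x\ge \sqrt 2\,t-\tfrac{1}{4\sqrt 2}\log t+N$. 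The threshold $-\tfrac18\log t$ is not sharp: it is simply any number strictly less than $\tfrac{1}{4\sqrt 2}\log t$, leaving room to let $N=N(\delta(t))$ grow slowly. Your speculation that it arises from a Gaussian--versus--$e^t$ balance is off the mark.

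The gap in your route is the ``second contribution''. You need to show that the Lebesgue measure of $\{s\in[0,t]: B_s$ lies between $L_{t-s}$ and $c(t-s)\}$ is small, and you say this ``is small once $|L_{t-s}-c(t-s)|$ is uniformly $o(\sqrt{t-s})$''. But at this point in the paper nothing of the sort is available. For $c=\sqrt 2$, Lemma~\ref{lem:Lttsqrt2} gives only $L_s/s\to\sqrt 2$, i.e.\ $|L_s-\sqrt 2 s|=o(s)$, which is far weaker. For $c>\sqrt 2$, your proposed sandwich of $U_0$ between shifts of $\Pi_c$ via Proposition~\ref{prop:fbpcomparison} fails globally: $\gamma$ is uncontrolled on bounded $x$, so you cannot sandwich there without first modifying $U_0$ on a half-line --- which is exactly the paper's construction of $U_0^N$, and once you do that you need Lemma~\ref{lem:equalonpositive} to control the modification, at which point the Feynman--Kac decomposition is redundant. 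There is also a circularity risk: the fine boundary estimates you want are downstream of this lemma, not inputs to it.

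A minor additional issue: your claim that the $\eta\,\Pi_c$ piece ``contributes at most $\eta\,\Pi_c(x)$'' is not right as stated, since the weight in the expectation is $e^{\Leb\{s:B_s\ge L_{t-s}\}}$, not $e^{\Leb\{s:B_s\ge c(t-s)\}}$. This is fixable (use $\Pi_c(B_t)\le (1-\eta)^{-1}U_0(B_t)$ on $\{B_t>A_\eta\}$ and feed the whole thing back into $U(t,x+ct)\le 1$), but it indicates that the interaction between the two contributions is more entangled than your decomposition suggests.
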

The proof of Lemma~\ref{lem:Bramsonconv} will use the following elementary lemma.
\begin{lem} \label{lem:equalonpositive}
	There exists $C_1<\infty$ such that the following holds.
	Suppose $U^1_0$ and $U^2_0$ satisfy Assumption~\ref{assum:standing assumption ic}, and let
	$(U^1(t,x),L^1_t)$ and $(U^2(t,x),L^2_t)$ solve~\eqref{eq:FBP_CDF} with initial conditions $U_0^1$ and $U_0^2$ respectively. 
	If $U^1_0(x)=U^2_0(x)$ $\forall x>0$, then for $t>0$,
	\[
	|U^2(t,x)-U^1(t,x)|\le C_1 t^{-1/4} \quad \forall x\ge \sqrt{2}t-\tfrac{1}{4\sqrt{2}}\log t.
	\]
\end{lem}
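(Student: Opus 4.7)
The plan is to reduce the problem to a comparison with the linear heat-plus-$v$ equation via the FKPP approximations of Proposition~\ref{prop:Un}, and then close using a sharp Gaussian tail estimate. First, set $U_0^{\max}:=U_0^1\vee U_0^2$ and $U_0^{\min}:=U_0^1\wedge U_0^2$; both satisfy Assumption~\ref{assum:standing assumption ic}, and since $U_0^1=U_0^2$ on $(0,\infty)$ and both are $[0,1]$-valued, we have $U_0^{\max}-U_0^{\min}\le \1_{\{\cdot\le 0\}}$. Writing $(U^{\max},L^{\max})$ and $(U^{\min},L^{\min})$ for the associated classical solutions of~\eqref{eq:FBP_CDF}, the comparison principle (Proposition~\ref{prop:fbpcomparison}) gives $U^{\min}\le U^i\le U^{\max}$ for $i=1,2$, hence
\[
|U^2(t,x)-U^1(t,x)|\le U^{\max}(t,x)-U^{\min}(t,x)\qquad\forall\, t>0,\, x\in\R.
\]

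Next, I would bound $U^{\max}-U^{\min}$ at the level of the FKPP approximations. Let $U^{\max}_n,U^{\min}_n$ denote the solutions of~\eqref{eq:CDF_n} with initial data $U_0^{\max},U_0^{\min}$. The parabolic comparison principle yields $U^{\max}_n\ge U^{\min}_n$, so $v_n:=U^{\max}_n-U^{\min}_n\ge 0$ satisfies
\[
\partial_t v_n=\tfrac{1}{2}\Delta v_n+v_n-\bigl((U^{\max}_n)^n-(U^{\min}_n)^n\bigr)\le \tfrac{1}{2}\Delta v_n+v_n,
\]
where the inequality uses that $x\mapsto x^n$ is non-decreasing on $[0,1]$ and $U^{\max}_n\ge U^{\min}_n$. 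Comparing with the solution $\tilde v$ of $\partial_t\tilde v=\tfrac12\Delta\tilde v+\tilde v$ with initial datum $U_0^{\max}-U_0^{\min}$, for which the classical Feynman-Kac formula gives $\tilde v(t,x)=e^t\mathbb E_x[(U_0^{\max}-U_0^{\min})(B_t)]$, I obtain uniformly in $n\in\Nm$
\[
v_n(t,x)\le e^t\mathbb E_x\bigl[(U_0^{\max}-U_0^{\min})(B_t)\bigr]\le e^t\mathbb P_x(B_t\le 0).
\]
Passing to the limit $n\to\infty$ via~\eqref{eq:UntoU} in Proposition~\ref{prop:Un} then yields $|U^2(t,x)-U^1(t,x)|\le U^{\max}(t,x)-U^{\min}(t,x)\le e^t\mathbb P_x(B_t\le 0)$ for all $t>0$ and $x\in\R$.

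Finally, I would apply the classical tail bound $\mathbb P_x(B_t\le 0)\le \tfrac{\sqrt t}{x\sqrt{2\pi}}\,e^{-x^2/(2t)}$ for $x,t>0$. Writing $x=\sqrt 2\,t-\tfrac{1}{4\sqrt 2}\log t+\delta$ with $\delta\ge 0$ and expanding,
\[
\frac{x^2}{2t}=t-\frac{\log t}{4}+\frac{\log^2 t}{64\,t}+\Big(\sqrt 2-\frac{\log t}{4\sqrt 2\,t}\Big)\delta+\frac{\delta^2}{2t}\ge t-\frac{\log t}{4}+\delta
\]
for all $t$ larger than some universal $T_0\in[1,\infty)$. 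Combined with $\sqrt t/x\le C/\sqrt t$ in that range, this yields $e^t\,\mathbb P_x(B_t\le 0)\le C\,t^{-1/4}e^{-\delta}\le C\,t^{-1/4}$ for all $t\ge T_0$. For the remaining range $t\in(0,T_0]$ the trivial estimate $|U^2(t,x)-U^1(t,x)|\le 1\le T_0^{1/4}t^{-1/4}$ already suffices, and enlarging $C_1$ accordingly completes the argument. No step is a real obstacle: the essential point is that passing to $U^{\max}$ and $U^{\min}$ removes all dependence on the unknown free boundaries $L^1_t,L^2_t$, reducing the task to a linear problem on the whole line where Feynman-Kac applies directly.
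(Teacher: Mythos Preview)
Your proof is correct and follows essentially the same approach as the paper, which simply cites \cite[Lemma~3.3]{Bramson1983} and notes that the result follows by applying the FKPP comparison principle at the level of the approximations $U^{n,i}$ and then passing to the limit via Proposition~\ref{prop:Un}. Your argument is precisely the content of Bramson's lemma written out in full: the sandwich by $U^{\max},U^{\min}$, the sub-linear differential inequality for $v_n$, the Feynman--Kac bound $v_n\le e^t\mathbb{P}_x(B_t\le 0)$, and the Gaussian tail computation yielding the $t^{-1/4}$ decay.
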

\begin{proof}
	This follows from~\cite[Lemma 3.3]{Bramson1983}, which is a simple consequence of the comparison principle for FKPP equations, applied to solutions $U^{n,1}$ and $U^{n,2}$ of~\eqref{eq:CDF_n} with initial conditions $U_0^1$ and $U_0^2$ respectively, and then letting $n\to \infty$ and using~\eqref{eq:UntoU} in Proposition~\ref{prop:Un}.
	(Note that the constant $C$ in~\cite[Lemma 3.3]{Bramson1983} does not depend on $n$.)
\end{proof}
\begin{proof}[Proof of Lemma~\ref{lem:Bramsonconv}]
The proof follows the same argument as for~\cite[Lemma 3.4]{Bramson1983}.
	For $N\in \Nm$, set 
	\[
	U^N_0(x)=\begin{cases}
		U_0(x) \quad &\text{for }x>N,\\
		\Pi_{c}(x+\delta_N) \quad &\text{for }x\le N,
	\end{cases}
	\]
	where $\delta_N\in \Rm$ is chosen so that $\Pi_{c}(N+\delta_N)=U_0(N)$.
	Then by our assumption on $U_0$, and since $\Pi_{\min}(x)=(\sqrt 2 x+1)e^{-\sqrt 2 x}$ for $x>0$
	and, for $c>\sqrt{2}$, 
	\[
	\Pi_c(x)\sim (c^{2}-2)^{-1/2}(c-\sqrt{c^{2}-2})^{-1}e^{(-c+\sqrt{c^{2}-2})x} \quad \text{as }x\to \infty,
	\]  
	we have  $\delta_N\to 0$ as $N\to \infty$. Hence
	for each $\delta>0$, there exists $N(\delta)\in \Nm$ such that for $N\ge N(\delta)$, we have
	\[
	\Pi_{c}(x+\delta)\le U^N_0(x)\le \Pi_{c}(x-\delta) \quad \forall x\in \Rm .
	\]
	Therefore, letting $(U^N(t,x),L^N_t)$ denote the solution of the free boundary problem~\eqref{eq:FBP_CDF} with initial condition $U^N_0$,
	by the comparison principle (Proposition~\ref{prop:fbpcomparison}) we have that for $N\ge N(\delta)$,
	\begin{equation} \label{eq:sandwichUN}
		\Pi_{c}(x+\delta)\le U^N(t,x+ct)\le \Pi_{c}(x-\delta) \quad  \forall t>0,\, x\in \Rm.
	\end{equation}
	By Lemma~\ref{lem:equalonpositive}, for $t>0$,
	\begin{equation} \label{eq:UNU}
		|U^N(t,x)-U(t,x)|\le C_1 t^{-1/4} \quad \forall x\ge \sqrt{2}t-\tfrac{1}{4\sqrt{2}}\log t+N.
	\end{equation}
	Take $\delta(t)\to 0$ as $t\to \infty$ sufficiently slowly that $N(\delta(t))-\frac 1{4\sqrt{2}}\log t\le -\frac 18 \log t$ for $t$ sufficiently large.
	Combining~\eqref{eq:sandwichUN} and~\eqref{eq:UNU}, for $t$ sufficiently large, for
	$x\ge -\frac 1 8\log t \ge (\sqrt{2}-c)t+N(\delta(t))-\frac 1 {4\sqrt{2}}\log t$,
	\[
	\Pi_{c}(x+\delta(t))-C_1 t^{-1/4}\le U(t,x+ct)\le \Pi_{c}(x-\delta(t))+C_1 t^{-1/4}.
	\]
	Since $\Pi_{c}$ is uniformly continuous, the result follows.
\end{proof}
We can now prove convergence of $U(t,\cdot +m(t))$ to $\Pi_{c}$ as $t\to \infty$ if condition~\eqref{eq:propsandwich} below holds; this result corresponds to~\cite[Proposition 3.3]{Bramson1983}.
\begin{prop}\label{prop:sandwichmeansconv}
	Suppose $U_0$ satisfies Assumption~\ref{assum:standing assumption ic}, and $(U(t,x),L_t)$ solves~\eqref{eq:FBP_CDF} with initial condition $U_0$.
	Suppose for some $c\ge \sqrt{2}$, $(m(t),t\ge 0)$, $\gamma_1\in C(\R)$, $\gamma_2:[0,\infty)\to [0,\infty)$ and $N<\infty$,
	\begin{equation} \label{eq:propsandwich}
		\gamma_1^{-1}(x)\Pi_{c}(x)-\gamma_2(t)\le U(t,x+m(t))\le \gamma_1(x)\Pi_{c}(x)+\gamma_2(t)
		\quad \forall t>0,\, x\ge N,
	\end{equation}
	where $\gamma_1(x)\to 1$ as $x\to \infty$ and $\gamma_2(t)\to 0$ as $t\to \infty$,
	and $\sup_{0\le s \le t<\infty}(m(s)-m(t))<\infty$.
	Then
	\[
	\sup_{x\in \R}|U(t,x+m(t))-\Pi_{c}(x)|\to 0 \quad \text{as }t\to \infty.
	\]
\end{prop}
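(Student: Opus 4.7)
The strategy follows Bramson's approach in \cite[Proposition 3.3]{Bramson1983}: I will bracket $U(t_0+s,\cdot+m(t_0))$ between two solutions of~\eqref{eq:FBP_CDF} whose initial conditions are pointwise shifts of $\Pi_c$ in the right tail, and then apply Lemma~\ref{lem:Bramsonconv} together with the comparison principle (Proposition~\ref{prop:fbpcomparison}) to these bracketing solutions. The sandwich~\eqref{eq:propsandwich} only controls $U(t,\cdot+m(t))$ on the right tail $[N,\infty)$; the main task is to propagate this control to all of $\Rm$ and then reconcile the centering $m(t_0)+cs$ with the intrinsic centering $m(t_0+s)$.

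Fix $\varepsilon>0$. Choose $N_\varepsilon\ge N$ so that $|\gamma_1(x)-1|<\varepsilon$ for $x\ge N_\varepsilon$, and $t_0$ large so that $\gamma_2(t_0)<\varepsilon^2$. By the exponential decay of $\Pi_c(x)$ as $x\to\infty$ (from~\eqref{eq:Picdefn}--\eqref{eq:speed c travelling wave}), one can choose shifts $a^\pm_\varepsilon>0$ with $a^\pm_\varepsilon\to 0$ as $\varepsilon\to 0$, such that, after possibly enlarging $N_\varepsilon$,
\[
\Pi_c(x-a^+_\varepsilon)\ge (1+\varepsilon)\Pi_c(x)+\varepsilon^2 \quad \text{and} \quad \Pi_c(x+a^-_\varepsilon)\le (1-\varepsilon)\Pi_c(x)-\varepsilon^2 \quad \forall x\ge N_\varepsilon.
\]
Combined with~\eqref{eq:propsandwich} this yields $\Pi_c(x+a^-_\varepsilon)\le U(t_0,x+m(t_0))\le \Pi_c(x-a^+_\varepsilon)$ for $x\ge N_\varepsilon$. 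Now define
\[
f^+_\varepsilon(x):=\max\{U(t_0,x+m(t_0)),\,\Pi_c(x-a^+_\varepsilon)\},\qquad f^-_\varepsilon(x):=\min\{U(t_0,x+m(t_0)),\,\Pi_c(x+a^-_\varepsilon)\}.
\]
Both $f^\pm_\varepsilon$ are non-increasing and c\`adl\`ag, satisfy Assumption~\ref{assum:standing assumption ic}, coincide with $\Pi_c(\cdot\mp a^\pm_\varepsilon)$ for $x\ge N_\varepsilon$, and satisfy $f^-_\varepsilon\le U(t_0,\cdot+m(t_0))\le f^+_\varepsilon$ pointwise on $\Rm$.

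Let $(F^\pm_\varepsilon(s,x),L^{F^\pm_\varepsilon}_s)$ denote the solutions of~\eqref{eq:FBP_CDF} with initial conditions $f^\pm_\varepsilon$. Since $f^\pm_\varepsilon(x)=\Pi_c(x\mp a^\pm_\varepsilon)$ for $x\ge N_\varepsilon$, Lemma~\ref{lem:Bramsonconv} (applied after translating the initial condition by $\pm a^\pm_\varepsilon$ and using translation invariance of~\eqref{eq:FBP_CDF}) gives
\[
\sup_{x\ge -\frac{1}{8}\log s}|F^\pm_\varepsilon(s,x+cs)-\Pi_c(x\mp a^\pm_\varepsilon)|\to 0 \quad \text{as }s\to\infty.
\]
By Proposition~\ref{prop:fbpcomparison}, $F^-_\varepsilon(s,x)\le U(t_0+s,x+m(t_0))\le F^+_\varepsilon(s,x)$ for all $s\ge 0$ and $x\in\Rm$, so for any $s$ sufficiently large (depending on $\varepsilon$), $U(t_0+s,x+m(t_0)+cs)$ lies within $O(\varepsilon)$ of $\Pi_c(x)$ uniformly for $x\ge -\frac{1}{8}\log s$. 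Applying the sandwich at time $t_0+s$ gives $U(t_0+s,y+m(t_0+s))\sim\Pi_c(y)$ as $y\to\infty$, whereas the bound just derived gives an analogous statement with centering $m(t_0)+cs$. The precise exponential tail asymptotics of $\Pi_c$ then force $|m(t_0+s)-m(t_0)-cs|\le C\varepsilon$ for $s$ large (here the hypothesis $\sup_{u\le v}(m(u)-m(v))<\infty$ prevents this centering difference from escaping to $-\infty$), whence $\sup_{x\in\Rm}|U(t_0+s,x+m(t_0+s))-\Pi_c(x)|\le C'\varepsilon$ for large $s$, with $C'$ independent of $\varepsilon$. Letting $\varepsilon\to 0$ gives the claim. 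The main obstacle is the final step of reconciling the two centerings: this requires careful use of the precise exponential tail rate of $\Pi_c$, and the case $c=\sqrt 2$ (where $\pi_{\min}(x)=2xe^{-\sqrt 2 x}$ has a polynomial prefactor) needs slightly more care than $c>\sqrt 2$ (pure exponential decay).
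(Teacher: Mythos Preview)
Your overall architecture is right and matches the paper's, but there is a genuine gap at the point where you try to absorb the additive error $\gamma_2(t_0)$ into a spatial shift of the travelling wave. You claim one can choose $a^+_\varepsilon>0$ so that
\[
\Pi_c(x-a^+_\varepsilon)\ge (1+\varepsilon)\Pi_c(x)+\varepsilon^2 \quad \forall x\ge N_\varepsilon.
\]
This is impossible: as $x\to\infty$ the left-hand side tends to $0$, while the right-hand side tends to $\varepsilon^2>0$. The same objection applies to the lower inequality, where for large $x$ the left-hand side is positive but the right-hand side becomes negative. No amount of enlarging $N_\varepsilon$ fixes this, because the failure occurs at the right end of the tail, not the left. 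Consequently your bracketing functions $f^\pm_\varepsilon$ need \emph{not} coincide with shifts of $\Pi_c$ on $[N_\varepsilon,\infty)$, and you cannot invoke Lemma~\ref{lem:Bramsonconv} for them in the way you describe.

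The paper sidesteps this by \emph{not} absorbing $\gamma_2$ into a shift. It defines the bracketing initial data $V^\pm_0$ purely from the multiplicative envelopes $\gamma_1^{\pm1}(x)\Pi_c(x)$ (with no $\gamma_2$ and no dependence on a base time $t_0$), so that Lemma~\ref{lem:Bramsonconv} applies cleanly to $V^\pm$. The additive error $\gamma_2(t-s)$ between $U(t-s,\cdot+m(t-s))$ and $V^\pm_0$ is then propagated forward in time using the Gronwall-type estimate of Lemma~\ref{lem:Bramsongronwall}, which gives $|U(t,\cdot+m(t-s))-V^\pm(s,\cdot)|\le e^s\gamma_2(t-s)$; for $s$ fixed and $t$ large this is $<\epsilon$. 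Your argument can be repaired by introducing this ingredient. After that, the reconciliation of the two centerings in your final paragraph is essentially the same as the paper's (and does not in fact need a case split on $c=\sqrt 2$ versus $c>\sqrt 2$: one simply evaluates both approximations at a fixed point in the tail and inverts $\Pi_c$, using its uniform continuity).
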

\begin{proof}
The proof follows the same strategy as for~\cite[Proposition 3.3]{Bramson1983}.
	By increasing $N$ if necessary, we can assume that $N$ is sufficiently large that $\gamma_1(x)\Pi_{c}(x)\in (0,1)$ and $\gamma_1^{-1}(x)\Pi_{c}(x)\in (0,1)$ $\forall x\ge N$.
	Let $V^{\pm}(t,x)$ denote solutions of~\eqref{eq:FBP_CDF} with initial conditions
	\[
	V^+_0(x)=\begin{cases}
		\sup_{y\ge x}(\gamma_1(y)\Pi_{c}(y)) \quad &\text{for }x\ge N,\\
		1 \quad &\text{for }x<N,
	\end{cases}
	\]
	and
	\[
	V^-_0(x)=\begin{cases}
		\sup_{y\ge x}(\gamma_1^{-1}(y)\Pi_{c}(y)) \quad &\text{for }x\ge N,\\
		\sup_{y\ge N}(\gamma_1^{-1}(y)\Pi_{c}(y)) \quad &\text{for }x<N
	\end{cases}
	\]
	respectively.
	By Lemma~\ref{lem:Bramsonconv}, for any $\epsilon>0$, there exists $s_0=s_0(\epsilon,N)$ such that for $s\ge s_0$,
	\begin{equation} \label{eq:VpmPimin}
		|V^{\pm}(s,x+cs)-\Pi_{c}(x)|<\epsilon \quad \forall x\ge -\tfrac 18 \log t.
	\end{equation}
	But for $0<s<t$,
	by~\eqref{eq:propsandwich} and since $U(t-s,\cdot)$ is bounded by $1$ and non-increasing, we have
	\[
	U(t-s,x+m(t-s))-V^+_0(x)\le \gamma_2(t-s) \quad 
	\text{and} \quad U(t-s,x+m(t-s))-V^-_0(x)\ge -\gamma_2(t-s) \quad \forall x\in \Rm.
	\] 
	Therefore, by Lemma~\ref{lem:Bramsongronwall}, for $s>0$, for $t$ sufficiently large that $\gamma_2(t-s)<\epsilon e^{-s}$,
	\[
	U(t,x+m(t-s))-V^+(s,x)<\epsilon \quad \text{and}\quad
	U(t,x+m(t-s))-V^-(s,x)>-\epsilon \quad \forall x\in \Rm .
	\]
	Hence using~\eqref{eq:VpmPimin}, for $s\ge s_0(\epsilon,N)$, for $t$ sufficiently large (depending on $s$),
	\begin{equation} \label{eq:mt-snearPi}
		|U(t,x+m(t-s)+cs)-\Pi_{c}(x)|<2\epsilon \quad \forall x\ge -\tfrac 18 \log t.
	\end{equation}
	By~\eqref{eq:propsandwich}, for $t>0$ we have 
	\[
	|U(t,N+m(t))-\Pi_{c}(N)|\in [(\gamma_1^{-1}(N)-1)\Pi_{c}(N)-\gamma_2(t),(\gamma_1(N)-1)\Pi_{c}(N)+\gamma_2(t)].
	\]
	Therefore, for $s\ge s_0(\epsilon,N)$ fixed, for $t$ sufficiently large, applying~\eqref{eq:mt-snearPi} with $x=m(t)-m(t-s)-cs+N\ge -\frac 1 8\log t$ for $t$ sufficiently large,
	\[
	|\Pi_{c}(N)-\Pi_{c}(m(t)-m(t-s)-cs+N)|<((\gamma_1(N)-1)\vee (1-\gamma_1^{-1}(N))\Pi_{c}(N)+\gamma_2(t)+2\epsilon.
	\]
	Hence for $\delta>0$, for $N$ sufficiently large, for $\epsilon$ sufficiently small and $s\ge s_0(\epsilon,N)$, for $t$ sufficiently large we have
	\[
	|m(t)-m(t-s)-cs|<\delta.
	\]
	Therefore, by~\eqref{eq:mt-snearPi}, for $t>0$ sufficiently large, there exists $z\in [-\delta,\delta]$ such that
	\[
	|U(t,x+m(t))-\Pi_{c}(x-z)|<2\epsilon \quad \forall x\ge -\tfrac 18 \log t+\delta.
	\]
	Since $\Pi_{c}$ is uniformly continuous with $\Pi_{c}(x)\to 1$ as $x\to -\infty$, and since $U(t,\cdot)$ is non-increasing for every $t>0$, this completes the proof.
\end{proof}
In order to prove Theorem~\ref{theo:infinitemassconv}, we will prove that~\eqref{eq:propsandwich} holds with $c=\sqrt{2}$ for some $m,\gamma_1, \gamma_2$ and $N$, and then apply Proposition~\ref{prop:sandwichmeansconv} along with the following result.
\begin{lem} \label{lem:mtoL}
Suppose $U_0$ satisfies Assumption~\ref{assum:standing assumption ic}, and let $(U(t,x),L_t)$ solve~\eqref{eq:FBP_CDF}.
Suppose that for some $c\ge \sqrt{2}$ and $(m(t),t\ge 0)$, 
\[
\sup_{x\in \R}|U(t,x+m(t))-\Pi_{c}(x)|\to 0 \quad \text{as }t\to \infty.
\]
Then
\[
\sup_{x\in \R}|U(t,x+L_t)-\Pi_{c}(x)|\to 0 \quad \text{as }t\to \infty,
\]
and
 $L_t-m(t)\to 0$ as $t\to \infty$.
\end{lem}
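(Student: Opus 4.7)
The plan is to show $a_t := L_t - m(t) \to 0$; the shape convergence then follows at once, since
\[
|U(t, L_t + x) - \Pi_c(x)| \le \sup_{z \in \R}|U(t, m(t) + z) - \Pi_c(z)| + |\Pi_c(a_t + x) - \Pi_c(x)|,
\]
and both terms vanish uniformly in $x$ as $t \to \infty$: the first by hypothesis, the second by uniform continuity of $\Pi_c$ combined with $a_t \to 0$.

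The upper bound $\limsup_t a_t \le 0$ is immediate: for any $\delta > 0$, $\Pi_c(\delta) < 1$, so uniform convergence gives $U(t, m(t) + \delta) < 1$ for $t$ large, whence $L_t < m(t) + \delta$ by Proposition~\ref{prop:fbpsoln}(i).

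For the lower bound $\liminf_t a_t \ge 0$, I would fix $T > 0$ and an arbitrary sequence $t_n \to \infty$ and consider the shifted initial condition $V_0^n(x) := U(t_n - T, x + m(t_n - T))$, which tends uniformly to $\Pi_c$ by hypothesis. By uniqueness of the FBP, the solution of~\eqref{eq:FBP_CDF} with initial condition $V_0^n$ evaluated at time $T$ equals $U(t_n, \cdot + m(t_n - T))$, with free boundary $L_{t_n} - m(t_n - T)$; Lemma~\ref{lem:Bramsongronwall} then yields that this solution converges uniformly to $\Pi_c(\cdot - cT)$. The key intermediate claim is that $L_{t_n} - m(t_n - T) \to cT$: the upper bound proceeds as in the previous paragraph, and for the lower bound I would sandwich $V_0^n$ from below by a non-decreasing sequence $(W_0^{(k)})_k$ of initial conditions satisfying Assumption~\ref{assum:standing assumption ic} with $W_0^{(k)} \uparrow \Pi_c$ uniformly and, for each fixed $k$, $W_0^{(k)} \le V_0^n$ for all $n$ large enough, then apply Lemma~\ref{lem:convofLn} together with the comparison principle (Proposition~\ref{prop:fbpcomparison}). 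Granted this claim, the hypothesis also gives $U(t_n, \cdot + m(t_n)) \to \Pi_c$ uniformly, so both $m(t_n)$ and $m(t_n - T) + cT$ act as centres of uniform approximations of $U(t_n, \cdot)$ by $\Pi_c$; since strict monotonicity of $\Pi_c$ on $(0, \infty)$ forbids any non-trivial self-translate of $\Pi_c$ from being uniformly close to $\Pi_c$, this forces $m(t_n) - m(t_n - T) - cT \to 0$, and subtraction yields $L_{t_n} - m(t_n) \to 0$.

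The main obstacle will be the construction of the monotone sandwich, since each $W_0^{(k)}$ must satisfy Assumption~\ref{assum:standing assumption ic} (in particular $W_0^{(k)} \to 1$ at $-\infty$) while its 1-plateau must lie inside that of $V_0^n$, namely $(-\infty, a_{t_n - T}]$, which may a priori retreat as $n$ grows. This will require case analysis on whether $\liminf_n a_{t_n - T}$ is finite or $-\infty$, with the latter case needing a separate bootstrap that $a_t$ is bounded below, for instance via the universal second-order boundary expansion $U(t, L_t + y) = 1 - y^2 + O(y^3)$ that follows by differentiating $U(t, L_t) \equiv 1$ in time together with the free-boundary condition $\partial_x U(t, L_t^+) = 0$ and the PDE.
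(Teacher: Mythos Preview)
Your upper bound $\limsup_t a_t \le 0$ is correct and clean, and the reduction from $a_t\to 0$ to the shape statement is fine.

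The gap is in the lower bound. The monotone sandwich cannot be built in the generality you need. Any $W_0^{(k)}$ satisfying Assumption~\ref{assum:standing assumption ic} must have $W_0^{(k)}(x)\to 1$ as $x\to -\infty$; in particular, for each $k$ there is some $x_k$ with $W_0^{(k)}(x_k)>1-1/(2k)$. But $V_0^n(x)=U(t_n-T,x+m(t_n-T))$ is strictly below $1$ for every $x>a_{t_n-T}$, and you have no a priori lower bound on $a_{t_n-T}$; if $a_{t_n-T}\to -\infty$ you cannot have $W_0^{(k)}\le V_0^n$ for all large $n$. Your proposed bootstrap via the Taylor expansion $U(t,L_t+y)=1-y^2+O(y^3)$ does not close this: the expansion is local in $y$, the $O(y^3)$ has no uniform-in-$t$ control from the stated arguments, and in any case it says nothing about $U(t,L_t+y)$ at $y=m(t)-L_t$ when that quantity is large. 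So the lower bound remains unproved.

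The paper bypasses the sandwich entirely and uses the Feynman--Kac representation (Lemma~\ref{lem:FKforinfinitemass}) directly. One fixes a small $\varepsilon>0$ and argues by contradiction: if $L_{t+s}<m(t)-2\varepsilon^{1/3}$ for all $s\in[0,2\varepsilon]$, then a Brownian motion started at $m(t)-\varepsilon^{1/3}$ and confined to the strip $[m(t)-2\varepsilon^{1/3},m(t)]$ on $[0,2\varepsilon]$ stays above $L_{t+2\varepsilon-s}$ throughout, picking up a full factor $e^{2\varepsilon}$ in the Feynman--Kac exponential; combined with $U(t,B_{2\varepsilon})\ge 1-\varepsilon$ (from the hypothesis, since $B_{2\varepsilon}\le m(t)$) and a Gaussian tail bound on the confinement probability, one gets $U(t+2\varepsilon,m(t)-\varepsilon^{1/3})>1$, which is impossible. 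Together with the monotonicity of $s\mapsto L_s$ for large $s$ (Lemma~\ref{lem:boundary locally Lipschitz from the left}) this gives $L_{t+2\varepsilon}\ge m(t)-2\varepsilon^{1/3}$. A separate short estimate shows $|m(t+2\varepsilon)-m(t)|=O(\varepsilon^{1/2})$, and the two combine to give $|L_{t+2\varepsilon}-m(t+2\varepsilon)|\to 0$. Note that this is precisely the mechanism hidden inside the proof of Lemma~\ref{lem:convofLn} you were invoking; it is more efficient to use it directly here than to route through a sandwich that cannot be assembled.
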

\begin{proof}
By~\eqref{eq:speed c travelling wave}, there exists $a>0$ such that $\pi_c(y)\sim ay$ as $y\downarrow 0$, and so $1-\Pi_c(y)\sim \frac 12 ay^2$ as $y\downarrow 0$.
Therefore we can fix $\varepsilon>0$ sufficiently small that
\begin{equation} \label{eq:epschoice}
e^{2\varepsilon}(1-\varepsilon)(1-4e^{-\varepsilon^{-1/3}/4})>1
\text{ and }
e^{2\varepsilon}(\Pi_c(\varepsilon^{1/3})+\varepsilon+e^{-\varepsilon^{-1/3}/4})<1.
\end{equation}
Recall the definition of $t_0$ in Lemma~\ref{lem:boundary locally Lipschitz from the left}.
 Then take $T_\varepsilon>t_0$ sufficiently large that
\begin{equation}
\sup_{x\in \R}|U(t,x+m(t))-\Pi_{c}(x)|\le \varepsilon \quad \forall t\ge T_\varepsilon \label{eq:UPiTeps}
\end{equation}
Take $t\ge T_\varepsilon$; we will now show that $L_{t+2\varepsilon}$ and $m(t+2\varepsilon)$ are both close to $m(t)$.

First suppose, aiming for a contradiction, that $L_{t+s}< m(t)-2\varepsilon^{1/3}$ $\forall s\in [0,2\varepsilon]$.
Then by Lemma~\ref{lem:FKforinfinitemass} (the Feynman-Kac formula),~\eqref{eq:UPiTeps} and since $\Pi_{c}(x)=1$ $\forall x\le 0$, and then by the reflection principle and a Gaussian tail bound, we have
\begin{align*}
U(t+2\varepsilon,m(t)-\varepsilon^{1/3})
&\ge e^{2\varepsilon}(1-\varepsilon)\psub{m(t)-\varepsilon^{1/3}}{B_s\in [m(t)-2\varepsilon^{1/3},m(t)] \; \forall s\in [0,2\varepsilon]}\\
&\ge e^{2\varepsilon}(1-\varepsilon)(1-4e^{-\varepsilon^{2/3}/(4\varepsilon)})\\
&>1,
\end{align*}
where the last line follows from our choice of $\varepsilon$ in~\eqref{eq:epschoice}.
This gives us a contradiction, and so we must have $\sup_{s\in [0,2\varepsilon]}L_{t+s}\ge m(t)-2\varepsilon^{1/3}$.
Since we chose $T_\varepsilon>t_0$, it follows from Lemma~\ref{lem:boundary locally Lipschitz from the left} that
$L_{t+2\varepsilon}\ge m(t)-2\varepsilon^{1/3}$.

Applying Lemma~\ref{lem:FKforinfinitemass} and~\eqref{eq:UPiTeps} again, and then since $\Pi_{c}$ is non-increasing, we can write
\begin{align*}
U(t+2\varepsilon,m(t)+2\varepsilon^{1/3})
&\le e^{2\varepsilon}\Esub{2\varepsilon^{1/3}}{\left( \Pi_{c}(B_{2\varepsilon})+\varepsilon\right)\wedge 1}\\
&\le e^{2\varepsilon}\left(\Pi_c(\varepsilon^{1/3})+\varepsilon+\psub{2\varepsilon^{1/3}}{B_{2\varepsilon}<\varepsilon^{1/3}}\right)\\
&\le  e^{2\varepsilon}(\Pi_c(\varepsilon^{1/3})+\varepsilon+e^{-\varepsilon^{2/3}/(4\varepsilon)})\\
&<1,
\end{align*}
where the third inequality follows from a Gaussian tail bound, and the last inequality
by our choice of $\varepsilon$ in~\eqref{eq:epschoice}. Therefore $L_{t+2\varepsilon}\le m(t)+2\varepsilon^{1/3}$.
We now have $|L_{t+2\varepsilon}- m(t)|\le 2\varepsilon^{1/3}$.

Now fix $x_{1/2}>0$ such that $\Pi_{c}(x_{1/2})=1/2$.
Then
\[
\left|\Esub{x_{1/2}}{\Pi_{c}(B_{2\varepsilon})}-\tfrac 12 \right|\le \Esub{0}{\|\pi_{c}\|_{\infty}|B_{2\varepsilon}|}
=2\pi^{-1/2}\varepsilon^{1/2}\|\pi_{c}\|_{\infty}.
\]
Hence by Lemma~\ref{lem:FKforinfinitemass} and~\eqref{eq:UPiTeps},
\begin{align*}
U(t+2\varepsilon,m(t)+x_{1/2})
\in \left[\tfrac 12 -2\pi^{-1/2}\varepsilon^{1/2}\|\pi_{c}\|_{\infty}-\varepsilon,e^{2\varepsilon}(\tfrac 12 +2\pi^{-1/2}\varepsilon^{1/2}\|\pi_{c}\|_{\infty}+\varepsilon)\right],
\end{align*}
and so in particular there exists a constant $K_1<\infty$ such that
\[
|U(t+2\varepsilon,m(t)+x_{1/2})-\Pi_{c}(x_{1/2})|\le K_1 \varepsilon^{1/2}.
\]
Since, by~\eqref{eq:UPiTeps}, $|U(t+2\varepsilon,m(t+2\varepsilon)+x)-\Pi_{c}(x)|\le \varepsilon$ $\forall x\in \R$,
by the triangle inequality we have
\begin{equation} \label{eq:PiPidiff}
|\Pi_{c}(m(t)-m(t+2\varepsilon)+x_{1/2})-\Pi_{c}(x_{1/2})|\le K_1\varepsilon^{1/2}+\varepsilon.
\end{equation}
There exists a constant $K_2<\infty$ such that for $\varepsilon>0$ sufficiently small,~\eqref{eq:PiPidiff} implies that
\[
|m(t)-m(t+2\varepsilon)|\le K_2 \varepsilon^{1/2}.
\]

We now have that $|L_{t+2\varepsilon}-m(t+2\varepsilon)|\le 2\varepsilon^{1/3}+K_2 \varepsilon^{1/2}$ for any $t\ge T_\varepsilon$.
Hence for $t\ge T_\varepsilon$ and $x\in \R$, using the triangle inequality and then~\eqref{eq:UPiTeps},
\begin{align*}
&|U(t+2\varepsilon,x+L_{t+2\varepsilon})-\Pi_{c}(x)|\\
&\le |U(t+2\varepsilon,x+L_{t+2\varepsilon})-\Pi_{c}(x+L_{t+2\varepsilon}-m(t+2\varepsilon))|
+|\Pi_{c}(x+L_{t+2\varepsilon}-m(t+2\varepsilon))-\Pi_{c}(x)|\\
&\le \varepsilon + (2\varepsilon^{1/3}+K_2 \varepsilon^{1/2})\|\pi_{c}\|_{\infty}.
\end{align*}
Since $\varepsilon>0$ can be taken arbitrarily small, the result follows.
\end{proof}

\subsection{Trajectories ending below $\beta_r(t)$ make a small contribution} \label{subsec:inf_belowbeta}

In order to establish that~\eqref{eq:propsandwich} holds for some $m,\gamma_1,\gamma_2$ and $N$, we now study the expression for $U(t,x)$ given by the Feynman-Kac formula in Lemma~\ref{lem:FKforinfinitemass}.
Suppose $U_0$ satisfies Assumption~\ref{assum:standing assumption ic}, and let $(U(t,x),L_t)$ denote the solution of~\eqref{eq:FBP_CDF}. 
For $t>0$ and $r\in [0,t]$, let
\begin{align}
	\beta_r(t)&=\inf\{x\in \Rm: x+\sqrt 2 s \ge L_s \; \forall s\in [r,t]\} \label{eq:betadef}\\
	\text{and }\quad b_r(t)&=\inf\{s\in [r,t]:\beta_r(t)+\sqrt{2}s=L_s\}. \label{eq:brtdef}
\end{align}
These definitions are the equivalent of the definitions of $\beta_r(t)$ and $b_r(t)$ in~\cite{Bramson1983}. The quantity $\beta_r(t)$ will be
used in Section~\ref{subsec:inf_belowM} to define the curve $\underline{\mathcal M}_{r,t}$ mentioned in the proof overview at the start of Section~\ref{sec:infiniteinitialmass}, and the definition of $\beta_r(t)$ will be used to prove Lemma~\ref{lem:lrtLs}, which then implies a useful property of $\underline{\mathcal M}_{r,t}$ in Lemma~\ref{lem:Mrtestimates}.
 
We have the following bound on $\beta_r(t)$ as a consequence of Lemma~\ref{lem:Lttsqrt2}.
\begin{lem} \label{lem:betartbound}
Suppose $U_0$ satisfies Assumption~\ref{assum:standing assumption ic} and~\eqref{eq:assump_infmass1}.
Let $(U(t,x),L_t)$ denote the solution of~\eqref{eq:FBP_CDF},
and define $\beta_r(t)$ as in~\eqref{eq:betadef}.
Then there exists $(\beta(t),t\ge 0)$ such that $|\beta_r(t)|\le \beta(t)$ $\forall t\ge 0$ and $r\in [0,t]$, and
\[
\lim_{t\to \infty}\frac{\beta(t)}t=0.
\]
\end{lem}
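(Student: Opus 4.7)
The plan is to rewrite $\beta_r(t)$ as a supremum of $L_s-\sqrt 2 s$, so that Lemma~\ref{lem:Lttsqrt2} essentially gives both the upper and lower bounds immediately. Specifically, the condition $x+\sqrt 2 s\ge L_s$ is equivalent to $x\ge L_s-\sqrt 2 s$, so from~\eqref{eq:betadef},
\[
\beta_r(t)=\sup_{s\in[r,t]}\bigl(L_s-\sqrt 2 s\bigr).
\]
This yields the two-sided bound
\[
L_t-\sqrt 2 t\;\le\;\beta_r(t)\;\le\;\sup_{s\in[0,t]}\bigl(L_s-\sqrt 2 s\bigr)\;=:\;\beta_+(t),
\]
valid for every $r\in[0,t]$, where the lower bound comes from taking $s=t$ in the supremum.

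Next I would show that $\beta_+(t)/t\to 0$ as $t\to\infty$. Fix $\varepsilon>0$. By Lemma~\ref{lem:Lttsqrt2}, $L_s/s\to\sqrt 2$, so there exists $s_0=s_0(\varepsilon)$ such that $L_s-\sqrt 2 s<\varepsilon s$ for all $s\ge s_0$. By Proposition~\ref{prop:fbpsoln}, $s\mapsto L_s$ is continuous on $(0,\infty)$ and $\lim_{s\to 0} L_s = L_0 \in\{-\infty\}\cup\mathbb R$, so $s\mapsto L_s-\sqrt 2 s$ is bounded above on $(0,s_0]$ by some finite constant $M_{s_0}$. Hence for all $t\ge s_0$,
\[
\beta_+(t)\le \max\bigl(M_{s_0},\,\varepsilon t\bigr),
\]
so $\limsup_{t\to\infty}\beta_+(t)/t\le\varepsilon$, and since $\varepsilon>0$ was arbitrary, $\beta_+(t)/t\to 0$. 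On the other hand, $(\sqrt 2 t-L_t)/t=\sqrt 2-L_t/t\to 0$ as $t\to\infty$, again by Lemma~\ref{lem:Lttsqrt2}.

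Combining these, define
\[
\beta(t):=\max\bigl(\beta_+(t),\,\sqrt 2 t-L_t,\,0\bigr)\quad\text{for }t>0,
\]
and set $\beta(0):=+\infty$ (which is harmless, as the statement only concerns the asymptotic $\lim_{t\to\infty}\beta(t)/t$). Then $|\beta_r(t)|\le\beta(t)$ for all $t\ge 0$ and $r\in[0,t]$, and $\beta(t)/t\to 0$ as $t\to\infty$, completing the proof.

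I do not expect any serious obstacle: the argument is essentially a direct application of Lemma~\ref{lem:Lttsqrt2}. The only minor subtlety is the possibility $L_0=-\infty$, which would only make $L_s-\sqrt 2 s$ very negative (not very large) for small $s$, and so does not interfere with the supremum being finite on bounded time intervals.
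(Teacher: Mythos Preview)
Your proof is correct and takes essentially the same approach as the paper: both arguments reduce to Lemma~\ref{lem:Lttsqrt2}. Your explicit rewriting $\beta_r(t)=\sup_{s\in[r,t]}(L_s-\sqrt 2 s)$ makes the two-sided bound slightly more transparent than the paper's direct estimate, but the underlying idea is identical.
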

\begin{proof}
For $\varepsilon>0$, suppose for some $s_\varepsilon\ge 0$ that $L_s\in [(\sqrt 2-\varepsilon)s,(\sqrt 2+\varepsilon)s]$ $\forall s\ge s_\varepsilon$.
Then for $0\le r \le t$ with $t\ge s_\varepsilon$ we have
$-\varepsilon t\le \beta_r(t)\le \varepsilon t+\sup_{s\in [0,s_\varepsilon]}L_s.$
Hence $|\beta_r(t)|\le 2\varepsilon t$ $\forall t\ge s_\varepsilon \vee (\varepsilon^{-1} \sup_{s\in [0,s_\varepsilon]}L_s)$ and $r\in [0,t]$.
The result then follows directly from Lemma~\ref{lem:Lttsqrt2}.
\end{proof}
In the following result, which corresponds to~\cite[Proposition~7.6]{Bramson1983}, we show that for large $r$, for $t\ge r$ and $x\ge L_t$, trajectories $(B_s)_{s\in [0,t]}$ with $B_t\le \beta_r(t)$ only make a small contribution to the expectation in Lemma~\ref{lem:FKforinfinitemass}.
\begin{prop} \label{prop:belowbeta}
Suppose $U_0(x)$ satisfies Assumption~\ref{assum:standing assumption ic}, and let $(U(t,x),L_t)$ denote the solution of~\eqref{eq:FBP_CDF}.
Define $\beta_r(t)$ as in~\eqref{eq:betadef}.
	As $r\to \infty$,
	\[
	\frac{\Esub{x}{e^{\Leb(\{s\le t:B_s\ge L_{t-s}\})}U_0(B_t)\1_{\{B_t\le \beta_r(t)\}}}}{U(t,x)}\to 0
	\]
	uniformly in $t\ge r$ and $x\ge L_t$.
\end{prop}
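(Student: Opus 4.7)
The plan is to apply the strong Markov property at the deterministic time $\sigma := t - b_r(t)$, setting $T := b_r(t) \ge r$. Since the curve $s \mapsto L_{t-s}$ for $s \in [\sigma, t]$ corresponds to $s' \mapsto L_{T-s'}$ for $s' \in [0, T]$ after time-shifting, Lemma~\ref{lem:FKforinfinitemass} identifies the inner expectation as $U(T, B_\sigma)$, giving
\[
U(t,x) = \Esub{x}{e^{\Leb(\{s \le \sigma:\, B_s \ge L_{t-s}\})} \, U(T, B_\sigma)}, \quad
N(t,x) = \Esub{x}{e^{\Leb(\{s \le \sigma:\, B_s \ge L_{t-s}\})} \, \hat N(T, B_\sigma)},
\]
with $\hat N(T, z) := \Esub{z}{e^{\Leb(\{s \le T:\, B_s \ge L_{T-s}\})} U_0(B_T) \1_{\{B_T \le \beta_r(t)\}}}$ and $N(t,x)$ denoting the numerator in the statement. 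The defining property $\beta_r(t) + \sqrt{2} T = L_T$ of $T = b_r(t)$ is crucial here: the truncation forces the endpoint $B_T$ to lie at least $\sqrt{2} T$ below $L_T$, a gap growing linearly in $T \ge r$.

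Next I would analyze the reduced ratio $\hat N(T, z)/U(T, z)$ via Brownian bridges. Conditioning on $B_T = y$ and using the time-reversal identity $\Leb(\{s: \xi^T_{z,y}(s) \ge L_{T-s}\}) \overset{d}{=} \Leb(\{u: \xi^T_{y,z}(u) \ge L_u\})$, one writes both $\hat N(T,z)$ and $U(T,z)$ as integrals of $\Pm_z(B_T \in dy)\,U_0(y)\,G(y, z, T)$, with $\hat N$ restricted to $\{y \le \beta_r(t) = L_T - \sqrt 2 T\}$ and $G(y,z,T) := \mathbb{E}[\exp \Leb(\{u \in [0,T]:\, \xi^T_{y,z}(u) \ge L_u\})]$. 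For such $y$ and for $z$ of order $L_T$, the bridge $\xi^T_{y,z}$ has mean slope $(z-y)/T \approx \sqrt{2}$, so its mean descends at the same rate as the curve $L_u$ itself, and the bridge tracks $L_u$ closely throughout $[r, T]$; the resulting Gaussian fluctuations place the bridge below $L_u$ on a positive fraction of $[r, T]$, reducing $G(y, z, T)$ by a factor exponentially small in $T$ compared with the maximal $e^T$. Combined with the Gaussian cost $\Pm_z(B_T \le L_T - \sqrt 2 T)$, this yields the desired smallness of the tail contribution relative to the full integral.

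The main obstacle will be ensuring uniform control in $z \in \R$. For $z$ in the typical range near or above $L_T$, the bridge argument above should yield a pointwise bound $\hat N(T, z) \le \epsilon(r) U(T, z)$ with $\epsilon(r) \to 0$ as $r \to \infty$, which transfers directly to the outer ratio. For extreme values of $z$ --- particularly $z$ very negative, where the bridge naturally hugs low positions so that both $\hat N(T,z)$ and $U(T,z)$ are comparable and the pointwise ratio fails to be small --- one must instead control the outer Markov weight $\Esub{x}{e^{\Leb(\{s \le \sigma:\, B_s \ge L_{t-s}\})};\, B_\sigma \in dz}$, showing it assigns negligible mass to such $z$. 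This step would use $x \ge L_t$, the sublinear bound $|\beta_r(t)| \le \beta(t) = o(t)$ from Lemma~\ref{lem:betartbound}, and an entropic repulsion estimate for Brownian bridges. Combining the pointwise bridge estimate on the good $z$-range with a weight-control argument on its complement is the technical heart of the proof.
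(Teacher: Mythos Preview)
Your approach via the Markov property at $\sigma = t - b_r(t)$ and bridge analysis is substantially more involved than the paper's, and leaves real gaps. The paper bypasses all trajectory-level estimates by two applications of the comparison principle (Proposition~\ref{prop:fbpcomparison}). First, letting $V_0(y) = U_0(y)\1_{\{y < \beta_r(t)\}}$ and $(V, L^V)$ the corresponding solution of~\eqref{eq:FBP_CDF}, one has $L^V_s \le L_s$, so the numerator is bounded by $\Esub{x}{e^{\Leb(\{s\le t:B_s\ge L^V_{t-s}\})}V_0(B_t)} = V(t,x)$ via Lemma~\ref{lem:FKforinfinitemass}. Second, since $V_0(y) \le \1_{\{y < \beta_r(t)\}}$, comparison with the shifted Heaviside solution yields $V(t,x) \le U^H(t, x - \beta_r(t))$. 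This reduces everything to showing $U^H(t, x - \beta_r(t))/U(t,x) \to 0$ uniformly. Here $b_r(t)$ enters in a specific way: the identity $L_{b_r(t)} = \sqrt{2}b_r(t) + \beta_r(t)$ together with $L^H_s \le \sqrt{2}s - \frac{3}{2\sqrt{2}}\log s + K_1$ gives $L_{b_r(t)} - (L^H_{b_r(t)} + \beta_r(t)) \ge \frac{3}{2\sqrt{2}}\log r - K_1$, and Lemma~\ref{lem:less stretching means slower boundary} propagates this gap forward to time $t$. The stretching relations $U^H(t,\cdot) \le_s U(t,\cdot)$ and $U^H(t,\cdot) \le_s \Pi_{\min}$ then convert the boundary gap into decay of the ratio at the polynomial rate $r^{-3/(2\sqrt{2})}$.

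The gap in your argument is that the bridge heuristic for $\hat N(T,z)/U(T,z)$ is circular. Your claim that the bridge from $y \le L_T - \sqrt{2}T$ to $z$ spends a positive fraction of $[0,T]$ below $L_u$ requires knowing where $L_u$ sits relative to the chord joining $y$ and $z$, which in turn needs quantitative control of $L_u - \sqrt{2}u$ on $[0,T]$ --- precisely the information the paper extracts via $b_r(t)$ and the $\frac{3}{2\sqrt{2}}\log r$ gap. Without that input, your Markov reduction from $(t,x)$ to $(T,z)$ simply re-poses the original problem at time $T$: you are again asking whether a truncation at $L_T - \sqrt{2}T$ costs a small fraction of $U(T,z)$. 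The acknowledged difficulty with extreme $z$, where the pointwise ratio need not be small and one must instead control the outer weighted measure, would again require inputs of the same strength as those furnished by the comparison/stretching machinery. The paper's approach sidesteps all of this by comparing solutions of free boundary problems rather than analysing individual Brownian bridges.
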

\begin{proof}
The proof follows a simplified version of the strategy in the proof of~\cite[Proposition~7.6]{Bramson1983}.
	Take $0\le r \le t$.
	Let $(V(s,x),L^V_s)$ denote the solution of~\eqref{eq:FBP_CDF} with initial condition
	\[
	V_0(x)=\begin{cases}
		U_0(x) \quad &\text{for }x< \beta_r(t),\\
		0 \quad &\text{for }x\ge \beta_r(t).
	\end{cases}
	\] 
	Recall from Section~\ref{subsec:notation} that we write 
$(U^H(s,x),L^H_s)$ for the solution of~\eqref{eq:FBP_CDF} with Heaviside initial condition.
	By the comparison principle we have $V(s,y)\le U(s,y)$ $\forall s>0,$ $y\in \Rm$, and so
	$L^V_s\le L_s$ $\forall s>0$.
	Hence for any $x\in \Rm$,
	\begin{align*}
		\Esub{x}{e^{\Leb(\{s\le t:B_s\ge L_{t-s}\})}U_0(B_t)\1_{\{B_t\le \beta_r(t)\}}}
		&\le \Esub{x}{e^{\Leb(\{s\le t:B_s\ge L^V_{t-s}\})}U_0(B_t)\1_{\{B_t\le \beta_r(t)\}}}\\
		&= \Esub{x}{e^{\Leb(\{s\le t:B_s\ge L^V_{t-s}\})}V_0(B_t)}\\
		&= V(t,x)\\
		&\le U^H(t,x-\beta_r(t)),
	\end{align*}
	where the third line follows from Lemma~\ref{lem:FKforinfinitemass}, and the last inequality follows from the comparison principle.
	
	Therefore it suffices to prove that
	\begin{equation} \label{eq:UHUlimitclaim}
		\frac{U^H(t,x-\beta_r(t))}{U(t,x)}\to 0 \quad \text{as }r\to \infty
	\end{equation}
	uniformly in $t\ge r$ and $x\ge L_t$.
	
	By Theorem~\ref{theo:convergence to the minimal travelling wave for finite initial mass}, there exists $K_1<\infty$ such that for $s\ge 1$,
	\begin{equation} \label{eq:Lheateqnbd}
		L^H_s\le \sqrt{2} s-\frac 3 {2\sqrt 2}\log s+K_1.
	\end{equation}
	Now take $1\le r \le t$.
	By the definition of $b_r(t)$ in~\eqref{eq:brtdef}, we have
	\[
	L_{b_r(t)}=\sqrt{2}b_r(t)+\beta_r(t).
	\]
	Hence by~\eqref{eq:Lheateqnbd} with $s=b_r(t)\ge r\ge 1$, 
	\begin{equation} \label{eq:Lbrt}
		L_{b_r(t)}-(L^H_{b_r(t)}+\beta_r(t))\ge \frac 3 {2\sqrt 2}\log b_r(t)-K_1\ge \frac 3 {2\sqrt 2}\log r-K_1,
	\end{equation}
	i.e. if $r$ is large then at time $b_r(t)$, the free boundary $L_\cdot$ is far ahead of $L^H_{\cdot}+\beta_r(t)$.
	
	We now show that an analogue of inequality~\eqref{eq:Lbrt} holds at time $t$.
	Indeed, by the stretching lemma (Lemma~\ref{lem:extended maximum principle}),
	we have $U(b_r(t),\cdot)\ge_s U^H(b_r(t),\cdot)$, and so by Lemma~\ref{lem:less stretching means slower boundary},
	\[
	L_t-L_{b_r(t)}\ge L^H_t-L^H_{b_r(t)}.
	\]
	It follows that
	\begin{equation} \label{eq:LtLHt}
		L_t-(L^H_t+\beta_r(t))\ge L_{b_r(t)}-(L^H_{b_r(t)}+\beta_r(t))\ge \frac 3 {2\sqrt 2}\log r-K_1,
	\end{equation}
	where the second inequality follows from~\eqref{eq:Lbrt}.
	
	We can now establish~\eqref{eq:UHUlimitclaim} and complete the proof.
	Suppose $r\ge 1$ is sufficiently large that $\frac 3 {2\sqrt 2}\log r-K_1>0$.
	Take $t\ge r$ and $x\ge L_t$, and let $y=x-L_t\ge 0$.
	Then we can write
	\begin{equation} \label{eq:twofrac}
		\frac{U^H(t,x-\beta_r(t))}{U(t,x)}
		=\frac{U^H(t,y+L_t-\beta_r(t))}{U^H(t,y+L^H_t)}\cdot \frac{U^H(t,y+L^H_t)}{U(t,y+L_t)}.
	\end{equation}
	Now note that by Lemma~\ref{lem:extended maximum principle} we have $U^H(t,\cdot)\le_s U(t,\cdot)$, and so by Lemma~\ref{lem:stretchdecr} and since $y\ge 0$ we have
	\begin{equation} \label{eq:UHUy}
	U^H(t,y+L^H_t)\le U(t,y+L_t).
	\end{equation}
It remains to bound the first fraction on the right-hand side of~\eqref{eq:twofrac}.
By Lemma~\ref{lem:extended maximum principle} we have $U^H(t,\cdot)\le_s \Pi_{\min}$.
Therefore, setting $c_y:= U^H(t,y+L^H_t)\le 1$, by Lemma~\ref{lem:stretchdecr} we have	
\[
U^H(t,y+L^H_t+z)\le \Pi_{\min}(z+\Pi^{-1}_{\min}(c_y)) \; \forall z\ge 0.
\]
Since $L_t-(L^H_t+\beta_r(t))>0$ by~\eqref{eq:LtLHt} and our choice of $r$, it follows that
\begin{equation} \label{eq:Ufirstfrac}
\frac{U^H(t,y+L_t-\beta_r(t))}{U^H(t,y+L^H_t)}\le \frac{\Pi_{\min}(L_t-(L^H_t+\beta_r(t))+\Pi_{\min}^{-1}(c_y))}{\Pi_{\min}(\Pi_{\min}^{-1}(c_y))}.
\end{equation}
For $z_1,z_2\ge 0$, we have
\begin{equation} \label{eq:Pitail}
\frac{\Pi_{\min}(z_1+z_2)}{\Pi_{\min}(z_1)}=\frac{\sqrt{2}(z_1+z_2)+1}{\sqrt{2}z_1+1}e^{-\sqrt 2 z_2}\le (1+\sqrt 2 z_2)e^{-\sqrt 2 z_2}\le K_2 e^{-z_2}
\end{equation}
for some constant $K_2<\infty$. Combining~\eqref{eq:twofrac},~\eqref{eq:UHUy},~\eqref{eq:Ufirstfrac} and~\eqref{eq:Pitail}
with $z_1=\Pi_{\min}^{-1}(c_y)$ and $z_2=L_t-(L^H_t+\beta_r(t))$, and then using~\eqref{eq:LtLHt}, it follows that
\[
\frac{U^H(t,x-\beta_r(t))}{U(t,x)}\le K_2 e^{-(L_t-(L^H_t+\beta_r(t)))}
\le K_2 e^{-\frac 3 {2\sqrt 2}\log r+K_1}.
\]
The claim~\eqref{eq:UHUlimitclaim} follows, which completes the proof.
\end{proof}

\subsection{Trajectories going below $\underline{\mathcal M}_{r,t}(t-\cdot)$ make a small contribution} \label{subsec:inf_belowM}

Suppose $U_0$ satisfies Assumption~\ref{assum:standing assumption ic}, and let $(U(t,x),L_t)$ denote the solution of~\eqref{eq:FBP_CDF}. 
Recall the definition of $\beta_r(t)$ in~\eqref{eq:betadef}.
The following definitions are analogues of definitions in~\cite{Bramson1983}.
For $t>0$ and $r,s\in [0,t]$, let
\begin{equation} \label{eq:lrt_defn}
	\ell_{r,t}(s)=\frac s t L_t +\frac{t-s}t \beta_r(t).
\end{equation}
For $t> 0$ and $r,s\in [0,t]$, let
\begin{equation} \label{eq:Krtdefn}
G_{r,t}(s)=L_s-\frac st L_t-\frac{t-s}t \beta_r(t)=L_s-\ell_{r,t}(s).
\end{equation}
For $t\ge 4$ and $r\in [2,t/2]$,
define $\theta_{r,t}$ as follows: for a function $G:[0,t]\to \Rm$,
let 
\[
\theta_{r,t}\circ G(s)=
\begin{cases}
	G(s+s^{1/3})+4 s^{1/3} \quad &\text{for }s\in [r,t/2],\\
	G(s+(t-s)^{1/3})+4 (t-s)^{1/3} \quad &\text{for } s\in [t/2, t-2r],\\
	G(s)\quad &\text{for }s\in [0,r)\cup (t-2r,t].
\end{cases}
\]
Then for $G:[0,t]\to \Rm$, let 
\begin{equation} \label{eq:theta-defn}
\theta^{-1}_{r,t}\circ G=\inf\{\ell:[0,t]\to \Rm \; \text{ s.t. }\theta_{r,t}\circ \ell \ge G\}.
\end{equation}
The right-hand side of~\eqref{eq:theta-defn} is the pointwise infimum of the set of functions $\ell$ that satisfy $\theta_{r,t}\circ \ell \ge G$ pointwise.
The following more explicit expression for $\theta_{r,t}^{-1}\circ G$ will be useful later.
Define $h_{t}:[2,t]\to \Rm$ such that
\begin{equation}\label{eq:htdefn}
h_{t}(s+s^{1/3})=s \text{ for }s\in [2,t/2] \quad  \text{ and } \quad h_{t}(s+(t-s)^{1/3})=s \text{ for }s\in [t/2,t-2].
\end{equation}
Then for $G:[0,t]\to \Rm$,
\begin{equation} \label{eq:theta-expr}
\theta_{r,t}^{-1}\circ G(s)=
\begin{cases}
	-\infty \quad &\text{for }s\in [r,r+r^{1/3}),\\
	G(h_{t}(s))-4(h_{t}(s))^{1/3} \quad &\text{for } s\in [r+r^{1/3},t/2+(t/2)^{1/3}],\\
	G(h_{t}(s))-4(t-h_{t}(s))^{1/3} \quad &\text{for } s\in [t/2+(t/2)^{1/3},t-2r],\\
	G(s)\vee (G(h_{t}(s))-4(t-h_{t}(s))^{1/3}) \quad &\text{for }s\in (t-2r,t-2r+(2r)^{1/3}],\\
	G(s) \quad &\text{for }s\in [0,r)\cup (t-2r+(2r)^{1/3} ,t].
\end{cases}
\end{equation}
We can now define the curve $\underline{\mathcal M}_{r,t}$ mentioned in the proof overview at the start of Section~\ref{sec:infiniteinitialmass}.
For $t\ge 4$, $r\in [2,t/2]$ and $s\in [0,t]$, let 
\begin{equation} \label{eq:Munderdefn}
	\underline{\mathcal M}_{r,t}(s)=\theta^{-1}_{r,t}\circ G_{r,t}(s)+\frac st L_t +\frac{t-s}t \beta_r(t)=\theta^{-1}_{r,t}\circ G_{r,t}(s)+\ell_{r,t}(s).
\end{equation}
We will see that the curve $\underline{\mathcal M}_{r,t}$ is chosen in such a way that if a Brownian bridge $\xi(\cdot)$ hits $\underline{\mathcal M}_{r,t}(t-\cdot)$ at some time $s$, then (roughly speaking) it is likely that $\xi(\cdot)$ stays below $L_{t-\cdot}$ for a time interval of length $\frac 12 (s\wedge (t-s))^{1/3}$. 

Recall from Lemma~\ref{lem:FKforinfinitemass} that for $t>0$ and $x\in \R$, we can write
\begin{equation} \label{eq:UtxBBformula}
U(t,x)=\int_{-\infty}^\infty U_0(y) \frac{e^{-\frac 1 {2t}(x-y)^2}}{\sqrt{2\pi t}}\Es{e^{\Leb(\{s\in [0,t]:\xi^t_{x,y}(s)\ge L_{t-s}\})}}dy,
\end{equation}
where, as in Section~\ref{subsec:notation}, we let $(\xi^t_{x,y}(s),0\le s \le t)$ denote a Brownian bridge from $x$ at time 0 to $y$ at time $t$.
In this subsection, we will show (in Proposition~\ref{prop:Gxy} below) that for large $r$,
for $t$ sufficiently large and $x\ge L_t$, $y\ge \beta_r(t)$ not too large,
the contribution to~\eqref{eq:UtxBBformula} from trajectories $(\xi^t_{x,y}(s),0\le s \le t)$ with $\xi^t_{x,y}(s)\le \underline{\mathcal M}_{r,t}(t-s)$ for some $s\in [3r,t-3r]$ is small.

Moreover, we will see in Lemma~\ref{lem:probaboveinterval} below in Section~\ref{subsec:inf_asympt} that by an entropic repulsion result of Bramson~\cite{Bramson1983}, for large $r$, the probability that $(\xi^t_{x,y}(s),0\le s \le t)$ stays above $\underline{\mathcal M}_{r,t}(t-s)$ for all $s\in [3r,t-3r]$ is close to the probability of staying above $L_{t-s}$ on the whole time interval $[0,t]$.

In summary, the heuristic to have in mind is that $\underline{\mathcal M}_{r,t}$ is chosen to be far enough below $L$ that trajectories going below $\underline{\mathcal M}_{r,t}(t-\cdot)$ make a small contribution to $U(t,x)$, but close enough to $L$ that the probability of staying above $\underline{\mathcal M}_{r,t}(t-\cdot)$ is close to the probability of staying above $L_{t-\cdot}$.

We begin by proving two simple deterministic lemmas concerning $\ell_{r,t}$ and $\underline{\mathcal M}_{r,t}$.
First, we show that the line $\ell_{r,t}$ does not get too far below the free boundary $L$; this result corresponds to~\cite[Lemma 9.1]{Bramson1983}.
\begin{lem} \label{lem:lrtLs}
	There exists $C_2<\infty$ such that the following holds.
	Suppose $U_0$ satisfies Assumption~\ref{assum:standing assumption ic}, and let $(U(t,x),L_t)$ denote the solution of~\eqref{eq:FBP_CDF}.
	For $t>0$ and $r\in [0,t]$, defining $\ell_{r,t}$ as in~\eqref{eq:lrt_defn},
	\[
	\ell_{r,t}(s)-L_s \ge -\frac 3{2\sqrt 2} \log ((s\wedge (t-s))+1)-C_2 \quad \forall s\in  [r,t].
	\]
\end{lem}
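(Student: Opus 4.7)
Set $f(u):=L_u-\sqrt 2 u$ for $u>0$. The definition~\eqref{eq:betadef} reads $\beta_r(t)=\sup_{u\in[r,t]}f(u)$, so in particular $f(s)\leq\beta_r(t)$ and $f(t)\leq\beta_r(t)$ for every $s\in[r,t]$. Substituting~\eqref{eq:lrt_defn} and cancelling the $\sqrt 2 s$ terms yields
\[
L_s-\ell_{r,t}(s)\;=\;f(s)-\tfrac{s}{t}f(t)-(1-\tfrac{s}{t})\beta_r(t),
\]
so the lemma reduces to showing that this right-hand side is at most $\tfrac{3}{2\sqrt 2}\log((s\wedge(t-s))+1)+C_2$.

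\textbf{Universal growth bound.} The key preliminary is
\[
f(u)-f(v)\;\geq\;-\tfrac{3}{2\sqrt 2}\log((u-v)+1)-C_0\qquad \forall\,0<v\leq u,
\]
with $C_0$ the constant of Lemma~\ref{lem:LHtlower}. For any $s>0$, $U(s,\cdot):\R\to[0,1]$ is non-increasing and equals $1$ on $(-\infty,L_s]$ (Proposition~\ref{prop:fbpsoln}), so a short case check of~\eqref{eq:defin 1 of being more stretched} gives $U(s,\cdot)\geq_s H$: the premise $U(s,x_1)>H(x_1+c)$ forces $H(x_1+c)=0$ (else $U(s,x_1)>1$, impossible), after which $H(x_2+c)=0$ yields the conclusion. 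Lemma~\ref{lem:less stretching means slower boundary} applied with initial conditions $U(s,\cdot)$ and $H$ then gives $L_{s+h}-L_s\geq L^H_h$ for $h\geq 0$, and plugging in Lemma~\ref{lem:LHtlower} produces the claim.

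\textbf{Two-case estimate.} Split at $s=t/2$. For $s\in[r,t/2]$, use $f(s)\leq\beta_r(t)$ to bound
\[
f(s)-\tfrac{s}{t}f(t)-(1-\tfrac{s}{t})\beta_r(t)\;\leq\;\tfrac{s}{t}\bigl(\beta_r(t)-f(t)\bigr);
\]
the growth bound controls $\beta_r(t)-f(t)=\sup_{u\in[r,t]}(f(u)-f(t))\leq \tfrac{3}{2\sqrt 2}\log(t+1)+C_0$, and the elementary convex inequality $\tfrac{s}{t}\log(t+1)\leq\log(s+1)$ on $[0,t]$ (the function $g(s)=\tfrac{s}{t}\log(t+1)-\log(s+1)$ is convex with $g(0)=g(t)=0$) delivers the bound $\tfrac{3}{2\sqrt 2}\log(s+1)+C_0$. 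For $s\in[t/2,t]$, apply the growth bound directly at $(v,u)=(s,t)$ to get $f(s)\leq f(t)+\tfrac{3}{2\sqrt 2}\log((t-s)+1)+C_0$, whence
\[
f(s)-\tfrac{s}{t}f(t)-(1-\tfrac{s}{t})\beta_r(t)\;\leq\;(1-\tfrac{s}{t})\bigl(f(t)-\beta_r(t)\bigr)+\tfrac{3}{2\sqrt 2}\log((t-s)+1)+C_0,
\]
and $f(t)\leq\beta_r(t)$ kills the first summand. Both cases produce the desired inequality with $C_2=C_0$. The only genuinely subtle step is the universal growth bound in the second paragraph: no decay assumption on $U_0$ is available here, so one cannot appeal to the finite-mass asymptotics of Theorem~\ref{theo:convergence to the minimal travelling wave for finite initial mass}; the observation that $U(s,\cdot)\geq_s H$ for every $s>0$ (regardless of $U_0$) is precisely what transports Lemma~\ref{lem:LHtlower} to arbitrary solutions.
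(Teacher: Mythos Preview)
Your proof is correct and follows essentially the same approach as the paper's: both rest on the universal increment bound $L_u-L_v\ge L^H_{u-v}$ (for $v>0$) via Lemma~\ref{lem:less stretching means slower boundary} and Lemma~\ref{lem:LHtlower}, then split into a ``small $s$'' estimate (using $f(s)\le\beta_r(t)$ and concavity of $\log$) and a ``large $s$'' estimate (using $f(t)\le\beta_r(t)$ and the increment bound on $[s,t]$). Your recasting in terms of $f(u)=L_u-\sqrt2 u$ and the explicit split at $t/2$ are cosmetic; the paper instead proves both inequalities on all of $[r,t]$ and takes their minimum.
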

\begin{proof}
	The proof follows the same argument as the proof of~\cite[Lemma 9.1]{Bramson1983}.
	By the definitions of $\beta_r(t)$ and $b_r(t)$ in~\eqref{eq:betadef} and~\eqref{eq:brtdef}, we have
	$L_s\le \beta_r(t)+\sqrt{2}s$ $\forall s\in [r,t]$ and
	$\beta_r(t)=L_{b_r(t)}-\sqrt{2}b_r(t)$. Therefore, by~\eqref{eq:lrt_defn}, for $s\in [r,t]$,
	\begin{equation} \label{eq:lrts-Ls}
		\ell_{r,t}(s)-L_s\ge \frac st L_t-\frac st \beta_r(t)-\sqrt{2}s=
		\frac st \left((L_t-L_{b_r(t)})-\sqrt{2}(t-b_r(t))\right).
	\end{equation}
	Recall from Section~\ref{subsec:notation} that we let $(U^H(s,x),L_s^H)$ denote the solution of~\eqref{eq:FBP_CDF} with Heaviside initial condition.
	By Lemma~\ref{lem:LHtlower},
	we have
	\begin{equation} \label{eq:LHlower}
		L^H_s\ge \sqrt{2}s-\frac{3}{2\sqrt{2}}\log (s+1)-C_0 \quad \forall s\ge 0.
	\end{equation}
	For $s\in [0,t]$, by Lemma~\ref{lem:less stretching means slower boundary}
	 and then by~\eqref{eq:LHlower}, 
	\begin{equation} \label{eq:LtLslower}
		L_t-L_s\ge L^H_{t-s}\ge \sqrt{2}(t-s)-\frac{3}{2\sqrt{2}}\log (t-s+1)-C_0.
	\end{equation}
	Substituting this into~\eqref{eq:lrts-Ls}, for $s\in [r,t]$ we have
	\begin{equation} \label{eq:ellLlower1}
		\ell_{r,t}(s)-L_s\ge -\frac{3}{2\sqrt{2}}\frac st \log (t-b_r(t)+1)-C_0\ge -\frac{3}{2\sqrt{2}}\frac st \log (t+1)-C_0\ge -\frac{3}{2\sqrt{2}}\log (s+1)-C_0,
	\end{equation}
	where in the last inequality we used that $\log$ is concave. 
	
	Now note that  for $s\in [r,t]$,
	by~\eqref{eq:lrt_defn} and then by~\eqref{eq:LtLslower} and since $\beta_r(t)-L_t\ge -\sqrt{2}t$ by~\eqref{eq:betadef},
	\[
		\ell_{r,t}(s)-L_s=\frac{t-s}t(\beta_r(t)-L_t)+L_t-L_s
		\ge -\frac{3}{2\sqrt{2}}\log (t-s+1)-C_0.
	\]
	Combining this with~\eqref{eq:ellLlower1} completes the proof.
\end{proof}
The following bounds on $\underline{\mathcal M}_{r,t}$ will be used in the proofs later in this subsection.
For large $r$, for $s$ not too close to $0$ or $t$, we show that $\underline{\mathcal M}_{r,t}(s)$ is below $\ell_{r,t}(s)$ and $L_s$, and (roughly speaking) we show that $\underline{\mathcal M}_{r,t}(s)$ is at least distance $(s\wedge (t-s))^{1/3}+\mathcal O(1)$ below $L_{s'}$ if $|s-s'|\le \frac 1{50} (s\wedge (t-s))^{1/3}+\mathcal O(1)$. We also give a lower bound on $\underline{\mathcal M}_{r,t}(s)$ that will be used in the proof of Lemma~\ref{lem:Bt_ends_big} in Section~\ref{subsec:inf_asympt}.
\begin{lem} \label{lem:Mrtestimates}
Suppose $U_0$ satisfies Assumption~\ref{assum:standing assumption ic} and~\eqref{eq:assump_infmass1}, and let $(U(t,x),L_t)$ denote the solution of~\eqref{eq:FBP_CDF}.
For $r\ge 2$ sufficiently large and $t\ge 6r$,	defining $\underline{\mathcal M}_{r,t}$ as in~\eqref{eq:Munderdefn} and $\ell_{r,t}$ as in~\eqref{eq:lrt_defn},
\begin{align} 
\underline{\mathcal M}_{r,t}(t-s) &\le \ell_{r,t}(t-s) \quad \forall s\in [2r,t-r] \label{eq:Mrtest1a} \\
\text{and }\quad \underline{\mathcal M}_{r,t}(t-s) &< L_{t-s} \qquad \quad \forall s\in [3r,t-3r]. \label{eq:Mrtest1b} 
\end{align}
Moreover, for $j\in [\lfloor 3r \rfloor ,t/2]$ and $s^*\in [j,j+1]$,
\begin{equation} \label{eq:Mrtest2}
\underline{\mathcal M}_{r,t}(t-s^*)+j^{1/3}<L_{t-s} \quad \forall s\in [j-\tfrac 1{50} j^{1/3},j],
\end{equation}
and for $s^*\in [t-j-1,t-j]$,
\begin{equation} \label{eq:Mrtest3}
\underline{\mathcal M}_{r,t}(t-s^*)+j^{1/3}<L_{t-s} \quad \forall s\in [t-j,t-j+\tfrac 1{50} j^{1/3}].
\end{equation}
For $r\ge 2$ and $\varepsilon>0$, there exists $R>0$ such that for $t$ sufficiently large, defining $\underline{\mathcal M}_{r,t}$ as in~\eqref{eq:Munderdefn},
\begin{equation} \label{eq:Mrtest4}
\underline{\mathcal M}_{r,t}(s)\ge L_{s-s^{1/3}-1} -\tfrac 12 s^{1/3+\varepsilon} \quad \forall s\in [R,t-R].
\end{equation}
\end{lem}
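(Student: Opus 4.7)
All four estimates are deterministic consequences of the explicit formula~\eqref{eq:theta-expr} for $\theta_{r,t}^{-1}\circ G_{r,t}$, combined with three tools. Tool \textbf{(i)}: the upper bound $G_{r,t}(s)\le \tfrac{3}{2\sqrt 2}\log((s\wedge(t-s))+1)+C_2$ from Lemma~\ref{lem:lrtLs}. Tool \textbf{(ii)}: the increment bound
\[
L_u-L_v\;\ge\;L^H_{u-v}\;\ge\;\sqrt 2(u-v)-\tfrac{3}{2\sqrt 2}\log(u-v+1)-C_0\qquad\text{for }0<v\le u,
\]
obtained from Lemma~\ref{lem:less stretching means slower boundary} applied to $U(v,\cdot)\ge_s H$ (which holds trivially from the definition of $\ge_s$ since $H$ only takes the values $0$ and $1$, and is allowed since $L_v>-\infty$ for $v>0$) together with Lemma~\ref{lem:LHtlower}. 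Tool \textbf{(iii)}: the slope bound $0\le (L_t-\beta_r(t))/t\le\sqrt 2$, whose upper inequality comes from taking $s=t$ in~\eqref{eq:betadef} (forcing $\beta_r(t)\ge L_t-\sqrt 2 t$), and whose lower inequality follows for $t$ large (depending on $r$) from Lemmas~\ref{lem:Lttsqrt2} and~\ref{lem:betartbound}. In particular $\ell_{r,t}$ is non-decreasing with Lipschitz constant at most $\sqrt 2$. Notice also that on the interior of its domain the formula~\eqref{eq:theta-expr} reads uniformly as $\theta_{r,t}^{-1}\circ G_{r,t}(s)=G_{r,t}(h_t(s))-4(h_t(s)\wedge(t-h_t(s)))^{1/3}$.

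For~\eqref{eq:Mrtest1a}, with $u=t-s\in[r,t-2r]$, the goal reduces to $\theta_{r,t}^{-1}\circ G_{r,t}(u)\le 0$. On $[r,r+r^{1/3})$ the value is $-\infty$; on the interior intervals, tool (i) gives
\[
\theta_{r,t}^{-1}\circ G_{r,t}(u)\;\le\;\tfrac{3}{2\sqrt 2}\log\!\bigl(h_t(u)\wedge(t-h_t(u))+1\bigr)+C_2-4\bigl(h_t(u)\wedge(t-h_t(u))\bigr)^{1/3},
\]
which is $\le 0$ once $r$ is large, since $h_t(u)\wedge(t-h_t(u))\ge r$ on those intervals. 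For~\eqref{eq:Mrtest1b} with $u\in[3r,t-3r]$, after cancelling $\ell_{r,t}(u)$ the claim becomes $L_u-L_{h_t(u)}>[\ell_{r,t}(u)-\ell_{r,t}(h_t(u))]-4(h_t(u)\wedge(t-h_t(u)))^{1/3}$. Tool (ii) gives $L_u-L_{h_t(u)}\ge \sqrt 2(u-h_t(u))-O(\log(u-h_t(u)+1))$, while tool (iii) gives $\ell_{r,t}(u)-\ell_{r,t}(h_t(u))\le \sqrt 2(u-h_t(u))$; the $\sqrt 2$-linear terms cancel exactly, leaving a positive residue of the form $4(h_t(u)\wedge(t-h_t(u)))^{1/3}-O(\log(u-h_t(u)+1))-C_0$, positive for $r$ large.

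For~\eqref{eq:Mrtest2} (the proof of~\eqref{eq:Mrtest3} is symmetric by time-reversal), set $u^*=t-s^*$, $u=t-s$ and $v=h_t(u^*)$; the identity $u^*-v\in\{v^{1/3},(t-v)^{1/3}\}$ coming from the definition~\eqref{eq:htdefn} of $h_t$, together with $j\le t/2$, gives after an elementary computation $(h_t(u^*)\wedge(t-h_t(u^*)))^{1/3}\ge\tfrac 34 j^{1/3}$ and $u-v\le 3 j^{1/3}$ for $j$ large. Decomposing
\[
L_u-\underline{\mathcal M}_{r,t}(u^*)\;=\;(L_u-L_v)+[\ell_{r,t}(v)-\ell_{r,t}(u^*)]+4\bigl(h_t(u^*)\wedge(t-h_t(u^*))\bigr)^{1/3},
\]
tool (ii) bounds the first bracket below by $\sqrt 2(u-v)-O(\log j)-C_0$, tool (iii) bounds the second below by $-\sqrt 2(u^*-v)$, and the identity $u-v=(u-u^*)+(u^*-v)$ makes the $\sqrt 2$-linear parts telescope to $\sqrt 2(u-u^*)\ge 0$. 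The result is $L_u-\underline{\mathcal M}_{r,t}(u^*)\ge 3j^{1/3}-O(\log j)-C_0>j^{1/3}$ for $j\ge\lfloor 3r\rfloor$ with $r$ large. For~\eqref{eq:Mrtest4}, writing $\underline{\mathcal M}_{r,t}(s)=L_{h_t(s)}-4(h_t(s)\wedge(t-h_t(s)))^{1/3}+[\ell_{r,t}(s)-\ell_{r,t}(h_t(s))]$, the bound $h_t(s)\ge s-s^{1/3}$ combined with the monotonicity of $L$ on $[t_0,\infty)$ from Lemma~\ref{lem:boundary locally Lipschitz from the left} gives $L_{h_t(s)}\ge L_{s-s^{1/3}-1}$; tool (iii) yields $\ell_{r,t}(s)-\ell_{r,t}(h_t(s))\ge 0$; and an elementary check shows $(h_t(s)\wedge(t-h_t(s)))^{1/3}\le 2 s^{1/3}$ on $[R,t-R]$. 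Hence $\underline{\mathcal M}_{r,t}(s)\ge L_{s-s^{1/3}-1}-8s^{1/3}\ge L_{s-s^{1/3}-1}-\tfrac12 s^{1/3+\varepsilon}$ once $R$ (and then $t$) is large enough in terms of $\varepsilon$.

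The key technical obstacle, and the reason for the specific $(\cdot)^{1/3}$ exponent hardwired into the definition of $\theta_{r,t}$, is~\eqref{eq:Mrtest2}: the $\sqrt 2$-linear pieces in $L_u-L_v$ and $\ell_{r,t}(v)-\ell_{r,t}(u^*)$ cancel \emph{exactly}, so the entire remaining margin must be extracted from the $4(\cdot)^{1/3}$ correction, of which $j^{1/3}$ is immediately consumed by the additive $+j^{1/3}$ on the right-hand side of the claim. Verifying that the logarithmic corrections from tools (i) and (ii), the absolute constants $C_0,C_2$, and the $o(1)$ slack in tool (iii) together do not deplete the residual $3j^{1/3}$ budget is where the hypotheses $r$ large and $t\ge 6r$ (and, for tool (iii) in Part~\eqref{eq:Mrtest4}, $t$ sufficiently large depending on $r$) are used simultaneously.
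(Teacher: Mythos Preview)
Your argument is correct and the overall structure is sound, though it takes a genuinely different route from the paper. The paper first proves a sharp estimate on $h_t$, namely $|h_t(u)-(u-u_\wedge^{1/3})|\le r^{-1/3}$, and combines this with the crude slope bound $|(L_t-\beta_r(t))/t|\le 2$ to obtain a single sandwich inequality
\[
\inf_{u'\in[s-s_\wedge^{1/3}-1,\,s-s_\wedge^{1/3}+1]}L_{u'}-6r^{-1/3}-6s_\wedge^{1/3}\;\le\;\underline{\mathcal M}_{r,t}(s)\;\le\;\sup_{u'\in[\cdots]}L_{u'}+6r^{-1/3}-2s_\wedge^{1/3},
\]
from which all of \eqref{eq:Mrtest1b}--\eqref{eq:Mrtest4} follow using only the eventual monotonicity of $L$ (Lemma~\ref{lem:boundary locally Lipschitz from the left}). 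In particular the paper never needs your tool~(ii). Your approach instead exploits the \emph{sharp} slope bound $(L_t-\beta_r(t))/t\le\sqrt 2$ so that the $\sqrt 2$-linear contribution from $\ell_{r,t}(u^*)-\ell_{r,t}(v)$ is cancelled exactly by the $\sqrt 2$-linear part of the increment bound $L_u-L_v\ge\sqrt 2(u-v)-O(\log)$; this is elegant and makes the role of the $4(\cdot)^{1/3}$ cushion very transparent, at the cost of invoking a stronger comparison input. Either approach works with room to spare.

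One small point: your ``symmetric by time-reversal'' for \eqref{eq:Mrtest3} is not literally true. In \eqref{eq:Mrtest2} you have $u\ge u^*$ and hence $\sqrt 2(u-u^*)\ge 0$, while in \eqref{eq:Mrtest3} (where $u^*\in[j,j+1]$ and $u\in[j-\tfrac1{50}j^{1/3},j]$) the sign flips and one only gets $\sqrt 2(u-u^*)\ge-\sqrt 2(1+\tfrac1{50}j^{1/3})$. This costs at most $\tfrac{\sqrt 2}{50}j^{1/3}+O(1)$ against your $3j^{1/3}$ margin, so the conclusion is unaffected, but the two cases do require separate (parallel) verifications rather than a formal symmetry.
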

\begin{proof}
For $s\in [0,t]$, write $s_{\wedge}:=s\wedge (t-s)$.
We begin by proving~\eqref{eq:Mrtest1a}.
By the definition of $\underline{\mathcal M}_{r,t}$ in~\eqref{eq:Munderdefn}, and using~\eqref{eq:theta-expr} and~\eqref{eq:htdefn}, and then by~\eqref{eq:Krtdefn}, for $s\in [r,t-2r]$ with $s+s_\wedge^{1/3}\le t-2r$,
	\[
	\underline{\mathcal M}_{r,t}(s+s_\wedge^{1/3} )-\ell_{r,t}(s+s_\wedge^{1/3})=G_{r,t}(s)-4 s_\wedge^{1/3}=L_s-\ell_{r,t}(s)-4 s_\wedge^{1/3},
	\]
	and for $s\in [r,r+r^{1/3})$, $\underline{\mathcal M}_{r,t}(s)=-\infty$.
	For $s'\in [2r,t-r]$, we have either $t-s'\in [r,r+r^{1/3})$ or there exists $s\in [r,t-2r]$ such that $s+s^{1/3}_\wedge=t-s'$.
	It follows from Lemma~\ref{lem:lrtLs} that if $r$ is sufficiently large then~\eqref{eq:Mrtest1a} holds.
	
We now prove the remaining bounds~\eqref{eq:Mrtest1b}-\eqref{eq:Mrtest4}.
By~\eqref{eq:Munderdefn} and~\eqref{eq:theta-expr},
	for $s\in [r+r^{1/3},t-2r]$,
	\begin{equation} \label{eq:Mhrtexpr}
	\underline{\mathcal M}_{r,t}(s)
	=L_{h_{t}(s)}-\ell_{r,t}(h_{t}(s))+\ell_{r,t}(s)-4 h_{t}(s)_\wedge^{1/3}.
	\end{equation}
	Recall the definition of $h_{t}$ before~\eqref{eq:theta-expr}.
We now claim that
\begin{equation} \label{eq:hrtclaim}
|h_{t}(u)-(u-u_\wedge ^{1/3})|=|h_{t}(u)_{\wedge}^{1/3}-u_{\wedge}^{1/3}|\le r^{-1/3} \quad \forall u\in [r+r^{1/3},t-r].
\end{equation}
Indeed, 
for $u\in [r+r^{1/3},t-r]$, 
we have $u=s+s_\wedge^{1/3}$ for some $s\in [r,t-r]$, and so
by the definition of $h_{t}$ in~\eqref{eq:htdefn} we have $h_t(u)=s$ and therefore $h_{t}(u)+h_{t}(u)_{\wedge}^{1/3}=u$.
Hence, using in the first inequality below that $|a^{1/3}-b^{1/3}|\le b^{-2/3}|a-b|$ $\forall a,b\ge 0$, and in the second inequality that $|a_\wedge-b_\wedge|\le |a-b|$ $\forall a,b\in [0,t]$,
\begin{align*} 
|h_{t}(u)-(u-u_{\wedge}^{1/3})|
=|u-h_{t}(u)_{\wedge}^{1/3}-u+u_{\wedge}^{1/3}|
&=|h_{t}(u)_{\wedge}^{1/3}-u_{\wedge}^{1/3}| \notag \\
&\le h_{t}(u)_{\wedge}^{-2/3}|h_{t}(u)_{\wedge}-u_{\wedge}| \notag \\
&\le h_{t}(u)_{\wedge}^{-2/3}|h_{t}(u)-u| \notag \\
&=h_{t}(u)_{\wedge}^{-2/3}\cdot h_{t}(u)_{\wedge}^{1/3},
\end{align*}
and the claim~\eqref{eq:hrtclaim} follows since for any $u\in [r+r^{1/3},t-r]$ we have $h_{t}(u)\in [r,t-r]$.
	
	Now suppose, using Lemma~\ref{lem:Lttsqrt2} and Lemma~\ref{lem:betartbound}, that $t$ is sufficiently large that for any $r\in [2,t]$ we have $|L_t|+|\beta_r(t)|\le 2t$.
	To bound the right-hand side of~\eqref{eq:Mhrtexpr}, for $s\in [r+r^{1/3},t-2r]$, using~\eqref{eq:lrt_defn} we write
	\begin{align*} 
	-\ell_{r,t}(h_{t}(s))+\ell_{r,t}(s)-4 h_{t}(s)_\wedge^{1/3} 
	&=(s-h_{t}(s))t^{-1}(L_t-\beta_r(t))-4 h_{t}(s)_\wedge^{1/3} \notag \\
	&\le 2|s-h_{t}(s)|-4 h_{t}(s)_\wedge^{1/3} \notag \\
	&\le 6r^{-1/3}-2 s_\wedge^{1/3},
	\end{align*}
	where the last inequality follows from~\eqref{eq:hrtclaim} applied to each term.
	Similarly, we can write
	\[
	-\ell_{r,t}(h_{t}(s))+\ell_{r,t}(s)-4 h_{t}(s)_\wedge^{1/3} 
	\ge - 2|s-h_{t}(s)|-4 h_{t}(s)_\wedge^{1/3} 
	\ge -6r^{-1/3}-6 s_\wedge^{1/3},
	\]
	again by applying~\eqref{eq:hrtclaim} to each term for the second inequality.
	Hence by~\eqref{eq:Mhrtexpr} and~\eqref{eq:hrtclaim}, we have that for $t$ sufficiently large,
	for $s\in [r+r^{1/3},t-2r]$,
	\begin{equation} \label{eq:Mboundfromclaim}
	\inf_{u\in [s-s_\wedge^{1/3}-1,s-s_\wedge^{1/3}+1]}L_u-6r^{-1/3}-6 s_\wedge^{1/3} \le \underline{\mathcal M}_{r,t}(s)
	\le \sup_{u\in [s-s_\wedge^{1/3}-1,s-s_\wedge^{1/3}+1]}L_u+6r^{-1/3}-2 s_\wedge^{1/3}.
	\end{equation}
	By Lemma~\ref{lem:boundary locally Lipschitz from the left}, there exists $t_0>0$ such that $u\mapsto L_u$ is non-decreasing on $[t_0,\infty)$.
	Therefore,~\eqref{eq:Mrtest1b},~\eqref{eq:Mrtest2} and~\eqref{eq:Mrtest3} 
	follow directly from the second inequality in~\eqref{eq:Mboundfromclaim} by taking $r$ sufficiently large.
	Moreover,~\eqref{eq:Mrtest4} follows directly from the first inequality in~\eqref{eq:Mboundfromclaim} by taking $R$ sufficiently large.
\end{proof}
Recall from Section~\ref{subsec:notation} that for $x\in \R$, we write $\mathbb P_x$ for the probability measure under which $(B_t)_{t\ge 0}$ is a Brownian motion started at $x$, and for $t\ge 0$ and  $x,y\in \Rm$, we write $(\xi^t_{x,y}(s),0\le s \le t)$ for a Brownian bridge from $x$ at time 0 to $y$ at time $t$.
We now state some standard properties of the Brownian bridge $\xi^t_{x,y}$, which we will use in the proofs later in this section.
\begin{lem} \label{lem:BBfacts}
For $t>0$,
\begin{enumerate}[(a)]
\item $(\xi^t_{0,0}(s), \, 0\le s \le t)$ under $\mathbb P$ has the same law as $(\frac{t-s}tB_{st/(t-s)} , 0\le s \le t)$ under $\mathbb P_0$.
\item for $x,y>0$, 
\[
\p{\xi^t_{x,y}(s)>0 \; \forall s\in [0,t]}=1 -e^{-2xy/t}.
\]
\item for $x>0$ and $s_0\in (0,t)$,
\[
\p{\xi^t_{0,0}(s)>-x \; \forall s\in [0,s_0]}>1-2s_0^{1/2}x^{-1}e^{-x^2/(2s_0)}.
\]
\item for $\ell_1,\ell_2, \ell:[0,t]\to \R\cup \{-\infty\}$ bounded above with $\p{\xi^t_{x,y}(s)>\ell_2(s) \; \forall s\in [0,t]}>0$ and $\ell_1(s)\le \ell_2(s)$ $\forall s\in [0,t]$,
\begin{align*}
&\p{\left. \xi^t_{x,y}(s)>\ell(s) \; \forall s\in [0,t] \right| \xi^t_{x,y}(s)>\ell_1(s) \; \forall s\in [0,t]}\\
&\quad \le 
\p{\left. \xi^t_{x,y}(s)>\ell(s) \; \forall s\in [0,t] \right| \xi^t_{x,y}(s)>\ell_2(s) \; \forall s\in [0,t]}.
\end{align*}
\item for $\ell_1,\ell_2:[0,t]\to \Rm$ upper semi-continuous except at finitely many points with $\ell_1(s)\le \ell_2(s)$ $\forall s\in [0,t]$, if the denominator is non-zero then
	\[
	\frac{\p{\xi^t_{x,y}(s)>\ell_2(s)\; \forall s\in [0,t]}}{\p{\xi^t_{x,y}(s)>\ell_1(s)\; \forall s\in [0,t]}}
	\]
	is increasing in $x$ and $y$.
\end{enumerate}
\end{lem}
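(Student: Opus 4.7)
Each statement is a standard fact about Brownian bridge, and the plan is to reduce each to a classical tool: a time change for Gaussian processes, the reflection principle, the convex order, and the FKG inequality for log-concave Gaussian measures.

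For (a), I would check directly that $Y_s:=\frac{t-s}{t}B_{st/(t-s)}$ under $\mathbb P_0$ is a centred Gaussian process on $[0,t)$ with covariance
\[
\Cov(Y_{s_1},Y_{s_2})=\frac{(t-s_1)(t-s_2)}{t^2}\cdot \frac{s_1 t}{t-s_1}=\frac{s_1(t-s_2)}{t}\quad (s_1\le s_2),
\]
which matches the Brownian bridge. Continuity on $[0,t)$ is inherited from $B$, and $Y_s\to 0$ as $s\uparrow t$ follows from the law of the iterated logarithm for $B$. For (b), the classical reflection principle applied to Brownian motion conditioned on its value at time $t$ gives the stated formula.

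For (c), the plan is to condition on $\xi^t_{0,0}(s_0)=z$: the restricted process $(\xi^t_{0,0}(s))_{s\le s_0}$ then has the law of a Brownian bridge from $0$ to $z$ over $[0,s_0]$, and (b) applied to this shorter bridge yields
\[
\p{\min_{s\le s_0}\xi^t_{0,0}(s)\le -x \mid \xi^t_{0,0}(s_0)=z}=\min\bigl(1,\, e^{-2x(x+z)/s_0}\bigr),
\]
which is a convex function of $z$. The same identity holds for Brownian motion under $\mathbb P_0$ in place of the bridge, since conditional on $B_{s_0}=z$ the restricted process is again a bridge from $0$ to $z$ over $[0,s_0]$. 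Since $\xi^t_{0,0}(s_0)\sim N(0,\,s_0(t-s_0)/t)$ is smaller in the convex order than $B_{s_0}\sim N(0,s_0)$ (same mean, smaller variance), integrating the convex conditional probability yields
\[
\p{\min_{s\le s_0}\xi^t_{0,0}(s)\le -x}\,\le\, \psub{0}{\min_{s\le s_0}B_s\le -x}\,=\,2\psub{0}{B_{s_0}\le -x},
\]
and the stated bound follows from the Gaussian tail estimate $\psub{0}{B_{s_0}\le -x}<(2\pi)^{-1/2}s_0^{1/2}x^{-1}e^{-x^2/(2s_0)}$.

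Parts (d) and (e) are monotonicity statements and follow from the FKG inequality for the Gaussian (hence log-concave) law of the bridge on path space. For (d), conditioning on the convex event $\{f:f(s)>\ell_1(s)\,\forall s\}$ preserves log-concavity, and both $\{\xi>\ell\}$ and $\{\xi>\ell_2\}$ are coordinatewise non-decreasing, so FKG under the conditional law gives $\p{\xi>\ell \mid \xi>\ell_1,\,\xi>\ell_2}\ge \p{\xi>\ell \mid \xi>\ell_1}$, which combined with $\{\xi>\ell_1\}\cap\{\xi>\ell_2\}=\{\xi>\ell_2\}$ is exactly (d). For (e), I would write $\xi^t_{x,y}(s)=\xi^t_{0,0}(s)+\frac{t-s}{t}x+\frac{s}{t}y$ in law, so that increasing $x$ or $y$ shifts the bridge upward by a non-negative function $h$ and the ratio becomes $\p{\xi^t_{0,0}>\ell_2-h}/\p{\xi^t_{0,0}>\ell_1-h}$; reducing to a finite time grid and invoking FKG for the resulting Gaussian vector (whose covariance does not depend on $h$) shows this ratio is non-decreasing in $h$. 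The main obstacle is transferring the FKG inequality from finite-dimensional Gaussians to the full path space, which I would handle by monotone approximation along a refining sequence of finite partitions of $[0,t]$, using upper semi-continuity of $\ell_1,\ell_2$ to pass events from the grid to all of $[0,t]$.
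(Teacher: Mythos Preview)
The paper simply cites Bramson's memoir for all five parts. Your sketches for (a), (b) and (d) are fine; the FKG route for (d) is legitimate, since the bridge is MTP$_2$ on finite grids and conditioning on an upper orthant preserves this.

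Your argument for (c), however, has a genuine gap. The function $g(z)=\min\bigl(1,e^{-2x(x+z)/s_0}\bigr)$ is \emph{not} convex: it equals $1$ on $(-\infty,-x]$ and then decays exponentially, so at $z=-x$ the one-sided derivative drops from $0$ to $-2x/s_0$, a concave kink. Worse, the comparison you deduce from it,
\[
\mathbb P\Bigl(\min_{[0,s_0]}\xi^t_{0,0}\le -x\Bigr)\;\le\;\mathbb P_0\Bigl(\min_{[0,s_0]}B\le -x\Bigr),
\]
is itself false: as $s_0\uparrow t$ the left side tends to $e^{-2x^2/t}$, which for small $x/\sqrt t$ exceeds $2\Phi(-x/\sqrt{s_0})$ (compare $1-2x^2/t$ with $1-\sqrt{2/\pi}\,x/\sqrt t$). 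The stated bound nonetheless holds because $2s_0^{1/2}x^{-1}e^{-x^2/(2s_0)}$ is weaker than $2\Phi(-x/\sqrt{s_0})$ and leaves enough slack; a correct proof integrates $g(z)$ explicitly against the Gaussian density of $\xi^t_{0,0}(s_0)$ and bounds the resulting expression directly. Your sketch for (e) is also incomplete: FKG for $\xi^t_{0,0}$ does not by itself yield monotonicity of the \emph{ratio} $\p{\xi>\ell_2-h}/\p{\xi>\ell_1-h}$ in the shift $h$, since both numerator and denominator change; the standard argument instead discretises and uses the Markov property together with the monotone-likelihood-ratio property of one-dimensional Gaussian increments.
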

\begin{proof}
Property (a) is stated in e.g.~\cite[Lemma 2.1(b)]{Bramson1983}, and can be easily checked by calculating the covariance.
Properties (b) and (c) are proved in e.g.~\cite[Lemma 2.2(a) and (b)]{Bramson1983}.
Property (d) is proved in e.g.~\cite[Lemma 2.6]{Bramson1983} (note that the condition on $\Lambda$ is not needed in the proof).
Property (e) is proved in e.g.~\cite[Proposition 6.3]{Bramson1983}.
\end{proof}
We now begin to show that for large $r$,
for large $t$ and suitable $x,y$,
the contribution to~\eqref{eq:UtxBBformula} from trajectories $(\xi^t_{x,y}(s),0\le s \le t)$ with $\xi^t_{x,y}(s)\le \underline{\mathcal M}_{r,t}(t-s)$ for some $s\in [3r,t-3r]$ is small.
We will need to introduce some notation;
the following notation is the equivalent of notation in~\cite{Bramson1983}.
For $0\le r \le t$ and $x,y\in \R$, let
\begin{align}
	S^{(1)}_{r,t,x,y}&:=\sup(\{0\}\cup \{s\in [2r,t/2]: \xi^t_{x,y}(s)\le \underline{\mathcal M}_{r,t}(t-s)\}) \label{eq:S1defn}\\
	\text{and }\quad S^{(2)}_{r,t,x,y}&:=\inf(\{t\}\cup \{s\in [t/2,t-r]: \xi^t_{x,y}(s)\le \underline{\mathcal M}_{r,t}(t-s)\}). \label{eq:S2defn}
\end{align}
Then let
\[S_{r,t,x,y}:=
\begin{cases}
	S^{(1)}_{r,t,x,y} \quad &\text{if }S^{(1)}_{r,t,x,y}>t-S^{(2)}_{r,t,x,y}\\
	S^{(2)}_{r,t,x,y} \quad &\text{otherwise,}
\end{cases}
\]
so that $S_{r,t,x,y}$ is the time nearest $t/2$ when $\xi^t_{x,y}(\cdot)$ is below $\underline {\mathcal M}_{r,t}(t-\cdot)$.

For $u\in [r,t/2]$, define the event
\begin{align} \label{eq:Gxtdefn}
	E_{r,t,x,y}(u)
	:=\{S_{r,t,x,y}\in [u,t-u]\}
	=\{\exists s\in [u\vee (2r),t-u] \text{ s.t. }\xi^t_{x,y}(s)\le \underline{\mathcal M}_{r,t}(t-s)\}.
\end{align}
Let $E_{r,t,x,y}:=E_{r,t,x,y}(r)$.
Define the time intervals
\[
I_j:= [j,j+1)\cup (t-j-1,t-j] \; \forall j\in \{0,1,\ldots , \lfloor t/2\rfloor -1\} \quad \text{ and }\quad I_{\lfloor t/2\rfloor}:=[\lfloor t/2\rfloor,t-\lfloor t/2\rfloor]
\]
For $j\in \{0,1,\ldots , \lfloor t/2\rfloor\}$, define the event $A_{r,t,x,y}(j):=\{S_{r,t,x,y}\in I_j\},$ and let
\begin{align} \label{eq:Aj1Aj2defn}
	A^{+}_{r,t,x,y}(j)&=A_{r,t,x,y}(j)\cap \{\xi^t_{x,y}(S_{r,t,x,y})>-(S_{r,t,x,y}\wedge (t-S_{r,t,x,y}))+\tfrac {S_{r,t,x,y}}t y +\tfrac{t-S_{r,t,x,y}}t x \} \notag \\
	\text{and }\quad A^{-}_{r,t,x,y}(j)&:=A_{r,t,x,y}(j)\setminus A^{+}_{r,t,x,y}(j).
\end{align}
Later in this subsection, in Lemma~\ref{lem:e-LebA1j} below, we will establish an upper bound on the expectation in the integrand of~\eqref{eq:UtxBBformula} on the event $A^+_{r,t,x,y}(j)$ in terms of the probability of the event $A^+_{r,t,x,y}(j)$. We now give an upper bound on the probability of the event $(E_{r,t,x,y}(u))^c$; since $A^+_{r,t,x,y}(j)\subseteq (E_{r,t,x,y}(j+1))^c$ for $j\le \lfloor t/2 \rfloor -1$, we will be able to combine this lemma with Lemma~\ref{lem:e-LebA1j} to prove the main result of this subsection (Proposition~\ref{prop:Gxy}).
This result corresponds to~\cite[Lemma~7.1]{Bramson1983}.
\begin{lem} \label{lem:Gcxyr}
	There exists $C_3\in (0,\infty)$ such that the following holds.
	Suppose $U_0$ satisfies Assumption~\ref{assum:standing assumption ic} and~\eqref{eq:assump_infmass1}, and let $(U(t,x),L_t)$ denote the solution of~\eqref{eq:FBP_CDF}.
	Define $\beta_r(t)$ as in~\eqref{eq:betadef}.
For $r\ge 2$ sufficiently large and $t\ge 6r$,
	for $u\in [2r,t/2]$, $y\ge \beta_r(t)$ and $x\ge L_t$,
	\begin{align}
		\p{(E_{r,t,x,y}(u))^c }&\le C_3 \frac{u}r \p{(E_{r,t,x,y})^c} \label{eq:Ertu1}\\ 
		\text{and }\quad \p{(E_{r,t,x,y})^c}&\ge 2C_3^{-1}\frac r t. \label{eq:Ert2}
	\end{align}
\end{lem}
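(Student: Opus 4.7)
The proof follows the strategy of~\cite[Lemma 7.1]{Bramson1983}, adapted to our barrier $\underline{\mathcal{M}}_{r,t}$ and using the shape estimates from Lemma~\ref{lem:Mrtestimates} together with the standard Brownian bridge facts in Lemma~\ref{lem:BBfacts}. The first step is to pass to the shifted bridge $\tilde\xi(s) := \xi^t_{x,y}(s) - \tilde\ell(s)$, where $\tilde\ell(s) := \ell_{r,t}(t-s) = \tfrac{t-s}{t}L_t + \tfrac{s}{t}\beta_r(t)$. Then $\tilde\xi$ is a Brownian bridge from $\tilde\xi(0) = x - L_t \ge 0$ to $\tilde\xi(t) = y - \beta_r(t) \ge 0$, and $E_{r,t,x,y}(u)^c = \{\tilde\xi(s) > \tilde{\underline{\mathcal{M}}}(s)\;\forall s \in [u, t-u]\}$, where the shifted barrier $\tilde{\underline{\mathcal{M}}}(s) := \underline{\mathcal{M}}_{r,t}(t-s) - \tilde\ell(s)$ is non-positive on $[2r, t-r]$ by~\eqref{eq:Mrtest1a}, and strictly negative of order $-c(s \wedge (t-s))^{1/3}$ in the bulk via Lemma~\ref{lem:Mrtestimates}.

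For~\eqref{eq:Ertu1}, since $E_{r,t,x,y}^c \subseteq E_{r,t,x,y}(u)^c$, the Markov property of the bridge at times $u$ and $t-u$ yields
\[
\p{E_{r,t,x,y}^c} = \e\bigl[\1_{E_{r,t,x,y}(u)^c}\, Q(\tilde\xi(u), \tilde\xi(t-u))\bigr],
\]
where $Q(a,b)$ is the conditional probability that $\tilde\xi$ stays above $\tilde{\underline{\mathcal{M}}}$ on the two ``ear'' intervals $[r,u] \cup [t-u, t-r]$ given $\tilde\xi(u) = a$, $\tilde\xi(t-u) = b$. It therefore suffices to show $Q(a,b) \gtrsim r/u$ almost surely on $E_{r,t,x,y}(u)^c$. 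Conditional on the endpoint values, $\tilde\xi|_{[0,u]}$ is a Brownian bridge from $x - L_t \ge 0$ to $a$, and the marginal of $\tilde\xi(r)$ is approximately Gaussian of variance $\lesssim r$; applying Lemma~\ref{lem:BBfacts}(b) to the sub-bridge on $[r,u]$ with the essentially linear barrier bound provided by Lemma~\ref{lem:Mrtestimates} gives a left-ear lower bound of the right order, with an analogous argument for $[t-u, t-r]$. Properties (d)--(e) of Lemma~\ref{lem:BBfacts} ensure that typical values of $(a, b)$ under the $E_{r,t,x,y}(u)^c$--distribution are of order $\sqrt{u}$, so the two ear estimates combine to yield the claimed $r/u$ scaling.

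For~\eqref{eq:Ert2}, lower-bound $\p{E_{r,t,x,y}^c}$ by the probability that $\tilde\xi$ stays strictly above $0$ on $[2r, t-r]$. Conditioning on $(\tilde\xi(2r), \tilde\xi(t-r))$, the middle segment is a bridge of length $\sim t$ between those values and, by Lemma~\ref{lem:BBfacts}(b), has survival probability $1 - \exp(-2\tilde\xi(2r)\tilde\xi(t-r)/(t-3r))$. A direct Gaussian estimate shows that $\p{\tilde\xi(2r) \wedge \tilde\xi(t-r) \ge \sqrt{r}} \ge c > 0$ uniformly in the endpoints, giving $\p{E_{r,t,x,y}^c} \gtrsim r/t$; in the degenerate case where either $x - L_t$ or $y - \beta_r(t)$ is close to zero we instead exploit the strict negativity $\tilde{\underline{\mathcal{M}}}(s) \le -c (s \wedge (t-s))^{1/3}$ in the bulk and use Lemma~\ref{lem:BBfacts}(c) in place of (b) to obtain the same scaling.

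The main technical obstacle is controlling the nonlinear, $(s \wedge (t-s))^{1/3}$-shaped deviation of $\underline{\mathcal{M}}_{r,t}$ from $\ell_{r,t}$ uniformly across $[r, t-r]$: in the bulk this extra clearance drives the calculation, but near the interval endpoints $s = r$ and $s = t-r$ the barrier coincides with $\ell_{r,t}$, so the entropic-repulsion arguments must be carried out with the possibility that $\tilde\xi(0)$ or $\tilde\xi(t)$ itself vanishes.
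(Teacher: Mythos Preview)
Your starting point is correct: after shifting by $\tilde\ell(s)=\ell_{r,t}(t-s)$, the bridge $\tilde\xi$ runs from $x-L_t\ge 0$ to $y-\beta_r(t)\ge 0$, and by~\eqref{eq:Mrtest1a} the shifted barrier is non-positive on $[2r,t-r]$. However, your argument for~\eqref{eq:Ertu1} has a genuine gap. You condition on $(\tilde\xi(u),\tilde\xi(t-u))=(a,b)$ and assert that it suffices to show $Q(a,b)\gtrsim r/u$ almost surely on $(E_{r,t,x,y}(u))^c$. But on that event you only know $a>\tilde{\underline{\mathcal M}}(u)$, and $\tilde{\underline{\mathcal M}}(u)$ can be arbitrarily close to $0$ (for $u$ near $2r$ the bound~\eqref{eq:Mrtest1a} gives only $\le 0$, not strictly negative). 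Thus $a$ may be arbitrarily small, and $Q(a,b)$ is not bounded below uniformly. Your fallback remark that ``typical values of $(a,b)$ are of order $\sqrt u$'' does not help: you need a pointwise bound to pull $Q$ out of the expectation, not a bound in probability.

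The paper avoids this entirely via a single application of Lemma~\ref{lem:BBfacts}(d) (an FKG-type monotonicity): since the ear constraint and the middle constraint are both increasing events for $\tilde\xi$, one has
\[
\p{(E_{r,t,x,y})^c\mid (E_{r,t,x,y}(u))^c}\ \ge\ \p{\tilde\xi(s)>\tilde{\underline{\mathcal M}}(s)\ \forall s\in[2r,u)\cup(t-u,t-r]}
\ \ge\ \p{\xi^t_{0,0}(s)>0\ \forall s\in[r,u]}^2,
\]
the last step using non-positivity of the shifted barrier and monotonicity in the endpoints. The right-hand side is then bounded below by $K_1^{-2}\,r/u$ via the time-change Lemma~\ref{lem:BBfacts}(a) and the reflection principle, yielding~\eqref{eq:Ertu1}; the bound~\eqref{eq:Ert2} follows from the same estimate with $u=t/2$. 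In particular only the non-positivity~\eqref{eq:Mrtest1a} is used here; the $(s\wedge(t-s))^{1/3}$ shape of the barrier and the entropic-repulsion considerations you invoke play no role in this lemma.
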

\begin{proof}
The proof is similar to the proof of~\cite[Lemma~7.1]{Bramson1983}.
Take $r$ sufficiently large that Lemma~\ref{lem:Mrtestimates} holds.
	By~\eqref{eq:lrt_defn}, and then by~\eqref{eq:Mrtest1a} in Lemma~\ref{lem:Mrtestimates}, for $y\ge \beta_r(t)$ and $x\ge L_t$,
	\begin{equation} \label{eq:Munderlinebelow}
		\underline{\mathcal M}_{r,t}(t-s)-\frac{t-s}t x-\frac st y\le \underline{\mathcal M}_{r,t}(t-s)-\ell_{r,t}(t-s)\le 0 \;\; \forall s\in [2r,t-r].
	\end{equation}
For $u\in [2r,t/2]$, by the definition of the event $E_{r,t,x,y}(u)$ in~\eqref{eq:Gxtdefn} and by Lemma~\ref{lem:BBfacts}(d) with $\ell_1=-\infty$,
and then by~\eqref{eq:Munderlinebelow},
	\begin{align} \label{eq:Gcxylower}
		\p{\left. (E_{r,t,x,y})^c \, \right| (E_{r,t,x,y}(u))^c}
		&\ge \p{\xi^t_{x,y}(s)>\underline{\mathcal M}_{r,t}(t-s) \; \forall s\in [2r,u)\cup (t-u,t-r]} \notag \\
		&\ge \p{\xi^t_{0,0}(s)>0 \; \forall s\in [2r,u)\cup (t-u,t-r]} \notag \\
		&\ge \p{\xi^t_{0,0}(s)>0 \; \forall s\in [r,u]}^2,
	\end{align}
	where the last inequality follows by Lemma~\ref{lem:BBfacts}(d) again.
	Then by Lemma~\ref{lem:BBfacts}(a), we can write
	\begin{align} \label{eq:BBBM}
		\p{\xi^t_{0,0}(s)>0 \; \forall s\in [r,u]}
		&\ge \psub{0}{\left\{ B_{rt/(t-r)}>r^{1/2}\right\}
		\cap \left\{\tfrac{t-s}t B_{st/(t-s)}>0\; \forall s\in [r,u] \right\}} \notag \\
		&\ge \psub{0}{B_{rt/(t-r)}>r^{1/2}}
		\psub{r^{1/2}}{B_{s'}>0\; \forall s'\in [0,2u] },
	\end{align}
	where the second inequality follows since $u\le t/2$ and so $ut/(t-u)-rt/(t-r)\le 2u$.
	By the reflection principle,
	\[
	\psub{r^{1/2}}{B_{s}>0\; \forall s\in [0,2u] }
	=1-2\psub{0}{B_{2u}\ge r^{1/2}}=\psub{0}{|B_{2u}|\le r^{1/2}}
	\ge \frac{2r^{1/2}}{\sqrt{2\pi\cdot 2u}}e^{-r/4u}.
	\]
	Since $r/(4u)\le 1/8$ and $rt/(t-r)\ge r$, it follows from~\eqref{eq:BBBM} that there exists a constant $K_1\in (0,\infty)$ such that
	\begin{equation} \label{eq:BBabovelowerbd}
	\p{\xi^t_{0,0}(s)>0 \; \forall s\in [r,u]}\ge \frac 1 {K_1} \sqrt{\frac{r}{u}}.
	\end{equation}
	The statement~\eqref{eq:Ertu1} now follows from~\eqref{eq:Gcxylower} and~\eqref{eq:BBabovelowerbd}, since $(E_{r,t,x,y})^c\subseteq (E_{r,t,x,y}(u))^c$.
	The second statement~\eqref{eq:Ert2} also follows from~\eqref{eq:BBabovelowerbd}, since
	\[
	\p{ (E_{r,t,x,y})^c }
	\ge \p{\xi^t_{0,0}(s)>0 \; \forall s\in [r,t/2]}^2,
	\]
	by the same argument as for~\eqref{eq:Gcxylower}.
\end{proof}
We now give an upper bound on the expectation in the integrand of the expression for $U(t,x)$ in~\eqref{eq:UtxBBformula} 
on the event $A^{+}_{r,t,x,y}(j)$, for large $r$, $t$ and $j$.
This result corresponds to~\cite[Lemma~7.2]{Bramson1983}.
\begin{lem} \label{lem:e-LebA1j}
Suppose $U_0$ satisfies Assumption~\ref{assum:standing assumption ic} and~\eqref{eq:assump_infmass1}, and let $(U(t,x),L_t)$ denote the solution of~\eqref{eq:FBP_CDF}.
	Define $\beta_r(t)$ as in~\eqref{eq:betadef}.
For $r\ge 2$ sufficiently large and $t\ge 6r$,
for $j\in \Nm\cap [\lfloor 3r \rfloor,t/2]$,
	$y\in [\beta_r(t),\beta_r(t)+20t]$ and $x\in [L_t,L_t+20t]$,
	\[
	\Es{e^{-\Leb(\{s\in [0,t]:\xi^t_{x,y}(s)<L_{t-s}\})}\1_{A^{+}_{r,t,x,y}(j)}}\le 2e^{-j^{1/3} /50}\p{A^{+}_{r,t,x,y}(j)}.
	\]
\end{lem}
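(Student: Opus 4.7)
The plan is to show that on $A^+_{r,t,x,y}(j)$ the bridge $\xi^t_{x,y}$ typically spends time at least $\tfrac{1}{50}j^{1/3}$ below $L_{t-\cdot}$ in a window adjacent to $S := S_{r,t,x,y}$, which produces the exponential factor $e^{-j^{1/3}/50}$. Set $T := \{s \in [0,t] : \xi^t_{x,y}(s) < L_{t-s}\}$.

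First I would extract a deterministic consequence of Lemma~\ref{lem:Mrtestimates}: on $A^+_{r,t,x,y}(j)$ we have $S \in I_j$ and $\xi^t_{x,y}(S) \leq \underline{\mathcal M}_{r,t}(t-S)$, so applying Lemma~\ref{lem:Mrtestimates}(2) when $S \in [j,j+1)$ and part~(3) when $S \in (t-j-1,t-j]$ (each time with $s^* := S$) produces an interval $J \subseteq [0,t]$ of length $|J| = \tfrac{1}{50}j^{1/3}$, adjacent to $S$ (preceding it in the first sub-case, following it in the second), on which $\underline{\mathcal M}_{r,t}(t-S) + j^{1/3} < L_{t-s}$. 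Hence $\sup_{s \in J}(\xi^t_{x,y}(s) - \xi^t_{x,y}(S)) \leq j^{1/3}$ forces $J \subseteq T$ and so $\Leb(T) \geq |J|$. Splitting the expectation accordingly yields
\[
\mathbb E\bigl[e^{-\Leb(T)} \mathbf 1_{A^+_{r,t,x,y}(j)}\bigr] \leq e^{-j^{1/3}/50}\,\Pm(A^+_{r,t,x,y}(j)) + \Pm\bigl(A^+_{r,t,x,y}(j) \cap R_j\bigr),
\]
where $R_j := \{\sup_{s \in J}(\xi^t_{x,y}(s) - \xi^t_{x,y}(S)) > j^{1/3}\}$, and it then suffices to show $\Pm(A^+_{r,t,x,y}(j) \cap R_j) \leq e^{-j^{1/3}/50}\,\Pm(A^+_{r,t,x,y}(j))$.

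Next, I would split $A^+_{r,t,x,y}(j)$ according to $S = S^{(1)}$ or $S = S^{(2)}$. On $\{S = S^{(2)}\}$, $\tau := S^{(2)}$ is a stopping time for the natural filtration of the bridge from time $t/2$ onward, so the strong Markov property gives that, conditional on $(\tau, \xi^t_{x,y}(\tau) = z)$, the post-$\tau$ process is a Brownian bridge from $z$ to $y$ over time $T' := t - \tau \in [j, j+1)$ (and $J \subseteq [\tau, t]$). On $\{S = S^{(1)}\}$, $S^{(1)}$ is only a last-exit time; I would instead use the fixed-time Markov property of the bridge at $u := S^{(1)}$. Conditioning on $\xi^t_{x,y}(u) = z$ decouples the pre-$u$ and post-$u$ pieces into independent Brownian bridges, and the conditions defining $A^{+,(1)}_{r,t,x,y}(j)$ (namely $\{S^{(1)} = u\} \cap \{S^{(1)} > t - S^{(2)}\} \cap \{\xi^t_{x,y}(u) > \ell'(u) - u\}$) constrain only $z$ and the post-$u$ piece, leaving the pre-$u$ bridge from $x$ to $z$ over $[0,u]$ unconditioned; since here $J \subseteq [0, u]$, the rise probability is controlled on this unconditioned bridge (via time reversal if desired).

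In either sub-case the supremum of interest can then be bounded via the decomposition $\xi(\delta) - z = B_\delta - \tfrac{\delta}{T'} B_{T'} + \tfrac{\delta}{T'}(y' - z)$, where $y' \in \{x, y\}$ is the far endpoint and $\delta$ ranges over $[0, \Delta_{\max}]$ with $\Delta_{\max} \leq 1 + \tfrac{1}{50}j^{1/3}$. Combining the $A^+$-constraint $\xi^t_{x,y}(S) > -S_\wedge + \ell'(S)$ with Lemma~\ref{lem:Mrtestimates}(1) (from which $\xi^t_{x,y}(S) \leq \underline{\mathcal M}_{r,t}(t-S) \leq \ell_{r,t}(t-S) \leq \ell'(S)$), and using $|y - x| \leq 22t$ (which holds for large $t$ via $L_t = (\sqrt 2 + o(1))t$ and $|\beta_r(t)| = o(t)$ from Lemmas~\ref{lem:Lttsqrt2} and~\ref{lem:betartbound}, together with the assumed ranges for $x$ and $y$), one obtains $|y' - z| \leq 23(j+1)$. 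Taking $r$ sufficiently large that $j \geq \lfloor 3r \rfloor$ forces $j^{1/3} \geq 50$ (so $\Delta_{\max} \leq j^{1/3}/25$), the drift contribution $\tfrac{\Delta_{\max}}{T'}|y' - z|$ is at most $\tfrac{23}{25} j^{1/3}$, strictly less than $j^{1/3}$. Hence on $R_j$ the random fluctuation $\sup_{\delta \leq \Delta_{\max}} B_\delta + \tfrac{\Delta_{\max}}{T'}|B_{T'}|$ must exceed $\tfrac{2}{25}j^{1/3}$; by the reflection principle (for the first term) and a Gaussian tail bound (for the second, using $T' \geq j$), this event has probability at most $C e^{-c j^{1/3}}$ for some $c > \tfrac{1}{50}$ once $j$ is large, which is less than $e^{-j^{1/3}/50}$.

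The main obstacle is the handling of the case $S = S^{(1)}$, since $S^{(1)}$ is a last-exit time rather than a stopping time; the resolution via the fixed-time Markov property of the Brownian bridge, exploiting that the remaining conditions defining $A^{+,(1)}_{r,t,x,y}(j)$ involve only $\xi^t_{x,y}(S^{(1)})$ and the post-$S^{(1)}$ path, is the key technical move. A secondary but quantitatively essential subtlety is ensuring the drift displacement $\tfrac{\Delta_{\max}}{T'}|y' - z|$ stays strictly below $j^{1/3}$; this forces us to take $r$ (and hence $j$) large enough that $j^{1/3} \geq 50$.
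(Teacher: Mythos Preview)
Your overall strategy matches the paper's: split on whether the bridge rises by more than $j^{1/3}$ over the window $J$ adjacent to $S$, use Lemma~\ref{lem:Mrtestimates} to see that on the good event the bridge stays below $L_{t-\cdot}$ throughout $J$, and bound the bad event using the conditional bridge law together with the lower bound on $\xi(S)$ coming from the $A^+$ constraint. The numerical estimates (the drift bound $\tfrac{\Delta_{\max}}{T'}|y'-z| \le \tfrac{23}{25}j^{1/3}$ and the resulting fluctuation exponent $\tfrac{2}{25} > \tfrac{1}{50}$) work out.

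The one genuine gap is your handling of the case $S = S^{(1)}$. You correctly note that $S^{(1)}$ is only a last-exit time, but then propose to apply the ``fixed-time Markov property'' at $u := S^{(1)}$; this is not valid, since the Markov property at a deterministic time does not transfer to a random time. Your observation that $A^{+,(1)}$ depends only on $\xi(S^{(1)})$ and the post-$S^{(1)}$ path is correct and is exactly the needed ingredient, but it does not by itself yield that the pre-$S^{(1)}$ segment is conditionally an unconditioned bridge. The paper's resolution is cleaner: reverse time. The process $(\xi^t_{x,y}(t-s))_{s\in[0,t]}$ is again a Brownian bridge, and for it $t-S^{(1)}$ is a genuine stopping time (the first time after $t/2$ the reversed bridge drops below $\underline{\mathcal M}_{r,t}$). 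The strong Markov property then gives that, conditional on $(S^{(1)},\xi(S^{(1)}))$, the segment $(\xi^t_{x,y}(S^{(1)}-s))_{s\in[0,S^{(1)}]}$ is a Brownian bridge from $\xi(S^{(1)})$ to $x$, after which the paper bounds the fluctuation directly via Lemma~\ref{lem:BBfacts}(c) (obtaining the sharper $e^{-j^{1/3}}$ rather than your $Ce^{-2j^{1/3}/25}$, though either suffices).
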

\begin{proof}
The proof is similar to the proof of~\cite[Lemma~7.2]{Bramson1983}.
Take $r$ sufficiently large that Lemma~\ref{lem:Mrtestimates} holds.

	Recall the definition of $S^{(1)}_{r,t,x,y}$ in~\eqref{eq:S1defn}. Take $j\in \Nm\cap [\lfloor 3r \rfloor,t/2]$, and define the event
	\[
	D^{(1)}_j:= \{\xi^t_{x,y}(s)-\xi^t_{x,y}(S^{(1)}_{r,t,x,y})\le j^{1/3} \; \forall s\in [j-\tfrac 1 {50}j^{1/3} ,S^{(1)}_{r,t,x,y}]\}.
	\]
	By~\eqref{eq:Aj1Aj2defn}, on the event $A^{+}_{r,t,x,y}(j)\cap \{S_{r,t,x,y}<t/2\}$ we have
	$S^{(1)}_{r,t,x,y}\in [j,j+1)$.
	Therefore, on the event $A^{+}_{r,t,x,y}(j)\cap \{S_{r,t,x,y}<t/2\}\cap D^{(1)}_j$, for $s\in [j-\frac 1 {50}j^{1/3} ,j]$ we have
	\[
	\xi^t_{x,y}(s)\le j^{1/3} +\underline{\mathcal M}_{r,t}(t-S^{(1)}_{r,t,x,y})
	 <L_{t-s},
	\]
	where the last inequality follows from~\eqref{eq:Mrtest2} in Lemma~\ref{lem:Mrtestimates}.
	Hence on the event $A^{+}_{r,t,x,y}(j)\cap \{S_{r,t,x,y}<t/2\}\cap D^{(1)}_j$,
	\[
	\Leb(\{s\in [0,t]:\xi^t_{x,y}(s)<L_{t-s}\})\ge \tfrac 1 {50} j^{1/3}.
	\]
	Therefore 
	\begin{align} \label{eq:ELebA1}
		&\Es{e^{-\Leb(\{s\in [0,t]:\xi^t_{x,y}(s)<L_{t-s}\})}\1_{A^{+}_{r,t,x,y}(j)}\1_{\{S_{r,t,x,y}<t/2\}}} \notag \\
		&\le \p{A^{+}_{r,t,x,y}(j)\cap \{S_{r,t,x,y}<t/2\}}
		\left(e^{-\frac 1 {50}j^{1/3}}+\p{ (D^{(1)}_j)^c \left|A^{+}_{r,t,x,y}(j)\cap \{S_{r,t,x,y}<t/2\}\right.}\right).
	\end{align}
	We now bound the last term on the right-hand side of~\eqref{eq:ELebA1}.
	On the event $A^{+}_{r,t,x,y}(j)\cap \{S_{r,t,x,y}<t/2\}$, letting $S^{(1)}=S^{(1)}_{r,t,x,y}$, by~\eqref{eq:Aj1Aj2defn} and then by our choice of $x$ and $y$ we have
	\begin{equation} \label{eq:xiS1bound}
		\xi^t_{x,y}(S^{(1)})-x
		> -S^{(1)}+\frac{S^{(1)}}t (y-x)
		\ge -S^{(1)} +\frac{S^{(1)}}t (\beta_r(t)-L_t-20t)
		\ge -23 S^{(1)},
	\end{equation}
	where the last inequality follows
	for $t$ sufficiently large by Lemma~\ref{lem:Lttsqrt2} and Lemma~\ref{lem:betartbound} (using that $U_0$ satisfies~\eqref{eq:assump_infmass1}).
	Moreover, $\frac 12 t-S^{(1)}_{r,t,x,y}$ is a stopping time for the process $(\xi^t_{x,y}(t-s),s\in [0,t])$, and so, writing $S^{(1)}=S^{(1)}_{r,t,x,y}$ again, conditional on
	$(S^{(1)},\xi^t_{x,y}(S^{(1)}))$, the process $(\xi^t_{x,y}(S^{(1)}-s),s\in [0,S^{(1)}])$ is a Brownian bridge from $\xi^t_{x,y}(S^{(1)})$ to $x$ in time $S^{(1)}$.
	Therefore, using~\eqref{eq:xiS1bound}, 
	\begin{align} \label{eq:Djcbound}
		\p{ (D^{(1)}_j)^c \left|A^{+}_{r,t,x,y}(j)\cap \{S_{r,t,x,y}<t/2\}\right.}
		&\le \sup_{t'\ge j, \, y'>x-23t'}\p{\exists s\in [0,\tfrac 1{50}j^{1/3} +1] : \xi^{t'}_{y',x}(s)>y'+j^{1/3}} \notag \\
		&\le \sup_{t'\ge j}\p{\exists s\in [0,\tfrac 1{50}j^{1/3} +1] : \xi^{t'}_{0,0}(s)>-23s+ j^{1/3} } \notag \\
		&\le \sup_{t'\ge j}\p{\exists s\in [0,\tfrac 1{50}j^{1/3} +1] : \xi^{t'}_{0,0}(s)>\tfrac 12 j^{1/3} },
		\end{align}
		where the last inequality holds for $r$ (and hence $j$) sufficiently large.
	Then by Lemma~\ref{lem:BBfacts}(c), for $t'\ge j$ we have
		\begin{equation} \label{eq:Djcbound2}
		\p{\exists s\in [0,\tfrac 1{50}j^{1/3} +1] : \xi^{t'}_{0,0}(s)>\tfrac 12 j^{1/3} } 
		\le \frac{2(\frac 1 {50}j^{1/3} +1)^{1/2}}{\frac 12 j^{1/3}}e^{-\frac{\frac 14 j^{2/3}}{2(\frac 1 {50}j^{1/3} +1)}} 
		\le e^{-j^{1/3}},
	\end{equation}
	where the second inequality holds for $r$ (and hence $j$) sufficiently large.
	By combining~\eqref{eq:ELebA1},~\eqref{eq:Djcbound} and~\eqref{eq:Djcbound2},
	we now have that 
	\begin{equation} \label{eq:LebAj1S1}
		\Es{e^{-\Leb(\{s\in [0,t]:\xi^t_{x,y}(s)<L_{t-s}\})}\1_{A^{+}_{r,t,x,y}(j)}\1_{\{S_{r,t,x,y}<t/2\}}}
		\le 
		2e^{-\frac 1 {50}j^{1/3}}\p{A^{+}_{r,t,x,y}(j)\cap \{S_{r,t,x,y}<t/2\}}.
	\end{equation}
	
	We now prove the analogue of~\eqref{eq:LebAj1S1} with the event $\{S_{r,t,x,y}\ge t/2\}$ in place of $\{S_{r,t,x,y}<t/2\}$.
	Recall the definition of $S^{(2)}_{r,t,x,y}$ in~\eqref{eq:S2defn}.
	Take $j\in \Nm\cap [\lfloor 3r \rfloor,t/2]$, and define the event
	\[
	D^{(2)}_j:= \{\xi^t_{x,y}(s)-\xi^t_{x,y}(S^{(2)}_{r,t,x,y})\le j^{1/3} \; \forall s\in [S^{(2)}_{r,t,x,y},t-j+\tfrac 1 {50}j^{1/3} ]\}.
	\]
	By~\eqref{eq:Aj1Aj2defn}, on the event $A^{+}_{r,t,x,y}(j)\cap \{S_{r,t,x,y}\ge t/2\}$, we have
	$S^{(2)}_{r,t,x,y}\in (t-j-1,t-j]$.
	Hence on the event $A^{+}_{r,t,x,y}(j)\cap \{S_{r,t,x,y}\ge t/2\}\cap D^{(2)}_j$, for $s\in [t-j,t-j+\tfrac 1 {50}j^{1/3} ]$,
	\[
	\xi^t_{x,y}(s)\le j^{1/3} +\underline{\mathcal M}_{r,t}(t-S^{(2)}_{r,t,x,y}) 
	<L_{t-s},
	\]
	where the last inequality follows from~\eqref{eq:Mrtest3} in Lemma~\ref{lem:Mrtestimates}.
	Therefore, by the same argument as for~\eqref{eq:ELebA1},
	\begin{align} \label{eq:ELebA12}
		&\Es{e^{-\Leb(\{s\in [0,t]:\xi^t_{x,y}(s)<L_{t-s}\})}\1_{A^{+}_{r,t,x,y}(j)}\1_{\{S_{r,t,x,y}\ge t/2\}}} \notag \\
		&\le \p{A^{+}_{r,t,x,y}(j)\cap \{S_{r,t,x,y}\ge t/2\}}
		\left(e^{-\frac 1 {50}j^{1/3}}+\p{ (D^{(2)}_j)^c \left|A^{+}_{r,t,x,y}(j)\cap \{S_{r,t,x,y}\ge t/2\}\right.}\right).
	\end{align}
	On the event $A^{+}_{r,t,x,y}(j)\cap \{S_{r,t,x,y}\ge t/2\}$, letting $S^{(2)}=S^{(2)}_{r,t,x,y}$, by~\eqref{eq:Aj1Aj2defn} and then by our choice of $x$ and $y$,
	\begin{align*}
		\xi^t_{x,y}(S^{(2)})-y> -(t-S^{(2)})+\frac{t-S^{(2)}}t (x-y)
		&\ge -(t-S^{(2)}) +\frac{t-S^{(2)}}t (L_t-\beta_r(t)-20t)\\
		&\ge -23 (t-S^{(2)}),
	\end{align*}
	where the last inequality follows for $t$ sufficiently large by Lemma~\ref{lem:Lttsqrt2} and Lemma~\ref{lem:betartbound}.
Moreover, $S^{(2)}_{r,t,x,y}-\frac 12 t$ is a stopping time for the process $(\xi^t_{x,y}(s),s\in [0,t])$.	
	Therefore, using the same argument as in~\eqref{eq:Djcbound} and~\eqref{eq:Djcbound2}, for $r$ (and hence $j$) sufficiently large,
	\begin{align*}
		\p{ (D^{(2)}_j)^c \left|A^{+}_{r,t,x,y}(j)\cap \{S_{r,t,x,y}\ge t/2\}\right.}
		&\le  e^{-j^{1/3}}.
	\end{align*}
	By~\eqref{eq:ELebA12}, it follows that
	\begin{align*}
		\Es{e^{-\Leb(\{s\in [0,t]:\xi^t_{x,y}(s)<L_{t-s}\})}\1_{A^{+}_{r,t,x,y}(j)}\1_{\{S_{r,t,x,y}\ge t/2\}}}
		&\le 
		2e^{-\frac 1 {50}j^{1/3}}\p{A^{+}_{r,t,x,y}(j)\cap \{S_{r,t,x,y}\ge t/2\}},
	\end{align*}
	and by~\eqref{eq:LebAj1S1} this completes the proof.
\end{proof}
We can now use Lemmas~\ref{lem:e-LebA1j} and~\ref{lem:Gcxyr} to give an upper bound on the contribution to the expression for $U(t,x)$ in~\eqref{eq:UtxBBformula}, for large $r$ and large $t$, from trajectories $(\xi^t_{x,y}(s),0\le s \le t)$ such that $\xi^t_{x,y}(s)\le \underline{\mathcal M}_{r,t}(t-s)$ for some $s\in [3r,t-3r]$ (with $x\ge L_t$, $y\ge \beta_r(t)$ not too large).
This result corresponds to~\cite[Proposition~7.3]{Bramson1983}.
\begin{prop} \label{prop:Gxy}
Suppose $U_0$ satisfies Assumption~\ref{assum:standing assumption ic} and~\eqref{eq:assump_infmass1}, and let $(U(t,x),L_t)$ denote the solution of~\eqref{eq:FBP_CDF}.
	Define $\beta_r(t)$ as in~\eqref{eq:betadef}.
For $r\ge 2$ sufficiently large and $t\ge 6r$,
for $y\in [\beta_r(t),\beta_r(t)+20t]$ and $x\in [L_t,L_t+20t]$,
	\[
	\Es{e^{-\Leb(\{s\in [0,t]:\xi^t_{x,y}(s)<L_{t-s}\})}\1_{E_{r,t,x,y}(3r)}}\le r^{-2}\p{(E_{r,t,x,y})^c}.
	\]
\end{prop}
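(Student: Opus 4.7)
The plan is to decompose $E_{r,t,x,y}(3r)$ according to where the crossing time $S_{r,t,x,y}$ falls. Since $E_{r,t,x,y}(3r) = \{S_{r,t,x,y}\in[3r, t-3r]\}$ is partitioned by the events $A_{r,t,x,y}(j) = \{S_{r,t,x,y}\in I_j\}$ for $j\in \mathbb{N}\cap [\lceil 3r\rceil,\lfloor t/2\rfloor]$, and each $A_{r,t,x,y}(j) = A^+_{r,t,x,y}(j)\sqcup A^-_{r,t,x,y}(j)$, we can write
\[
\Es{e^{-\Leb(\{s\in [0,t]:\xi^t_{x,y}(s)<L_{t-s}\})}\1_{E_{r,t,x,y}(3r)}} = \sum_{j}\Big(\Es{e^{-\Leb(\cdots)}\1_{A^+_{r,t,x,y}(j)}} + \Es{e^{-\Leb(\cdots)}\1_{A^-_{r,t,x,y}(j)}}\Big).
\]
For each $j$, the $A^+$ term is controlled directly by Lemma~\ref{lem:e-LebA1j}: $\Es{e^{-\Leb(\cdots)}\1_{A^+_{r,t,x,y}(j)}}\le 2e^{-j^{1/3}/50}\p{A_{r,t,x,y}(j)}$.

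For the $A^-$ term, I would mirror the proof of Lemma~\ref{lem:e-LebA1j} but exploiting the stronger constraint that $\xi^t_{x,y}(S)\le \text{mean}(S) - (S\wedge (t-S))$ on $A^-_{r,t,x,y}(j)$. Combining this with Lemma~\ref{lem:lrtLs}, which gives $L_{t-S}\le \ell_{r,t}(t-S) + \frac{3}{2\sqrt{2}}\log((S\wedge(t-S))+1) + C_2\le \text{mean}(S) + O(\log j)$, one obtains that $\xi^t_{x,y}(S)\le L_{t-S} - j/2$ on $A^-_{r,t,x,y}(j)$ for $j$ large enough. Since this drop is $\ge j/2\gg j^{1/3}$, the same kind of short-interval Brownian-bridge continuity estimate used in the proof of Lemma~\ref{lem:e-LebA1j} (and in fact with the same interval length $j^{1/3}/50$) ensures that the bridge remains below $L_{t-s}$ throughout an interval of length $j^{1/3}/50$ around $S$ with conditional probability $\ge 1 - e^{-cj^{1/3}}$. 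This yields an analogous bound $\Es{e^{-\Leb(\cdots)}\1_{A^-_{r,t,x,y}(j)}}\le C e^{-c j^{1/3}}\p{A_{r,t,x,y}(j)}$.

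To sum the contributions, I would apply Lemma~\ref{lem:Gcxyr} to write $\p{A_{r,t,x,y}(j)}\le \p{(E_{r,t,x,y}(j))^c}\le C_3(j/r)\p{(E_{r,t,x,y})^c}$ for each $j\ge \lceil 3r\rceil$. Combining,
\[
\Es{e^{-\Leb(\cdots)}\1_{E_{r,t,x,y}(3r)}}\le \frac{C'}{r}\Big(\sum_{j\ge \lceil 3r\rceil}j\,e^{-c j^{1/3}}\Big)\p{(E_{r,t,x,y})^c}
\]
for an appropriate constant $C'$. A routine substitution $u=j^{1/3}$ shows the inner sum is $O(r^{5/3}e^{-c(3r)^{1/3}})$, which is $o(r^{-1})$ as $r\to\infty$. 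Hence for $r$ sufficiently large the whole right-hand side is at most $r^{-2}\p{(E_{r,t,x,y})^c}$, as required.

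The main obstacle is the $A^-$ case. The deeper dip makes the underlying geometric estimate easier (there is more room between $\xi(S)$ and $L_{t-S}$), but the conditional law of the bridge on $A^-$ differs substantially from that on $A^+$: one no longer has a useful lower bound on $\xi^t_{x,y}(S) - x$, so the bridge from $\xi(S)$ to $x$ may have large positive drift. Care is therefore needed in estimating $\p{(D^{(1)}_j)^c\mid A^-_{r,t,x,y}(j)}$, and in arguing that the boundary $L_{t-s}$ does not move away from $L_{t-S}$ too quickly over the short interval in question (for which one can invoke Lemma~\ref{lem:boundary locally Lipschitz from the left} and the linear approximation to $L$ implicit in Lemma~\ref{lem:lrtLs}). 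Once this estimate is established, the summation step and the comparison with $\p{(E_{r,t,x,y})^c}$ are routine.
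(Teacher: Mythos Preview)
Your decomposition and treatment of the $A^+$ terms match the paper exactly. The difference is in how you handle the $A^-$ terms, and here the paper's route is both simpler and avoids the obstacle you flag.

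Rather than trying to show that on $A^-_{r,t,x,y}(j)$ the bridge stays below $L$ for a while (which, as you correctly note, is awkward because there is no lower bound on $\xi^t_{x,y}(S)$ and hence the drift of the subsequent bridge segment can be arbitrarily large), the paper simply bounds $e^{-\Leb(\cdots)}\le 1$ and shows that $\p{A^-_{r,t,x,y}(j)}$ is itself exponentially small. Indeed, $A^-_{r,t,x,y}(j)$ requires $\xi^t_{0,0}(s)\le -(s\wedge(t-s))$ for some $s\in I_j$, i.e.\ the centred bridge must be at distance $\ge j$ below zero at a time of order $j$ from an endpoint. By Lemma~\ref{lem:BBfacts}(a) and the reflection principle this has probability $\le 6e^{-j/10}$. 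Combined with Lemma~\ref{lem:BBfacts}(d), which lets one factor out the event $(E_{r,t,x,y}(j+1))^c$, this gives
\[
\p{A^-_{r,t,x,y}(j)}\le 6e^{-j/10}\,\p{(E_{r,t,x,y}(j+1))^c},
\]
which is even stronger decay in $j$ than you were aiming for. The summation then proceeds exactly as you describe, via Lemma~\ref{lem:Gcxyr}.

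So your approach could plausibly be pushed through with more work (e.g.\ conditioning further on the value of $\xi^t_{x,y}(S)$ to control the drift), but the paper's observation that the \emph{probability} of $A^-$ is already small makes the Lebesgue-measure argument unnecessary for that half of the decomposition.
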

\begin{proof}
Take $r$ sufficiently large that Lemmas~\ref{lem:Gcxyr} and~\ref{lem:e-LebA1j} hold.
	By the definitions of the events $E_{r,t,x,y}(3r)$, $A^+_{r,t,x,y}(j)$ and $A^-_{r,t,x,y}(j)$ in~\eqref{eq:Gxtdefn} and~\eqref{eq:Aj1Aj2defn}, we can write
	\begin{align} \label{eq:sumoverj}
		&\Es{e^{-\Leb(\{s\in [0,t]:\xi^t_{x,y}(s)<L_{t-s}\})}\1_{E_{r,t,x,y}(3r)}} \notag \\
		&\le \sum_{j=\lfloor 3r \rfloor }^{\lfloor t/2\rfloor }\Es{e^{-\Leb(\{s\in [0,t]:\xi^t_{x,y}(s)<L_{t-s}\})}\1_{A^+_{r,t,x,y}(j)}}
		+\sum_{j=\lfloor 3r \rfloor }^{\lfloor t/2 \rfloor }\p{A^-_{r,t,x,y}(j)}.
	\end{align}
	For the first sum on the right-hand side of~\eqref{eq:sumoverj}, by Lemma~\ref{lem:e-LebA1j}, 
	\begin{align} \label{eq:sumA+}
		\sum_{j=\lfloor 3r \rfloor }^{\lfloor t/2\rfloor }\Es{e^{-\Leb(\{s\in [0,t]:\xi^t_{x,y}(s)<L_{t-s}\})}\1_{A^+_{r,t,x,y}(j)}}
		&\le \sum_{j=\lfloor 3r \rfloor }^{\lfloor t/2\rfloor}2e^{-\frac 1 {50}j^{1/3}}\p{A^+_{r,t,x,y}(j)} \notag \\
		&\le \sum_{j=\lfloor 3r \rfloor }^{\lfloor t/2\rfloor -1}2e^{-\frac 1 {50}j^{1/3}}\p{(E_{r,t,x,y}(j+1))^c}
		+2e^{-\frac 1 {50}\lfloor t/2\rfloor^{1/3}},
	\end{align}
	where the second inequality follows from the fact that, by~\eqref{eq:Aj1Aj2defn} and~\eqref{eq:Gxtdefn}, for $j\le \lfloor t/2 \rfloor-1$ we have $A^+_{r,t,x,y}(j)\subseteq A_{r,t,x,y}(j)\subseteq (E_{r,t,x,y}(j+1))^c$.
	For the second sum on the right hand side of~\eqref{eq:sumoverj}, by~\eqref{eq:Aj1Aj2defn} and~\eqref{eq:Gxtdefn} again, for $\lfloor 3r \rfloor \le j \le \lfloor t/2\rfloor-1$ we have
	\begin{align} \label{eq:A-bound}
		\p{A^-_{r,t,x,y}(j)}
		&\le \p{\{\exists s\in I_j : \xi^t_{x,y}(s)\le -(s\wedge (t-s))+\tfrac s t y +\tfrac{t-s}t x \}\cap  (E_{r,t,x,y}(j+1))^c} \notag \\
		&\le \p{\exists s\in I_j :\xi^t_{0,0}(s)\le -(s\wedge (t-s))}
		\p{ (E_{r,t,x,y}(j+1))^c},
	\end{align}
	where the last inequality follows from Lemma~\ref{lem:BBfacts}(d).
	By~\eqref{eq:Aj1Aj2defn} we also have
	\begin{equation} \label{eq:A-t2bound}
	\p{A^-_{r,t,x,y}(\lfloor t/2 \rfloor)}
	\le \p{\exists s\in I_{\lfloor t/2 \rfloor} : \xi^t_{0,0}(s)\le -(s\wedge (t-s)) }.
	\end{equation}
	By Lemma~\ref{lem:BBfacts}(a),
	for $\lfloor 3r \rfloor \le j\le \lfloor t/2\rfloor $ we can write
	\begin{align*}
		\p{\exists s\in [j,(j+1)\wedge \tfrac t2] :\xi^t_{0,0}(s)\le -(s\wedge (t-s))}
		&\le \psub{0}{\exists s\in [j,(j+1)\wedge \tfrac t2] : B_{st/(t-s)}\le -j }\\
		&\le \psub{0}{\exists s\in [j,2(j+1)]: B_s\le -j  }\\
		&\le \psub{0}{B_j\le -j/2}+\psub{0}{\sup_{s\le j+2}B_s\ge j/2}\\
		&\le e^{-j/8}+2e^{-j^2/8(j+2)},
	\end{align*}
	where the last inequality follows by the reflection principle and a Gaussian tail bound.
	Hence using the same argument for $[(t-j-1)\vee \tfrac t2,t-j]$ in place of $[j,(j+1)\wedge \tfrac t2]$, for $r$ sufficiently large, for 
	$\lfloor 3r \rfloor \le j\le \lfloor t/2\rfloor $ we have
	\begin{equation} \label{eq:probA-Ij}
	\p{\exists s\in I_j : \xi^t_{0,0}(s)\le -(s\wedge (t-s)) }\le 6e^{-j/10}.
	\end{equation}
	Therefore, by~\eqref{eq:sumoverj}-\eqref{eq:probA-Ij},
	and then using Lemma~\ref{lem:Gcxyr} in the second inequality,
	\begin{align*}
		&\Es{e^{-\Leb(\{s\in [0,t]:\xi^t_{x,y}(s)<L_{t-s}\})}\1_{E_{r,t,x,y}(3r)}}\\
		&\le \sum_{j=\lfloor 3r \rfloor }^{\lfloor t/2 \rfloor-1}(2e^{-\frac 1 {50}j^{1/3}}+6e^{-j/10})\p{(E_{r,t,x,y}(j+1))^c}
		+2e^{-\frac 1 {50}\lfloor t/2 \rfloor^{1/3} }+6e^{-\lfloor t/2 \rfloor/10}\\
		&\le \sum_{j=\lfloor 3r \rfloor }^{\lfloor t/2 \rfloor-1}(2e^{-\frac 1 {50}j^{1/3}}+6e^{-j/10})C_3 \frac{j+1}r \p{(E_{r,t,x,y})^c}
		+(2e^{-\frac 1 {50}\lfloor t/2 \rfloor^{1/3} }+6e^{-\lfloor t/2 \rfloor/10})\frac{\p{(E_{r,t,x,y})^c}}{2C_3^{-1}rt^{-1}}
		\\
		&\le r^{-2}\p{(E_{r,t,x,y})^c}
	\end{align*}
	for $r$ sufficiently large (and $t\ge 6r$), which completes the proof.
\end{proof}

\subsection{Asymptotic formula for $U(t,x)$} \label{subsec:inf_asympt}

In this subsection, we will use results from Sections~\ref{subsec:inf_belowbeta} and~\ref{subsec:inf_belowM}
to prove an upper bound on $U(t,x)$ (see Proposition~\ref{prop:upperbdinfmass} below). This upper bound will then be used to prove Corollary~\ref{cor:x2x1}, which will in turn allow us to control the tail of $U(t,\cdot)$ in the proof of Theorem~\ref{theo:infinitemassconv} in Section~\ref{subsec:inf_thm1pf}.
We will then establish an asymptotic formula for $U(t,x)$, for $t$ and $x-L_t$ large (in Proposition~\ref{prop:s0}), which will be used in the proof of Theorem~\ref{theo:infmassfront} in Section~\ref{subsec:inf_thm2pf}.

The following lemma will allow us to show that we can neglect the contribution to the expression for $U(t,x)$ in Lemma~\ref{lem:FKforinfinitemass} from trajectories $(B_s)_{s\in [0,t]}$ with $B_t=\mathcal O(1)$. The infinite initial mass condition~\eqref{eq:assump_infmass2} is crucial here; see~\cite[Proof of Lemma 9.2]{Bramson1983}.
\begin{lem} \label{lem:Bt_ends_big}
Suppose $U_0$ satisfies Assumption~\ref{assum:standing assumption ic},~\eqref{eq:assump_infmass1} and~\eqref{eq:assump_infmass2}. 
	For some $r_0\in (0,\infty)$ and $\delta \in (0,1/2)$, suppose for each $t\ge 2r_0$ that $\ell_t:[0,t]\to \R$
	 with $\ell_t(s)\ge \sqrt{2}s-s^{\delta }$ $\forall s\in [r_0,t-r_0]$.
	Then for $r_1,r_2\ge 0$, there exists $M(t)\to \infty$ as $t\to \infty$ such that
	\begin{equation} \label{eq:Bt_ends_big}
		\frac{\Esub{x}{U_0(B_t)\1_{\{B_t\le M(t), B_s>\ell_t(t-s)\; \forall s\in [r_1,t-r_2]\}}}}{\Esub{x}{U_0(B_t)\1_{\{B_s>\ell_t(t-s)\; \forall s\in [r_1,t-r_2]\}}}}\to 0 \quad \text{as }t\to \infty
	\end{equation}
	uniformly in $x\ge \sqrt{2}t-t^{\delta} +1,$ if the denominator in~\eqref{eq:Bt_ends_big} is positive.
	
	Let $(U(t,x),L_t)$ denote the solution of~\eqref{eq:FBP_CDF}, and define $\underline{\mathcal M}_{r,t}$ as in~\eqref{eq:Munderdefn}.
	In particular, for any $r\ge 2$, the limit in~\eqref{eq:Bt_ends_big} with $r_1,r_2>0$ and $\ell_t(s)=\underline{\mathcal M}_{r,t}(s)$ holds uniformly in $x\ge L_t$, and the limit in~\eqref{eq:Bt_ends_big} with $\ell_t(s)=L_s$ holds uniformly in $x\ge L_t+1$.
\end{lem}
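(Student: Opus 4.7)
The plan is to rewrite both expectations in~\eqref{eq:Bt_ends_big} via the Brownian bridge decomposition at the endpoint $y = B_t$:
\[
\Esub{x}{U_0(B_t) F(B_t) \1_{\{B_s > \ell_t(t-s)\;\forall s\in [r_1,t-r_2]\}}} = \int_\R U_0(y) F(y) \frac{e^{-(x-y)^2/(2t)}}{\sqrt{2\pi t}}\, q_t(x,y)\, dy,
\]
where $F$ is either $1$ (denominator) or $\1_{(-\infty,M(t)]}$ (numerator), and $q_t(x,y) := \p{\xi^t_{x,y}(s) > \ell_t(t-s)\;\forall s \in [r_1, t-r_2]}$. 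Expanding the Gaussian with $a := x - \sqrt 2 t \ge -t^\delta + 1$, the $y$-independent prefactor $\frac{e^{-t - \sqrt 2 a - a^2/(2t)}}{\sqrt{2\pi t}}$ is common to numerator and denominator, and the ratio in~\eqref{eq:Bt_ends_big} reduces to
\[
\frac{\int_{-\infty}^{M(t)} U_0(y)\, e^{\sqrt 2 y + ay/t - y^2/(2t)}\, q_t(x,y)\,dy}{\int_\R U_0(y)\, e^{\sqrt 2 y + ay/t - y^2/(2t)}\, q_t(x,y)\,dy}.
\]

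The heart of the argument is a Brownian bridge lower bound of the form
\[
q_t(x, y) \ge c\, \frac{(a + t^\delta)(y + K)}{t} \qquad \text{for all } y \ge y_0,
\]
where $K, y_0$ and $c$ depend only on $r_0, r_1, r_2$ and $\delta$. To prove this I would shift $\xi^t_{x,y}(s)$ vertically by $\sqrt{2}(t-s) - t^\delta$, obtaining a bridge from $a + t^\delta \ge 1$ to $y + t^\delta$ that must stay above $t^\delta - (t-s)^\delta \ge 0$ on the inner interval $[r_0, t-r_0]$, and apply the ballot identity of Lemma~\ref{lem:BBfacts}(b); the outer intervals $[r_1,r_0)\cup (t-r_0, t-r_2]$ are controlled by conditioning on the bridge values at $r_0$ and $t-r_0$ and applying Lemma~\ref{lem:BBfacts}(c)--(d). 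The hypothesis $\delta<1/2$ is essential here: it guarantees that the shifted barrier lives on scale $t^\delta$, which is negligible relative to the bridge's typical scale $\sqrt{s(t-s)/t}$. Combined with $q_t \le 1$ and the elementary bound $|ay/t - y^2/(2t)| \le 1$ valid for $|a| \le t^\delta$ and $y \le M(t) = o(\sqrt t/t^\delta)$, this bound reduces the problem to showing
\[
\frac{t \int_{-\infty}^{M(t)} U_0(y)\, e^{\sqrt 2 y}\,dy}{\int_{y_0}^{\sqrt t} y\, e^{\sqrt 2 y}\, U_0(y)\,dy} \longrightarrow 0,
\]
which, by the infinite initial mass hypothesis~\eqref{eq:assump_infmass2} applied in the slice $[y_0, \sqrt t]$, holds for any sequence $M(t)\to\infty$ growing slowly enough.

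The main obstacle is the bridge lower bound itself: since the hypothesis controls $\ell_t$ only on $[r_0, t-r_0]$ and supplies no upper bound on $\ell_t$ anywhere, one cannot simply substitute a linear barrier everywhere. The positivity of the denominator (part of the hypothesis) is used to obtain a quantitative comparison on the outer intervals, so that the constraint there contributes only a bounded factor. For the final two claims of the lemma, the hypothesis $\ell_t(s) \ge \sqrt 2 s - s^\delta$ on $[r_0, t-r_0]$ is verified for $\ell_t(s) = L_s$ (with any $\delta>0$, using Lemma~\ref{lem:LHtlower} together with the stretching-lemma comparison $L_s \ge L_1 + L^H_{s-1}$ from the proof of Lemma~\ref{lem:Lttsqrt2}) and for $\ell_t(s) = \underline{\mathcal M}_{r,t}(s)$ (with $\delta = 1/3 + \varepsilon$ for any small $\varepsilon>0$, using~\eqref{eq:Mrtest4} in Lemma~\ref{lem:Mrtestimates} together with the same lower bound on $L$).
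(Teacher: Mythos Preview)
Your central step --- the lower bound $q_t(x,y)\ge c\,(a+t^\delta)(y+K)/t$ --- does not follow from the hypothesis. The lemma assumes only that $\ell_t(s)\ge\sqrt 2 s-s^\delta$, and a \emph{higher} barrier makes the event $\{\xi^t_{x,y}(s)>\ell_t(t-s)\;\forall s\}$ \emph{less} likely, so a lower bound on $\ell_t$ can only yield an \emph{upper} bound on $q_t$. After your vertical shift the shifted barrier satisfies $\ell_t(t-s)-\sqrt 2(t-s)+t^\delta\ge t^\delta-(t-s)^\delta\ge 0$, so the ballot identity gives $q_t(x,y)\le 1-\exp\{-2(a+t^\delta)(y+t^\delta)/t\}$, the opposite inequality to the one you claim. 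With no upper bound on $\ell_t$ assumed in the general statement, $q_t$ cannot be lower-bounded directly; your appeal to ``positivity of the denominator'' for the outer intervals does not supply one either.

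The paper cites Bramson's Lemma~9.2 for the main statement; the mechanism there is to compare \emph{ratios} rather than bound $q_t$. Setting $\tilde\ell_t(s)=\sqrt 2 s-s^\delta$ on $[r_0,t-r_0]$ and $-\infty$ elsewhere, one has $\tilde\ell_t\le\ell_t$, so by Lemma~\ref{lem:BBfacts}(e) the likelihood ratio $y\mapsto q_t(x,y)/\tilde q_t(x,y)$ is non-decreasing, and hence by monotone-likelihood-ratio ordering
\[
\frac{\int_{-\infty}^{M(t)} U_0(y)\,p_t(x,y)\,q_t(x,y)\,dy}{\int_\R U_0(y)\,p_t(x,y)\,q_t(x,y)\,dy}
\;\le\;
\frac{\int_{-\infty}^{M(t)} U_0(y)\,p_t(x,y)\,\tilde q_t(x,y)\,dy}{\int_\R U_0(y)\,p_t(x,y)\,\tilde q_t(x,y)\,dy}.
\]
The right-hand side now involves the explicit barrier $\tilde\ell_t$, for which your ballot-identity and infinite-mass argument goes through legitimately (since $\tilde\ell_t$ is bounded above). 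Your verification of the hypothesis in the two special cases $\ell_t=\underline{\mathcal M}_{r,t}$ and $\ell_t=L_\cdot$ via~\eqref{eq:Mrtest4} and the stretching comparison is correct and matches the paper.
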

\begin{proof}
	The main statement is identical to~\cite[Lemma 9.2]{Bramson1983}, where the lower bound on $\ell_t$ is used to show that it suffices to prove similar estimates for a Brownian motion staying above a straight line, and then the
	 `infinite initial mass' condition~\eqref{eq:assump_infmass2} implies~\eqref{eq:Bt_ends_big}. Therefore, to complete the proof, it suffices to show that for $r\ge 2$, there exist $r_0\in (0,\infty)$ and $\delta \in (0,1/2)$ such that for $t$ sufficiently large,
	$\underline{\mathcal M}_{r,t}(s)\ge \sqrt{2}s-s^{\delta }$ $\forall s\in [r_0,t-r_0]$ and $L_s\ge \sqrt{2}s-s^{\delta}+1$ $\forall s\ge r_0$.
	
	Fix $\delta \in (1/3,1/2)$. 	
	Recall from Section~\ref{subsec:notation} that we let
	$(U^H(t,x),L^H_t)$ denote the solution of~\eqref{eq:FBP_CDF} with Heaviside initial condition. By Lemma~\ref{lem:less stretching means slower boundary} and then by Lemma~\ref{lem:LHtlower}, for $s\ge 1$ we have
	\begin{equation} \label{eq:Lslowerbddelta}
	L_s\ge L^H_{s-1}+L_1\ge \sqrt{2}s-\tfrac 13 s^{\delta}+1
	\end{equation}
	for $s$ sufficiently large.
	By~\eqref{eq:Mrtest4} in Lemma~\ref{lem:Mrtestimates}, there exists $R\in (0,\infty)$ such that for $t$ sufficiently large, for $s\in [R,t-R]$,
	\[
	\underline{\mathcal M}_{r,t}(s)\ge L_{s-s^{1/3}-1}-\tfrac 12 s^\delta \ge  \sqrt{2} s -s^{\delta} ,
	\]
	where the second inequality follows from~\eqref{eq:Lslowerbddelta} by taking $R$ sufficiently large.
\end{proof}
We can now combine Proposition~\ref{prop:belowbeta}, Proposition~\ref{prop:Gxy} and Lemma~\ref{lem:Bt_ends_big} to prove the following upper bound on the expression for $U(t,x)$ in~\eqref{eq:UtxBBformula}.
 This result corresponds to~\cite[Proposition~9.1]{Bramson1983}.
\begin{prop} \label{prop:upperbdinfmass}
Suppose $U_0$ satisfies Assumption~\ref{assum:standing assumption ic},~\eqref{eq:assump_infmass1} and~\eqref{eq:assump_infmass2}, and let $(U(t,x),L_t)$ denote the solution of~\eqref{eq:FBP_CDF}.
Define $\beta_r(t)$ as in~\eqref{eq:betadef} and define $\underline{\mathcal M}_{r,t}$ as in~\eqref{eq:Munderdefn}.
	There exists $A^{(1)}_r\to 1$ as $r\to \infty$, and there exists $N_r(t)\ge \beta_r(t)$ for $r\ge 2$ and $t\ge r$, with $N_r(t)\to \infty$ as $t\to \infty$ for each $r\ge 2$, such that the following holds.
	For $r$ sufficiently large, for $t\ge 6r$ and $x\in [L_t,L_t+7t]$,
	\begin{align*}
		U(t,x)
		&\le A^{(1)}_r e^t \int_{N_r(t)}^\infty U_0(y) \frac {e^{-\frac 1 {2t}(x-y)^2}} {\sqrt{2\pi t}}  \p{\xi^t_{x,y}(s)>\underline {\mathcal M}_{r,t}(t-s)\; \forall s\in [3r,t-3r]} dy.
	\end{align*}
\end{prop}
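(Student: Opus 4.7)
The strategy is to start from the Feynman--Kac formula of Lemma~\ref{lem:FKforinfinitemass}, rewritten as
\[
U(t,x) = e^t \Esub{x}{U_0(B_t)\, e^{-\Leb(\{s\in [0,t]\,:\, B_s < L_{t-s}\})}},
\]
and then to condition on $B_t = y$, giving
\[
U(t,x) = e^t \int_{-\infty}^{\infty} U_0(y)\, \frac{e^{-(x-y)^2/(2t)}}{\sqrt{2\pi t}}\, \Es{e^{-\Leb(\{s\in[0,t]\,:\,\xi^t_{x,y}(s)< L_{t-s}\})}}\, dy.
\]
I will split this $y$-integral into the three ranges $(-\infty,\beta_r(t)]$, $(\beta_r(t), \beta_r(t)+20t]$ and $(\beta_r(t)+20t, \infty)$, bound each via one of the preceding results, and finally push the lower limit of integration up to $N_r(t)$ via Lemma~\ref{lem:Bt_ends_big}.

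Proposition~\ref{prop:belowbeta} handles the first range: its contribution is at most $\epsilon_r U(t,x)$ with $\epsilon_r \to 0$ as $r\to \infty$, yielding
\[
(1-\epsilon_r) U(t,x) \le e^t \int_{\beta_r(t)}^{\infty} U_0(y)\, \phi_t(x-y)\, \Es{e^{-\Leb(\cdots)}}\, dy,
\]
where $\phi_t$ is the centred Gaussian density of variance $t$. For the second range I split the expectation according to the event $E_{r,t,x,y}(3r)$ of Section~\ref{subsec:inf_belowM}: on its complement $e^{-\Leb(\cdots)} \le 1$, and on $E_{r,t,x,y}(3r)$ itself, Proposition~\ref{prop:Gxy} combined with the inclusion $E_{r,t,x,y}(3r) \subseteq E_{r,t,x,y}$ (so that $\p{E_{r,t,x,y}^c} \le \p{E_{r,t,x,y}(3r)^c}$) gives $\Es{e^{-\Leb(\cdots)}\1_{E_{r,t,x,y}(3r)}} \le r^{-2} \p{E_{r,t,x,y}(3r)^c}$. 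Summing these two bounds yields
\[
\Es{e^{-\Leb(\cdots)}} \le (1+r^{-2})\, \p{\xi^t_{x,y}(s) > \underline{\mathcal M}_{r,t}(t-s) \ \forall s \in [3r,t-3r]},
\]
which is precisely the integrand of the target bound, up to the factor $1+r^{-2}$.

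For the third range, trivially $\Es{e^{-\Leb(\cdots)}} \le 1$. I will further show that for $y > \beta_r(t) + 20t$ and $x \in [L_t, L_t+7t]$, the bridge probability $P_r(y) := \p{\xi^t_{x,y}(s) > \underline{\mathcal M}_{r,t}(t-s) \ \forall s \in [3r, t-3r]}$ satisfies $P_r(y) \ge 1 - \alpha_r$ with $\alpha_r \to 0$ as $r\to\infty$: subtracting $\ell_{r,t}(t-s)$ from $\xi^t_{x,y}(s)$ produces a Brownian bridge from $x-L_t \ge 0$ to $y-\beta_r(t) > 20t$, whose mean exceeds $0$ by at least $20s$ on $[3r, t-3r]$, and a reflection/Gaussian union-bound estimate in the spirit of Lemma~\ref{lem:Gcxyr} controls the probability of dipping below $0$; combined with Lemma~\ref{lem:Mrtestimates} (giving $\underline{\mathcal M}_{r,t} \le \ell_{r,t}$ on $[2r, t-r]$) and Lemma~\ref{lem:BBfacts}(d), this provides the required lower bound on $P_r(y)$. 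Hence $\Es{e^{-\Leb(\cdots)}} \le 1 \le (1-\alpha_r)^{-1} P_r(y)$ on the third range, so its contribution is absorbed into the main integral at the cost of a factor $(1-\alpha_r)^{-1}$. Finally, Lemma~\ref{lem:Bt_ends_big} with $\ell_t = \underline{\mathcal M}_{r,t}$ and $r_1 = r_2 = 3r$ (its hypotheses are verified in that lemma's statement) yields a function $M_r(t)\to\infty$ such that $\int_{-\infty}^{M_r(t)} U_0\phi_t P_r\, dy \le \eta_r(t) \int_{-\infty}^{\infty} U_0\phi_t P_r\, dy$ with $\eta_r(t)\to 0$; setting $N_r(t) := M_r(t) \vee \beta_r(t)$ (choosing $M_r(t)$ large enough to dominate $\beta_r(t)$, which is permitted by the freedom in Lemma~\ref{lem:Bt_ends_big}) and rearranging gives $\int_{\beta_r(t)}^{\infty}(\cdots)\,dy \le (1-\eta_r(t))^{-1} \int_{N_r(t)}^{\infty} (\cdots)\,dy$. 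Combining all the bounds, the target inequality holds with $A^{(1)}_r := \limsup_{t\to\infty}(1+r^{-2})(1-\alpha_r)^{-1}(1-\eta_r(t))^{-1}/(1-\epsilon_r)$, which tends to $1$ as $r\to\infty$.

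The main obstacle will be the tail $y > \beta_r(t) + 20t$: Proposition~\ref{prop:Gxy}'s range restriction cannot be relaxed, and $U(t,x)$ itself can be exponentially small on the $x$-range $[L_t, L_t + 7t]$, so the crude Gaussian estimate $e^t\phi_t(x-y) \le e^{-ct}$ is not directly comparable to $U(t,x)$. The resolution sketched above---showing $P_r(y) \to 1$ uniformly on this tail via an entropic repulsion estimate for a bridge whose mean grows linearly away from the barrier---requires care to be uniform in $x \in [L_t, L_t + 7t]$ and in large $t$, and this will be the most delicate technical step of the proof.
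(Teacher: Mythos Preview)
Your decomposition into three $y$-ranges and the handling of the middle range via Proposition~\ref{prop:Gxy}, as well as the final push from $\beta_r(t)$ up to $N_r(t)$ via Lemma~\ref{lem:Bt_ends_big}, match the paper's argument exactly. Two points deserve comment.

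\textbf{The tail $y>\beta_r(t)+20t$.} Your proposed route---showing $P_r(y)\ge 1-\alpha_r$ uniformly via an entropic-repulsion estimate for a bridge whose mean grows linearly away from the barrier---can be carried out (condition on $\xi^t_{x-L_t,y-\beta_r(t)}(3r)$, whose mean exceeds $60r$ with variance $\le 3r$, then apply Lemma~\ref{lem:BBfacts}(b) on the remaining bridge), but it is considerably more work than the paper's argument. The paper disposes of this range in two lines: a crude Feynman--Kac lower bound gives $U(t,x)\ge e^{-41t}$ for $x\in[0,9t]$ (using only that $U_0(y)\ge\tfrac12$ for $y$ far to the left), while a Gaussian tail bound gives
\[
e^t\int_{\beta_r(t)+20t}^{\infty}U_0(y)\,\frac{e^{-(x-y)^2/(2t)}}{\sqrt{2\pi t}}\,dy
\le e^t e^{-(10t)^2/(2t)} \le e^{-49t}\le e^{-8t}U(t,x),
\]
so the entire tail is absorbed into $U(t,x)$ itself without ever comparing it to the target integral. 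This avoids your ``most delicate technical step'' entirely.

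\textbf{The constant $A^{(1)}_r$.} Your definition $A^{(1)}_r:=\limsup_{t\to\infty}(1+r^{-2})(1-\alpha_r)^{-1}(1-\eta_r(t))^{-1}/(1-\epsilon_r)$ does not yield the claimed inequality for \emph{all} $t\ge 6r$: the factor $(1-\eta_r(t))^{-1}$ from Lemma~\ref{lem:Bt_ends_big} is $t$-dependent and only tends to $1$ as $t\to\infty$, so for moderate $t$ it may exceed your $A^{(1)}_r$. The paper repairs this by a case split in the definition of $N_r(t)$: set $N_r(t)=M_r(t)$ only when the factor coming from Lemma~\ref{lem:Bt_ends_big} is at most $1+r^{-1}$, and otherwise set $N_r(t)=\beta_r(t)$ and fall back on the earlier bound with integral from $\beta_r(t)$ (which requires no such factor). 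This makes the effective constant at most $(1+r^{-1})$ for every $t\ge 6r$, while still ensuring $N_r(t)\to\infty$ for each fixed $r$.
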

\begin{proof}
	Using Lemma~\ref{lem:Lttsqrt2} and Lemma~\ref{lem:betartbound}, take $r$ sufficiently large that for $t\ge 6r$ we have $0\le L_t\le 2t$ and $|\beta_r(t)|\le t$.
	Since $U_0$ is non-increasing with $U_0(x)\to 1$ as $x\to -\infty$, there exists $m_0\in \R$ such that $U_0(y)\ge 1/2$ $\forall y\le m_0$.
	Therefore, by the Feynman-Kac formula in Lemma~\ref{lem:FKforinfinitemass}, for $t\ge 6r$ and
	$0\le x\le L_t+7t\le 9t$,
	\begin{align} \label{eq:Ucrudelower}
		U(t,x)
		\ge \frac 12 \int_{-\infty}^{m_0}  \frac{e^{-\frac 1 {2t}(x-y)^2}}{\sqrt{2\pi t}}dy 
		\ge \frac 12 \frac{e^{-\frac 1 {2t}(x+|m_{0}|+1)^2}}{\sqrt{2\pi t}} 
		\ge e^{-41t}.
	\end{align}
	where the second inequality holds since $|x-y|\le x+|m_{0}|+1$ $\forall y\in [m_{0}-1,m_{0}]$, and the last inequality holds for $t$ sufficiently large, since $0\le x \le 9t$.
	Moreover,
	for $0\le x\le L_t+7t\le 9t$, we have 
	\begin{equation} \label{eq:integralcrudeupper}
		\int_{\beta_r(t)+20t}^\infty U_0(y)e^t \frac{e^{-\frac 1 {2t}(x-y)^2}}{\sqrt{2\pi t}}dy
		\le e^t \int_{19t}^\infty  \frac{e^{-\frac 1 {2t}(x-y)^2}}{\sqrt{2\pi t}}dy 
		\le e^t e^{-\frac 1 {2t}(19t-x)^2} 
		\le e^{-49t} 
		\le e^{-8t}U(t,x),
	\end{equation}
	where the second inequality follows by a Gaussian tail bound, and the last inequality follows from~\eqref{eq:Ucrudelower} for $t$ sufficiently large.
	Therefore,
	by Lemma~\ref{lem:FKforinfinitemass},~\eqref{eq:integralcrudeupper} and Proposition~\ref{prop:belowbeta}, there exists $K^{(1)}_r\to 1$ as $r\to \infty$ such that
	for $r$ sufficiently large, for $t\ge 6r$ and $x\in [L_t,L_t+7t]$, 
	\begin{align*}
		U(t,x)&\le K^{(1)}_r\int_{\beta_r(t)}^{\beta_r(t)+20t} U_0(y) \frac{e^{-\frac 1 {2t}(x-y)^2}}{\sqrt{2\pi t}}\Es{e^{t-\Leb(\{s\in [0,t]:\xi^t_{x,y}(s)<L_{t-s}\})}}dy.
	\end{align*}
	By Proposition~\ref{prop:Gxy} and since $(E_{r,t,x,y})^c\subseteq (E_{r,t,x,y}(3r))^c$ by~\eqref{eq:Gxtdefn}, for $r$ sufficiently large, for $t\ge 6r$, $y\in [\beta_r(t),\beta_r(t)+20t]$ and $x\in [L_t,L_t+20t]$ we have
	\[
	\Es{e^{-\Leb(\{s\in [0,t]:\xi^t_{x,y}(s)<L_{t-s}\})}\1_{E_{r,t,x,y}(3r)}}\le r^{-2}\p{(E_{r,t,x,y}(3r))^c}.
	\]
	Therefore, for $r$ sufficiently large, for $t\ge 6r$ and $x\in [L_t,L_t+7t]$ we have
	\begin{align} \label{eq:Uupperbetter}
		U(t,x)&\le K^{(1)}_r \int_{\beta_r(t)}^{\beta_r(t)+20t} U_0(y) \frac{e^{-\frac 1 {2t}(x-y)^2}}{\sqrt{2\pi t}}e^t 
		\left(r^{-2}\p{(E_{r,t,x,y}(3r))^c}+\p{(E_{r,t,x,y}(3r))^c}\right)dy \notag \\
		&= K^{(1)}_r(1+r^{-2})
		\int_{\beta_r(t)}^{\beta_r(t)+20t} U_0(y) \frac{e^{-\frac 1 {2t}(x-y)^2}}{\sqrt{2\pi t}}e^t 
		\p{\xi^t_{x,y}(s)>\underline{\mathcal M}_{r,t}(t-s) \; \forall s\in [3r,t-3r]}dy \notag \\
		&\le K^{(1)}_r(1+r^{-2})
		\Esub{x}{U_0(B_t)e^t\1_{\{B_s>\underline{\mathcal M}_{r,t}(t-s) \; \forall s\in [3r,t-3r]\}}\1_{\{B_t>\beta_r(t)\}}},
	\end{align}
	where the second line follows from the definition of the event $E_{r,t,x,y}(3r)$ in~\eqref{eq:Gxtdefn}.
	
	By Lemma~\ref{lem:Bt_ends_big}, for fixed $r\ge 2$, there exist $K^{(2)}_r(t)\to 1$ as $t\to \infty$ and $M_r(t)\to \infty$ as $t\to \infty$ such that 
	for $t\ge 6r$ and $x\ge L_t$,
	\begin{equation*}
		\Esub{x}{U_0(B_t)\1_{\{B_s>\underline{\mathcal M}_{r,t}(t-s)\; \forall s\in [3r,t-3r]\}}}
		\le K^{(2)}_r(t)\Esub{x}{U_0(B_t)\1_{\{B_t>M_r(t)\}}\1_{\{B_s>\underline{\mathcal M}_{r,t}(t-s)\; \forall s\in [3r,t-3r]\}}}.
	\end{equation*}
	Therefore, for $r$ sufficiently large, for $t\ge 6r$
	and $x\in [L_t,L_t+7t]$,
	\begin{equation} \label{eq:UupperwithM}
		U(t,x)\le K^{(1)}_r(1+r^{-2})K^{(2)}_r(t)\Esub{x}{U_0(B_t)e^t\1_{\{B_s>\underline{\mathcal M}_{r,t}(t-s) \; \forall s\in [3r,t-3r]\}}\1_{\{B_t>M_r(t)\}}}.
	\end{equation}
	For $2\le r\le t$, we now let
	\[
	N_r(t)=\begin{cases} M_r(t) \quad &\text{if }K^{(2)}_r(t)\le 1+r^{-1}\text{ and }M_r(t)\ge \beta_r(t),\\
		\beta_r(t) \quad &\text{otherwise}.
	\end{cases}
	\]
	Note that $N_r(t)\ge \beta_r(t)$ for every $2\le r\le t$, and for fixed $r\ge 2$, since $K^{(2)}_r(t)\le 1+r^{-1}$ for $t$ sufficiently large, we have that for $t$ sufficiently large, $N_r(t)\ge M_r(t)$ and so $N_r(t)\to \infty$ as $t\to \infty$.
	Let $K^{(3)}_r=K^{(1)}_r(1+r^{-2})(1+r^{-1})$.
	Then using~\eqref{eq:UupperwithM} in the case $N_r(t)=M_r(t)$, in which case $K^{(2)}_r(t)\le 1+r^{-1}$, and~\eqref{eq:Uupperbetter} in the case $N_r(t)=\beta_r(t)$,
	for $r$ sufficiently large, for $t\ge 6r$ and $x\in [L_t,L_t+7t]$,
	\[
	U(t,x)
	\le K^{(3)}_r
	\int_{N_r(t)}^{\infty} U_0(y) \frac{e^{-\frac 1 {2t}(x-y)^2}}{\sqrt{2\pi t}}e^t 
	\p{\xi^t_{x,y}(s)>\underline{\mathcal M}_{r,t}(t-s) \; \forall s\in [3r,t-3r]}dy.
	\]
	Then $K^{(3)}_r\to 1$ as $r\to \infty$ and $N_r(t)\to \infty$ as $t\to \infty$ for each $r\ge 2$, with $N_r(t)\ge \beta_r(t)$ for every $2\le r \le t$, as required.
\end{proof}

Suppose $U_0$ satisfies Assumption~\ref{assum:standing assumption ic} and~\eqref{eq:assump_infmass1}.
Then for
$b\in (0,\sqrt{2})$, there exists $T_b(U_0)\in (0,\infty)$ such that
\begin{equation} \label{eq:Tbdefn}
U_0(y) \le e^{-by} \;\;\forall y\ge T_b(U_0).
\end{equation}
The next result, which will follow easily from Proposition~\ref{prop:upperbdinfmass} and standard Brownian bridge estimates, will be one of the main ingredients in the proof of Theorem~\ref{theo:infinitemassconv} in the next subsection.
 This result corresponds to~\cite[Corollary 1 of Proposition 9.1]{Bramson1983}.
\begin{cor} \label{cor:x2x1}
Suppose $U_0$ satisfies Assumption~\ref{assum:standing assumption ic},~\eqref{eq:assump_infmass1} and~\eqref{eq:assump_infmass2}, and let $(U(t,x),L_t)$ denote the solution of~\eqref{eq:FBP_CDF}.
Define $A^{(1)}_r$ as in Proposition~\ref{prop:upperbdinfmass}.
For $r$ sufficiently large, for $t\ge 6r$, $b\in (0,\sqrt 2)$ and $\delta>0$ with $\delta t\ge T_b(U_0)$, for $x_1,x_2\in (L_t,L_t+7t]\cap[\delta t,(b+\delta)t]$ with $x_1\le x_2$, 
	\begin{align*}
		U(t,x_2)&\le A^{(1)}_r e^{-b(x_2-\sqrt{2}t)+\frac 12 (\sqrt{2}-b)^2t} e^{-\frac 1 {2t}((b+\delta) t -x_2)^2}\\
		&\quad +A^{(1)}_r e^t\int_{N_r(t)}^{\infty} U_0(y)\frac{e^{-\frac 1 {2t}(x_1-y)^2}}{\sqrt{2\pi t}} e^{-(\frac{L_t} t-\delta)(x_2-x_1)}\frac{x_2-L_t}{x_1-L_t}\\
		&\hspace{4cm}\cdot  \p{\xi^t_{x_1,y}(s)>\underline{\mathcal M}_{r,t}(t-s) \; \forall s\in [3r,t-3r]}dy.
	\end{align*}
\end{cor}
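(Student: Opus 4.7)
The plan is to start from Proposition~\ref{prop:upperbdinfmass}, which gives
\[
U(t,x_2)\le A^{(1)}_r\, e^t\int_{N_r(t)}^{\infty}U_0(y)\frac{e^{-(x_2-y)^2/(2t)}}{\sqrt{2\pi t}}\,\p{\xi^t_{x_2,y}(s)>\underline{\mathcal M}_{r,t}(t-s)\;\forall s\in[3r,t-3r]}\,dy,
\]
and split the range of integration at $y=(b+\delta)t$. The tail piece $y\ge (b+\delta)t$ will produce the first stated term, while the bulk piece $y\in[N_r(t),(b+\delta)t]$ will produce the second (after restoring the range to $[N_r(t),\infty)$ by non-negativity of the integrand).

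\textbf{Tail piece.} Since $y\ge(b+\delta)t\ge\delta t\ge T_b(U_0)$, \eqref{eq:Tbdefn} gives $U_0(y)\le e^{-by}$. Bounding the bridge probability by $1$ and completing the square via $-by-(x_2-y)^2/(2t)=-bx_2+\tfrac12 b^2 t-(y-(x_2-bt))^2/(2t)$, the remaining integral equals $e^{-bx_2+b^2 t/2}\,\psub{0}{B_t\ge (2b+\delta)t-x_2}$. Since $(2b+\delta)t-x_2\ge(b+\delta)t-x_2\ge 0$, the Gaussian tail bound yields at most $e^{-bx_2+b^2 t/2}\,e^{-((b+\delta)t-x_2)^2/(2t)}$; the identity $t-bx_2+\tfrac12 b^2 t=-b(x_2-\sqrt 2\, t)+\tfrac12(\sqrt 2-b)^2 t$ then delivers exactly the first term in the stated bound.

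\textbf{Bulk piece.} The heart of the argument is the pointwise inequality, for $y\in[N_r(t),(b+\delta)t]$,
\[
\frac{e^{-(x_2-y)^2/(2t)}}{\sqrt{2\pi t}}\,\p{\xi^t_{x_2,y}(s)>\underline{\mathcal M}_{r,t}(t-s)\;\forall s\in[3r,t-3r]}\le e^{-(\frac{L_t}{t}-\delta)(x_2-x_1)}\frac{x_2-L_t}{x_1-L_t}\cdot\frac{e^{-(x_1-y)^2/(2t)}}{\sqrt{2\pi t}}\,\p{\xi^t_{x_1,y}(s)>\underline{\mathcal M}_{r,t}(t-s)\;\forall s\in[3r,t-3r]}.
\]
I would combine a Girsanov shift with a Brownian bridge comparison: tilting the underlying Brownian motion by drift $-(L_t/t-\delta)$ produces the factor $e^{-(L_t/t-\delta)(x_2-x_1)}$ directly as a Radon--Nikodym derivative while transforming $\underline{\mathcal M}_{r,t}(t-s)$ into the new barrier $\underline{\mathcal M}_{r,t}(t-s)+(L_t/t-\delta)s$. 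Using Lemma~\ref{lem:Mrtestimates}\eqref{eq:Mrtest1a} and $\beta_r(t)=o(t)$ (Lemma~\ref{lem:betartbound}), this new barrier is still bounded above by $L_t$ on $[3r,t-3r]$ for large $t$. Two applications of Lemma~\ref{lem:BBfacts}(e) -- first replacing the tilted barrier by the constant $L_t$ on $[3r,t-3r]$, then extending that constant barrier to all of $[0,t]$ -- together with the explicit formula of Lemma~\ref{lem:BBfacts}(b) and the concavity bound $(1-e^{-u_2})/(1-e^{-u_1})\le u_2/u_1$ for $u_1\le u_2$, yield the factor $(x_2-L_t)/(x_1-L_t)$.

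\textbf{Main obstacle.} The technical core is the bulk-piece comparison. Treating the Gaussian density ratio and the bridge-probability ratio separately produces the wrong sign in the exponential, since $e^{-(x_2-x_1)(x_1+x_2-2y)/(2t)}$ can exceed $1$ when $y$ is close to $(b+\delta)t$. The Girsanov shift and the bridge comparison must therefore be executed in tandem; the range restriction $y\le (b+\delta)t$, combined with $\beta_r(t)=o(t)$, is what ensures the post-tilt barrier is dominated by $L_t$ on $[3r,t-3r]$ and hence that Lemma~\ref{lem:BBfacts}(e) applies as planned. An additional care point is the sub-range of very small $y$ (where $y+(L_t/t-\delta)t$ may dip below $L_t$), which may require either a further sub-splitting of the bulk integral or a comparison to a suitably chosen lower constant barrier before invoking Lemma~\ref{lem:BBfacts}(b).
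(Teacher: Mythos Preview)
Your tail piece is correct, but the bulk piece has a genuine gap: the pointwise inequality you write down for $y\in[N_r(t),(b+\delta)t]$ is \emph{false}, so no Girsanov manoeuvre can establish it. For a counterexample take $b$ close to $\sqrt 2$ and $\delta>0$ with $b+\delta>\sqrt 2$, and set $x_1=L_t+1$, $x_2=y=L_t+2$ (all admissible for large $t$). The Gaussian density ratio $e^{-(x_2-y)^2/(2t)}/e^{-(x_1-y)^2/(2t)}$ is $e^{1/(2t)}\approx 1$, and the bridge-probability ratio is $\ge 1$ (coupling $\xi^t_{x_2,y}(s)=\xi^t_{x_1,y}(s)+\tfrac{t-s}{t}(x_2-x_1)$). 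Hence the left side of your display is at least $\approx$ the corresponding $x_1$-quantity, while the prefactor you assert is $2e^{-(L_t/t-\delta)}\approx 2e^{-\sqrt 2}\approx 0.49<1$. Your Girsanov sketch is also mis-specified: a drift tilt changes the drift, not the starting point, and the bridges from $x_1$ and $x_2$ to a common $y$ differ by a deterministic linear function, not a change of measure.

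The paper avoids the difficulty entirely by moving the split to $y=\delta t$ rather than $(b+\delta)t$. On the tail $y\ge\delta t$ one still has $U_0(y)\le e^{-by}$, bounds the bridge probability by $1$, and completes the square as you do; the remaining Gaussian integral is $\p{N(x_2-bt,t)\ge\delta t}\le e^{-\frac{1}{2t}((b+\delta)t-x_2)^2}$ directly since $x_2\le(b+\delta)t$. On the bulk $y\le\delta t$ the two ratios are handled \emph{separately} and elementarily: from $y\le\delta t$ and $x_1\ge L_t$ one gets $x_1-y\ge L_t-\delta t$, whence
\[
\frac{e^{-(x_2-y)^2/(2t)}}{e^{-(x_1-y)^2/(2t)}}\le e^{-\frac{1}{t}(x_2-x_1)(x_1-y)}\le e^{-(\frac{L_t}{t}-\delta)(x_2-x_1)},
\]
while for the bridge ratio one applies Lemma~\ref{lem:BBfacts}(e) with $\ell_1=\underline{\mathcal M}_{r,t}(t-\cdot)$ (set to $-\infty$ off $[3r,t-3r]$) and $\ell_2=\ell_{r,t}(t-\cdot)$ (using~\eqref{eq:Mrtest1a}), and then Lemma~\ref{lem:BBfacts}(b) with the inequality $(1-e^{-a_2})/(1-e^{-a_1})\le a_2/a_1$ gives $(x_2-L_t)/(x_1-L_t)$. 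This is exactly the combination of lemmas you listed; the only missing idea was the correct split point.
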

\begin{proof}
	Suppose $b\in (0,\sqrt 2)$ and $\delta t \ge T_b(U_0)$; then by~\eqref{eq:Tbdefn}, for $x\in [ \delta t,(b+\delta)t]$,
	\begin{align*}
		e^t \int_{\delta t}^\infty U_0(y) \frac{e^{-\frac 1 {2t}(x-y)^2}}{\sqrt{2\pi t}}dy 
		\le e^t \int_{\delta t}^\infty e^{-by} \frac{e^{-\frac 1 {2t}(x-y)^2}}{\sqrt{2\pi t}}dy
		&=e^t e^{-bx}e^{\frac 12 b^2 t}\int_{\delta t}^\infty \frac{e^{-\frac 1 {2t}(x-bt-y)^2}}{\sqrt{2\pi t}}dy\\
		&\le e^t e^{-bx}e^{\frac 12 b^2 t}e^{-\frac 1 {2t}((b+\delta)t-x)^2}\\
		&= e^{-b(x-\sqrt 2 t)}e^{\frac 12 (\sqrt 2 -b)^2 t}e^{-\frac 1 {2t}((b+\delta)t-x)^2},
	\end{align*}
	where the second inequality follows from a Gaussian tail bound.
	
	Take $r$ sufficiently large that Proposition~\ref{prop:upperbdinfmass} and Lemma~\ref{lem:Mrtestimates} hold.
	Then by Proposition~\ref{prop:upperbdinfmass}, for $t\ge 6r$ and $b\in (0,\sqrt 2)$ with $\delta t\ge T_b(U_0)$ and
	$x_2\in [L_t,L_t+7t]\cap [\delta t, (b+\delta) t]$,
	\begin{equation}
	\begin{split} 
		U(t,x_2)&\le 
		A^{(1)}_r e^{-b(x_2-\sqrt{2}t)+\frac 12 (\sqrt{2}-b)^2t} e^{-\frac 1 {2t}((b+\delta) t -x_2)^2}\label{eq:Utx2upper}
		\\
		&\hspace{0.5cm} +A^{(1)}_r e^t\int_{N_r(t)}^{(\delta t) \vee N_r(t)} U_0(y) \frac{e^{-\frac 1 {2t}(x_2-y)^2}}{\sqrt{2\pi t}}\p{\xi^t_{x_2,y}(s)>\underline{\mathcal M}_{r,t}(t-s) \; \forall s\in [3r,t-3r]}dy.
	\end{split}
	\end{equation}
	By~\eqref{eq:Mrtest1a} in Lemma~\ref{lem:Mrtestimates}, for $t\ge 6r$ we have
	$
	\underline{\mathcal M}_{r,t}(t-s)\le \ell_{r,t}(t-s)$ $\forall s\in [3r,t-3r].
	$
	Therefore, by Lemma~\ref{lem:BBfacts}(e),
	for $y\in \Rm$,
	\[
	\frac{\p{\xi^t_{x,y}(s)>\ell_{r,t}(t-s) \; \forall s\in [0,t]}}{\p{\xi^t_{x,y}(s)>\underline{\mathcal M}_{r,t}(t-s)\; \forall s\in [3r,t-3r]}}
	\]
	is increasing in $x$.
	Recall from Proposition~\ref{prop:upperbdinfmass} that $N_r(t)\ge \beta_r(t)$, and recall the definition of $\ell_{r,t}$ in~\eqref{eq:lrt_defn}.
	For $x,y\in \R$, we can write
	$\xi^t_{x,y}(s)=\xi^t_{x-L_t,y-\beta_r(t)}(s)+\ell_{r,t}(t-s)$ $\forall s\in [0,t]$.
	Therefore, for $x_1,x_2> L_t$ with $x_1\le x_2$ and $y> N_r(t)\ge \beta_r(t)$,
	\begin{align} \label{eq:probaboveratio}
		\frac{\p{\xi^t_{x_2,y}(s)>\underline{\mathcal M}_{r,t}(t-s)\; \forall s\in [3r,t-3r]}}{\p{\xi^t_{x_1,y}(s)>\underline{\mathcal M}_{r,t}(t-s)\; \forall s\in [3r,t-3r]}}
		&\le \frac{\p{\xi^t_{x_2,y}(s)>\ell_{r,t}(t-s) \; \forall s\in [0,t]}}{\p{\xi^t_{x_1,y}(s)>\ell_{r,t}(t-s) \; \forall s\in [0,t]}}\\
		&= \frac{\p{\xi^t_{x_2-L_t,y-\beta_r(t)}(s)>0\; \forall s\in [0,t]}}{\p{\xi^t_{x_1-L_t,y-\beta_r(t)}(s)>0\; \forall s\in [0,t]}} \notag \\
		&=\frac{1-e^{-\frac 2 t (x_2-L_t)(y-\beta_r(t))}}{1-e^{-\frac 2 t (x_1-L_t)(y-\beta_r(t))}} \notag \\
		&\le \frac{x_2-L_t}{x_1-L_t},
	\end{align}
	where the equality follows from Lemma~\ref{lem:BBfacts}(b), and the
	last inequality follows since $\frac{1-e^{-b}}{1-e^{-a}}\le \frac b a$ for $0< a \le b$.
	For $y\le \delta t$ and $x_2\ge x_1\ge L_t$, we also have
	\begin{equation} \label{eq:gaussiantailratio}
		\frac{e^{\frac 1 {2t}(x_1-y)^2}}{e^{\frac 1 {2t}(x_2-y)^2}}
		=e^{\frac 1 {2t}(x_1-y)^2-\frac 1 {2t}((x_1-y)^2+2(x_2-x_1)(x_1-y)+(x_2-x_1)^2)}
		\le e^{-\frac 1t (x_2-x_1)(x_1-y)}\le e^{-(\frac{L_t}t -\delta)(x_2-x_1)},
	\end{equation}
	where the final inequality follows since $x_2-x_1\ge 0$ and $x_1-y\ge L_t-\delta t$.
	By substituting~\eqref{eq:probaboveratio} and~\eqref{eq:gaussiantailratio} into~\eqref{eq:Utx2upper},
	the result follows.
\end{proof}
In the remainder of this subsection, we use the upper bound on $U(t,x)$ in Proposition~\ref{prop:upperbdinfmass} to establish the asymptotic formula for $U(t,x)$ in Proposition~\ref{prop:s0} below.
We will use the following entropic repulsion result for the Brownian bridge from~\cite{Bramson1983}.
Recall the definition of $\theta_{r,t}^{-1}$ in~\eqref{eq:theta-defn}.
\begin{lem}[Proposition 6.2, \cite{Bramson1983}] \label{lem:Bramsonentropicrepulsion}
Suppose $r_0\in (0,\infty)$, and for each $t\ge 1$, $G_t:[0,t]\to \R$ with
\begin{equation} \label{eq:Ktupperbd}
G_t(s)\le  8 (s\wedge (t-s))^{1/3}\;
\forall s\in [r_0,t-r_0].
\end{equation}
Then
\[
	\frac{\mathbb{P}\left(\xi^t_{0,0}(s)>G_t(s) \; \forall s \in [3r,t-3r]\right)}{\mathbb{P}\left(\xi^t_{0,0}(s)>\theta_{r,t}^{-1}\circ G_t(s) \; \forall s \in [3r,t-3r]\right)} \rightarrow 1 \quad \text { as } r \rightarrow \infty 
\]
	uniformly in $t\ge 6r$ and in $G_t$ satisfying~\eqref{eq:Ktupperbd}.
\end{lem}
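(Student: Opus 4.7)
The plan is to prove the ratio tends to $1$ by upper-bounding the numerator-minus-denominator-type difference. Let $I = [3r, t-3r]$ and write
\[
P_{\mathrm{hi}} = \mathbb{P}(\xi^t_{0,0}(s) > G_t(s) \; \forall s \in I), \qquad P_{\mathrm{lo}} = \mathbb{P}(\xi^t_{0,0}(s) > \theta_{r,t}^{-1}\circ G_t(s) \; \forall s \in I).
\]
From the explicit formula \eqref{eq:theta-expr} and the upper bound \eqref{eq:Ktupperbd}, one checks that $\theta_{r,t}^{-1}\circ G_t(s) \leq G_t(s)$ on $I$, so $P_{\mathrm{hi}} \leq P_{\mathrm{lo}}$ and the claim is equivalent to $P_{\mathrm{lo}} - P_{\mathrm{hi}} = o(P_{\mathrm{lo}})$ as $r\to\infty$, uniformly in $t \geq 6r$ and in $G_t$ satisfying \eqref{eq:Ktupperbd}.

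The first step is a lower bound on $P_{\mathrm{lo}}$. From \eqref{eq:theta-expr} and \eqref{eq:Ktupperbd} we have $\theta_{r,t}^{-1}\circ G_t(s) \leq 4 h_t(s)^{1/3} \leq 4 s_\wedge^{1/3}$ on $I$, so by Lemma~\ref{lem:BBfacts}(d),
\[
P_{\mathrm{lo}} \geq \mathbb{P}(\xi^t_{0,0}(s) > 4 s_\wedge^{1/3} \; \forall s \in I).
\]
Using Lemma~\ref{lem:BBfacts}(a) to reduce to a Brownian motion and then splitting at $s = t/2$ via independence of the past and future of the bridge given its midpoint, one can apply a standard ballot-type argument for Brownian motion subjected to a power-law barrier to obtain $P_{\mathrm{lo}} \gtrsim r/t$ (the lower bound \eqref{eq:BBabovelowerbd} encapsulates the analogous estimate for a flat barrier and extends here since $4 s^{1/3}$ is swamped by the typical fluctuation $\sqrt{s}$ of the bridge for $s$ large).

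The second and more delicate step is to upper-bound $P_{\mathrm{lo}} - P_{\mathrm{hi}}$. Decompose according to the integer time-slot $[k, k+1]$ (respectively $[t-k-1, t-k]$) containing the first, respectively last, crossing of $G_t$ by $\xi$, and summing over $k \in \{\lfloor 3r\rfloor,\ldots, \lfloor t/2\rfloor\}$. On such a crossing event, $\xi$ enters the strip $\{\theta_{r,t}^{-1}\circ G_t \leq \cdot \leq G_t\}$ of width $\sim k^{1/3}$ at some time $s^* \in [k, k+1]$. Using the strong Markov property at $s^*$ (for the bridge, via the time-reversal identity in Lemma~\ref{lem:BBfacts}(a)), together with the observation that, by the definition of $\theta_{r,t}^{-1}$, on the subsequent interval of length $k^{1/3}$ the barrier $\theta_{r,t}^{-1}\circ G_t$ rises to within $\mathcal{O}(1)$ of $G_t(s^*)$ — and hence of $\xi(s^*)$ — one reduces to bounding the probability that a Brownian meander stays above a barrier very close to its starting height for time $\sim k^{1/3}$. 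Such a probability is $\lesssim k^{-1/3}$ by a standard Gaussian small-ball estimate for the running maximum of a Brownian bridge (cf.\ Lemma~\ref{lem:BBfacts}(c)). Combining with a crude estimate $\lesssim 1/t$ for the marginal probability that the bridge lies in a width-$k^{1/3}$ strip at time $s^*$ (Gaussian density bound), each summand is $\lesssim k^{1/3}/t \cdot k^{-1/3} \cdot (\text{entropic repulsion factor}) = o(1)/t$ after the entropic repulsion discount, which is what makes the sum geometric in $k^{-1/3}$ and hence $o(r/t)$.

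The main obstacle is the entropic repulsion discount. Conditional on $\xi$ staying above $\theta_{r,t}^{-1}\circ G_t$ on $I$, the bridge is pushed far above the barrier — typical height $\sim \sqrt{s_\wedge}$ rather than $s_\wedge^{1/3}$ — so visiting the thin strip at time $s^*$ with $s^*_\wedge \approx k$ is penalised by a factor that one must show is summable in $k \geq 3r$ and negligible compared to the $r/t$ lower bound on $P_{\mathrm{lo}}$. Establishing this rigorously requires a careful last-exit decomposition of the conditioned bridge and comparison to a free Brownian bridge through Lemma~\ref{lem:BBfacts}(e), and is where the specific exponents $1/3$ and the constant $4$ in the definition of $\theta_{r,t}$ are essential: the shift by $4 s^{1/3}$ is large enough that the Gaussian increments of the bridge over time $s^{1/3}$ (of order $s^{1/6}$) are dwarfed, giving an exponentially small penalty $\exp(-c\, k^{1/3})$ for each summand, and thus uniformity as $r\to\infty$.
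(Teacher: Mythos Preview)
This lemma is not proved in the paper; it is cited from Bramson (1983), and the paper gives only a two-line summary of Bramson's argument immediately after the statement. That summary is worth noting, because the method is quite different from yours: Bramson first uses the Cameron--Martin (Girsanov) formula to show directly that for $\tilde G_t$ satisfying \eqref{eq:Ktupperbd},
\[
\frac{\mathbb{P}\bigl(\xi^t_{0,0}(s)>\tilde G_t(s)+8(s\wedge(t-s))^{1/3}\ \forall s\in[r,t-r]\bigr)}{\mathbb{P}\bigl(\xi^t_{0,0}(s)>\tilde G_t(s)-8(s\wedge(t-s))^{1/3}\ \forall s\in[r,t-r]\bigr)}\to 1,
\]
and then deduces the stated form via Gaussian tail estimates. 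The point is that shifting the bridge by $h(s)=8(s\wedge(t-s))^{1/3}$ has Radon--Nikodym cost governed by $\int_r^{t-r}h'(s)^2\,ds$, and $h'(s)^2\sim s^{-4/3}$ is integrable at the endpoints --- this is exactly why the exponent $1/3$ appears. The ratio is thus controlled by an explicit exponential of an integral that vanishes as $r\to\infty$, with no need for crossing decompositions.

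Your crossing-time approach is a natural alternative in spirit, but as written it has genuine gaps. First, the claim that $\theta_{r,t}^{-1}\circ G_t\le G_t$ on $I$ does not follow from \eqref{eq:Ktupperbd}: that hypothesis is only an \emph{upper} bound on $G_t$, so $G_t$ may be very negative and highly irregular, and from \eqref{eq:theta-expr} one has $\theta_{r,t}^{-1}\circ G_t(s)=G_t(h_t(s))-4h_t(s)_\wedge^{1/3}$ with $h_t(s)<s$; there is no reason this lies below $G_t(s)$. So you cannot assume $P_{\mathrm{hi}}\le P_{\mathrm{lo}}$, and the two-sided convergence needs a different framing. Second, and more seriously, the ``entropic repulsion discount'' $\exp(-ck^{1/3})$ in your last paragraph is asserted, not proved, and it is essentially the content of the lemma itself: knowing that the conditioned bridge is repelled to height $\gg k^{1/3}$ at time $k$ is precisely what one is trying to establish. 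The Cameron--Martin route avoids this circularity by comparing the two probabilities directly through a change of measure rather than through pathwise behaviour of the conditioned process.
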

(Note that~\cite[Proposition 6.2]{Bramson1983} is stated with $[r,t-r]$ in place of $[3r,t-3r]$, but the result stated here follows from the same proof as in~\cite{Bramson1983}.) 
In~\cite{Bramson1983},~\cite[Proposition 6.2]{Bramson1983} is proved by first showing, using the Cameron-Martin formula, that for $\tilde G_t$ satisfying~\eqref{eq:Ktupperbd},
\[
	\frac{\mathbb{P}\left(\xi^t_{0,0}(s)>\tilde G_t(s)+8(s\wedge (t-s))^{1/3} \; \forall s \in [r,t-r]\right)}{\mathbb{P}\left(\xi^t_{0,0}(s)>\tilde G_t(s)-8(s\wedge (t-s))^{1/3} \; \forall s \in [r,t-r]\right)} \rightarrow 1 \quad \text { as } r \rightarrow \infty 
\]
uniformly in $t\ge 2r$, and then showing that the result follows from Gaussian tail estimates.

Recall from Proposition~\ref{prop:fbpsoln} that for $U_0$ satisfying Assumption~\ref{assum:standing assumption ic}, letting $(U(t,x),L_t)$ denote the solution of~\eqref{eq:FBP_CDF}, we have $\sup_{s\in [0,t]}L_s<\infty$ for any $t>0$.

We now use Lemma~\ref{lem:Bramsonentropicrepulsion} to show that for large $r$, for suitable $x$ and $y$, the probability that $\xi^t_{x,y}(\cdot)$ stays above $\underline{\mathcal M}_{r,t}(t-\cdot)$ on the interval $[3r,t-3r]$ is close to the probability that $\xi^t_{x,y}(\cdot)$ stays above $L_{t-\cdot}$ on the whole interval $[0,t]$.
This result corresponds to~\cite[Proposition 7.4]{Bramson1983}.
\begin{lem} \label{lem:probaboveinterval}
Suppose $U_0$ satisfies Assumption~\ref{assum:standing assumption ic} and~\eqref{eq:assump_infmass1}, and let $(U(t,x),L_t)$ denote the solution of~\eqref{eq:FBP_CDF}.
	Define $\beta_r(t)$ as in~\eqref{eq:betadef} and $\underline{\mathcal M}_{r,t}$ as in~\eqref{eq:Munderdefn}.
	For $t\ge 0$, let
\begin{equation} \label{eq:barLtdefn}
\bar{L}_t:=\sup_{s\in [0,t]}L_s \vee 0.
\end{equation}	
	There exists $A^{(2)}_r\to 1$ as $r\to \infty$ such that the following holds.
For $r$ sufficiently large, for $t \ge 6 r,$ $ x \ge L_{t}+14 r$ and
	$y\ge \beta_r(t)\vee (2(\bar{L}_{3r}+r))$,
	\begin{equation} \label{eq:probaboveinterval1}
	\p{\xi^t_{x,y}(s)>\underline{\mathcal M}_{r,t}(t-s)\; \forall s\in [3r,t-3r]}
	\le A^{(2)}_r\p{\xi^t_{x,y}(s)>L_{t-s}\; \forall s\in [0,t]}.
	\end{equation}
Moreover, for $s_0\in [0,\infty)$, for $r$ sufficiently large, for $t\ge 6r$, $x> L_t$ and $y\ge 2(\bar{L}_{3r}+r)$,
	\begin{equation} \label{eq:probaboveinterval2}
\p{\xi^t_{x,y}(s)>L_{t-s}\;\forall s\in [0,t-s_0]}
	\le A^{(2)}_r\p{\xi^t_{x,y}(s)>L_{t-s}\;\forall s\in [0,t]}.
	\end{equation}
\end{lem}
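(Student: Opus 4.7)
The plan is to prove~\eqref{eq:probaboveinterval1} by sandwiching through the intermediate quantity $P_m:=\p{\xi^t_{x,y}(s)>L_{t-s}\;\forall s\in[3r,t-3r]}$. Writing $P_1$ and $P_2$ for the LHS and RHS of~\eqref{eq:probaboveinterval1}, we have $P_1\ge P_m$ by~\eqref{eq:Mrtest1b} (since $\underline{\mathcal M}_{r,t}(t-s)\le L_{t-s}$ on $[3r,t-3r]$), and $P_m\ge P_2$ trivially. It therefore suffices to show two bounds (a) $P_1\le(1+\varepsilon_r^{(1)})P_m$ (entropic repulsion) and (b) $P_m\le(1+\varepsilon_r^{(2)})P_2$ (endpoint step), with $\varepsilon_r^{(j)}\to 0$ as $r\to\infty$ uniformly in $(t,x,y)$ in the stated range.

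For (a), use the representation $\xi^t_{x,y}(s)=\xi^t_{0,0}(s)+\tfrac{t-s}{t}x+\tfrac{s}{t}y$ to rewrite both $P_1$ and $P_m$ as probabilities that $\xi^t_{0,0}$ stays above the barriers $\tilde H(s):=\underline{\mathcal M}_{r,t}(t-s)-\tfrac{t-s}{t}x-\tfrac{s}{t}y$ and $H(s):=L_{t-s}-\tfrac{t-s}{t}x-\tfrac{s}{t}y$ on $[3r,t-3r]$. By~\eqref{eq:Munderdefn} these differ by $H(s)-\tilde H(s)=G_{r,t}(t-s)-\theta_{r,t}^{-1}\circ G_{r,t}(t-s)$, which is precisely the form Lemma~\ref{lem:Bramsonentropicrepulsion} compares (after time reversal, which preserves the law of $\xi^t_{0,0}$). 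The hypothesis $G_{r,t}(s)\le 8(s\wedge(t-s))^{1/3}$ of that lemma will be verified by combining the upper bound $L_s\le\beta_r(t)+\sqrt{2}s$ on $[r,t]$ from~\eqref{eq:betadef} with Lemma~\ref{lem:lrtLs}, truncating $G_{r,t}$ on the exceptional set where this upper bound exceeds $8(s\wedge(t-s))^{1/3}$ (which sits in neighbourhoods of $s=r$ and $s=t-r$) and showing via direct Gaussian bridge estimates that the truncation changes both $P_1$ and $P_m$ by only a factor $1+o_r(1)$, using $x\ge L_t+14r$ and $y\ge\beta_r(t)$.

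For (b), condition on $(a,b):=(\xi^t_{x,y}(3r),\xi^t_{x,y}(t-3r))$ and apply the Markov property: with $p(a,b)$ the joint density and $Q^{\mathrm{L}}_{x,a},Q^{\mathrm{M}}_{a,b},Q^{\mathrm{R}}_{b,y}$ the probabilities that each sub-bridge stays above $L_{t-\cdot}$ on its interval, we have $P_2=\int p\,Q^{\mathrm{L}}Q^{\mathrm{M}}Q^{\mathrm{R}}\,da\,db$ and $P_m=\int p\,Q^{\mathrm{M}}\,da\,db$. On $[0,3r]$, Lemma~\ref{lem:boundary locally Lipschitz from the left} gives $L_{t-s}\le L_t$ for $r$ large; on $[t-3r,t]$, $L_{t-s}\le\bar L_{3r}$. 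Hence for $a\ge L_t+r$ and $b\ge\bar L_{3r}+r$, the hypotheses $x\ge L_t+14r$ and $y\ge 2(\bar L_{3r}+r)$ provide margins of at least $13r$ and $r$, so Lemma~\ref{lem:BBfacts}(c) gives $Q^{\mathrm{L}}_{x,a},Q^{\mathrm{R}}_{b,y}\ge 1-o_r(1)$. The complementary region in $(a,b)$ contributes a vanishing fraction of $P_m$ by Lemma~\ref{lem:BBfacts}(e), comparing $Q^{\mathrm{M}}_{a,b}$ with the probability of staying above the straight line $\ell_{r,t}(t-\cdot)$, which by Lemma~\ref{lem:lrtLs} and its proof sits $\Omega(\log r)$ below $L$ at time $b_r(t)$. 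For~\eqref{eq:probaboveinterval2}, condition on $\xi(t-s_0)$: for $r$ large depending on $s_0$, $y\ge 2(\bar L_{3r}+r)\ge 2\sup_{u\le s_0}L_u+2r$, and since Lemma~\ref{lem:BBfacts}(e) pushes the conditional distribution of $\xi(t-s_0)$ upward under the $[0,t-s_0]$-event, the bridge from $\xi(t-s_0)$ to $y$ stays above $L_u$ on $[0,s_0]$ with conditional probability $\to 1$ by Lemma~\ref{lem:BBfacts}(c). The main obstacle is step (a): because $\theta^{-1}_{r,t}$ does not commute with the linear translations used to pass to a bridge from $0$ to $0$ (as $h_t(s)\ne s$), a direct application of Lemma~\ref{lem:Bramsonentropicrepulsion} is delicate, and controlling the truncation uniformly in $(t,x,y)$ over the allowed range requires care.
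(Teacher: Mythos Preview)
Your overall sandwich $P_1\ge P_m\ge P_2$ is exactly the paper's structure, but your execution of both steps is more complicated than needed, and in step~(a) you have identified a real obstacle whose clean resolution you have missed.

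For step~(a), the key trick is \emph{not} to translate by the interpolation from $x$ to $y$, but by $\ell_{r,t}$ itself. Write
\[
\frac{P_m}{P_1}
=\frac{\p{\xi^t_{x-L_t,\,y-\beta_r(t)}(s)>G_{r,t}(t-s)\;\forall s\in[3r,t-3r]}}
       {\p{\xi^t_{x-L_t,\,y-\beta_r(t)}(s)>\theta^{-1}_{r,t}\circ G_{r,t}(t-s)\;\forall s\in[3r,t-3r]}},
\]
which follows directly from the definitions of $G_{r,t}$ and $\underline{\mathcal M}_{r,t}$. Now the barriers \emph{are} exactly $G_{r,t}$ and $\theta^{-1}_{r,t}\circ G_{r,t}$---no commutation issue arises---but the bridge goes from $x-L_t\ge 0$ to $y-\beta_r(t)\ge 0$ rather than from $0$ to $0$. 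Since $\underline{\mathcal M}_{r,t}(s)<L_s$ on $[3r,t-3r]$ (Lemma~\ref{lem:Mrtestimates}), Lemma~\ref{lem:BBfacts}(e) says this ratio is \emph{increasing} in both endpoints, so it is bounded below by its value at $(0,0)$, which tends to $1$ by Lemma~\ref{lem:Bramsonentropicrepulsion}. The hypothesis $G_{r,t}(s)\le 8(s\wedge(t-s))^{1/3}$ on $[r,t-r]$ is immediate from Lemma~\ref{lem:lrtLs} (logarithmic bound, dominated by the cube root for $r$ large); no truncation is needed. This monotonicity step completely sidesteps the non-commutation obstacle you flagged.

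For step~(b), your Markov decomposition with integration over $(a,b)$ is workable but heavier than necessary. The paper instead applies Lemma~\ref{lem:BBfacts}(d) directly: conditioning on $\{\xi^t_{x,y}(s)>L_{t-s}\;\forall s\in[3r,t-3r]\}$ only increases the probability of staying above $L_{t-\cdot}$ on $[0,3r]\cup[t-3r,t]$, so $P_2/P_m\ge\p{\xi^t_{x,y}(s)>L_{t-s}\;\forall s\in[0,3r]}\cdot\p{\xi^t_{x,y}(s)>L_{t-s}\;\forall s\in[t-3r,t]}$. Each factor is then bounded by a single application of Lemma~\ref{lem:BBfacts}(c) after subtracting the linear interpolation: using $L_{t-s}\le L_t$ on $[0,3r]$ (eventual monotonicity of $L$) and $L_{t-s}\le\bar L_{3r}$ on $[t-3r,t]$, the margins $14r$ and $y-\bar L_{3r}\ge\bar L_{3r}+2r$ give clearance $\ge r$, hence each factor is $\ge 1-2e^{-r/12}$. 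The same one-line argument gives~\eqref{eq:probaboveinterval2}: take $r>s_0/3$, then $P_2/\p{\cdots\forall s\in[0,t-s_0]}\ge\p{\xi^t_{x,y}(s)>L_{t-s}\;\forall s\in[t-3r,t]}$ by Lemma~\ref{lem:BBfacts}(d), and the right side was just bounded.
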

\begin{proof}
We claim that
	there exists $K^{(1)}_r\to 1$ as $r\to\infty$ such that
	for $r$ sufficiently large, for $t\ge 6r$, $x\ge L_t$ and $y\ge \beta_r(t)$,
	\begin{equation} \label{eq:underMLxiabove}
	\p{\xi^t_{x,y}(s)>\underline{\mathcal M}_{r,t}(t-s)\; \forall s\in [3r,t-3r]}
	\le K^{(1)}_r\p{\xi^t_{x,y}(s)>L_{t-s}\; \forall s\in [3r,t-3r]}.
	\end{equation}
To prove the claim,	note first that by Lemma~\ref{lem:lrtLs} and~\eqref{eq:lrt_defn}, for $r$ sufficiently large and $t\ge 2r$,
	\begin{equation} \label{eq:Lsupperdelta}
		L_s\le \frac s t L_t+\frac{t-s}t \beta_r(t)+8(s\wedge (t-s))^{1/3}  \; \; \forall s\in [r,t-r].
	\end{equation}
By~\eqref{eq:Mrtest1b} in Lemma~\ref{lem:Mrtestimates}, for $r$ sufficiently large and $t\ge 6r$,  	
\begin{equation} \label{eq:MbelowLbd}
\underline{\mathcal M}_{r,t}(s)< L_s \quad \forall s\in [3r,t-3r].
\end{equation}
For $r\ge 1$, $t\ge 6r$ and $x,y\in \R$, we can write
	\begin{align} \label{eq:ratioKK}
		& \frac{\mathbb{P}\left(\xi_{x, y}^{t}(s)>L_{t-s}\;  \forall s \in[3 r, t-3 r]\right)}{\mathbb{P}\left(\xi^{t}_{x, y}(s)>\underline{\mathcal M}_{r, t}(t-s) \; \forall s \in[3 r, t-3 r]\right)} \notag \\
		& =\frac{\mathbb{P}\left(\xi^t_{x-L_{t}, y-\beta_{r}(t)}(s)>L_{t-s}-\frac{t-s}{t} L_{t}-\frac{s}{t} \beta_{r}(t) \; \forall s\in [3r,t-3r]\right)}{\mathbb{P}\left(\xi^t_{x-L_{t}, y-\beta_{r}(t)}(s)>\underline{\mathcal  M}_{r,t}(t-s)-\frac{t-s}{t} L_{t}-\frac{s}{t} \beta_{r}(t) \;\forall s\in [3r,t-3r]\right)} \notag \\
		& =\frac{\mathbb{P}\left(\xi^t_{x-L_{t}, y-\beta_{r}(t)}(s)>G_{r,t}(t-s) \; \forall s\in [3r,t-3r]\right)}{\mathbb{P}\left(\xi^t_{x-L_{t}, y-\beta_{r}(t)}(s)>\theta^{-1}_{r,t}\circ G_{r,t}(t-s) \; \forall s\in [3r,t-3r]\right)},
	\end{align}
	where the last line follows
	by~\eqref{eq:Krtdefn} and~\eqref{eq:Munderdefn}.
	By Lemma~\ref{lem:Bramsonentropicrepulsion}, using~\eqref{eq:Lsupperdelta} and~\eqref{eq:Krtdefn}, we have
	$$
	\frac{\mathbb{P}\left(\xi^t_{0,0}(s)>G_{r,t}(t-s) \; \forall s \in [3r,t-3r]\right)}{\mathbb{P}\left(\xi^t_{0,0}(s)>\theta^{-1}_{r,t}\circ G_{r,t}(t-s) \; \forall s \in [3r,t-3r]\right)} \rightarrow 1 \quad \text { as } r \rightarrow \infty 
	$$
	uniformly in $t\ge 6r$.	
	Then by Lemma~\ref{lem:BBfacts}(e) and~\eqref{eq:MbelowLbd}, we have that the left-hand side of~\eqref{eq:ratioKK} is increasing in $x$ and $y$, and bounded above by 1, which completes the proof of the claim~\eqref{eq:underMLxiabove}.

	By Lemma~\ref{lem:BBfacts}(d) applied twice, for $r>0$, $t\ge 6r$, $x> L_t$ and $y\in \R$, we have
	\begin{equation} 
	\begin{split}\label{eq:xiLs0frac}
		&\frac{\p{\xi^t_{x,y}(s)>L_{t-s}\; \forall s\in [0,t]}}{\p{\xi^t_{x,y}(s)>L_{t-s}\; \forall s\in [3r,t-3r]}}
		\ge \p{\xi^t_{x,y}(s)>L_{t-s}\; \forall s\in [0,3r]\cup [t-3r,t]} \\
		& \ge \p{\xi^t_{x,y}(s)>L_{t-s}\; \forall s\in [0,3r]}\p{\xi^t_{x,y}(s)>L_{t-s}\; \forall s\in  [t-3r,t]}.
	\end{split}
	\end{equation}
	By Lemma~\ref{lem:boundary locally Lipschitz from the left}, there exists $t_0>0$ such that $L_{s_{2}} \ge L_{s_1}$ $ \forall t_{0} \leq s_{1} \leq s_{2}$. Therefore, for $r$ sufficiently large, for $t \ge 6r$ and $x,y\in \R$ we have
	\begin{equation} \label{eq:probLt-s03r}
	\mathbb{P}\left(\xi_{x, y}^{t}(s)>L_{t-s} \; \forall s \in[0,3 r]\right) \ge \mathbb{P}\left(\xi_{0,0}^{t}(s)>L_{t}-\tfrac{s}{t} y-\tfrac{t-s}{t} x \; \forall s \in[0,3 r]\right) .
	\end{equation}
	Using Lemmas~\ref{lem:Lttsqrt2} and~\ref{lem:betartbound}, we can take $r$ sufficiently large that for
	$t\ge 6r$ we have $L_{t}-\beta_{r}(t) \le 2 t$.
	Then for $t\ge 6r$, $x \ge L_{t}+14 r$, $y \ge \beta_{r}(t)$, and $s \in[0,3 r]\subseteq [0,t/2]$ we have
\[
L_{t}-\tfrac{s}{t} y-\tfrac{t-s}{t} x=\tfrac{s}{t} (L_t-y)-\tfrac{t-s}{t} (x-L_t)
\le 2s -7r\le -r.
\]	
Therefore, substituting into~\eqref{eq:probLt-s03r},
	\begin{align} \label{eq:aboveL3r}
		\mathbb{P}\left(\xi_{x, y}^{t}(s)>L_{t-s} \; \forall s \in[0,3 r]\right)  \ge \mathbb{P}\left(\xi_{0,0}^{t}(s)>-r \; \forall s \in[0,3 r]\right) 
		 &\ge \mathbb{P}_0\left(B_{s}>-r \; \forall s \in[0,6 r]\right) \notag \\
		&\ge 1-2 e^{-r / 12},
	\end{align}
	where the second inequality follows from Lemma~\ref{lem:BBfacts}(a),
	and the last inequality follows
	by the reflection principle and a Gaussian tail bound.
	
	To establish~\eqref{eq:probaboveinterval1}, it remains to bound the second term on the right-hand side of~\eqref{eq:xiLs0frac}.
	Recall the definition of $\bar{L}_{\cdot}$ in~\eqref{eq:barLtdefn}.
	Using Lemma~\ref{lem:Lttsqrt2}, take $r$ sufficiently large that for
	$t\ge 6r$ we have $L_{t} \ge 0$.
	Then for $t\ge 6r$, $x \ge L_{t}\ge 0$ and $y \ge 2(\bar{L}_{3r}+r)$,
	we have
	\begin{align} \label{eq:aboveLt-3r}
		\mathbb{P}\left(\xi^{t}_{x,y}(s)>L_{t-s} \; \forall s \in[t-3 r, t]\right) &\ge \mathbb{P}\left(\xi^t_{0,0}(s)>\bar{L}_{3r}-\tfrac{t-s}{t} x-\tfrac{s}{t} y
		\;\forall s \in[t-3 r, t]\right) \notag \\
		& \ge \mathbb{P}\left(\xi_{0,0}^{t}(s)>-r \; \forall s \in[t-3 r, t]\right) \notag \\
		& \geq 1-2 e^{-r / 12},
	\end{align}
	where the last inequality follows by the same argument as in~\eqref{eq:aboveL3r}.
By combining \eqref{eq:underMLxiabove}, \eqref{eq:xiLs0frac}, \eqref{eq:aboveL3r} and \eqref{eq:aboveLt-3r}, the first statement~\eqref{eq:probaboveinterval1} follows.

For the second statement~\eqref{eq:probaboveinterval2}, take $r>s_0/3$ and note that by Lemma~\ref{lem:BBfacts}(d),
for $t\ge 6r$, $x> L_t$ and $y\in \R$,
\[
\frac{\p{\xi^t_{x,y}(s)>L_{t-s}\; \forall s\in [0,t]}}{\p{\xi^t_{x,y}(s)>L_{t-s}\; \forall s\in [0,t-s_0]}}
		\ge \p{\xi^t_{x,y}(s)>L_{t-s}\; \forall s\in [t-3r,t]}. 
\]
The result then follows from~\eqref{eq:aboveLt-3r}.
\end{proof}
By combining Proposition~\ref{prop:upperbdinfmass}, Lemma~\ref{lem:probaboveinterval} and Lemma~\ref{lem:Bt_ends_big}, we can now prove the following result, which will be used in the proof of Theorem~\ref{theo:infmassfront}.
This result corresponds to~\cite[Proposition 6.3]{Bramson1983}.
\begin{prop} \label{prop:s0}
Suppose $U_0$ satisfies Assumption~\ref{assum:standing assumption ic},~\eqref{eq:assump_infmass1} and~\eqref{eq:assump_infmass2}, and let $(U(t,x),L_t)$ denote the solution of~\eqref{eq:FBP_CDF}.
For $s_0\ge 0$, there exists $(C_{t,z}(s_0))_{t>0,z>0}$, with $\lim_{t\wedge z\to \infty}C_{t,z}(s_0)=1$ for each $s_0\ge 0$, such that for $t>0$ and $x\in (L_t,L_t+7t]$,
\[
U(t,x)=C_{t,x-L_t}(s_0)e^t \Esub{x}{U_0(B_t)\1_{\{B_s>L_{t-s} \; \forall s\in [0,t-s_0]\}}}.
\]
\end{prop}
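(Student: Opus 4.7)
\medskip

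\noindent \textbf{Proof plan for Proposition~\ref{prop:s0}.}
The strategy is to sandwich $U(t,x)$ between two expressions of the form $c\cdot e^t\Esub{x}{U_0(B_t)\1_{\{B_s>L_{t-s}\;\forall s\in [0,t-s_0]\}}}$, where the prefactor $c$ can be made arbitrarily close to $1$ by enlarging the auxiliary parameter $r$ from Proposition~\ref{prop:upperbdinfmass} and Lemma~\ref{lem:probaboveinterval}. Defining $C_{t,z}(s_0)$ (with $z=x-L_t$) to be the precise ratio
\[
C_{t,z}(s_0):=\frac{U(t,x)}{e^t \Esub{x}{U_0(B_t)\1_{\{B_s>L_{t-s}\;\forall s\in [0,t-s_0]\}}}},
\]
the sandwich combined with a suitable choice $r=r(t,z)\to\infty$ slowly (with $r(t,z)\le z/14$) will yield the claimed limit $C_{t,z}(s_0)\to 1$ as $t\wedge z\to\infty$.

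For the \emph{upper bound}, I invoke Proposition~\ref{prop:upperbdinfmass} to write, for $r$ large, $t\ge 6r$ and $x\in [L_t,L_t+7t]$,
\[
U(t,x)\le A^{(1)}_r e^t \int_{N_r(t)}^\infty U_0(y)\frac{e^{-(x-y)^2/(2t)}}{\sqrt{2\pi t}}\p{\xi^t_{x,y}(s)>\underline{\mathcal M}_{r,t}(t-s)\;\forall s\in [3r,t-3r]}dy.
\]
Since $N_r(t)\ge\beta_r(t)$ always and $N_r(t)\to\infty$ as $t\to\infty$, for $t$ large (depending on $r$) we have $N_r(t)\ge 2(\bar{L}_{3r}+r)$, so~\eqref{eq:probaboveinterval1} of Lemma~\ref{lem:probaboveinterval} applies whenever $x\ge L_t+14r$. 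Combined with the trivial inclusion $\{B_s>L_{t-s}\;\forall s\in [0,t]\}\subseteq \{B_s>L_{t-s}\;\forall s\in [0,t-s_0]\}$, this gives
\[
U(t,x)\le A^{(1)}_r A^{(2)}_r e^t \Esub{x}{U_0(B_t)\1_{\{B_s>L_{t-s}\;\forall s\in [0,t-s_0]\}}}.
\]

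For the matching \emph{lower bound}, the Feynman--Kac representation in Lemma~\ref{lem:FKforinfinitemass} gives $U(t,x)\ge e^t \Esub{x}{U_0(B_t)\1_{\{B_s>L_{t-s}\;\forall s\in [0,t]\}}}$. Estimate~\eqref{eq:probaboveinterval2} of Lemma~\ref{lem:probaboveinterval} yields, for $y\ge 2(\bar{L}_{3r}+r)$,
\[
\p{\xi^t_{x,y}(s)>L_{t-s}\;\forall s\in [0,t-s_0]}\le A^{(2)}_r \p{\xi^t_{x,y}(s)>L_{t-s}\;\forall s\in [0,t]}.
\]
Splitting $\Esub{x}{U_0(B_t)\1_{\{B_s>L_{t-s}\;\forall s\in [0,t-s_0]\}}}$ at the threshold $B_t=2(\bar{L}_{3r}+r)$, the tail piece $\{B_t>2(\bar{L}_{3r}+r)\}$ is bounded by $A^{(2)}_r e^{-t}U(t,x)$ via the display above and the Feynman--Kac lower bound, while the remaining piece $\{B_t\le 2(\bar{L}_{3r}+r)\}$ is controlled by Lemma~\ref{lem:Bt_ends_big} with $\ell_t=L$, $r_1=0$, $r_2=s_0$ (the required hypothesis on $\ell_t$ is precisely the content of the last sentence of Lemma~\ref{lem:Bt_ends_big}), so that for any fixed $r$ this piece is at most $o_t(1)\cdot e^t \Esub{x}{U_0(B_t)\1_{\{B_s>L_{t-s}\;\forall s\in [0,t-s_0]\}}}$ uniformly in $x\ge L_t+1$. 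Rearranging yields
\[
U(t,x)\ge (A^{(2)}_r)^{-1}(1-o_t(1))\cdot e^t \Esub{x}{U_0(B_t)\1_{\{B_s>L_{t-s}\;\forall s\in [0,t-s_0]\}}}.
\]

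Combining the two bounds sandwiches $C_{t,z}(s_0)$ between $(A^{(2)}_r)^{-1}(1-o_t(1))$ and $A^{(1)}_r A^{(2)}_r$ whenever $r$ is sufficiently large, $t$ is sufficiently large depending on $r$, and $z=x-L_t\ge 14r$. Since $A^{(1)}_r,A^{(2)}_r\to 1$ as $r\to\infty$, letting $r=r(t,z)\to\infty$ slowly enough along any sequence with $t\wedge z\to \infty$ gives the claimed limit. The main technical care lies not in the analysis but in the simultaneous book-keeping of the range restrictions ($x\in [L_t,L_t+7t]$ and $x\ge L_t+14r$, and ensuring $N_r(t)\ge 2(\bar{L}_{3r}+r)$ for $t$ large); all the deep estimates are already packaged in Proposition~\ref{prop:upperbdinfmass} and Lemma~\ref{lem:probaboveinterval}, so this proposition is essentially a synthesis.
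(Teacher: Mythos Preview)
Your proposal is correct and follows essentially the same approach as the paper: the upper bound combines Proposition~\ref{prop:upperbdinfmass} with~\eqref{eq:probaboveinterval1} of Lemma~\ref{lem:probaboveinterval} exactly as the paper does, and your lower bound uses the Feynman--Kac inequality together with~\eqref{eq:probaboveinterval2} and Lemma~\ref{lem:Bt_ends_big} in the same way. The only cosmetic difference is that the paper applies Lemma~\ref{lem:Bt_ends_big} first (restricting to $B_t>M(t)$ with $M(t)\to\infty$, then checking $M(t)\ge 2(\bar L_{3r}+r)$ so that~\eqref{eq:probaboveinterval2} applies) and chains inequalities forward, whereas you split the target expectation at the fixed threshold $2(\bar L_{3r}+r)$ and bound the two pieces separately; these are equivalent reorganizations of the same estimate.
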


\begin{proof}
Recall the definition of $N_r(t)$ in Proposition~\ref{prop:upperbdinfmass}.
Since $N_r(t)\to \infty$ as $t\to \infty$ and $N_r(t)\ge \beta_r(t)$ $\forall t\ge r \ge 2$, we have by Lemma~\ref{lem:probaboveinterval} that for $r$ sufficiently large, for $t\ge 6r$ sufficiently large that $N_r(t)\ge 2(\bar{L}_{3r}+r)$ and $x\ge L_t+14r$,
	\[
	\p{\xi^t_{x,y}(s)>\underline{\mathcal M}_{r,t}(t-s) \; \forall s\in [3r,t-3r]}
	\le A^{(2)}_r \p{\xi^t_{x,y}(s)>L_{t-s} \; \forall s\in [0,t]} \quad \forall y\ge N_r(t).
	\]
	By Proposition~\ref{prop:upperbdinfmass}, it follows that
	for $r$ sufficiently large, for $t\ge 6r$ sufficiently large that $N_r(t)\ge 2(\bar{L}_{3r}+r)$ and $x\in [L_t+14r,L_t+7t]$,
\begin{align} \label{eq:s0lemupper}
U(t,x)&\le A^{(1)}_r A^{(2)}_r e^t \Esub{x}{U_0(B_t)\1_{\{B_s>L_{t-s}\; \forall s\in [0,t]\}}\1_{\{B_t>N_r(t)\}}} \notag \\
&\le A^{(1)}_r A^{(2)}_r e^t \Esub{x}{U_0(B_t)\1_{\{B_s>L_{t-s}\; \forall s\in [0,t-s_0]\}}}.
\end{align}
By Lemma~\ref{lem:Bt_ends_big} with $\ell_t(s)=L_s$, we can take $M(t)\to \infty$ as $t\to \infty$ and $\varepsilon_t\to 0$ as $t\to \infty$ such that for $t>0$ and $x\ge L_t+1$,
\begin{equation} \label{eq:useBtendsbig}
\Esub{x}{U_0(B_t)\1_{\{B_t\le M(t),B_s>L_{t-s}\;\forall s\in [0,t-s_0]\}}}\le \varepsilon_t \Esub{x}{U_0(B_t)\1_{\{B_s>L_{t-s}\;\forall s\in [0,t-s_0]\}}}.
\end{equation}
Take $r>0$ sufficiently large that~\eqref{eq:probaboveinterval2} in Lemma~\ref{lem:probaboveinterval} holds, and then take $t\ge 6r$ sufficiently large that $M(t)\ge 2(\bar{L}_{3r}+r)$. 
By the Feynman-Kac formula in Lemma~\ref{lem:FKforinfinitemass}, and then by conditioning on $B_t$ and using~\eqref{eq:probaboveinterval2} in Lemma~\ref{lem:probaboveinterval},
 for $x\ge L_t+1$,
 \begin{align} \label{eq:s0lemlower}
U(t,x)& \ge e^t \Esub{x}{U_0(B_t)\1_{\{B_s>L_{t-s}\; \forall s\in [0,t]\}}\1_{\{B_t>M(t)\}}} \notag \\
&\ge e^t (A^{(2)}_r)^{-1}\Esub{x}{U_0(B_t)\1_{\{B_s>L_{t-s}\; \forall s\in [0,t-s_0]\}}\1_{\{B_t>M(t)\}}} \notag \\
&\ge e^t (A^{(2)}_r)^{-1}(1-\varepsilon_t)\Esub{x}{U_0(B_t)\1_{\{B_s>L_{t-s}\; \forall s\in [0,t-s_0]\}}},
\end{align}
where the last inequality follows from~\eqref{eq:useBtendsbig}.
The result follows by combining~\eqref{eq:s0lemupper} and~\eqref{eq:s0lemlower}, since $A^{(1)}_r\to 1$ and $A^{(2)}_r\to 1$ as $r\to \infty$.
\end{proof}

\subsection{Proof of Theorem~\ref{theo:infinitemassconv}} \label{subsec:inf_thm1pf}

We now use Lemma~\ref{lem:probaboveinterval}, Corollary~\ref{cor:x2x1}, the stretching lemma, and the convergence of $U^H(t,L^H_t+\cdot)$ to $\Pi_{\min}$ from Theorem~\ref{theo:convergence to the minimal travelling wave for finite initial mass}, to show that the tail bounds on $U(t,\cdot)$ in~\eqref{eq:propsandwich} in Proposition~\ref{prop:sandwichmeansconv} hold.
\begin{prop} \label{prop:UhasPimintail}
Suppose $U_0$ satisfies Assumption~\ref{assum:standing assumption ic},~\eqref{eq:assump_infmass1} and~\eqref{eq:assump_infmass2}, and let $(U(t,x),L_t)$ denote the solution of~\eqref{eq:FBP_CDF}.
	There exist $N>0$, $T>0$,
	$\gamma_1(z)\to 1$ as $z\to \infty$, 
	$\gamma_2(t)\to 0$ as $t\to \infty$ 
	and $(m(t),t\ge T)$ with $\sup_{T\le s \le t<\infty}(m(s)-m(t))<\infty$
	such that
	\begin{equation} \label{eq:propUsandwich}
	\gamma_1^{-1}(z)\Pi_{\min}(z)-\gamma_2(t)\le U(t,z+m(t))\le \gamma_1(z)\Pi_{\min}(z)+\gamma_2(t)
	\quad \forall z\ge N, \, t\ge T.
	\end{equation}
\end{prop}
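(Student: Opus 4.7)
The plan is to take $m(t):=L_t$. This choice satisfies the monotonicity requirement immediately: by Lemma~\ref{lem:boundary locally Lipschitz from the left} there exists $t_0>0$ such that $t\mapsto L_t$ is non-decreasing on $[t_0,\infty)$, so for $T\ge t_0$ we have $\sup_{T\le s\le t<\infty}(m(s)-m(t))\le 0<\infty$.

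For the lower bound in~\eqref{eq:propUsandwich} I would use stretching: by Lemma~\ref{lem:extended maximum principle} $U(t,\cdot)\ge_s U^H(t,\cdot)$, and since $U(t,L_t)=1=U^H(t,L^H_t)$ and $U^H(t,\cdot)$ is strictly decreasing on $[L^H_t,\infty)$ by Proposition~\ref{prop:fbpsoln}(iv), Lemma~\ref{lem:stretchdecr} yields $U(t,L_t+z)\ge U^H(t,L^H_t+z)$ for every $z\ge 0$. The Heaviside initial condition has finite initial mass, so Theorem~\ref{theo:convergence to the minimal travelling wave for finite initial mass} gives $\varepsilon(t):=\sup_{z\in\R}|U^H(t,L^H_t+z)-\Pi_{\min}(z)|\to 0$ as $t\to\infty$. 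Hence $U(t,L_t+z)\ge\Pi_{\min}(z)-\varepsilon(t)$, which furnishes the lower side of~\eqref{eq:propUsandwich} with $\gamma_1^{-1}\equiv 1$ and $\gamma_2(t)\ge\varepsilon(t)$.

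For the upper bound the principal tool is Corollary~\ref{cor:x2x1}. Fixing a large $r$ and a large $R$, and applying the corollary with $x_1=L_t+R$, $x_2=L_t+z$, together with Lemma~\ref{lem:probaboveinterval} (to replace the Brownian bridge probability involving $\underline{\mathcal M}_{r,t}$ by one involving $L_{t-s}$) and Lemma~\ref{lem:FKforinfinitemass} (to recognise the resulting integral as a lower bound for $U(t,L_t+R)$), and using $L_t/t\to\sqrt 2$ from Lemma~\ref{lem:Lttsqrt2}, yields
\[
U(t,L_t+z)\le (\text{first term})+A^{(1)}_r A^{(2)}_r(1+o_t(1))\,e^{-\sqrt 2(z-R)}\frac{z}{R}\,U(t,L_t+R).
\]
I would bound the (Gaussian) first term using $L_t\ge\sqrt 2 t-\tfrac{3}{2\sqrt 2}\log t+\mathcal O(1)$ (from Lemma~\ref{lem:LHtlower} combined with Lemma~\ref{lem:less stretching means slower boundary}), choosing $b\uparrow\sqrt 2$ and $\delta\downarrow 0$ as a suitable function of $t$, so that this term is absorbed into $\gamma_2(t)$ in the intermediate range of $z$; for $z$ beyond the range covered by Corollary~\ref{cor:x2x1}, I would use the crude bound $U(t,x)\le e^t\,\mathbb E_x[U_0(B_t)]$ from Lemma~\ref{lem:FKforinfinitemass} with the exponential tail~\eqref{eq:assump_infmass1} to produce a Gaussian upper bound negligible compared to $\Pi_{\min}(z)$. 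Finally, using the elementary identity $\frac{z(1+\sqrt 2 R)}{R(1+\sqrt 2 z)}\to 1$ as $R,z\to\infty$, the main term rearranges into $(1+o_{R,r,t}(1))\,U(t,L_t+R)\Pi_{\min}(z)/\Pi_{\min}(R)$, so the whole estimate becomes comparable to $\Pi_{\min}(z)$ once $U(t,L_t+R)$ is comparable to $\Pi_{\min}(R)$.

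\textbf{Main obstacle.} The delicate point is obtaining the \emph{tight} constant, i.e.\ ensuring that $\gamma_1(z)\to 1$ rather than merely $\gamma_1$ bounded. A naive iteration of Corollary~\ref{cor:x2x1} starting from $U(t,L_t+R_0)\le 1$ yields an upper bound $U(t,L_t+R)\le C\,\Pi_{\min}(R)$ with $C>1$, which propagates a multiplicative constant strictly greater than $1$. To close the gap I would either (i) bootstrap using the matching lower bound from stretching as an anchor, letting $R\to\infty$ while exploiting $A^{(1)}_r,A^{(2)}_r\to 1$ to squeeze $\limsup_{t,z\to\infty}U(t,L_t+z)/\Pi_{\min}(z)$ down to $1$, or (ii) invoke the Brunet--Derrida relation (Theorem~\ref{theo:magic formula}) applied at time $t$ to identify the asymptotic Laplace transform $\int_0^\infty U(t,L_t+v)e^{rv}dv\to\frac{2\sqrt 2-r}{(\sqrt 2-r)^2}=\int_0^\infty\Pi_{\min}(v)e^{rv}dv$ for each $r\in(0,\sqrt 2)$ (using that $L_{t+s}-L_t\to\sqrt 2 s$ in an averaged sense), and combine this integral tightness with the stretching lower bound and the dominating tail control established above to conclude that the ratio $U(t,L_t+z)/\Pi_{\min}(z)$ approaches $1$ uniformly for $z$ large and $t$ large, as required.
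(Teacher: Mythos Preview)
Your lower bound via stretching and your identification of Corollary~\ref{cor:x2x1} together with Lemma~\ref{lem:probaboveinterval} as the engine for the upper ratio estimate are exactly right, and indeed the paper uses both ingredients in the same way. The gap is precisely the one you flag under ``Main obstacle'': with the choice $m(t)=L_t$ you cannot close the upper bound with $\gamma_1(z)\to 1$, and neither of your proposed fixes works.

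For option~(i), the ratio bound from Corollary~\ref{cor:x2x1} gives you $U(t,L_t+z)\le(1+o(1))\,\tfrac{z}{R}\,e^{-\sqrt 2(z-R)}U(t,L_t+R)$, but to conclude you still need $U(t,L_t+R)\le(1+o_R(1))\Pi_{\min}(R)$, and nothing in your toolkit supplies this: stretching only gives the lower bound $U(t,L_t+R)\ge\Pi_{\min}(R)-\varepsilon(t)$, and iterating the ratio bound from a small anchor $R_0$ with $U(t,L_t+R_0)\le 1$ produces a multiplicative constant $e^{\sqrt 2 R_0}/(\sqrt 2 R_0)>1$ that does not vanish. For option~(ii), the Brunet--Derrida relation applied at time $t$ requires control of $L_{t+s}-L_t$ for all $s\ge 0$; in the infinite--mass regime, at this stage you only have $L_t/t\to\sqrt 2$ from Lemma~\ref{lem:Lttsqrt2} and the one--sided bound $L_{t+s}-L_t\ge(\sqrt 2-\epsilon)s$ from Lemma~\ref{lem:boundary locally Lipschitz from the left}. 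There is no matching upper bound on increments (Lemma~\ref{lem:boundary Lipschitz from the right when less stretched than a travelling wave} would require $U_0\le_s\Pi_c$, which is not assumed), so the integral $\int_0^\infty e^{r(L_{t+s}-L_t)-(1+\frac12 r^2)s}\,ds$ cannot be shown to converge to the right limit.

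The paper resolves this not by forcing $m(t)=L_t$ but by exploiting the freedom in the statement to \emph{shift}: it derives both an upper ratio estimate (your Corollary~\ref{cor:x2x1} route) and a lower ratio estimate (from stretching plus the exact form of $\Pi_{\min}$) showing that $U(t,L_t+x_2)/U(t,L_t+x_1)$ is squeezed between $(1\pm o(1))\,\tfrac{x_2-L_t}{x_1-L_t}\,e^{-\sqrt 2(x_2-x_1)}$ for $x_1,x_2$ in a growing window $[L_t+14r,\,L_t+M_t]$. Evaluating at a reference point $M_t\to\infty$ then pins $U(t,L_t+M_t)$ between constant multiples of $M_t e^{-\sqrt 2 M_t}$, which means one can write $U(t,L_t+M_t)=\sqrt 2\,M_t\,e^{-\sqrt 2(M_t-\zeta(t))}$ with $|\zeta(t)|\le N_0$ bounded. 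Setting $m(t):=L_t+\zeta(t)$ absorbs the unknown normalisation into the centring, and the two--sided ratio estimates transported from the reference point $M_t$ give~\eqref{eq:propUsandwich} with $\gamma_1\to 1$. The boundedness of $\zeta(t)$ ensures the monotonicity condition on $m$ is inherited from $L$.
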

\begin{proof}
We begin by establishing an upper bound on $U(t,x_2)$ in terms of $U(t,x_1)$ for large $t$ and suitable $L_t\le x_1\le x_2$, using Corollary~\ref{cor:x2x1}.

Note first that by Lemma~\ref{lem:probaboveinterval}, for $r$ sufficiently large, for $t\ge 6r$ sufficiently large that $N_r(t)\ge 2(\bar{L}_{3r}+r)$, for $x_1\ge L_t+14r$,
	\begin{equation} \label{eq:useprobabovelem}
	\p{\xi^t_{x_1,y}(s)>\underline{\mathcal M}_{r,t}(t-s)\; \forall s\in [3r,t-3r]}
	\le A^{(2)}_r\p{\xi^t_{x_1,y}(s)>L_{t-s}\; \forall s\in [0,t]} \quad \forall y\ge N_r(t).
	\end{equation}
	Recall the definition of $T_b(U_0)$ in~\eqref{eq:Tbdefn}.
Using Lemma~\ref{lem:Lttsqrt2}, take $\delta_t\in (0,\sqrt 2)$ $\forall t>0$ with $\delta_t$ non-increasing in $t$ and $\delta_t\to 0$ as $t\to \infty$ sufficiently slowly that
\begin{equation} \label{eq:deltatconditions}
\frac{\delta_t^2 t}{\log t}\to \infty \quad
\text{and} \quad
\delta_t^{-2} \left|\frac{L_t}t -\sqrt 2 \right| \to 0 
\quad \text{ as }t\to \infty,
\end{equation}
and that for $t$ sufficiently large, 
\begin{equation} \label{eq:deltatconditions2}
\delta_t t\ge T_{\sqrt{2}-\frac 14 \delta_t}(U_0).
\end{equation}
Take $r$ sufficiently large that Corollary~\ref{cor:x2x1} holds, and then take $t\ge 6r$ sufficiently large that~\eqref{eq:deltatconditions2} holds, that $\sqrt 2-\frac 14 \delta_t>0$, and that (using~\eqref{eq:deltatconditions})
\begin{equation} \label{eq:tbigforthm}
L_t-\sqrt 2 t \ge -\frac{\delta_t^2 t}{32\sqrt 2} \quad \text{and}\quad
\left|\frac{L_t}t -\sqrt 2\right| \le \delta_t.
\end{equation}
Then by Corollary~\ref{cor:x2x1} (with $\delta=\delta_t$ and $b=\sqrt 2-\frac 14 \delta_t$) combined with~\eqref{eq:useprobabovelem},
for $x_1,x_2\in [L_t+14r,L_t+7t]\cap [\delta_t t,(\sqrt 2+\frac 14 \delta _t)t]$ with $0\le x_2-x_1\le \delta_t^{-1/2}$,
and then in the second inequality using that $\sqrt 2-\frac 14 \delta_t>0$ and $L_t\le x_2\le (\sqrt 2+\frac 14 \delta_t)t$ for the first term, and using the second part of~\eqref{eq:tbigforthm} for the second term,
\begin{align} \label{eq:usecorforthm}
&U(t,x_2) \notag\\
&\le A^{(1)}_r e^{-(\sqrt 2-\frac 14 \delta_t)(x_2-\sqrt 2 t)+\frac 1 {32}\delta_t^2 t}e^{-\frac 1 {2t}((\sqrt 2+\frac 34 \delta_t)t-x_2)^2} \notag \\
&\quad +A^{(1)}_r e^t e^{-(\frac{L_t}t-\delta_t)(x_2-x_1)}\frac{x_2-L_t}{x_1-L_t}
A^{(2)}_r \int_{N_r(t)}^\infty U_0(y)\frac{e^{-\frac{1}{2t}(x_1-y)^2}}{\sqrt{2\pi t}}\p{\xi^t_{x_1,y}(s)>L_{t-s}\; \forall s\in [0,t]}dy \notag \\
&\le A^{(1)}_r e^{-(\sqrt 2-\frac 14 \delta_t)(L_t-\sqrt 2 t)+\frac 1 {32}\delta_t^2 t}e^{-\frac 1 {2t}\frac 14 \delta_t^2 t^2} \notag \\
&\quad +A^{(1)}_r e^t e^{-(\sqrt 2 -2\delta_t)(x_2-x_1)}\frac{x_2-L_t}{x_1-L_t}
A^{(2)}_r \Esub{x_1}{U_0(B_t)\1_{\{B_s>L_{t-s}\; \forall s\in [0,t]\}}} \notag \\
&\le A^{(1)}_r e^{-\frac 1 {16}\delta_t^2 t}
+A^{(1)}_r A^{(2)}_r  e^{2\delta_t^{1/2}}e^{-\sqrt 2(x_2-x_1)}\frac{x_2-L_t}{x_1-L_t}U(t,x_1),
\end{align}
and the last inequality follows by the first part of~\eqref{eq:tbigforthm} for the first term, and by the Feynman-Kac representation in Lemma~\ref{lem:FKforinfinitemass} and since $x_2-x_1\le \delta_t^{-1/2}$ for the second term.

We now use the stretching lemma and Theorem~\ref{theo:convergence to the minimal travelling wave for finite initial mass} to establish a lower bound on $U(t,x_2)$ in terms of $U(t,x_1)$ for $L_t\le x_1\le x_2$.
By Theorem~\ref{theo:convergence to the minimal travelling wave for finite initial mass}, there exists $\varepsilon_t\to 0$ as $t\to \infty$ such that
\begin{equation} \label{eq:heavisideconvforinfthm}
\sup_{z\in \R}|U^H(t,L^H_t+z)-\Pi_{\min}(z)|\le \varepsilon_t \quad \forall t>0.
\end{equation}
We will assume that $(\varepsilon_t)_{t>0}$ is non-increasing.
Recall from~\eqref{eq:minimal travelling wave} and~\eqref{eq:Pimindefn}
that 
\begin{equation} \label{eq:Piminformula}
\Pi_{\min}(z)=(\sqrt 2 z+1)e^{-\sqrt 2 z} \; \forall z\ge 0.
\end{equation}
By Lemma~\ref{lem:extended maximum principle} and Lemma~\ref{lem:stretchdecr}, and then by~\eqref{eq:heavisideconvforinfthm} and~\eqref{eq:Piminformula}, for $t>0$ and $z\ge 0$ we have
\begin{equation} \label{eq:lowerbdfromstretch}
U(t,L_t+z)\ge U^H(t,L^H_t+z)\ge (\sqrt 2 z+1)e^{-\sqrt 2 z}-\varepsilon_t\ge e^{-\sqrt 2 z}-\varepsilon_t.
\end{equation}
Also, by Lemma~\ref{lem:extended maximum principle} and Lemma~\ref{lem:stretchdecr} and then~\eqref{eq:heavisideconvforinfthm} again,
for $t>0$ and $x_1>L_t$, taking $z_1>0$ such that $U^H(t,L^H_t+z_1)=U(t,x_1)$, for any $z\ge 0$ we have
\begin{align*}
U(t,x_1+z)&\ge U^H(t,L^H_t+z_1+z)\ge \Pi_{\min}(z_1+z)-\varepsilon_t\\
\text{and }\quad U(t,x_1)&=U^H(t,L^H_t+z_1)\le \Pi_{\min}(z_1)+\varepsilon_t.
\end{align*}
Since $U(t,L_t+z_1)\ge U^H(t,L^H_t+z_1)=U(t,x_1)$ by~\eqref{eq:lowerbdfromstretch}, we have $x_1\ge L_t+z_1$.
Hence for $z\ge 0$, using~\eqref{eq:Piminformula} in the second inequality,
and in the third inequality using that $[\Pi_{\min}(x_1-L_t)]^{-1}\le e^{\sqrt 2 (x_1-L_t)}$ and that letting $y=x_1-L_t>0$ we have $\sqrt 2(\sqrt 2 y+1)^{-1}\ge y^{-1}(1-y^{-1})$, it follows that
\begin{align} \label{eq:lowerbdforthm}
\frac{U(t,x_1+z)}{U(t,x_1)}
&\ge \frac{\Pi_{\min}(z_1+z)-\varepsilon_t}{\Pi_{\min}(z_1)+\varepsilon_t} \notag \\
&\ge \left(\frac{\sqrt 2(x_1-L_t+z)+1}{\sqrt 2(x_1-L_t)+1}e^{-\sqrt 2 z}-\varepsilon_t [\Pi_{\min}(x_1-L_t)]^{-1}
\right)
(1+\varepsilon_t [\Pi_{\min}(x_1-L_t)]^{-1})^{-1} \notag \\
&\ge e^{-\sqrt 2 z}\frac{x_1-L_t+z}{x_1-L_t}(1-(x_1-L_t)^{-1}-\varepsilon_t e^{\sqrt 2 (x_1-L_t+z)})
(1+\varepsilon_t e^{\sqrt 2 (x_1-L_t)})^{-1}.
\end{align}

Take $M_t\to \infty$ as $t\to \infty$ sufficiently slowly that
\begin{equation} \label{eq:Mtconds}
\varepsilon_t^{1/2}e^{\sqrt 2 M_t}\to 0\quad \text{as }t\to \infty,
\end{equation}
and that, using~\eqref{eq:deltatconditions},
for $t$ sufficiently large,
\begin{equation} \label{eq:Mtconds2}
L_t+M_t\le (\sqrt 2+\tfrac 14 \delta_t)t, \;
M_t\le 7t, \;
M_t\le \delta_t^{-1/2}, \;
e^{-\sqrt 2 M_t}\ge 2\varepsilon_t
\;\text{ and }\;
e^{2\sqrt 2 M_t}\le e^{\frac 1 {32}\delta_t^2 t}.
\end{equation}
Note that by Lemma~\ref{lem:Lttsqrt2} and since $\delta_t\to 0$ as $t\to \infty$, we also have $\delta_t t\le L_t$ for $t$ sufficiently large.
Take $r_0>0$ sufficiently large that $A^{(2)}_r>1/2$ for $r\ge r_0$ and that Corollary~\ref{cor:x2x1} holds for $r\ge r_0$.
Then by~\eqref{eq:usecorforthm},~\eqref{eq:lowerbdfromstretch} and~\eqref{eq:Mtconds2},
 there exists $(t_r)_{r\ge r_0}$ such that 
 for $r\ge r_0$, $t\ge t_r$ and $x_1,x_2\in [L_t+14r,L_t+M_t]$ with $x_1\le x_2$,
 \begin{align} \label{eq:Uratioupper}
 \frac{U(t,x_2)}{U(t,x_1)}
 &\le 
 A^{(1)}_r e^{-\frac 1 {16}\delta_t^2 t}(e^{-\sqrt 2 M_t}-\varepsilon_t)^{-1}+
A^{(1)}_r A^{(2)}_r  e^{2\delta_t^{1/2}}e^{-\sqrt 2(x_2-x_1)}\frac{x_2-L_t}{x_1-L_t} \notag \\
 &\le 
A^{(1)}_r A^{(2)}_r  e^{2\delta_t^{1/2}}e^{-\sqrt 2(x_2-x_1)}\frac{x_2-L_t}{x_1-L_t}(1+4e^{-\frac 1 {32}\delta_t^2 t}),
 \end{align}
 where the second inequality follows from~\eqref{eq:Mtconds2} and our assumption that $(A^{(2)}_r)^{-1}<2$.
 Moreover, by increasing $(t_r)_{r\ge r_0}$ if necessary, by
 ~\eqref{eq:lowerbdforthm} and~\eqref{eq:Mtconds},
  for $r\ge r_0$, $t\ge t_r$ and $x_1,x_2\in [L_t+14r,L_t+M_t]$ with $x_1\le x_2$,
 \begin{equation} \label{eq:Uratiolower}
\frac{U(t,x_2)}{U(t,x_1)}\ge 
e^{-\sqrt 2(x_2-x_1)}\frac{x_2-L_t}{x_1-L_t}(1-(x_1-L_t)^{-1}-\varepsilon_t^{1/2})(1+\varepsilon_t^{1/2})^{-1}.
 \end{equation}
We can assume that $(t_r)_{r\ge r_0}$ is non-decreasing and that $t_r\to \infty$ as $r\to \infty$.
By reducing $(M_t)_{t>0}$ if necessary, we assume from now on that $M_s\to \infty$ sufficiently slowly as $s\to \infty$ that for $s$ sufficiently large,
\begin{equation} \label{eq:tauMtcond}
t_{M_s}\le s.
\end{equation}

Take $r_1\ge r_0\vee 1$ sufficiently large that $A^{(1)}_{r_1} A^{(2)}_{r_1} \le 2$.
Then by~\eqref{eq:lowerbdfromstretch}, for $t$ sufficiently large, $U(t,L_t+14r_1)\in [\frac 12 e^{-\sqrt 2 \cdot 14r_1},1]$, and by~\eqref{eq:Uratioupper} and~\eqref{eq:Uratiolower}, for $t\ge t_{r_1}$ sufficiently large we have
\[
\frac 12 \frac{M_t}{14r_1}e^{-\sqrt 2(M_t-14r_1)}\le \frac{U(t,L_t+M_t)}{U(t,L_t+14r_1)}\le 3\frac{M_t}{14r_1}e^{-\sqrt 2(M_t-14r_1)}.
\]
Therefore, there exists $N_0>0$ such that for $t$ sufficiently large, there exists $\zeta(t)\in [-N_0,N_0]$ such that
\begin{equation} \label{eq:zetatdefn}
U(t,L_t+M_t)=\sqrt 2 M_t e^{-\sqrt 2(M_t-\zeta(t))}.
\end{equation}
Then by~\eqref{eq:Uratioupper} with $x_1=x$ and $x_2=L_t+M_t$, for $r\ge r_0$ and $t\ge t_r$, for $x\in [L_t+14r,L_t+M_t]$,
\begin{align*}
U(t,x)
&\ge (A^{(1)}_r A^{(2)}_r)^{-1} e^{-2\delta_t^{1/2}}e^{\sqrt 2(L_t+M_t-x)}\frac{x-L_t}{M_t}(1+4e^{-\frac 1 {32}\delta_t^2 t})^{-1}\cdot \sqrt 2 M_t e^{-\sqrt 2(M_t-\zeta(t))}\\
&=(A^{(1)}_r A^{(2)}_r)^{-1} e^{-2\delta_t^{1/2}}(1+4e^{-\frac 1 {32}\delta_t^2 t})^{-1}\sqrt 2(x-L_t)e^{-\sqrt 2(x-L_t-\zeta(t))}.
\end{align*}
Also, by~\eqref{eq:Uratiolower} with $x_1=x$ and $x_2=L_t+M_t$, 
\begin{align*}
U(t,x)
&\le e^{\sqrt 2(L_t+M_t-x)}\frac{x-L_t}{M_t}(1-(x-L_t)^{-1}-\varepsilon_t^{1/2})^{-1}(1+\varepsilon_t^{1/2})\cdot \sqrt 2 M_t e^{-\sqrt 2(M_t-\zeta(t))}\\
&=(1-(x-L_t)^{-1}-\varepsilon_t^{1/2})^{-1}(1+\varepsilon_t^{1/2})\sqrt 2(x-L_t)e^{-\sqrt 2(x-L_t-\zeta(t))}.
\end{align*}
Take $T_0>0$ sufficiently large that~\eqref{eq:tauMtcond} and~\eqref{eq:zetatdefn} hold for $t\ge T_0$, and $M_t\ge 14r_0$ for $t\ge T_0$, and also $\delta_t^2 t\ge \log t$ for $t\ge T_0$ (using~\eqref{eq:deltatconditions}).
Then for $s\ge T_0$ and $x\in [L_s+14r_0,L_s+M_s]$, letting $r=(x-L_s)/14$, we have $r\ge r_0$, $s\ge t_{M_s}\ge t_r$ and $x\in [L_s+14r,L_s+M_s]$, and so, using that $(\varepsilon_u)_{u>0}$ and $(\delta_u)_{u>0}$ are non-increasing,
\[
(1-r^{-1}-\varepsilon_{t_r}^{1/2})^{-1}(1+\varepsilon_{t_r}^{1/2})\ge \frac{U(s,x)}{\sqrt 2(x-L_s)e^{-\sqrt 2(x-L_s-\zeta(s))}}
\ge (A^{(1)}_r A^{(2)}_r)^{-1} e^{-2\delta_{t_r}^{1/2}}(1+4t_r^{-1/32})^{-1}.
\]
For $t\ge T_0$, let $m(t)=L_t+\zeta(t)$.
Note that by Lemma~\ref{lem:boundary locally Lipschitz from the left}
and since $|\zeta(t)|\le N_0$ $\forall t\ge T_0$, we have
$\sup_{T_0\le s \le t<\infty}(m(s)-m(t))<\infty$.
For $z\ge N_0+14r_0$, let
\begin{align*}
\gamma_1(z)&=\tilde{\gamma}_1((z-N_0)/14) \left(\sup_{|y|\le N_0}\frac{\sqrt 2(z+y)e^{-\sqrt 2 z}}{\Pi_{\min}(z)}\vee 1\right)\left(\inf_{|y|\le N_0}\frac{\sqrt 2(z+y)e^{-\sqrt 2 z}}{\Pi_{\min}(z)}\wedge 1\right)^{-1},\\
\text{where }\quad \tilde{\gamma}(r)&=\max((1-r^{-1}-\varepsilon_{t_r}^{1/2})^{-1}(1+\varepsilon_{t_r}^{1/2}), A^{(1)}_r A^{(2)}_r e^{2\delta_{t_r}^{1/2}}(1+4t_r^{-1/32}))\quad \text{for }r\ge r_0.
\end{align*}
Then since $\tilde \gamma(r)\to 1$ as $r\to \infty$, we have $\gamma_1(z)\to 1$ as $z\to \infty$, and for $t\ge T_0$ and $z\in [14r_0+N_0,M_t-N_0]$,
\begin{equation} \label{eq:Umtsandwich}
\gamma_1(z)^{-1}\Pi_{\min}(z)\le U(t,m(t)+z)\le \gamma_1(z)\Pi_{\min}(z).
\end{equation}
For $t\ge T_0$, let
\[
\gamma_2(t)=\sup_{z\ge M_t-N_0}(\gamma_1(z)^{-1}\Pi_{\min}(z))+U(t,m(t)+M_t-N_0).
\]
By~\eqref{eq:Umtsandwich}, we have $U(t,m(t)+M_t-N_0)\le \gamma_1(M_t-N_0)\Pi_{\min}(M_t-N_0)$ for $t\ge T_0$, and so $\gamma_2(t)\to 0$ as $t\to \infty$.
Since $U(t,\cdot)\ge 0$ is non-increasing, we now have that~\eqref{eq:propUsandwich} holds with $N=14r_0+N_0$ and $T=T_0$, which completes the proof.
\end{proof}

We can now complete the proof of Theorem~\ref{theo:infinitemassconv}.

\begin{proof}[Proof of Theorem~\ref{theo:infinitemassconv}]
	The result follows directly from Proposition~\ref{prop:UhasPimintail}, Proposition~\ref{prop:sandwichmeansconv}, and
	 Lemma~\ref{lem:mtoL}.
\end{proof}

\subsection{Proof of Theorem~\ref{theo:infmassfront}} \label{subsec:inf_thm2pf}

Using Proposition~\ref{prop:s0} and Theorem~\ref{theo:infinitemassconv}, Theorem~\ref{theo:infmassfront} will now follow from exactly the same arguments as for~\cite[Theorem 5]{Bramson1983} and~\cite[Application of Theorems 4 and 5]{Bramson1983}.
We will state how the proofs in~\cite{Bramson1983} can be applied here, and give a brief heuristic overview of the proofs in~\cite{Bramson1983}; the reader is referred to~\cite{Bramson1983} for the full details.
The following result corresponds to~\cite[Theorem 5]{Bramson1983}.

\begin{prop} \label{prop:frontinfiniteinitial}
	Suppose $U_0$ satisfies Assumption~\ref{assum:standing assumption ic},~\eqref{eq:assump_infmass1} and~\eqref{eq:assump_infmass2}.
	Let $(U(t,x),L_t)$ denote the solution of the free boundary problem~\eqref{eq:FBP_CDF}.
	Suppose for some $\delta\in (0,1/2)$, $s_0, s_1>0$ and $C_0>0$ that $(m(t),t\ge 0)$ satisfies
	\begin{equation} \label{eq:mconditions}
	\begin{aligned}
	&\lim_{t\to \infty}\frac{m(t)}t =\sqrt 2,\\
		&\sup\{|m(s)|:s_0\le s \le t\}<\infty \quad \forall t>0,\\
		&m(t)\ge \sqrt{2}t-t^\delta \quad \forall t\ge s_1,\\
		\text{and }\quad &m(t)-m(s)\ge \sqrt 2 (t-s)-(t-s)^\delta -C_0 \quad \forall s_0\le s \le t.
	\end{aligned}
	\end{equation}
	Suppose also that for some $D:\R_+\times \R\to [0,\infty)$ and $v:\R\to [0,\infty)$,
	\begin{multline}
	\label{eq:Dvasymp}
	e^t \Esub{x}{U_0(B_t)\1_{\{B_s>m(t-s) \; \forall s\in [0,t-s_0]\}}}=D(t,x-m(t))v(x-m(t)) \quad \forall t>0, x\in \R,\\
	\text{where }\lim_{z\to \infty} \limsup_{t\to \infty}D(t,z)=1 \text{ and }\lim_{z\to \infty} \liminf_{t\to \infty}D(t,z)=1.
	\end{multline}
	Then there exists $x_0\in \R$ such that
	\[
	m(t)-L_t\to x_0 \quad \text{as } t\to \infty.
	\]
	Moreover,~\eqref{eq:Dvasymp} holds for some $D$ with $v(z)=\Pi_{\min}(z+x_0)$ $\forall z\in \R$.
\end{prop}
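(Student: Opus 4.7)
The plan is to combine the two Feynman--Kac representations of $U(t,x)$: the one from Proposition~\ref{prop:s0} using the true barrier $L$, and the one implicit in the hypothesis~\eqref{eq:Dvasymp} using the given barrier $m$. Set
\[
G(t,x):=e^t\Esub{x}{U_0(B_t)\1_{\{B_s>L_{t-s}\,\forall s\in[0,t-s_0]\}}},\qquad F(t,x):=e^t\Esub{x}{U_0(B_t)\1_{\{B_s>m(t-s)\,\forall s\in[0,t-s_0]\}}}.
\]
Proposition~\ref{prop:s0} gives $U(t,x)=C_{t,x-L_t}(s_0)G(t,x)$ with $C\to 1$, and Theorem~\ref{theo:infinitemassconv} gives $U(t,L_t+z)\to\Pi_{\min}(z)$ uniformly in $z$, so
\[
G(t,L_t+z)=(1+o(1))\Pi_{\min}(z)\quad\text{as }t\wedge z\to\infty.
\]
The hypothesis reads $F(t,m(t)+z)=D(t,z)v(z)$ with the analogous iterated asymptotic on $D$.

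The central step is a comparison lemma asserting that, whenever $|L_t-m(t)|$ is bounded along a sequence $t_n\to\infty$,
\[
F(t_n,x)=(1+o(1))\,G(t_n,x)
\]
uniformly over starting points $x$ with $\min(x-L_{t_n},x-m(t_n))$ large. Since $L$ itself satisfies~\eqref{eq:mconditions} (using Lemmas~\ref{lem:LHtlower},~\ref{lem:less stretching means slower boundary},~\ref{lem:Lttsqrt2}, and~\ref{lem:boundary locally Lipschitz from the left}), both barriers $m(t-\cdot)$ and $L_{t-\cdot}$ have the same leading asymptotic $\sqrt 2 s$ with a Bramson-type correction, and since they differ by a bounded amount, their pointwise maximum still satisfies a bound of the form $\sqrt 2 s+O((s\wedge(t-s))^{1/3})$. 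The entropic repulsion of Lemma~\ref{lem:Bramsonentropicrepulsion}, applied in the style of Lemma~\ref{lem:probaboveinterval} to the max and min of the two barriers, then shows that the survival probabilities above $m$ and above $L$ agree up to a factor $1+o(1)$.

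The boundedness of $L_t-m(t)$ would be established along similar lines. If $m(t_n)-L_{t_n}\to-\infty$ along a subsequence, then for any fixed $z$ the starting point $m(t_n)+z$ eventually drops below $L_{t_n}$, forcing the indicator defining $G$ to vanish; a matching truncation of $F$ (cutting Brownian trajectories whose minimum on $[0,t-s_0]$ falls too far below $L_{t-\cdot}$), controlled by the same entropic-repulsion estimate, forces $F(t_n,m(t_n)+z)\to 0$, contradicting the hypothesis $F(t_n,m(t_n)+z)\to v(z)>0$ for $z$ large. If $m(t_n)-L_{t_n}\to+\infty$, then $G(t_n,m(t_n)+z)=G(t_n,L_{t_n}+(z+m(t_n)-L_{t_n}))\to 0$ because $\Pi_{\min}(+\infty)=0$, and a one-sided majorization argument using the max of the two barriers yields $F(t_n,m(t_n)+z)\to 0$ as well, again contradicting the hypothesis. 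With boundedness in hand, along any subsequence $t_n$ with $m(t_n)-L_{t_n}\to x_0$, the comparison together with the $G$-asymptotic gives, for each sufficiently large $z$,
\[
v(z)=\lim_{n\to\infty}F(t_n,m(t_n)+z)=\lim_{n\to\infty}G(t_n,L_{t_n}+z+x_0)=\Pi_{\min}(z+x_0),
\]
and injectivity of $\Pi_{\min}$ pins down $x_0$ uniquely from $v$. Every subsequential limit of $m(t)-L_t$ therefore equals the same $x_0$, giving $m(t)-L_t\to x_0$. Setting $D^{\mathrm{new}}(t,z):=D(t,z)v(z)/\Pi_{\min}(z+x_0)$, the identity $v(z)\sim\Pi_{\min}(z+x_0)$ as $z\to\infty$ ensures that $D^{\mathrm{new}}$ inherits the required asymptotics, verifying the second conclusion.

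The hard part is the comparison lemma. Its proof requires extending the entropic-repulsion machinery of Section~\ref{subsec:inf_belowM}, notably Lemmas~\ref{lem:Bramsonentropicrepulsion} and~\ref{lem:probaboveinterval}, to a pair of close but possibly non-monotone barriers; the Lipschitz lower bound $m(t)-m(s)\ge\sqrt 2(t-s)-(t-s)^\delta-C_0$ from~\eqref{eq:mconditions} is precisely what is needed to keep the pointwise envelope of the two barriers within the regime where the Bramson-style bounds apply.
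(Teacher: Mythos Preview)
Your proposal follows the same architecture as the paper's proof: both defer to Bramson's Theorem~5, replacing his FKPP convergence by Theorem~\ref{theo:infinitemassconv} and his Feynman--Kac representation by Proposition~\ref{prop:s0}, and both hinge on comparing $F$ (barrier $m$) against $G$ (barrier $L$). Your identification of entropic repulsion as the mechanism behind the comparison is correct; that is exactly what underlies Bramson's ratio bound~\eqref{eq:Eaboveratio}.

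The gap is in your boundedness step. The comparison you need is controlled by $\sup_{s_0\le s\le t}|m(s)-L_s|$, not by the endpoint difference $|m(t_n)-L_{t_n}|$ alone; this is what the paper's bound~\eqref{eq:Eaboveratio} says explicitly. Your subsequential contradiction arguments implicitly assume pointwise control on the whole interval. In Case~1 ($m(t_n)-L_{t_n}\to-\infty$), the ``truncation forces $F\to 0$'' claim does not follow: the Brownian motion starting at $m(t_n)+z$ lies \emph{below} $L_{t_n}$ for large $n$, and staying above the lower barrier $m$ is easier, not harder, than staying above $L$; there is nothing in the entropic-repulsion machinery that makes $F$ small here. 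In Case~2, the majorization $F\le G$ would require $m(s)\ge L_s$ for all $s\le t_n$, which you only know at the endpoint. The paper's outline (following Bramson) avoids this circularity by invoking a second ingredient you do not mention: an a~priori tail estimate~\eqref{eq:Eabovetail} on the quantity $e^t\Esub{x}{U_0(B_t)\1_{\{B_s>m_0(t-s)\,\forall s\}}}$ valid for any $m_0$ satisfying~\eqref{eq:mconditions}. This tail bound, combined with the ratio bound and a bootstrap, is what actually yields $\sup_t|m(t)-L_t|<\infty$; once that is in hand, your comparison and identification of $x_0$ go through as you describe.
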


\begin{proof}
Note that by Lemma~\ref{lem:Lttsqrt2}, Lemma~\ref{lem:less stretching means slower boundary} and Theorem~\ref{theo:convergence to the minimal travelling wave for finite initial mass}, $(L_t)_{t\ge 0}$ satisfies~\eqref{eq:mconditions}.
The proof is then identical to the proof of Theorem~5 in~\cite[pp. 173-181]{Bramson1983}
using Proposition~\ref{prop:s0} in place of~\cite[(9.39)]{Bramson1983}
and Theorem~\ref{theo:infinitemassconv} in place of~\cite[Theorem 4]{Bramson1983}.
Indeed, in~\cite[pp. 173-181]{Bramson1983}, Bramson first proves a bound on
\begin{equation} \label{eq:Eaboveratio}
\frac{\Esub{x}{U_0(B_t)\1_{\{B_s>m_1(t-s) \; \forall s\in [0,t-s_0]\}}}}{\Esub{x}{U_0(B_t)\1_{\{B_s>m_2(t-s) \; \forall s\in [0,t-s_0]\}}}}
\end{equation}
in terms of $\sup_{s_0\le s \le t}|m_1(s)-m_2(s)|$, for $m_1$ and $m_2$ satisfying~\eqref{eq:mconditions} and $U_0$ satisfying~\eqref{eq:assump_infmass2}. 
He also proves a bound on the tail of
\begin{equation} \label{eq:Eabovetail}
\Esub{x}{U_0(B_t)\1_{\{B_s>m_0(t-s) \; \forall s\in [0,t-s_0]\}}}
\end{equation}
for $m_0$ satisfying~\eqref{eq:mconditions} and $U_0$ satisfying~\eqref{eq:assump_infmass2}. 
By applying the bound on~\eqref{eq:Eaboveratio} with $m_1(t)=m(t)$ and $m_2(t)=L_t$, and using the assumption~\eqref{eq:Dvasymp}, and Proposition~\ref{prop:s0} together with Theorem~\ref{theo:infinitemassconv}, along with the estimates on the tail of~\eqref{eq:Eabovetail}, it follows from exactly the same arguments as in~\cite{Bramson1983} that $|m(t)-L_t|$ is bounded as $t\to \infty$, and then by the same arguments as in~\cite{Bramson1983}, the result follows.
\end{proof}

In~\cite{Bramson1983}, Bramson proves the analogue of Theorem~\ref{theo:infmassfront} for the classical FKPP equation under the following condition: for some $\delta<1/3$, for $t$ sufficiently large,
\begin{equation} \label{eq:bcond}
	b(t):=2^{-1 / 2} \log \left(\int_0^{\infty} y e^{\sqrt 2 y} U_0(y) e^{-y^2 / 2 t} d y+1\right) \leq t^\delta.
	\end{equation}
The following lemma will allow us to instead impose the simpler condition, \eqref{eq:stretched exponential U0 proof section}, on $U_0$.
\begin{lem}\label{lem:stretched exponential implies b condition}
Suppose $U_0$ satisfies Assumption~\ref{assum:standing assumption ic}, and that for some $\gamma<1/2$,
\[
U_0(x)\leq e^{x^{\gamma}-\sqrt{2}x}
\]
for all $x$ sufficiently large.
Define $b(t)$ as in~\eqref{eq:bdefninfmass}; then there exists $\delta<1/3$ such that $b(t)\le t^\delta$ for $t$ sufficiently large.
\end{lem}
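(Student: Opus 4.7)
My plan is to bound $b(t)$ by directly estimating the integral $I(t) := \int_0^\infty y e^{\sqrt{2}y} U_0(y) e^{-y^2/(2t)} dy$ using the hypothesis on $U_0$. Fix $y_0 \ge 0$ such that $U_0(y) \le e^{y^\gamma - \sqrt 2 y}$ for $y \ge y_0$. Splitting at $y_0$, the piece $\int_0^{y_0} y e^{\sqrt 2 y} U_0(y) e^{-y^2/(2t)} dy$ is bounded by a finite constant $C_0$ independent of $t$ (using $U_0\le 1$), while for the tail the factor $e^{\sqrt 2 y}$ cancels against the decay in the hypothesis:
\begin{equation*}
\int_{y_0}^\infty y e^{\sqrt 2 y} U_0(y) e^{-y^2/(2t)}\, dy \le \int_{y_0}^\infty y\, e^{y^\gamma - y^2/(2t)}\, dy.
\end{equation*}

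The main step is the elementary inequality
\begin{equation*}
y^\gamma - \tfrac{y^2}{2t} \le C\, t^{\gamma/(2-\gamma)} - \tfrac{y^2}{4t} \qquad \forall\, y\ge 0,\ t>0,
\end{equation*}
for a constant $C = C(\gamma) < \infty$. To verify this, one rewrites the claim as $\sup_{y\ge 0}\bigl(y^\gamma - y^2/(4t)\bigr) \le C t^{\gamma/(2-\gamma)}$, and performs an explicit optimisation: the maximiser is $y_* = (2\gamma t)^{1/(2-\gamma)}$ and plugging in yields a value of the form $c_\gamma t^{\gamma/(2-\gamma)}$. This is a one-line calculus computation and should not present any difficulty.

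Combining, the tail integral is at most $e^{Ct^{\gamma/(2-\gamma)}} \int_0^\infty y e^{-y^2/(4t)} dy = 2t\, e^{Ct^{\gamma/(2-\gamma)}}$, so $I(t) \le C_0 + 2t\, e^{Ct^{\gamma/(2-\gamma)}}$. Taking $\log$ in the definition of $b(t)$ and using $\log(1+x)\le \log 2 + \log^+ x$, we get $b(t) = O(t^{\gamma/(2-\gamma)})$ as $t\to\infty$. Since $\gamma < 1/2$, the exponent satisfies $\gamma/(2-\gamma) < (1/2)/(3/2) = 1/3$, so any $\delta$ chosen in $(\gamma/(2-\gamma),\, 1/3)$ does the job: $b(t) \le t^\delta$ for all sufficiently large $t$. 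There is no genuine obstacle here—the argument is a direct computation, and the only thing to watch is the sharpness of the exponent $\gamma/(2-\gamma)$, which is precisely what makes the threshold $\gamma < 1/2$ match the required $\delta < 1/3$.
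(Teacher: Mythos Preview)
Your proof is correct and follows essentially the same approach as the paper's: both reduce to maximizing $y^\gamma - c\,y^2/t$ over $y\ge 0$, which yields the exponent $\gamma/(2-\gamma) < 1/3$. The only cosmetic difference is that the paper absorbs the linear factor $y$ by passing to a slightly larger exponent $\gamma'\in(\gamma,1/2)$ before bounding $\int f_t$ by $(\text{length})\times\sup f_t$ plus a tail, whereas you keep $\gamma$ and instead split $y^2/(2t)$ in half, using one half to control $y^\gamma$ and the other to evaluate the remaining Gaussian integral $\int_0^\infty y\,e^{-y^2/(4t)}\,dy = 2t$.
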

In fact, it is straightforward to show that these two conditions are equivalent, and so stating Theorem~\ref{theo:infmassfront} under the condition~\eqref{eq:stretched exponential U0 proof section} rather than~\eqref{eq:bcond} incurs no loss of generality, but we will not need that to prove Theorem~\ref{theo:infmassfront}.
\begin{proof}[Proof of Lemma~\ref{lem:stretched exponential implies b condition}]
Take $\gamma '\in (\gamma,1/2)$; then there exists $A \in (0,\infty)$ such that 
\[
U_0(x)\leq Ax^{-1}e^{x^{\gamma'}-\sqrt{2}x} \quad \forall x\in (0,\infty).
\]
Therefore, for $t>0$,
\begin{equation} \label{eq:daggerU0blem}
\int_0^{\infty} y e^{\sqrt 2 y} U_0(y) e^{-y^2 / (2 t)} d y\leq A\int_0^{\infty}f_t(y)dy,
\end{equation}
where, for $t>0$ and $y\in \R$, we let
\[
f_t(y):=e^{y^{\gamma '}-y^2/(2t)}.
\]
Fixing $t>0$ and differentiating with respect to $y$, we see that for $y\in \R$,
\begin{equation} \label{eq:*U0blem}
\partial_y f_t(y)=(\gamma ' y^{\gamma '-1}-yt^{-1})f_t(y).
\end{equation}
Therefore, we observe that
$\partial_y f_t(y)>0$ for $y\in (0,(\gamma ' t)^{1/(2-\gamma ')})$ and $\partial_y f_t(y)<0$ for $y>(\gamma ' t)^{1/(2-\gamma ')}$.
We deduce that the maximum of $f_t(y)$ is attained at $y=(\gamma ' t)^{1/(2-\gamma ')}$, and so
\[
\sup_{y>0}f_t(y)\leq \exp[(\gamma ' t)^{\gamma '/(2-\gamma ')}].
\]
By~\eqref{eq:*U0blem} and since $\gamma '<1/2$, we also have that $\partial_y f_t(y)\leq -f_t(y)$ for $t\ge 1$ and $y\geq 2 t\ge 2$. 
Hence by Gr\"onwall's inequality, we see that for $t\ge 1$,
\[
f_t(y)\leq e^{-(y-2 t)}f_t(2t)\leq e^{-(y-2 t)}\sup_{z>0}f_t(z)\quad \text{for $y>2 t$.}
\]
We conclude that for $t\ge 1$,
\[
\int_0^{\infty}f_t(y)dy \leq 2 t \sup_{y>0}f_t(y)+\int_{2 t}^{\infty}e^{-(y-2 t)}dy \sup_{z>0}f_t(z)\leq (2 t+1)\exp[(\gamma ' t)^{\gamma'/(2-\gamma ')}].
\]
It therefore follows from~\eqref{eq:daggerU0blem} and~\eqref{eq:bdefninfmass} that for $t$ sufficiently large,
\[
\sqrt 2 b(t)\leq 1+\log A+\log (2t+1)+(\gamma ' t)^{\gamma'/(2-\gamma')}.
\]
Since $\gamma'<1/2$, we see that $\frac{\gamma'}{2-\gamma'}< \frac{1/2}{2-1/2}=1/3$, and the result follows.
\end{proof}

We can now use Theorem~\ref{theo:infinitemassconv} and Proposition~\ref{prop:frontinfiniteinitial} to prove Theorem~\ref{theo:infmassfront}.

\begin{proof}[Proof of Theorem~\ref{theo:infmassfront}]
By Lemma~\ref{lem:stretched exponential implies b condition} and since we assume~\eqref{eq:stretched exponential U0 proof section}, we have that $b(t)$ satisfies~\eqref{eq:bcond} for some $\delta<1/3$.
Recall the definition of $m(t)$ in~\eqref{eq:mtheodef} in the statement of the result.
By an identical proof to the proof of~\cite[Application of Theorems 4 and 5, p.~181]{Bramson1983},
using Theorem~\ref{theo:infinitemassconv} in place of~\cite[Theorem 4]{Bramson1983}
and Proposition~\ref{prop:frontinfiniteinitial} in place of~\cite[Theorem 5]{Bramson1983}, we obtain that
\[
m(t)-L_t\to x_0 \quad \text{as } t\to \infty,
\]
where $x_0\in \R$ is such that $\Pi_{\min}(x+x_0)\sim \sqrt{\frac{2}{\pi}}xe^{-\sqrt{2}x}$ as $x\to \infty$.
Indeed, the result is proved in~\cite{Bramson1983} by determining the asymptotic behaviour of the left-hand side of~\eqref{eq:Dvasymp} for $m$ as defined in~\eqref{eq:mtheodef}, for large $x-m(t)$. The condition~\eqref{eq:bcond} is used to show that $m$ is sufficiently close to linear that the left-hand side of~\eqref{eq:Dvasymp} can be estimated by replacing $m(s)$ with $m(t)s/t$, and that $|m(t)-\sqrt 2 t|$ is sufficiently small that the asymptotics of this simplified expression can then be calculated, so that Proposition~\ref{prop:frontinfiniteinitial} can then be applied. The expression for $x_0$ comes from the result of this calculation and the fact that, as stated in Proposition~\ref{prop:frontinfiniteinitial},~\eqref{eq:Dvasymp} must hold with $v(z)=\Pi_{\min}(z+x_0)$.

Since $\Pi_{\min}(x)=(\sqrt 2 x+1)e^{-\sqrt 2 x}$ for $x>0$ by~\eqref{eq:minimal travelling wave}, it follows that $x_0=\log(\sqrt{\pi})/\sqrt{2}$, which completes the proof.
\end{proof}

\section{Proof of Theorems~\ref{theo:convtoPimin},~\ref{theo:Ltposition} and~\ref{theo:slowerdecay}} \label{sec:mainthmpfs}

We can now use results proved in Sections~\ref{section:properties of free-boundary}-\ref{sec:infiniteinitialmass} to prove Theorems~\ref{theo:convtoPimin},~\ref{theo:Ltposition} and~\ref{theo:slowerdecay}.

\begin{proof}[Proof of Theorem~\ref{theo:convtoPimin}]
\textbf{1.} $\Leftrightarrow $ \textbf{2.}
Suppose $\limsup_{x\ra\infty}\frac{1}{x}\log U_0(x)\leq -\sqrt{2}$;
then for any $\epsilon>0$, we have $U_0(x)\le e^{-(\sqrt 2-\epsilon)x}$ for $x$ sufficiently large, and so
 by the definition of $r_0(U_0)$ in~\eqref{eq:r0U0defn} we have $r_0(U_0)=\sqrt 2$.
By Theorem~\ref{theo:initial condition limsup bdy relation}, it follows that $\limsup_{t\to \infty}\frac{L_t}{t}\le \sqrt 2$.

Conversely, if $\limsup_{t\to \infty}\frac{L_t}{t}\le \sqrt 2$, then by Theorem~\ref{theo:initial condition limsup bdy relation} and since $r\mapsto \frac 1 r + \frac r 2$  is decreasing on $(0,\sqrt 2]$, we must have $r_0(U_0)=\sqrt 2$.
Now suppose (aiming for a contradiction) that $\limsup_{x\ra\infty}\frac{1}{x}\log U_0(x)> -\sqrt{2}$, and so there exist $\epsilon>0$ and $(x_n)_{n=1}^\infty\subset \R$ with $x_n\to \infty$ as $n\to \infty$ such that $U_0(x_n)\ge e^{-(\sqrt 2-\epsilon)x_n}$ $\forall n\in \Nm$.
Then since $U_0$ is non-increasing, for any $r\ge \sqrt 2-\frac 12 \epsilon$ and $n\in \Nm$ sufficiently large we have 
\[\int_{x_n-1}^{x_n}U_0(x)e^{rx}dx
\ge e^{-(\sqrt 2-\epsilon)x_n}e^{r(x_n-1)}
\ge e^{\frac 12 \epsilon x_n}e^{-(\sqrt 2 -\frac 12 \epsilon)}\to \infty
\]
as $n\to \infty$, and so $r_0(U_0)\le \sqrt{2}-\frac 12 \epsilon$, giving us a contradiction.
Therefore $\limsup_{x\ra\infty}\frac{1}{x}\log U_0(x)\leq -\sqrt{2}$.

\noindent \textbf{2.} $\Leftrightarrow $ \textbf{3.}
Recall the definition of $(U^H(t,x),L^H_t)$ in Section~\ref{subsec:notation}.
By Lemma~\ref{lem:less stretching means slower boundary} and then by Lemma~\ref{lem:LHtlower}, for $t\ge 1$ we have
\[
L_t-L_1\ge L^H_{t-1}\ge \sqrt 2(t-1)-\frac 3{2\sqrt 2}\log t -C_0.
\]
It follows that $\liminf_{t\to\infty}\frac{L_t}{t}\ge \sqrt 2$, and so $\limsup_{t\to\infty}\frac{L_t}{t}\le \sqrt 2$ if and only if $\lim_{t\to\infty}\frac{L_t}{t}=\sqrt 2$.

\noindent \textbf{1.} $\Rightarrow $ \textbf{4.}
Suppose $\limsup_{x\ra\infty}\frac{1}{x}\log U_0(x)\leq -\sqrt{2}$.
Then by Theorem~\ref{theo:convergence to the minimal travelling wave for finite initial mass} if~\eqref{eq:finiteinitialmassinsection} holds, and by Theorem~\ref{theo:infinitemassconv} if instead~\eqref{eq:assump_infmass2} holds,
we have $U(t,x+L_t)\to \Pi_{\min}(x)$ uniformly in $x$ as $t\to \infty$.

\noindent \textbf{4.} $\Rightarrow $ \textbf{2.}
Suppose $U(t,x+L_t)\to \Pi_{\min}(x)$ uniformly in $x$ as $t\to \infty$.
Then for $\delta>0$, there exists $T<\infty$ such that $|U(t,x+L_t)-\Pi_{\min}(x)|\le \delta$ $\forall t\ge T, $ $x\in \R$.
Hence by Lemma~\ref{lem:Bramsongronwall} and since $\Pi_{\min}$ is a travelling wave solution of~\eqref{eq:FBP_CDF} with speed $\sqrt 2$, we have 
\[
|U(t+1,x+L_t)-\Pi_{\min}(x-\sqrt 2)|\le e\delta \quad \forall t\ge T,\, x\in \R.
\]
Therefore $U(t+1,y)<1$ for $y>L_t+\sqrt 2+\Pi_{\min}^{-1}(1-e\delta)$, and so $L_{t+1}-L_t\le \sqrt 2+\Pi_{\min}^{-1}(1-e\delta)$.
Since $\delta>0$ was arbitrary, and $\Pi_{\min}^{-1}(1-e\delta)\to 0$ as $\delta \to 0$, it follows that $\limsup_{t\to\infty}\frac{L_t}{t}\le \sqrt 2$.
This completes the proof.
\end{proof} 

\begin{proof}[Proof of Theorem~\ref{theo:Ltposition}]
We begin by assuming that~\eqref{eq:stretched exponential U0} is satisfied for some $\gamma<1/2$.
In the case $\int_0^{\infty} y e^{\sqrt 2 y} U_0(y)  d y<\infty$,
then there exists $b\in \R$ such that $b(t)\to b$ as $t\to \infty$, and so the result follows by combining~\eqref{eq:theoLHtLtconv} in Theorem~\ref{theo:convergence to the minimal travelling wave for finite initial mass}
with Theorem~\ref{theo:bootstrap boundary asymptotics}.
In the case $\int_0^{\infty} y e^{\sqrt 2 y} U_0(y)  d y=\infty$, 
since~\eqref{eq:assump_infmass1} follows from~\eqref{eq:stretched exponential U0},
the result follows from Theorem~\ref{theo:infmassfront}.

All that remains is to check that, in the case $\int_0^{\infty} y e^{\sqrt 2 y} U_0(y)  d y=\infty$, the conclusion $L_t-[\sqrt{2}t-\frac{3}{2\sqrt{2}}\log t]\ra \infty$ as $t\ra\infty$ remains valid without assuming~\eqref{eq:stretched exponential U0}. If $U_0(x)\leq e^{-\sqrt 2 x}$ for all $x$ large enough, then~\eqref{eq:stretched exponential U0} is satisfied. Therefore we can assume, without loss of generality, that there exist arbitrarily large $x$ such that $U_0(x)>e^{-\sqrt 2 x}$.

We now define
\[
\tilde{U}_0(x):=U_0(x)\wedge e^{-\sqrt 2 x} \quad \text{for }x\in \R.
\]
Then there exists an increasing sequence $x_n\ra \infty$ as $n\ra\infty$ with $x_1\ge 2$ such that $\tilde{U}_0(x_n)=e^{-\sqrt 2 x_n}$ and $x_{n+1}-x_n>1$ for all $n\in \mathbb N$. Therefore $\int_0^{\infty} y e^{\sqrt 2 y} \tilde{U}_0(y)  d y\geq \sum_{n\in \mathbb N}e^{-\sqrt 2 x_n}e^{\sqrt{2}(x_n-1)}=\infty$. 

Moreover, $\tilde{U}_0$ satisfies Assumption~\ref{assum:standing assumption ic} and \eqref{eq:stretched exponential U0} (since $\tilde{U}_0(x)\leq e^{-\sqrt 2 x}$). We take $(\tilde{U},L^{\tilde{U}})$ to be the solution to \eqref{eq:FBP_CDF} with initial condition $\tilde{U}_0$. We have $L^{\tilde{U}}_t\leq L_t$ for all $t>0$ by the comparison principle (Proposition \ref{prop:fbpcomparison}), and $L^{\tilde{U}}_t-[\sqrt{2}t-\frac{3}{2\sqrt{2}}\log t]\ra \infty$ as $t\ra \infty$ by the previous parts of Theorem~\ref{theo:Ltposition}.
\end{proof}

\begin{proof}[Proof of Theorem~\ref{theo:slowerdecay}]
First note that since $U_0$ is non-decreasing, for any $b>0$ and $h>0$ we have
\begin{equation} \label{eq:slowdecayequiv}
\lim_{x\to \infty}\frac 1 x \log U_0(x)=-b \quad \Leftrightarrow \quad \lim_{t\to \infty}\frac 1t \log \left(\int_t^{(1+h)t}U_0(y)dy\right)=-b.
\end{equation}
Indeed, take $\varepsilon \in (0,b)$ and take $K<\infty$ sufficiently large that $h(K-1)>1$.
Then if $U_0(x)\in [e^{-(b+\varepsilon)x},e^{-(b-\varepsilon)x}]$ $\forall x\ge K$, it follows that for $t\ge K$,
\[
(b+\varepsilon)^{-1}e^{-(b+\varepsilon)t}(1-e^{-(b+\varepsilon)ht})
\le \int_t^{(1+h)t}U_0(y)dy
\le (b-\varepsilon)^{-1}e^{-(b-\varepsilon)t}.
\]
Conversely, if there exists $x\ge K$ such that $U_0(x)<e^{-(b+\varepsilon)x}$ then
\[
\int_x^{(1+h)x}U_0(y)dy
\le hx e^{-(b+\varepsilon)x},
\]
and if there exists $x\ge K$ such that $U_0(x)>e^{-(b-\varepsilon)x}$ then
\[
\int_{x-1}^{(1+h)(x-1)}U_0(y)dy
\ge \int_{x-1}^{x}U_0(y)dy \ge  e^{-(b-\varepsilon)x}.
\]
The claim~\eqref{eq:slowdecayequiv} follows.

For $t\ge 0$ and $x\in \R$, let $\varphi(t,x)=e^t\Esub{x}{U_0(B_t)}$.
Then by the Feynman-Kac formula in Lemma~\ref{lem:FKforinfinitemass}, we have $U(t,x)\le \varphi(t,x)$ $\forall t\ge 0$, $x\in \R$.
Therefore, recalling the definition of $m(t)$ in~\eqref{eq:mtslowdecay}, we have $L_s\le m(s)$ $\forall s\ge 0$, and so by Lemma~\ref{lem:FKforinfinitemass} again, for $t\ge 0$ and $x\in \R$,
\begin{equation} \label{eq:slowdecayFKsandwich}
\Esub{x}{U_0(B_t)e^{\Leb(\{s\in [0,t]:B_s\ge m(t-s)\})}} \le U(t,x)\le e^t \Esub{x}{U_0(B_t)}.
\end{equation}

Now fix $c>\sqrt{2}$, and suppose $\lim_{x\ra\infty}\frac{1}{x}\log U_0(x)=-c+\sqrt{c^2-2}$.
Let
\[
x_c:=\frac{1}{c-\sqrt{c^{2}-2}} \left(\log \sqrt{c^{2}-2}+\log\big(c-\sqrt{c^{2}-2}\big)\right).
\]
By the definition of $\Pi_c$ in~\eqref{eq:Picdefn} and~\eqref{eq:speed c travelling wave}, we have $e^{(c-\sqrt{c_2-2})x}\Pi_c(x-x_c)\to 1$ as $x\to \infty$.
Then by exactly the same argument as in~\cite[Proof of Theorem~1, p.69-74]{Bramson1983}, using~\eqref{eq:slowdecayequiv}, and using~\eqref{eq:slowdecayFKsandwich} in place of~\cite[(5.4)]{Bramson1983} and Proposition~\ref{prop:sandwichmeansconv} in place of~\cite[Proposition~3.3]{Bramson1983}, we have that $U(t,x+m(t))\ra \Pi_c(x-x_c)$ uniformly in $x$ as $t\ra\infty$.
By Lemma~\ref{lem:mtoL} it follows that $U(t,x+L_t)\ra \Pi_c(x)$ uniformly in $x$ as $t\ra\infty$, and $L_t-m(t)-x_c\to 0$ as $t\to\infty$.

Suppose instead that $U(t,x+L_t)\ra \Pi_c(x)$ uniformly in $x$ as $t\ra\infty$.
By Lemma~\ref{lem:Bramsongronwall}, 
\[
\sup_{x\in \R}|U(t+1,x+c+L_t)-\Pi_c(x)|\le e \sup_{x\in \R}|U(t,x+L_t)-\Pi_c(x)|\to 0
\]
 as $t\to \infty$.
 Therefore, by Lemma~\ref{lem:mtoL} we have $L_{t+1}-L_t-c\to 0$ as $t\to \infty$.
 Since $L_{\lfloor t \rfloor}\le L_t \le L_{\lceil t \rceil}$ for $t\ge t_0+1$ by Lemma~\ref{lem:boundary locally Lipschitz from the left}, it follows that
 \[
 \lim_{t\to\infty}\frac{L_t}t=c.
 \]
 Hence by exactly the same argument as in~\cite[Proof of Theorem~2, p.79-83]{Bramson1983}, using~\eqref{eq:slowdecayFKsandwich}, we have that 
  $\lim_{t\to \infty}\frac 1t \log \left(\int_t^{(1+h)t}U_0(y)dy\right)=-c+\sqrt{c^2-2}$, and by~\eqref{eq:slowdecayequiv} it follows that 
  \[
  \lim_{x\ra\infty}\tfrac{1}{x}\log U_0(x)=-c+\sqrt{c^2-2}. \qedhere
  \]
\end{proof}

\section{Results for the generalised free boundary problem \eqref{eq:generalised FBP_CDF}}\label{section:proof on generalised FBP}

In this section, we state in full our results about the long-term behaviour of solutions of the generalised free boundary problem~\eqref{eq:generalised FBP_CDF}, and prove these results, as well as proving Theorems~\ref{theo:existence uniqueness classical solution V} and~\ref{theo:mapping between FBP and general problem}.

We begin by defining travelling wave solutions.
Recalling~\eqref{eq:Picdefn}, for $c\ge \sqrt 2 $ and $\beta>0$, let
\begin{equation}\label{eq:travelling waves general beta}
	\Pi^{(\beta)}_c(x)=\begin{cases}
		\Pi_c(x)+\frac{\beta}{2}\partial_x\Pi_c(x),\quad & x>0,\\
		1,\quad &x\leq 0.
	\end{cases}
\end{equation}
These are travelling wave solutions of~\eqref{eq:generalised FBP_CDF}, in the sense that for any $c\ge \sqrt 2 $ and $\beta>0$,
\begin{equation}\label{eq:general beta V travelling}
V(t,x):=\Pi^{(\beta)}_c(x-ct)
\end{equation}
is a solution of~\eqref{eq:generalised FBP_CDF}; this can be verified directly from~\eqref{eq:Picdefn},~\eqref{eq:speed c travelling wave} and~\eqref{eq:minimal travelling wave}. 
We have the following result concerning the existence of {\it non-negative} travelling wave solutions of~\eqref{eq:generalised FBP_CDF}; this result is also stated in~\cite[Section 2]{Berestycki2018a}.
\begin{prop}\label{prop:travelling waves general beta}
For $c\ge \sqrt 2 $ and $\beta>0$,
the travelling wave $\Pi^{(\beta)}_c$ defined in~\eqref{eq:travelling waves general beta} is non-negative if and only if $c\geq c^{(\beta)}_{\min}$, where 
\begin{equation} \label{eq:cbetamindef}
c^{(\beta)}_{\min}:=
\begin{cases}
\sqrt{2},\quad &0<\beta \leq \sqrt{2},\\
\frac{\beta}{2}+\frac{1}{\beta},\quad &\beta >\sqrt{2}.
\end{cases}
\end{equation}
Furthermore, for $c\geq c^{(\beta)}_{\min}$, the travelling wave $ \Pi^{(\beta)}_c$ is non-increasing and satisfies $ \Pi^{(\beta)}_c(x)\to 0$ as $x\to \infty$.
For $\beta>0$, let 
\begin{equation} \label{eq:generalbetaminTW}
\Pi^{(\beta)}_{\min}:=\Pi^{(\beta)}_{c^{(\beta)}_{\min}}
\end{equation}
denote the minimal non-negative travelling wave. 
Then there exists $A_\beta\in (0,\infty)$ such that as $x\to \infty$,
\begin{equation} \label{eq:Pibetaminasymptotics}
\Pi^{(\beta)}_{\min}(x)\sim \begin{cases}
A_{\beta}xe^{-\sqrt{2}x}\quad &\text{if }0<\beta<\sqrt{2},\\
A_{\beta}e^{-\beta x}\quad &\text{if }\beta\geq \sqrt{2}.
\end{cases}
\end{equation}
\end{prop}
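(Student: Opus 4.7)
The plan is to exploit the identity $\Pi_c^{(\beta)}(x)=\Pi_c(x)-\tfrac\beta2\pi_c(x)$ for $x>0$ (using $\partial_x\Pi_c=-\pi_c$) and reduce everything to an explicit computation from the formulas~\eqref{eq:minimal travelling wave} and~\eqref{eq:speed c travelling wave}. The case $c=\sqrt 2$ is immediate: direct substitution yields
\[
\Pi_{\sqrt 2}^{(\beta)}(x)=\bigl(1+(\sqrt 2-\beta)x\bigr)e^{-\sqrt 2 x}\quad\text{for }x>0,
\]
which is non-negative iff $\beta\le\sqrt 2$, strictly decreasing when so, and exhibits the claimed tail with $A_\beta=\sqrt 2-\beta$ for $\beta<\sqrt 2$ (and $A_{\sqrt 2}=1$ in the degenerate case $\beta=\sqrt 2$, matching the second regime of~\eqref{eq:Pibetaminasymptotics}).

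For $c>\sqrt 2$ I would introduce $\lambda_\pm=c\pm\sqrt{c^2-2}$, so that $\lambda_+\lambda_-=2$ and $0<\lambda_+<\sqrt 2<\lambda_-$. A short rearrangement gives the representation
\[
\Pi_c^{(\beta)}(x)=\frac{1}{2\sqrt{c^2-2}}\bigl[(\lambda_--\beta)e^{-\lambda_+ x}-(\lambda_+-\beta)e^{-\lambda_- x}\bigr]\quad\text{for }x>0.
\]
Non-negativity is then handled by case analysis on the sign of $\beta-\lambda_\pm$: if $\beta\le\lambda_+$ both bracketed coefficients are non-negative and $(\lambda_--\beta)e^{-\lambda_+ x}\ge(\lambda_+-\beta)e^{-\lambda_- x}$ by direct comparison; if $\lambda_+<\beta\le\lambda_-$ the formula rewrites as a positive combination of the two exponentials; and if $\beta>\lambda_-$ the leading $e^{-\lambda_+ x}$ term has strictly negative coefficient, forcing $\Pi_c^{(\beta)}(x)<0$ for $x$ large. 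Hence $\Pi_c^{(\beta)}\ge 0$ precisely when $\beta\le\lambda_-(c)$. Since $c\mapsto\lambda_-(c)$ is continuous and strictly increasing on $[\sqrt 2,\infty)$ with $\lambda_-(\sqrt 2)=\sqrt 2$, solving $c+\sqrt{c^2-2}=\beta$ for $\beta>\sqrt 2$ yields $c=\tfrac\beta2+\tfrac1\beta$, which together with the $c=\sqrt 2$ case gives~\eqref{eq:cbetamindef}.

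For monotonicity when $c\ge c^{(\beta)}_{\min}$ and $c>\sqrt 2$, differentiating the above gives
\[
\partial_x\Pi_c^{(\beta)}(x)=\frac{1}{2\sqrt{c^2-2}}\bigl[-\lambda_+(\lambda_--\beta)e^{-\lambda_+ x}+\lambda_-(\lambda_+-\beta)e^{-\lambda_- x}\bigr].
\]
In the pushed/pushmi-pullyu range $\lambda_+<\beta\le\lambda_-$ both coefficients are non-positive and the claim is immediate; in the pulled range $\beta\le\lambda_+$ the equation $\partial_x\Pi_c^{(\beta)}=0$ rearranges to $e^{(\lambda_--\lambda_+)x}=\lambda_-(\lambda_+-\beta)/[\lambda_+(\lambda_--\beta)]$, whose right-hand side is $\le 1$ because $\lambda_+<\lambda_-$, so there is no critical point in $(0,\infty)$; combined with $\partial_x\Pi_c^{(\beta)}(0+)=-\beta<0$ this yields strict monotonicity throughout. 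The decay $\Pi_c^{(\beta)}(x)\to 0$ is read off the exponentials, and~\eqref{eq:Pibetaminasymptotics} follows by identifying the dominant exponential at $c=c^{(\beta)}_{\min}$: for $0<\beta<\sqrt 2$ the degenerate $c=\sqrt 2$ case supplies the polynomial prefactor $x$; for $\beta\ge\sqrt 2$ the identity $\lambda_-(c^{(\beta)}_{\min})=\beta$ kills the $e^{-\lambda_+ x}$ term and leaves a pure $A_\beta e^{-\beta x}$ with positive constant (indeed $A_\beta=1$, using $\lambda_+=2/\beta$). The proof is essentially bookkeeping; the only mildly delicate step is ruling out an interior zero of the derivative in the pulled range, where its two coefficients have opposite signs and monotonicity cannot be read off by inspection.
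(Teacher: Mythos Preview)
Your argument is correct and follows essentially the same route as the paper: explicit formulas for $\Pi_c^{(\beta)}$, case analysis on the relative position of $\beta$ and the exponential rates, and direct computation for the derivative and asymptotics. Your representation is in fact the paper's formula simplified using $\lambda_+\lambda_-=2$, and your non-negativity criterion $\beta\le\lambda_-$ is equivalent to the paper's $a_c\beta\le2$. The monotonicity argument differs only cosmetically: the paper checks the sign of the derivative directly, while you rule out interior critical points and use $\partial_x\Pi_c^{(\beta)}(0+)=-\beta$; both are equally short.

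One notational slip: you write $\lambda_\pm=c\pm\sqrt{c^2-2}$ but then use $\lambda_+$ as the \emph{smaller} root (since you assert $0<\lambda_+<\sqrt2<\lambda_-$ and treat $e^{-\lambda_+x}$ as the dominant term). The sign in the definition should be flipped, or the roles of $\lambda_+$ and $\lambda_-$ swapped. This does not affect the substance of the proof.
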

Let $\tilde \Pi^{(\beta)}_{\min}(x)=e^{c^{(\beta)}_{\min}x}\Pi^{(\beta)}_{\min}(x)$ $\forall x\in \R$.
Then~\eqref{eq:Pibetaminasymptotics} is consistent with {\it pushed} behaviour when $\beta>\sqrt{2}$, since $\tilde \Pi^{(\beta)}_{\min}\in L^2(\Rm)$ (see \cite[(1.16)]{An2023b} and \cite[Remark 1]{Garnier2012}); {\it pushmi-pullyu} behaviour when $\beta=\sqrt{2}$, since $\tilde \Pi^{(\beta)}_{\min}\in L^{\infty}(\Rm)$ but $\tilde \Pi^{(\beta)}_{\min}\notin L^2(\Rm)$ (see \cite[(1.18)]{An2023b}), and {\it pulled} behaviour when $0<\beta<\sqrt{2}$, since $\tilde \Pi^{(\beta)}_{\min}\notin L^{\infty}(\Rm)$ and $\tilde \Pi^{(\beta)}_{\min}\notin L^2(\Rm)$ (see \cite[p.2077]{An2023b}). Note that the above references give conditions on $e^{\frac 12 c^{(\beta)}_{\min}x}\Pi^{(\beta)}_{\min}(x)$; the distinction comes from the $\frac{1}{2}$ in front of the Laplacian in~\eqref{eq:generalised FBP_CDF}.

By combining Theorems~\ref{theo:existence uniqueness classical solution V} and~\ref{theo:mapping between FBP and general problem} with Theorems~\ref{theo:convtoPimin},~\ref{theo:slowerdecay} and~\ref{theo:initial condition limsup bdy relation}, we can characterise the domain of attraction of this minimal travelling wave solution.
\begin{theo} \label{theo:convtoPimin general beta}
Take $\beta>0$, 
suppose that $V_0$ satisfies Assumption~\ref{assum:standing assumption initial condition general beta}, and let $(V(t,x),L_t)$ denote the solution of the free boundary problem~\eqref{eq:generalised FBP_CDF}. 
Define $c^{(\beta)}_{\min}$ as in~\eqref{eq:cbetamindef}.
Then the following are equivalent:
	\begin{enumerate}
		\item $\limsup_{x\ra\infty}\frac{1}{x}\log V_0(x)\leq -\min(\sqrt{2},\frac{2}{\beta})$; \label{enum:bound on tails of U0 general beta} 
		\item $\limsup_{t\ra\infty}\frac{L_t}{t}\leq c^{(\beta)}_{\min}$;
		\item $\lim_{t\ra\infty}\frac{L_t}{t}= c^{(\beta)}_{\min}$;\label{enum:convergence of velocity general beta}
		\item $V(t,x+L_t)\ra \Pi^{(\beta)}_{\min}(x)$ uniformly in $x$ as $t\ra\infty$. \label{enum:convergence of profile general beta}
	\end{enumerate}
\end{theo}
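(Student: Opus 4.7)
The plan is to reduce everything to Theorems~\ref{theo:convtoPimin},~\ref{theo:slowerdecay} and~\ref{theo:initial condition limsup bdy relation} via the mapping in Theorem~\ref{theo:mapping between FBP and general problem}, which identifies $(V,L_t)$ with the solution $(U,L_t)$ of~\eqref{eq:FBP_CDF} starting from $U_0$ given by~\eqref{eq:U0 formula from V0}, and gives $V = U + \tfrac{\beta}{2}\partial_x U$ in the interior. Since the free boundaries coincide, I first translate the tail condition on $V_0$ into one on $U_0$: using the explicit formula~\eqref{eq:U0 formula from V0} and splitting according to whether $\int_{-\infty}^\infty e^{2z/\beta}V_0(z)\,dz$ is finite, the condition $\limsup_{x\to\infty}\tfrac{1}{x}\log V_0(x)\leq -\min(\sqrt 2, 2/\beta)$ is equivalent to $\limsup_{x\to\infty}\tfrac{1}{x}\log U_0(x)\leq -\sqrt 2$ when $0<\beta\leq \sqrt 2$, and to $\lim_{x\to\infty}\tfrac{1}{x}\log U_0(x) = -2/\beta$ when $\beta>\sqrt 2$. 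The latter equality relies on the a priori lower bound $U_0(x)\geq c\,e^{-2x/\beta}$, which follows from $V_0(z)\to 1$ as $z\to -\infty$ inside~\eqref{eq:U0 formula from V0}. The reverse implications are easier, using $V_0\leq U_0$ (a consequence of $\partial_x U_0 = \tfrac{2}{\beta}(V_0-U_0)\leq 0$).

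With this translation, the implication $(1)\Rightarrow (3)$, together with the uniform convergence $U(t,\cdot+L_t)\to \Pi_{c^{(\beta)}_{\min}}$, follows from Theorem~\ref{theo:convtoPimin} when $\beta\leq \sqrt 2$ (with $c^{(\beta)}_{\min}=\sqrt 2$) and from Theorem~\ref{theo:slowerdecay} applied at the speed $c^{(\beta)}_{\min}=\beta/2+1/\beta$ when $\beta>\sqrt 2$, noting that $c^{(\beta)}_{\min}-\sqrt{(c^{(\beta)}_{\min})^2-2}=2/\beta$. The implication $(2)\Rightarrow (1)$ follows from Theorem~\ref{theo:initial condition limsup bdy relation}: the a priori bound $r_0(U_0)\leq \min(\sqrt 2,2/\beta)$ combined with $\limsup L_t/t\leq c^{(\beta)}_{\min}$ forces $r_0(U_0)=\min(\sqrt 2, 2/\beta)$, which via monotonicity of $U_0$ and the reverse translation yields the tail condition on $V_0$. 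The equivalence $(2)\Leftrightarrow(3)$ follows exactly as in the proof of Theorem~\ref{theo:convtoPimin}, using Lemmas~\ref{lem:less stretching means slower boundary} and~\ref{lem:LHtlower} to rule out $\liminf L_t/t < c^{(\beta)}_{\min}$.

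The remaining task is to promote uniform convergence of $U(t,\cdot+L_t)\to \Pi_{c^{(\beta)}_{\min}}$ to uniform convergence of $V(t,\cdot+L_t) = U(t,\cdot+L_t) + \tfrac{\beta}{2}\partial_x U(t,\cdot+L_t)$ to $\Pi^{(\beta)}_{\min} = \Pi_{c^{(\beta)}_{\min}} - \tfrac{\beta}{2}\pi_{c^{(\beta)}_{\min}}$. My preferred approach exploits the monotonicity of $V(t,\cdot)$ guaranteed by Theorem~\ref{theo:mapping between FBP and general problem}. Integrating the identity $V = U + \tfrac{\beta}{2}\partial_x U$ gives
\[
\int_x^\infty V(t,L_t+y)\,dy = \int_x^\infty U(t,L_t+y)\,dy - \tfrac{\beta}{2}\,U(t,L_t+x),
\]
and the right-hand side converges uniformly in $x\in\R$ to $\int_x^\infty \Pi^{(\beta)}_{\min}(y)\,dy$ thanks to uniform convergence of $U(t,\cdot+L_t)$ together with uniform exponential tail bounds on $U(t,L_t+\cdot)$ obtained from the stretching lemma (Lemma~\ref{lem:extended maximum principle}) and Lemma~\ref{lem:stretchdecr}. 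Since $V(t,L_t+\cdot)$ is non-increasing and the limit $\Pi^{(\beta)}_{\min}$ is continuous, uniform convergence of the integrated tail forces pointwise convergence of $V(t,L_t+\cdot)$ at every $x$, which monotonicity then upgrades to uniform convergence. Finally, $(4)\Rightarrow(2)$ is obtained by an adaptation of Lemma~\ref{lem:Bramsongronwall}: if $V(t,\cdot+L_t)$ stays close to $\Pi^{(\beta)}_{\min}$, then $L_{t+1}-L_t$ must be close to $c^{(\beta)}_{\min}$.

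The hard part will be this final transfer of uniform convergence from $U$ to $V$: uniform convergence of $U$ does not a priori control its derivative, and the monotonicity-based argument above requires a uniform exponential tail on $U(t,L_t+\cdot)$. In the pushed regime $\beta>\sqrt 2$, where $U_0$ itself only decays as $e^{-2x/\beta}$, this tail bound does not come for free from the methods used for Theorem~\ref{theo:convtoPimin}, and an additional comparison with a travelling wave at speed $c^{(\beta)}_{\min}$ via the stretching lemma seems to be the most delicate input.
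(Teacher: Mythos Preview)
Your reduction to $(U,L_t)$ via Theorem~\ref{theo:mapping between FBP and general problem}, the translation of the tail condition on $V_0$ to one on $U_0$, and the route through Theorems~\ref{theo:convtoPimin},~\ref{theo:slowerdecay} and~\ref{theo:initial condition limsup bdy relation} for the equivalence of 1--3 all match the paper. (Minor note: for $\beta>\sqrt 2$ the Heaviside lower bound only gives $\liminf L_t/t\geq\sqrt 2$, not $\geq c^{(\beta)}_{\min}$, but since you already have $2\Rightarrow 1\Rightarrow 3$ the loop closes trivially via $3\Rightarrow 2$, so this is harmless.)

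The real gap is in $1\Rightarrow 4$. Your identity $\int_x^\infty V(t,L_t+y)\,dy=\int_x^\infty U(t,L_t+y)\,dy-\tfrac{\beta}{2}U(t,L_t+x)$ requires convergence of $\int_x^\infty U(t,L_t+y)\,dy$, i.e.\ convergence of first moments of $u(t,L_t+\cdot)$, which needs a uniform-in-$t$ tail bound on $U(t,L_t+\cdot)$. You propose the stretching lemma, but that would need $U_0\leq_s\Pi_{c'}$ for some $c'$, and the tail condition on $V_0$ does not provide this: a stretching comparison constrains the global level-set structure, not just the asymptotic decay rate, and in the pushed regime $U_0$ from~\eqref{eq:U0 formula from V0} always satisfies $U_0(x)\geq c\,e^{-2x/\beta}$ with the \emph{same} leading exponent as $\Pi_{c^{(\beta)}_{\min}}$, leaving no slack. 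The paper avoids tail control entirely by working with $\int_0^\infty e^{rx}U(t,L_t+x)\,dx$ for $r<0$: the weight $e^{rx}$ is integrable and $0\le U\le 1$, so dominated convergence applies automatically; integration by parts transfers convergence to $\int_0^\infty e^{rx}V(t,L_t+x)\,dx$; then Helly's selection theorem plus analytic continuation and Fourier uniqueness identify every subsequential limit of $V(t,L_t+\cdot)$ with $\Pi^{(\beta)}_{\min}$, and monotonicity upgrades to uniform convergence.

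For $4\Rightarrow 1$, adapting Lemma~\ref{lem:Bramsongronwall} directly to $V$ is shaky, since that lemma is proved via the FKPP approximation of Proposition~\ref{prop:Un}, which has no analogue for~\eqref{eq:generalised FBP_CDF}. The paper instead recovers $U(t,L_t+\cdot)\to\Pi_{c^{(\beta)}_{\min}}$ from $V(t,L_t+\cdot)\to\Pi^{(\beta)}_{\min}$ via the inverse integral formula $U(t,L_t+x)=e^{-2x/\beta}\bigl(1+\tfrac{2}{\beta}\int_0^x e^{2z/\beta}V(t,L_t+z)\,dz\bigr)$, where the integral is over a bounded interval so no tail issue arises, and then applies Theorem~\ref{theo:convtoPimin} or~\ref{theo:slowerdecay} to $U$.
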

We now turn to the asymptotics of the front position for solutions of~\eqref{eq:generalised FBP_CDF}. As for solutions of~\eqref{eq:FBP_CDF}, the behaviour of the front position will depend on whether the initial condition $V_0$ has \textit{finite} or \textit{infinite initial mass}, but the meaning of this will vary between the cases $0<\beta< \sqrt{2}$ and $\beta\geq \sqrt{2}$.
For $\beta>0$ and $V_0$ satisfying Assumption~\ref{assum:standing assumption initial condition general beta}, as in~\eqref{eq:Ibetaintro} we let 
\begin{equation}\label{eq:I integral finite initial mass general beta}
I_\beta=I_\beta(V_0):=
\begin{cases}
\int_0^{\infty}xe^{\sqrt{2}x}V_0(x)dx \quad &\text{if }0<\beta< \sqrt{2}, \\
\int_{-\infty}^{\infty}e^{\frac 2 \beta x}V_0(x)dx \quad &\text{if }\beta\geq  \sqrt{2}.
\end{cases}
\end{equation}
We say that $V_0$ has \textit{finite initial mass} if $I_\beta<\infty$ and $V_0$ has \textit{infinite initial mass} if instead $I_\beta=\infty$.
Note that in the case $\beta< \sqrt 2$, this is the same definition of finite and infinite initial mass as in Definition~\ref{defin: fin init mass}.

By combining Theorems~\ref{theo:existence uniqueness classical solution V} and~\ref{theo:mapping between FBP and general problem} with Theorems~\ref{theo:Ltposition} and~\ref{theo:slowerdecay}, we obtain the following result about the long-term asymptotics of the front position for solutions of~\eqref{eq:generalised FBP_CDF}.
As discussed in Section~\ref{subsec:pushedFBP}, the result is divided into the following three regimes:
\begin{enumerate}
\item $0<\beta <\sqrt{2}$ is the pulled regime;
\item $\beta=\sqrt{2}$ is the pushmi-pullyu regime; and
\item $\beta>\sqrt{2}$ is the pushed regime.
\end{enumerate}
The behaviour of the front position is very different in each of these three cases.
Note that, as mentioned in Section~\ref{subsec:pushedFBP}, the constant term in the asymptotics is given explicitly in the pushmi-pullyu and pushed cases, but in the pulled case the constant term is not explicit in the finite initial mass case.
\begin{theo} \label{theo:Ltposition general beta}
Take $\beta>0$, and suppose that $V_0$ satisfies Assumption~\ref{assum:standing assumption initial condition general beta} and 
\[
\limsup_{x\ra\infty}\tfrac{1}{x}\log V_0(x)\leq -\min(\sqrt{2},\tfrac{2}{\beta}).
\]
Define $I_\beta=I_\beta(V_0)$ as in~\eqref{eq:I integral finite initial mass general beta} and $c^{(\beta)}_{\min}$ as in~\eqref{eq:cbetamindef}.
Let $(V(t,x),L_t)$ denote the solution of \eqref{eq:generalised FBP_CDF} with initial condition $V_0$.
\begin{enumerate}
\item \label{enum:pulled Lt generalised}
Suppose that $0<\beta < \sqrt{2}$. Suppose that for some $\gamma<1/2$,
\begin{equation}\label{eq:stretched exponential U0 general beta}
V_0(x)\leq e^{x^{\gamma}-\sqrt{2}x}
\end{equation}
for all $x$ sufficiently large. Let
\begin{equation}\label{eq:definition of b general beta}
	 b(t)=2^{-1 / 2} \log \left(\frac{2}{\beta}\int_{0}^{\infty} y e^{(\sqrt{2}-\frac{2}{\beta}) y}
\int_{-\infty}^y e^{\frac{2}{\beta}z}V_0(z)dz \, 
 e^{-y^2 / (2 t)} d y+1\right) \quad \text{for }t>0.
	\end{equation}
\commentout{Suppose that for some $\delta<1 / 3$, for $t$ sufficiently large,
	\begin{equation}\label{eq:assumption on b general beta}
	b(t)\le t^\delta.
	\end{equation}}
Let
\begin{equation}\label{eq:definition of m general beta}
m(t)=\sqrt{2}t-\frac{3}{2\sqrt{2}}\log t+b(t) \quad \text{for }t>0.
\end{equation}
Then there exists $a=a(V_0)\in \mathbb{R}$ such that
\[
L_t-m(t)\ra a\quad\text{as } t\ra\infty.
\]
In particular, if $I_\beta<\infty$, then there exists $c=c(V_0)\in \mathbb{R}$ such that
\begin{equation}\label{eq:Lt asymp finite init mass beta < sqrt 2}
L_t=\sqrt{2}t-\frac{3}{2\sqrt{2}}\log t+c+o(1)\quad\text{as $t\ra\infty$.}
\end{equation}
If instead $I_\beta=\infty$, then
\[
a(V_0)=-\frac{1}{\sqrt{2}}\log \sqrt{\pi}.
\]
\item \label{enum:pushmi-pullyu Lt generalised}
Suppose that $\beta =\sqrt{2}$. Define $b$ as in \eqref{eq:definition of b general beta}, suppose that~\eqref{eq:stretched exponential U0 general beta} is satisfied for some $\gamma<1/2$, for $x$ sufficiently large, and define $m$ as in \eqref{eq:definition of m general beta}. Then
\begin{equation}\label{eq:asymptotics 1 of Lt beta sqrt 2}
L_t-m(t)\ra -\frac{1}{\sqrt{2}}\log\sqrt{\pi}\quad\text{as } t\ra\infty.
\end{equation}
In particular, if $I_{\sqrt 2}<\infty$, then 
\begin{equation}\label{eq:Lt asymp finite init mass beta sqrt 2}
L_t=\sqrt{2}t-\frac{1}{2\sqrt{2}}\log t+\frac{1}{\sqrt{2}}\Big(\log(\sqrt 2 I_{\sqrt 2})-\log\sqrt{\pi}\Big)+o(1) \quad\text{as } t\ra\infty.
\end{equation}
\item \label{enum:pushed Lt generalised}
Suppose that $\beta>\sqrt{2}$. Define $U_0$ as in~\eqref{eq:U0 formula from V0}, and for $t> 0$, let
\begin{equation} \label{eq:mtslowdecay generalised}
m(t):=\sup\left\{x\in \R:e^t\int_{-\infty}^{\infty}U_0(y)\frac{e^{-(x-y)^2/(2t)}}{\sqrt{2\pi t}}dy\ge 1\right\}.
\end{equation}
Then 
\begin{equation}\label{eq:Lt in terms of mt}
L_t-m(t)-\frac{\beta}{2} \left(\log(\beta^2-2)-2\log \beta\right) \rightarrow 0\quad\text{as }t\rightarrow \infty.
\end{equation}
In particular, if $I_{\beta}<\infty$, then 
\begin{equation}\label{eq:Lt asymp finite init mass beta grt sqrt 2}
L_t=c^{(\beta)}_{\min}t+\frac{\beta}{2} \left(\log (\tfrac{2}{\beta}I_{\beta})+\log(\beta^2-2)-2\log \beta\right)+o(1)\quad\text{as } t\ra\infty.
\end{equation}
\end{enumerate}
Now suppose that $\beta>0$ and $I_{\beta}=\infty$. Then (without assuming~\eqref{eq:stretched exponential U0 general beta}):
\begin{enumerate}[a)]
\item if $0<\beta<\sqrt{2}$, then $L_t-[\sqrt{2}t-\frac{3}{2\sqrt{2}}\log t]\ra \infty$ as $t\ra\infty$;\label{enum:infinitely far from pulled beta<sqrt 2}
\item if $\beta=\sqrt{2}$, then $L_t-[\sqrt{2}t-\frac{1}{2\sqrt{2}}\log t]\ra \infty$ as $t\ra\infty$;\label{enum:infinitely far from pulled beta=sqrt 2}
\item if $\beta>\sqrt{2}$, then $L_t-c^{(\beta)}_{\min} t\ra \infty$ as $t\ra\infty$.\label{enum:infinitely far from pulled beta>sqrt 2}
\end{enumerate}
\end{theo}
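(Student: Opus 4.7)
The unifying strategy is to transfer the problem via Theorem~\ref{theo:mapping between FBP and general problem}: for $V_0$ as in the hypothesis, the $U_0$ defined by~\eqref{eq:U0 formula from V0} satisfies Assumption~\ref{assum:standing assumption ic}, and the solution $(U(t,\cdot),L_t)$ of~\eqref{eq:FBP_CDF} has the \emph{same} free boundary as $(V,L)$, so it will suffice to apply Theorems~\ref{theo:Ltposition} and~\ref{theo:slowerdecay} to $U_0$. A Fubini identity,
\[
\int_0^\infty y e^{\sqrt 2 y} U_0(y)\,dy \;=\; \tfrac{2}{\beta}\int_{-\infty}^\infty e^{\frac{2}{\beta}z}V_0(z)\int_{z\vee 0}^\infty y\,e^{(\sqrt 2-\frac{2}{\beta})y}\,dy\,dz,
\]
then gives: (i) for $0<\beta<\sqrt 2$, $U_0$ has finite initial mass in the sense of Theorem~\ref{theo:Ltposition} iff $I_\beta(V_0)<\infty$; for $\beta\ge \sqrt 2$, $U_0$ always has infinite initial mass (use $V_0\to 1$ at $-\infty$); (ii) the $b(t)$ of~\eqref{eq:definition of b general beta} coincides with the $b(t)$ of Theorem~\ref{theo:Ltposition} for $U_0$; and (iii) when $\beta\le \sqrt 2$, the stretched-exponential bound~\eqref{eq:stretched exponential U0 general beta} on $V_0$ yields~\eqref{eq:stretched exponential U0} for $U_0$ (with exponent $\gamma'\in(\gamma,1/2)$), since splitting the integral in~\eqref{eq:U0 formula from V0} at $x/2$ gives $U_0(x)\le Ce^{-x/\beta}+C'x\,e^{x^\gamma-\sqrt 2 x}$ and $\tfrac{2}{\beta}\ge\sqrt 2$ makes the first term negligible.

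With these reductions, case~\ref{enum:pulled Lt generalised} is immediate from Theorem~\ref{theo:Ltposition}. In case~\ref{enum:pushmi-pullyu Lt generalised}, $U_0$ has infinite initial mass, so Theorem~\ref{theo:Ltposition} gives $L_t-m(t)\to -\tfrac{1}{\sqrt 2}\log\sqrt\pi$, i.e.~\eqref{eq:asymptotics 1 of Lt beta sqrt 2}; when additionally $I_{\sqrt 2}<\infty$, reading off $U_0(y)\sim \sqrt 2\, I_{\sqrt 2}\,e^{-\sqrt 2 y}$ from~\eqref{eq:U0 formula from V0} and using $\int_0^\infty y e^{-y^2/(2t)}\,dy=t$ yield $b(t)=\tfrac{1}{\sqrt 2}\log(\sqrt 2\, I_{\sqrt 2}\,t)+o(1)$, from which~\eqref{eq:Lt asymp finite init mass beta sqrt 2} follows. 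For case~\ref{enum:pushed Lt generalised} with $I_\beta<\infty$, $U_0(y)\sim \tfrac{2}{\beta}I_\beta\,e^{-\frac{2}{\beta}y}$ with $\tfrac{2}{\beta}<\sqrt 2$, so Theorem~\ref{theo:slowerdecay} applies with $c=c^{(\beta)}_{\min}$; using $c-\sqrt{c^2-2}=\tfrac{2}{\beta}$ and $\sqrt{c^2-2}=\tfrac{\beta^2-2}{2\beta}$ simplifies its constant to $\tfrac{\beta}{2}(\log(\beta^2-2)-2\log\beta)$, giving~\eqref{eq:Lt in terms of mt}, and a Laplace-type evaluation of $m(t)$ in~\eqref{eq:mtslowdecay generalised}, with saddle at $y=x-\tfrac{2t}{\beta}$, yields $m(t)=c^{(\beta)}_{\min}t+\tfrac{\beta}{2}\log(\tfrac{2}{\beta}I_\beta)+o(1)$, whence~\eqref{eq:Lt asymp finite init mass beta grt sqrt 2}.

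Addendum~\ref{enum:infinitely far from pulled beta<sqrt 2} follows directly from the final sentence of Theorem~\ref{theo:Ltposition} (via the Fubini identity above). The stronger conclusions~\ref{enum:infinitely far from pulled beta=sqrt 2} and~\ref{enum:infinitely far from pulled beta>sqrt 2}, which are \emph{not} consequences of Theorem~\ref{theo:Ltposition}'s infinite-mass addendum, I plan to obtain by approximating $V_0$ from below: set $V_0^{(n)}(x):=V_0(x)\wedge \tilde V_n(x)$, where $\tilde V_n(x)=1$ for $x\le x_n$ and $\tilde V_n(x)=e^{-(c_\beta+\frac{1}{n})(x-x_n)}$ for $x>x_n$, with $c_\beta=\sqrt 2$ in~\ref{enum:infinitely far from pulled beta=sqrt 2} and $c_\beta=\tfrac{2}{\beta}$ in~\ref{enum:infinitely far from pulled beta>sqrt 2}, and $x_n\uparrow\infty$. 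Each $V_0^{(n)}$ is non-increasing, satisfies Assumption~\ref{assum:standing assumption initial condition general beta} and~\eqref{eq:stretched exponential U0 general beta}, and has $I_\beta(V_0^{(n)})<\infty$; and since $V_0^{(n)}\equiv V_0$ on $(-\infty,x_n]$, $I_\beta(V_0^{(n)})\to\infty$ for a suitable choice of $x_n$. The map $V_0\mapsto U_0$ in~\eqref{eq:U0 formula from V0} is positive and linear, so $V_0^{(n)}\le V_0$ gives $U_0^{(n)}\le U_0$ and Proposition~\ref{prop:fbpcomparison} gives $L_t^{(n)}\le L_t$; applying the already-established finite-mass cases to each $V_0^{(n)}$ yields $\liminf_{t\to\infty}(L_t-\text{main term}(t))\ge K(V_0^{(n)})$ with $K(V_0^{(n)})\to\infty$, as required. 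The principal obstacle is the simultaneous verification of all the required properties of $V_0^{(n)}$ (monotonicity, correct limits at $\pm\infty$, finite $I_\beta$, and stretched-exponential bound), together with the clean Laplace evaluation of $m(t)$ in case~\ref{enum:pushed Lt generalised} --- each routine but cumulatively nontrivial bookkeeping.
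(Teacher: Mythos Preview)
Your approach matches the paper's: transfer to $U_0$ via Theorem~\ref{theo:mapping between FBP and general problem}, then invoke Theorems~\ref{theo:Ltposition} and~\ref{theo:slowerdecay}. The Fubini identity you state is the content of Lemma~\ref{lem:U0 V0 exponential moment relationship}, and your verification that~\eqref{eq:stretched exponential U0 general beta} transfers to~\eqref{eq:stretched exponential U0} is the same as the paper's. Two points deserve comment.

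First, there is a genuine (if easily repaired) gap in case~\ref{enum:pushed Lt generalised}: you derive~\eqref{eq:Lt in terms of mt} only under the additional hypothesis $I_\beta<\infty$, via the sharp asymptotic $U_0(y)\sim \tfrac{2}{\beta}I_\beta\,e^{-2y/\beta}$. But~\eqref{eq:Lt in terms of mt} is asserted in the theorem \emph{without} assuming $I_\beta<\infty$. Theorem~\ref{theo:slowerdecay} only requires $\lim_{x\to\infty}\tfrac{1}{x}\log U_0(x)=-\tfrac{2}{\beta}$, not a sharp asymptotic, and this limit holds under the standing assumption $\limsup_{x\to\infty}\tfrac{1}{x}\log V_0(x)\le -\tfrac{2}{\beta}$ alone: the $\limsup$ direction follows from your Fubini-type estimate, while the matching $\liminf$ is immediate from~\eqref{eq:U0 formula from V0} and $V_0\to 1$ at $-\infty$, which forces $U_0(x)\ge c\,e^{-2x/\beta}$ for large $x$. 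This is precisely the equivalence recorded as~\eqref{eq:equivalence of limsup V0 and lim U0} in the paper. With this correction, Theorem~\ref{theo:slowerdecay} gives~\eqref{eq:Lt in terms of mt} in full generality, and your Laplace evaluation of $m(t)$ then yields~\eqref{eq:Lt asymp finite init mass beta grt sqrt 2} under the extra hypothesis $I_\beta<\infty$, as intended.

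Second, for the addenda~\ref{enum:infinitely far from pulled beta=sqrt 2}) and~\ref{enum:infinitely far from pulled beta>sqrt 2}) the paper uses the much simpler truncation $V_0^R(x):=V_0(x)\1_{\{x<R\}}$ in place of your $V_0\wedge\tilde V_n$. This is automatically non-increasing, c\`adl\`ag, satisfies the stretched-exponential bound trivially (with $\gamma=0$ in case~\ref{enum:infinitely far from pulled beta=sqrt 2}), has $I_\beta(V_0^R)<\infty$, and $I_\beta(V_0^R)\uparrow I_\beta(V_0)=\infty$ by monotone convergence --- sidestepping all the ``cumulative bookkeeping'' you flag. Your more elaborate construction also works, but the exponential tail and the ``suitable choice of $x_n$'' are unnecessary.
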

\begin{rmk}\label{rmk:necessary and sufficient for asymptotics generalised}
Theorem~\ref{theo:Ltposition general beta} tells us that (for some $c=c(V_0)\in \Rm$) we have the asymptotics 
\begin{enumerate}
\item $L_t=\sqrt{2}t-\frac{3}{2\sqrt{2}}\log t+c+o(1)$ as $t\ra\infty$, when $0<\beta<\sqrt{2}$;
\item $L_t=\sqrt{2}t-\frac{1}{2\sqrt{2}}\log t+c+o(1)$ as $t\ra\infty$, when $\beta=\sqrt{2}$; or
\item $L_t=c^{(\beta)}_{\min} t+c+o(1)$ as $t\ra\infty$, when $\beta>\sqrt{2}$,
\end{enumerate}
if and only if the finite initial mass condition $I_\beta<\infty$ holds. Such a necessary and sufficient condition has been previously established in the pulled setting for FKPP equations by Bramson \cite{Bramson1983}, but we are not aware of such a necessary and sufficient characterisation having previously been established in the pushmi-pullyu or pushed settings
(see~\cite{Rothe1981} for sufficient conditions in the pushed case, for reaction-diffusion equations with no free boundary).
\end{rmk}

We further remark that if $\beta=\sqrt{2}+\epsilon$ for small $\epsilon>0$, and $I_{\beta}<\infty$, then Theorem \ref{theo:Ltposition general beta} tells us that
\[
L_t=c^{(\beta)}_{\min}t+\frac{\beta}{2} \left(\log (\tfrac{2}{\beta}I_{\beta})-2\log \beta\right)+\frac{\sqrt{2}+\epsilon}{2}\log (2\sqrt 2 \epsilon+\epsilon^2)+o(1)\quad\text{as } t\ra\infty.
\] 
We see that the final $\frac{\sqrt{2}+\epsilon}{2}\log (2\sqrt 2\epsilon+\epsilon^2)$ term converges to $-\infty$ as $\epsilon \downarrow 0$, reflecting (at least formally) that the pushmi-pullyu ($\beta=\sqrt{2}$) correction from the linear speed (the $-\frac{1}{2\sqrt{2}}\log t$ term) goes to $-\infty$ as $t\ra\infty$.

\begin{rmk}\label{rmk:FKPP asymptotics heavy-tailed pushmi-pullyu}
As an illustrative example of the results in Theorem~\ref{theo:Ltposition general beta}, we now suppose that $V_0(x) \sim Ax^\nu e^{-\min(\sqrt 2,\frac 2{\beta}) x}$ as $x\to \infty$ for some fixed $\nu\in \R$ and $A\in (0,\infty)$. We assume that $\nu\geq -2$ if $0<\beta<\sqrt{2}$, and $\nu\geq -1$ if $\beta\geq \sqrt{2}$; otherwise the asymptotics of $L_t$ are stated above in~\eqref{eq:Lt asymp finite init mass beta < sqrt 2},~\eqref{eq:Lt asymp finite init mass beta sqrt 2} and~\eqref{eq:Lt asymp finite init mass beta grt sqrt 2}, for the $0<\beta<\sqrt{2}$, $\beta=\sqrt{2}$ and $\beta>\sqrt{2}$ cases respectively.

\begin{enumerate}
\item If $0<\beta<\sqrt{2}$, we have:
\begin{enumerate}
\item if $\nu=-2$, then $L_t=\sqrt{2}t-\frac{3}{2\sqrt{2}}\log t+\frac{1}{\sqrt{2}}\log \log t+\frac{1}{\sqrt{2}}\log \left(\frac{A}{\sqrt{\pi}(2-\sqrt{2}\beta)}\right)+o(1)$ as $t\to \infty$.
	\item if $\nu>-2$, then $L_t=\sqrt{2}t+\frac{\nu-1}{2\sqrt{2}}\log t+\frac{1}{\sqrt{2}}\log \left(\frac{2A}{\sqrt{\pi}(2-\sqrt{2}\beta)}\int_{0}^{\infty} y^{1+\nu} e^{-y^2 / 2 } d y\right)+o(1)$ as $t\to \infty$.
\end{enumerate}
\item If $\beta=\sqrt{2}$, we have
\begin{enumerate}
	\item if $\nu=-1$, then $L_t =\sqrt 2 t -\frac{1}{2\sqrt 2} \log t + \frac 1 {\sqrt 2}\log \log t+  \frac 1 {\sqrt 2}\log \left(\frac{A}{2\sqrt{\pi}}\right)+o(1)$ as $t\to \infty$.
		\item if $\nu>-1$, then $L_t =\sqrt 2 t +\frac{\nu}{2\sqrt 2} \log t + \frac{1}{\sqrt{2}}\log\left(\frac{\sqrt 2 A}{\sqrt{\pi}}\int_0^{\infty}y^{\nu}e^{-y^2/2}dy\right) +o(1)$ as $t\to \infty$.
\end{enumerate}
\item If $\beta>\sqrt{2}$, we have
\begin{enumerate}
\item if $\nu=-1$, then $L_t=c^{(\beta)}_{\min}t+\frac{\beta}{2}\log\log t+\frac{\beta}{2}\left[\log\left(2A(\beta^2-2)\right)-3\log \beta\right]+o(1)$ as $t\to \infty$.
\item if $\nu>-1$, then $L_t=c^{(\beta)}_{\min}t+\frac{\beta}{2}(1+\nu)\log t+\frac{\beta}{2}\left[(2+\nu)\log(\frac{\beta}{2}-\frac{1}{\beta})+\log (\frac{4A}{(1+\nu)\beta^2})\right]+o(1)$ as $t\to \infty$.
\end{enumerate}
\end{enumerate}
\end{rmk}

In the remainder of this section, we will prove Theorems~\ref{theo:existence uniqueness classical solution V} and~\ref{theo:mapping between FBP and general problem}, Proposition~\ref{prop:travelling waves general beta}, and Theorems~\ref{theo:convtoPimin general beta} and~\ref{theo:Ltposition general beta}.
We begin by proving the mapping from solutions of~\eqref{eq:FBP_CDF} to solutions of~\eqref{eq:generalised FBP_CDF}, Theorem~\ref{theo:mapping between FBP and general problem}. We then establish Theorem~\ref{theo:existence uniqueness classical solution V} by showing how one can invert this mapping. We will next prove Proposition~\ref{prop:travelling waves general beta}, characterising all the non-negative travelling wave solutions of~\eqref{eq:generalised FBP_CDF}. We will then prove a lemma relating exponential moments of $V_0$ and $U_0$ under the mapping~\eqref{eq:U0 formula from V0}. Following this, we will prove Theorem~\ref{theo:convtoPimin general beta}, and then Theorem~\ref{theo:Ltposition general beta}.

\subsection{Proof of Theorem \ref{theo:mapping between FBP and general problem}}

The following Feynman-Kac formula for solutions of~\eqref{eq:FBP} with bounded initial conditions will be used in the proof of Theorem~\ref{theo:mapping between FBP and general problem}.
Recall from Section~\ref{subsec:notation} that under $\mathbb P_x$, $(B_t)_{t\ge 0}$ is a Brownian motion started at $x$.
\begin{lem} \label{lem:FKformulaforu0}
Suppose $u_0\in L^\infty(\R)\cap L^1(\R)$ with $\int_{-\infty}^\infty u_0(x)dx =1$, and let $(u(t,x),L_t)$ denote the solution of~\eqref{eq:FBP} with initial condition $u_0(x)dx$.
Let $L_0:=\inf\{x\in \R:U_0(x)<1\}\in \{-\infty\}\cup \R$.
For $t>0$, define the stopping time
\begin{equation}\label{eq:stopping time F-K for u}
\tau^t:=\inf\{s>0:B_s\leq L_{(t-s)\vee 0}\}.
\end{equation}
Then for $t>0$ and $x>L_t$,
\[
u(t,x)=e^t\expE_x[\Ind_{\{\tau^t>t\}} u_0(B_t)]. 
\]
\end{lem}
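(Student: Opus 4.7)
The plan is to derive the claimed representation by combining the forward-time Feynman--Kac identity from Lemma~\ref{lem:FKforwardstime} with a time-reversal of the Brownian bridge.

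First, I would use that since $u_0\in L^1(\R)$ is a probability density and $U_0(x)=\int_x^{\infty}u_0(y)\,dy$, Proposition~\ref{prop:fbpsoln}(v) guarantees that $x\mapsto U(t,x)$ is $C^1(\R)$ with $\partial_xU(t,\cdot)=-u(t,\cdot)$. Combined with Lemma~\ref{lem:FKforwardstime}, differentiating the identity $\mathbb{P}_{u_0}(B_t>x,\tau>t)=e^{-t}U(t,x)$ in $x$ yields
\[
\mathbb{P}_{u_0}(B_t\in dx,\tau>t) = e^{-t}u(t,x)\,dx \quad \text{for } x>L_t,
\]
where $\tau:=\inf\{s>0:B_s\le L_s\}$.

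Second, I would disintegrate over the initial position and apply the standard Brownian-bridge decomposition of Brownian motion conditioned on its endpoints, giving
\[
e^{-t}u(t,x) = \int_{-\infty}^{\infty} u_0(y)\,p_t(y,x)\,\mathbb{P}\bigl(\xi^t_{y,x}(s)>L_s \; \forall s\in(0,t]\bigr)\,dy,
\]
where $p_t(y,x)=(2\pi t)^{-1/2}e^{-(x-y)^2/(2t)}$. The key step is then to time-reverse the bridge, using that $(\xi^t_{y,x}(t-s))_{s\in[0,t]}$ has the same law as $(\xi^t_{x,y}(s))_{s\in[0,t]}$; this rewrites the survival probability as $\mathbb{P}(\xi^t_{x,y}(s)>L_{t-s}\; \forall s\in[0,t))$. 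Using the symmetry $p_t(y,x)=p_t(x,y)$ and reassembling via the Brownian-bridge representation $\mathbb{E}_x[f(B_t)g((B_s)_{s\le t})]=\int p_t(x,y)f(y)\mathbb{E}[g(\xi^t_{x,y})]\,dy$, this gives
\[
e^{-t}u(t,x) = \mathbb{E}_x\!\left[u_0(B_t)\,\mathbf{1}_{\{B_s>L_{t-s}\; \forall s\in[0,t)\}}\right].
\]

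Finally, I would identify the indicator above with $\mathbf{1}_{\{\tau^t>t\}}$ from~\eqref{eq:stopping time F-K for u}. Since $B_0=x>L_t$, the event $\{B_s>L_{t-s}\; \forall s\in[0,t)\}$ differs from $\{B_s>L_{(t-s)\vee 0}\; \forall s\in (0,t]\}$ only through the endpoint $s=t$, which imposes $B_t>L_0$; but continuity of $U_0$ together with the definition $L_0=\inf\{x:U_0(x)<1\}$ forces $u_0=0$ Lebesgue-a.e.\ on $(-\infty,L_0)$, so $u_0(B_t)\mathbf{1}_{\{B_t\le L_0\}}=0$ $\mathbb{P}_x$-almost surely, and the two expectations coincide. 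The main obstacle is really only the careful bookkeeping of these endpoint issues; both the disintegration of Brownian motion over its endpoints and time-reversal of the Brownian bridge are classical, so no new analytic ingredients are needed once the setup from Lemma~\ref{lem:FKforwardstime} is in place.
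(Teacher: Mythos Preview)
Your approach is correct and takes a genuinely different route from the paper's proof. The paper proceeds directly via It\^o's formula: it shows that $(e^s u(t-s,B_s))_{0\le s\le \tau^t\wedge(t-\delta)}$ is a $\mathbb{P}_x$-martingale (using boundedness of $u$ on $[\delta,\infty)\times\R$ from Proposition~\ref{prop:fbpsoln}(v)), obtaining $u(t,x)=e^{t-\delta}\mathbb{E}_x[\mathbf{1}_{\{\tau^t>t-\delta\}}u(\delta,B_{t-\delta})]$, and then passes to the limit $\delta\to 0$. That limit is the bulk of the paper's work: it invokes smoothness of the killed Brownian density (via H\"ormander's theorem) and a careful truncation argument to push $u(\delta,\cdot)$ onto $u_0$. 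Your route is considerably lighter: by leveraging the forward-time representation of Lemma~\ref{lem:FKforwardstime} (imported from~\cite{Berestycki2024}) together with the classical bridge time-reversal, you sidestep both the regularity discussion and the $\delta\to 0$ limit entirely. The trade-off is that you treat Lemma~\ref{lem:FKforwardstime} as a black box, whereas the paper's argument is self-contained once the basic PDE properties of $u$ are in place. One small point to watch: your identity first arises as an equality of Lebesgue densities in $x$, hence only a.e.; to upgrade to \emph{every} $x>L_t$ you should observe that both sides are continuous in $x$---the left by Proposition~\ref{prop:fbpsoln}(v), the right by a routine dominated-convergence argument using the affine dependence $\xi^t_{x,y}(s)=\tfrac{t-s}{t}x+\tfrac{s}{t}y+\xi^t_{0,0}(s)$.
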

\begin{proof}
For $x\in \R$, let $U_0(x)=\int_x^\infty u_0(y)dy$; then let $(U(t,x),L_t)$ denote the solution of~\eqref{eq:FBP_CDF} with initial condition $U_0$ (using Proposition~\ref{prop:fbpsoln}(v) to see that the free boundary $L_t$ is the same as the free boundary for the solution of~\eqref{eq:FBP}). By Proposition~\ref{prop:fbpsoln}(v) we have that $u=-\partial_x U$.

Fix $t>0$ and $x>L_t$, and take $\delta \in (0,t)$.
Since $u \in C^{1,2}(\{(t,x):t>0,\, x>L_t\})\cap C( (0,\infty) \times \R)$ and $u$ is bounded on $\{(s,y):s\ge \delta, \, y\in \R\}$ (by Proposition~\ref{prop:fbpsoln}(v)),  we can apply It\^o's lemma to see that
\[
( e^su(t-s,B_{s}):0\leq s\leq  \tau^t \wedge (t-\delta)  ) 
\]
is a $\Pm_x$-martingale. Therefore we have 
\[
u(t,x)=\expE_x[e^{\tau^t\wedge (t-\delta)}u(t-(\tau^t\wedge (t-\delta)),B_{\tau^t\wedge(t-\delta)})].
\]
Since $u$ vanishes along $\{(t,L_t):t>0\}$, we have
\begin{equation} \label{eq:udeltaFK}
u(t,x)=e^{t-\delta}\expE_x[\Ind_{\{\tau^t>t-\delta\}}u(\delta,B_{t-\delta})].
\end{equation}

Recall from Proposition~\ref{prop:fbpsoln}(ii) that $L_s\to L_0$ as $s\to 0$.
Therefore $s\mapsto L_s$ is continuous on $[0,\infty)$, and so the set $\{(s,y)\in (0,\infty)\times \Rm:y>L_{(t-s)\vee 0}\}$ is open.
We have that for fixed $(t,x)$ with $x>L_t$, $v(s,dy):=\Pm_x(B_{s}\in dy,\tau^t>s)$ satisfies the heat equation in a weak sense on $\{(s,y)\in (0,\infty)\times \Rm:y>L_{(t-s)\vee 0}\}$.
Recall that weak solutions of the heat equation are $C^{\infty}$ on any open set (e.g.~by H\"ormander's theorem~\cite{Hormander1967}).
Hence $v(s,dy)$
has a $C^{\infty}$ density on $\{(s,y)\in (0,\infty)\times \Rm:y>L_{(t-s)\vee 0}\}$
 with respect to Lebesgue measure, which we denote by $w(s,y)$; 
 for convenience we set $w(s,y)=0$ for $(s,y)\in (0,\infty)\times \Rm$ with $y\le L_{(t-s)\vee 0}$.

We now establish that
\begin{equation}\label{eq:convergence of u(t,x) to F-K of u0}
u(t,x)=e^{t-\delta}\int_{L_{\delta}}^{\infty}w(t-\delta,y)u(\delta,y)dy\ra e^t\int_{L_{0}}^{\infty}w(t,y)u_0(y)dy,
\end{equation}
as $\delta \ra 0$. We already have the above equality for all $\delta\in (0,t)$ by~\eqref{eq:udeltaFK}.

We now an fix arbitrary $\epsilon>0$. Since $u_0=-U_0'$ is bounded and $U(t,x)\ra U_0(x)$ for Lebesgue-almost every $x>L_0$ (by Proposition~\ref{prop:fbpsoln}(vi)), we can find $L_0<x_{\epsilon}^-<x_{\epsilon}^+<\infty$ such that $U_0(x_{\epsilon}^-)>1-\epsilon$, $U_0(x_{\epsilon}^+)<\epsilon$, and $U(t,x_{\epsilon}^{\pm})\ra U_0(x_{\epsilon}^{\pm})$ as $t\ra 0$. We then take $\phi\in C_c^{\infty}((L_0,\infty);[0,1])$ with $\phi(x)=1$ for all $x\in (x_{\epsilon}^-,x_{\epsilon}^+)$. 
We observe that there exists $\bar \delta \in (0,t)$ such that $w(s,\cdot)\phi(\cdot)\in C_b(\Rm)$ $\forall s\in [t-\bar \delta ,t]$, and moreover,
\begin{align}\label{eq:map time to profile of w phi}
W:[t-\bar \delta ,t]&\to C_b(\Rm) \notag \\
s &\mapsto  w(s,\cdot)\phi(\cdot)
\end{align} 
is continuous with respect to the uniform norm on $C_b(\Rm)$, using the fact that $L_s\ra L_0$ as $s\ra 0$ (by Proposition \ref{prop:fbpsoln}\eqref{enum:Lt_time0_limit}), $w(s,y)$ is jointly continuous in $(s,y)$ on $\{(s,y)\in (0,\infty)\times \Rm:y>L_{(t-s)\vee 0}\}$, and the support of $\phi$ is a compact subset of $(L_0,\infty)$. 

By the triangle inequality, for $\delta \in (0,\bar \delta)$,
\begin{align} \label{eq:convwithphi}
&\left \lvert e^{t-\delta}\int_{L_{\delta}}^{\infty}\phi(y)w(t-\delta,y)u(\delta,y)dy- e^t\int_{L_0}^{\infty}\phi(y)w(t,y)u_0(y)dy\right\rvert \notag \\
&\leq  e^{t-\delta}\int_{L_{\delta}}^{\infty}\lvert \phi(y)w(t-\delta,y)-\phi(y)w(t,y)\rvert u(\delta,y)dy \notag \\
&\;\;+\left \lvert e^{t-\delta}\int_{L_{\delta}}^{\infty}\phi(y)w(t,y)u(\delta,y)dy- e^t\int_{L_0}^{\infty}\phi(y)w(t,y)u_0(y)dy\right\rvert .
\end{align}
As $\delta \ra 0$, the first term on the right-hand side of~\eqref{eq:convwithphi} converges to $0$ since $W$ defined in~\eqref{eq:map time to profile of w phi} is continuous with respect to the uniform norm,
and since $\int_{L_{\delta}}^{\infty}u(\delta,y)dy=1$ $\forall \delta>0$.
The second term on the right-hand side of~\eqref{eq:convwithphi} converges to $0$ as $\delta \ra 0$ because $w(t,\cdot)\phi(\cdot)\in C_b(\Rm)$ and $u(\delta,y)dy$ converges weakly to $u_0(y)dy$.

Since $w$ is bounded by the density of Brownian motion without absorption, there exists $C<\infty$ such that $w(s,y)\leq C$ for all $(s,y)\in [t-\bar \delta ,t]\times \Rm$.
Therefore, since $\phi=1$ on $(x_{\epsilon}^-,x_{\epsilon}^+)$, and then since $u=-\partial_x U$,
for $\delta \in (0,\bar \delta)$ we have
\begin{align} \label{eq:difftonophidelta}
\left\lvert \int_{L_{\delta}}^{\infty}w(t-\delta,y)u(\delta,y)dy-\int_{L_{\delta}}^{\infty}\phi(y)w(t-\delta,y)u(\delta,y)dy\right\rvert
&\leq C\int_{L_{\delta}}^{x^-_{\epsilon}}u(\delta,y)dy+C\int_{x_{\epsilon}^+}^{\infty}u(\delta,y)dy \notag \\
&=C[1-U(\delta,x^-_{\epsilon})+U(\delta,x_{\epsilon}^+)],
\end{align}
and similarly, since $u_0=-U'_0$,
\begin{align} \label{eq:difftonophi0}
\left\lvert \int_{L_{0}}^{\infty}w(t,y)u_0(y)dy-\int_{L_{0}}^{\infty}\phi(y)w(t,y)u_0(y)dy\right\rvert
&\leq C\int_{L_{0}}^{x^-_{\epsilon}}u_0(y)dy+C\int_{x_{\epsilon}^+}^{\infty}u_0(y)dy \notag \\
&=C[1-U_0(x^-_{\epsilon})+U_0(x_{\epsilon}^+)].
\end{align}

Putting~\eqref{eq:convwithphi},~\eqref{eq:difftonophidelta} and~\eqref{eq:difftonophi0} together with the triangle inequality and using the convergences $U(\delta,x_{\epsilon}^{\pm})\ra U_0(x_{\epsilon}^{\pm})$ as $\delta\ra 0$, and then using the inequalities $U_0(x^-_{\epsilon})>1-\epsilon$, $U_0(x^+_{\epsilon})<\epsilon$, we see that
\[
\limsup_{\delta\ra 0}\left\lvert e^{t-\delta}\int_{L_{\delta}}^{\infty}w(t-\delta,y)u(\delta,y)dy- e^t\int_{L_0}^{\infty}w(t,y)u_0(y)dy\right\rvert \leq 2Ce^t[1-U_0(x_{\epsilon}^-)+U_0(x_{\epsilon}^+)]\leq 4Ce^t\epsilon.
\]
Now since $\epsilon>0$ was arbitrary, we have established \eqref{eq:convergence of u(t,x) to F-K of u0}.
The result now follows from the definition of $w$.
\end{proof}

The following elementary lemma will be used in the proofs of Theorems~\ref{theo:existence uniqueness classical solution V} and~\ref{theo:mapping between FBP and general problem}.

\begin{lem} \label{lem:assumpV0impliesU0}
Suppose $V_0:\R\to [0,1]$ is a c\`adl\`ag function, and define $U_0$ as in~\eqref{eq:U0 formula from V0}. Then 
\begin{enumerate}[(i)]
\item $U_0$ is differentiable, and $V_0(x)=U_0(x)+\frac{\beta}{2}U'_0(x)$ $\forall x\in \R$;
\item $U_0'$ is bounded; 
\item For any $x\in \R$, if $V_0$ is continuous at $x$ then $U_0'$ is continuous at $x$;
\item If $V_0$ satisfies Assumption~\ref{assum:standing assumption initial condition general beta}, 
then $U_0$ satisfies Assumption~\ref{assum:standing assumption ic}.
\end{enumerate}

\end{lem}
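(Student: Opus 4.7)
The plan is to prove all four parts by direct computation, exploiting the representation of $U_0$ as a convolution-type integral of $V_0$. First I would rewrite the defining formula~\eqref{eq:U0 formula from V0} via the substitution $u=x-z$ as
\[
U_0(x)=\int_0^{\infty}\tfrac{2}{\beta}e^{-\frac{2}{\beta}u}V_0(x-u)\,du,
\]
which exhibits $U_0$ as a weighted average of left-shifts of $V_0$ with a probability density. This representation will drive (ii) and (iv); the differential identity in (i) will come from differentiating the original formula.

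For (i), writing $U_0(x)=\tfrac{2}{\beta}e^{-\frac{2}{\beta}x}F(x)$ with $F(x):=\int_{-\infty}^x e^{\frac{2}{\beta}z}V_0(z)\,dz$, note that since $V_0$ is bounded and c\`adl\`ag, $F$ is locally Lipschitz and differentiable at every continuity point of $V_0$ with $F'(x)=e^{\frac{2}{\beta}x}V_0(x)$ (and possesses a right derivative everywhere equal to this, by right-continuity of $V_0$). The product rule then gives
\[
U_0'(x)=-\tfrac{2}{\beta}U_0(x)+\tfrac{2}{\beta}V_0(x),
\]
which rearranges to $V_0(x)=U_0(x)+\tfrac{\beta}{2}U_0'(x)$. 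Part (ii) then follows at once, because by (iv) we have $U_0(x)\in[0,1]$ and by hypothesis $V_0(x)\in[0,1]$, so $|U_0'(x)|=\tfrac{2}{\beta}|V_0(x)-U_0(x)|\le \tfrac{2}{\beta}$. For (iii), observe that $F$, being the integral of a bounded function, is continuous, hence $U_0$ is continuous on $\R$; combined with the formula $U_0'(x)=\tfrac{2}{\beta}(V_0(x)-U_0(x))$, continuity of $V_0$ at $x$ transfers to continuity of $U_0'$ at $x$.

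For (iv), I would read off each property of $U_0$ from the integral representation above. That $U_0$ takes values in $[0,1]$ is immediate since $\tfrac{2}{\beta}e^{-\frac{2}{\beta}u}\,du$ is a probability measure on $(0,\infty)$ and $V_0\in[0,1]$. Monotonicity of $U_0$ follows because $x\mapsto V_0(x-u)$ is non-increasing for every fixed $u>0$. Continuity of $U_0$ (in particular c\`adl\`ag) was noted above. For the limits, given $\varepsilon>0$, pick $x_-$ with $V_0(z)\ge 1-\varepsilon$ for $z\le x_-$; then for $x\le x_-$,
\[
U_0(x)\ge (1-\varepsilon)\int_0^{\infty}\tfrac{2}{\beta}e^{-\frac{2}{\beta}u}\,du=1-\varepsilon,
\]
so $U_0(x)\to 1$ as $x\to-\infty$. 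Similarly, pick $y_\varepsilon$ with $V_0(z)\le \varepsilon$ for $z\ge y_\varepsilon$; splitting the integral at $u=x-y_\varepsilon$ gives, for $x\ge y_\varepsilon$,
\[
U_0(x)\le \varepsilon+\int_{x-y_\varepsilon}^{\infty}\tfrac{2}{\beta}e^{-\frac{2}{\beta}u}\,du=\varepsilon+e^{-\frac{2}{\beta}(x-y_\varepsilon)},
\]
so $U_0(x)\to 0$ as $x\to\infty$.

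The only mildly delicate point is the differentiability claim in (i) at discontinuities of $V_0$: at such points the one-sided derivatives of $F$ differ, and strictly speaking $U_0'$ is to be understood as a right derivative there (which matches the stated formula since $V_0$ is c\`adl\`ag). Since $V_0$ is monotone it has at most countably many jumps, and the formula $V_0=U_0+\tfrac{\beta}{2}U_0'$ holds pointwise on $\R$ with this interpretation; the remaining parts (ii)--(iv) are insensitive to this issue.
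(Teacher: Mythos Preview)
Your proof is correct and follows essentially the same route as the paper: both differentiate the defining formula to obtain $U_0'(x)=\tfrac{2}{\beta}(V_0(x)-U_0(x))$, read off (ii) and (iii) from this identity, and use $\varepsilon$-arguments for the limits in (iv). Two minor differences are worth noting. For monotonicity in (iv), the paper instead argues via the derivative: since $V_0$ is non-increasing, $\int_{-\infty}^x e^{\frac{2}{\beta}z}V_0(z)\,dz\ge V_0(x)\tfrac{\beta}{2}e^{\frac{2}{\beta}x}$, which substituted into the formula for $U_0'$ gives $U_0'\le 0$; your convolution representation $U_0(x)=\int_0^\infty \tfrac{2}{\beta}e^{-\frac{2}{\beta}u}V_0(x-u)\,du$ makes monotonicity (and the $[0,1]$ bound) more transparent. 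You also explicitly flag the issue of differentiability at jump points of $V_0$, which the paper glosses over; your observation that only the right derivative matches the stated formula at such points, and that this suffices for the applications, is accurate. One small presentational point: in (ii) you invoke (iv) for $U_0\in[0,1]$, but this already follows from $V_0\in[0,1]$ and your convolution representation without the extra hypothesis in (iv), so there is no genuine forward dependence.
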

\begin{proof}
We begin by proving~(i).
For succinctness we define $k:=\frac{2}{\beta}>0$. Then by~\eqref{eq:U0 formula from V0}, for $x\in \R$,
\begin{equation} \label{eq:U0kaboveL0}
U_0(x)=ke^{-kx}\int_{-\infty}^xe^{kz}V_0(z)dz.
\end{equation}
Differentiating, we obtain
\begin{equation}\label{eq:derivative of U0 proof of transform}
U_0'(x)=-k^2e^{-kx}\int_{-\infty}^xe^{kz}V_0(z)dz+kV_0(x).
\end{equation}
Therefore, for $x\in \R$ we calculate 
\[
U_0(x)+\frac{\beta}{2}U'_0(x)
=U_0(x)-ke^{-kx}\int_{-\infty}^xe^{kz}V_0(z)dz+V_0(x)
=V_0(x),
\]
where the second equality follows from~\eqref{eq:U0 formula from V0}; this establishes~(i). 
We can see~(ii) and~(iii) directly from~\eqref{eq:derivative of U0 proof of transform}, using that
$0\le V_0 \le 1$. 

From now on, suppose $V_0$ satisfies Assumption~\ref{assum:standing assumption initial condition general beta}. Then for any $\epsilon>0$ there exists $a\in (0,\infty)$ such that $V_0(y)<\epsilon$ for all $y>a$ and $V_0(y)>1-\epsilon$ for all $y<-a$. Hence for any $x\in \R$, 
\[
\int_{-\infty}^xe^{\frac{2}{\beta}z}V_0(z)dz\leq  \int_{-\infty}^a e^{\frac{2}{\beta}z}dz + \epsilon \int_{-\infty}^xe^{\frac{2}{\beta}z}dz = \frac{\beta}{2} e^{\frac{2}{\beta}a}+  \frac{\beta}{2} \epsilon e^{\frac{2}{\beta}x}.
\]
Moreover, for any $x<-a$,
\[
\int_{-\infty}^xe^{\frac{2}{\beta}z}V_0(z)dz\geq  (1-\epsilon)\int_{-\infty}^x e^{\frac{2}{\beta}z}dz= (1-\epsilon)\frac{\beta}2 e^{\frac{2}{\beta}x}.
\]
Therefore, by~\eqref{eq:U0 formula from V0} and since $\epsilon>0$ was arbitrary, we have 
\[
\limsup_{x\ra\infty}U_0(x)\leq 0 \quad \text{and}\quad \liminf_{x\ra -\infty}U_0(x)\geq 1.
\]
On the other hand, we can see directly from~\eqref{eq:U0 formula from V0} that since $0\le V_0\le 1$ we have $0\le U_0\le 1$.

Since $U_0$ is continuous,
to establish~(iv) it remains to show that $U_0$ is non-increasing. 
Since $V_0$ is non-increasing, we have that for $x\in \R$,
\[
\int_{-\infty}^xe^{kz}V_0(z)dz\geq V_0(x)\int_{-\infty}^xe^{kz}dz=V_0(x)k^{-1}e^{kx}.
\]
Therefore, substituting into~\eqref{eq:derivative of U0 proof of transform}, 
we see that $U_0'(x)\le 0$, which completes the proof.
\end{proof}

We can now use Lemmas~\ref{lem:FKformulaforu0} and~\ref{lem:assumpV0impliesU0} to prove Theorem~\ref{theo:mapping between FBP and general problem}.
\begin{proof}[Proof of Theorem~\ref{theo:mapping between FBP and general problem}]
Define $U_0$ as~\eqref{eq:U0 formula from V0}; by Lemma~\ref{lem:assumpV0impliesU0}(iv) we have that $U_0$ satisfies Assumption~\ref{assum:standing assumption ic}. 
Now let $(U(t,x),L_t)$ denote the unique classical solution of~\eqref{eq:FBP_CDF} with initial condition $U_0$, and define $V$ in terms of $U$ as in~\eqref{eq:V defined in terms of U}. 
Let $L_0:=\inf\{x\in \R:V_0(x)<1\}\in \{-\infty\}\cup \R$.

Recall the definition of a classical solution of~\eqref{eq:generalised FBP_CDF} from the start of Section~\ref{subsec:pushedFBP}.
By the definition of a classical solution of~\eqref{eq:FBP_CDF} from the start of Section~\ref{subsec:mainresults}, condition~(i) is satisfied.
By Proposition~\ref{prop:fbpsoln}(v), we have $V\in C^{1,2} (\{ (t,x): t>0, \, x>L_t \}) \cap C( (0,\infty)\times \R)$. We now check that $(V,L)$ satisfies the remaining conditions~(ii)-(iv).

From the fact that solutions of the heat equation are $C^{\infty}$ on any open set (e.g.~by classical parabolic regularity~\cite{Evans1998} or by H\"ormander's theorem~\cite{Hormander1967}), we know that $U\in C^{\infty}(\{(t,x):t>0,\,x>L_t\})$. Then on $\{t>0,\,x>L_t\}$ we have that 
\[
\partial_tV=\partial_tU+\tfrac{\beta}{2}\partial_x\partial_tU=\tfrac{1}{2}\Delta U+U+\tfrac{\beta}{2}(\tfrac{1}{2}\partial_x\Delta U+\partial_x U)=\tfrac{1}{2}\Delta V+V.
\]

We now check that $V$ satisfies the initial condition in~\eqref{eq:generalised FBP_CDF}. Recall from Proposition~\ref{prop:fbpsoln}(v) that letting  $u(t,x):=-\partial_xU(t,x)$ for $t>0$, $x\in \R$, we have that $u$ satisfies~\eqref{eq:FBP} with initial condition $u_0:=-U_0'$.
We also note from Lemma~\ref{lem:assumpV0impliesU0}(ii) that $u_0$ is bounded. 
Therefore, by Lemma~\ref{lem:FKformulaforu0}, we have that for $t>0$ and $x>L_t$,
\begin{equation} \label{eq:FKforu}
u(t,x)=e^t\expE_x[\Ind_{\{\tau^t>t\}} u_0(B_t)],
\end{equation}
where $\tau^t$ is defined in~\eqref{eq:stopping time F-K for u}.
By taking the $t\ra 0$ limit (using that $s\mapsto L_s$ is continuous on $[0,\infty)$ by Proposition~\ref{prop:fbpsoln}(ii), and using again that $u_0$ is bounded), we obtain that
for any $x>L_0$ such that $u_0$ is continuous at $x$,
\begin{equation}\label{eq:generalised proof convergence u init cond}
	u(t,x)\ra u_0(x) \quad \text{as }t\to 0.
\end{equation}

By Lemma~\ref{lem:assumpV0impliesU0}(iii), and since $V_0$ is non-increasing and therefore continuous a.e., we have that $u_0$ is continuous Lebesgue-almost everywhere.
Furthermore, by Proposition~\ref{prop:fbpsoln}(vi),
we know that $U(t,x) \to U_0(x)$ as $t \to 0$ at all continuity points $x$ of $U_0$. Thus, by~\eqref{eq:V defined in terms of U}, for Lebesgue-almost every $x>L_0$,
\[
V(t,x)=U(t,x)+\tfrac{\beta}{2}\partial_xU(t,x)\ra U_0(x)+\tfrac{\beta}{2}U'_0(x) \quad \text{as }t\to 0.
\]
By Lemma~\ref{lem:assumpV0impliesU0}(i), it follows that $V(t,x)\to V_0(x)$ as $t\to 0$ for Lebesgue-almost every $x>L_0$.

Since $L_t\ra L_0$ as $t\ra 0$ (by Proposition \ref{prop:fbpsoln}\eqref{enum:Lt_time0_limit}), 
we have trivially by the definition of $V$ in~\eqref{eq:V defined in terms of U} that
$V(t,x)\to V_0(x)$ as $t\to 0$ for all $x<L_0$.
By~\eqref{eq:FKforu} and since $u_0$ is bounded (by Lemma~\ref{lem:assumpV0impliesU0}(ii)), we have that $u$ is bounded on $(0,T)\times \R$ for any $T\in (0,\infty)$.
Therefore, using~\eqref{eq:V defined in terms of U} and since $0\le U\le 1$, we have that
\begin{equation} \label{eq:Visbounded}
\sup\{|V(t,x)|:t\in (0,T), \, x\in \R\}<\infty 
\end{equation}
for any $T\in (0,\infty)$, and so $V(t,\cdot)\to V_0(\cdot)$ in $L^1_{\mathrm{loc}}$ as $t\to 0$.
We have therefore established the initial condition in~\eqref{eq:generalised FBP_CDF}.

We now take $t^*>0$ and $(t_n,x_n)\to (t^*,L_{t^*})$ as $n\to \infty$ with $x_n>L_{t_n}$ $\forall n\in \mathbb N$.
We will show that
$\partial_x V(t_n,x_n)\to -\beta$ as $n\to \infty$; this will establish condition~(iii) in the definition of a classical solution of~\eqref{eq:generalised FBP_CDF}.
On $\{t>0,\,x>L_t\}$, using that $\partial_tU=\frac{1}{2}\Delta U+U$, we calculate
\[
\partial_xV=\partial_xU+\tfrac{\beta}{2}\Delta U=\partial_xU-\beta(U-\partial_tU).
\]
By Proposition~\ref{prop:fbpsoln}(v),
we know that $\partial_xU(t_n,x_n) \ra 0$ and $U(t_n,x_n)\ra 1$ as $n\to \infty$. Therefore, in order to see that $\partial_xV(t_n,x_n)\ra -\beta$ as $n\to \infty$, it suffices to show that
\begin{equation}\label{eq:time derivative of U at t0}
\partial_tU(t_n,x_n)\ra 0 \quad  \text{as }n\to \infty.
\end{equation}

We now define, for $\epsilon\in [0,1)$ and $t>0$, 
\[
\gamma(\epsilon,t):=\inf\{x>L_t:U(t,x)<1-\epsilon\}.
\]
Then by~\cite[Theorem 1]{Chen2022} and the penultimate sentence in~\cite[Proof of Proposition 3, p.4708]{Chen2022}, we have that for some $\delta>0$,
\begin{equation} \label{eq:gammaisC1}
\gamma\in C^{1}([0,\delta)\times (t^*-\delta,t^*+\delta)).
\end{equation}
Therefore, for $\epsilon>0$ sufficiently small and $t\in (t^*-\delta,t^*+\delta)$, we can calculate
\[
0=\frac{d}{dt}U(t,\gamma(\epsilon,t))=\partial_tU(t,\gamma(\epsilon,t))+\partial_xU(t,\gamma(\epsilon,t))\partial_t\gamma (\epsilon,t).
\]
For $n\in \mathbb N$, let $\epsilon_n=1-U(t_n,x_n)$; note that $\epsilon_n\to 0$ as $n\to \infty$.
Then by Proposition~\ref{prop:fbpsoln}(iv), we have $\gamma(\epsilon_n,t_n)=x_n$.
Therefore, for $n$ sufficiently large,
\[
\partial_t U(t_n,x_n)=-\partial_x U(t_n,x_n)\partial_t \gamma(\epsilon_n,t_n).
\]
By~\eqref{eq:gammaisC1}, we have that
\[
\sup_{(\epsilon,t)\in [0,\delta/2]\times [t^*-\delta/2,t^*+\delta/2]}|\partial_t \gamma(\epsilon,t)|<\infty,
\]
and so since $\partial_xU(t_n,x_n) \ra 0$ as $n\to \infty$,~\eqref{eq:time derivative of U at t0} holds, which completes the proof that $\partial_x V(t_n,x_n)\to -\beta$ as $n\to \infty$.

To show that $(V,L)$ is a classical solution of~\eqref{eq:generalised FBP_CDF}, it remains to show that $0\le V\le 1$.
We now complete the proof by showing
that $V(t,\cdot)$ is non-increasing and $0\le V(t,\cdot)\le 1$ for $t>0$. We have the following facts about $V$:
\begin{enumerate}
\item $V\in C^{1,2}(\{(t,x):t>0,\, x>L_t\})\cap C( (0,\infty) \times \R)$, by~\eqref{eq:V defined in terms of U} and Proposition~\ref{prop:fbpsoln}(v).
\item $V(t,x)\le 1$ $\forall t>0$, $x\in \R$, by~\eqref{eq:V defined in terms of U} and since $U\le 1$ and $\partial_x U\le 0$ by Proposition~\ref{prop:fbpsoln}(i) and~(iii).
Moreover, for any $T\in (0,\infty)$, $V$ is bounded on $(0,T)\times \R$ by~\eqref{eq:Visbounded}.
\item We have already established that $V(t,\cdot)\ra V_0(\cdot)$ in $L^1_{\mathrm{loc}}$ as $t\ra 0$. 
\end{enumerate}
For $t>0$, define $\tau^t$ as in~\eqref{eq:stopping time F-K for u},
and for $y\ge 0$, define
\[
\tau^t_y:=\inf\{s>0:B_s\leq L_{(t-s)\vee 0}+y\},
\]
so that $\tau^t_0=\tau^t$.
Using facts 1-3 above, we can apply the Feynman-Kac formula in~\cite[Proposition~3.1, condition~2]{Berestycki2018} to $V$, to obtain that for any $y\ge 0$ and $x>L_t$,
\begin{equation} \label{eq:FKforVwithy}
V(t,x)=\expE_x[\1_{\{\tau^t_y\ge t\}}e^t V_0(B_t)+\Ind_{\{\tau^t_y< t\}}e^{\tau^t_y}V(t-\tau^t_y,B_{\tau^t_y})].
\end{equation}
Setting $y=0$ in~\eqref{eq:FKforVwithy}, and using that $V(t-s,L_{t-s})=1$ for $s\in [0,t)$, we have
\begin{equation} \label{eq:FKforV}
V(t,x)=\expE_x[\1_{\{\tau^t\ge t\}}e^tV_0(B_t)+\Ind_{\{\tau^t< t\}}e^{\tau^t}],
\end{equation}
and the non-negativity of $V$ immediately follows.

Now take $t>0$, $x>L_t$ and $\delta>0$. By~\eqref{eq:FKforVwithy}, and then
by coupling a Brownian motion started from $x$ with a Brownian motion started from $x+\delta$ by simple translation, we have
\[
\begin{split}
V(t,x+\delta)&=\expE_{x+\delta}[\1_{\{\tau^t_{\delta}\ge t\}}e^t V_0(B_t)+\Ind_{\{\tau^t_{\delta}< t\}}e^{\tau^t_{\delta}}V(t-\tau^t_{\delta},B_{\tau^t_{\delta}})]\\
&=\expE_{x}[\1_{\{\tau^t\ge t\}}e^t V_0(B_t+\delta)+\Ind_{\{\tau^t< t\}}e^{\tau^t}V(t-\tau^t,B_{\tau^t}+\delta)]\\
&\le \expE_{x}[\1_{\{\tau^t\ge t\}}e^t V_0(B_t)+\Ind_{\{\tau^t< t\}}e^{\tau^t}]\\
&=V(t,x),
\end{split}
\]
where the above inequality follows because $V_0$ is non-increasing and $V(s,y)\le 1$ $\forall s>0$, $y\in \R$, and the last line follows from~\eqref{eq:FKforV}. Since $V(t,x)=1$ for $x\le L_t$, it follows that $V(t,\cdot)$ is non-increasing, which completes the proof.
\end{proof}

\subsection{Proof of Theorem \ref{theo:existence uniqueness classical solution V}}

Our strategy will be to invert the mapping given by Theorem \ref{theo:mapping between FBP and general problem}. We accomplish this in the following proposition.
Recall the definition of a classical solution of~\eqref{eq:generalised FBP_CDF} from the start of Section~\ref{subsec:pushedFBP}, and the definition of a classical solution of~\eqref{eq:FBP_CDF} from the start of Section~\ref{subsec:mainresults}.
\begin{prop}\label{prop:mapping inversion}
Let $\beta>0$, suppose $V_0$ satisfies Assumption~\ref{assum:standing assumption initial condition general beta}, and suppose that $(V(t,x),L_t)$ is a classical solution of~\eqref{eq:generalised FBP_CDF} with initial condition $V_0$. For $t>0$, let
\begin{equation} \label{eq:UtfromVtclassical}
U(t,x):=
\frac{2}{\beta}e^{-\frac{2}{\beta}x}\int_{-\infty}^xe^{\frac{2}{\beta}z}V(t,z)dz \quad \forall x\in \R.
\end{equation}
Then $(U(t,x),L_t)$ is a classical solution of \eqref{eq:FBP_CDF} with initial condition, $U_0$, given by~\eqref{eq:U0 formula from V0}.
\end{prop}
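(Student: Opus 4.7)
My plan is to verify each of the defining conditions of a classical solution of \eqref{eq:FBP_CDF} for the pair $(U,L_t)$; condition~(i) on the free boundary is inherited from the assumed classical solution $(V,L_t)$ of \eqref{eq:generalised FBP_CDF}. Basic properties follow by inspection of \eqref{eq:UtfromVtclassical}: $V\in[0,1]$ gives $0\le U\le 1$; for $x\le L_t$ we have $V(t,\cdot)\equiv 1$ on $(-\infty,L_t]$, so $\int_{-\infty}^x e^{\frac{2}{\beta}z}V(t,z)dz=\frac{\beta}{2}e^{\frac{2}{\beta}x}$ and hence $U(t,x)=1$; and continuity of $U$ on $(0,\infty)\times \R$ follows from continuity of $V$ by dominated convergence. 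Differentiating \eqref{eq:UtfromVtclassical} in $x$ yields the key identity
\[
\partial_xU(t,x)=\tfrac{2}{\beta}(V(t,x)-U(t,x))\quad\text{on }\{(t,x):t>0,\,x>L_t\},
\]
equivalently $V=U+\tfrac\beta2\partial_xU$. Taking $x\downarrow L_t$ and using $V(t,L_t+)=1=U(t,L_t+)$ gives $\partial_xU(t,L_t+)=0$, which is the boundary condition in \eqref{eq:FBP_CDF}. Differentiating the identity once more and using $V\in C^{1,2}(\{x>L_t\})$ shows $\partial_{xx}U\in C(\{x>L_t\})$.

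For the time derivative, I aim to establish the formula
\[
\partial_tU(t,x)=\tfrac{2}{\beta}e^{-\frac{2}{\beta}x}\int_{L_t}^xe^{\frac{2}{\beta}z}\partial_tV(t,z)dz\quad\text{for }x>L_t,
\]
exploiting that $V\equiv 1$ (hence $\partial_tV=0$) on $\{z<L_t\}$, and that the boundary contribution from the moving curve $t\mapsto L_t$ cancels because $V(t,L_t\pm)=1$. I expect this to be the main technical step: rigorous justification can be obtained by splitting the integration into a region bounded away from $L_t$, where classical Leibniz applies since $V$ is $C^{1,2}$, and a shrinking neighbourhood of $L_t$ whose contribution is controlled by the modulus of continuity of $V$ at the boundary together with the uniform bound $V\in[0,1]$.

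The PDE $\partial_tU=\tfrac12\Delta U+U$ on $\{x>L_t\}$ is then obtained by substituting $\partial_tV=\tfrac12\partial_{zz}V+V$ into the above integral formula and integrating by parts twice in $z$, using $V(t,L_t+)=1$ and $\partial_xV(t,L_t+)=-\beta$ (condition~(iii) of the definition of classical solution of~\eqref{eq:generalised FBP_CDF}); the resulting expression collapses to $\tfrac12\partial_{xx}U+U$ via the identities $V-U=\tfrac\beta2\partial_xU$ and $\tfrac{2}{\beta}\int_{L_t}^xe^{-\frac{2}{\beta}(x-z)}V(t,z)\,dz=U(t,x)-e^{-\frac{2}{\beta}(x-L_t)}$. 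Equivalently, one can first observe that the operator $\mathcal L:=1+\tfrac\beta2\partial_x$ satisfies $\mathcal LU=V$, so applying the heat operator to both sides gives $\mathcal LW=0$ for $W:=\partial_tU-\tfrac12\Delta U-U$, hence $W(t,x)=C(t)e^{-\frac{2}{\beta}x}$; letting $x\downarrow L_t$ and combining $U\to 1$, $\partial_{xx}U\to -2$, and $\partial_tU(t,L_t+)=0$ (all following from the analysis above) forces $C(t)\equiv 0$. Finally, the initial condition: $U_0$ as given by \eqref{eq:U0 formula from V0} satisfies Assumption~\ref{assum:standing assumption ic} by Lemma~\ref{lem:assumpV0impliesU0}, and applying dominated convergence to \eqref{eq:UtfromVtclassical} using $V(t,\cdot)\to V_0$ in $L^1_{\mathrm{loc}}$ with $V\in[0,1]$ gives $U(t,x)\to U_0(x)$ Lebesgue-a.e., hence $U(t,\cdot)\to U_0$ in $L^1_{\mathrm{loc}}$.
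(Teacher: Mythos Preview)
Your identification of the easy parts is correct: conditions (i)--(v) in the definition of a classical solution of \eqref{eq:FBP_CDF}, the identity $\partial_xU=\tfrac{2}{\beta}(V-U)$, and the boundary value $\partial_{xx}U(t,L_t+)=-2$ all follow as you say. The gap is in your justification of the time derivative.

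Your proposed formula
\[
\partial_tU(t,x)=\tfrac{2}{\beta}e^{-\frac{2}{\beta}x}\int_{L_t}^{x}e^{\frac{2}{\beta}z}\partial_tV(t,z)\,dz
\]
requires more than the modulus of continuity of $V$ and the bound $V\in[0,1]$. The definition of a classical solution of \eqref{eq:generalised FBP_CDF} only gives that $t\mapsto L_t$ is \emph{continuous}; there is no Lipschitz or H\"older bound. When you form a difference quotient $(U(t,x)-U(t_0,x))/(t-t_0)$ and split the integral near the boundary, the contribution from a neighbourhood of width $\eta$ of $L_{t_0}$ is indeed $O(\eta^2)$ (using $|1-V|\le 2\beta(z-L_s)_+$ from condition~(iii)), but you must take $\eta\ge|L_t-L_{t_0}|$ to separate the regions, and so the quotient is only controlled if $|L_t-L_{t_0}|^2/|t-t_0|\to 0$, i.e.\ if $L$ is H\"older-$\tfrac12$. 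Similarly, $\partial_tV=\tfrac12\partial_{xx}V+V$ is not known to be integrable up to the boundary, so the integral may only exist as an improper limit. Your alternative route via $\mathcal LW=0$ presupposes that $\partial_tU$ exists (so that $W$ is defined), which is exactly what is in question; and a weak version of that argument runs into the fact that $\mathcal L^*=1-\tfrac{\beta}{2}\partial_x$ is not surjective onto $C_c^\infty$.

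The paper sidesteps this by never differentiating across the free boundary. It introduces the smooth level curves $\ell_n(t)=\inf\{x>L_t:V(t,x)<1-n^{-1}\}$, which lie strictly inside $\{x>L_t\}$; since $V\in C^\infty$ there (by interior parabolic regularity) and $\partial_xV<0$ near the boundary (by condition~(iii)), the implicit function theorem gives $\ell_n\in C^\infty$. For the approximations $U_n$ built with $\ell_n$ in place of $L_t$, Leibniz's rule applies rigorously, and one computes explicitly
\[
(\partial_t-\tfrac12\Delta-1)U_n(t,x)=\Bigl(\tfrac{-1}{(1-n^{-1})\beta}\partial_xV(t,\ell_n(t))-1\Bigr)A_n(t,x),
\]
which tends to $0$ as $n\to\infty$ because $\partial_xV(t,\ell_n(t))\to-\beta$. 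Passing to the limit in the weak formulation shows $U$ is a distributional solution of $\partial_tU=\tfrac12\Delta U+U$ on $\{x>L_t\}$, and then interior parabolic regularity upgrades this to $U\in C^\infty$, in particular $C^{1,2}$, there. This level-set regularisation is the missing idea in your argument.
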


Before proving Proposition~\ref{prop:mapping inversion}, we show that it implies Theorem~\ref{theo:existence uniqueness classical solution V}.

\begin{proof}[Proof of Theorem~\ref{theo:existence uniqueness classical solution V}]
Take $V_0$ satisfying Assumption~\ref{assum:standing assumption initial condition general beta}.
By Theorem~\ref{theo:mapping between FBP and general problem}, a classical solution of~\eqref{eq:generalised FBP_CDF} exists; it remains to show uniqueness.

Suppose $(V,L)$ and $(\tilde{V},\tilde{L})$ are two classical solutions of~\eqref{eq:generalised FBP_CDF}, both with the same initial condition $V_0$. 
Then by Proposition~\ref{prop:mapping inversion}, defining $U$ as in~\eqref{eq:UtfromVtclassical}, and defining $\tilde U$ as in~\eqref{eq:UtfromVtclassical} with $V$ replaced by $\tilde V$, we have that
$(U,L)$ and $(\tilde{U},\tilde{L})$ are classical solutions of~\eqref{eq:FBP_CDF}, with the same initial condition, $U_0$, 
given by~\eqref{eq:U0 formula from V0}.
By Theorem~\ref{theo:mapping between FBP and general problem}, we have that $U_0$
satisfies Assumption \ref{assum:standing assumption ic}. 
Therefore, by Proposition~\ref{prop:fbpsoln}, $L=\tilde{L}$ and $U=\tilde{U}$. 
By Lemma~\ref{lem:assumpV0impliesU0}(i),
it then immediately follows that $V=\tilde{V}$, which completes the proof.
\end{proof}

We now prove Proposition~\ref{prop:mapping inversion}.

\begin{proof}[Proof of Proposition \ref{prop:mapping inversion}]
It is immediately clear from~\eqref{eq:UtfromVtclassical} and the definition of a classical solution of~\eqref{eq:generalised FBP_CDF} (at the start of Section~\ref{subsec:pushedFBP}) that:
\begin{enumerate}
\item  $L_t\in \R$ $\forall t>0$ and $t\mapsto L_t$ is continuous on $(0,\infty)$;
\item  $U:(0,\infty) \times \R  \to [0, 1]$ with $U \in C( (0,\infty) \times \R)$;
\item $U(t,x)=1$ for $t>0$, $x\leq L_t$;
\item $\partial_xU(t,L_t)=0$ for $t>0$;
\item $U(t,\cdot ) \to U_0(\cdot)$ in $L^1_{\mathrm{loc}}$ as $t\to 0$. 
\end{enumerate}
All that remains is to check that $U \in C^{1,2}(\{(t,x):t>0,\, x>L_t\})$ with $\partial_tU=\frac{1}{2}\Delta U +U$ on $\{(t,x):t>0,x>L_t\}$.

Fix $t_0>0$. By~(i) and~(iii) in the definition of a classical solution of~\eqref{eq:generalised FBP_CDF}, there exists $\delta\in (0,t_0)$ such that
\begin{equation} \label{eq:dxVnear-beta}
\partial_x V(t,x)\in [-2\beta,-\tfrac{1}2 \beta] \quad \forall t\in [t_0-\delta,t_0+\delta], \, x\in (L_t,L_t+\delta].
\end{equation}
For $n\in \mathbb N$ and $t>0$, let
\begin{equation} \label{eq:ellndefn}
\ell_n(t):=\inf\{x>L_t:V(t,x)<1-n^{-1}\}.
\end{equation}
Then by~\eqref{eq:dxVnear-beta} and since $V(t,L_t)=1$ $\forall t>0$, there exists $n_0\in \mathbb N$ such that for $n\ge n_0$ we have $\ell_n(t)\in (L_t,L_t+\delta]$ $\forall t\in [t_0-\delta,t_0+\delta]$. Moreover, for each $t\in [t_0-\delta,t_0+\delta]$ we have $\ell_n(t)\to L_t$ as $n\to \infty$.

Since solutions of the heat equation are $C^{\infty}$ on any open set (e.g.~by classical parabolic regularity~\cite{Evans1998} or by H\"ormander's theorem~\cite{Hormander1967}), we have that $V\in C^{\infty}(\{(t,x):t>0,\,x>L_t\})$.
Therefore, for $n\ge n_0$, by the implicit function theorem and since $\partial_x V<0$ on $\{(t,x):t\in [t_0-\delta,t_0+\delta],(L_t,L_t+\delta] \}$ by~\eqref{eq:dxVnear-beta},
\begin{equation}\label{eq:smoothness of level set}
 \ell_n\in C^{\infty}((t_0-\delta,t_0+\delta)).
\end{equation}
For $n\ge n_0$, we write $\mathcal{O}_n:=\{(t,x):t\in (t_0-\delta, t_0+\delta), x>\ell_{n}(t)\}$, and define
$U_n:  [t_0-\delta, t_0+\delta]\times \R\to [0,1]$ by letting
\begin{equation} \label{eq:Undefn}
U_n(t,x):=\frac{2}{\beta}e^{-\frac{2}{\beta}x}\int_{-\infty}^x (\1_{\{z<\ell_n(t)\}} (1-n^{-1})+\1_{\{z\ge \ell_n(t)\}})e^{\frac{2}{\beta}z}V(t,z)dz.
\end{equation}
Then for $(t,x)\in \mathcal{O}_n$ we can write
\begin{equation}\label{eq:Un formula}
U_n(t,x)=e^{-\frac{2}{\beta}x}\left((1-n^{-1})e^{\frac{2}{\beta}\ell_{n}(t)}+\frac{2}{\beta}\int_{\ell_{n}(t)}^xe^{\frac{2}{\beta}z}V(t,z)dz\right),
\end{equation}
and so by~\eqref{eq:smoothness of level set} we have that
 $U_n\in C^{\infty}(\mathcal{O}_n)$.

It will be convenient to define, for $(t,x)\in \mathcal{O}_n$,
\begin{align}
A_n(t,x) &:=(1-n^{-1})e^{-\frac{2}{\beta}(x-\ell_{n}(t))} \label{eq:AndefnforUn}\\
\text{and } \quad B_n(t,x) &:=e^{-\frac{2}{\beta}(x-\ell_{n}(t))}\partial_xV(t,\ell_{n}(t)). \label{eq:BndefnforUn}
\end{align}
Write $\dot{\ell}_n(t)$ for the time derivative of $\ell_n(t)$.
For $(t,x)\in \mathcal{O}_n$, we can calculate directly from~\eqref{eq:Un formula} that
\begin{align} \label{eq:dtUncalc}
\partial_tU_n(t,x)&=e^{-\frac{2}{\beta}x}\left( (1-n^{-1}) \frac{2}{\beta}\dot{\ell}_{n}(t)e^{\frac{2}{\beta}\ell_{n}(t)}-\frac{2}{\beta}\dot{\ell}_{n}(t)e^{\frac{2}{\beta}\ell_{n}(t)}V(t,\ell_{n}(t))\right) \notag \\
&\qquad+\frac{2}{\beta}e^{-\frac{2}{\beta}x}\int_{\ell_{n}(t)}^xe^{\frac{2}{\beta}z}[\tfrac{1}{2}\Delta V+V](t,z)dz \notag \\
&= \frac{2}{\beta}e^{-\frac{2}{\beta}x}\int_{\ell_{n}(t)}^xe^{\frac{2}{\beta}z}[\tfrac{1}{2}\Delta V+V](t,z)dz,
\end{align}
where in the second equality we used the fact that, by~\eqref{eq:ellndefn}, $V(t,\ell_n(t))=1-n^{-1}$
to see that the first term is $0$. We then use integration by parts twice to calculate
\[
\begin{split}
\int_{\ell_n(t)}^xe^{\frac{2}{\beta}z}\Delta V(t,z)dz&=\left[e^{\frac{2}{\beta}z}\partial_z V(t,z)\right]_{z=\ell_{n}(t)}^{z=x}-\frac{2}{\beta}\int_{\ell_n(t)}^xe^{\frac{2}{\beta}z}\partial_z V(t,z)dz\\
&=\left[e^{\frac{2}{\beta}z}\partial_z V(t,z)\right]_{z=\ell_{n}(t)}^{z=x}-\frac{2}{\beta}\left[e^{\frac{2}{\beta}z}V(t,z)\right]_{z=\ell_{n}(t)}^{z=x}+\frac{4}{\beta^2}\int_{\ell_n(t)}^xe^{\frac{2}{\beta}z} V(t,z)dz.
\end{split}
\]
Therefore, substituting into~\eqref{eq:dtUncalc} and using~\eqref{eq:BndefnforUn}, and then using~\eqref{eq:Un formula},~\eqref{eq:AndefnforUn} and the fact that $V(t,\ell_n(t))=1-n^{-1}$, for $(t,x)\in \mathcal O_n$ we have
\begin{align} \label{eq:dtUncalc2}
\partial_tU_n(t,x)&=
\left(1+\frac{2}{\beta^2}\right)\frac{2}{\beta} e^{-\frac{2}{\beta}x}\int_{\ell_{n}(t)}^xe^{\frac{2}{\beta}z}V(t,z)dz
+\frac{1}{\beta}[\partial_xV(t,x)-B_n(t,x)] \notag \\
&\qquad -\frac{2}{\beta^2}[V(t,x)-e^{-\frac 2 \beta x}e^{\frac 2 \beta \ell_n(t)} V(t,\ell_n(t))] \notag \\
&=(U_n(t,x)-A_n(t,x))\left(1+\frac{2}{\beta^2}\right)+\frac{1}{\beta}[\partial_xV(t,x)-B_n(t,x)]-\frac{2}{\beta^2}[V(t,x)-A_n(t,x)].
\end{align}
We now calculate directly from \eqref{eq:Un formula} that for $(t,x)\in \mathcal O_n$,
\begin{align} \label{eq:DeltaUncalc}
\Delta U_n(t,x)&=\frac{4}{\beta^2}U_n(t,x)-\frac{8}{\beta^2}V(t,x)+\frac{2}{\beta}\left[\frac{2}{\beta}V(t,x)+\partial_xV(t,x)\right] \notag \\
&=\frac{4}{\beta^2}U_n(t,x)-\frac{4}{\beta^2}V(t,x)+\frac{2}{\beta}\partial_xV(t,x).
\end{align}
Therefore, combining~\eqref{eq:dtUncalc2} and~\eqref{eq:DeltaUncalc}, for $(t,x)\in \mathcal O_n$,
\begin{equation} \label{eq:Unheateqapprox}
[\partial_tU_n-\tfrac{1}{2}\Delta U_n-U_n](t,x)
=-A_n(t,x)-\frac{1}{\beta}B_n(t,x)=\left(\frac{-1}{(1-n^{-1})\beta}\partial_xV(t,\ell_{n}(t))-1\right)A_n(t,x),
\end{equation}
where the second equality follows from~\eqref{eq:AndefnforUn} and~\eqref{eq:BndefnforUn}.

It will be convenient to write $\mathcal{O}:=\{(t,x):t\in (t_0-\delta, t_0+\delta), x>L_t\}$. We now fix $\phi\in C_c^{\infty}(\mathcal O)$ and write $\mathrm{supp}(\phi)$ for the support of $\phi$. Then there exist $t_0-\delta<t_1<t_2<t_0+\delta $ such that $\mathrm{supp}(\phi)\subset [t_1,t_2]\times \R$. We also observe that $\{(t,x):t\in [t_0-\delta, t_0+\delta],x\leq \ell_n(t)\}\cap \mathrm{supp}(\phi)$ is a non-increasing sequence of compact sets with empty intersection, hence we can find some $n_1\ge n_0$ such that $\phi(t,x)= 0$ for all $n\ge n_1$, $x\leq \ell_{n}(t)$ and $t\in [t_0-\delta,t_0+\delta]$. Then by integration by parts, for $n\ge n_1$, we can write
\begin{equation}\label{eq:weak solution for U_n from integration by parts}
\begin{split}
\int_{\mathcal{O}}(-\partial_t\phi -\tfrac{1}{2}\Delta \phi-\phi)(t,x) U_n(t,x) dtdx&=\int_{\mathcal{O}}\phi(t,x)(\partial_tU_n-\tfrac{1}{2}\Delta U_n -U_n)(t,x)dtdx\\
&=\int_{\mathcal{O}}\phi(t,x)\left(\frac{-1}{(1-n^{-1})\beta}\partial_xV(t,\ell_{n}(t))-1\right)A_n(t,x)dtdx,
\end{split}
\end{equation}
where the second line follows from~\eqref{eq:Unheateqapprox}.

From~\eqref{eq:Undefn} and~\eqref{eq:UtfromVtclassical}, we can see 
that $U_n(t,x)$ converges to $U(t,x)$ pointwise on $\mathcal O$ as $n\to \infty$, and $0\le U_n\le 1$ on $\mathcal O$. Hence by the dominated convergence theorem the left-hand side of~\eqref{eq:weak solution for U_n from integration by parts} converges to
\begin{equation}\label{eq:weak formulation U}
\int_{\mathcal{O}}(-\partial_t\phi -\phi-\tfrac{1}{2}\Delta \phi)(t,x)U(t,x) dtdx
\end{equation}
as $n\ra \infty$. 

On the other hand, we see that for $t\in [t_0-\delta,t_0+\delta]$,
\begin{equation}
\left \lvert \frac{-1}{(1-n^{-1})\beta}\partial_xV(t,\ell_{n}(t))-1\right\rvert \ra 0 \quad \text{as }n\to \infty, \label{eq:dxVlimit}
\end{equation}
since $\ell_n(t)\to L_t$ and so $\partial_xV(t,\ell_{n}(t))\ra -\beta$ as $n\ra \infty$.
Moreover, the left-hand side of~\eqref{eq:dxVlimit} is bounded on $t\in [t_0-\delta,t_0+\delta]$ for $n\ge n_0$,
using~\eqref{eq:dxVnear-beta} and our choice of $n_0$ after~\eqref{eq:ellndefn}.
We also note from~\eqref{eq:AndefnforUn} that $0\le A_n\le 1$ on $\mathcal O_n$. It therefore follows from the dominated convergence theorem that the right-hand side of~\eqref{eq:weak solution for U_n from integration by parts} converges to $0$ as $n\to \infty$.

 Therefore~\eqref{eq:weak formulation U} is zero for all $\phi \in C_c^{\infty}(\mathcal{O})$,
 i.e.~$U$ is a weak solution of $\partial_tU=\frac{1}{2}\Delta U +U$ on $\mathcal O$, and hence it is a $C^{\infty}(\mathcal{O})$ classical solution by the aforementioned parabolic regularity for the heat equation (e.g. by H\"ormander's theorem~\cite{Hormander1967}). 

Since $t_0>0$ was arbitrary, this completes the proof of Proposition \ref{prop:mapping inversion}.
\end{proof}

\subsection{Proof of Proposition \ref{prop:travelling waves general beta}}
Recall that $\Pi^{(\beta)}_{c}$ was defined for $\beta>0$ and $c\geq \sqrt{2}$ in \eqref{eq:travelling waves general beta}. We now determine which travelling waves $\Pi^{(\beta)}_{c}$ are non-negative.

We define $a_c:=c-\sqrt{c^2-2}$ and $b_c=c+\sqrt{c^2-2}$ for $c>\sqrt{2}$. We have from \eqref{eq:minimal travelling wave}, \eqref{eq:speed c travelling wave}, \eqref{eq:Pimindefn} and \eqref{eq:Picdefn} that for $c\ge \sqrt 2$, there exists $Z_c\in (0,\infty)$ such that for $x>0$,
\begin{equation}\label{eq:travelling waves CDF equation proof generalised prop}
\Pi_{c}(x)=\begin{cases}
e^{-\sqrt{2}x}(1+\sqrt{2}x)\quad &\text{if }c=\sqrt{2},\\
Z_c[b_ce^{-a_cx}-a_ce^{-b_cx}]\quad &\text{if }c>\sqrt{2}.
\end{cases}
\end{equation}
Therefore, for $x>0$,
\begin{equation} \label{eq:Pibetacexpression}
\Pi^{(\beta)}_{c}(x)=\begin{cases}
e^{-\sqrt{2}x}[1+(\sqrt{2}-\beta)x] \quad &\text{if }c=\sqrt{2},\\
\frac 12 Z_c[b_c(2-a_c\beta)e^{-a_cx}-a_c(2-b_c\beta)e^{-b_cx}] \quad &\text{if }c>\sqrt{2}.
\end{cases}
\end{equation}
In particular, we see that $ \Pi^{(\beta)}_c(x)\to 0$ as $x\to \infty$.
The non-negativity of $\Pi^{(\beta)}_{c}(x)$ is equivalent to the non-negativity of $e^{\sqrt{2}x}\Pi^{(\beta)}_{\sqrt{2}}(x)$ when $c=\sqrt{2}$, or of $e^{a_cx}\Pi^{(\beta)}_{c}(x)$ when $c>\sqrt{2}$. We therefore see that $\Pi^{(\beta)}_{c}$ is non-negative if and only if
\[
\begin{cases}
1+(\sqrt{2}-\beta)x\geq 0 \text{ for all $x>0$}\quad &\text{if }c=\sqrt{2},\\
b_c(2-a_c\beta)-a_c(2-b_c\beta)e^{-(b_c-a_c)x}\geq 0 \text{ for all $x>0$},\quad &\text{if } c>\sqrt{2}.
\end{cases}
\]
Therefore $\Pi^{(\beta)}_{\sqrt{2}}$ is non-negative if and only if $\beta\leq \sqrt{2}$. 
For $c>\sqrt{2}$, we observe that
\[
x\mapsto b_c(2-a_c\beta)-a_c(2-b_c\beta)e^{-(b_c-a_c)x}
\]
is monotone on $(0,\infty)$, and so
$\Pi^{(\beta)}_{c}$ is non-negative if and only if $\min(b_c(2-a_c\beta),2(b_c-a_c))\geq 0$. We always have $2(b_c-a_c)\geq 0$ since $b_c\geq a_c$. Hence $\Pi^{(\beta)}_{c}$ is non-negative if and only if
\begin{equation}\label{eq:beta leq 2/ac}
a_c\beta\leq 2.
\end{equation}

We now consider the cases $\beta\leq \sqrt{2}$ and $\beta>\sqrt{2}$ separately.

\noindent \textbf{Case 1: $\beta\leq \sqrt{2}$.}
 Since $c\mapsto a_c$ is strictly decreasing on $[\sqrt{2},\infty)$, we have $a_c\leq a_{\sqrt{2}}=\sqrt{2}$ for all $c\geq \sqrt{2}$. From this, we see that $a_c\beta \leq 2$ for all $\beta\leq \sqrt{2}$ and $c\geq \sqrt{2}$. Therefore $\Pi^{(\beta)}_{c}$ is non-negative for all $\beta\leq \sqrt{2}$ and $c>\sqrt{2}$.

\noindent \textbf{Case 2: $\beta> \sqrt{2}$.} We can check that $c=\frac{\beta}{2}+\frac{1}{\beta}>\sqrt 2$ is a solution to $a_c \beta=2$. Thus, since $c\mapsto a_c$ is strictly decreasing on $[\sqrt{2},\infty)$, we see that \eqref{eq:beta leq 2/ac} is equivalent to $c\geq \frac{\beta}{2}+\frac{1}{\beta}$, for $\beta>\sqrt{2}$.

\noindent Overall, by the definition of $c^{(\beta)}_{\min}$ in~\eqref{eq:cbetamindef}, we see that $\Pi^{(\beta)}_{c}$ is non-negative if and only if $c\geq c^{(\beta)}_{\min}$. 

We now check that $\Pi^{(\beta)}_{c}$ is non-increasing for $c\geq c^{(\beta)}_{\min}$. Differentiating~\eqref{eq:Pibetacexpression}, we see that for $x>0$,
\[
\frac{d}{dx}\Pi^{(\beta)}_{c}(x)=\begin{cases}
e^{-\sqrt{2}x}[(\beta-\sqrt{2})\sqrt 2 x-\beta],\quad &c=\sqrt{2},\\
\tfrac 12 Z_c a_c b_c e^{-a_cx}[(a_c\beta-2)-(b_c\beta-2)e^{-(b_c-a_c)x}],\quad &c>\sqrt{2}.
\end{cases}
\]
Using that we have \eqref{eq:beta leq 2/ac} whenever $c\geq c^{(\beta)}_{\min}$, 
and using that $(a_c\beta-2)-(b_c\beta-2)=(a_c-b_c)\beta \le 0$,
we immediately see that $\Pi^{(\beta)}_c$ is non-increasing for all $c\geq c^{(\beta)}_{\min}$.

Finally, we establish~\eqref{eq:Pibetaminasymptotics}.
For $\beta\le \sqrt 2$, the asymptotics in~\eqref{eq:Pibetaminasymptotics} follow directly from~\eqref{eq:Pibetacexpression}.
For $\beta>\sqrt{2}$, letting $c=c^{(\beta)}_{\min}=\frac{\beta}{2}+\frac{1}{\beta}$ we have $b_c \beta >a_c \beta=2$ and $b_c=\beta$, and so again~\eqref{eq:Pibetaminasymptotics} follows directly from~\eqref{eq:Pibetacexpression}.
\qed

\subsection{Proof of Theorem \ref{theo:convtoPimin general beta}}

The following lemma, relating exponential moments of $V_0$ and $U_0$ under the mapping~\eqref{eq:U0 formula from V0}, will be used in the proofs of Theorems~\ref{theo:convtoPimin general beta} and~\ref{theo:Ltposition general beta}.
\begin{lem}\label{lem:U0 V0 exponential moment relationship}
Take $\beta>0$, and
suppose that $V_0$ satisfies Assumption~\ref{assum:standing assumption initial condition general beta}. 
Define $U_0$ as in~\eqref{eq:U0 formula from V0}, i.e.~let
\begin{equation} \label{eq:U0defninlemma}
U_0(x):=
\frac{2}{\beta}e^{-\frac{2}{\beta}x}\int_{-\infty}^xe^{\frac{2}{\beta}z}V_0(z)dz
\quad \forall x \in \R.
\end{equation}
Then for $r<\frac{2}{\beta}$, 
\begin{equation}
\int_{0}^{\infty}e^{rx}U_0(x)dx=\frac{2}{2-r\beta}\Big(\int_{0}^{\infty}e^{rx}V_0(x)dx+\int_{-\infty}^{0} e^{\frac{2}{\beta}x}V_0(x)dx\Big)
\label{eq:U0 V0 exponential moment relationship}\end{equation}
and
\begin{equation}
\begin{split}
\int_{0}^{\infty}xe^{rx}U_0(x)dx
&=\frac{2\beta}{(2-r\beta)^2}\Big(\int_{0}^{\infty}e^{rx}V_0(x)dx+\int_{-\infty}^{0} e^{\frac{2}{\beta}x}V_0(x)dx\Big)
 +\frac{2}{2-r\beta}\int_{0}^{\infty}xe^{rx}V_0(x)dx .
\end{split}
\label{eq:U0 V0 x exponential moment relationship 2}
\end{equation}
\end{lem}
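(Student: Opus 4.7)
The plan is to prove both identities by direct computation: substitute the definition \eqref{eq:U0defninlemma} of $U_0$ into the left-hand side and then swap the order of integration via Tonelli's theorem, which is legitimate because the integrands are nonnegative (recall $V_0 \geq 0$). The condition $r < 2/\beta$ is precisely what makes the resulting inner integral in $x$ converge, since after the swap we encounter $\int_{\max(z,0)}^\infty x^k e^{(r-2/\beta)x} dx$ for $k \in \{0,1\}$ and this requires $r - 2/\beta < 0$.

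For \eqref{eq:U0 V0 exponential moment relationship}, after substituting and swapping I would write
\[
\int_0^\infty e^{rx} U_0(x) \, dx = \frac{2}{\beta} \int_{-\infty}^\infty e^{\frac{2}{\beta}z} V_0(z) \int_{\max(z,0)}^\infty e^{(r-\frac{2}{\beta})x} \, dx \, dz,
\]
and then evaluate the inner integral explicitly as $\frac{\beta}{2-r\beta} e^{(r-\frac{2}{\beta})\max(z,0)}$. Combining the exponential factors gives $e^{rz}$ when $z \geq 0$ and $e^{\frac{2}{\beta}z}$ when $z < 0$, which splits the outer integral into the two pieces appearing in \eqref{eq:U0 V0 exponential moment relationship}.

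For \eqref{eq:U0 V0 x exponential moment relationship 2}, I would repeat the same procedure with an extra factor of $x$ inside, so the inner integral becomes $\int_{\max(z,0)}^\infty x e^{(r-\frac{2}{\beta})x} dx$, which one computes (via integration by parts) to be
\[
e^{(r-\frac{2}{\beta})\max(z,0)} \left( \frac{\beta \max(z,0)}{2-r\beta} + \frac{\beta^2}{(2-r\beta)^2} \right).
\]
Splitting into the cases $z \geq 0$ and $z < 0$ as before, the constant term produces the prefactor $\frac{2\beta}{(2-r\beta)^2}$ multiplying the combined integral, while the term linear in $z$ only contributes for $z \geq 0$ and yields $\frac{2}{2-r\beta}\int_0^\infty z e^{rz} V_0(z) dz$.

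There is no real obstacle here; the only care needed is the bookkeeping in the Fubini swap and the elementary computation of $\int_c^\infty x e^{-ax} dx = e^{-ac}(c/a + 1/a^2)$ with $a = 2/\beta - r > 0$ and $c = \max(z,0)$. Nonnegativity of $V_0$ means we do not need to check an absolute-integrability hypothesis separately from what already appears on the right-hand side: if the right-hand side is infinite the identity holds trivially in $[0,\infty]$, and if it is finite then Tonelli gives us genuine equality.
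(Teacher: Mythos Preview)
Your proof is correct. For the first identity \eqref{eq:U0 V0 exponential moment relationship} your approach is identical to the paper's: substitute the definition of $U_0$, apply Tonelli, and evaluate the inner $x$-integral.

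For the second identity \eqref{eq:U0 V0 x exponential moment relationship 2} the approaches differ. You repeat the Tonelli computation with the extra factor of $x$, evaluating $\int_{\max(z,0)}^\infty x e^{(r-2/\beta)x}\,dx$ directly. The paper instead obtains \eqref{eq:U0 V0 x exponential moment relationship 2} by differentiating both sides of \eqref{eq:U0 V0 exponential moment relationship} with respect to $r$, which is valid where both sides are finite, i.e.\ for $r<\min(r_0,2/\beta)$ with $r_0:=\sup\{r:\int_0^\infty e^{rx}V_0(x)\,dx<\infty\}$; it then handles $r=r_0$ by taking a limit and $r\in(r_0,2/\beta)$ by noting both sides are infinite. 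Your direct Tonelli route is more elementary and uniform: nonnegativity of $V_0$ makes the swap legitimate and the identity holds in $[0,\infty]$ without any case analysis or justification of differentiation under the integral sign. The paper's route is a shade slicker once the first identity is in hand, but at the cost of that extra bookkeeping. Both are entirely acceptable.
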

\begin{proof}
For $r<\frac 2 {\beta}$,
by~\eqref{eq:U0defninlemma} and Tonelli's theorem,
\begin{align*}
\int_{0}^{\infty}e^{rx}U_0(x)dx
&= \frac{2}{\beta}\int_{-\infty}^{\infty} e^{\frac{2}{\beta}z}V_0(z) \int_{z\vee 0}^\infty e^{(r-\frac{2}{\beta})x}dx dz\\
&=\frac{2}{\beta}\int_{-\infty}^{\infty} e^{\frac{2}{\beta}z}V_0(z)\frac 1 {\frac 2 \beta -r} e^{(r-\frac{2}{\beta})(z\vee 0)} dz\\
&=\frac{2}{2-r\beta}\Big(\int_{0}^{\infty}e^{rx}V_0(x)dx+\int_{-\infty}^{0} e^{\frac{2}{\beta}x}V_0(x)dx\Big),
\end{align*}
as claimed in~\eqref{eq:U0 V0 exponential moment relationship}.

Now let
\[
r_0:=\sup \Big\{r\in \R:\int_{0}^{\infty}e^{rx}V_0(x)dx<\infty\Big\}.
\]
For $r<\min(r_0,\frac{2}{\beta})$, we can differentiate both sides of~\eqref{eq:U0 V0 exponential moment relationship} with respect to $r$ to obtain~\eqref{eq:U0 V0 x exponential moment relationship 2}. Then if $r_0<\frac{2}{\beta}$, we obtain~\eqref{eq:U0 V0 x exponential moment relationship 2} for $r=r_0$ by taking the limit of both sides of~\eqref{eq:U0 V0 x exponential moment relationship 2} as $r\uparrow r_0$. Finally, if $r_0<\frac{2}{\beta}$ and $r\in (r_0,\frac{2}{\beta})$, then~\eqref{eq:U0 V0 exponential moment relationship} implies that both sides of~\eqref{eq:U0 V0 x exponential moment relationship 2} are infinite.
\end{proof}

\begin{proof}[Proof of Theorem~\ref{theo:convtoPimin general beta}]
Define $U_0$ as in~\eqref{eq:U0 formula from V0}.
By Theorem~\ref{theo:mapping between FBP and general problem}, $U_0$ satisfies Assumption~\ref{assum:standing assumption ic};
let $(U(t,x),L_t)$ denote the solution of~\eqref{eq:FBP_CDF} with initial condition $U_0$.
By Theorems~\ref{theo:existence uniqueness classical solution V} and~\ref{theo:mapping between FBP and general problem}, the solution $(V(t,x),L_t)$ of~\eqref{eq:generalised FBP_CDF} is given by~\eqref{eq:V defined in terms of U}.
Since $V_0$ is non-increasing, and then in the second line using Lemma~\ref{lem:U0 V0 exponential moment relationship}, and then finally since $U_0$ is non-increasing,
\begin{equation}\label{eq:equivalence limsup V and U}
\begin{split}
\limsup_{x\ra\infty}\tfrac{1}{x}\log V_0(x)\leq -\min(\sqrt{2},\tfrac{2}{\beta})
&\Leftrightarrow \int_0^{\infty}e^{rx}V_0(x)dx<\infty\quad\text{for all $r<\min(\sqrt{2},\tfrac{2}{\beta})$}\\
&\Leftrightarrow \int_0^{\infty}e^{rx}U_0(x)dx<\infty\quad\text{for all $r<\min(\sqrt{2},\tfrac{2}{\beta})$}
\\&\Leftrightarrow\limsup_{x\ra\infty}\tfrac{1}{x}\log U_0(x)\leq -\min(\sqrt{2},\tfrac{2}{\beta}).
\end{split}
\end{equation}
We observe from \eqref{eq:U0 formula from V0} that necessarily
\[
\liminf_{x\rightarrow\infty}\tfrac{1}{x}\log U_0(x)\geq -\tfrac{2}{\beta}.
\]
In the case $\beta>\sqrt 2$, by~\eqref{eq:equivalence limsup V and U} it follows that 
\begin{equation}\label{eq:equivalence of limsup V0 and lim U0}
\limsup_{x\ra\infty}\tfrac{1}{x}\log V_0(x)\leq -\tfrac{2}{\beta}
\quad \Leftrightarrow \quad \lim_{x\ra\infty}\tfrac{1}{x}\log U_0(x)= -\tfrac{2}{\beta}.
\end{equation}

\noindent \textbf{Equivalence of \ref{enum:bound on tails of U0 general beta}-\ref{enum:convergence of velocity general beta}:}

If $\beta\leq \sqrt{2}$ then since $c^{(\beta)}_{\min}=\sqrt{2}$ by~\eqref{eq:cbetamindef}, we can immediately conclude the equivalence of \ref{enum:bound on tails of U0 general beta}-\ref{enum:convergence of velocity general beta} in Theorem \ref{theo:convtoPimin general beta} by applying Theorem \ref{theo:convtoPimin} and~\eqref{eq:equivalence limsup V and U}. 

Suppose instead that $\beta>\sqrt{2}$. 
Using that $c^{(\beta)}_{\min}=\frac{\beta}{2}+\frac{1}{\beta}$ by~\eqref{eq:cbetamindef}, we can check that 
\begin{equation} \label{eq:cbetaminidentity}
c^{(\beta)}_{\min}>\sqrt 2 \quad \text{and} \quad -c^{(\beta)}_{\min}+\sqrt{(c^{(\beta)}_{\min})^2-2}=-\tfrac{2}{\beta}.
\end{equation} 
Therefore, by~\eqref{eq:equivalence of limsup V0 and lim U0} and Theorem~\ref{theo:slowerdecay}, we have
\begin{equation} \label{eq:limsupV0toLtlim}
\limsup_{x\ra\infty}\tfrac{1}{x}\log V_0(x)\leq -\tfrac{2}{\beta}
\quad \Rightarrow \quad \lim_{t\ra\infty}\tfrac{L_t}{t}= c^{(\beta)}_{\min}.
\end{equation}
By Theorem~\ref{theo:initial condition limsup bdy relation}, if $\limsup_{t\ra\infty}\tfrac{L_t}{t}\le c^{(\beta)}_{\min}$ then
$\frac{1}{r_0}+\frac{r_0}{2}\le c^{(\beta)}_{\min}$, where
\[
r_0:=\sup\left( \{0\}\cup \Big\{r\in (0,\sqrt{2}):\int_{0}^{\infty}e^{rx}U_0(x)dx<\infty\Big\}\right).
\]
Since $r\mapsto \frac 1r +\frac r2$ is decreasing on $(0,\sqrt 2)$, we must have $r_0\ge \frac 2 \beta$; it follows from~\eqref{eq:equivalence limsup V and U} that
\begin{equation} \label{eq:limsupLttoV0}
\limsup_{t\ra\infty}\tfrac{L_t}{t}\le c^{(\beta)}_{\min}
\quad \Rightarrow \quad \limsup_{x\ra\infty}\tfrac{1}{x}\log V_0(x)\leq -\tfrac{2}{\beta}.
\end{equation}
Therefore, by~\eqref{eq:limsupV0toLtlim} and~\eqref{eq:limsupLttoV0}, and since trivially $\lim_{t\ra\infty}\frac{L_t}{t}= c^{(\beta)}_{\min}$ implies
$\limsup_{t\ra\infty}\frac{L_t}{t}\leq c^{(\beta)}_{\min}$, we have
the equivalence of \ref{enum:bound on tails of U0 general beta}-\ref{enum:convergence of velocity general beta}.

\medskip

\noindent \textbf{\ref{enum:bound on tails of U0 general beta}} $\Rightarrow$ \textbf{\ref{enum:convergence of profile general beta}}:

Suppose $\limsup_{x\ra\infty}\tfrac{1}{x}\log V_0(x)\leq -\min(\sqrt{2},\tfrac{2}{\beta})$.
In the case $\beta\le \sqrt 2$, we use~\eqref{eq:equivalence limsup V and U} and Theorem~\ref{theo:convtoPimin}, and in the case $\beta> \sqrt 2$, we use~\eqref{eq:equivalence of limsup V0 and lim U0},~\eqref{eq:cbetaminidentity} and Theorem~\ref{theo:slowerdecay} to see that
\[
U(t, L_t+x)\ra \Pi_{c^{(\beta)}_{\min}}(x) \quad \text{uniformly in }x \text{ as }t\to\infty.
\]
Hence by dominated convergence, for any $r<0$,
\[
\int_0^{\infty}e^{rx}U(t, L_t+x)dx\ra \int_0^{\infty}e^{rx}\Pi_{c^{(\beta)}_{\min}}(x)dx \quad \text{as }t\ra\infty.
\]
It then follows from~\eqref{eq:V defined in terms of U},~\eqref{eq:travelling waves general beta} and integration by parts that
\[
\int_0^{\infty}e^{rx}V(t,L_t+x)dx\ra \int_0^{\infty}e^{rx}\Pi^{(\beta)}_{\min}(x)dx \quad \text{as }t\ra\infty.
\]

By Theorem \ref{theo:mapping between FBP and general problem}, for any $t>0$ we have that $V(t,L_t+\cdot)$ is non-increasing, $[0,1]$-valued and continuous.
Using Helly's selection theorem, we see that for any sequence of times, we can find a subsequence of times $(t_n)_{n\in \mathbb N_0}$ along which $V(t_n,L_{t_n}+\cdot)$ converges pointwise. Let $\tilde V$ denote a 
subsequential limit; then $\tilde V(x)=1$  $\forall x\in (-\infty,0)$, and for any $r<0$, by dominated convergence,
\begin{equation}\label{eq:equality of subsequential limit and Pimin exponential moments 1}
\int_0^{\infty}e^{rx}\tilde{V}(x)dx= \int_0^{\infty}e^{rx}\Pi^{(\beta)}_{\min}(x)dx.
\end{equation}

By conformal extension, we have that~\eqref{eq:equality of subsequential limit and Pimin exponential moments 1} holds for any $r\in \mathbb{C}$ with $\text{Re}(r)<0$, and by the monotone convergence theorem,~\eqref{eq:equality of subsequential limit and Pimin exponential moments 1} holds for $r=0$. Then by~\eqref{eq:Pibetaminasymptotics} in Proposition~\ref{prop:travelling waves general beta}, and using that~\eqref{eq:equality of subsequential limit and Pimin exponential moments 1} holds for $r=0$, $\tilde V$ is integrable on $(0,\infty)$, and so both sides of~\eqref{eq:equality of subsequential limit and Pimin exponential moments 1} are continuous on $\{r\in \mathbb{C}:\text{Re}(r)\leq 0\}$.

Therefore~\eqref{eq:equality of subsequential limit and Pimin exponential moments 1} holds for all imaginary $r$, and so $\tilde{V}$ and $\Pi^{(\beta)}_{\min}$ (restricted to $(0,\infty)$) have the same Fourier transform, and hence must be equal. 

Since (1) $V(t_n,L_{t_n}+\cdot)$ are $[0,1]$-valued and non-increasing for all $n$, (2) $\Pi^{(\beta)}_{\min}(x)\ra 0$ as $x\ra \infty$ and $\Pi^{(\beta)}_{\min}(x)\ra1$ as $x\ra -\infty$, and (3) $\Pi^{(\beta)}_{\min}$ is continuous, the pointwise convergence of $V(t_n,L_{t_n}+\cdot)$ to $\Pi^{(\beta)}_{\min}$ is in fact uniform. This can be seen by the same argument as at the end of the proof of Theorem~\ref{theo:convergence to the minimal travelling wave for finite initial mass}, i.e.~for arbitrary $n$, taking $x_1<\ldots<x_{n-1}$ such that $\Pi^{(\beta)}_{\min}(x_k)=k/n$ for all $k\in \{1,\ldots , n-1\}$, then combining convergence at each $x_k$ with monotonicity.

Therefore $V(t,x+L_t)\ra \Pi^{(\beta)}_{\min}(x)$ uniformly in $x$ as $t\ra\infty$.

\medskip

\noindent \textbf{\ref{enum:convergence of profile general beta}} $\Rightarrow$ \textbf{\ref{enum:bound on tails of U0 general beta}}:

Suppose $V(t,x+L_t)\ra \Pi^{(\beta)}_{\min}(x)$ uniformly in $x$ as $t\ra\infty$.
By~\eqref{eq:V defined in terms of U} and integration by parts, for $t>0$ and $x>0$ we have
\[
e^{-\frac{2}{\beta}x}\left(1+\tfrac{2}{\beta}\int_{0}^xe^{\frac{2}{\beta}z}V(t,z+L_t)dz\right)=U(t,x+L_t).
\]
Therefore, for any $x>0$, as $t\to\infty$,
\[
U(t,x+L_t)\to e^{-\frac{2}{\beta}x}\left(1+\tfrac{2}{\beta}\int_{0}^xe^{\frac{2}{\beta}z}\Pi^{(\beta)}_{\min}(z) dz\right)
=\Pi_{c^{(\beta)}_{\min}}(x),
\]
where the equality follows from~\eqref{eq:travelling waves general beta},~\eqref{eq:generalbetaminTW}
and integration by parts.
By the same argument as at the end of the proof of Theorem~\ref{theo:convergence to the minimal travelling wave for finite initial mass}, it follows that $U(t,x+L_t)\ra \Pi_{c^{(\beta)}_{\min}}(x)$ uniformly in $x$ as $t\ra\infty$.
Therefore, by Theorem~\ref{theo:convtoPimin} in the case $\beta\le \sqrt 2$, and by Theorem~\ref{theo:slowerdecay} and~\eqref{eq:cbetaminidentity} in the case $\beta> \sqrt 2$,
we have $\limsup_{x\ra\infty}\tfrac{1}{x}\log U_0(x)\leq -\min(\sqrt{2},\tfrac{2}{\beta})$.
By~\eqref{eq:equivalence limsup V and U} it follows that
$\limsup_{x\ra\infty}\tfrac{1}{x}\log V_0(x)\leq -\min(\sqrt{2},\tfrac{2}{\beta})$. 
\end{proof}

\subsection{Proof of Theorem \ref{theo:Ltposition general beta}}

Define $U_0$ as in~\eqref{eq:U0 formula from V0}.
By Theorem~\ref{theo:mapping between FBP and general problem}, $U_0$ satisfies Assumption~\ref{assum:standing assumption ic}, and 
letting $(U(t,x),L_t)$ denote the solution of~\eqref{eq:FBP_CDF} with initial condition $U_0$,
by Theorems~\ref{theo:existence uniqueness classical solution V} and~\ref{theo:mapping between FBP and general problem} we have that
$L_t$ is the free boundary in the solution $(V(t,x),L_t)$ of~\eqref{eq:generalised FBP_CDF}.

We separate the proof of~\ref{enum:pulled Lt generalised}-\ref{enum:pushed Lt generalised} and~\ref{enum:infinitely far from pulled beta<sqrt 2})-\ref{enum:infinitely far from pulled beta>sqrt 2}) into two cases: $\beta\leq \sqrt{2}$ and $\beta>\sqrt{2}$.

\noindent \underline{$\beta\leq \sqrt{2}$}

Since $\limsup_{x\ra\infty}\frac{1}{x}\log V_0(x)\leq -\sqrt{2}$, \eqref{eq:equivalence limsup V and U} implies that $\limsup_{x\ra\infty}\frac{1}{x}\log U_0(x)\leq -\sqrt{2}$. 
Recall the definition of $I_\beta$ in~\eqref{eq:I integral finite initial mass general beta} and the definition of finite initial mass for $U_0$ in~\eqref{eq:finite initial mass U0}.
We observe that for $\beta<\sqrt 2$, Lemma~\ref{lem:U0 V0 exponential moment relationship} implies that $I_\beta<\infty$ if and only if~\eqref{eq:finite initial mass U0} holds. 
Moreover, in the case $\beta=\sqrt{2}$,~\eqref{eq:U0 V0 x exponential moment relationship 2} in Lemma~\ref{lem:U0 V0 exponential moment relationship} holds for $r<\sqrt 2$, and taking the limit on both sides as $r\uparrow \sqrt 2$, 
we see that $\int_0^{\infty}xe^{\sqrt 2 x}U_0(x)dx=\infty$.

Suppose $V_0$ satisfies~\eqref{eq:stretched exponential U0 general beta} for some $\gamma<1/2$;
we now check that $U_0$ satisfies~\eqref{eq:stretched exponential U0}. In the following, $C,R<\infty$ are positive constants and $C<\infty$ may increase from line to line. For $x\in \R$, we have
\[
\int_{-\infty}^xe^{\frac{2}{\beta}z}V_0(z)dz \leq C+\1_{\{x\ge R\}}\int_{R}^xe^{(\frac{2}{\beta}-\sqrt{2})z+z^{\gamma}}dz\leq C(1+xe^{(\frac{2}{\beta}-\sqrt{2})x+x^{\gamma}}),
\]
where in the second inequality we are using that $\frac{2}{\beta}\geq \sqrt{2}$. Therefore, by~\eqref{eq:U0 formula from V0}, for $x\ge 0$,
\[
U_0(x)\leq \frac{2}{\beta}e^{-\frac{2}{\beta}x}C(1+xe^{(\frac{2}{\beta}-\sqrt{2})x+x^{\gamma}})\leq C(1+xe^{-\sqrt{2}x+x^{\gamma}}).
\]
Increasing $\gamma<1/2$ slightly, we see that $U_0$ satisfies~\eqref{eq:stretched exponential U0}. 

We can therefore conclude both the whole of~\ref{enum:pulled Lt generalised} and~\ref{enum:infinitely far from pulled beta<sqrt 2})  (the $\beta<\sqrt{2}$ case) and the convergence~\eqref{eq:asymptotics 1 of Lt beta sqrt 2} in~\ref{enum:pushmi-pullyu Lt generalised} (the $\beta=\sqrt{2}$ case) by applying Theorem~\ref{theo:Ltposition} to $(U(t,x),L_t)$.

We now establish the convergence~\eqref{eq:Lt asymp finite init mass beta sqrt 2}; we set $\beta=\sqrt{2}$, assume $I_{\sqrt 2}<\infty$ and compute $b(t)$, defined in~\eqref{eq:definition of b general beta}.  We define
\begin{equation} \label{eq:Jydefn}
J_y:=\int_{-\infty}^{y}e^{\sqrt{2}x}V_0(x)dx\quad\text{for $y\in \R$,}
\end{equation}
and observe that by~\eqref{eq:I integral finite initial mass general beta} we have $J_y\ra I_{\sqrt 2}$ as $y\ra\infty$. We then calculate
\begin{equation} \label{eq:btcalc}
\begin{split}
b(t)&=2^{-1/2}\log\Big(\sqrt 2\int_{0}^{\infty}yJ_y e^{-y^2/(2t)}dy+1\Big)\\
&=2^{-1/2}\log\Big(\sqrt 2\int_{0}^{\infty}yI_{\sqrt 2} e^{-y^2/(2t)}dy
+\sqrt 2\int_{0}^{\infty}y(J_y-I_{\sqrt 2})e^{-y^2/(2t)}dy+1\Big).
\end{split}
\end{equation}
For any $\epsilon>0$, we can take $R_\epsilon<\infty$ such that $\lvert I_{\sqrt 2}-J_y\rvert\leq \epsilon$ for all $y\geq R_\epsilon$. Then
\[
\Big\lvert \int_{0}^{\infty}{\sqrt 2}y(J_y-I_{\sqrt 2})e^{-y^2/(2t)}dy \Big\rvert \leq I_{\sqrt 2}\Big\lvert \int_0^{R_\epsilon}\sqrt 2 y dy\Big\rvert +\epsilon\Big\lvert \int_0^{\infty}\sqrt 2 ye^{-y^2/(2t)}dy\Big\rvert.
\]
Therefore, for any $\epsilon>0$, there exists $C_{\epsilon}<\infty$ such that for all $t<\infty$,
\[
\Big\lvert \int_{0}^{\infty}{\sqrt 2}y(J_y-I_{\sqrt 2})e^{-y^2/(2t)}dy \Big\rvert \leq C_{\epsilon}+\sqrt 2 \epsilon t.
\]
We now calculate
\begin{equation} \label{eq:btcalc1}
\sqrt 2\int_{0}^{\infty}yI_{\sqrt 2} e^{-y^2/(2t)}dy
=\sqrt 2I_{\sqrt 2} t.
\end{equation}
Moreover, we cannot have $I_{\sqrt 2}=0$, since this would imply $V_0\equiv 0$.
It follows that
\begin{equation} \label{eq:btcalc2}
\frac{\left\lvert \int_{0}^{\infty}y(J_y-I_{\sqrt 2})e^{-y^2/(2t)}dy \right\rvert}{\int_{0}^{\infty}yI_{\sqrt 2}e^{-y^2/(2t)}dy} \ra 0 \quad \text{as }t\ra\infty.
\end{equation}
Combining~\eqref{eq:btcalc1} and~\eqref{eq:btcalc2} with~\eqref{eq:btcalc}, we see that
\[
b(t)=2^{-1/2}\log\Big({\sqrt 2}I_{\sqrt 2} t\Big)+o(1)=2^{-1/2}\log t+2^{-1/2}\log({\sqrt 2}I_{\sqrt 2})+o(1)
\]
as $t\ra\infty$. Therefore $m(t)$, defined in \eqref{eq:definition of m general beta}, is given by
\[
m(t)=\sqrt{2}t-\frac{1}{2\sqrt{2}}\log t+\frac{1}{\sqrt{2}}\log({\sqrt 2}I_{\sqrt 2})+o(1)
\]
as $t\ra\infty$. Inputting this into \eqref{eq:asymptotics 1 of Lt beta sqrt 2}, we obtain \eqref{eq:Lt asymp finite init mass beta sqrt 2}.

Finally, we prove~\ref{enum:infinitely far from pulled beta=sqrt 2}); we set $\beta=\sqrt{2}$ and assume $I_{\sqrt 2}=\infty$.
Take $R>0$, and let
$V^R_0(x):=V_0(x)\Ind_{\{x< R\}}$ for $x\in \R$.
Then $V^R_0$ satisfies Assumption~\ref{assum:standing assumption initial condition general beta} and~\eqref{eq:stretched exponential U0 general beta}. We let $(V^R,L^R)$ denote the solution of~\eqref{eq:generalised FBP_CDF} with initial condition $V^R_0$. Then
by Theorems~\ref{theo:existence uniqueness classical solution V} and~\ref{theo:mapping between FBP and general problem}, and
 by the monotonicity of the mapping~\eqref{eq:U0 formula from V0} and the comparison principle (Proposition \ref{prop:fbpcomparison}), we see that $L^R_t\leq L_t$ for all $t>0$. 
Defining $J_y$ as in~\eqref{eq:Jydefn}, we have from~\eqref{eq:I integral finite initial mass general beta} that $I_{\sqrt 2}(V_0^R)=J_R<\infty$, 
and so by~\eqref{eq:Lt asymp finite init mass beta sqrt 2},
\[
L_t^R=\sqrt{2}t-\frac{1}{2\sqrt{2}}\log t+\frac{1}{\sqrt{2}}\Big(\log(\sqrt 2 J_R)-\log\sqrt{\pi}\Big)+o(1) \quad\text{as } t\ra\infty.
\]
Since $I_{\sqrt 2}=\infty$ we have $J_R\to \infty$ as $R\to \infty$, and $R>0$ was arbitrary,  and so the claim~\ref{enum:infinitely far from pulled beta=sqrt 2}) follows.

\medskip

\noindent \underline{$\beta>\sqrt{2}$}

Since $\limsup_{x\rightarrow\infty}\frac{1}{x}\log V_0(x)\leq -\frac{2}{\beta}$, \eqref{eq:equivalence of limsup V0 and lim U0} implies that $\lim_{x\rightarrow\infty}\frac{1}{x}\log U_0(x)= -\frac{2}{\beta}$. 
Since $c^{(\beta)}_{\min}=\frac{\beta}{2}+\frac{1}{\beta}$ by~\eqref{eq:cbetamindef}, letting $c=c^{(\beta)}_{\min}>\sqrt 2$ we have 
\[
-c+\sqrt{c^2-2}=-\frac{2}{\beta}
\qquad \text{and}\qquad
\sqrt{c^2-2}\left(c-\sqrt{c^2-2}\right)=\frac{\beta^2-2}{\beta^2}.
\]
We can therefore use Theorem~\ref{theo:slowerdecay} applied to $(U(t,x),L_t)$ to obtain \eqref{eq:Lt in terms of mt}. 

We now assume that $I_\beta<\infty$ and establish~\eqref{eq:Lt asymp finite init mass beta grt sqrt 2}, which we shall accomplish by computing $m(t)$, defined in~\eqref{eq:mtslowdecay generalised}. We write
\[
U_0(x)=e^{-kx}f(x) \quad \text{for }x\in \R,
\]
where we define $k:=\frac{2}{\beta}>0$, and
\[
f(x):=k\int_{-\infty}^{x}e^{ky}V_0(y)dy \quad \text{for }x\in \R
\]
Then, recalling the definition of $I_{\beta}$ from~\eqref{eq:I integral finite initial mass general beta}, we have that $f$ is non-decreasing and bounded, and $f(x)\ra kI_{\beta}\in (0,\infty)$ as $x\rightarrow \infty$.

For $t>0$, to compute $m(t)$, 
we substitute $z=(y-x)/\sqrt{t}$, and then $u=z+k\sqrt{t}$, obtaining
\[
\begin{split}
\int_{-\infty}^{\infty}U_0(y)\frac{e^{-(x-y)^2/(2t)}}{\sqrt{2\pi t}}dy
&=\int_{-\infty}^{\infty}U_0( x +\sqrt{t}z)\frac{e^{-z^2/2}}{\sqrt{2\pi }}dz\\
&=e^{-kx}\int_{-\infty}^{\infty}f( x +\sqrt{t}z)e^{-k\sqrt{t}z}\frac{e^{-z^2/2}}{\sqrt{2\pi }}dz\\
&=\frac{e^{\frac 12 k^2t-kx}}{\sqrt{2\pi }}\int_{-\infty}^{\infty}f( x +\sqrt{t}z)e^{-(z+k\sqrt{t})^2/2}dz\\
&=\frac{e^{\frac 12 k^2t-kx}}{\sqrt{2\pi }}\int_{-\infty}^{\infty}f( x +\sqrt{t}u-kt)e^{-u^2/2}du.
\end{split}
\]
We now recall from~\eqref{eq:cbetamindef} that $c^{(\beta)}_{\min}=\frac{1}{k}+\frac{k}{2}$, and so, in particular, $\frac 12 k^2-kc^{(\beta)}_{\min}+1=0$. Then letting $x'=x-c^{(\beta)}_{\min}t$, we can write that for $t>0$,
\begin{equation} \label{eq:intformupper}
e^t\int_{-\infty}^{\infty}U_0(y)\frac{e^{-(x-y)^2/(2t)}}{\sqrt{2\pi t}}dy
=e^{-kx'}\int_{-\infty}^{\infty}f( x' +\sqrt{t}u+(\tfrac{1}{k}-\tfrac{k}{2})t)\frac{e^{-u^2/2}}{\sqrt{2\pi }}du.
\end{equation}
Since $\frac{1}{k}>\frac{k}{2}$, by dominated convergence we see that for any fixed $x'\in \R$, the above integral converges to $kI_{\beta}$ as $t\ra\infty$. 
Therefore, 
for $x'\in \R$ fixed, letting $x=x'+c^{(\beta)}_{\min}t$ we have
\[
e^t\int_{-\infty}^{\infty}U_0(y)\frac{e^{-(x-y)^2/(2t)}}{\sqrt{2\pi t}}dy \to e^{-kx'}kI_{\beta} \quad \text{as }t\to \infty.
\]
By the definition of $m(t)$ in~\eqref{eq:mtslowdecay generalised}, it follows that $m(t)=c^{(\beta)}_{\min}t+k^{-1}\log (kI_{\beta})+o(1)$ as $t\to \infty$.
We now substitute this into \eqref{eq:Lt in terms of mt}, obtaining \eqref{eq:Lt asymp finite init mass beta grt sqrt 2}.

We finally turn to establishing~\ref{enum:infinitely far from pulled beta>sqrt 2}). Assume $I_\beta = \infty$.
As in the proof of~\ref{enum:infinitely far from pulled beta=sqrt 2}),
take $R>0$, and let
$V^R_0(x):=V_0(x)\Ind_{\{x< R\}}$ for $x\in \R$.
Then $V^R_0$ satisfies Assumption~\ref{assum:standing assumption initial condition general beta}.
Let $(V^R,L^R)$ denote the solution of~\eqref{eq:generalised FBP_CDF} with initial condition $V^R_0$; by Theorems~\ref{theo:existence uniqueness classical solution V} and~\ref{theo:mapping between FBP and general problem}, the monotonicity of the mapping~\eqref{eq:U0 formula from V0} and Proposition \ref{prop:fbpcomparison}, we have $L^R_t\leq L_t$ for all $t>0$.
Since $I_\beta(V_0^R)<\infty$, we have 
by~\eqref{eq:Lt asymp finite init mass beta grt sqrt 2} that
\[
L^R_t=c^{(\beta)}_{\min}t+\frac{\beta}{2} \left(\log (\tfrac{2}{\beta}I_{\beta}(V_0^R))+\log(\beta^2-2)-2\log \beta\right)+o(1)\quad\text{as } t\ra\infty.
\]
Then since $I_\beta = \infty$ we have $I_\beta(V_0^R)\to \infty$ as $R\to \infty$, and $R>0$ was arbitrary, so the claim~\ref{enum:infinitely far from pulled beta>sqrt 2}) follows.\qed

\bibliography{freeboundary.bib}

@article{Rothe1981,
  title={Convergence to pushed fronts},
  author={Rothe, Franz},
  journal={The Rocky Mountain Journal of Mathematics},
  pages={617--633},
  year={1981},
  volume={11},
  number={4},
  publisher={JSTOR}
}

@article{An2023a,
  title={Quantitative steepness, semi-{FKPP} reactions, and pushmi-pullyu fronts},
  author={An, Jing and Henderson, Christopher and Ryzhik, Lenya},
  journal={Archive for Rational Mechanics and Analysis},
  volume={247},
  number={5},
  pages={88},
  year={2023},
  publisher={Springer}
}

@article{An2023b,
  title={Pushed, pulled and pushmi-pullyu fronts of the {B}urgers-{FKPP} equation},
  author={An, Jing and Henderson, Christopher and Ryzhik, Lenya},
  journal={Journal of the European Mathematical Society},
  volume={27},
  number={5},
  pages={2073--2154},
  year={2023}
}

@article{Giletti2022,
  title={Monostable pulled fronts and logarithmic drifts},
  author={Giletti, Thomas},
  journal={Nonlinear Differential Equations and Applications NoDEA},
  volume={29},
  number={4},
  pages={35},
  year={2022},
  publisher={Springer}
}

@article{Berard2023,
  title={Hydrodynamic limit of {$N$}-branching {M}arkov processes},
  author={B{\'e}rard, Jean and Fr{\'e}nais, Brieuc},
  journal={arXiv preprint arXiv:2311.12453},
  year={2023}
}

@article{Klump2022,
	title = {The Inverse First-passage Time Problem as Hydrodynamic Limit of a Particle System},
	volume = {25},
	ISSN = {1573-7713},
	url = {http://dx.doi.org/10.1007/s11009-023-10020-7},
	DOI = {10.1007/s11009-023-10020-7},
	number = {1},
	journal = {Methodology and Computing in Applied Probability},
	publisher = {Springer Science and Business Media LLC},
	author = {Klump, Alexander},
	year = {2023}
}

@article{Klump2023,
	title = {Uniqueness of the Inverse First-Passage Time Problem and the Shape of the {S}hiryaev Boundary},
	volume = {67},
	ISSN = {1095-7219},
	url = {http://dx.doi.org/10.1137/S0040585X97T991155},
	DOI = {10.1137/s0040585x97t991155},
	number = {4},
	journal = {Theory of Probability and Its Applications},
	publisher = {Society for Industrial & Applied Mathematics (SIAM)},
	author = {Klump,  Alexander and Kolb,  Martin},
	year = {2023},
	pages = {570–592}
}

@article{Anulova1981,
	title = {On {M}arkov Stopping Times with a Given Distribution for a {W}iener Process},
	volume = {25},
	ISSN = {1095-7219},
	url = {http://dx.doi.org/10.1137/1125045},
	DOI = {10.1137/1125045},
	number = {2},
	journal = {Theory of Probability and Its Applications},
	publisher = {Society for Industrial & Applied Mathematics (SIAM)},
	author = {Anulova,  Svetlana},
	year = {1981},
	pages = {362–366}
}

@article{Ekstrm2016,
	title = {The inverse first-passage problem and optimal stopping},
	volume = {26},
	ISSN = {1050-5164},
	url = {http://dx.doi.org/10.1214/16-AAP1172},
	DOI = {10.1214/16-aap1172},
	number = {5},
	journal = {The Annals of Applied Probability},
	publisher = {Institute of Mathematical Statistics},
	author = {Ekstr\"{o}m,  Erik and Janson,  Svante},
	year = {2016},
	pages = {3154-3177}
}

@article{Chen2011,
	title = {Existence and uniqueness of solutions to the inverse boundary crossing problem for diffusions},
	volume = {21},
	ISSN = {1050-5164},
	url = {http://dx.doi.org/10.1214/10-AAP714},
	DOI = {10.1214/10-aap714},
	number = {5},
	journal = {The Annals of Applied Probability},
	publisher = {Institute of Mathematical Statistics},
	author = {Chen,  Xinfu and Cheng,  Lan and Chadam,  John and Saunders,  David},
	year = {2011},
	pages = {1663-1693}
}

@article{Chen2022,
	title = {Higher-Order Regularity of the Free Boundary in the Inverse First-Passage Problem},
	volume = {54},
	ISSN = {1095-7154},
	url = {http://dx.doi.org/10.1137/21M1466797},
	DOI = {10.1137/21m1466797},
	number = {4},
	journal = {SIAM Journal on Mathematical Analysis},
	publisher = {Society for Industrial \& Applied Mathematics (SIAM)},
	author = {Chen,  Xinfu and Chadam,  John and Saunders,  David},
	year = {2022},
	pages = {4695–4720}
}

@article{Cheng2006,
	title = {Analysis of an Inverse First Passage Problem from Risk Management},
	volume = {38},
	ISSN = {1095-7154},
	url = {http://dx.doi.org/10.1137/050622651},
	DOI = {10.1137/050622651},
	number = {3},
	journal = {SIAM Journal on Mathematical Analysis},
	publisher = {Society for Industrial \& Applied Mathematics (SIAM)},
	author = {Cheng,  Lan and Chen,  Xinfu and Chadam,  John and Saunders,  David},
	year = {2006},
	pages = {845–873}
}

@book{Peskir2006,
	title= {Optimal Stopping and Free-Boundary Problems},
	author = {Goran Peskir and Albert Shiryaev},
	year={2006},
	publisher={Birkh{\"a}user Basel},
	address={Basel},
	ISBN = {9783764324193},
	url = {http://dx.doi.org/10.1007/978-3-7643-7390-0},
	DOI = {10.1007/978-3-7643-7390-0},
	journal = {Lectures in Mathematics. ETH Z\"{u}rich}
}

@article{Brunet2023,
  title={Ahead of the {F}isher--{KPP} front},
  author={Brunet, {\'E}ric},
  journal={Nonlinearity},
  volume={36},
  number={10},
  pages={5541},
  year={2023},
  publisher={IOP Publishing}
}

@article{BD2015,
  title={An exactly solvable travelling wave equation in the {F}isher--{KPP} class},
  author={Brunet, {\'E}ric and Derrida, Bernard},
  journal={Journal of Statistical Physics},
  volume={161},
  number={4},
  pages={801--820},
  year={2015},
  publisher={Springer}
}

@article{Maillard2016,
   abstract = {We consider branching Brownian motion on the real line with the following selection mechanism: every time the number of particles exceeds a (large) given number N, only the N right-most particles are kept and the others killed. After rescaling time by log 3N, we show that the properly recentred position of the ⌈ αN⌉ -th particle from the right, α∈ (0 , 1) , converges in law to an explicitly given spectrally positive Lévy process. This behaviour has been predicted to hold for a large class of models falling into the universality class of the FKPP equation with weak multiplicative noise [Brunet et al. in Phys Rev E 73(5):056126 2006] and is proven here for the first time for such a model.},
   author = {Pascal Maillard},
   doi = {10.1007/s00440-016-0701-9},
   issn = {01788051},
   issue = {3-4},
   journal = {Probability Theory and Related Fields},
   keywords = {Branching Brownian motion,FKPP equation,Selection},
   pages = {1061-1173},
   publisher = {Springer New York LLC},
   title = {Speed and fluctuations of {$N$}-particle branching {B}rownian motion with spatial selection},
   volume = {166},
   year = {2016},
}

@incollection{DeMasi2019,
    AUTHOR = {De Masi, Anna and Ferrari, Pablo and Presutti, Errico and
              Soprano-Loto, Nahuel},
     TITLE = {Hydrodynamics of the {$N$}-{BBM} process},
 BOOKTITLE = {Stochastic dynamics out of equilibrium},
    SERIES = {Springer Proc. Math. Stat.},
    VOLUME = {282},
     PAGES = {523--549},
 PUBLISHER = {Springer},
      YEAR = {2019},
   MRCLASS = {82C22},
  MRNUMBER = {3986078},
}

@InProceedings{GroismanJonckheer,
author="Groisman, Pablo
and Jonckheere, Matthieu",
editor="Sidoravicius, Vladas",
title="Front Propagation and Quasi-Stationary Distributions: Two Faces of the Same Coin",
booktitle="Sojourns in Probability Theory and Statistical Physics - III",
year="2019",
publisher="Springer Singapore",
address="Singapore",
pages="242--254",
abstract="We analyze the connection between front propagation and quasi-stationary distributions in translation invariant one-dimensional Markov processes. We describe the link between them through the microscopic models known as Branching Brownian Motion with selection and Fleming--Viot.",
isbn="978-981-15-0302-3"
}

@Article{Henderson17,
 Author = {Henderson, Christopher},
 Title = {Population stabilization in branching {Brownian} motion with absorption and drift},
 FJournal = {Communications in Mathematical Sciences},
 Journal = {Commun. Math. Sci.},
 ISSN = {1539-6746},
 Volume = {14},
 Number = {4},
 Pages = {973--985},
 Year = {2016},
 Language = {English},
 DOI = {10.4310/CMS.2016.v14.n4.a5},
 Keywords = {92D25,35K57,60J85},
 zbMATH = {6619950},
 Zbl = {1344.92133}
}

@article{BBHR17,
	author = {Berestycki, Julien and Brunet, {\'E}ric and Harris, Simon C. and Roberts, Matt},
	da = {2017/02/01},
	date-added = {2023-10-14 17:23:26 +0200},
	date-modified = {2023-10-14 17:23:26 +0200},
	doi = {10.1007/s00220-016-2790-9},
	id = {Berestycki2017},
	isbn = {1432-0916},
	journal = {Communications in Mathematical Physics},
	number = {3},
	pages = {857--893},
	title = {Vanishing Corrections for the Position in a Linear Model of {FKPP} Fronts},
	ty = {JOUR},
	url = {https://doi.org/10.1007/s00220-016-2790-9},
	volume = {349},
	year = {2017},
	Bdsk-Url-1 = {https://doi.org/10.1007/s00220-016-2790-9}}

@article{Ebert98,
	title = {Universal Algebraic Relaxation of Fronts Propagating into an Unstable State and Implications for Moving Boundary Approximations},
	author = {Ebert, Ute and van Saarloos, Wim},
	journal = {Phys. Rev. Lett.},
	volume = {80},
	issue = {8},
	pages = {1650--1653},
	numpages = {0},
	year = {1998},
	publisher = {American Physical Society},
	doi = {10.1103/PhysRevLett.80.1650},
	url = {https://link.aps.org/doi/10.1103/PhysRevLett.80.1650}
}

@article{Martinez1998,
   abstract = {We consider Brownian motion with a negative drift conditioned to stay positive. We give a sufficient condition for an initial measure to be in the domain of attraction of a quasi-stationary distribution. We construct a counterexample that strongly suggests that this condition is optimal.},
   author = {Servet Martinez and Jaime San Martin},
   journal = {Adv. Appl. Prob},
   keywords = {Brownian motion AMS 1991 Subject Classification,Primary 60J65,Quasi-stationary distributions},
   pages = {385-408},
   title = {Domain of attraction of quasi-stationary distributions for the {B}rownian motion with drift},
   volume = {30},
   year = {1998},
}

@article{Berestycki2018,
 Author = {Berestycki, Julien and Brunet, {\'E}ric and Penington, Sarah},
 Title = {Global existence for a free boundary problem of {Fisher}-{KPP} type},
 FJournal = {Nonlinearity},
 Journal = {Nonlinearity},
 ISSN = {0951-7715},
 Volume = {32},
 Number = {10},
 Pages = {3912--3939},
 Year = {2019},
 Language = {English},
 DOI = {10.1088/1361-6544/ab25af},
 Keywords = {35R35,35A01,60J80},
 zbMATH = {7106117},
 Zbl = {1420.35474}
}

@article{Berestycki2018a,
  title={Exact solution and precise asymptotics of a {Fisher--KPP} type front},
  author={Berestycki, Julien and Brunet, {\'E}ric and Derrida, Bernard},
  journal={Journal of Physics A: Mathematical and Theoretical},
  volume={51},
  number={3},
  pages={035204},
  year={2017},
  publisher={IOP Publishing}
}

@article{Berestycki2018b,
	doi = {10.1209/0295-5075/122/10001},
	url = {https://doi.org/10.1209/0295-5075/122/10001},
	year = {2018},
	publisher = {EDP Sciences, IOP Publishing and Società Italiana di Fisica},
	volume = {122},
	number = {1},
	pages = {10001},
	author = {Berestycki, Julien and Brunet, Éric and Derrida, Bernard},
	title = {A new approach to computing the asymptotics of the position of {F}isher-{KPP} fronts},
	journal = {Europhysics Letters},
	abstract = {This paper presents a novel way of computing front positions in Fisher-KPP equations. Our method is based on an exact relation between the Laplace transform of the initial condition and some integral functional of the front position. Using singularity analysis, one can obtain the asymptotics of the front position up to the  term. Our approach is robust and can be generalised to other front equations.}
}

@article{Kolmogorov1937,
   author = {Andrey Nikolaevich Kolmogorov and Ivan Georgievich Petrovskii and Nikolai Piskunov},
   journal = {Moscow University Bulletin of Mathematics},
   pages = {1-25},
   title = {A study of the equation of diffusion with increase in the quantity of matter, and its application to a biological problem},
   volume = {1},
   year = {1937},
}

@article{Brunet1997,
   author = {\'Eric Brunet and Bernard Derrida},
   doi = {10.1103/PhysRevE.56.2597},
   issue = {3},
   journal = {Physical Review E},
   pages = {2597-2604},
   publisher = {American Physical Society},
   title = {Shift in the velocity of a front due to a cutoff},
   volume = {56},
   url = {https://link.aps.org/doi/10.1103/PhysRevE.56.2597},
   year = {1997},
}

@article{Brunet1999,
  title={Microscopic models of traveling wave equations},
  author={Brunet, {\'E}ric and Derrida, Bernard},
  journal={Computer Physics Communications},
  volume={121},
  pages={376--381},
  year={1999},
  publisher={Elsevier}
}

@article{Tough23,
   abstract = {We consider the Fleming-Viot particle system consisting of $N$ identical particles evolving in $\mathbb\{R\}_\{>0\}$ as Brownian motions with constant drift $-1$. Whenever a particle hits $0$, it jumps onto another particle in the interior. It is known that this particle system has a hydrodynamic limit as $N\rightarrow\infty$ given by Brownian motion with drift $-1$ conditioned not to hit $0$. This killed Brownian motion has an infinite family of quasi-stationary distributions (QSDs), with a Yaglom limit given by the unique QSD minimising the survival probability. On the other hand, for fixed $N<\infty$, this particle system converges to a unique stationary distribution as time $t\rightarrow\infty$. We prove the following selection principle: the empirical measure of the $N$-particle stationary distribution converges to the aforedescribed Yaglom limit as $N\rightarrow\infty$. The selection problem for this particular Fleming-Viot process is closely connected to the microscopic selection problem in front propagation, in particular for the $N$-branching Brownian motion. The proof requires neither fine estimates on the particle system nor the use of Lyapunov functions.},
   author = {Oliver Tough},
   journal = {Journal of the European Mathematical Society, to appear},
   title = {Selection principle for the {F}leming-{V}iot particle system on the positive half-line with constant negative drift},
   year = {2025},
}

@article{Burdzy2000,
author = {Burdzy, Krzysztof and Hołyst, Robert and March, Peter},
journal = {Communications in Mathematical Physics},
number = {3},
pages = {679--703},
title = {A {F}leming-{V}iot particle representation of the {D}irichlet {L}aplacian},
volume = {214},
year = {2000}
}

@article{Durrett2011,
   abstract = {We consider a branching-selection system in ℝ with N particles which give birth independently at rate 1 and where after each birth the leftmost particle is erased, keeping the number of particles constant. We show that, as N →∞, the empirical measure process associated to the system converges in distribution to a deterministic measure-valued process whose densities solve a free boundary integro-differential equation. We also show that this equation has a unique traveling wave solution traveling at speed c or no such solution depending on whether c ≥ a or c>a,wherea is the asymptotic speed of the branching random walk obtained by ignoring the removal of the leftmost particles in our process. The traveling wave solutions correspond to solutions of Wiener-Hopf equations. © 2011 Institute of Mathematical Statistics.},
   author = {Rick Durrett and Daniel Remenik},
   doi = {10.1214/10-AOP601},
   issn = {00911798},
   issue = {6},
   journal = {Annals of Probability},
   keywords = {Branching random walk,Branching-selection system,Free boundary equation,Traveling wave solutions,Wiener-Hopf equation},
   pages = {2043-2078},
   publisher = {Institute of Mathematical Statistics},
   title = {{B}runet-{D}errida particle systems, free boundary problems and {Wiener-Hopf equations}},
   volume = {39},
   year = {2011},
}

@article{Brunet2007,
   author = {Brunet, {É}ric and Derrida, Bernard and Mueller, Alfred H. and Munier, Stéphane},
   doi = {10.1103/PhysRevE.76.041104},
   issn = {1539-3755},
   issue = {4},
   journal = {Physical Review E},
   pages = {041104},
   publisher = {American Physical Society},
   title = {Effect of selection on ancestry: An exactly soluble case and its phenomenological generalization},
   volume = {76},
   url = {https://link.aps.org/doi/10.1103/PhysRevE.76.041104},
   year = {2007},
}

@article{Brunet2006,
   abstract = {For a family of models of evolving population under selection, which can be described by noisy traveling-wave equations, the coalescence times along the genealogical tree scale like , where N is the size of the population, in contrast with neutral models for which they scale like N. An argument relating this time scale to the diffusion constant of the noisy traveling wave leads to a prediction for α which agrees with our simulations. An exactly soluble case gives trees with statistics identical to those predicted for mean-field spin glasses by Parisi's theory.},
   author = {Brunet, {É}ric and Derrida, Bernard and Mueller, Alfred H. and Munier, Stéphane},
   doi = {10.1209/epl/i2006-10224-4},
   issn = {0295-5075},
   issue = {1},
   journal = {Europhysics Letters (EPL)},
   pages = {1-7},
   publisher = {EDP Sciences},
   title = {Noisy Traveling Waves: {E}ffect of Selection on Genealogies},
   volume = {76},
   url = {http://stacks.iop.org/0295-5075/76/i=1/a=001?key=crossref.a2b0d05bdf83753f81c5dcd2080a390e},
   year = {2006},
}

@article{Fisher1937,
   author = {Ronald A. Fisher},
   issue = {4},
   journal = {Annals of Eugenics},
   pages = {353-369},
   title = {The wave of advance of advantageous genes},
   volume = {7},
   year = {1937},
}

@article{Bramson1983,
   author = {Maury D. Bramson},
   journal = {Memoirs of the American Mathematical Society},
   pages = {0},
   title = {Convergence of solutions of the {K}olmogorov equation to travelling waves},
   volume = {44},
   year = {1983},
}

@article{Berestycki2024,
  title={Selection principle for the {$N$-BBM}},
  author={Berestycki, Julien and Tough, Oliver},
  journal={arXiv preprint arXiv:2407.05792},
  year={2024}
}

@article{Graham,
	doi = {10.1088/1361-6544/aaffe8},
	url = {https://doi.org/10.1088/1361-6544/aaffe8},
	year = {2019},
	publisher = {IOP Publishing},
	volume = {32},
	number = {6},
	pages = {1967},
	author = {Graham, Cole},
	title = {Precise asymptotics for {F}isher–{KPP} fronts},
	journal = {Nonlinearity},
	abstract = {We consider the one-dimensional Fisher–KPP equation with step-like initial data. Nolen et al showed that the solution u converges at long time to a travelling wave  at a position , with error  for any . With their methods, we find a refined shift  such that in a frame moving with , the solution u satisfies  for a certain profile  independent of the initial data. The coefficient  depends on the initial data, but  is universal, and agrees with a finding of Berestycki et al. Furthermore, we predict the asymptotic forms of  and u to arbitrarily high order.}
}

@article{Meleard2011,
   abstract = {This survey concerns the study of quasi-stationary distributions with a specific focus on models derived from ecology and population dynamics. We are concerned with the long time behavior of different stochastic population size processes when 0 is an absorbing point almost surely attained by the process. The hitting time of this point, namely the extinction time, can be large compared to the physical time and the population size can fluctuate for large amount of time before extinction actually occurs. This phenomenon can be understood by the study of quasi-limiting distributions. In this paper, general results on quasi-stationarity are given and examples developed in detail. One shows in particular how this notion is related to the spectral properties of the semi-group of the process killed at 0. Then we study different stochastic population models including non-linear terms modeling the regulation of the population. These models will take values in countable sets (as birth and death processes) or in continuous spaces (as logistic Feller diffusion processes or stochastic Lotka-Volterra processes). In all these situations we study in detail the quasi-stationarity properties. We also develop an algorithm based on Fleming-Viot particle systems and show a lot of numerical pictures.},
   author = {Sylvie Méléard and Denis Villemonais},
   doi = {10.1214/11-PS191},
   issn = {15495787},
   issue = {1},
   journal = {Probability Surveys},
   keywords = {Birth and death process,Fleming-Viot particle system.,Logistic feller diffusion,Population dynamics,Quasi-stationarity,Ya-glom limit},
   pages = {340-410},
   title = {Quasi-stationary distributions and population processes},
   volume = {9},
   year = {2012}
}

@article{Ocafrain2020,
   abstract = {This paper deals with the rate of convergence in 1-Wasserstein distance of the marginal law of a Brownian motion with drift conditioned not to have reached 0 towards the Yaglom limit of the process. In particular it is shown that, for a wide class of initial measures including probability measures with compact support, the Wasserstein distance decays asymptotically as 1/t. Likewise, this speed of convergence is recovered for the convergence of marginal laws conditioned not to be absorbed up to a horizon time towards the Bessel-3 process, when the horizon time tends to infinity.},
   author = {William Oçafrain},
   doi = {10.1214/20-ECP315},
   issn = {1083589X},
   journal = {Electronic Communications in Probability},
   keywords = {1-Wasserstein distance,Bessel-3 process,Brownian motion with drift,Quasi-stationary distribution,Rate of convergence,Yaglom limit},
   pages = {1-12},
   publisher = {Institute of Mathematical Statistics},
   title = {{P}olynomial rate of convergence to the {Y}aglom limit for {B}rownian motion with drift},
   volume = {25},
   year = {2020}
}

@article{Villemonais2011,
   abstract = {We consider a strong Markov process with killing and prove an approximation method for the distribution of the process conditioned not to be killed when it is observed. The method is based on a Fleming-Viot type particle system with rebirths, whose particles evolve as independent copies of the original strong Markov process and jump onto each others instead of being killed. Our only assumption is that the number of rebirths of the Fleming-Viot type system doesn't explode in finite time almost surely and that the survival probability of the original process remains positive in finite time. The approximation method generalizes previous results and comes with a speed of convergence. A criterion for the non-explosion of the number of rebirths is also provided for general systems of time and environment dependent diffusion particles. This includes , but is not limited to, the case of the Fleming-Viot type system of the approximation method. The proof of the non-explosion criterion uses an original non-attainability of (0,0) result for pair of non-negative semi-martingales with positive jumps.},
   author = {Denis Villemonais},
   doi = {10.1051/ps/2013045},
   journal = {ESAIM: Probability and Statistics},
   keywords = {()},
   pages = {441-467},
   title = {General Approximation Method for the Distribution of {M}arkov Processes Conditioned not to be Killed},
   volume = {18},
   url = {https://hal.archives-ouvertes.fr/hal-00598085v2},
   year = {2014}
}

@article{Lee,
  title={A free boundary problem in biological selection models},
  author={Lee, Jimyeong},
  journal={arXiv preprint arXiv:1707.01232},
  year={2017}
}

@article{Hormander1967,
   author = {Lars H\"ormander},
   journal = {Acta Math.},
   volume = {119},
   pages = {147-171},
   title = {Hypoelliptic Second Order Differential Equations},
   year = {1967}
}

@article{An2024,
   abstract = {We propose a novel method for establishing the convergence rates of solutions to reaction-diffusion equations to traveling waves. The analysis is based on the study of the traveling wave shape defect function introduced in [2]. It turns out that the convergence rate is controlled by the distance between the ``phantom front location'' for the shape defect function and the true front location of the solution. Curiously, the convergence to a traveling wave itself has a pulled nature, regardless of whether the traveling wave is of pushed, pulled, or pushmi-pullyu type. In addition to providing new results, this approach simplifies dramatically the proof in the Fisher-KPP case and gives a unified, succinct explanation for the known algebraic rates of convergence in the Fisher-KPP case and the exponential rates in the pushed case.},
   author = {Jing An and Christopher Henderson and Lenya Ryzhik},
   journal = {Ann. Inst. H. Poincaré C Anal. Non Linéaire},
   volume = {42},
   number = {4},
   pages = {895-932},
   title = {Front location determines convergence rate to traveling waves},
   url = {http://arxiv.org/abs/2307.09523},
   year = {2025}
}

@article{Demircigil2025,
   author = {Mete Demircigil and Christopher Henderson},
   journal = {To appear shortly},
   title = {Front Location for Go or Grow Models of Aerotaxis},
   year = {2025}
}

@article{Garnier2012,
   abstract = {We investigate the inside structure of one-dimensional reaction-diffusion traveling fronts. The reaction terms are of the monostable, bistable or ignition types. Assuming that the fronts are made of several components with identical diffusion and growth rates, we analyze the spreading properties of each component. In the monostable case, the fronts are classified as pulled or pushed ones, depending on the propagation speed. We prove that any localized component of a pulled front converges locally to 0 at large times in the moving frame of the front, while any component of a pushed front converges to a well determined positive proportion of the front in the moving frame. These results give a new and more complete interpretation of the pulled/pushed terminology which extends the previous definitions to the case of general transition waves. In particular, in the bistable and ignition cases, the fronts are proved to be pushed as they share the same inside structure as the pushed monostable critical fronts. Uniform convergence results and precise estimates of the left and right spreading speeds of the components of pulled and pushed fronts are also established. © 2012 Elsevier Masson SAS.},
   author = {Jimmy Garnier and Thomas Giletti and François Hamel and Lionel Roques},
   doi = {10.1016/j.matpur.2012.02.005},
   issn = {00217824},
   issue = {4},
   journal = {Journal des Mathematiques Pures et Appliquees},
   keywords = {Pulled fronts,Pushed fronts,Reaction-diffusion,Spatial dynamics,Spreading speeds,Traveling fronts},
   pages = {428-449},
   title = {Inside dynamics of pulled and pushed fronts},
   volume = {98},
   number = {4},
   year = {2012}
}

@article{Zucca2009,
   author = {Cristina Zucca and Laura Sacerdote},
   issue = {4},
   journal = {Journal des Mathematiques Pures et Appliquees},
   pages = {1319–46},
   title = {On the Inverse First-Passage-Time Problem for a {W}iener Process},
   volume = {19},
   year = {2009}
}

@article{Dudley1977,
   author = {Richard M. Dudley and Sam Gutmann},
   journal = {Séminaire de Probabilités (Strasbourg)},
   pages = {51-58},
   title = {Stopping times with given laws},
   volume = {11},
   url = {http://www.numdam.org/conditions},
   year = {1977}
}

@book{Gupta2017,
  title={The classical {Stefan} problem: basic concepts, modelling and analysis with quasi-analytical solutions and methods},
  author={Gupta, Sushil},
  volume={45},
  year={2017},
  publisher={Elsevier}
}

@book{Friedman1964,
   author = {Avner Friedman},
   publisher = {Englewood Cliffs : Prentice-Hall},
   title = {Partial differential equations of parabolic type},
   year = {1964}
}

@book{Carinci2016,
  author={Gioia Carinci and Anna De Masi and Cristian Giardinà and Errico Presutti},
  title={Free Boundary Problems in PDEs and Particle Systems},
  volume={45},
  year={2016},
  SERIES = {SpringerBriefs in Mathematical Physics },
  publisher={Springer}
}

@article{Atar2025,
   author = {Rami Atar},
   doi = {10.1214/24-aop1751},
   issue = {5},
   journal = {The Annals of Probability},
   publisher = {Institute of Mathematical Statistics},
   title = {A weak formulation of free boundary problems and its application to hydrodynamic limits of particle systems with selection},
   volume = {53},
   year = {2025}
}

@book{Evans1998,
   author = {Lawrence C. Evans},
   edition = {First},
   publisher = {American Mathematical Society},
   title = {Partial Differential Equations},
   year = {1998}
}
\bibliographystyle{plain}
\end{document}